\title{federico}
\date{November 2023}
\def \R {\mathbb{R}}
\newcommand{\equ}[1]{(\ref{#1})}
\numberwithin{equation}{section}
\newcommand{\beq}{\begin{equation}}
	\newcommand{\bea}[1]{\begin{array}{#1} }
		\newcommand{\eeq}{ \end{equation}}
	\newcommand{\ea}{ \end{array}}
\newcommand{\inn}{{\quad\hbox{in } }}
\newtheorem{theorem}{Theorem}[section]
\newtheorem{lemma}[theorem]{Lemma}
\newtheorem{proposition}[theorem]{Proposition}
\newtheorem{corollary}[theorem]{Corollary}
\newtheorem{remark}[theorem]{Remark}
\def\@settitle{\begin{center}%
		\baselineskip14\p@\relax
		\bfseries
		\uppercasenonmath\@title
		\@title
		\ifx\@subtitle\@empty\else
		\\[1ex]\uppercasenonmath\@subtitle
		\footnotesize\mdseries\@subtitle
		\fi
	\end{center}%
}
\def\subtitle#1{\gdef\@subtitle{#1}}
\def\@subtitle{}
\newcommand{\ch}[1]{#1}
\begin{document}
	\title[Existence of finite time blow-up in Keller-Segel system]{Existence of finite time blow-up in Keller-Segel system}
	%\date{\today}%, file: \textsf{KS}}

\author[ F.~Buseghin]{Federico Buseghin}
\address{\noindent F.~Buseghin: Department of Mathematical Sciences University of Bath, Bath BA2 7AY, United Kingdom}
\email{fb588@bath.ac.uk}

\author[J.~D\'avila]{Juan D\'avila}
\address{\noindent J.~D\'avila: Department of Mathematical Sciences University of Bath, Bath BA2 7AY, United Kingdom}
\email{jddb22@bath.ac.uk}

\author[M.~del Pino]{Manuel del Pino}
\address{\noindent M.~del Pino: Department of Mathematical Sciences University of Bath, Bath BA2 7AY, United Kingdom}
\email{m.delpino@bath.ac.uk}

\author[M.~Musso]{Monica Musso}
\address{\noindent M.~Musso: Department of Mathematical Sciences University of Bath, Bath BA2 7AY, United Kingdom.}
\email{m.musso@bath.ac.uk}

\begin{abstract}
	% EXTENDED ABSTRACT:
	Perhaps the most classical diffusion model for chemotaxis is the Keller-Segel system
	\begin{equation}\tag{$\ast$}
		\label{ks0ab}
		\left\{ \begin{aligned}
			u_t =&\; \Delta u - \nabla \cdot(u \nabla v) \quad \inn \R^2\times(0,T),\\
			v =&\; (-\Delta_{\R^2})^{-1} u := \frac 1{2\pi} \int_{\R^2} \, \log \frac 1{|x-z|}\,u(z,t)\, dz,
			\\ & \qquad\ u(\cdot ,0) = u_0^{\star} \ge 0\quad\hbox{in } \R^2.
		\end{aligned}
		\right.
	\end{equation}
   We show that there exists $\varepsilon>0$ such that for any $m$ satisfying
    \begin{align*}
    	8\pi<m\le 8\pi+\varepsilon
    \end{align*}
    and any $k$ given points $q_{1},...,q_{k}$ in $\mathbb{R}^{2}$
    there is an initial data $u_0^*$ of \equ{ks0ab} for which the solution $u(x,t)$ blows-up in finite time as $t\to T$ with the approximate profile
	$$
		u(x,t)=\sum_{j=1}^{k}\frac{1}{\lambda_{j}^{2}(t)}U\left(\frac{x-\xi_{j}(t)}{\lambda_{j}(t)}\right)(1+o(1)),\ \ \ U(y)=\frac{8}{(1+|y|^{2})^{2}}
	,
	$$
	with $\lambda_{j}(t) \approx 2e^{-\frac{\gamma+2}{2}}\sqrt{T-t}e^{-\sqrt{\frac{|\ln(T-t)|}{2}}} $  where $\gamma=0.57721...$ is the Euler-Mascheroni constant, $\xi_{j}(t)\to q_{j}\in \mathbb{R}^{2}$ and such that 
	$$		\int_{\R^2}u(x,t)dx=km.
$$ 
This construction generalizes the existence result of the stable blow-up dynamics recently proved in \cite{CGMN1,CGMN2}.

	% SHORT ABSTRACT:
	% The simplest version of the parabolic-elliptic Patlak-Keller-Segel system in the two-dimensional Euclidean space has an $8\pi$ critical mass which corresponds to the exact threshold between finite-time blow-up and self-similar diffusion towards zero. Among functions with mass $8\pi$, we find a neighborhood of a radial function such that any solution with initial condition in this neighborhood is globally defined and blows-up in infinite time with an explicit scaling involving the square root of the logarithm of the time.
\end{abstract}

%\begin{center}

% MAKE SURE YOU TAKE OUT THE SQUARE BRACKETS

%\LARGE{\textbf{The Nonlocal Equation in finite time} }\\
%\normalsize\textbf{Federico Buseghin} \\
%\normalsize{federico.buseghin@studenti.unipd.it} \\
%\end{center}

%\begin{center}

% MAKE SURE YOU TAKE OUT THE SQUARE BRACKETS

%	\LARGE{\textbf{Confirmation Report}} \\
%	\vspace{1em}
%	\normalsize\textbf{Federico Buseghin} \\
%	%\normalsize{fb588@bath.ac.uk} \\
%	\vspace{1em}
%	\small\textbf{Supervision: Prof. Manuel Del Pino and Prof. Monica Musso}\\
%	\vspace{1em}
%\end{center}

%\title[]{
	%}

%\subjclass[2010]{}
%\keywords{}

\date{}

%\begin{abstract} 

%\end{abstract}
%\maketitle

%\author{Federico Buseghin\\[1ex]
	%\small Supervisors: Prof. Manuel Del Pino and Prof. Monica Musso}
%\address{Department of Mathematical Sciences\\ University of Bath\\ BA2 7AY \\ Bath\\ United Kingdom}

%\email{fb588@bath.ac.uk}

\maketitle

%\tableofcontents
\section{Introduction}
In the present work we consider the two dimensional Keller-Segel model
\begin{equation}\label{ks0}
	\left\{ \begin{aligned}
		u_t =&\; \Delta u - \nabla \cdot(u \nabla v) \quad \inn \R^2\times(0,T),\\
		v =&\; (-\Delta_{\R^2})^{-1} u := \frac 1{2\pi} \int_{\R^2} \, \log \frac 1{|x-z|}\,u(z,t)\, dz,
		\\ & \qquad\ u(\cdot ,0) = u_0^{\star} \ge 0\quad\hbox{in } \R^2.
	\end{aligned}
	\right.
\end{equation}
The system \eqref{ks0} is used to model \emph{chemotaxis}, a biological phenomenon describing the motion of a population density, $u(x,t)$, in the presence of an external chemical stimulus, $v(x,t)$, called \emph{chemoattractant}. There is a huge literature on chemotaxis in biology and mathematics. This model was introduced by Keller-Segel \cite{KS} to describe the motion of the slime mold  amoebae Dictyostelium discoideum (see also \cite{Pat} for an earlier model proposed by Patlak and the later works \cite{KS1,KS2,KS0}). Under certain circumstances, these cells secret the chemoattractant themselves. This self-organization process is considered a fundamental mechanism in biology. \newline  The two dimensional case is clearly of special interest in biology but, as we will show, also from a mathematical point of view because of the scaling properties of \eqref{ks0}. \newline
We consider positive solutions which are well defined, unique and smooth up to a maximal time $0<T\le +\infty$. This problem formally preserves the mass
\begin{align*}
	M:=\int_{\R^2}u(x,t)dx=\int_{\R^2}u_{0}(x)dx \ \ \ \text{for any }t\ge0.
\end{align*}  
The study of positive steady states of \eqref{ks0} 
\begin{align}\label{steadyks0}
	\begin{cases}
		0=\Delta u -\nabla \cdot(u\nabla v), \ \ x\in \mathbb{R}^{2} \\
		0=\Delta v+u, \ \ x\in\mathbb{R}^{2},\\
		u>0
	\end{cases}
\end{align}
is equivalent to the study of ground states of the equation
\begin{align}\label{LiouvilleEqts}
	\Delta v+ \lambda e^{v}=0, \ \ x\in \mathbb{R}^{2} \ \text{and }\lambda>0.
\end{align}
In fact in \cite{dPW} they showed that a solution of \eqref{steadyks0} satisfies
\begin{align*}
	\int_{\R^2}u|\nabla (\ln u-v)|^{2}dx=0,
\end{align*}
so that $u=\lambda e^{v}$ for some positive constant $\lambda$, resulting in the equation \eqref{LiouvilleEqts} with $\lambda=1$.
Note that $v_{\lambda,\xi}(x)=\ln U_{\lambda,\xi}$ where
\begin{align}\label{bubbleF}
	U_{\lambda,\xi}(x)=\frac{1}{\lambda^{2}}U_{0}(\frac{x-\xi}{\lambda}), \ \ U_{0}(y)=\frac{8}{(1+|y|^{2})^{2}}, \ \ \lambda>0, \xi \in \mathbb{R}^{2}
\end{align}
is a solution of \eqref{steadyks0}. \newline
For a fast decaying solution one gets
\begin{align}\label{secMomForm}
	&\int_{\mathbb{R}^{2}}\Delta \omega|x|^{2}-\int_{\mathbb{R}^{2}}\nabla \cdot(\omega\nabla(-\Delta)^{-1}\omega)|x|^{2}dx=4M-\frac{M^{2}}{2\pi},
\end{align}
that leads to the following identity for the second moment of the solution
\begin{align}\label{secMomFormEquation}
	&\frac{d}{dt}\int_{\R^2}u(x,t)|x|^{2}dx=4M-\frac{M^{2}}{2\pi}.
\end{align}
In their seminal work \cite{JL}, J\"ager-Luckhaus derived the parabolic-elliptic system \eqref{ks0} and showed that solutions globally exist if the mass $M$ is small and blow-up in finite time if $M$ is large. More recently this first result has been improved. \newline
If $M<8\pi$ the second moment grows linearly and it can be proved that the solution exists for all time and diffuses to zero with a self-similar profile \cite{DP} \cite{BDP}. \newline When $M=8\pi$ there exist globally radially symmetric solutions to \eqref{ks0} for initial data with finite or infinite second moment \cite{BKLN} \cite{BKLN1}. If the initial second moment is finite, the infinite time blow-up takes place in the form of a bubble \eqref{bubbleF} with $\lambda(t)\to0$ \cite{BKLN1} \cite{BCM}. In \cite{GM} \cite{DdPDMW} they constructed explicit examples satisfying
\begin{align}\label{infinitetimeBlowUP}
	u(x,t)\approx \frac{1}{\lambda^{2}(t)}U_{0}(\frac{x}{\lambda(t)}), \ \ \ \lambda(t)\approx \frac{c}{\sqrt{\ln t}},
\end{align} 
for some constant $c>0$. The blow-up dynamics \eqref{infinitetimeBlowUP} was proved to be radially stable in \cite{GM}. In \cite{DdPDMW} they proved the nonradial stability. \newline
If $M>8\pi$, the negative rate of production of the second moment and the positivity of the solution implies finite time blow-up. Kozono-Sugiyama \cite{KoSu} showed that any blow-up solutions of \eqref{ks0} satisfies
\begin{align*}
	\|u(t)\|_{L^{\infty}(\mathbb{R}^{2})}\ge C(T-t)^{-1}.
\end{align*}
In general we say that a solution $u(t)$ of \eqref{ks0} exhibits type I blow-up at $t=T$ if there exists a constant $C>0$ such that
\begin{align*}
	\limsup_{t\to T}(T-t)\|u(t)\|_{L^{\infty}(\mathbb{R}^{2})}\le C
\end{align*}
otherwise the blow-up is of type II. Concrete examples of type II blow-up solutions were constructed in the radial setting by Herrero-Vel\'azquez in \cite{HV3} (see also \cite{Ve, Ve1, Ve2} for stability results) and by Rapha\"el-Schweyer \cite{RS}. More recently  Collot-Ghoul-Masmoudi-Nguyen \cite{CGMN1, CGMN2} improved the result in \cite{RS} constructing a stable dynamics in the nonradial setting where the solution of \eqref{ks0} satisfies
\begin{align}\label{finitetimeDyn}
	 	u(x,t)\approx \frac{1}{\lambda^{2}(t)}U_{0}(\frac{x}{\lambda(t)}), \ \ \, \lambda(t)\approx 2e^{-\frac{\gamma+2}{2}}e^{-\sqrt{\frac{|\ln(T-t)|}{2}}},
\end{align}
with $\gamma=0.57721...$ the Euler-Mascheroni constant. In \cite{CGMN2}  they additionally constructed other blow-up solutions with different rates corresponding to unstable blow-up dynamics. More recently Mizoguchi \cite{Mi} proved that \eqref{finitetimeDyn} is the only stable blow-up mechanism that occurs among radial nonnegative solutions. \newline

The purpose of this work is to construct finite-time blow-up solution by using a different method from \cite{HV3}, \cite{RS} and \cite{CGMN2}. Our proof is based on the gluing techniques developed in \cite{CdPM, DdPW,dPMW,DdPDMW}. The following is our main result.
\begin{theorem}\label{theorem1}
There exists $\varepsilon>0$ such that for any $m$ satisfying
\begin{align*}
	8\pi<m\le 8\pi+\varepsilon
\end{align*}
and any $k$ given points $q_{1},...,q_{k}$ in $\mathbb{R}^{2}$
there is an initial data $u_0^{\star}$ of \equ{ks0} for which the corresponding solution $u(x,t)$ has the form
$$
u(x,t)=\sum_{j=1}^{k}\frac{1}{\lambda_{j}^{2}(t)}U\left(\frac{x-\xi_{j}(t)}{\lambda_{j}(t)}\right)(1+o(1)),\ \ \ U(y)=\frac{8}{(1+|y|^{2})^{2}}
,
$$
uniformly on bounded sets of $\mathbb{R}^{2}$,
with $\lambda_{j}(t) \approx 2e^{-\frac{\gamma+2}{2}}\sqrt{T-t}e^{-\sqrt{\frac{|\ln(T-t)|}{2}}} $  where $\gamma=0.57721...$ is the Euler-Mascheroni constant , $\xi_{j}(t)\to q_{j}\in \mathbb{R}^{2}$ and such that 
$$		\int_{\R^2}u(x,t)dx=km.
$$ 
\end{theorem}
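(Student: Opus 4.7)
\medskip

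\noindent\textbf{Proof plan.} The natural strategy, following the inner–outer gluing scheme of \cite{CdPM,DdPW,dPMW,DdPDMW}, is to look for a solution of the form
\begin{equation*}
u(x,t) \;=\; \sum_{j=1}^{k} U_{\lambda_j(t),\xi_j(t)}(x) \;+\; \Phi_{\mathrm{corr}}(x,t),
\end{equation*}
where $U_{\lambda,\xi}$ is the bubble in \eqref{bubbleF}, the parameters $\lambda_j(t)\to 0$, $\xi_j(t)\to q_j$ are to be determined, and $\Phi_{\mathrm{corr}}$ is a small remainder. Since each bubble has mass $8\pi$ but the total mass is prescribed to be $km$ with $m=8\pi+\mu$, $\mu\in(0,\varepsilon]$, the first step is to \emph{enlarge} the approximation bubble by adding a non-self-similar correction $\eta_j(|x-\xi_j|/\lambda_j,t)$ carrying the extra mass $\mu$ per bubble, exactly as in the single-bubble construction implicit in \cite{CGMN2}. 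This correction, built from the inverse of the linearized Liouville operator acting in radial-mass mode, is precisely the object that forces the refined rate $\lambda_j(t)\sim 2e^{-(\gamma+2)/2}\sqrt{T-t}\,e^{-\sqrt{|\ln(T-t)|/2}}$.

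\smallskip

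Next I would compute the error $S[u^*]=-\partial_t u^*+\Delta u^*-\nabla\!\cdot\!(u^*\nabla v^*)$ of the improved ansatz $u^*$ and split the remainder as
\begin{equation*}
\Phi_{\mathrm{corr}}(x,t) \;=\; \sum_{j=1}^{k}\eta_j^{\mathrm{cut}}(x,t)\,\phi_j\!\left(\tfrac{x-\xi_j(t)}{\lambda_j(t)},\tau_j(t)\right) \;+\; \psi(x,t),
\end{equation*}
with $\eta_j^{\mathrm{cut}}$ a cutoff supported near $\xi_j$, $\tau_j=\int_0^t\lambda_j^{-2}(s)\,ds$ the inner time, and $\psi$ a global outer correction. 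The inner problems reduce, up to lower order, to
\begin{equation*}
\partial_{\tau_j}\phi_j \;=\; \mathcal{L}_0\phi_j + h_j[\lambda,\xi,\psi](y,\tau_j),\qquad y\in\mathbb{R}^2,
\end{equation*}
where $\mathcal{L}_0\phi = \Delta\phi-\nabla\!\cdot\!(U_0\nabla(-\Delta)^{-1}\phi)-\nabla\!\cdot\!(\phi\,\nabla\log U_0)$ is the linearized Keller--Segel operator at a single bubble, whose bounded kernel consists of the translation and scaling modes. The outer problem is a forced heat equation for $\psi$ with right-hand side that is small and well-localized away from the concentration points.

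\smallskip

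The core technical task is to solve the inner problem in weighted spaces that capture polynomial decay in $y$ and the expected size $O(\lambda_j^{2-\sigma})$ in $\tau_j$, with solvability enforced by the orthogonality conditions
\begin{equation*}
\int_{\mathbb{R}^2} h_j[\lambda,\xi,\psi](y,\tau_j)\,Z_\ell(y)\,dy \;=\; 0,\qquad \ell=0,1,2,
\end{equation*}
against the translation modes $Z_1,Z_2=\partial_{y_i}U_0$ and the scaling mode $Z_0=2U_0+y\cdot\nabla U_0$. Of these, the $Z_{1,2}$ conditions give a nearly decoupled ODE system for the $\xi_j(t)$, driven by the mutual logarithmic-interaction gradient $-\nabla_{\xi_j}\sum_{\ell\ne j}(-\Delta)^{-1}U_{\lambda_\ell,\xi_\ell}$ evaluated at $\xi_j$; since the $\lambda_\ell$ vanish, this forcing is small and admits the solution $\xi_j(t)\to q_j$. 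The $Z_0$ condition yields a single non-local ODE for $\lambda_j(t)$ of the form
\begin{equation*}
2\lambda_j\dot\lambda_j \;=\; -\,\frac{\mu}{|\ln\lambda_j|}\,\bigl(1+o(1)\bigr) \;+\; \text{coupling terms},
\end{equation*}
whose leading-order solution is precisely the announced rate, as in \cite{CGMN2}. Once $\phi_j,\psi,\lambda_j,\xi_j$ are obtained in closed form as a fixed point of a contraction in these weighted norms, the constructed $u$ satisfies \eqref{ks0}, the concentration profile of Theorem \ref{theorem1} and, by choice of $\mu=m-8\pi$, the mass identity $\int_{\mathbb{R}^2}u(\cdot,t)=km$.

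\smallskip

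The main obstacle I anticipate is the inner linear theory: unlike the Schrödinger or nonlinear heat settings where bounded kernel elements decay, the Keller--Segel linearization $\mathcal{L}_0$ has the scaling mode $Z_0$ decaying only like $|y|^{-4}$ but its $(-\Delta)^{-1}$ counterpart only logarithmically, so the non-local term in $\mathcal{L}_0$ destroys naive weighted estimates, and one must work in spaces tailored to the logarithmic tails (as in \cite{DdPDMW}). Getting the precise constant $2e^{-(\gamma+2)/2}$ then requires tracking the exact Liouville expansion at infinity of the inner correction, which is the source of the Euler--Mascheroni factor. The multi-bubble aspect itself is comparatively benign, because the bubble separation $|q_i-q_j|$ is of order one while the scales $\lambda_j$ vanish, making the bubble–bubble interaction a perturbation of order $\lambda_j^2|\ln\lambda_j|$ of the single-bubble analysis.
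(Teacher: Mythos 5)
Your plan correctly identifies the inner--outer gluing framework, the kernel elements $Z_0,Z_1,Z_2$, the role of a mass-carrying correction, and the fact that multi-bubbles are a mild perturbation of the single-bubble case. But the central reduction you write down for $\lambda$ is wrong, and the gap cannot be patched by an $o(1)$ factor.

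You assert that the $Z_0$ orthogonality condition collapses to an ODE of the form $2\lambda_j\dot\lambda_j = -\tfrac{\mu}{|\ln\lambda_j|}\,(1+o(1))$ with $\mu=m-8\pi$ fixed, and that its leading-order solution gives the announced rate. Integrating that equation yields $\lambda^2|\ln\lambda^2| \approx 2\mu(T-t)$, i.e.\ $\lambda^2 \sim \mu(T-t)/|\ln(T-t)|$, which is a power-of-log correction to the self-similar scale. The rate in the theorem is $\lambda^2 \sim 4e^{-(\gamma+2)}(T-t)\,e^{-\sqrt{2|\ln(T-t)|}}$ --- an exponentially-small-in-$\sqrt{|\ln(T-t)|}$ correction. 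These two asymptotics are incompatible; your proposed ODE simply does not have the claimed solution. The actual rate-determining relation (equation \eqref{nonlocalIntro} in the paper) is genuinely nonlocal in time,
\[
4\pi \int_{-\varepsilon(T)}^{T}\frac{\lambda\dot\lambda(s)}{T-s}ds-4\pi \int_{-\varepsilon(T)}^{t-\lambda^{2}}\frac{\lambda\dot\lambda(s)}{t-s}ds+4\pi(\gamma+1-\ln4)\lambda\dot\lambda(t)=0 ,
\]
and derives from computing the mass of a \emph{caloric} correction $\varphi_\lambda$ solving $\partial_t\varphi_\lambda=\Delta_6\varphi_\lambda+E$ by Duhamel on an \emph{extended} interval $(-\varepsilon(T),T)$. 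This is where the memory (nonlocality) and the Euler--Mascheroni constant (via the exponential integral $Ei$ arising from the expansion of $\int e^{-r^2/4}Z_0(ra)r\,dr$) come from. Your ansatz of a \emph{static} correction $\eta_j$ built from the inverse of the Liouville linearization misses this caloric structure entirely, which is why it cannot capture the memory term and hence cannot yield the right rate. Relatedly, your extra-mass parameter $\mu$ is not an independent dial: in the paper it is $\int\varphi_\lambda(\cdot,T)\,dx$, which is tied to the auxiliary parameter $\varepsilon(T)$ with $T\ll\varepsilon(T)\ll1$, and the relation $m\leftrightarrow\varepsilon(T)$ then determines which masses near $8\pi$ are reached.

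Two further structural points that your sketch omits and are not cosmetic: (i) the radial inner solution must be split into a zero-mass-and-zero-second-moment piece and a second-moment piece, because the second moment cannot be annihilated exactly (only made small), which forces the introduction of the amplitude parameter $\alpha(t)$ and an extra elliptic improvement $\phi_0^i$; and (ii) the inner operator $B[\phi]=\lambda\dot\lambda(2\phi+y\cdot\nabla\phi)$ has coefficient $\sim \ln\tau/\tau$ after the time rescaling $\tau=\int\lambda^{-2}$, in contrast to the $1/(\tau\ln\tau)$ coefficient in the infinite-time problem, so the linear theory of \cite{DdPDMW} does not transfer; one needs to cut $B$ and develop three \emph{perturbed} inner theories (Propositions \ref{PropMode0Mass0ANTICIP}--\ref{PropMode1ANTICIP}) with extra forcing operators $\mathcal E$, $\mathcal F$. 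Without these, the fixed-point closure fails.
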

\noindent We remark that Theorem \ref{theorem1} generalizes the existence results in \cite{HV3}, \cite{RS} and \cite{CGMN1, CGMN2}. Firstly we construct a solution in the nonradial setting that concentrates at $k\ge1$ points in $\mathbb{R}^{2}$. 
Secondly our method allows us to find a concrete relation between the mass of $u$ and the blow up time. 
More precisely, in the case of a single blow up point, we show that there is $\varepsilon>0$ so that given $T>0$ small, for any $m$ in the range
\[
8\pi + \epsilon(T)\leq m\leq 8\pi+\varepsilon
\]
where 
\[
\epsilon(T)=2\sqrt{2}\pi e^{-(\gamma+2)}e^{-\sqrt{2|\ln T|}}\sqrt{2|\ln T|}(1+O(\frac{1}{\sqrt{|\ln T|}})),
\]
there is a solution $u(x,t)$ as described in Theorem~\ref{theorem1} with mass $m$.
\newline

The structure of the paper follows naturally from the proof presented in the infinite time construction based on the so-called \emph{inner-outer gluing scheme} \cite{DdPDMW}. We anticipate that the construction in finite time is different in some respects. In fact there are three main technical difficulties:
\begin{itemize}
	\item in the construction of the first approximation it is crucial to find a good approximation of the rate of the blow-up $\lambda(t)$. In finite time time we will show that $\lambda(t)$ must satisfy the following \emph{nonlocal} equation
		\begin{align}\label{nonlocalIntro}
			4\pi \int_{-\varepsilon(T)}^{T}\frac{\lambda\dot{\lambda}(s)}{T-s}ds-4\pi \int_{-\varepsilon(T)}^{t-\lambda^{2}}\frac{\lambda\dot{\lambda}(s)}{t-s}ds+4\pi(\gamma+1-\ln4)\lambda\dot{\lambda}(t)=0 \ \ \ \ t\in(0,T),
		\end{align}
	for some $T\ll\varepsilon(T)\ll1$ whose choice will be explained in Section \ref{MASSPHISECTION}. We observe that our construction allows us to find the following expansion for the mass of the solution
\begin{align*} \int_{\R^2} u(x,t)dx=8\pi+2\sqrt{2}\pi e^{-(\gamma+2)}e^{-\sqrt{2|\ln\varepsilon(T)|}}\sqrt{2|\ln\varepsilon(T)|}(1+O(\frac{1}{\sqrt{|\ln \varepsilon(T)|}})).
\end{align*} 
\noindent We remark that the nonlocal operator in \eqref{nonlocalIntro} has already been discovered in this context in \cite{DdPW}. Because of a cancellation of the main order terms, here finding the last term of \eqref{nonlocalIntro} is crucial and it requires a rather involved expansion of some integrals. As in \cite{DdPW} we will be only able to find an approximate solution of \eqref{nonlocalIntro} for a sufficiently small error but here the resolvent operator does not have the same smallness they used to implement a fixed point scheme. To find the desired approximate solution of \eqref{nonlocalIntro} in Section \ref{MASSPHISECTION} we will introduce a fixed-point scheme that is based on the fundamental observation that the resolvent operator is \emph{at main order} differentiable; \\
	\item a key ingredient of the inner-outer gluing scheme is finding estimates for the solution of an inner equation. In both finite and infinite time the prototype equation we get is 
	\begin{align}\label{prototypeinneqtINTRO}
		\lambda^{2}\partial_{t}\phi=L[\phi]+\lambda\dot{\lambda}(2\phi+y\cdot \nabla \phi)+h
	\end{align}
    where $L$ is the linearized operator around the correspondent approximate solution \eqref{finitetimeDyn}. After the time rescaling 
    \begin{align}\label{tauVARINTRO}
    		\tau:=\tau_{0}+\int_{0}^{t}\frac{1}{\lambda^{2}(s)}ds\implies\tau\to \infty \text{ as }t\to T
    \end{align}
    the equations in finite time and in infinite time look similar but there is a crucial difference:
    \begin{align*}
    	\lambda\dot{\lambda}(2\phi+y\cdot \nabla \phi)\approx\begin{cases}
    		-\frac{1}{2\tau \ln \tau}(2\phi+y\cdot \nabla \phi) \ \ \ \ \text{ in infinite time }\\[5pt]
    		-\frac{\ln \tau}{2\tau}(2\phi+y\cdot \nabla \phi) \ \ \ \ \ \text{in finite time}.
    	\end{cases}
    \end{align*}
    As we will explain in Sections \ref{ProofProp71}, \ref{Section11} and \ref{Section12} to take into account this difference three \emph{perturbed} inner theories will be required;\\
	\item in order to finally solve the resulting inner-outer gluing system in finite time a last improvement of the rate of the blow-up $\lambda(t)$ is required. In fact, in finite time the inner equation and the outer equation are coupled differently. This generates a loss in the time decay of the inner solution. 
\end{itemize}
\section{The approximations $u_{0}$ and $u_{1}$}
From now on we restrict ourselves to the case of a single point blow-up, namely $k=1$. In Section \ref{MultiSpikes} we will explain how we can easily extend the proof to the case $k>1$. \newline
In this section we will define a basic approximation to a solution of the Keller-Segel system \eqref{ks0}. Let us consider parameter functions
\begin{align*}
	0<\lambda(t)\to 0, \ \ \ \ \xi(t)\to q\in \mathbb{R}^{2}, \ \ \ \ \alpha(t)\to1 \ \ \  \text{as } t \to T
\end{align*}
that we will later specify. Let us introduce a smooth cut-off function $\chi$ such that $\chi(x,t)=\chi_{0}(\frac{x-\xi}{\sqrt{\delta(T-t)}})$ where $\chi_{0}$ is smooth, radial and satisfies
\begin{equation}\label{cutoff}
	\chi_{0}(s)=\begin{cases} 1 \ \ \ \ \ \text{if } s\le 1\\
		0 \ \ \ \ \  \text{if } s\ge2
	\end{cases}
\end{equation}
and $\delta$ is a small parameter that will be fixed in Section \ref{InnerTheoryIntro}. We consider the functions
\begin{align*}
	U(y)=\frac{8}{(1+|y|^{2})^{2}}, \ \ \ \ \Gamma_{0}=\log U(y)
\end{align*}
and we remark that this is a solution of the time independent equation
\begin{align}\label{timeindeq}
	\nabla \cdot(\nabla U-U\nabla\Gamma_{0})=\Delta_{y}U-\nabla_{y}U\cdot\nabla_{y}\Gamma_{0}+U^{2}=0, \ \ \ \ \ \ \  -\Delta\Gamma_{0}=U.
\end{align}
We define the approximate solution $u_{0}(x,t)$ as
\begin{align}\label{first approximation}
	&u_{0}(x,t)=\frac{\alpha}{\lambda^{2}}U(\frac{x-\xi}{\lambda})\chi(x,t) \\
	&v_{0}(x,t)=(-\Delta_{x})^{-1}u_{0}=\frac{1}{2\pi}\int_{\mathbb{R}^{2}}\log \frac{1}{|x-z|}u_{0}(z,t)dz. \nonumber
\end{align}
We consider the error operator 
\begin{align}\label{erroroperator}
	S(u)=-\partial_{t}u+\mathcal{E}(u)
\end{align}
where
\begin{align*}
	\mathcal{E}(u)=\Delta_{x}u-\nabla_{x}\cdot (u\nabla_{x}v), \ \ \ v=(-\Delta_{x})^{-1}u.
\end{align*}
We can compute the error of the first approximation \eqref{first approximation}. We have
\begin{align}\label{timederiU0}
	-\partial_{t}u_{0}(x,t)=&-\frac{\dot{\alpha}}{\lambda^{2}}U(y)\chi_{0}(w)+\alpha \frac{\dot{\lambda}}{\lambda^{3}}Z_{0}(y)\chi_{0}(w)+\frac{\alpha}{\lambda^{3}}\dot{\xi}\cdot \nabla_{y}U(y)\chi_{0}(w)+\nonumber \\ 
	&+\frac{\alpha}{\lambda^{2}\sqrt{\delta(T-t)}}U(y)\dot{\xi}\cdot \nabla_{w}\chi_{0}(w)-\frac{\alpha}{2\lambda^{2}(T-t)}U(y)\nabla_{w}\chi_{0}(w)\cdot w
\end{align}
where $y=\frac{x-\xi}{\lambda}$ and $w=\frac{x-\xi}{\sqrt{\delta (T-t)}}$ and 
\begin{align}\label{Z0}
	Z_{0}=2U(y)+y\cdot\nabla_{y}U(y).
\end{align}
We also have
\begin{align*}
	\mathcal{E}(u_{0})=&\Delta_{x}u_{0}-\nabla_{x} \cdot(u_{0}\nabla_{x}v_{0})=\\
	=&\frac{2\alpha}{\lambda^{3}\sqrt{\delta(T-t)}}\nabla_{w}\chi_{0}(w)\cdot \nabla_{y}U(y)+\frac{\alpha}{\delta(T-t)}\frac{1}{\lambda^{2}}\Delta_{w}\chi_{0}(w)U(y)-\\
	&-\frac{\alpha}{\lambda^{2}\sqrt{\delta(T-t)}}U(y)\nabla_{w}\chi_{0}(w)\cdot \nabla_{x}v_{0}+\frac{\alpha \chi_{0}(w)}{\lambda^{4}}\big[\Delta_{y}U-\nabla_{y}U\cdot\nabla_{y}v_{0}+\alpha \chi U^{2}\big]=
\end{align*}
\begin{align*}
	=&\frac{2\alpha}{\lambda^{3}\sqrt{\delta(T-t)}}\nabla_{z}\chi_{0}(w)\cdot \nabla_{y}U(y)+\frac{\alpha}{\delta(T-t)}\frac{1}{\lambda^{2}}\Delta_{w}\chi_{0}(w)U(y)-\\
	&-\frac{\alpha}{\lambda^{2}\sqrt{\delta(T-t)}}U(y)\nabla_{w}\chi_{0}(w)\cdot \nabla_{x}v_{0}+\\
	&+\frac{\alpha \chi_{0}(w)}{\lambda^{4}}\big[(\chi_{0}(w)\alpha-1)U^{2}(y)-\nabla_{y}U(y)\cdot (\nabla_{y}v_{0}-\nabla_{y}\Gamma_{0})\big]
\end{align*}
where in the last identity we used \eqref{timeindeq}. If we decompose
\begin{align}\label{decompV}
	v_{0}(y)=\alpha \Gamma_{0}+\mathcal{R}(y),
\end{align}
by writing $-\Delta_{x}(v_{0}-\alpha\Gamma_{0})=\frac{\alpha}{\lambda^{2}}U(y)(\chi-1)$ in radial coordinates we get
\begin{align}
	|\nabla_{y}\mathcal{R}(y)|\le \begin{cases}
		0  \ \ \ \ \text{if }|y|\le \frac{\sqrt{\delta(T-t)}}{\lambda}\\ 
		\frac{\alpha\lambda^{2}}{\delta(T-t)}\frac{1}{|y|} \ \ \ \ \text{if } |y|\ge\frac{\sqrt{\delta(T-t)}}{\lambda}.
	\end{cases}
\end{align} 
Then we have
\begin{align*}
	\mathcal{E}(u_{0})=&\frac{2\alpha}{\lambda^{3}\sqrt{\delta(T-t)}}\nabla_{w}\chi_{0}(w)\cdot \nabla_{y}U(y)+\frac{\alpha}{\delta(T-t)}\frac{1}{\lambda^{2}}\Delta_{w}\chi_{0}(w)U(y)-\\
	&-\frac{\alpha}{\lambda^{2}\sqrt{\delta(T-t)}}U(y)\nabla_{w}\chi_{0}(w)\cdot \nabla_{x}v_{0}+\\
	&+\frac{\alpha \chi_{0}(w)}{\lambda^{4}}\big[(\alpha-1)U^{2}-(\alpha-1)\nabla_{y}U\cdot \nabla_{y}\Gamma_{0}+\alpha(\chi_{0}(w)-1)U^{2}-\nabla_{y}U\cdot \nabla_{y}\mathcal{R}(y)\big]
\end{align*}
and this gives
\begin{align}\label{firstapprErr}
	S(u_{0})=&-\frac{\dot{\alpha}}{\lambda^{2}}U(y)\chi_{0}(w)+\alpha \frac{\dot{\lambda}}{\lambda^{3}}Z_{0}(y)\chi_{0}(w)+\frac{\alpha}{\lambda^{3}}\dot{\xi}\cdot \nabla_{y}U(y)\chi_{0}(w)+\nonumber \\ 
	&+\frac{\alpha}{\lambda^{2}\sqrt{\delta(T-t)}}U(y)\dot{\xi}\cdot \nabla_{w}\chi_{0}(w)-\frac{\alpha}{2\lambda^{2}(T-t)}U(y)\nabla_{w}\chi_{0}(w)\cdot w+ \nonumber\\
	&+\frac{2\alpha}{\lambda^{3}\sqrt{\delta(T-t)}}\nabla_{w}\chi_{0}(w)\cdot \nabla_{y}U(y)+\frac{\alpha}{\delta(T-t)}\frac{1}{\lambda^{2}}\Delta_{w}\chi_{0}(w)U(y)-\nonumber\\
	&-\frac{1}{\lambda^{2}\sqrt{\delta(T-t)}}U(y)\nabla_{w}\chi_{0}(w)\cdot \nabla_{x}{v_{0}}-\frac{\alpha(\alpha-1)\chi_{0}(w)}{\lambda^{4}}\nabla_{y}\cdot(U\nabla_{y}\Gamma_{0})+\nonumber\\
	&+\frac{\alpha \chi_{0}(w)}{\lambda^{4}}\big[\alpha(\chi_{0}(w)-1)U^{2}-\nabla_{y}U\cdot \nabla_{y}\mathcal{R}(y)\big]-\frac{\alpha-1}{\lambda^{2}\sqrt{\delta(T-t)}}U(y)\nabla_{w}\chi_{0}(w)\cdot \nabla_{x}v_{0}.
\end{align}
We proceed as in \cite{DdPDMW} and, in order to erase the main order term in \eqref{firstapprErr}, we introduce the operator 
\begin{align}\label{SixDimLap}
	\Delta_{6}v(x)=\Delta v(x)+4\frac{x}{|x|^{2}}\cdot\nabla v(x), \ \ \ \ x\in\mathbb{R}^{2}
\end{align}
where the notation is justified by the fact that when $v$ is radial this operator coincide with the Laplace operator in $\mathbb{R}^{6}$, that is $\Delta_{6}=\partial_{r}^{2}+\frac{5}{r}\partial_{r}$ if $r=|x|$. We introduce the radial correction $\varphi_{\lambda}(x,t)$ that solves
\begin{align}\label{eqphilambda}
	\begin{cases}
		\partial_{t}\varphi_{\lambda}=\Delta_{6}\varphi_{\lambda}+E(x,t) \ \ \ \text{in }\mathbb{R}^{2}\times (0,T)\\
		\varphi_{\lambda}(\cdot,-\varepsilon(T))=0,
	\end{cases}
\end{align}
by means of Duhamel's formula, where
\begin{align}\label{Ephilambda}
	E(x,t;\lambda)= \frac{\dot{\lambda}}{\lambda^{3}}Z_{0}(\bar{y})\chi_{0}(\bar{w})-\frac{1}{2\lambda^{2}(T-t)}U(\bar{y})\nabla_{\bar{w}}\chi_{0}(\bar{w})\cdot \bar{w}+\tilde{E}(x,t;\lambda),
\end{align}
\begin{align}\label{Etildephilambda}
	\tilde{E}(x,t;\lambda)=&\frac{2}{\lambda^{3}\sqrt{\delta(T-t)}}\nabla_{\bar{w}}\chi_{0}(\bar{w})\cdot \nabla_{\bar{y}}U(\bar{y})+\frac{1}{\delta(T-t)}\frac{1}{\lambda^{2}}\Delta_{\bar{w}}\chi_{0}(\bar{w})U(\bar{y})-\nonumber\\
	&-\frac{1}{\alpha}\frac{1}{\lambda^{2}\sqrt{\delta(T-t)}}U(\bar{y})\nabla_{\bar{w}}\chi_{0}(\bar{w})\cdot \nabla_{x}v_{0}
\end{align}
with
\begin{align}\label{radialWY}
	\bar{w}=\frac{x}{\sqrt{\delta(T-t)}}, \ \ \ \  \bar{y}=\frac{x}{\lambda}
\end{align}
and $\varepsilon(T)$ is an explicit \emph{small} parameter that, as we will be clear in Section \ref{MASSPHISECTION}, will satisfy $T\ll\varepsilon(T)\ll1$. This parameter will play a crucial role in the expansion of the mass of the solution. \newline We notice that \eqref{eqphilambda} differs from the correction introduced in \cite{DdPDMW} in some respects. They consider $\varphi_{\lambda}(x-\xi(t),t)$, introducing in the error the term $\nabla \varphi_{\lambda}\cdot \dot{\xi}$ that in finite time would be dangerously singular away from the the point where the singularity occurs. This modification will produce errors that are supported only close to the singularity. As we will explain in Section \ref{MASSPHISECTION} we need to define the problem for negative times to give meaning to a decomposition of the mass of $\varphi_{\lambda}$, see formula \eqref{ExpansionMassphilambda}. Consequently, when later we will write $\lambda=\lambda_{0}+\lambda_{1}$ both these functions will be defined in $(-\varepsilon(T),T)$. Another fundamental fact we want to underline is that the mass of $\varphi_{\lambda}$ tends to some constant as $t\to T$ as we will precisely state in Corollary \ref{Corollary EXPANSION MASS}. \newline
We introduce the approximate solution
\begin{align}\label{u1}
	u_{1}:=u_{0}+\varphi_{\lambda}
\end{align}
which depends on the parameter functions $\alpha(t)$, $\xi(t)$ and $\lambda(t)$. Correspondingly, we write
\begin{align*}
	v_{1}=(-\Delta_{x})^{-1}(u_{1}).
\end{align*}
We will show that by properly choosing the parameter functions we are able to find a solution of \eqref{ks0} as a lower order perturbation of $u_{1}$. 
\section{The first error of approximation}
In this section we will always assume that $\alpha(t)\to 1$, $\xi(t)\to 0$ are $C^{1}$ functions in the interval $(0,T)$ and, for reasons that will be clear in Section \ref{MASSPHISECTION}, $\lambda\dot{\lambda}(t)$ is a $C^{1}$ function in $(-\varepsilon(T),T)$. Moreover we assume that they satisfy the following estimates
\begin{align}\label{EstiLambda}
		c_{1}\sqrt{T-t}e^{-\sqrt{\frac{|\ln(T-t)|}{2}}}\le |\lambda(t)|\le c_{2}\sqrt{T-t}e^{-\sqrt{\frac{|\ln(T-t)|}{2}}} \text{for some } c_{1},c_{2}>0
\end{align}
\begin{align}\label{EstiPar}
	\begin{cases}
		|\lambda\dot{\lambda}(t)|+(T-t)\sqrt{|\ln(T-t)|}|\frac{d}{dt}(\lambda\dot{\lambda})(t)|\le C e^{-\sqrt{\frac{|\ln(T-t)|}{2}}},\\[7pt]
		|\dot{\xi}|\le C \frac{e^{-(\frac{3}{2}+\gamma_{1}) \sqrt{2|\ln(T-t)|}}}{\sqrt{T-t}} \ \ \ \ \ \text{for some }\gamma_{1}>0,\\[7pt]
		|\dot{\alpha}|\le C \frac{e^{-(\frac{3}{2}-\gamma_{2}) \sqrt{2|\ln(T-t)|}}}{T-t} \ \ \ \ \text{for any }\gamma_{2}>0.
	\end{cases}
\end{align}
In Section \ref{proofThm1} we will prove that \eqref{EstiLambda}, \eqref{EstiPar} are indeed satisfied.
By looking at \eqref{ExpansionMassphilambda}, it is also clear why it is so convenient to work with $\lambda\dot{\lambda}(t)$ instead of directly with $\lambda(t)$. \newline
We compute
\begin{align*}
	S(u_{1})=S(u_{0}+\varphi_{\lambda})=S(u_{0})-\partial_{t}\varphi_{\lambda}+\mathcal{L}_{u_{0}}[\varphi_{\lambda}]-\nabla \cdot(\varphi_{\lambda}\nabla \psi_{\lambda})
\end{align*}
where
\begin{align*}
	&\mathcal{L}_{u_{0}}[\varphi]=\Delta \varphi-\nabla \cdot(\varphi \nabla v_{0})-\nabla \cdot(u_{0}\nabla \psi), \ \ \ \ \psi_{\lambda}=(-\Delta)^{-1}\varphi_{\lambda}, \ \ v_{0}=(-\Delta)^{-1}u_{0}.
\end{align*} 
We observe that if $r=|x|$ we have
\begin{align}\label{secapprErr}
	S(u_{1})=&-\frac{\dot{\alpha}}{\lambda^{2}}U(y)\chi_{0}(w)+(\alpha-1) \frac{\dot{\lambda}}{\lambda^{3}}Z_{0}(y)\chi_{0}(w)+\frac{\alpha}{\lambda^{3}}\dot{\xi}\cdot \nabla_{y}U(y)\chi_{0}(w)+\nonumber \\ 
	&+\frac{\alpha}{\lambda^{2}\sqrt{\delta(T-t)}}U(y)\dot{\xi}\cdot \nabla_{w}\chi_{0}(w)-\frac{\alpha-1}{2\lambda^{2}(T-t)}U(y)\nabla_{w}\chi_{0}(w)\cdot w+ \nonumber\\
	&+\frac{2(\alpha-1)}{\lambda^{3}\sqrt{\delta(T-t)}}\nabla_{w}\chi_{0}(w)\cdot \nabla_{y}U(y)+\frac{\alpha-1}{\delta(T-t)}\frac{1}{\lambda^{2}}\Delta_{w}\chi_{0}(w)U(y)-\nonumber\\
	&-\frac{1-1/\alpha}{\lambda^{2}\sqrt{\delta(T-t)}}U(y)\nabla_{w}\chi_{0}(w)\cdot \nabla_{x}v_{0}-\frac{\alpha(\alpha-1)\chi_{0}(w)}{\lambda^{4}}\nabla_{y}\cdot(U\nabla_{y}\Gamma_{0})+\nonumber\\
	&+\frac{\alpha \chi_{0}(w)}{\lambda^{4}}\big[\alpha(\chi_{0}(w)-1)U^{2}-\nabla_{y}U\cdot \nabla_{y}\mathcal{R}(y)\big]-\frac{\alpha-1}{\lambda^{2}\sqrt{\delta(T-t)}}U(y)\nabla_{w}\chi_{0}(w)\cdot \nabla_{x}v_{0}\nonumber\\
	&-\frac{4}{r}\partial_{r}\varphi_{\lambda}-\nabla \cdot(\varphi_{\lambda}\nabla v_{0})-\nabla \cdot(u_{0}\nabla \psi_{\lambda})-\nabla \cdot(\varphi_{\lambda}\nabla \psi_{\lambda})+E(x-\xi,t)-E(x,t).
\end{align}
wher $\mathcal{R}$ is defined in the decomposition \eqref{decompV} and  $E$ in \eqref{Ephilambda}. \newline Now we find some preliminar estimates for $\varphi_{\lambda}$.
\begin{lemma}\label{estimatephilambda}
	Take $T$, $\varepsilon(T)$ sufficiently small and let $\varphi_{\lambda}$ be defined by \eqref{eqphilambda} with $\lambda$ satisfying \eqref{EstiLambda}, \eqref{EstiPar}. For any $(x,t)\in \mathbb{R}^{2}\times (-\varepsilon(T),T)$ we have
	\begin{align}\label{boundphilambda}
		|\varphi_{\lambda}(x,t)|+(|x|+\lambda)|\nabla \varphi_{\lambda}(x,t)|\le C \begin{cases}
			\frac{e^{-\sqrt{2|\ln(T-t)|}}}{\lambda^{2}+|x|^{2}} \ \ \ \ |x|\le \sqrt{T-t},\\[7pt]
		  \frac{e^{-\sqrt{2|\ln (|x|^{2})|}}}{|x|^{2}} \ \ \ \ \ \sqrt{T-t}\le |x|\le \sqrt{\varepsilon(T)},\\[7pt]
		   \frac{e^{-\sqrt{2|\ln (\varepsilon(T))|}}}{\varepsilon(T)}e^{-\frac{|x|^{2}}{4(t+2\varepsilon(T))}} \ \ \ \ \ |x|\ge\sqrt{\varepsilon(T)}
		\end{cases}
	\end{align}
and, additionally 
\begin{align}\label{betinnerphilambda}
	|\nabla \varphi_{\lambda}(x,t)|+(|x|+\lambda)|D^{2} \varphi_{\lambda}|\le C e^{-\sqrt{2|\ln(T-t)|}}\frac{|x|}{(\lambda+|x|^{2})^{2}}, \ \ \ |x|\le \sqrt{T-t}.
\end{align}
\end{lemma}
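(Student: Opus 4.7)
The strategy is to exploit the Duhamel formula for $\varphi_\lambda$. Since $\Delta_6$ coincides with the Euclidean Laplacian in six dimensions when acting on radial functions and $E$ is radial, I regard $\varphi_\lambda$ as the radial solution in $\mathbb{R}^6$ of the six dimensional heat equation with source $E$ and zero data at time $-\varepsilon(T)$, so that
$$
\varphi_\lambda(x,t)=\int_{-\varepsilon(T)}^{t}\!\!\int_{\mathbb{R}^6} p_{t-s}(x-y)\,E(y,s)\,dy\,ds,\qquad p_\tau(z)=\frac{e^{-|z|^2/(4\tau)}}{(4\pi\tau)^{3}}.
$$
I would then split $E=E_{\mathrm{main}}+\tilde E$, with $E_{\mathrm{main}}$ the two self-similar terms in \eqref{Ephilambda} and $\tilde E$ the tail term \eqref{Etildephilambda} supported in the annulus $\sqrt{\delta(T-s)}\le|y|\le 2\sqrt{\delta(T-s)}$. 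Inspection of $Z_0=2U+y\cdot\nabla U$ together with the sharper consequence $|\dot\lambda\lambda|(s)\lesssim e^{-\sqrt{2|\ln(T-s)|}}$ of \eqref{EstiLambda}--\eqref{EstiPar} gives a self-similar pointwise bound on $E_{\mathrm{main}}$, while $\tilde E$ contributes only lower order terms in all three regions because of its thin support.

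The three pointwise bounds in \eqref{boundphilambda} are then obtained by a region-by-region analysis of the convolution. In the inner region $|x|\le\sqrt{T-t}$, I split the time integral at $s=t-(\lambda^2+|x|^2)$: in the near past the kernel is essentially a Dirac mass and the integral reduces to the stationary six dimensional Newtonian inversion $(-\Delta_6)^{-1}E_{\mathrm{main}}$, which, because of the explicit self-similar structure of $E_{\mathrm{main}}$ and the identity $\int_{\mathbb{R}^2}Z_0\,dy=0$, has profile $(\lambda^2+|x|^2)^{-1}$ multiplied by an amplitude proportional to $|\dot\lambda\lambda|(t)$; the far past contribution, controlled via the smoothness of the kernel away from the source, is strictly subleading. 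For the middle range $\sqrt{T-t}\le|x|\le\sqrt{\varepsilon(T)}$ the dominant contribution shifts to times with $t-s\sim|x|^2$, at which the bubble sits well inside the kernel's effective ball; evaluating the amplitude $|\dot\lambda\lambda|(s)$ there produces the factor $e^{-\sqrt{2|\ln|x|^2|}}/|x|^2$. For the outer region $|x|\ge\sqrt{\varepsilon(T)}$ the source is confined to $|y|\lesssim\sqrt{T-s}\ll|x|$, and the Gaussian tail of the six dimensional heat kernel propagated from the initial time $-\varepsilon(T)$, together with the minimum elapsed time $t+\varepsilon(T)$, yields the factor $e^{-|x|^2/(4(t+2\varepsilon(T)))}/\varepsilon(T)$.

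The derivative bounds \eqref{betinnerphilambda} then follow from the same Duhamel representation by differentiating $p_{t-s}$ in $x$ and applying interior parabolic regularity on balls of radius comparable to $|x|+\lambda$ on which the source is smooth; the weight $|x|(\lambda+|x|^2)^{-2}$ is precisely the gradient of the stationary profile $(\lambda^2+|x|^2)^{-1}$ that governs the inner region. The main obstacle I anticipate is the correct extraction of the sharp exponential factor $e^{-\sqrt{2|\ln(T-t)|}}$: since $|\dot\lambda\lambda|(s)$ is an extremely slowly varying exponential of $\sqrt{|\ln(T-s)|}$, the crude pointwise bound on $E_{\mathrm{main}}$ alone would produce a logarithmic loss from time integration near $s\to t$, and one has to exploit the self-similar cancellation $\int_{\mathbb{R}^2}Z_0=0$ together with the antisymmetry of the cutoff-derivative term $U(y)\,\nabla\chi_0\cdot\bar w$ to close the matching across the three scales.
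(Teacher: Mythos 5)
Your approach is genuinely different from the paper's: you propose to estimate the Duhamel convolution directly against the six-dimensional heat kernel, region by region, whereas the paper constructs explicit radial supersolutions and invokes the comparison principle. The paper's route is noticeably lighter: after observing $|E(x,t;\lambda)|\lesssim e^{-\sqrt{2|\ln(T-t)|}}(\lambda^{2}+|x|^{2})^{-2}\tilde\chi$, the barrier $\hat\varphi_0=c_0\,e^{-\sqrt{2|\ln(T-t)|}}(\lambda^{2}+|x|^{2})^{-1}$ (cut off at $|x|\sim\sqrt{T-t}$), plus an intermediate piece $c_1\,e^{-\sqrt{2|\ln(|x|^{2}+T-t)|}}((T-t)+|x|^{2})^{-1}$ cut off at $|x|\sim\sqrt{\varepsilon(T)}$, plus a Gaussian $c_{2}e^{-\sqrt{2|\ln\varepsilon(T)|}}\varepsilon(T)^{-1}e^{-|x|^{2}/4(t+2\varepsilon(T))}$, satisfies the supersolution inequality pointwise with no time integral at all; the inner gradient estimate then comes from rescaling to the $y$-variable and applying interior parabolic Schauder estimates to the rescaled equation \eqref{rescaledphilambda}. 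What the barrier method buys is that one never has to match a slowly varying amplitude $e^{-\sqrt{2|\ln(T-s)|}}$ across a time integral, which is exactly the step you were worried about; your Duhamel splitting can also be made to work but requires more bookkeeping.

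One concrete misconception in your plan: the ``cancellation'' $\int_{\mathbb{R}^{2}}Z_{0}\,dy=0$ is not the mechanism that prevents a logarithmic loss here, and in fact cannot be invoked in the form you describe. The convolution is in $\mathbb{R}^{6}$ (that is what $\Delta_{6}$ produces), and $Z_{0}(y)\sim -16/|y|^{4}$ is not integrable against $y^{5}\,dy$, so $\int_{\mathbb{R}^{6}}Z_{0}$ does not even converge; the compactly supported truncation $\int_{\mathbb{R}^{6}}Z_{0}(y/\lambda)\chi_{0}(y/\sqrt{\delta(T-s)})\,dy$ is of order $\lambda^{4}(T-s)$, not zero. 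In the inner region no cancellation is needed: with the pointwise bound $|E|\lesssim e^{-\sqrt{2|\ln(T-s)|}}(\lambda^{2}+|y|^{2})^{-2}$, the $\tau=t-s$ integral of the kernel contribution behaves like $\int_{\lambda^{2}}^{T-t}\tau^{-2}\,d\tau\sim\lambda^{-2}$ with mass concentrated at $\tau\sim\lambda^{2}+|x|^{2}$, where $T-s\approx T-t$, so the amplitude is preserved without any subtraction. Similarly in the intermediate range the dominant contribution sits at $\tau\sim|x|^{2}$ and the factor $e^{-\sqrt{2|\ln|x|^{2}|}}$ drops out for free. If you pursue the Duhamel route, drop the appeal to $\int Z_{0}=0$ and just carry out the layer-cake estimate on the time integral; the decay $\tau^{-2}$ near the lower limit makes it converge with the claimed constant.
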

\begin{proof}
First we observe that the right-hand side of \eqref{eqphilambda} satisfies
\begin{align*}
	|E(x,t;\lambda)|\le C e^{-\sqrt{2|\ln(T-t)|}}\frac{1}{(\lambda^{2}+|x|^{2})^{2}}\tilde{\chi}
\end{align*}
where $\tilde{\chi}=\chi_{0}(\frac{|x|}{2\sqrt{\delta(T-t)}})$. To find an estimate for $\varphi_{\lambda}$ we use barriers. In what follows we construct a radial, positive and decaying in space $\varphi$ such that
\begin{align}\label{defBarphilambda}
	(\partial_{t}-\Delta_{6})\varphi\ge C e^{-\sqrt{2|\ln(T-t)|}}\frac{1}{(\lambda^{2}(t)+|x|^{2})^{2}}\tilde{\chi} \ \ \ \text{if }(x,t)\in \mathbb{R}^{2}\times(-\varepsilon(T),T).
\end{align} 
By comparison principle \eqref{defBarphilambda} implies $|\varphi_{\lambda}|\le \varphi$.
We observe that in order to get the following inequalities we will use multiple times that $T$ and $\varepsilon(T)$ are sufficiently small. \newline
Take $c_{0}>0$, the first piece of the barrier will be $\hat{\varphi}_{0}=c_{0}\frac{e^{-\sqrt{2|\ln(T-t)}}}{\lambda^{2}+|x|^{2}}$ that solves
\begin{align*}
	(\partial_{t}-\Delta_{6})\hat{\varphi}_{0}\ge C c_{0} e^{-\sqrt{2|\ln(T-t)|}}\frac{1}{(\lambda^{2}(t)+|x|^{2})^{2}} \ \ \ \ \text{if } |x|\le \sqrt{T-t}
\end{align*}
for some universal constant $C$.
If we take $\varphi_{0}=\hat{\varphi}_{0}\chi_{0}(\frac{|x|}{\sqrt{T-t}})$ we see that
\begin{align}\label{firstpiecBar}
 (\partial_{t}-\Delta_{6})\varphi_{0}\ge C c_{0} e^{-\sqrt{2|\ln(T-t)|}}\frac{1}{(\lambda^{2}+|x|^{2})^{2}}\chi_{0}(\frac{|x|}{\sqrt{T-t}})+ O(\frac{e^{-\sqrt{2|\ln(T-t)|}}}{(T-t)^{2}}\mathbbm{1}_{\{0\le |x|\le 2\sqrt{T-t}\}})
\end{align}
where $\mathbbm{1}$ is the indicator function and by $O(f(x))$ we mean that $O(f(x))\le C |f(x)|$. The second piece of the barrier will erase the remainders in \eqref{firstpiecBar}. The candidate function is $\hat{\varphi}_{1}=c_{1}\frac{e^{-\sqrt{2|\ln(|x|^{2}+(T-t))|}}}{(T-t)+|x|^{2}}$ and indeed we have
\begin{align*}
	(\partial_{t}-\Delta_{6})\hat{\varphi}_{1}\ge C c_{1} \frac{e^{-\sqrt{2|\ln(|x|^{2}+(T-t))|}}}{((T-t)+|x|^{2})^{2}} \ \ \ \ \text{if } |x|\le \sqrt{\varepsilon(T)}.
\end{align*} 
By properly taking $c_{1}$, if we define $\varphi_{1}=\hat{\varphi}_{1}\chi_{0}(\frac{|x|}{\sqrt{\varepsilon(T)}})$, we get 
\begin{align}\label{secpiecBar}
	(\partial_{t}-\Delta_{6})(\varphi_{0}+\varphi_{1})\ge C c_{0} e^{-\sqrt{2|\ln(T-t)|}}\frac{1}{(\lambda^{2}+|x|^{2})^{2}}\chi_{0}(\frac{|x|}{\sqrt{T-t}})+O(\frac{e^{-\sqrt{2|\ln \varepsilon(T)|}}}{\varepsilon(T)^{2}}\mathbbm{1}_{\{0\le |x|\le2\sqrt{\varepsilon(T)}\}}).
\end{align}
To erase the remainders in \eqref{secpiecBar} we introduce $\varphi_{2}=c_{2}\frac{e^{-\sqrt{2|\ln (\varepsilon(T))|}}}{\varepsilon(T)}e^{- \frac{|x|^{2}}{4(t+2\varepsilon(T))}}$, if $t>-\varepsilon(T)$ (here we use that $\varepsilon(T)\gg T$) we have
\begin{align*}
	(\partial_{t}-\Delta_{6})\varphi_{2}\ge Cc_{2} \frac{e^{-\sqrt{2|\ln (\varepsilon(T))|}}}{\varepsilon(T)^{2}} e^{-\frac{|x|^{2}}{4(t+2\varepsilon(T))}}.
\end{align*} 
By properly choosing $c_{0}$ and $c_{2}$ we see that $\varphi=\varphi_{0}+\varphi_{1}+\varphi_{3}$ satisfies \eqref{defBarphilambda}. Finally we observe that if $|x|\le \sqrt{T-t}$ we have $\varphi_{1},\varphi_{2}\le \varphi_{0}$, and that if $|x|\le \sqrt{\varepsilon(T)}$ we have $\varphi_{1}\le \varphi_{2}$. \newline Now we are interested in estimating the gradient. After writing $\bar{\varphi}_{\lambda}(|x|,t)=\frac{1}{\lambda^{2}}\hat{\phi}(\frac{|x|}{\lambda},t)$ and if $y=|x|/\lambda$ we observe that $\hat{\phi}$ must solve
\begin{align}\label{rescaledphilambda}
	\lambda^{2}\partial_{t}\hat{\phi}=\Delta_{6,y}\hat{\phi}+\lambda\dot{\lambda}(2\hat{\phi}+y\cdot \nabla \hat{\phi} )+\lambda^{4}E(\lambda y,t),
\end{align}
where $\Delta_{6,y}=\partial^{2}_{|y|}+\frac{5}{|y|}\partial_{|y|}$ is the rescaled operator. 
Then if we adequately change the time scale, by the previous estimate we found for $\varphi_{\lambda}$ and standard parabolic estimates we get the bound for the gradient in \eqref{boundphilambda}.
\newline In order to get \eqref{betinnerphilambda}, we formally differentiate in $y$ \eqref{rescaledphilambda}. Using the bound for the gradient we obtained before and standard parabolic estimates we get
\begin{align*}
	|D^{2}_{y}\hat{\phi}(y,t)|\le C \frac{e^{-\sqrt{2|\ln(T-t)|}}}{\lambda^{4}}\frac{1}{1+|y|^{4}}.
\end{align*}
Then, since by radial symmetry $\nabla_{y} \hat{\phi}(0,t)=0$, after integrating we finally get \eqref{betinnerphilambda}.
\end{proof}
Thanks to the additional regularity of $\lambda\dot{\lambda}(t)$ we have from \eqref{EstiPar} we can obtain some estimates for $\partial_{t}\varphi_{\lambda}$. These estimates will allow us to control second moment of the error away from the singularity and consequently to drastically simplify the proof of Corollary \ref{secmomconditionELLI}.
\begin{lemma}\label{Derestimatephilambda}
	Take $T$, $\varepsilon(T)$ sufficiently small and let $\varphi_{\lambda}$ be defined by \eqref{eqphilambda} with $\lambda$ satisfying \eqref{EstiPar}. Then for any $(x,t)\in \mathbb{R}^{2}\times (-\varepsilon(T),T)$ we have
	\begin{align}\label{Derboundphilambda}
		|\partial_{t}\varphi_{\lambda}(x,t)|+(|x|+\lambda)|\nabla \partial_{t}\varphi_{\lambda}(x,t)|\le C \begin{cases}
			e^{-\sqrt{2|\ln(T-t)|}}\frac{\ln(2+\frac{|x|^{2}}{\lambda^{2}})}{(\lambda^{2}+|x|^{2})^{2}} \ \ \ \ |x|\le \sqrt{T-t},\\[7pt]
			e^{-\sqrt{2|\ln |x|^{2}|}}\frac{\sqrt{|\ln |x|^{2}|}}{|x|^{4}} \ \ \ \ \ \sqrt{T-t}\le |x|\le\sqrt{\varepsilon(T)},\\[7pt]
			e^{-\sqrt{2|\ln \varepsilon(T)|}}\frac{\sqrt{|\ln \varepsilon(T)|}}{\varepsilon(T)^{2}}e^{-\frac{|x|^{2}}{4(t+2\varepsilon(T))}} \ \ \ \ \ |x|\ge\sqrt{\varepsilon(T)}.
		\end{cases}
	\end{align}
\end{lemma}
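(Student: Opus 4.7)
The plan is to treat $w:=\partial_t\varphi_\lambda$ as the solution of a derived parabolic problem and mimic the three-piece barrier argument of Lemma~\ref{estimatephilambda}. Differentiating \eqref{eqphilambda} in $t$ and using $\varphi_\lambda(\cdot,-\varepsilon(T))=0$ (so that $\Delta_6\varphi_\lambda(\cdot,-\varepsilon(T))=0$) gives
\begin{align*}
\partial_t w=\Delta_6 w+\partial_t E(x,t;\lambda),\qquad w(x,-\varepsilon(T))=E(x,-\varepsilon(T);\lambda).
\end{align*}
By Lemma~\ref{estimatephilambda} the initial datum is bounded by the same pointwise estimate used there, so its Duhamel contribution is a Gaussian of size $\lesssim e^{-\sqrt{2|\ln\varepsilon(T)|}}/\varepsilon(T)^{2}$ (using $\varepsilon(T)\gg T$) and can be absorbed into the outer piece of the barrier. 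The task then reduces to controlling $w$ in terms of $\partial_t E$.

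The key new input is the $C^{1}$ regularity of $\lambda\dot\lambda$ imposed in \eqref{EstiPar}. From the identity $\ddot\lambda=\lambda^{-1}\bigl[\tfrac{d}{dt}(\lambda\dot\lambda)-\dot\lambda^{2}\bigr]$, together with the assumed bound on $\tfrac{d}{dt}(\lambda\dot\lambda)$ and the a priori inequalities $\dot\lambda^{2}\lesssim e^{-\sqrt{2|\ln(T-t)|}}/\lambda^{2}$ and $|\dot\lambda/\lambda|\lesssim|\lambda\dot\lambda|/\lambda^{2}$ (needed for the chain rule on $\bar y=x/\lambda$), a termwise differentiation of \eqref{Ephilambda}--\eqref{Etildephilambda} yields
\begin{align*}
|\partial_t E(x,t;\lambda)|\le C\,e^{-\sqrt{2|\ln(T-t)|}}\,\frac{1}{(\lambda^{2}+|x|^{2})^{3}}\,\tilde\chi.
\end{align*}
The extra factor $(\lambda^{2}+|x|^{2})^{-1}$ relative to the bound on $E$ used in the proof of Lemma~\ref{estimatephilambda} is one parabolic scaling, and the exponential rate survives thanks to the cancellation $\lambda\ddot\lambda=\tfrac{d}{dt}(\lambda\dot\lambda)-\dot\lambda^{2}$, which trades the singular $\ddot\lambda$ for the controlled $\dot\lambda^{2}/\lambda$.

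With this source I would build a three-piece supersolution following Lemma~\ref{estimatephilambda}. In the inner region $|x|\le\sqrt{T-t}$ the candidate is $c_{0}\,e^{-\sqrt{2|\ln(T-t)|}}\ln(2+|x|^{2}/\lambda^{2})/(\lambda^{2}+|x|^{2})^{2}$, cut off at $|x|=\sqrt{T-t}$; the logarithmic factor is exactly what $(-\Delta_{6})^{-1}$ extracts in six dimensions from a source of homogeneity $-6$, since the radial integral $\int_{0}^{r}s^{-5}\int_{0}^{s}t^{-1}\,dt\,ds$ diverges logarithmically. In the intermediate range $\sqrt{T-t}\le|x|\le\sqrt{\varepsilon(T)}$ the piece $c_{1}\,e^{-\sqrt{2|\ln|x|^{2}|}}\sqrt{|\ln|x|^{2}|}/|x|^{4}$ matches continuously across $|x|\sim\sqrt{T-t}$ because $\ln(2+(T-t)/\lambda^{2})\sim\sqrt{2|\ln(T-t)|}$ by \eqref{EstiLambda}, and in the far region $|x|\ge\sqrt{\varepsilon(T)}$ the Gaussian $c_{2}\,e^{-\sqrt{2|\ln\varepsilon(T)|}}\sqrt{|\ln\varepsilon(T)|}/\varepsilon(T)^{2}\cdot e^{-|x|^{2}/[4(t+2\varepsilon(T))]}$ takes over. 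The comparison principle then delivers the pointwise bound on $w$, and the gradient and second-derivative estimates follow from the rescaling $w(x,t)=\lambda^{-4}\hat w(x/\lambda,t)$ together with standard interior parabolic regularity, exactly as at the end of the proof of Lemma~\ref{estimatephilambda}. The main technical difficulty is the precise extraction of the log factor: a naive barrier of shape $e^{-\sqrt{2|\ln(T-t)|}}/(\lambda^{2}+|x|^{2})^{2}$ would still be a supersolution but would miss the logarithm, and one must carefully check the matching of the three pieces across $|x|\sim\sqrt{T-t}$ and $|x|\sim\sqrt{\varepsilon(T)}$ to ensure the sharp bound needed for the second-moment control in later sections.
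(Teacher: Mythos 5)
Your proposal follows the same route as the paper's own proof: differentiate \eqref{eqphilambda} in $t$, use the $C^1$ regularity of $\lambda\dot\lambda$ from \eqref{EstiPar} to control the second time derivative of the source term, and run the same three-piece barrier construction as in Lemma~\ref{estimatephilambda} with a logarithmic enhancement $\ln(2+|x|^2/\lambda^2)$ in the inner region (the paper writes $\ln(k+|x|/\lambda)$ with $k$ large, but the logic is identical and your smooth-at-the-origin form $\ln(2+|x|^2/\lambda^2)$ is in fact what appears in the statement), finishing with the rescaled parabolic estimates for the gradient. The only slight imprecision is in attributing the initial-datum contribution $w(\cdot,-\varepsilon(T))=E(\cdot,-\varepsilon(T);\lambda)$ entirely to the outer Gaussian piece: near $x=0$ this datum is of order $e^{\sqrt{2|\ln\varepsilon(T)|}}/\varepsilon(T)^2$ and is dominated by the inner piece of the barrier, not the outer one, though this plays no role once the comparison principle is applied to the full supersolution.
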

\begin{proof}
	We start by formally differentiating in time \eqref{eqphilambda} (the initial condition can be easily deduced from Duhamel's formula). We observe that, for example,
	\begin{align*}
		|\partial_{t}(\frac{\dot{\lambda}}{\lambda^{3}}Z_{0}\chi)|\le\frac{1}{\sqrt{|\ln(T-t)|}} \frac{e^{-\sqrt{2|\ln(T-t)|}}}{(\lambda^{2}+|x|^{2})^{3}}+\frac{e^{-2\sqrt{|\ln(T-t)|}}}{(\lambda^{2}+|x|^{2})^{3}}
	\end{align*} 
    and that we have similar estimates for the other terms in the right-hand side of \eqref{eqphilambda}. We need to take a constant $k$ sufficiently large.
    The first piece of the barrier we consider is $\hat{\varphi}_{0}=c_{0}e^{-\sqrt{2|\ln(T-t)|}}\frac{\ln(k+\frac{|x|}{\lambda})}{(\lambda^{2}+|x|^{2})^{2}}$, that satisfies
\begin{align}\label{choicek}
	(\partial_{t}-\Delta_{6})\hat{\varphi}_{0}\ge C c_{0} \frac{e^{-\sqrt{2|\ln(T-t)|}}}{(\lambda^{2}+|x|^{2})^{3}} \ \ \ \ \text{if } |x|\le \epsilon\sqrt{T-t}
\end{align} 
where $\epsilon>0$ is a small constant.  We observe that the choice of a sufficiently large $k$ allows us to control the left-hand side of \eqref{choicek} when $|x|$ is small. 
As in the proof of Lemma \ref{estimatephilambda} we take $\varphi_{0}=\hat{\varphi}_{0}\chi_{0}(\frac{|x|}{\epsilon\sqrt{T-t}})$ and we get
\begin{align}\label{firstpiecBarDER}
	(\partial_{t}-\Delta_{6})\varphi_{0}\ge& C c_{0} e^{-\sqrt{2|\ln(T-t)|}}\frac{1}{(\lambda^{2}+|x|^{2})^{3}}\chi_{0}(\frac{|x|}{\epsilon\sqrt{T-t}})+\nonumber\\
	&+O(\frac{e^{-\sqrt{2|\ln(T-t)|}}\sqrt{|\ln(T-t)|}}{(T-t)^{3}}\mathbbm{1}_{\{0\le |x|\le2 \sqrt{T-t}\}})
\end{align}
where we observe that to erase the remainders in \eqref{firstpiecBarDER} it is not necessary to keep track of the dependence on the parameters $\epsilon$ and $k$. The next term in the barrier is $\hat{\varphi}_{1}=c_{1}e^{-\sqrt{2|\ln(T-t+|x|^{2})|}}\frac{\sqrt{|\ln (T-t+|x|^{2})}}{(|x|^{2}+(T-t))^{2}}$ and we can take $\varphi_{1}=\hat{\phi}_{1}\chi_{0}(\frac{|x|}{\sqrt{\varepsilon(T)}})$.
We choose the remaining piece of the barrier as in the proof of Lemma \ref{estimatephilambda}. Thanks to the estimate for $\partial_{t}\varphi_{\lambda}$ we just proved, by standard parabolic estimates and absorbing the constant $k$ by taking $C$ sufficiently large we finally get \eqref{Derboundphilambda}.
\end{proof}
\begin{lemma}\label{lemmaesterr}
	Assuming \eqref{EstiLambda}, \eqref{EstiPar} for any $(x,t)\in \mathbb{R}^{2}\times(0,T) $ we have
	\begin{align}\label{innererr}
	     |\lambda^{4}S(u_{1})\chi(x,t)|\le C e^{-\sqrt{2|\ln(T-t)|}}\frac{\log(2+|y|)}{1+|y|^{6}}, \ \ \ y=\frac{x-\xi}{\lambda},
	\end{align}
if $|x-\xi|\le 2\sqrt{\delta(T-t)}$,
 \begin{align}\label{outererrro}
	|S(u_{1})(1-\chi)|\le C\begin{cases}
		\frac{e^{-2\sqrt{2|\ln |x|^{2}|}}}{|x|^{4}}\sqrt{|\ln |x|^{2} } \ \ \ \text{if } \sqrt{T-t}\le|x-\xi|\le \sqrt{\varepsilon(T)}\\[7pt]
		\frac{e^{-2\sqrt{2|\ln \varepsilon(T)|}}}{\varepsilon(T)^{2}}\sqrt{|\ln \varepsilon(T)|}e^{-\frac{|x|^{2}}{4(t+2\varepsilon(T))}} \ \ \ \ \ \text{if }|x-\xi|> \sqrt{\varepsilon(T)}.
	\end{cases}
\end{align}
\end{lemma}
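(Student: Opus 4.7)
\medskip
\noindent \textbf{Proof proposal.} The plan is to carry out a direct term-by-term analysis of the expansion \eqref{secapprErr}, splitting the work into the inner region $|x-\xi|\le 2\sqrt{\delta(T-t)}$ (where $\chi\equiv 1$ on the inner piece) and the outer region $|x-\xi|\ge \sqrt{\delta(T-t)}$ (where $1-\chi\ne 0$). In each region I would classify the terms of \eqref{secapprErr} into three families: (i) bubble-type terms with small prefactors $(\alpha-1)$, $\dot\alpha$, $\dot\xi$, $\dot\lambda$ etc.; (ii) cut-off-derivative terms supported in the annulus $\sqrt{\delta(T-t)}\le |x-\xi|\le 2\sqrt{\delta(T-t)}$; and (iii) the $\varphi_\lambda$-coupling terms $-\tfrac{4}{r}\partial_r\varphi_\lambda$, $\nabla\cdot(\varphi_\lambda\nabla v_0)$, $\nabla\cdot(u_0\nabla\psi_\lambda)$, $\nabla\cdot(\varphi_\lambda\nabla\psi_\lambda)$, together with the residual $E(x-\xi,t)-E(x,t)$.

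For the inner bound \eqref{innererr}, I would multiply \eqref{secapprErr} by $\lambda^4\chi$ and bound each piece. Bubble terms in family (i) carry one factor of $\lambda^4/\lambda^k$ ($k=2,3,4$) combined with the small prefactor bounds \eqref{EstiPar}; using $\lambda^2\sim (T-t)e^{-\sqrt{2|\ln(T-t)|}}$ and the fact that $|y|\le 2\sqrt{\delta}\, e^{\sqrt{|\ln(T-t)|/2}}$ in the inner region, a routine computation converts these into contributions of size $e^{-\sqrt{2|\ln(T-t)|}}/(1+|y|^6)$ up to a log factor. Family (ii) terms are supported where $|y|\sim\sqrt{T-t}/\lambda\sim e^{\sqrt{|\ln(T-t)|/2}}$, so the very strong decay of $U$ and $\nabla U$ there provides ample room to absorb all prefactors and gain the $|y|^{-6}$ weight. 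For family (iii), one invokes Lemma~\ref{estimatephilambda} in the form $|\varphi_\lambda|+(|x|+\lambda)|\nabla\varphi_\lambda|\lesssim e^{-\sqrt{2|\ln(T-t)|}}/(\lambda^2+|x|^2)$, notes that $u_0$ is localised so $\nabla\psi_\lambda$ on the bubble scale behaves like a constant plus a log, and rewrites $\mathcal R$ via the Poisson representation \eqref{decompV} to see that $\nabla_y\mathcal R$ contributes $O(\lambda^2/(T-t))$ on the support of $U$. The term $E(x-\xi,t)-E(x,t)$ is handled by the mean value theorem in $\xi$, giving a factor $|\xi|\cdot|\nabla E|$ which is small by the explicit form \eqref{Ephilambda}.

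For the outer estimate \eqref{outererrro}, only the $\varphi_\lambda$-coupling terms and $E(x-\xi,t)-E(x,t)$ survive, since $u_0$ and $U(y)\chi_0(w)$ are compactly supported near $\xi$. The key inputs are the decay bounds \eqref{boundphilambda} (and \eqref{Derboundphilambda}) on $\varphi_\lambda$ and its derivatives; $v_0$ behaves like the Newtonian potential of a bubble of mass $\approx 8\pi$ localised at $\xi$, so $|\nabla v_0|\lesssim 1/|x-\xi|$ off the support of $u_0$, producing the decay $1/|x|^4$; the residual $E(x-\xi,t)-E(x,t)$ is again handled via Taylor expansion in $\xi$. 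Finally one verifies the two regimes: the polynomial one for $\sqrt{T-t}\le |x-\xi|\le\sqrt{\varepsilon(T)}$ and the Gaussian tail one for $|x-\xi|>\sqrt{\varepsilon(T)}$, each inherited directly from the corresponding case of Lemma~\ref{estimatephilambda}.

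\noindent The hardest step will be the book-keeping in the inner region, where several terms at first sight decay only like $|y|^{-4}$ rather than the target $|y|^{-6}$: the crucial observation is that the inner region is bounded by $|y|\lesssim e^{\sqrt{|\ln(T-t)|/2}}$, so each missing factor of $|y|^{-2}$ can be traded for a controlled exponential factor in $|\ln(T-t)|$, which is absorbed by the margin in \eqref{EstiPar} (e.g.\ the $\gamma_1,\gamma_2$ freedom). Keeping track of this trade-off term-by-term, while making sure the accumulated logarithms only produce the harmless $\log(2+|y|)$ factor in \eqref{innererr}, is the main technical obstacle.
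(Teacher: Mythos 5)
Your overall decomposition (inner/outer split, three families of terms, reliance on Lemma~\ref{estimatephilambda}, Taylor expansion of $E(x-\xi,t)-E(x,t)$ in $\xi$) matches the paper's strategy, but there is a serious gap in the inner estimate that your ``trade $|y|^{-2}$ for an exponential margin'' device cannot fix.

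The problematic terms are $-\tfrac{4}{r}\partial_r\varphi_\lambda$ and $-\nabla\varphi_\lambda\cdot\nabla v_0$. Treating them separately, the best you get from \eqref{betinnerphilambda} is
$$
\lambda^4\bigl|\tfrac{4}{r}\partial_r\varphi_\lambda\bigr|\ \lesssim\ \lambda^4\,\frac{1}{r}\cdot e^{-\sqrt{2|\ln(T-t)|}}\frac{r}{(\lambda^2+r^2)^2}\ \approx\ \frac{e^{-\sqrt{2|\ln(T-t)|}}}{(1+|y|^2)^2},
$$
and similarly for $\nabla\varphi_\lambda\cdot\nabla\Gamma_0$. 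This is $|y|^{-4}$, not $|y|^{-6}$, and the deficit is a full factor of $1/(1+|y|^2)$. On the inner region one has $1+|y|^2\lesssim \delta(T-t)/\lambda^2\approx e^{\sqrt{2|\ln(T-t)|}}$, so the factor you propose to ``trade'' is of the same order as the entire exponential weight $e^{-\sqrt{2|\ln(T-t)|}}$ appearing on the right-hand side of \eqref{innererr}. There is no extra margin in \eqref{EstiPar} to compensate --- $-\tfrac{4}{r}\partial_r\varphi_\lambda$ carries no small prefactor like $\alpha-1$, $\dot\alpha$, or $\dot\xi$. So your estimate would only prove \eqref{innererr} with an additional factor $e^{\sqrt{2|\ln(T-t)|}}$, which is useless for the rest of the construction.

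What the paper uses, and what your proposal misses, is an \emph{algebraic cancellation}: writing $v_0=\alpha\Gamma_0+\mathcal R$ and using that $\varphi_\lambda$ is radial in $x$ (centered at the origin, not at $\xi$), one has
$$
-\frac{4}{r}\partial_r\varphi_\lambda-\nabla\varphi_\lambda\cdot\nabla\Gamma_0
=\frac{4}{|x|}\Bigl(\frac{-\lambda^2+(x-\xi)\cdot\xi}{\lambda^2+|x-\xi|^2}\Bigr)\partial_r\varphi_\lambda,
$$
so that the \emph{combination} of the two problematic terms acquires exactly the extra factor $\frac{\lambda^2}{\lambda^2+|x-\xi|^2}\approx\frac{1}{1+|y|^2}$ needed to close the $|y|^{-6}$ bound (the $\xi$-piece being small by \eqref{EstiPar} since $|\xi|/\lambda\ll1$). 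This is the reason $\Delta_6$ was chosen over $\Delta$ in the definition of $\varphi_\lambda$ in the first place, and it is load-bearing: without exhibiting this cancellation, the estimate \eqref{innererr} cannot be proven. Your outer analysis and your identification of where the $\log(2+|y|)$ comes from (the $\nabla\psi_\lambda$ terms via $\partial_r\psi_\lambda=-\frac1r\int_0^r\varphi_\lambda\,s\,ds$) are fine, but you should rework the inner argument around this pairing rather than around a room-in-the-exponent heuristic.
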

\begin{proof}
	We will first prove \eqref{innererr}. By Lemma \ref{estimatephilambda} we have 
	\begin{align*}
		|\lambda^{2}U(y)\varphi_{\lambda}(\lambda y +\xi) |\le \frac{e^{-\sqrt{2|\ln(T-t)|}}}{1+|y|^{6}} \ \ \ \ \ \  \text{if }\frac{|x-\xi|}{\lambda}\le 2\frac{\sqrt{\delta(T-t)}}{\lambda}.
	\end{align*}
Recalling that $r=|x|$, we see by \eqref{decompV} that
\begin{align*}
	-\frac{4}{r}\partial_{r}\varphi_{\lambda}-\nabla \varphi_{\lambda}\cdot \nabla v_{0}&=-\frac{4}{r}\partial_{r}\varphi_{\lambda}-\nabla \varphi_{\lambda}\cdot \nabla \Gamma_{0}-(\alpha-1)\nabla \varphi_{\lambda}\cdot \nabla \Gamma_{0}-\nabla \varphi_{\lambda}\cdot \nabla \mathcal{R}=\\
	&=4(-\frac{1}{4}\nabla\Gamma_{0}\cdot\frac{x}{r}-\frac{1}{r})\partial_{r}\varphi_{\lambda}-(\alpha-1)\nabla \varphi_{\lambda}\cdot \nabla \Gamma_{0}-\nabla \varphi_{\lambda}\cdot \nabla \mathcal{R}=\\
	&=4(\frac{1}{|x|}\frac{(x-\xi)\cdot x}{\lambda^{2}+|x-\xi|^{2}}-\frac{1}{|x|})\partial_{r}\varphi_{\lambda}-(\alpha-1)\nabla \varphi_{\lambda}\cdot \nabla \Gamma_{0}- \nabla \varphi_{\lambda}\cdot \nabla \mathcal{R}=\\
	&=\frac{4}{|x|}\big(-\frac{\lambda^{2}}{\lambda^{2}+|x-\xi|^{2}}+\frac{(x-\xi)\cdot \xi}{\lambda^{2}+|x-\xi|^{2}}\big)\partial_{r}\varphi_{\lambda}-(\alpha-1)\nabla \varphi_{\lambda}\cdot \nabla \Gamma_{0}- \nabla \varphi_{\lambda}\cdot \nabla \mathcal{R}\le \\
	&\le \frac{1}{\lambda^{4}}\frac{e^{-\sqrt{2|\ln(T-t)|}}}{1+|y|^{6}} \ \ \ \text{if }\frac{|x-\xi|}{\lambda}\le2 \frac{\sqrt{\delta(T-t)}}{\lambda}
\end{align*}
where since $|x|$ is small we used \eqref{betinnerphilambda} and we used that the smallness assumption on $\xi$ that comes from \eqref{EstiPar}.
The logarithm in \eqref{innererr} comes from the terms involving $\nabla\psi_{\lambda}$. In fact they can be estimated recalling the estimate for $\varphi_{\lambda}$ we have in \eqref{boundphilambda} and observing that since everything is radial we have
\begin{align}\label{psilambda}
	\partial_{r}\psi_{\lambda}=-\frac{1}{r}\int_{0}^{r}\varphi_{\lambda}(s,t)sds.
\end{align}
To estimate $\lambda^{4}(E(x-\xi,t)-E(x,t))$ it is sufficient to observe that when $|y|\le2 \frac{|\xi|}{\lambda}\ll1$ (thanks to \eqref{EstiLambda},\eqref{EstiPar}) we can get any decay in space . If instead $|y|\ge 2\frac{|\xi|}{\lambda}$, recalling the notation \eqref{radialWY} and again \eqref{EstiLambda}, \eqref{EstiPar}, we can see that $\frac{x-\xi}{\lambda}=\bar{y}-\frac{\xi}{\lambda}\approx \bar{y}$ and finally obtain the desired decay. Noticing that the other terms in \eqref{secapprErr} can be estimated similarly, we get \eqref{innererr}.
\newline To get \eqref{outererrro} we only need to consider the following terms
\begin{align*}
	(1-\chi)[-\frac{4}{r}\partial_{r}\varphi_{\lambda}-\nabla \varphi_{\lambda}\cdot \nabla v_{0}-\nabla \cdot ( \varphi_{\lambda}\nabla \psi_{\lambda})].
\end{align*}
First we observe that by the same expansion we made above, we have that if $\sqrt{T-t}\le|x-\xi|\le\sqrt{\varepsilon(T)}$
\begin{align*}
	|-\frac{4}{r}\partial_{r} \varphi_{\lambda}-\nabla \varphi_{\lambda}\cdot \nabla v_{0}| \le \frac{e^{-4a\sqrt{|\ln(|x|^{2})}}}{|x|^{4}}.
\end{align*}
Among the remaining terms, the most delicate ones involve $ \nabla \psi_{\lambda}$. Formula \eqref{psilambda} and the estimate \eqref{boundphilambda} give 
\begin{align*}
	|\partial_{r}\psi_{\lambda}|\le& \frac{1}{r} \int_{0}^{r}|\varphi_{\lambda}(s,t)|sds\le \frac{1}{r}[\int_{0}^{\sqrt{T-t}}|\varphi_{\lambda}(s,t)|sds+\int_{\sqrt{T-t}}^{r}|\varphi_{\lambda}(s,t)|sds]\le \\
	\le &\frac{1}{r}[e^{-\sqrt{2|\ln(T-t)|}}|\ln (\frac{T-t}{\lambda^{2}})|+e^{-\sqrt{2|\ln r^{2}|}}\sqrt{|\ln r^{2}|}]\le \frac{e^{-\sqrt{2|\ln r^{2}|}}}{r}\sqrt{|\ln r^{2}|}.
\end{align*}
If $|x-\xi|\ge \sqrt{\varepsilon(T)}$ everything can be estimated analogously but $\nabla \varphi_{\lambda}\cdot \nabla \psi_{\lambda}$. Indeed by \eqref{boundphilambda}, \eqref{psilambda} we get
\begin{align*}
	|\partial_{r}\psi_{\lambda}|\le C \frac{1}{r}\int_{\mathbb{R}^{2}} |\varphi_{\lambda}|dx\le \frac{1}{r}e^{-\sqrt{2|\ln(\varepsilon(T))|}}\sqrt{|\ln \varepsilon(T)|}.
\end{align*}
\end{proof}
\section{The inner-outer gluing system}\label{InnerOuterSection}
We look for a solution of the Keller-Segel system \eqref{ks0} in the form of a small perturbation of \eqref{u1}:
\begin{align}\label{solutionexpansion}
     u(x,t)=u_{1}(x,t)+\Phi(x,t), \ \ \ \ (x,t)\in \mathbb{R}^{2}\times(0,T).
\end{align}
We write the perturbation $\Phi$ as a sum of ``inner" and ``outer" terms. The use of this terminology comes from the rescaling we need to define these corrections. More precisely, we write
\begin{align}\label{perturb}
	\Phi(x,t)=\frac{1}{\lambda^{2}}\phi^{i}(x,t)\chi(x,t)+\varphi^{o}(x,t), \ \ \ y=\frac{x-\xi}{\lambda}
\end{align}
where $\chi=\chi_{0}(\frac{|x-\xi|}{\sqrt{\delta(T-t)}})$ was introduced in \eqref{first approximation}. Recalling the definition of the error operator \eqref{erroroperator} we want to find $\Phi$ such that
\begin{align}\label{finaleqt}
	S(u_{1}+\Phi)=0.
\end{align}
As it is immediately clear by looking at \eqref{perturb} by expanding \eqref{finaleqt} we will obtain many terms multiplying $\chi$ and many other terms multiplying derivatives of $\chi$. This section is devoted to make this distinction clear and to define a parabolic system in the unknowns $\phi^{i}$, $\varphi^{o}$. Solving this system will require to choose the parameters $\alpha$, $\lambda$ and $\xi$. The solution of this system in particular will solve \eqref{finaleqt} and then will give us a solution of \eqref{ks0}.
\newline
Let us observe that
\begin{align*}
	S(u_{1}+\Phi)=&S(u_{1})-\partial_{t}\big(\frac{1}{\lambda^{2}}\partial_{t}\phi^{i}\chi\big)-\partial_{t}\varphi^{o}+\mathcal{L}_{u_{1}}\big[\frac{1}{\lambda^{2}}\phi^{i}\chi\big]+\mathcal{L}_{u_{1}}[\varphi^{o}]-\nabla \cdot ( \Phi \nabla (-\Delta)^{-1}\Phi)=\\
	=&S(u_{1})\chi+S(u_{1})(1-\chi)-\partial_{t}\big(\frac{1}{\lambda^{2}}\partial_{t}\phi^{i}\chi\big)-\partial_{t}\varphi^{o}+\mathcal{L}_{u_{1}}\big[\frac{1}{\lambda^{2}}\phi^{i}\chi\big]+\mathcal{L}_{u_{1}}[\varphi^{o}]-\\
	&-\nabla \cdot ( \Phi \nabla (-\Delta)^{-1}\Phi)\chi+\nabla \cdot ( \Phi \nabla (-\Delta)^{-1}\Phi)(1-\chi),
\end{align*}
where
\begin{align*}
	\mathcal{L}_{u_{1}}[\phi]=\Delta \phi-\nabla \cdot (\phi \nabla v_{1})-\nabla \cdot (u_{1}\nabla(-\Delta)^{-1}\varphi), \ \ \ \ v_{1}=(-\Delta)^{-1}u_{1}.
\end{align*}
We use the notation 
\begin{align*}
	\psi=\frac{1}{\lambda^{2}}(-\Delta)^{-1}\phi^{i}, \ \ \ \hat{\psi}=\frac{1}{\lambda^{2}}(-\Delta)^{-1}(\phi^{i}\chi).
\end{align*}
We expand
\begin{align*}
	\mathcal{L}_{u_{1}}[\frac{1}{\lambda^{2}}\phi \chi]=\chi\frac{1}{\lambda^{2}}\Delta \phi^{i}+\frac{2}{\lambda^{2}}\nabla \chi \cdot \nabla \phi^{i}+\frac{1}{\lambda^{2}}\phi \Delta \chi - \nabla \cdot ( \frac{1}{\lambda^{2}}\phi^{i}\chi \nabla v_{1})-\nabla \cdot (u_{1}\nabla \hat{\psi}).
\end{align*}
We have 
\begin{align*}
	\nabla \cdot(u_{1}\nabla \hat{\psi})=&\nabla \cdot ( \frac{\alpha}{\lambda^{2}}U\nabla \psi)\chi+\nabla \cdot(\frac{\alpha}{\lambda^{2}}U \nabla(\hat{\psi}-\psi))\chi+\frac{\alpha}{\lambda^{2}}U \nabla \chi \cdot \nabla \hat{\psi}+\\
	&+\nabla \cdot(\varphi_{\lambda}\nabla \psi)+\nabla \cdot ( \varphi_{\lambda}(\hat{\psi}-\psi))
\end{align*}
and 
\begin{align*}
	\nabla \cdot ( \frac{1}{\lambda^{2}}\phi^{i} \chi \nabla v_{1})=\nabla \cdot(\frac{1}{\lambda^{2}}\phi^{i} \nabla v_{1})\chi+\frac{1}{\lambda^{2}}\phi^{i}\nabla \chi \cdot \nabla v_{1}.
\end{align*}
We recall the notation 
\begin{align*}
	v_{1}=v_{0}+\psi_{\lambda}, \ \ \ \text{where }v_{0}=\frac{\alpha}{\lambda^{2}}(-\Delta)^{-1}(U\chi), \ \ \ \ \psi_{\lambda}=(-\Delta)^{-1}\varphi_{\lambda}
\end{align*}
and \eqref{decompV}
\begin{align*}
	v_{0}=\alpha \Gamma_{0}+\mathcal{R}, \ \ \ \mathcal{R}=\frac{\alpha}{\lambda^{2}}(-\Delta)^{-1}(U(\chi-1)).
\end{align*}
Then
\begin{align*}
	\nabla \cdot (\frac{1}{\lambda^{2}}\phi^{i}\chi \nabla v_{1})=&\nabla \cdot(\frac{1}{\lambda^{2}}\phi^{i}\nabla v_{0})\chi + \nabla \cdot ( \frac{1}{\lambda^{2}}\phi^{i}\nabla \psi_{\lambda})\chi+\frac{1}{\lambda^{2}}\phi^{i} \nabla \chi \cdot \nabla v_{0}+\frac{1}{\lambda^{2}}\phi^{i}\nabla \chi \cdot \nabla \psi_{\lambda}=\\
	=&\frac{\alpha}{\lambda^{2}}\nabla \cdot ( \phi^{i}\nabla \Gamma_{0})\chi+\nabla\cdot ( \frac{1}{\lambda^{2}}\phi^{i}\nabla \mathcal{R})\chi+\nabla \cdot ( \frac{1}{\lambda^{2}}\phi^{i}\nabla \psi_{\lambda})\chi +\\
	&+\frac{\alpha}{\lambda^{2}}\phi^{i}\nabla \chi \cdot \nabla \Gamma_{0}+\frac{1}{\lambda^{2}}\phi^{i}\nabla \chi \cdot \nabla \mathcal{R}+\frac{1}{\lambda^{2}}\phi^{i}\nabla \chi \cdot \nabla \psi_{\lambda}.
\end{align*}
Therefore
\begin{align*}
	\mathcal{L}_{u_{1}}\big[\frac{1}{\lambda^{2}}\phi^{i}\chi\big]=&\chi \frac{1}{\lambda^{2}}\Delta \phi^{i}+\frac{2}{\lambda^{2}}\nabla \chi \cdot \nabla \phi^{i}+\frac{1}{\lambda^{2}}\phi^{i} \Delta\chi -\\
	&-\big[\nabla \cdot ( \frac{\alpha}{\lambda^{2}}\phi^{i}\nabla \Gamma_{0})\chi+\nabla \cdot ( \frac{1}{\lambda^{2}}\phi^{i}\nabla \mathcal{R})\chi+\nabla \cdot(\frac{1}{\lambda^{2}}\phi^{i}\nabla \psi_{\lambda})\chi+\\
	&\hspace{0.43cm}+\frac{\alpha}{\lambda^{2}}\phi^{i}\nabla \chi \cdot \nabla \Gamma_{0}+\frac{1}{\lambda^{2}}\phi^{i}\nabla \chi \cdot \nabla \mathcal{R}+\frac{1}{\lambda^{2}}\phi^{i}\nabla \chi \cdot \nabla \psi_{\lambda}\big]-\\
	&-\big[\nabla \cdot ( \frac{\alpha}{\lambda^{2}}U\nabla \psi)\chi+\nabla \cdot(\frac{\alpha}{\lambda^{2}}U\nabla(\hat{\psi}-\psi))\chi+\frac{\alpha}{\lambda^{2}}U\nabla \chi \cdot \nabla \hat{\psi}+\\
	&\hspace{0.43cm}+ \nabla \cdot(\varphi_{\lambda}\nabla \psi)\chi+\nabla \cdot(\varphi_{\lambda}\nabla \psi)(1-\chi)+\\
	&\hspace{0.43cm}+\nabla \cdot(\varphi_{\lambda}\nabla(\hat{\psi}-\psi))\chi+\nabla \cdot(\varphi_{\lambda}\nabla(\hat{\psi}-\psi))(1-\chi)\big].
\end{align*}
Next we expand
\begin{align*}
	\mathcal{L}_{u_{1}}[\varphi^{o}]=\Delta \varphi^{o}-\nabla \cdot (\varphi^{o}\nabla v_{1})-\nabla \cdot ( u_{1}\nabla \psi^{o}), \ \ \ \ \ \psi^{o}=(-\Delta)^{-1}\varphi^{o}.
\end{align*}
We have 
\begin{align*}
	\nabla \cdot (u_{1}\nabla \psi^{o})=&\nabla \cdot ( \frac{\alpha}{\lambda^{2}}U\chi \nabla \psi^{o})+\nabla \cdot ( \varphi_{\lambda}\nabla \psi^{o})=\\
	=&\nabla \cdot(\frac{\alpha}{\lambda^{2}}U\nabla \psi^{o})\chi+\frac{\alpha}{\lambda^{2}}U\nabla \chi \cdot \nabla \psi^{o}+\nabla \cdot(\varphi_{\lambda}\nabla \psi^{o})\chi +\nabla \cdot(\varphi_{\lambda}\nabla \psi^{o})(1-\chi),
\end{align*}
and 
\begin{align*}
   \nabla \cdot(\varphi^{o}\nabla v_{1})=&\nabla \cdot(\varphi^{o}\nabla v_{0})+\nabla \cdot(\varphi^{o}\nabla \psi_{\lambda})=\\
   =&\alpha \nabla \cdot(\varphi^{o}\nabla \Gamma_{0})+\nabla \cdot(\varphi^{o}\nabla \mathcal{R})+\nabla \cdot(\varphi^{o}\nabla \psi_{\lambda})=\\
   =&\nabla\varphi^{o}\cdot \nabla \Gamma_{0}-\chi\frac{1}{\lambda^{2}}U\varphi_{0}-(1-\chi)\frac{1}{\lambda^{2}}U\varphi_{0}+(\alpha-1)\nabla \cdot(\varphi^{o}\nabla \Gamma_{0})\chi+\\
   &+(\alpha-1)\nabla \cdot(\varphi^{o}\nabla \Gamma_{0})\ch(1-\chi)+\nabla \cdot(\varphi^{o}\nabla \mathcal{R})+\nabla \cdot(\varphi^{o}\nabla \psi_{\lambda})\chi+\nabla \cdot(\varphi^{o}\nabla \psi_{\lambda})(1-\chi).
\end{align*}
Based on the previous formulas we get the inner equation
\begin{align*}
	\lambda^{4}\partial_{t}(\frac{1}{\lambda^{2}}\phi^{i}(y,t))\chi=&L[\phi^{i}]\chi+[ -(\alpha-1)\nabla_{y}\cdot (U\nabla_{y}\psi)-(\alpha-1)\nabla_{y}\cdot(\phi^{i}\nabla \Gamma_{0})]\chi+\lambda^{4}S(u_{1})\chi-\\
	&-\lambda^{2}\nabla_{y}\cdot(\varphi_{\lambda}\nabla_{y}\psi^{o})\chi-\lambda^{2}\nabla\cdot(\varphi^{o}\nabla_{y}\psi_{\lambda})\chi+\lambda^{2}U\varphi^{o}\chi-\alpha\nabla_{y}\cdot(U\nabla \psi^{o})\chi-\\
	&-\lambda^{2}\nabla_{y}\cdot(\varphi_{\lambda}\nabla_{y}\psi)\chi-\nabla_{y}\cdot(\phi^{i}\nabla_{y}\psi_{\lambda})\chi-(\alpha-1)\lambda^{2}\nabla \cdot(\varphi^{o}\nabla \Gamma_{0})\chi-\\
	&-\alpha \nabla_{y}\cdot(U\nabla_{y}(\hat{\psi}-\psi))\chi-\lambda^{2}\nabla_{y}\cdot(\varphi_{\lambda}\nabla_{y}(\hat{\psi}-\psi))\chi-\\
	&-\nabla_{y}\cdot((\phi^{i}\chi+\lambda^{2}\varphi_{0})\nabla_{y}(\hat{\psi}+\psi^{o}))\chi,
\end{align*}
where 
\begin{align}\label{LinOperator}
 L[\phi]=\Delta_{y}\phi-\nabla_{y}\cdot(U\nabla_{y}\psi)-\nabla_{y}\cdot(\phi \nabla \Gamma_{0}).
\end{align}
The inner equation can be slighly modified into the form
\begin{align}\label{cutoffinner}
	\lambda^{2}\partial_{t}\phi^{i}\chi=L[\phi^{i}]\chi+B[\phi^{i}]\chi+\hat{E}_{1}\widetilde{\chi}+\hat{F}[\phi^{i},\varphi^{o},\textbf{p}]\widetilde{\chi}
\end{align}
where we introduced the cut-off
\begin{align}\label{chitilde}
	\tilde{\chi}=\chi_{0}(\frac{|x-\xi|}{2\sqrt{\delta(T-t)}})
\end{align}
with $\chi_{0}$ defined in \eqref{cutoff}, the operator
\begin{align}\label{OperatorB}
	B[\phi^{i}]=\lambda\dot{\lambda}(2\phi^{i}+y\cdot \nabla\phi^{i})
\end{align}
that takes into account the radial part of the derivative in time of the rescaled function $\frac{1}{\lambda^{2}}\phi^{i}(\frac{x-\xi}{\lambda})=\frac{1}{\lambda^{2}}\phi(y,t)$, and we denoted
\begin{align}\label{VECp}
	\textbf{p}=(\lambda,\alpha,\xi), \ \ \ \ \ \ \  \ \ 	\hat{E}_{1}(y,t)=\lambda^{4}S(u_{1}(\textbf{p}))\chi(x,t)
\end{align}
\begin{align}\label{Fhat}
	\hat{F}[\phi^{i},\varphi^{o},\textbf{p}]=&\lambda\dot{\xi}\cdot \nabla \phi^{i} \chi-(\alpha-1)\nabla_{y}\cdot (U\nabla_{y}\psi)\chi-(\alpha-1)\nabla_{y}\cdot(\phi^{i}\nabla \Gamma_{0})\chi-\nonumber\\
	&-\lambda^{2}\nabla_{y}\cdot(\varphi_{\lambda}\nabla_{y}\psi^{o})\chi-\lambda^{2}\nabla\cdot(\varphi^{o}\nabla_{y}\psi_{\lambda})\chi+\lambda^{2}U\varphi^{o}\chi-\alpha\nabla_{y}\cdot(U\nabla \psi^{o})\chi-\nonumber\\
	&-\lambda^{2}\nabla_{y}\cdot(\varphi_{\lambda}\nabla_{y}\psi)\chi-\nabla_{y}\cdot(\phi^{i}\nabla_{y}\psi_{\lambda})\chi-(\alpha-1)\lambda^{2}\nabla \cdot(\varphi^{o}\nabla \Gamma_{0})\chi-\nonumber\\
	&-\alpha \nabla_{y}\cdot(U\nabla_{y}(\hat{\psi}-\psi))\chi-\lambda^{2}\nabla_{y}\cdot(\varphi_{\lambda}\nabla_{y}(\hat{\psi}-\psi))\chi-\nonumber\\
	&-\nabla_{y}\cdot((\phi^{i}\chi+\lambda^{2}\varphi^{o})\nabla_{y}(\hat{\psi}+\psi^{o}))\chi.
\end{align}
The reason why we need to introduce the cut-off \eqref{chitilde} will be clear in Section \ref{SectionChoice}. \newline
Similarly we formulate the outer equation as
\begin{align}\label{outereq}
	\partial_{t}\varphi^{o}=\Delta \varphi^{o}-\nabla \Gamma_{0}\cdot \nabla \varphi^{o}+G(\phi^{i},\varphi^{o},\textbf{p})
\end{align}
where
\begin{align*}
	G(\phi^{i},\varphi^{o},\textbf{p})=&S(u_{1},\textbf{p})(1-\chi)+\frac{2}{\lambda^{2}}\nabla \chi \cdot \nabla \phi^{i}+\frac{1}{\lambda^{2}}\phi^{i}\Delta \chi-\frac{1}{\lambda^{2}}\phi^{i}\partial_{t}\chi-\frac{\alpha}{\lambda^{2}}\phi^{i}\nabla \chi \cdot \nabla \Gamma_{0}+\nonumber\\
	&+\frac{1}{\lambda^{2}}U\varphi^{o}(1-\chi)-\alpha \lambda^{2}U\nabla \chi \cdot \nabla \psi^{o}-\nabla \cdot (\varphi_{\lambda}\nabla \psi^{o})(1-\chi)-\nonumber\\
\end{align*}
\begin{align}\label{G}
	&-\frac{1}{\lambda^{2}}\nabla \cdot ( \phi^{i}\nabla \mathcal{R})\chi -\frac{1}{\lambda^{2}}\phi^{i}\nabla \chi \cdot \nabla \mathcal{R}-\frac{1}{\lambda^{2}}\phi^{i}\nabla \chi \cdot \nabla \psi_{\lambda}-\nonumber\\
	&-\frac{\alpha}{\lambda^{2}}U \nabla \chi \cdot \nabla \hat{\psi}-\nabla \cdot(\varphi_{\lambda}\nabla(\hat{\psi}-\psi))(1-\chi)-\nonumber\\
	&-\nabla\cdot(\varphi_{\lambda}\nabla \psi)(1-\chi)-\nabla \cdot((\frac{1}{\lambda^{2}}\phi^{i}\chi+\varphi^{o})\nabla(\hat{\psi}+\psi^{o}))(1-\chi).
\end{align}
If $\phi^{i}$, $\varphi^{o}$ are solutions of \eqref{cutoffinner} and \eqref{outereq} respectively, then $u$ given by \eqref{solutionexpansion} satisfies \eqref{ks0}. We will need some conditions to solve this system and this is the role of the parameters $\textbf{p}$ defined in \eqref{VECp}. \newline The initial conditions and some further modifications of equations \eqref{cutoffinner}, \eqref{outereq} will be discussed in Section \ref{ProjectionIOsystem}.
\subsection{Choice of $\lambda_{0}$, $\alpha_{0}$ and $\xi_{0}$}\label{SectionChoice}
In this section we introduce the first approximation of the parameters, namely $\alpha_{0}$, $\lambda_{0}$ and $\xi_{0}$, in the context of the elliptic equation
\begin{align}\label{elliptic}
	L[\phi^{i}]=-\lambda^{4}S(u_{1}(\textbf{p}))\tilde{\chi} \ \ \ \ \text{in }\mathbb{R}^{2}
\end{align}
where we used the definition of $\textbf{p}$ in \eqref{VECp}. We observe that clearly $L$ in \eqref{elliptic} is invariant under dilations and translations. \newline
As we observed in Lemma \ref{lemmaesterr}, in contrast with \cite{DdPDMW}, the decay of $S(u_{1})$ drastically changes its behaviour if we move away from the singularity. This is why we need to keep the cut-off in the right-hand side of \eqref{elliptic}. As we will see in Section \ref{SecSecImprovement} the solution of this ``toy problem" will give us a last improvement for our ansatz $u_{1}$ that will erase the main order terms in the right-hand side of \eqref{cutoffinner}. We recall the statement of Lemma 5.1 in \cite{DdPDMW} where the interested reader can also find a proof.
\begin{lemma}\label{ellipticsol}
	Let h(y) be a radial function such that 
	\begin{align*}
		\|(1+|y|)^{\gamma}h(y)\|_{L^{\infty}(\mathbb{R}^{2})}<\infty,
	\end{align*}
for some $\gamma>4$ and satisfying 
\begin{align}
         \int_{\mathbb{R}^{2}}h(y)dy=0,\label{masscondh} \\
         \int_{\mathbb{R}^{2}}h(y)|y|^{2}dy=0.\label{secmomcondh}
\end{align}
Then there exists a radial solution $\phi(y)$ of 
\begin{align*}
	L[\phi]=h \ \ \ \ \text{in }\mathbb{R}^{2}
 \end{align*}
such that
\begin{align}\label{estimatephi0igammanot6}
	|\phi(y)|\le C \|(1+|y|)^{\gamma}h(y)\|_{L^{\infty}(\mathbb{R}^{2})}\frac{1}{(1+|y|)^{\gamma-2}} \ \ \ \,\text{if } \gamma\neq6,
\end{align}\label{estimatephi0igamma6}
\begin{align}
	|\phi(y)|\le C \|(1+|y|)^{\gamma}h(y)\|_{L^{\infty}(\mathbb{R}^{2})}\frac{\ln(2+|y|)}{1+|y|^{4}}, \ \ \ \text{if } \gamma=6,
\end{align}
and 
\begin{align}\label{zeromasselliptic}
	\int_{\mathbb{R}^{2}}\phi(y)dy=0.
\end{align}
\end{lemma}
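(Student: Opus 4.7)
The plan is to exploit radial symmetry to reduce the PDE $L[\phi]=h$ to an ODE, then solve it explicitly by variation of parameters, with the two integrability hypotheses \eqref{masscondh}, \eqref{secmomcondh} serving as solvability conditions.

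First, for radial $\phi(r)$ with $r=|y|$, write $L[\phi]$ in radial form and use $-\Delta_y\psi=\phi$, i.e.\ $r\psi'(r)=-M(r)$ with $M(r):=\int_0^r \phi(s)\,s\,ds$. Multiplying the equation by $r$ and integrating from $0$ to $r$ gives
\begin{equation*}
r\phi' - r\phi\,\Gamma_0' + U\,M = H(r), \qquad H(r):=\int_0^r h(s)\,s\,ds.
\end{equation*}
Using $\phi=M'/r$ and $\Gamma_0'(r)=-4r/(1+r^2)$ this becomes a second order linear ODE
\begin{equation*}
M'' + p(r)M' + U(r)\,M = H(r), \qquad p(r)=\tfrac{4r}{1+r^2}-\tfrac{1}{r}.
\end{equation*}
One homogeneous solution is explicit and comes from the dilation invariance of the stationary bubble: since $Z_0=2U+y\cdot\nabla_y U$ satisfies $L[Z_0]=0$ and $(r^2 U)'=r Z_0$, we take $M_1(r)=r^2 U(r)=8r^2/(1+r^2)^2$. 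The Abel formula with Wronskian $W(r)=c\,r/(1+r^2)^2$ then produces a second solution $M_2$ whose asymptotics $M_2(0^+)\sim r^{-2}$ (singular) and $M_2(r)\sim \text{const}$ as $r\to\infty$ can be read off explicitly.

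Next, write the particular solution by variation of parameters,
\begin{equation*}
M(r)=-M_1(r)\int_{r_1}^{r}\frac{M_2(s)H(s)}{W(s)}\,ds + M_2(r)\int_{r_2}^{r}\frac{M_1(s)H(s)}{W(s)}\,ds,
\end{equation*}
and fix the lower limits $r_1,r_2$ so as to (i) kill the singular $M_2$ contribution at the origin, forcing $r_2=0$, and (ii) suppress the bounded nonzero-limit behavior of $M_2$ at infinity, which requires $r_1=\infty$ together with the vanishing of $\int_0^\infty M_1(s)H(s)/W(s)\,ds$. A direct computation shows that this last integral is, up to a constant, precisely $\int_{\R^2} h(y)\,|y|^2\,dy$, so the second moment condition \eqref{secmomcondh} is exactly the solvability condition eliminating the kernel direction $Z_0$. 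The mass condition \eqref{masscondh} enters separately to ensure $H(r)\to 0$ with enough decay at infinity so that the variation of parameters integrals converge, and it also yields $M(r)\to 0$, hence $\int_{\R^2}\phi\,dy=\lim_{r\to\infty}2\pi M(r)=0$, giving \eqref{zeromasselliptic}.

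Finally, the pointwise decay \eqref{estimatephi0igammanot6}--\eqref{estimatephi0igamma6} follows from the bound $|h(s)|\le C(1+s)^{-\gamma}$ inserted into the explicit formula for $M$, differentiated to recover $\phi=M'/r$. For $\gamma\ne 6$ both integrals decay like $r^{-\gamma+4}$ leading to $|\phi|\lesssim (1+r)^{-(\gamma-2)}$; the exponent $\gamma=6$ is resonant, arising when the $s^{-\gamma+3}$ integrand in $\int_r^\infty M_2(s)H(s)/W(s)\,ds$ becomes $s^{-1}$, producing the logarithmic correction $\log(2+|y|)/(1+|y|^4)$. The main technical obstacle is precisely this careful bookkeeping at the resonant exponent together with the delicate choice of integration constants needed to ensure that no undesired multiple of $M_1$ or $M_2$ survives at infinity; every other step is either explicit algebra or standard variation of parameters analysis.
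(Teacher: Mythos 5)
Your overall strategy --- pass to the radial variable, set $M(r)=\int_0^r\phi\,s\,ds$, obtain a second-order linear ODE $M''+pM'+UM=H$ with $H(r)=\int_0^r h\,s\,ds$, take $M_1=r^2U$ (coming from $Z_0\in\ker L$) as one homogeneous solution, and conclude by variation of parameters --- is exactly the standard route, and this is essentially the argument behind Lemma~5.1 of \cite{DdPDMW} which the paper invokes. Your identification of the second-moment condition \eqref{secmomcondh} with the solvability integral $\int_0^\infty M_1 H/W\,ds=0$ is correct and is the crux: indeed $M_1/W=8s/c$, and integrating by parts with $H(\infty)=0$ (which uses \eqref{masscondh}) gives $\int_0^\infty sH\,ds=-\tfrac12\int_0^\infty s^3h\,ds=-\tfrac1{4\pi}\int_{\R^2}|y|^2h\,dy$.

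Two of the technical claims, however, are not accurate as stated. First, the second homogeneous solution is not singular like $r^{-2}$ at the origin: since $W/M_1^2=c(1+r^2)^2/(64r^3)$ and $M_1\sim 8r^2$, one gets $M_2(r)\to -c/16$, a finite nonzero constant (with a $c\,r^2\log r/4$ correction), so the corresponding $\phi_2=M_2'/r$ is only \emph{logarithmically} singular at $0$. This does not change the conclusion that $r_2=0$ must be taken, but the stated asymptotic is wrong (you may have conflated it with the $r^{-2}$ behaviour of $M_1$ at infinity). Second, and more substantively, the choice $r_1=\infty$ is only possible for $\gamma>6$: at infinity $M_2H/W\sim Cs^{5-\gamma}$, so $\int_r^\infty M_2H/W\,ds$ diverges whenever $4<\gamma\le 6$. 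For that range one must take $r_1$ finite (for instance $r_1=0$ is admissible since $M_2H/W\sim s$ near the origin), after which $\int_{r_1}^r M_2H/W\,ds\sim r^{6-\gamma}$ (resp.\ $\log r$ when $\gamma=6$), yielding $M\sim r^{4-\gamma}$ (resp.\ $r^{-2}\log r$) and hence $\phi\sim r^{2-\gamma}$ (resp.\ $r^{-4}\log r$); $M(\infty)=0$ is then checked separately from the two orthogonality conditions, rather than being forced by the choice of $r_1$. With these corrections your argument closes and recovers the cited lemma, including the resonance at $\gamma=6$ --- though the cleanest explanation of the resonance is that $\gamma-2=4$ is precisely the far-field decay rate of the kernel element $Z_0=M_1'/r$.
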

\begin{proof}
	See the proof of Lemma 5.1 in \cite{DdPDMW}.
\end{proof}
We remark that condition \eqref{secmomcondh} is crucial to get \eqref{zeromasselliptic}. In fact it can be easily proved that the mass of $\phi$ is proportional to the second moment of the right-hand side. Recalling that we denoted $\textbf{p}=(\lambda,\alpha,\xi)$, later we want to assume that at main order  $\textbf{p}\approx\textbf{p}_{0}=(\lambda_{0},\alpha_{0},0)$. But then, since we want to solve \eqref{elliptic} by applying Lemma \ref{ellipticsol},  we make the right-hand side radial writing
\begin{align}\label{ApproximationFIrstTIme}
		L[\phi^{i}]=-\lambda^{4}_{0}S(u_{1}(\textbf{p}_{0}))\tilde{\chi}_{0} \ \ \ \ \text{in }\mathbb{R}^{2}.
\end{align} 
where we introduced the cut-off
\begin{align}\label{tildechi2}
	\tilde{\chi}_{0}=\chi_{0}(\frac{|x|}{2\sqrt{\delta(T-t)}}),
\end{align}
We remark that the factor $2$ in \eqref{tildechi2} (and similarly in \eqref{chitilde}) is important since, thanks to \eqref{EstiLambda}, \eqref{EstiPar}, it makes $S(u_{1}(\textbf{p}_{0}))(1-\tilde{\chi}_{0})$ differentiable in time. Otherwise you would need to control $\ddot{\alpha}$. This fact will be important in the proof of Lemma \ref{controlDerSecMom}.
By keeping in mind Lemma \ref{ellipticsol} it is clear why we will be interested in finding $\alpha_{0}(t)$, $\lambda_{0}(t)$ such that for any time $t\in(0,T)$
\begin{align}\label{MASSU1}
	\int_{\mathbb{R}^{2}}S(u_{1}(\textbf{p}_{0}))\tilde{\chi}_{0}dx=0
\end{align}
and for some $\sigma>0$
\begin{align}\label{SECMOMU1}
	\int_{\R^2}S(u_{1}(\textbf{p}_{0}))|x|^{2}\tilde{\chi}_{0}dx=O(e^{-(\frac{3}{2}+\sigma)\sqrt{2|\ln(T-t)|}}).
\end{align}
Notice that in \eqref{SECMOMU1} we are not imposing that the second moment is equal to zero since we are only able to solve the resulting equation up to a small error. For this reason in Section \ref{SecSecImprovement} we are going to project the right-hand side of \eqref{elliptic} introducing an error that thanks to the size of second moment \eqref{SECMOMU1} will be sufficiently small.
Achieving \eqref{MASSU1} and \eqref{SECMOMU1} for some $\textbf{p}_{0}$ will be the goal of the remaining part of this section, but first we need the following two results. In Lemma \ref{lemmaIntSecMom} we give an expansion of the main order terms of the second moment of $S(u_{1})$ without the cut-off. In Lemma \ref{controlDerSecMom} we present a formula that relates the second moment of $S(u_{1})\tilde{\chi}_{0}$ with the uncut second moment.
\begin{lemma}\label{lemmaIntSecMom}
   Let $u_{1}$ be defined in \eqref{u1}. Then for any $t$ in $(0,T)$
   \begin{align}\label{SecMomInt}
   	\int_{\mathbb{R}^{2}}S(u_{1}[\lambda,\alpha,0])|x|^{2}dx=&4\int_{\R^2}\varphi_{\lambda}dx-\alpha \int_{\R^2}\tilde{E}(x,t;\lambda)|x|^{2}dx+\nonumber\\
   	&-\frac{\dot{\alpha}}{\lambda^{2}}\int_{\mathbb{R}^{2}}U\chi |x|^{2}dx-(1-\alpha)\int_{\mathbb{R}^{2}}E(x,t;\lambda)|x|^{2}dx+\nonumber\\
   	&+4(\int_{\R^2}u_{0}+\int_{\R^2}\varphi_{\lambda})(1-\frac{1}{8\pi}\int_{\R^2}u_{0}-\frac{1}{8\pi}\int_{\R^2}\varphi_{\lambda})
   \end{align}
where $E$, $\tilde{E}$ are defined in \eqref{Ephilambda}, \eqref{Etildephilambda}.
\end{lemma}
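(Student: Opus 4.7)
The plan is to split $S(u_1) = -\partial_t u_1 + \mathcal{E}(u_1)$ and compute the second moment of each piece separately, using the virial identity \eqref{secMomForm} for the chemotactic part and a direct computation for the time derivative.

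For the chemotactic term, since $u_0$ is compactly supported in $x$ and $\varphi_\lambda$ has Gaussian decay at infinity by Lemma~\ref{estimatephilambda}, the identity \eqref{secMomForm} applied to $u_1 = u_0+\varphi_\lambda$ with total mass $M_1 = \int u_0\,dx + \int \varphi_\lambda\,dx$ immediately produces
\begin{align*}
\int_{\mathbb{R}^2}\mathcal{E}(u_1)|x|^2\,dx = 4M_1 - \frac{M_1^2}{2\pi} = 4M_1\Bigl(1-\frac{M_1}{8\pi}\Bigr),
\end{align*}
which is exactly the last line of \eqref{SecMomInt}. For $-\int \partial_t u_0\,|x|^2\,dx$ I specialize \eqref{timederiU0} to $\xi\equiv 0$, which kills the two $\dot\xi$-terms, and then recognize the remaining $\alpha\dot\lambda Z_0\chi_0/\lambda^3$ and $\alpha U\nabla\chi_0\cdot w/(2\lambda^2(T-t))$ contributions as precisely $\alpha(E-\tilde E)$ by comparing with \eqref{Ephilambda}--\eqref{Etildephilambda}, which yields
\begin{align*}
-\int_{\mathbb{R}^2}\partial_t u_0\,|x|^2\,dx = -\frac{\dot\alpha}{\lambda^2}\int U\chi\,|x|^2\,dx + \alpha\int(E-\tilde E)\,|x|^2\,dx.
\end{align*}

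For $-\int \partial_t\varphi_\lambda\,|x|^2\,dx$ I substitute $\partial_t\varphi_\lambda = \Delta_6\varphi_\lambda + E$ from \eqref{eqphilambda}. Writing $\Delta_6 = \Delta + 4|x|^{-2}x\cdot\nabla$ as in \eqref{SixDimLap}, Green's identity gives $\int\Delta\varphi_\lambda\,|x|^2\,dx = \int\varphi_\lambda\,\Delta(|x|^2)\,dx = 4\int\varphi_\lambda\,dx$, while the identity $\nabla\cdot(x\varphi_\lambda) = 2\varphi_\lambda + x\cdot\nabla\varphi_\lambda$ combined with the decay of $\varphi_\lambda$ at infinity gives $4\int x\cdot\nabla\varphi_\lambda\,dx = -8\int\varphi_\lambda\,dx$. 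Adding the two yields $\int\Delta_6\varphi_\lambda\,|x|^2\,dx = -4\int\varphi_\lambda\,dx$, hence
\begin{align*}
-\int_{\mathbb{R}^2}\partial_t\varphi_\lambda\,|x|^2\,dx = 4\int\varphi_\lambda\,dx - \int E\,|x|^2\,dx.
\end{align*}

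Summing the three pieces, the two occurrences of $\int E\,|x|^2\,dx$ combine as $\alpha\int E\,|x|^2\,dx - \int E\,|x|^2\,dx = -(1-\alpha)\int E\,|x|^2\,dx$, producing the remaining terms of \eqref{SecMomInt}. The only technical point is that every integration by parts is legitimate: by Lemma~\ref{estimatephilambda} both $\varphi_\lambda$ and $|x||\nabla\varphi_\lambda|$ have Gaussian decay for $|x|\ge\sqrt{\varepsilon(T)}$, and the Newtonian potential $\nabla v_1$ behaves like $1/|x|$ at infinity, so all boundary terms at infinity vanish. I expect no real obstacle beyond this bookkeeping, since the shape of the identity is dictated entirely by the linearity of $S$, the virial identity for $\mathcal{E}$, and the radial integration by parts for $\Delta_6$ against $|x|^2$.
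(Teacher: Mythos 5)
Your proof is correct and matches the paper's argument essentially step for step: both apply the virial identity \eqref{secMomForm} to $u_1$, multiply \eqref{eqphilambda} by $|x|^2$ (equivalently, compute $\int\Delta_6\varphi_\lambda\,|x|^2\,dx=-4\int\varphi_\lambda\,dx$), and identify $-\partial_t u_0$ with $-\dot\alpha\lambda^{-2}U\chi_0+\alpha(E-\tilde E)$ from \eqref{timederiU0} before collecting the two $E$-terms into $-(1-\alpha)\int E\,|x|^2\,dx$. The only difference is cosmetic: you make the $\Delta_6$ integration by parts explicit, which the paper leaves implicit.
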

\begin{proof}
	In what follows we will simply denote $u_{1}=u_{1}[\lambda,\alpha,0]$. Using \eqref{secMomForm} we see that
	\begin{align*}
		\int_{\R^2}S(u_{1})|x|^{2}dx=&-\int_{\R^2}\partial_{t}u_{0}|x|^{2}dx-\int_{\R^2}\partial_{t}\varphi_{\lambda}|x|^{2}dx+\\
		&+4(\int_{\mathbb{R}^{2}}u_{0}+\int_{\R^2}\varphi_{\lambda})(1-\frac{1}{8\pi}\int_{\mathbb{R}^{2}}u_{0}-\frac{1}{8\pi}\int_{\R^2}\varphi_{\lambda}).
	\end{align*}
   Multiplying \eqref{eqphilambda} by $|x|^{2}$ and integrating we get
   \begin{align*}
    \int_{\R^2}\partial_{t}\varphi_{\lambda}|x|^{2}dx=-4\int_{\mathbb{R}^{2}}\varphi_{\lambda}dx+\int_{\R^2}E(x,t;\lambda)|x|^{2}dx
   \end{align*}
    and then
    \begin{align*}
    	\int_{\mathbb{R}^{2}}S(u_{1})|x|^{2}dx=&-\int_{\mathbb{R}^{2}}\partial_{t}u_{0}|x|^{2}dx+4\int_{\mathbb{R}^{2}}\varphi_{\lambda}dx-\int_{\mathbb{R}^{2}}E(x,t;\lambda)|x|^{2}+\nonumber\\
    	&+4\big(\int_{\mathbb{R}^{2}}u_{0}+\int_{\mathbb{R}^{2}}\varphi_{\lambda}\big)\big(1-\frac{1}{8\pi}\int_{\R^2}u_{0}-\frac{1}{8\pi}\int_{\R^2}\varphi_{\lambda}\big).
    \end{align*}
    From \eqref{timederiU0} and from \eqref{Ephilambda}, \eqref{Etildephilambda}, recalling \eqref{radialWY}, we get
    \begin{align*}
    	    -\partial_{t}u_{0}(x,t)=-\frac{\dot{\alpha}}{\lambda^{2}}U(\bar{y})\chi_{0}(\bar{w})+\alpha E(x,t;\lambda)-\alpha \tilde{E}(x,t;\lambda).
    \end{align*}
    Finally we get
    \begin{align*}
    	\int_{\R^2}(\partial_{t}u_{0}+E(x,t;\lambda))|x|^{2}&=\int_{\R^2}(\partial_{t}u_{0}+\alpha E(x,t;\lambda))|x|^{2}dx+(1-\alpha)\int E(x,t;\lambda)|x|^{2}dx\\
    	&=\frac{\dot{\alpha}}{\lambda^{2}}\int_{\mathbb{R}^{2}}U\chi |x|^{2}dx+\alpha \int_{\mathbb{R}^{2}}\tilde{E}(x,t;\lambda)|x|^{2}dx+(1-\alpha)\int_{\R^2}E(x,t;\lambda)|x|^{2}dx.
    \end{align*}
\end{proof}
\begin{lemma}\label{controlDerSecMom}
	Let $u_{1}$ be defined in \eqref{u1}. Assume \eqref{EstiLambda}, \eqref{EstiPar}. Then for any $\rho>0$, and for any time t in $(0,T)$ we have
	\begin{align*}%\label{estimatesecmom}
		\int_{\mathbb{R}^{2}}S(u_{1}[\lambda,\alpha,0])\tilde{\chi}_{0}|x|^{2}dx=&	\int_{\mathbb{R}^{2}}S(u_{1}[\lambda,\alpha,0])|x|^{2}dx-\int_{\mathbb{R}^{2}}S(u_{1}[\lambda,\alpha,0])(x,T)|x|^{2}dx+\nonumber\\
		&-\frac{1}{\pi}(T-t)\big(\int_{\R^2}\varphi_{\lambda}\big)\partial_{t}\int_{\R^2}(u_{0}+\varphi_{\lambda})+O(e^{-(2-\rho)\sqrt{2|\ln(T-t)|}}).
	\end{align*}
\end{lemma}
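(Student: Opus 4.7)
Let $F(t):=\int_{\mathbb{R}^{2}} S(u_1[\lambda,\alpha,0])(1-\tilde{\chi}_0)\,|x|^{2}\,dx$. The identity to prove is equivalent to
\[ F(t)-F(T) \;=\; \frac{T-t}{\pi}\Big(\int_{\mathbb{R}^{2}}\varphi_\lambda\Big)\,\partial_t\!\int_{\mathbb{R}^{2}}(u_0+\varphi_\lambda) \;+\; O\!\left(e^{-(2-\rho)\sqrt{2|\ln(T-t)|}}\right), \]
where $F(T):=\lim_{s\to T} F(s) = \int_{\mathbb{R}^{2}} S(u_1)(x,T)|x|^{2}\,dx$. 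The outer bound of Lemma \ref{lemmaesterr} gives a uniform-in-$t$ $L^{1}$ majorant of $|S(u_1)(1-\tilde{\chi}_0)||x|^{2}$, and since $\tilde{\chi}_0(x,t)\to 0$ for $x\neq 0$, dominated convergence yields the continuity of $F$ at $T$. Hence $F(t)-F(T)=-\int_t^T F'(s)\,ds$, and the task is to compute $F'(s)$ to leading order.

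\textbf{Structural simplification.} On $\operatorname{supp}(1-\tilde{\chi}_0)\subset\{|x|\geq 2\sqrt{\delta(T-s)}\}$, $u_0$ vanishes identically together with all its derivatives, so $u_1=\varphi_\lambda$ there; moreover the forcing $E$ from \eqref{Ephilambda}, being supported inside $\{|x|\leq 2\sqrt{\delta(T-s)}\}$, satisfies $E(1-\tilde{\chi}_0)\equiv 0$. Using $\partial_t\varphi_\lambda=\Delta_6\varphi_\lambda+E$ and the definition of $\Delta_6$ in \eqref{SixDimLap} one gets
\[ S(u_1)(1-\tilde{\chi}_0) \;=\; \Big[-\tfrac{4}{|x|^{2}}(x\cdot\nabla\varphi_\lambda) - \nabla\cdot(\varphi_\lambda\nabla v_1)\Big](1-\tilde{\chi}_0). \]
Integrating by parts (the boundary terms at infinity vanishing by the decay in Lemma \ref{estimatephilambda}), and using the radial Newton identity $x\cdot\nabla v_1(x)=-M_1(|x|)/(2\pi)$, with $M_1(r):=\int_{|z|\leq r}u_1\,dz$, reduces $F(s)$ to
\[ F(s) \;=\; 8\!\int\!\varphi_\lambda(1-\tilde\chi_0)\,dx \;-\; \frac{1}{\pi}\!\int\!\varphi_\lambda\,M_1(|x|)\,(1-\tilde\chi_0)\,dx \;+\; R_{\mathrm{ann}}(s), \]
where $R_{\mathrm{ann}}(s)$ collects terms supported on the annulus $\{2\sqrt{\delta(T-s)}\leq|x|\leq 4\sqrt{\delta(T-s)}\}$ involving $\nabla\tilde\chi_0$.

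\textbf{Extraction of the leading behavior and conclusion.} Writing $M_1=M_0+M_\varphi(r)$ and exploiting $\partial_r M_1 = 2\pi r\varphi_\lambda$ on $\operatorname{supp}(1-\tilde\chi_0)$, one rewrites the nonlinear piece as $\int\varphi_\lambda M_1(|x|)(1-\tilde\chi_0)dx = \tfrac{M_1^{2}}{2}+\tfrac{1}{2}\int M_1^{2}\partial_r\tilde\chi_0\,dr$; on the annulus $M_1\approx M_0$, so the second summand is $-M_0^{2}/2$ at leading order, yielding $M_0 M_\varphi + M_\varphi^{2}/2$. Since $M_0 = 8\pi\alpha + O(\lambda^{2}/(T-s))$, collecting terms produces
\[ F(s) \;=\; 8(1-\alpha(s))\,M_\varphi(s) \;-\; \frac{M_\varphi(s)^{2}}{2\pi} \;+\; \widetilde R(s), \]
with $|\widetilde R(s)|\le C\,e^{-(2-\rho)\sqrt{2|\ln(T-s)|}}$ controlled via the pointwise bounds in Lemmas \ref{estimatephilambda}, \ref{Derestimatephilambda}, the estimate on $R_{\mathrm{ann}}$ (using $|\nabla\tilde\chi_0|\lesssim(T-s)^{-1/2}$ on an annulus of area $\sim T-s$) and the parameter estimates \eqref{EstiPar}. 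Differentiating and invoking $\dot M_0 = 8\pi\dot\alpha + O(\text{small})$ gives $F'(s) = -\tfrac{1}{\pi}M_\varphi(s)\,\dot M_1(s)+O(e^{-(2-\rho)\sqrt{2|\ln(T-s)|}}/(T-s))$; integrating over $[t,T]$ with the slow variation of the product $M_\varphi \dot M_1$ yields the announced identity.

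\textbf{Main obstacle.} The critical technical step is the identification of the leading-order form for $F(s)$ with remainder sharp at order $e^{-(2-\rho)\sqrt{2|\ln(T-s)|}}$. This demands (i) replacing $M_\varphi(r)$ by its asymptotic value $M_\varphi$ at the annulus scale $r\sim\sqrt{\delta(T-s)}$ with an error of this precise size, which requires the refined decay bounds on $\varphi_\lambda$ from Lemma \ref{estimatephilambda}; (ii) the exact cancellation $-8\dot\alpha M_\varphi-M_\varphi\dot M_\varphi/\pi = -M_\varphi\dot M_1/\pi$ via $\dot M_0=8\pi\dot\alpha$; and (iii) bounding $R'_{\mathrm{ann}}(s)$ which necessitates the estimate for $\partial_t\varphi_\lambda$ provided by Lemma \ref{Derestimatephilambda} rather than just the pointwise estimate for $\varphi_\lambda$.
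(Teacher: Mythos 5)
Your strategy and the paper's agree on the two structural ideas — (i) on $\operatorname{supp}(1-\tilde\chi_0)$ one has $u_1=\varphi_\lambda$, the forcing $E$ vanishes, and $\partial_t\varphi_\lambda-\Delta\varphi_\lambda=\tfrac{4}{r}\partial_r\varphi_\lambda$, so that $S(u_1)=-\tfrac{4}{r}\partial_r\varphi_\lambda-\nabla\cdot(\varphi_\lambda\nabla v_1)$ there; (ii) the radial Newton identity converting $\partial_r v_1$, $\partial_r\psi_\lambda$, $\partial_r v_0$ into cumulative masses. What differs is the order of operations. The paper Taylor-expands $\int S(u_1)(1-\tilde\chi_0)|x|^2$ in time around $T$ with a Lagrange remainder at $\sigma$, then differentiates $S(u_1)$ \emph{pointwise}, identifies $-\nabla\varphi_\lambda\cdot\partial_t\nabla\psi_\lambda-\nabla\varphi_\lambda\cdot\partial_t\nabla v_0=\tfrac{1}{r}\partial_r\varphi_\lambda\bigl[\tfrac{1}{2\pi}\partial_t\!\int(u_0+\varphi_\lambda)-\cdots\bigr]$ and extracts $-\tfrac{1}{\pi}\bigl(\int\varphi_\lambda\bigr)\partial_t\!\int(u_0+\varphi_\lambda)$. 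You instead integrate by parts in space first to turn $F(s)$ into a functional of cumulative masses $M_0$, $M_\varphi$, and only then differentiate in time. Both end up at the same leading term; the paper's route avoids the mass reformulation and gets the annulus and tail terms more directly via Lemma~\ref{Derestimatephilambda}, while yours makes the cancellation mechanism (the role of $8\pi$) more visible but costs you careful bookkeeping of boundary contributions.

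There is, however, a genuine gap in your estimate of $R_{\mathrm{ann}}$. The two boundary terms produced by the spatial integration by parts are $-8\pi\int\varphi_\lambda r^2\tilde\chi_0'\,dr$ and $+\int\varphi_\lambda r^2 M_1(r)\tilde\chi_0'\,dr$, and \emph{each one separately} has magnitude $\sim e^{-\sqrt{2|\ln(T-s)|}}$: on the annulus $r\sim\sqrt{\delta(T-s)}$ one has $|\varphi_\lambda|\sim e^{-\sqrt{2|\ln(T-s)|}}/(T-s)$ and $r^2|\tilde\chi_0'|\,dr\sim (T-s)$, so the naive bound you quote ("$|\nabla\tilde\chi_0|\lesssim(T-s)^{-1/2}$ on an annulus of area $\sim T-s$") saturates at $e^{-\sqrt{2|\ln(T-s)|}}$, which is much larger than the required $O(e^{-(2-\rho)\sqrt{2|\ln(T-s)|}})$. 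The required smallness comes only after combining the two pieces into $R_{\mathrm{ann}}=\int\varphi_\lambda r^2\bigl(M_1(r)-8\pi\bigr)\tilde\chi_0'\,dr$ and using that $M_1(r)-8\pi = 8\pi(\alpha-1)+M_\varphi(r)-O(\lambda^2/(T-s)) = O\bigl(e^{-\sqrt{2|\ln(T-s)|}}\sqrt{|\ln(T-s)|}\bigr)$ on the annulus. You do flag a similar "$8\pi$ cancellation" for the bulk term in the expansion of $F(s)$, but you do not observe it for $R_{\mathrm{ann}}$, and the estimate you propose there cannot close the proof. A smaller omission: after differentiating your leading-order form, the term $8(1-\alpha)\dot M_\varphi$ survives alongside the pair $-8\dot\alpha M_\varphi - M_\varphi\dot M_\varphi/\pi$ that you do cancel, and it needs the bound $|1-\alpha|\lesssim e^{-(3/2-\gamma_2)\sqrt{2|\ln(T-t)|}}$ (integrating \eqref{EstiPar}) together with an estimate of $\partial_t\!\int\varphi_\lambda$ to discard it; this is not spelled out.
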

\begin{proof}
	By \eqref{innererr} we know that as $t\to T$ we have $\int_{\mathbb{R}^{2}}S(u_{1})|x|^{2}\tilde{\chi}_{0}dx\to 0$ and then
	\begin{align*}
		\int_{\mathbb{R}^{2}}S(u_{1})|x|^{2}(1-\tilde{\chi}_{0})dx\to \int_{\mathbb{R}^{2}}S(u_{1})(x,T)|x|^{2}dx  \ \ \ \ \text{as }t\to T.
	\end{align*}
	 Then the statement is a consequence of the following Taylor expansion
	\begin{align}\label{TaylorSecMom}
		\int_{\mathbb{R}^{2}}S(u_{1})\tilde{\chi}_{0}|x|^{2}dx=&\int_{\mathbb{R}^{2}}S(u_{1})|x|^{2}dx-\int_{\mathbb{R}^{2}}S(u_{1})|x|^{2}(1-\tilde{\chi}_{0})dx=\nonumber\\
		=&\int_{\mathbb{R}^{2}}S(u_{1})|x|^{2}dx-\int_{\mathbb{R}^{2}}S(u_{1})(x,T)|x|^{2}dx-\nonumber\\
		&-(T-t)\int_{\mathbb{R}^{2}}\partial_{t}[S(u_{1})](x,\sigma)|x|^{2}(1-\tilde{\chi}_{0})dx+(T-t)\int_{\mathbb{R}^{2}}S(u_{1})|x|^{2}\partial_{t}\tilde{\chi}_{0}(x,\sigma)dx
	\end{align}
for some $\sigma\in(t,T)$.
To estimate the last term it is sufficient to recall \eqref{outererrro}. 
\newline The remaining term in \eqref{TaylorSecMom} is more delicate. The first basic observation is that, recalling that $\xi=0$, by \eqref{secapprErr} if $|x|\ge 2\sqrt{\delta(T-t)}$ we have
\begin{align*}
	&S(u_{1})=-\frac{4}{r}\partial_{r}\varphi_{\lambda}-\nabla \varphi_{\lambda}\cdot \nabla v_{0}-\nabla\varphi_{\lambda}\cdot \nabla \psi_{\lambda}+(\varphi_{\lambda})^{2}\\
	&\implies \partial_{t}S(u_{1})=-\frac{4}{r}\partial_{r}\partial_{t}\varphi_{\lambda}-\nabla \partial_{t}\varphi_{\lambda}\cdot \nabla v_{0}-\nabla \partial_{t}\varphi_{\lambda}\cdot \nabla \psi_{\lambda}+2\varphi_{\lambda}\partial_{t}\varphi_{\lambda}-\\
	&\hspace{2.55cm}-\nabla \varphi_{\lambda}\cdot  \partial_{t} \nabla\psi_{\lambda}-\nabla \varphi_{\lambda}\cdot \partial_{t}\nabla v_{0}=h_{1}(x,t)-\nabla \varphi_{\lambda}\cdot  \partial_{t} \nabla\psi_{\lambda}-\nabla \varphi_{\lambda}\cdot \partial_{t}\nabla v_{0}.
\end{align*}
We remark that we do not need any assumption on the second derivative of $\alpha(t)$ since we are in the region $|x|\ge 2\sqrt{\delta(T-t)}$.
Now, Lemma \ref{Derestimatephilambda} and the same observations we made to prove Lemma \ref{lemmaesterr} give that if $t>0$
\begin{align*}
    |h_{1}(x,t)|\le C\begin{cases}
    	\frac{e^{-2\sqrt{2|\ln|x|^{2}|}}}{|x|^{6}}|\ln|x|^{2}| \ \ \ \ \text{if }2\sqrt{\delta(T-t)}\le |x|\le \sqrt{\varepsilon(T)}\\
    	\frac{e^{-2\sqrt{2|\ln \varepsilon(T)|}}}{\varepsilon(T)^{3}}|\ln \varepsilon(T)| e^{-\frac{|x|^{2}}{4(t+2\varepsilon(T))}} \ \ \ \ \text{if }|x|\ge \sqrt{\varepsilon(T)}
    \end{cases} .
\end{align*}
and then if $t>0$ we have
\begin{align}\label{Estimateh1}
	|(T-t)\int_{\mathbb{R}^{2}}h_{1}(x,\sigma)|x|^{2}&(1-\chi_{0}(\frac{|x|}{2\sqrt{\delta(T-t)}}))dx|=O(e^{-(2-\rho)\sqrt{2|\ln(T-t)|}})
\end{align}
for any $\rho>0$ (here we also used that $f(x)=\frac{e^{-2\sqrt{2|\ln x|}}}{x}|\ln x|$ is a decreasing function if $x$ is small). Now we observe that since $\xi=0$ everything is radial and then we have
 \begin{align}\label{}
 	\partial_{t}\partial_{r}\psi_{\lambda}=-\frac{1}{r}\partial_{t}\int_{0}^{r}\varphi_{\lambda}(s,t)sds, \ \ \  \partial_{t}\partial_{r}v_{0}=-\frac{1}{r}\partial_{t}\int_{0}^{r}u_{0}(s,t)sds
 \end{align}
that implies
\begin{align*}
	-\nabla \varphi_{\lambda}\cdot  \partial_{t} \nabla\psi_{\lambda}-\nabla \varphi_{\lambda}\cdot \partial_{t}\nabla v_{0}=\frac{1}{r}\partial_{r}\varphi_{\lambda}[\frac{1}{2\pi}\partial_{t}\int_{\R^2}(u_{0}+\varphi_{\lambda})-\int_{r}^{\infty}\partial_{t}\varphi_{\lambda}(s,t)sds].
\end{align*}
The second term inside the parentheses can be treated as $h_{1}$, see Lemma \ref{Derestimatephilambda} and \eqref{Estimateh1}. The desired conclusion comes by observing that
\begin{align*}
	\frac{1}{2\pi}\big(\int_{\R^2}\frac{1}{r}\partial_{r}\varphi_{\lambda}|x|^{2}dx\big)\partial_{t}\int_{\R^2}(u_{0}+\varphi_{\lambda})=-\frac{1}{\pi}\big(\int_{\R^2}\varphi_{\lambda}\big)\partial_{t}\int_{\R^2}(u_{0}+\varphi_{\lambda}).
\end{align*}
\end{proof}
The following Remark is based on the fact that \eqref{EstiLambda}, \eqref{EstiPar} imply $\frac{|\xi|}{\lambda}\le e^{-(1+\gamma_{1})\sqrt{2|\ln(T-t)|}}$ and then for any $\rho>0$
\begin{align*}
	\int_{\R^2}\lambda^{4}S(u_{1}[\lambda,\alpha,\xi])\tilde{\chi}|y|^{2}dy=&\int_{|y|\le2 \frac{|\xi|}{\lambda}}\lambda^{4}S(u_{1}[\lambda,\alpha,\xi])\tilde{\chi}|y|^{2}dy+\int_{|y|\ge2\frac{|\xi|}{\lambda}}\lambda^{4}S(u_{1}[\lambda,\alpha,\xi])\tilde{\chi}|y|^{2}dy=\\
	=&O(e^{-(2-\rho)\sqrt{2|\ln(T-t)|}})+\int_{|y|\ge2\frac{|\xi|}{\lambda}}\lambda^{4}S(u_{1}[\lambda,\alpha,\xi])\tilde{\chi}|y|^{2}dy.
\end{align*}
Then we observe that if we use the notation \eqref{radialWY}, we have that $|y|\ge2\frac{|\xi|}{\lambda}$ implies $\frac{y}{\lambda}\approx \frac{y}{\lambda}+\frac{\xi}{\lambda}=\frac{\bar{y}}{\lambda}$ and consequently we get the following remark. 
At this point we are ready to fix $\lambda_{0}$ and $\alpha_{0}$ satisfying \eqref{MASSU1} and \eqref{SECMOMU1}. Recalling that $\xi_{0}=0$, by \eqref{SecMomInt} and Lemma \ref{controlDerSecMom} we get
   \begin{align}\label{SecMOmSimpli}
	\int_{\mathbb{R}^{2}}S(u_{1}(\textbf{p}_{0}))|x|^{2}\tilde{\chi}_{0}dx=&4\int_{\R^2}\varphi_{\lambda_{0}}dx-\alpha_{0} \int_{\R^2}\tilde{E}(x,t;\lambda_{0})|x|^{2}dx+\nonumber\\
	&-\frac{\dot{\alpha}_{0}}{\lambda_{0}^{2}}\int_{\mathbb{R}^{2}}U(\frac{x}{\lambda}_{0})\chi_{0}(\frac{|x|}{\lambda_{0}}) |x|^{2}dx-(1-\alpha_{0})\int_{\mathbb{R}^{2}}E(x,t;\lambda_{0})|x|^{2}dx+\nonumber\\
	&+4(\int_{\R^2}u_{0}+\int_{\R^2}\varphi_{\lambda_{0}})(1-\frac{1}{8\pi}\int_{\R^2}u_{0}-\frac{1}{8\pi}\int_{\R^2}\varphi_{\lambda_{0}})+\nonumber\\
	&-\frac{1}{\pi}(T-t)\big(\int_{\R^2}\varphi_{\lambda_{0}}\big)\partial_{t}\int_{\R^2}(u_{0}+\varphi_{\lambda_{0}})-\int_{\mathbb{R}^{2}}S(u_{1})(x,T)|x|^{2}dx+\nonumber\\
	&+O(e^{-(2-\rho)\sqrt{2|\ln(T-t)|}}).
\end{align}
At the same time we know that for any $t\in(0,T)$ we have
\begin{align*}
	\lambda_{0}^{4}\int_{\mathbb{R}^{2}}S(u_{1}(\textbf{p}_{0}))\tilde{\chi}_{0} d\bar{y}_{0}&=\lambda_{0}^{4}\int_{\mathbb{R}^{2}}S(u_{1}(\textbf{p}_{0}))d\bar{y}_{0}-\lambda_{0}^{4}\int_{\mathbb{R}^{2}}S(u_{1}(\textbf{p}_{0}))(1-\tilde{\chi}_{0})d\bar{y}_{0}=\\
	&=-\lambda_{0}^{2}\partial_{t}\big( \int_{\mathbb{R}^{2}}(u_{0}+\varphi_{\lambda_{0}})dx-\int_{t}^{T}\lambda_{0}^{2}(s)\int_{\mathbb{R}^{2}}S(u_{1}(\textbf{p}_{0}))(1-\tilde{\chi}_{0})d\bar{y}_{0}\big).
\end{align*}
Then we want to fix $\alpha_{0}(t)$ such that
\begin{align}\label{equationforalpha}
	\int_{\mathbb{R}^{2}}(u_{0}+\varphi_{\lambda_{0}})dx-\int_{t}^{T}\lambda_{0}^{2}(s)\int_{\mathbb{R}^{2}}S(u_{1}(\textbf{p}_{0}))(1-\tilde{\chi}_{0})d\bar{y}_{0}=8\pi +\int_{\R^2}\varphi_{\lambda_{0}}(\cdot,T)dx.
\end{align}
Observing that that thanks to \eqref{outererrro} the second term in the left-hand side has size $O(e^{-2\sqrt{2|\ln(T-t)|}}|\ln(T-t)|)$, we can rewrite \eqref{equationforalpha} and obtain for any $\rho>0$ and for any $t\in(0,T)$
\begin{align}\label{finaleqalpha0}
	\int_{\mathbb{R}^{2}}(u_{0}+\varphi_{\lambda_{0}})dx=8\pi+\int_{\R^2}\varphi_{\lambda_{0}}(x,T)dx+O(e^{-(2-\rho)\sqrt{2|\ln(T-t)|}}).
\end{align}
Thanks to \eqref{finaleqalpha0}, by \eqref{EstiPar}, and recalling \eqref{SECMOMU1}, the expansion \eqref{SecMOmSimpli} tells us we want to fix $\lambda_{0}$ such that for any time $t\in(0,T)$ we have
\begin{align}\label{equationlambda}
	4\int_{\mathbb{R}^{2}}\varphi_{\lambda_{0}}dx-\int_{\mathbb{R}^{2}}\tilde{E}(x,t;\lambda_{0})|x|^{2}dx=4\int_{\mathbb{R}^{2}}\varphi_{\lambda_{0}}(x,T)dx+O(e^{-(\frac{3}{2}+\sigma)\sqrt{2|\ln(T-t)|}})
\end{align}
for some $\sigma>0$. First we observe that an elementary computation gives
\begin{align}\label{ExpansionRHSeqlambda}
	\int_{\mathbb{R}^{2}}\tilde{E}|x|^{2}dx=-64\pi \beta \frac{\lambda^{2}}{\delta(T-t)}+O(\frac{\lambda^{4}}{(T-t)^{2}}), \ \ \ \ \beta=\int_{0}^{\infty}\frac{1-\chi_{0}(s)}{s^{3}}ds.
\end{align}
Moreover as it will be shown in Corollary \ref{ExpansionMass}, we have that if we let $\gamma$ be the Euler-Mascheroni constant, assuming \eqref{EstiLambda},\eqref{EstiPar}, for any $t\in (-\frac{T}{2},T)$ and for any $\kappa>0$ we have 
\begin{align}\label{ExpansionMassphilambda}
	\int_{\mathbb{R}^{2}}\varphi_{\lambda}dx-16\pi\beta \frac{\lambda^{2}}{\delta(T-t)}=&\int_{\mathbb{R}^{2}}\varphi_{\lambda}(x,T)dx+4\pi \int_{-\varepsilon(T)}^{T}\frac{\lambda\dot{\lambda}(s)}{T-s}ds-4\pi \int_{-\varepsilon(T)}^{t-\lambda^{2}}\frac{\lambda\dot{\lambda}(s)}{t-s}ds+\nonumber\\
	&+4\pi(\gamma+1-\ln4)\lambda\dot{\lambda}(t)+O(\frac{e^{-\sqrt{2|\ln(T-t)|}}}{|\ln(T-t)^{|\kappa}}).
\end{align}
By looking the right-hand side of \eqref{ExpansionMassphilambda}, if we take for instance $t=0$ it is immediately clear why we need $\varepsilon(T)>0$. It is also clear why it is helpful to have $t>-\frac{T}{2}$, in fact we see that we would have $-\varepsilon(T)-\lambda^{2}(-\varepsilon(T))< -\varepsilon(T)$ (see Section \ref{MASSPHISECTION} for more precise justification for this choice). At the same time we also recall that $\lambda$ must be defined in $(-\varepsilon(T),T)$. The idea is to extend the equation to the whole interval $(-\varepsilon(T),T)$ by cutting some remainders. This cut-off will be equal to $1$ in the interval $(0,T)$. We notice that in this way we are solving an equation in $(-\varepsilon(T),T)$ but that gives the desired conclusion only in $(0,T)$. We postpone the details to Section \ref{MASSPHISECTION}. We notice that in Lemma \ref{SECMASSprofile} we will also prove that
\begin{align}\label{approximatlambda}
	\lambda^{\star}(t)=e^{-\frac{\gamma+2}{2}}\sqrt{T-t}e^{-\sqrt{\frac{|\ln(T-t)|}{2}}}
\end{align}
is an approximate solution of
\begin{align*}
	4\pi \int_{-\varepsilon(T)}^{T}\frac{\lambda\dot{\lambda}(s)}{T-s}ds-4\pi \int_{-\varepsilon(T)}^{t-\lambda^{2}}\frac{\lambda\dot{\lambda}(s)}{t-s}ds+4\pi(\gamma+1-\ln4)\lambda\dot{\lambda}(t)=0 \ \ \ \ t\in(0,T),
\end{align*}
but the errors produced if we simply considered \eqref{approximatlambda} would be too large (see for example the errors in \eqref{ExpansionMassphilambda}). Consequently, we need to improve this first approximation and the following Proposition that we will prove in Section \ref{MASSPHISECTION} goes in this direction. We observe that the following result will give us a remainder that is much smaller than the one we required to proceed with the construction, see \eqref{SECMOMU1} and \eqref{equationlambda}.
	\begin{proposition}\label{prop-lambda0}
	For $T$, $\varepsilon(T)$ sufficiently small there exists $\lambda_0:[-\varepsilon(T),T) \to (0,\infty) $ such that if $t\in (0,T)$
	\begin{align}\label{EqPhiLam}
		4 \int_{\R^2} \varphi_{\lambda_0} dx
		- \int_{\R^2} \tilde E(\lambda_0) |x-\xi|^2dx
		& =4\int_{\mathbb{R}^{2}}\varphi_{\lambda_0}(x,T)dx+ \mathcal{E}[\lambda_{0}] ,
	\end{align}
    where
    \begin{align}\label{EstimatesErrorproplambda0}
    	&\mathcal{E}[\lambda_{0}](t)=O\Bigl(e^{-(2-\rho)\sqrt{2|\ln(T-t)|}}), \\ \ \ \ &\frac{d}{dt}\mathcal{E}[\lambda_{0}](t)=O\Bigl(\frac{e^{-(2-\rho)\sqrt{2|\ln(T-t)|}}}{T-t}), \\
    	&\frac{d^{2}}{dt^{2}}\mathcal{E}[\lambda_{0}](t)=O\Bigl(\frac{e^{-(\frac{3}{2}-\rho)\sqrt{2|\ln(T-t)|}}}{(T-t)^{2}}),
    \end{align}
	for any $\rho>0$. Moreover, as $  t \to T$, $\lambda_0$ satisfies
	\begin{align*}
		&\lambda_0(t) = 2e^{-\frac{\gamma+2}{2}}\sqrt{T-t}e^{-\sqrt{\frac{|\ln(T-t)|}{2}}}(1+\frac{1}{|\ln(T-t)|^{1/8}}),
		\\
		&\dot\lambda_0(t) = -e^{-\frac{\gamma+2}{2}}\frac{1}{\sqrt{T-t}}e^{-\sqrt{\frac{|\ln(T-t)|}{2}}} (1+\frac{1}{|\ln(T-t)|^{1/8}}),
		\\
		&|\ddot \lambda_0(t) |  \leq  \frac{1}{(T-t)^{3/2}}e^{-\sqrt{\frac{|\ln(T-t)|}{2}}}  ,
	\end{align*}
	where $\gamma=0.577215...$ is the Euler-Mascheroni constant.
\end{proposition}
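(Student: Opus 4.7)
The plan is to invert, in a controlled way, the nonlocal equation obtained by substituting the expansions \eqref{ExpansionMassphilambda} and \eqref{ExpansionRHSeqlambda} into \eqref{EqPhiLam}. I would begin by performing this reduction: after the $\lambda^{2}/(T-t)$ contributions from the mass of $\varphi_\lambda$ and from $\int\tilde E |x|^2\,dx$ combine (the ``cancellation of main order terms'' mentioned in the Introduction), \eqref{EqPhiLam} becomes equivalent, up to remainders that will be absorbed into $\mathcal{E}[\lambda_0]$, to the nonlocal integro-differential equation
\[
\mathcal{N}[\lambda](t) := 4\pi\int_{-\varepsilon(T)}^{T}\frac{\lambda\dot\lambda(s)}{T-s}\,ds - 4\pi\int_{-\varepsilon(T)}^{t-\lambda^{2}}\frac{\lambda\dot\lambda(s)}{t-s}\,ds + 4\pi(\gamma+1-\ln 4)\lambda\dot\lambda(t) = 0, \quad t\in(0,T).
\]
Following the paper's suggestion, I would work with $\mu(t) := \lambda\dot\lambda(t)$ as the effective unknown and extend the equation to $(-\varepsilon(T),T)$ by multiplying the remainder terms with a cut-off equal to $1$ on $(0,T)$.

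Next I would identify the approximate profile. Substituting $\lambda^{*}(t) = 2e^{-(\gamma+2)/2}\sqrt{T-t}\,e^{-\sqrt{|\ln(T-t)|/2}}$ gives $\lambda^{*}\dot\lambda^{*}(t) = -2e^{-(\gamma+2)}e^{-\sqrt{2|\ln(T-t)|}}(1+O(|\ln(T-t)|^{-1/2}))$. The change of variable $\sigma=\sqrt{2|\ln(T-s)|}$ converts the two singular integrals into Laplace-type integrals whose leading asymptotics I would evaluate using the identity $\int_{0}^{\infty}e^{-x}\ln x\,dx=-\gamma$ together with the explicit $\ln 4$ produced by the nonlinear cut-off $s\le t-\lambda^{2}$. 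This computation reproduces precisely the local coefficient $\gamma+1-\ln 4$ and shows that $\mathcal{N}[\lambda^{*}]$ has size $e^{-\sqrt{2|\ln(T-t)|}}/|\ln(T-t)|^{1/2}$, larger than the error budget in \eqref{EstimatesErrorproplambda0}, hence a further correction is needed.

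To upgrade $\lambda^{*}$ to $\lambda_0$, I would write $\lambda_0 = \lambda^{*}(1+\mu)$ and derive a linearized nonlocal equation $\mathcal{L}\mu = R(\mu)$ about $\lambda^{*}$. The hard part of the argument, explicitly flagged in the Introduction, is that the resolvent of $\mathcal{L}$ does \emph{not} have the smallness available in the infinite-time construction of \cite{DdPW}, so a direct contraction in a weighted $L^{\infty}$ norm is unavailable. My strategy would be to exploit the observation that the resolvent is \emph{differentiable at main order} in the variable $\sigma=\sqrt{2|\ln(T-t)|}$: each differentiation in $\sigma$ gains a factor $1/\sigma$, providing the missing smallness. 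I would therefore set up the fixed-point scheme in a Banach space of functions of $\sigma$ endowed with a weighted $C^{2}$ norm chosen so that $\mu = O(|\ln(T-t)|^{-1/8})$ holds, which yields the claimed asymptotics for $\lambda_0$, $\dot\lambda_0$, $\ddot\lambda_0$. The bounds on $\mathcal{E}[\lambda_0]$ and its first two time derivatives then follow by tracking all remainders from the reduction step together with Lemmas \ref{estimatephilambda}--\ref{Derestimatephilambda}; the main technical challenge is precisely to handle the interaction of the nonlinear upper limit $t-\lambda^{2}$ with the iteration procedure and to propagate $C^{2}$ bounds in $\sigma$ throughout it.
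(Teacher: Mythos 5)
Your plan captures most of the paper's strategy: reduction to the nonlocal equation in $p=\lambda\dot\lambda$, the approximate profile $\lambda^\star$ and the origin of the coefficient $\gamma+1-\ln4$, the extension by a cut-off to $(-\varepsilon(T),T)$, and — crucially — the observation that the resolvent's lack of smallness (unlike the infinite-time setting of \cite{DdPW}) must be compensated by its being ``differentiable at main order.'' Your instinct to track higher-order regularity of the unknown through the iteration is also in line with what the paper does.

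There is, however, a genuine gap: a \emph{single} fixed-point iteration, no matter how much $C^k$-in-$\sigma$ regularity you build into the norm, cannot reach the error exponent $2-\rho$ demanded by \eqref{EstimatesErrorproplambda0}. The reason is structural. The resolvent decomposes as $T_0^1[f]=T_{0,\mathrm{reg}}^1[f]-f/(2a\sqrt{|\ln(T-t)|})$ (formula \eqref{splitResolvant}), with $T_{0,\mathrm{reg}}^1$ essentially Lipschitz in $t$, and this Lipschitz control is what produces the contraction in Lemma \ref{EstA}. But the contraction factor there is $2\sigma+\omega_T$, $\sigma$ being the parameter splitting the singular integral in \eqref{Asigma}, so one is forced to take $\sigma<\frac12$. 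The remainder left outside the Picard map, essentially $\int_{t-(T-t)e^{-2a\sigma\sqrt{|\ln(T-t)|}}}^{t-\lambda_\star^2}\frac{p(t)-p(s)}{t-s}\,ds$, is then only of size $e^{-(1+\sigma)\sqrt{2|\ln(T-t)|}}$, i.e.\ exponent strictly below $\tfrac32$; this is exactly the obstruction recorded in \eqref{badDecayinNONl}. The paper closes it with a second stage: $p=p_\star+p_1+p_2$, where after solving for $p_1$ with control of $p_1,\dot p_1,\ddot p_1,\dddot p_1$ (Lemma \ref{SolutionNotRegular}), a second fixed point produces $p_2$ whose sole job is to cancel that $e^{-(1+\sigma)}$ remainder, leaving a new error of size $e^{-(1+2\sigma)\sqrt{2|\ln(T-t)|}}$, which does reach $2-\rho$ as $\sigma\to\tfrac12^-$. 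The control of $\dddot p_1$ is what makes $\ddot p_2$, and hence $\frac{d^2}{dt^2}\mathcal{E}[\lambda_0]$, estimable. Finally, your heuristic that ``each differentiation in $\sigma$ gains a factor $1/\sigma$'' is not quite the mechanism: the gain comes from the Lipschitz constant of $T_{0,\mathrm{reg}}$ bounding difference quotients inside $\mathcal{A}_\sigma$, not from any scaling of $\sigma$-derivatives.
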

\begin{proof}
	See Section \ref{MASSPHISECTION}.
\end{proof}
\noindent We anticipate that the estimate for the second derivative of the error in \eqref{EstimatesErrorproplambda0} could be improved at this level since we have sufficient control of the regularity of the right-hand side. This decay will be sufficient for our construction. \newline
Once $\lambda_{0}$ has been constructed as in Proposition \ref{prop-lambda0} we choose $\alpha_{0}$ such that  \eqref{MASSU1}, or equivalently \eqref{equationforalpha}, holds. We observe that for any $t\in(0,T)$ we have
\begin{align*}
	\alpha_{0}(t)\int_{\mathbb{R}^{2}}U(y)\chi_{0}\big(\frac{\lambda_{0} y}{\sqrt{\delta(T-t)}}\big)dy+\int_{\R^2}\varphi_{\lambda_{0}}dx=\alpha_{0}(t)\big(8\pi-16\pi\beta\frac{\lambda_{0}^{2}}{\delta(T-t)}+O(\frac{\lambda_{0}^{4}}{(T-t)^{2}})\big)+\int_{\R^2}\varphi_{\lambda_{0}}dx.
\end{align*}
But then, since $\lambda_{0}$ solves \eqref{EqPhiLam} and because of \eqref{finaleqalpha0},\eqref{ExpansionRHSeqlambda}, for any $t\in(0,T)$ we get
\begin{align}\label{alpha-1}
	|\alpha_{0}(t)-1|\le Ce^{-(2-\rho)\sqrt{2|\ln(T-t)|}}.
\end{align}
Thanks to the additional regularity we have from Proposition \ref{prop-lambda0}, we can also control the derivative of the error in \eqref{EqPhiLam} and prove that in the same interval for any $\rho>0$
\begin{align}\label{estimatealphadot}
	|\dot{\alpha}_{0}|\le \frac{Ce^{-(2-\rho)\sqrt{2|\ln(T-t)|}}}{T-t}, \ \ \ \ |\ddot{\alpha}_{0}|\le C \frac{e^{-(\frac{3}{2}-\rho)\sqrt{2|\ln(T-t)|}}}{(T-t)^{2}}.
\end{align}
As a corollary of Proposition \ref{prop-lambda0} we get
\begin{corollary}\label{secmomconditionELLI}
	Let $\textbf{p}_{0}=(\lambda_{0},\alpha_{0},0)$ with $\lambda_{0}$ given by Proposition \ref{prop-lambda0} and $\alpha_{0}$ defined in \eqref{MASSU1}. Then for any $t\in (0,T)$ and for any $\rho>0$ we have
	\begin{align*}
		\int_{\mathbb{R}^{2}}S(u_{1}(\textbf{p}_{0}))\tilde{\chi}_{0}|x|^{2}dx=O(e^{-(2-\rho)\sqrt{2|\ln(T-t)|}}).
	\end{align*}
\begin{proof}
	We observe that by \eqref{equationforalpha} we have
	\begin{align*}
		|\partial_{t}(\int_{\mathbb{R}^{2}}(u_{0}+\varphi_{\lambda}))|=|-\int_{\mathbb{R}^{2}}S(u_{1}(\textbf{p}_{0}))(1-\tilde{\chi}_{0})dx|=O(\frac{e^{-(2-\rho)\sqrt{|\ln(T-t)|}}}{T-t}).
	\end{align*}
	Then the proof is an immediate consequence of Lemma \ref{lemmaIntSecMom}, Lemma \ref{controlDerSecMom}, Proposition \ref{prop-lambda0} and \eqref{estimatealphadot}.
\end{proof}
\end{corollary}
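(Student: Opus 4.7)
The plan is to assemble the conclusion from four already-established building blocks: Lemma \ref{lemmaIntSecMom}, Lemma \ref{controlDerSecMom}, Proposition \ref{prop-lambda0}, and the $\alpha_0$--selection identity \eqref{equationforalpha}. The guiding principle is that $\lambda_0$ and $\alpha_0$ were designed \emph{precisely} so that the main-order contributions to the cut-off second moment $\int S(u_1(\mathbf p_0))\tilde\chi_0|x|^2\,dx$ cancel, leaving only remainders of the advertised order $e^{-(2-\rho)\sqrt{2|\ln(T-t)|}}$.

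First I would invoke Lemma \ref{controlDerSecMom} with $(\lambda,\alpha)=(\lambda_0,\alpha_0)$ to write
\[
\int_{\R^2} S(u_1(\mathbf p_0))\tilde\chi_0|x|^2\,dx
=\int_{\R^2} S(u_1(\mathbf p_0))|x|^2\,dx
-\int_{\R^2} S(u_1(\mathbf p_0))(x,T)|x|^2\,dx
-\tfrac{1}{\pi}(T-t)\Bigl(\int_{\R^2}\varphi_{\lambda_0}\Bigr)\partial_t\!\!\int_{\R^2}(u_0+\varphi_{\lambda_0})
+O(e^{-(2-\rho)\sqrt{2|\ln(T-t)|}}).
\]
For the uncut second moment I would apply Lemma \ref{lemmaIntSecMom}, which produces the five pieces
\[
4\!\int\!\varphi_{\lambda_0}-\alpha_0\!\int\!\tilde E\,|x|^2-\tfrac{\dot\alpha_0}{\lambda_0^2}\!\int\!U\chi|x|^2-(1-\alpha_0)\!\int\!E|x|^2+4\Bigl(\!\int\!u_0+\!\int\!\varphi_{\lambda_0}\Bigr)\Bigl(1-\tfrac{1}{8\pi}\!\int\!u_0-\tfrac{1}{8\pi}\!\int\!\varphi_{\lambda_0}\Bigr).
\]

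The second step is to eliminate each of these pieces in turn. The first two combine, via the defining equation \eqref{EqPhiLam} of Proposition \ref{prop-lambda0}, into $4\int\varphi_{\lambda_0}(\cdot,T)\,dx+\mathcal E[\lambda_0]$, so $\mathcal E[\lambda_0]=O(e^{-(2-\rho)\sqrt{2|\ln(T-t)|}})$ is harmless. The $\dot\alpha_0$ term is controlled using $\tfrac{1}{\lambda_0^2}\int U\chi|x|^2=O(|\ln\lambda_0|)$ (a direct computation from $U(y)=8/(1+|y|^2)^2$) together with the bound on $\dot\alpha_0$ in \eqref{estimatealphadot}, which is of the right size up to a logarithm absorbed into $\rho$. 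The $(1-\alpha_0)$ term is likewise tamed by \eqref{alpha-1} combined with $\int E\,|x|^2=O(1)$ (use \eqref{Ephilambda} and the estimates in Lemma \ref{estimatephilambda}). For the quadratic mass term, \eqref{finaleqalpha0} gives $\int(u_0+\varphi_{\lambda_0})=8\pi+\int\varphi_{\lambda_0}(\cdot,T)+O(e^{-(2-\rho)\sqrt{2|\ln(T-t)|}})$, so this piece reduces, at leading order, to $-4\int\varphi_{\lambda_0}(\cdot,T)\,dx$ plus an acceptable remainder, and it is designed to cancel exactly the $4\int\varphi_{\lambda_0}(\cdot,T)$ produced by the $\mathcal E[\lambda_0]$ step and the $-\int S(u_1)(x,T)|x|^2\,dx$ term (which, as $t\to T$, encodes the same boundary mass balance).

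Finally, the correction term $\tfrac{1}{\pi}(T-t)(\int\varphi_{\lambda_0})\partial_t\int(u_0+\varphi_{\lambda_0})$ is controlled by differentiating the defining identity \eqref{equationforalpha} of $\alpha_0$, which gives $\partial_t\int(u_0+\varphi_{\lambda_0})=-\int S(u_1(\mathbf p_0))(1-\tilde\chi_0)\,dx=O(e^{-(2-\rho)\sqrt{2|\ln(T-t)|}}/(T-t))$ by \eqref{outererrro}; combined with $\int\varphi_{\lambda_0}=O(e^{-\sqrt{2|\ln(T-t)|}})$ from Lemma \ref{estimatephilambda} and the $(T-t)$ prefactor, this term is strictly smaller than the target bound. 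The main obstacle is the bookkeeping in the second step: several individual pieces are of size exactly $e^{-2\sqrt{2|\ln(T-t)|}}$ and must be paired up in the right way so that only their difference --- itself $O(e^{-(2-\rho)\sqrt{2|\ln(T-t)|}})$ --- survives. Once these cancellations are carefully carried out, the corollary follows.
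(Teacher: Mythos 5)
Your overall plan mirrors the paper's: the paper's terse proof simply cites Lemma \ref{controlDerSecMom}, Lemma \ref{lemmaIntSecMom}, Proposition \ref{prop-lambda0} and \eqref{estimatealphadot}, after first noting that \eqref{equationforalpha} combined with \eqref{outererrro} yields $\partial_t\int(u_0+\varphi_{\lambda_0})=O(e^{-(2-\rho)\sqrt{2|\ln(T-t)|}}/(T-t))$. You invoke exactly the same ingredients (plus \eqref{alpha-1} and \eqref{finaleqalpha0}, which are consequences of them), and you correctly identify how the $4\int\varphi_{\lambda_0}-\alpha_0\int\tilde E|x|^2$ piece cancels against $4\int\varphi_{\lambda_0}(\cdot,T)$ via \eqref{EqPhiLam} and how the quadratic mass term reduces via \eqref{finaleqalpha0}. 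So the route is the paper's route; the write-up is just more explicit.

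There is, however, a genuine arithmetic error in the step you single out, and as stated it would make the estimate fail. You claim $\frac{1}{\lambda_0^2}\int_{\R^2} U\chi|x|^2\,dx = O(|\ln\lambda_0|)$. In Lemma \ref{lemmaIntSecMom} the integrand is $U(\bar y_0)=U(x/\lambda_0)$, so changing variables $x=\lambda_0 y$ gives
\[
\frac{1}{\lambda_0^2}\int_{\R^2}U\bigl(\tfrac{x}{\lambda_0}\bigr)\chi\,|x|^2\,dx
=\lambda_0^2\int_{\R^2}U(y)\chi|y|^2\,dy
=O\bigl(\lambda_0^2\,\ln\tfrac{\sqrt{T-t}}{\lambda_0}\bigr),
\]
i.e.\ your stated bound is missing a factor of $\lambda_0^2$. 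This factor is not cosmetic: $\dot\alpha_0$ carries a $1/(T-t)$ from \eqref{estimatealphadot}, and only the $\lambda_0^2\sim C(T-t)e^{-\sqrt{2|\ln(T-t)|}}$ prefactor cancels that $1/(T-t)$. With your claimed bound the $\dot\alpha_0$ term would be $O\bigl(e^{-(2-\rho)\sqrt{2|\ln(T-t)|}}\sqrt{|\ln(T-t)|}/(T-t)\bigr)$, which blows up as $t\to T$ and cannot be "absorbed into $\rho$" (a $1/(T-t)$ is incomparably larger than any $e^{c\sqrt{2|\ln(T-t)|}}$ correction). With the corrected bound the term is $O\bigl(e^{-(3-\rho)\sqrt{2|\ln(T-t)|}}\sqrt{|\ln(T-t)|}\bigr)$, well within what is needed. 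So the conclusion is right but the reason given for this piece is not; the $\lambda_0^2$ factor must be restored for the estimate to close.
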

\begin{remark}\label{Remarkchangeofcenter}
	Let us assume \eqref{EstiLambda}, \eqref{EstiPar}. A direct computation gives that the difference in the mass and the second moment after changing the center (i.e., assuming $\xi\neq0$) produces only small errors:
	\begin{align*}
		&\int_{\R^2}(\chi_{0}\lambda^{4}_{0}S(u_{1}))(y\frac{\lambda_{0}}{\lambda}+\frac{\xi}{\lambda}) \tilde{\chi}(y)dy=O(e^{-3\sqrt{2|\ln(T-t)|}}\sqrt{|\ln(T-t)|}), \\
		&\int_{\R^2}(\chi_{0}\lambda^{4}_{0}S(u_{1}))(y\frac{\lambda_{0}}{\lambda}+\frac{\xi}{\lambda}) \tilde{\chi}(y)|y|^{2}dy=O(e^{-(2-\rho)\sqrt{2|\ln(T-t)|}}) \ \ \ \text{for any }\rho>0.
	\end{align*}
\end{remark}
\begin{corollary}\label{DerTimErrorAnsatz}
		Let $\textbf{p}_{0}=(\lambda_{0},\alpha_{0},0)$ with $\lambda_{0}$ given by Proposition \ref{prop-lambda0} and $\alpha_{0}$ defined in \eqref{MASSU1}. Then for any $t\in (0,T)$ 
	\begin{align}\label{DerAnsEst}
		|\partial_{t}(\lambda_{0}^{4}S(u_{1}(\textbf{p}_{0}))\tilde{\chi}_{0})|\le C e^{-\sqrt{2|\ln(T-t)|}}\frac{\log^{2}(2+|y|)}{1+|y|^{6}},
	\end{align}
    and for any $\rho>0$
    \begin{align*}
		\partial_{t}\int_{\mathbb{R}^{2}}S(u_{1}(\textbf{p}_{0}))\tilde{\chi}_{0}|x|^{2}dx=O(\frac{e^{-(2-\rho)\sqrt{2|\ln(T-t)|}}}{T-t}).
	\end{align*}
\end{corollary}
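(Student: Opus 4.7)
The plan is to prove both bounds by reprising the arguments of Lemma~\ref{lemmaesterr} and Corollary~\ref{secmomconditionELLI} with one additional time derivative. The preparatory estimates needed are already in place: Lemma~\ref{Derestimatephilambda} gives the time-derivative control on $\varphi_{\lambda_{0}}$ and $\nabla\varphi_{\lambda_{0}}$, Proposition~\ref{prop-lambda0} gives the second-derivative bound on $\lambda_{0}$ and the derivative estimate $\partial_{t}\mathcal E[\lambda_{0}]=O(e^{-(2-\rho)\sqrt{2|\ln(T-t)|}}/(T-t))$, and \eqref{estimatealphadot} provides the bounds on $\dot\alpha_{0}$ and $\ddot\alpha_{0}$.

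For the pointwise estimate \eqref{DerAnsEst}, I would differentiate term-by-term the expression for $\lambda_{0}^{4}S(u_{1})\tilde\chi_{0}$ coming from \eqref{secapprErr}. Each contribution was handled in Lemma~\ref{lemmaesterr} and produced at most $Ce^{-\sqrt{2|\ln(T-t)|}}\log(2+|y|)/(1+|y|^{6})$; now three kinds of new factors appear. First, derivatives on the parameter functions produce $\ddot\lambda_{0}$, $\dot\alpha_{0}$, $\ddot\alpha_{0}$, whose blow-up in $1/(T-t)$ is absorbed by the smallness factors $(\alpha_{0}-1)$, $\lambda_{0}\dot\lambda_{0}$ or $\lambda_{0}^{2}$ that are already present as multipliers in each term of \eqref{secapprErr}. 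Second, derivatives on $\varphi_{\lambda_{0}}$, $\nabla\varphi_{\lambda_{0}}$, $\psi_{\lambda_{0}}$ or $v_{0}$ are absorbed by Lemma~\ref{Derestimatephilambda}, which shows that these quantities enjoy the same $e^{-\sqrt{2|\ln(T-t)|}}$ time decay as their undifferentiated counterparts at the price of one additional $\log(2+|y|^{2}/\lambda^{2})$ factor in $y$: this is the source of the $\log^{2}$ rather than $\log$ in \eqref{DerAnsEst}. Third, derivatives on $\tilde\chi_{0}$ are supported in the annulus $\sqrt{\delta(T-t)}\le|x|\le 2\sqrt{\delta(T-t)}$, where $|y|$ is large and the polynomial decay $1/(1+|y|^{6})$ combined with the logarithmic enhancement comfortably absorbs the $1/(T-t)$ loss.

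For the integral estimate I would use the decomposition
\begin{align*}
\partial_{t}\bigl[S(u_{1})\tilde\chi_{0}\bigr]=-4\frac{\dot\lambda_{0}}{\lambda_{0}}S(u_{1})\tilde\chi_{0}+\frac{1}{\lambda_{0}^{4}}\partial_{t}\bigl(\lambda_{0}^{4}S(u_{1})\tilde\chi_{0}\bigr).
\end{align*}
Integrating against $|x|^{2}\,dx$, the first summand contributes $|\dot\lambda_{0}/\lambda_{0}|\cdot O(e^{-(2-\rho)\sqrt{2|\ln(T-t)|}})=O(e^{-(2-\rho)\sqrt{2|\ln(T-t)|}}/(T-t))$ by Corollary~\ref{secmomconditionELLI} and the bound $|\dot\lambda_{0}/\lambda_{0}|\le C/(T-t)$ from Proposition~\ref{prop-lambda0}. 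The second summand, after the change of variables $x=\lambda_{0}y$, is bounded by $Ce^{-\sqrt{2|\ln(T-t)|}}\int\log^{2}(2+|y|)|y|^{2}/(1+|y|^{6})\,dy\le Ce^{-\sqrt{2|\ln(T-t)|}}$, which is negligible compared to the first term as $t\to T$. This yields the stated bound.

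The main obstacle is the bookkeeping in the pointwise step: one has to verify, for every one of the many terms in \eqref{secapprErr}, that the worst-case combination of parameter-derivative loss, $\varphi$-derivative logarithmic penalty and cutoff-derivative singularity indeed fits inside $Ce^{-\sqrt{2|\ln(T-t)|}}\log^{2}(2+|y|)/(1+|y|^{6})$, exploiting in each case the particular smallness factor multiplying that term. Once this is established, the integral estimate follows essentially for free.
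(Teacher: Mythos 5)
Your route coincides with the paper's: differentiate \eqref{secapprErr} term by term and invoke Proposition~\ref{prop-lambda0}, \eqref{estimatealphadot} and Lemma~\ref{Derestimatephilambda}, which is exactly what the paper's one-line proof points to; your decomposition $\partial_{t}[S(u_{1})\tilde\chi_{0}]=-4(\dot\lambda_{0}/\lambda_{0})S(u_{1})\tilde\chi_{0}+\lambda_{0}^{-4}\partial_{t}(\lambda_{0}^{4}S(u_{1})\tilde\chi_{0})$ is a clean and valid way to reduce the integral estimate to Corollary~\ref{secmomconditionELLI} plus the pointwise bound, details the paper omits. One factual inaccuracy worth noting: you describe Lemma~\ref{Derestimatephilambda} as preserving the $e^{-\sqrt{2|\ln(T-t)|}}$ decay ``at the price of one additional $\log$ factor in $y$.'' In fact the lemma's bound for $\partial_{t}\varphi_{\lambda}$ is $\,Ce^{-\sqrt{2|\ln(T-t)|}}\,\frac{\ln(2+|x|^{2}/\lambda^{2})}{(\lambda^{2}+|x|^{2})^{2}}$, which relative to the undifferentiated bound $\,Ce^{-\sqrt{2|\ln(T-t)|}}/(\lambda^{2}+|x|^{2})$ of Lemma~\ref{estimatephilambda} carries an extra factor $\frac{\ln(2+|x|^{2}/\lambda^{2})}{\lambda^{2}+|x|^{2}}$, i.e.\ a logarithm \emph{and} an extra power $\lambda^{-2}(1+|y|^{2})^{-1}$. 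Your bookkeeping in the pointwise step must therefore absorb this additional polynomial factor (through the $\lambda^{4}$ multiplier and the inner-region restriction $|y|\lesssim\sqrt{\delta(T-t)}/\lambda$), not merely a $\log$; similarly, the assertion that the $1/(T-t)$ blow-up of $\dot\alpha_{0},\ddot\alpha_{0},\ddot\lambda_{0}$ is ``absorbed by the smallness factors already present'' is not self-evident for every term of \eqref{secapprErr} and would need to be checked case by case. The paper's own proof is equally terse on these points, so I would regard your proposal as at the same level of rigour as the paper's argument, with the caveat that your summary of Lemma~\ref{Derestimatephilambda} should be corrected before being used in a careful write-up.
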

\begin{proof}
	The proof is a consequence of formula \eqref{secapprErr}, Proposition \ref{prop-lambda0}, \eqref{estimatealphadot} and estimates \eqref{Derboundphilambda}. We notice that the the additional logarithm at the numerator we get in \eqref{DerAnsEst} is a consequence of estimate \eqref{Derboundphilambda}.
\end{proof}
\subsection{A further improvement of the approximation}\label{SecSecImprovement}  
By using again the variables \eqref{radialWY}, we can introduce $\phi_{0}^{i}$ that satisfies
\begin{align}\label{ellipticcorre}
	L[\phi_{0}^{i}]=-\lambda^{4}_{0}S(u_{1}(\textbf{p}_{0}))\tilde{\chi}_{0}+c_{0}(t)W_{2}(\bar{y}_{0}) \ \ \ \ x=\lambda_{0} \bar{y}_{0}\in \mathbb{R}^{2},
\end{align}
where $L$ is the operator \eqref{LinOperator} in \eqref{ellipticsol} and we used \eqref{tildechi2} that, consistently with the choice of $\textbf{p}_{0}$, will make the right-hand side radial. In \eqref{ellipticcorre} the time variable is regarded as a parameter, $W_{2}(\bar{y})$ is a fixed smooth radial function with compact support and such that
\begin{align}\label{W2}
	\int_{\mathbb{R}^{2}}W_{2}(y)dy=0, \ \ \ \int_{\mathbb{R}^{2}}W_{2}(y)|y|^{2}dy=1.
\end{align}
By Corollary \ref{secmomconditionELLI} we know that for any $t\in(0,T)$ and for any $\rho>0$
\begin{align}\label{c0}
	|c_{0}(t)|\le C e^{-(2-\rho)\sqrt{2|\ln(T-t)|}}.
\end{align}
 We also know, by the choice of $\alpha_{0}$ we made, that in $(0,T)$ we have $\int_{\mathbb{R}^{2}}S(u_{1}(\textbf{p}_{0}))\tilde{\chi}_{0}dx=0$. Thanks to Lemma \ref{ellipticsol} and estimate \eqref{innererr} we can take the zero mass solution of \eqref{ellipticcorre} that satisfies
\begin{align}\label{estEllCorr}
	|\phi_{0}^{i}(\bar{y}_{0},t)|\le C e^{-\sqrt{2|\ln(T-t)|}}\sqrt{|\ln(T-t)|}\frac{\ln(2+|\bar{y}_{0}|)}{1+|\bar{y}_{0}|^{4}}.
\end{align}

\subsection{Reformulation of the system}\label{ReformulationOf}
For any $(x,t)\in \mathbb{R}^{2}\times(0,T)$ we write
\begin{align}\label{decomPo}
	\begin{cases}
		\phi^{i}=\phi_{0}^{i}+\phi\\
		\textbf{p}=\textbf{p}_{0}+\textbf{p}_{1}
	\end{cases}
\end{align}
where $\textbf{p}_{0}=(\lambda_{0},\alpha_{0},0)$ and $\textbf{p}_{1}=(\lambda_{1},\alpha_{1},\xi_{1})$, with $\lambda_{0}$ constructed in Proposition \ref{prop-lambda0} and $\alpha_{0}$ such that \eqref{MASSU1} holds. Recalling \eqref{ellipticcorre}, by substituting \eqref{decomPo} into the equations \eqref{cutoffinner} and \eqref{outereq} we get the following equations for $\phi$ and $\varphi^{o}$:
\begin{align}\label{innereqfinal}
		\lambda^{2}\partial_{t}\phi\chi=L[\phi]\chi+B[\phi]\chi+\hat{E}_{2}\tilde{\chi}_{2}+\hat{F}_{2}[\phi,\varphi^{o},\textbf{p}]\widetilde{\chi}
\end{align}
\begin{align}\label{outereqfinal}
		\partial_{t}\varphi^{o}=\Delta \varphi-\nabla_{x}\Gamma_{0}(\frac{x-\xi}{\lambda})\cdot \nabla \varphi^{o} +G_{2}(\phi,\varphi^{o},\textbf{p}_{1})  
\end{align}
In \eqref{innereqfinal} we recall that $\tilde{\chi}$ is the same cut-off defined in \eqref{chitilde} and we introduced
\begin{align}\label{chitilde2}
\tilde{\chi}_{2}(x,t)=\chi_{0}\big(\frac{|x-\xi| }{\sqrt{T-t}}e^{\mu\sqrt{2|\ln(T-t)|}}\big)
\end{align}
with the $\chi_{0}$ as in \eqref{cutoff}, $\mu>0$ to be chosen small and 
\begin{align}\label{E2hat}
	\chi E_{2}=\hat{E}_{2}=(-\lambda^{2}\partial_{t}\phi_{0}^{i}+B[\phi_{0}^{i}]+c_{0}(t)W_{2}(y))\chi
\end{align}
\begin{align}\label{F2hat}
	\tilde{\chi}\hat{F}_{2}(\phi,\varphi,\textbf{p}_{1})=\tilde{\chi}\big[&\hat{F}(\phi_{0}^{i}+\phi,\varphi,\textbf{p}_{0}+\textbf{p}_{1})+\lambda^{4}S(u_{1}(\textbf{p}_{0}+\textbf{p}_{1}))\chi-\lambda_{0}^{4}S(u_{1}(\textbf{p}_{0}))\tilde{\chi}_{0}\chi+\nonumber\\
	&+[\hat{E}_{2}(\bar{y}_{0})-\hat{E}_{2}(y)]\tilde{\chi}_{2}\big]
\end{align}
\begin{align}
	G_{2}(\phi,\varphi^{o},\textbf{p}_{1})=G(\phi_{0}^{i}+\phi,\varphi^{o},\textbf{p}_{0}+\textbf{p}_{1})+\lambda^{-4}E_{2}(1-\tilde{\chi}_{2})\chi
\end{align}
where $\hat{F}$ and $G$ are \eqref{Fhat} and \eqref{G} respectively.
If we assume \eqref{EstiPar} we see that if $|y|\le \frac{2\sqrt{\delta(T-t)}}{\lambda(t)}$ we have
\begin{align}\label{E2pointw}
	|E_{2}(y,t)|\le C e^{-2\sqrt{2|\ln(T-t)\}}}\sqrt{|\ln(T-t)|} \frac{\ln(2+|y|)}{(1+|y|^{4})}+ C e^{-(2-\rho)\sqrt{2|\ln(T-t)|}}|W_{2}(y)|. 
\end{align}
We remark that the control of $\partial_{t}\phi_{0}^{i}$ is a consequence of Corollary \ref{DerTimErrorAnsatz}. \newline
The cut-off $\tilde{\chi}_{2}$ allows us to achieve
\begin{align}\label{INnnerESTE2}
	|E_{2}(y,t)\tilde{\chi}_{2}|\le C \frac{e^{-\nu \sqrt{2|\ln(T-t|)}}}{(1+|y|)^{6+\sigma}},
\end{align}
for any $\nu<1+2\mu-\frac{\sigma}{2}+\sigma\mu$. We will choose $\mu$ and $\sigma$ small positive numbers such that $2\mu-\frac{\sigma}{2}>0$ so that we have $1<\nu<1+2\mu-\frac{\sigma}{2}$. At the same time we notice that 
\begin{align*}
	|\lambda^{-4}E_{2}(1-\tilde{\chi}_{2})\chi|&\le\frac{e^{-2\sqrt{2|\ln(T-t)|}}}{\lambda^{4}} \sqrt{|\ln(T-t)|}\frac{\sqrt{|\ln(T-t)|}}{e^{2(1-2\mu)\sqrt{2|\ln(T-t)|}}}\chi\le\\
	&\le \frac{e^{-2(1-2\mu)\sqrt{2|\ln(T-t)|}}}{(T-t)^{2}}|\ln(T-t)| \chi.
\end{align*}
We anticipate that, as we will state more precisely in  Section \ref{OuterProblemSection} and Section \ref{proofThm1}, in order to solve the resulting system we will need to fix a $\nu$ that satisfies \eqref{INnnerESTE2} but assuring ourselves that $2(1-2\mu)>\nu+\frac{1}{2}$ in order to have a good control of this error in the outer region. This is the main reason why we will take $\nu>1$ but close to $1$ and $\mu$ sufficiently small.

\subsection{Splitting the inner solution $\phi$}\label{ProjectionIOsystem}In this section for any $(x,t)\in \mathbb{R}^{2}\times(0,T)$ we decompose $\phi$ in a radial part $[\phi]_{\text{rad}}$ defined by
\begin{align}\label{DecomPRAD}
	[\phi]_{\text{rad}}(|y|,t)=\frac{1}{2\pi}\int_{0}^{2\pi}\phi(|y|e^{i\theta},t)d\theta
\end{align}
and a term with no radial mode
\begin{align}\label{phi3def}
	\phi_{3}=\phi-[\phi]_{\text{rad}}
\end{align} . We further decompose $[\phi]_{\text{rad}}$ as
\begin{align*}
   [\phi]_{\text{rad}}=\phi_{1}+\phi_{2}.
\end{align*}
The function $\phi_{1}$ will solve an equation with the radial part of the right-hand side of \eqref{innereqfinal} which will be projected so that it has zero mass and zero second moment. The function $\phi_{2}$ will take care of the second moment that is not going to be identically equal to zero, but only sufficiently small.\newline
We introduce now another cut-off 
\begin{align}\label{chihat}
	\hat{\chi}(x,t)=\chi_{0}(\frac{|x-\xi|}{4\sqrt{\delta(T-t)}})
\end{align}
such that we can cut the operator $B$ observing that $B[\phi]\chi=B[\phi\hat{\chi}]\chi$. Cutting the operator $B$ we are paying the price of introducing the operator $\mathcal{E}[\phi_{1}]$ in the equation for $\phi_{1}$ and an operator $\mathcal{F}[\phi_{3}]$ in the equation for $\phi_{3}$. These two operators are explicit and will be introduced in Sections \ref{Section11} and \ref{Section12}. It will be helpful to consider \eqref{chihat} instead of \eqref{chitilde} since the pointwise estimate we will obtain for $\mathcal{E}[\phi]$ is sufficiently good only when $\hat{\chi}\equiv1$. As it will be explained in Section \ref{InnerTheoryIntro} we need to cut the operator $B$ to control the decay in space of the inner solutions. Recalling \eqref{E2hat}, \eqref{F2hat} and writing $\hat{F}_{3}=\hat{F}_{2}+\hat{E}_{2}\tilde{\chi}_{2}$ we can introduce the notation
\begin{align}\label{F4hat}
	\hat{F}_{4}=\hat{F}_{3}[\phi_{1}+\phi_{2}+\phi_{3},\varphi^{o},\textbf{p}_{1}]-\mathcal{E}[\phi_{1}]\chi-\mathcal{F}[\phi_{3}]\chi
\end{align}
and we can split the equation \eqref{innereqfinal} as
\begin{align*}
	\lambda^{2}\partial_{t}(\phi_{1}+\phi_{2})\chi=L[\phi_{1}+\phi_{2}]\chi+B[\phi_{1}+\phi_{2}\hat{\chi}]\chi+\mathcal{E}[\phi_{1}]\chi+([\hat{F}_{4}[\phi_{1}+\phi_{2}+\phi_{3},\varphi^{o},\textbf{p}_{1}]]_{\text{rad}})\tilde{\chi}
\end{align*}
\begin{align*}
	\lambda^{2}\partial_{t}\phi_{3}\chi=L[\phi_{3}]\chi+B[\phi_{3}\hat{\chi}]\chi+\mathcal{F}[\phi_{3}]\chi+(\hat{F}_{4}[\phi_{1}+\phi_{2}+\phi_{3},\varphi^{o},\textbf{p}_{1}]-[\hat{F}_{4}[\phi_{1}+\phi_{2}+\phi_{3},\varphi^{o},\textbf{p}_{1}]]_{\text{rad}})\tilde{\chi}
\end{align*}
where we decomposed $\hat{F}_{4}$ as we did for $\phi$ in \eqref{phi3def}. Finally, simplifying the cut-off $\chi$ and writing 
\begin{align}\label{F4Defin}
	F_{4}=\hat{F}_{4}/\chi,
\end{align} 
we obtain
\begin{align}\label{innereqfinalfinalMOD0}
			\lambda^{2}\partial_{t}(\phi_{1}+\phi_{2})=L[\phi_{1}+\phi_{2}]+B[\phi_{1}+\phi_{2}\hat{\chi}]+\mathcal{E}[\phi_{1}]+(F_{4}[\phi_{1}+\phi_{2}+\phi_{3},\varphi^{o},\textbf{p}_{1}]_{\text{rad}})\tilde{\chi},
\end{align}
\begin{align}\label{innereqfinalfinalMOD1}
	\lambda^{2}\partial_{t}\phi_{3}=L[\phi_{3}]+B[\phi_{3}\hat{\chi}]+\mathcal{F}[\phi_{3}]+(F_{4}[\phi_{1}+\phi_{2}+\phi_{3},\varphi^{o},\textbf{p}_{1}]-[F_{4}[\phi_{1}+\phi_{2}+\phi_{3},\varphi^{o},\textbf{p}_{1}]]_{\text{rad}})\tilde{\chi}.
\end{align}
We notice that, as we will prove in Section \ref{Section11}, we do not need to cut $\phi_{1}$.
In order to project the right-hand side of \eqref{innereqfinalfinalMOD0}, \eqref{innereqfinalfinalMOD1} we introduce some useful functions.
For any $h(y,t)$ with sufficient spatial decay we define
\begin{align}\label{masssecomomh}
	m_{0}[h](t)=\int_{\mathbb{R}^{2}}h(y,t)dy, \ \ \ \ m_{2}[h](t)=\int_{\mathbb{R}^{2}}h(y,t)|y|^{2}dy
\end{align}
and
\begin{align}\label{firstmomentsh}
	m_{1,j}[h](t)=\int_{\mathbb{R}^{2}}h(y,t)y_{j}dy, \ \ \ j=1,2,
\end{align}
which denote the mass, second moment and center of mass of $h$. Let $W_{0}\in C^{\infty}(\mathbb{R}^{2})$ be radial with compact support such that
\begin{align}\label{W0}
	\int_{\mathbb{R}^{2}}W_{0}dy=1, \ \ \ \ \int_{\mathbb{R}^{2}}W_{0}|y|^{2}dy=0.
\end{align}
Let $W_{1,j}$ , $j=1,2$ be smooth functions with compact support with form $W_{1,j}(y)=\tilde{W}(|y|)y_{j}$ so that
\begin{align}\label{W1j}
	\int_{\mathbb{R}^{2}}W_{1,j}(y)y_{j}=1.
\end{align}
We recall the definition of $W_{2}$ in \eqref{W2}. Then, $h-m_{0}[h]W_{0}$ has zero mass, $h-m_{2}[h]W_{2}$ has zero second moment, and $h-m_{1,1}[h]W_{1,1}-m_{1,2}[h]W_{1,2}$ has zero center of mass. We can write
\begin{align}\label{innereqfinalFINAL}
	\begin{cases}
		\lambda^{2}\partial_{t}\phi_{1}=&\hspace{-2.2cm}L[\phi_{1}]+B[\phi_{1}]+\mathcal{E}[\phi_{1}]+f-m_{0}[f]W_{0}-\\
		&-m_{2}[f]W_{2}, \ \ \ \text{in }\mathbb{R}^{2}\times(0,T)\\
		\phi_{1}(\cdot,0)=\phi_{1,0} \ \ \ \ \ \text{in }\mathbb{R}^{2},
	\end{cases}
\end{align}
\begin{align}\label{innereqfinalFINAL2}
	\begin{cases}
		\lambda^{2}\partial_{t}\phi_{2}=&\hspace{-2.2cm}L[\phi_{2}]+B[\phi_{2}\hat{\chi}]+m_{2}[f]W_{2}\ \ \ \ \text{in }\mathbb{R}^{2}\times(0,T)\\
		\phi_{2}(\cdot,0)=\phi_{2,0} \ \ \ \ \ \text{in }\mathbb{R}^{2},
	\end{cases}
\end{align}
\begin{align}\label{innereqfinalFINALMODE1}
	\begin{cases}
		\lambda^{2}\partial_{t}\phi_{3}=&\hspace{-2.2cm}L[\phi_{3}]+B[\phi_{3}\hat{\chi}]+\mathcal{F}[\phi_{3}]+f_{3}-m_{1,j}[f_{3}]W_{1,j}\ \ \ \ \text{in }\mathbb{R}^{2}\times(0,T)\\
		\phi_{3}(\cdot,0)=\phi_{3,0} \ \ \ \ \ \text{in }\mathbb{R}^{2},
	\end{cases}
\end{align}
where
\begin{align}\label{DEffGLUING}
	&f[\phi_{1}+\phi_{2}+\phi_{3},\varphi^{o},\textbf{p}_{1}]=([F_{4}[\phi_{1}+\phi_{2}+\phi_{3},\varphi^{o},\textbf{p}_{1}]]_{\text{radial}})\tilde{\chi},
\end{align}
\begin{align*}
	&f_{3}[\phi_{1}+\phi_{2}+\phi_{3},\varphi^{o},\textbf{p}_{1}]=([F_{4}[\phi_{1}+\phi_{2}+\phi_{3},\varphi^{o},\textbf{p}_{1}]]-[F_{4}[\phi_{1}+\phi_{2}+\phi_{3},\varphi^{o},\textbf{p}_{1}]]_{\text{radial}})\tilde{\chi}
\end{align*} 
with $F_{4}$ has been defined in \eqref{F4Defin} and \eqref{F4hat}.
We need to find parameters $\alpha_{1}(t)$, $\xi(t)(t)$ defined in $(0,T)$ and $\lambda_{1}(t)$ defined in $(-\varepsilon(T),T)$ such that
\begin{align}\label{orthogonalitycond}
	\begin{cases}
		m_{0}[f[\phi_{1}+\phi_{2}+\phi_{4},\varphi^{o},\textbf{p}_{1}]]=0\\
		m_{1,j}[f_{3}[\phi_{1}+\phi_{2}+\phi_{4},\varphi^{o},\textbf{p}_{1}]]=0, \ \ \ j=1,2,  \ \ \ \ \text{if } t\in (0,T).
	\end{cases}
\end{align}
The system above is coupled with the outer equation
\begin{align}\label{outereqfinalFINAL}
	\begin{cases}
		\partial_{t}\varphi^{o}=\Delta \varphi^{o}-\nabla_{x}\Gamma_{0}(\frac{x-\xi}{\lambda})\cdot \nabla \varphi^{o} +G_{2}(\phi_{1}+\phi_{2},\varphi^{o},\textbf{p}_{1})\\
		\varphi^{o}(\cdot,0)=0.
	\end{cases}
\end{align}
\begin{remark}\label{outersolutionRMKNegTim}
	We observe that the choice of $\lambda_{1}$ in the expansion \eqref{decomPo} will be crucial to obtain \eqref{orthogonalitycond}. For this purpose $\lambda_{1}$, that as $\lambda_{0}$ must be defined in $(-\varepsilon(T),T)$, will erase the mass introduced by the term $\lambda^{2}U\varphi^{o}$ in \eqref{F4hat}. This means that this term will  be the right-hand side of a nonlocal equation (similar to the one satisfied by $\lambda_{0}$ in Proposition \ref{prop-lambda0}) that must be defined for $t\in(-\varepsilon(T),T)$. The idea is to extend the outer solution $\varphi^{o}$ to the whole interval $(-\varepsilon(T),T)$ by simply assuming that $\varphi^{o}\equiv0$ when $-\varepsilon(T)<t\le 0$.
\end{remark}

\subsection{Mass, second moment and center of mass}\label{SecMassSecMom}
In this section we derive some formulas for the mass and the second moment that we have in the right-hand side of \eqref{innereqfinalFINAL} and \eqref{innereqfinalFINAL2}. We will also explain how the mass condition  \eqref{orthogonalitycond} will be satisfied and how we are going to make the right-hand side of \eqref{innereqfinalFINAL2}, the second moment of $f$,  sufficiently fast decaying. We remark that the coupling of these two equations is different to \cite{DdPDMW}. 
\begin{lemma}\label{lemmamassdiff}
	Let us assume \eqref{EstiLambda}, \eqref{EstiPar}. Let $\lambda=\lambda_{0}+\lambda_{1}$ and $\alpha=\alpha_{0}+\alpha_{1}$  with $\lambda_{0}$ given by Proposition \ref{prop-lambda0} and $\alpha_{0}$ defined in \eqref{MASSU1} then for any $t\in(0,T)$ and for any $\rho>0$ we have
	\begin{align*}
		m_{0}[\lambda^{4}S(u_{1}(\textbf{p}_{0}+\textbf{p}_{1}))\tilde{\chi}&-\lambda_{0}^{4}S(u_{1}(\textbf{p}_{0})\tilde{\chi}_{0}\tilde{\chi})]=\\
		&=-\lambda^{2}\partial_{t}\big[8\pi \alpha_{1}+\int_{\mathbb{R}^{2}}(\varphi_{\lambda}-\varphi_{\lambda_0})dx-16\pi \beta\frac{\lambda^{2}-\lambda_{0}^{2}}{\delta(T-t)}\big]+O(e^{-(3-\rho)\sqrt{2|\ln(T-t)|}})
	\end{align*}
where $\beta$ has been defined in \eqref{ExpansionRHSeqlambda}.
\end{lemma}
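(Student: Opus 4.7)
The proof rests on two ingredients: the divergence structure of the error operator, giving the conservation identity $\int_{\R^2} S(u)\,dx = -\frac{d}{dt}\int u\,dx$ for any $u$ with sufficient spatial decay (which holds for $u_1$ by the Gaussian tail bounds of Lemma~\ref{estimatephilambda}), and the defining relation \eqref{MASSU1}/\eqref{equationforalpha} for $\alpha_0$, which gives $\int S(u_1(\textbf{p}_0))\tilde\chi_0\,dx = 0$ on $(0,T)$. The plan is to convert both terms in $m_0[\,\cdot\,]$ into bulk mass derivatives and then expand those masses explicitly, pulling out the stated principal part and absorbing everything else into the error.

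\textbf{Step 1 (identity and cutoff reduction).} After unwinding the Jacobian implicit in $m_0$ (the change $y=(x-\xi)/\lambda$ converts $dy$ into $dx/\lambda^2$, and produces a harmless factor $(\lambda_0/\lambda)^4=1+O(\lambda_1/\lambda_0)$ in the second term), splitting $\tilde\chi = 1-(1-\tilde\chi)$ one obtains
\begin{align*}
\int S(u_1(\textbf{p}))\tilde\chi\,dx &= -\frac{d}{dt}\int u_1(\textbf{p})\,dx \;-\; \int S(u_1(\textbf{p}))(1-\tilde\chi)\,dx,\\
\int S(u_1(\textbf{p}_0))\tilde\chi_0\tilde\chi\,dx &= -\int S(u_1(\textbf{p}_0))\tilde\chi_0(1-\tilde\chi)\,dx,
\end{align*}
the second line using the mass condition \eqref{MASSU1}. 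Both remainders are supported in a thin annulus around $|x-\xi|\sim\sqrt{T-t}$ (using $|\xi|\ll\sqrt{T-t}$ from \eqref{EstiPar} to align the two cut-offs), and Lemma~\ref{lemmaesterr} bounds each by $O(e^{-(3-\rho)\sqrt{2|\ln(T-t)|}}/\lambda^2)$, so after multiplication by $\lambda^2$ they fit inside the claimed error.

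\textbf{Step 2 (mass expansion).} Combining $\int U\,dy=8\pi$ with the elementary tail estimate
\begin{equation*}
\int_{\R^2} U(y)\bigl(1-\chi_0(\lambda|y|/\sqrt{\delta(T-t)})\bigr)\,dy \;=\; 16\pi\beta\,\frac{\lambda^2}{\delta(T-t)} + O\Bigl(\frac{\lambda^4}{(T-t)^2}\Bigr),
\end{equation*}
which also underlies \eqref{ExpansionRHSeqlambda}, one gets $\int u_0(\textbf{p})\,dx = 8\pi\alpha - 16\pi\beta\alpha\lambda^2/(\delta(T-t)) + O(\alpha\lambda^4/(T-t)^2)$ and the analogue for $\textbf{p}_0$. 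Subtracting, writing $\alpha\lambda^2-\alpha_0\lambda_0^2 = (\lambda^2-\lambda_0^2) + (\alpha_0-1)(\lambda^2-\lambda_0^2) + \alpha_1\lambda^2$, the principal part reproduces exactly $8\pi\alpha_1 + \int(\varphi_\lambda-\varphi_{\lambda_0})\,dx - 16\pi\beta(\lambda^2-\lambda_0^2)/(\delta(T-t))$. The cross-terms $(\alpha_0-1)(\lambda^2-\lambda_0^2)/(T-t)$ and $\alpha_1\lambda^2/(T-t)$ and the quartic $\lambda^4/(T-t)^2$ are then shown, after $\lambda^2\frac{d}{dt}$, to be $O(e^{-(3-\rho)\sqrt{2|\ln(T-t)|}})$ via \eqref{alpha-1}, \eqref{estimatealphadot}, and Proposition~\ref{prop-lambda0}; each factor of $\lambda^2/(T-t)$ contributes an $e^{-\sqrt{2|\ln(T-t)|}}$ of decay, which after accumulation beats the threshold.

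\textbf{Step 3 and main obstacle.} The piece $-\lambda^2\frac{d}{dt}\int u_1(\textbf{p}_0)\,dx$ that would be generated by replacing $\int u_1(\textbf{p})$ by $\int u_1(\textbf{p}) - \int u_1(\textbf{p}_0)$ equals $\lambda^2\int S(u_1(\textbf{p}_0))(1-\tilde\chi_0)\,dx$ by the divergence identity together with \eqref{MASSU1}, and is $O(e^{-(3-\rho)\sqrt{2|\ln(T-t)|}})$ by the very outer bound used right after \eqref{equationforalpha}. The main technical difficulty lies in Step~2: the target error $e^{-(3-\rho)\sqrt{2|\ln(T-t)|}}$ leaves essentially no slack, so differentiating the cross and quartic remainders with only a logarithmic loss of decay requires the full strength of the second-derivative information on $\lambda_0,\alpha_0$ from Proposition~\ref{prop-lambda0} and \eqref{estimatealphadot}, together with the derivative estimates for $\varphi_\lambda$ from Lemma~\ref{Derestimatephilambda}.
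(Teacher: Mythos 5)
Your proposal is correct and follows essentially the same route the paper indicates: convert each $m_0$ bracket to a bulk mass derivative via the conservation identity $\int S(u)\,dx=-\frac{d}{dt}\int u\,dx$ and the defining relation \eqref{MASSU1}, expand $\int u_0(\mathbf{p})\,dx=\alpha\bigl(8\pi-16\pi\beta\lambda^2/(\delta(T-t))+O(\lambda^4/(T-t)^2)\bigr)$ exactly as in the derivation of \eqref{alpha-1}, subtract the $\mathbf p_0$ version to isolate $8\pi\alpha_1+\int(\varphi_\lambda-\varphi_{\lambda_0})-16\pi\beta(\lambda^2-\lambda_0^2)/(\delta(T-t))$, and push the cut-off tails, Jacobian factor $\lambda_0^4/\lambda^2$ versus $\lambda^2$, and differentiated cross-terms into the $O(e^{-(3-\rho)\sqrt{2|\ln(T-t)|}})$ remainder using \eqref{alpha-1}, \eqref{estimatealphadot}, Proposition~\ref{prop-lambda0}, Lemma~\ref{lemmaesterr} and Remark~\ref{Remarkchangeofcenter}. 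This is precisely what the paper's terse pointer to Section~\ref{SectionChoice} and Remark~\ref{Remarkchangeofcenter} amounts to.
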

\begin{proof}
	Recalling \eqref{MASSU1} and Remark \ref{Remarkchangeofcenter}, the proof is a trivial adaptation of the computations we made in Section \ref{SectionChoice} to get \eqref{alpha-1}.
\end{proof}
Now we expand the main order term in the right-hand side of \eqref{innereqfinalFINAL2}. A fundamental difference with the analogous computation in \cite{DdPDMW} is that now the mass of $\varphi_{\lambda}$ tends to a constant different from zero (for further details see also \eqref{EXPANSIONMASS}).
\begin{lemma}\label{lemmasecmomdiff}
		Let us assume \eqref{EstiLambda}, \eqref{EstiPar}. Let $\lambda=\lambda_{0}+\lambda_{1}$ and $\alpha=\alpha_{0}+\alpha_{1}$  with $\lambda_{0}$ given by Proposition \ref{prop-lambda0} and $\alpha_{0}$ defined in \eqref{MASSU1} then for any $t\in(0,T)$ and for any $\rho>0$ we have
		\begin{align*}
			m_{2}[\lambda^{4}S(u_{1}(\textbf{p}_{0}+\textbf{p}_{1}))\tilde{\chi}-\lambda_{0}^{4}S(u_{1}(\textbf{p}_{0})\tilde{\chi}_{0}\tilde{\chi})]=\\
			&\hspace{-5cm}=-(32\pi+8\int_{\mathbb{R}^{2}}\varphi_{\lambda}(T))\alpha_{1}-\\
			&\hspace{-4.5cm}-\frac{1}{\pi}\int_{\mathbb{R}^{2}}\varphi_{\lambda}(T)\big(\int_{\mathbb{R}^{2}}(\varphi_{\lambda}-\varphi_{\lambda}(\cdot,T))-\int_{\mathbb{R}^{2}}(\varphi_{\lambda_{0}}-\varphi_{\lambda_{0}}(\cdot,T))-16\pi \beta \frac{\lambda^{2}-\lambda_{0}^{2}}{\delta(T-t)}\big)-\\
			&\hspace{-4.5cm}-\frac{1}{\pi}(T-t)\big(\int_{\R^2}\varphi_{\lambda}(T)\big)\partial_{t}\big(8\pi \alpha_{1}+\int_{\R^2}(\varphi_{\lambda}-\varphi_{\lambda_{0}})-16\pi \beta \frac{\lambda^{2}-\lambda_{0}^{2}}{\delta(T-t)}\big)+\\
			&\hspace{-4.5cm}+O(e^{-(2-\rho)\sqrt{2|\ln(T-t)|}}),
		\end{align*}
	where $\beta$ has been defined in \eqref{ExpansionRHSeqlambda}.
\end{lemma}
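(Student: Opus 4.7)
The plan is to mirror the proof of Lemma \ref{lemmamassdiff}, replacing the simple mass identity by the second-moment expansion of Lemma \ref{lemmaIntSecMom} and using Lemma \ref{controlDerSecMom} to handle the cut-off. By Remark \ref{Remarkchangeofcenter} one may reduce to $\xi=0$ in $u_{1}(\textbf{p}_{0}+\textbf{p}_{1})$ at the cost of an error of order $O(e^{-(2-\rho)\sqrt{2|\ln(T-t)|}})$, which is absorbed into the stated remainder. Applying Lemma \ref{controlDerSecMom} to both cut second moments reduces the problem to the difference of the corresponding uncut second moments, up to two remainder contributions. The endpoint terms $\int S(u_{1})(x,T)|x|^{2}dx$ cancel between the two expansions modulo the allowed error because $\lambda(T)=\lambda_{0}(T)$ and $\alpha(T)\approx \alpha_{0}(T)\approx 1$ by \eqref{EstiPar} and \eqref{alpha-1}. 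The $(T-t)$-factor in Lemma \ref{controlDerSecMom}, combined with Lemma \ref{lemmamassdiff} (which identifies $\lambda^{2}\partial_{t}\int(u_{0}+\varphi_{\lambda})$ with $-\partial_{t}[8\pi\alpha_{1}+\int(\varphi_{\lambda}-\varphi_{\lambda_{0}})-16\pi\beta(\lambda^{2}-\lambda_{0}^{2})/(\delta(T-t))]$ up to negligible terms), produces precisely the third line of the claim; the analogous contribution arising from the $\lambda_{0}$-expansion is absorbed into the remainder by Proposition \ref{prop-lambda0}.

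Now expanding the uncut second moments via Lemma \ref{lemmaIntSecMom} and subtracting, the $\dot\alpha$- and $(1-\alpha)$-terms are controlled by \eqref{alpha-1}--\eqref{estimatealphadot}, contributing $O(e^{-(2-\rho)\sqrt{2|\ln(T-t)|}})$. The ``radial'' combination $4\int\varphi_{\lambda}-\alpha\int\tilde E(\lambda)|x|^{2}dx$ is handled using the defining equation \eqref{EqPhiLam} for $\lambda_{0}$ together with the expansions \eqref{ExpansionRHSeqlambda} and \eqref{ExpansionMassphilambda} applied to $\lambda$: the $\lambda_{0}$-equation eliminates the local integrals in favour of $4\int\varphi_{\lambda_{0}}(x,T)dx$, while \eqref{ExpansionMassphilambda} for $\lambda$ produces the Taylor-type combination
\begin{equation*}
\int(\varphi_{\lambda}-\varphi_{\lambda}(\cdot,T))dx-\int(\varphi_{\lambda_{0}}-\varphi_{\lambda_{0}}(\cdot,T))dx-16\pi\beta\frac{\lambda^{2}-\lambda_{0}^{2}}{\delta(T-t)}
\end{equation*}
which is exactly the bracket in the second line of the claim, with an overall coefficient $4$.

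The decisive remaining contribution is the difference of the nonlinear term $4M(1-M/(8\pi))$ with $M=\int u_{0}+\int\varphi_{\lambda}$. Factoring gives $(M-M_{0})[4-(M+M_{0})/(2\pi)]$. By \eqref{finaleqalpha0} together with the asymptotics $\int\varphi_{\lambda}\to\int\varphi_{\lambda}(\cdot,T)$ (a consequence of Corollary \ref{Corollary EXPANSION MASS}), both $M$ and $M_{0}$ equal $8\pi+\int\varphi_{\lambda}(\cdot,T)dx+O(e^{-(2-\rho)\sqrt{2|\ln(T-t)|}})$, so that
\begin{equation*}
4-\frac{M+M_{0}}{2\pi}=-4-\frac{1}{\pi}\int\varphi_{\lambda}(\cdot,T)dx+O(e^{-(2-\rho)\sqrt{2|\ln(T-t)|}}).
\end{equation*}
On the other hand Lemma \ref{lemmamassdiff} gives $M-M_{0}=8\pi\alpha_{1}+\int(\varphi_{\lambda}-\varphi_{\lambda_{0}})dx-16\pi\beta(\lambda^{2}-\lambda_{0}^{2})/(\delta(T-t))+\text{neg.}$, where one rewrites $\int(\varphi_{\lambda}-\varphi_{\lambda_{0}})dx$ as the Taylor-difference $\int(\varphi_{\lambda}-\varphi_{\lambda}(\cdot,T))dx-\int(\varphi_{\lambda_{0}}-\varphi_{\lambda_{0}}(\cdot,T))dx$ plus the endpoint difference $\int\varphi_{\lambda}(\cdot,T)dx-\int\varphi_{\lambda_{0}}(\cdot,T)dx$, which is $O(\textbf{p}_{1})$-small and absorbed. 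The $-4$ in the display above cancels against the coefficient $4$ coming from the radial contribution of the previous paragraph when multiplied by $(M-M_{0})$, leaving the $-\tfrac{1}{\pi}\int\varphi_{\lambda}(\cdot,T)dx$ factor. Applied to the $8\pi\alpha_{1}$ part of $M-M_{0}$ this yields the first line $-(32\pi+8\int\varphi_{\lambda}(\cdot,T)dx)\alpha_{1}$ (the $-32\pi\alpha_{1}$ coming from the same radial cancellation), and applied to the remaining Taylor-difference and $\lambda^{2}-\lambda_{0}^{2}$ pieces of $M-M_{0}$ gives the second line of the claim.

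The main technical obstacle is the careful bookkeeping of cancellations at the precision $O(e^{-(2-\rho)\sqrt{2|\ln(T-t)|}})$: in particular, verifying that $\int\varphi_{\lambda}(\cdot,T)dx$ and $\int\varphi_{\lambda_{0}}(\cdot,T)dx$ can be interchanged as coefficients of $\alpha_{1}$ and of the bracketed Taylor expression within the allowed error, and confirming that the $(T-t)$-factor produced by Lemma \ref{controlDerSecMom} combines exactly with the time derivative identified through Lemma \ref{lemmamassdiff} to reproduce the third line of the claim, without stray logarithmic losses.
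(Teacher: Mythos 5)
Your argument is correct and follows essentially the paper's route: reduce to $\xi=0$ via Remark~\ref{Remarkchangeofcenter}, pass from the cut to the uncut second moments through Lemmas~\ref{lemmaIntSecMom} and~\ref{controlDerSecMom}, cancel the radial piece $4\int_{\R^2}\varphi_{\lambda}-64\pi\beta\lambda^{2}/(\delta(T-t))$ against the linear part of $4M(1-M/(8\pi))$, Taylor-expand around $\int_{\R^2}\varphi_{\lambda}(\cdot,T)\,dx$ using \eqref{finaleqalpha0}, and read off the $(T-t)$-line from the mass evolution; your factorisation $(M-M_{0})\bigl[4-(M+M_{0})/(2\pi)\bigr]$ is an equivalent packaging of the paper's expansion of $4M(1-M/(8\pi))=-4m-m^{2}/(2\pi)$ followed by adding and subtracting the $\textbf{p}_{0}$-corrections, the paper having already disposed of $m_{2}[\lambda_{0}^{4}S(u_{1}(\textbf{p}_{0}))\tilde{\chi}_{0}\tilde{\chi}]$ outright via Corollary~\ref{secmomconditionELLI}. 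The one point worth tightening is the justification for the endpoint terms $\int_{\R^2}S(u_{1}[\cdot])(x,T)|x|^{2}\,dx$: the fact that $\lambda(T)=\lambda_{0}(T)=0$ and $\alpha(T)\approx\alpha_{0}(T)$ does \emph{not} imply $\varphi_{\lambda}(\cdot,T)=\varphi_{\lambda_{0}}(\cdot,T)$ (each is a functional of the whole trajectory, not of the endpoint value of $\lambda$), so these $t$-independent constants do not literally coincide; what actually absorbs their difference is that each cut second moment tends to $0$ as $t\to T$, which identifies each endpoint with the $t=T$ evaluation of the remaining terms in Lemma~\ref{lemmaIntSecMom} and reduces their difference to the same boundary pieces you already cancel against the radial and product contributions.
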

\begin{proof}
	After recalling Lemma \ref{lemmaIntSecMom}, Lemma \ref{controlDerSecMom}, Remark \ref{Remarkchangeofcenter} and assuming \eqref{EstiLambda}, \eqref{EstiPar} it is immediate to observe that for $t>0$ and for any $\rho>0$ we have
	\begin{align*}
		m_{2}[\lambda^{4}S(u_{1}(\textbf{p}))\tilde{\chi}]=&4\int_{\mathbb{R}^{2}}\varphi_{\lambda}-64\pi \beta \frac{\lambda^{2}}{\delta(T-t)}+4\big(\int_{\mathbb{R}^{2}}u_{0}(\textbf{p})+\int_{\mathbb{R}^{2}}\varphi_{\lambda}\big)\big(1-\frac{1}{8\pi}\int_{\mathbb{R}^{2}}u_{0}(\textbf{p})-\frac{1}{8\pi}\int_{\mathbb{R}^{2}}\varphi_{\lambda}\big)\\
		&-\frac{1}{\pi}(T-t)\big(\int_{\R^2}\varphi_{\lambda}\big)\partial_{t}\int_{\R^2}(u_{0}(\textbf{p})+\varphi_{\lambda})-\int_{\R^2}S(u_{1}(\lambda,\alpha,0))(x,T)|x|^{2}dx\\
		&+O(e^{-(2-\rho)\sqrt{2|\ln(T-t)|}})
	\end{align*}
and by Corollary \ref{secmomconditionELLI} and Remark \ref{Remarkchangeofcenter} we have that for any $\rho>0$
\begin{align*}
		m_{2}[\lambda_{0}^{4}S(u_{1}(\textbf{p}_{0}))\tilde{\chi}\tilde{\chi}_{0}]=O(e^{-(2-\rho)\sqrt{2|\ln(T-t)|}}).
\end{align*}
By expanding $\int_{\mathbb{R}^{2}}(u_{0}(\textbf{p})+\varphi_{\lambda})$ as in we did in Section \ref{SectionChoice} to get \eqref{alpha-1} we observe
\begin{align}\label{expansionMassu1}
   \int_{\mathbb{R}^{2}} (u_{0}(\textbf{p})+\varphi_{\lambda})dx=8\pi+8\pi(\alpha-1)+\int_{\mathbb{R}^{2}}\varphi_{\lambda}dx-16\pi \beta\frac{\lambda^{2}}{\delta(T-t)}+O(e^{-(2-\rho)\sqrt{2|\ln(T-t)|}}).
\end{align}
Hence
\begin{align*}
	4\big(\int_{\mathbb{R}^{2}}u_{0}(\textbf{p})&+\int_{\mathbb{R}^{2}}\varphi_{\lambda}\big)\big(1-\frac{1}{8\pi}\int_{\mathbb{R}^{2}}u_{0}(\textbf{p})-\frac{1}{8\pi}\int_{\mathbb{R}^{2}}\varphi_{\lambda}\big)=\\
	=&32\pi(-(\alpha-1)-\frac{1}{8\pi} \int_{\mathbb{R}^{2}}\varphi_{\lambda}+2\beta \frac{\lambda^{2}}{\delta(T-t)})-\frac{1}{2\pi} \big(8\pi(\alpha-1)+\int_{\mathbb{R}^{2}}\varphi_{\lambda}-16\pi \beta \frac{\lambda^{2}}{\delta(T-t)}\big)^{2}+\\
	&+O(e^{-(2-\rho)\sqrt{2|\ln(T-t)|}}).
\end{align*}
We want to expand the second term in the right-hand side. We see
\begin{align*}
	\big(8\pi(\alpha-1)+\int_{\mathbb{R}^{2}}\varphi_{\lambda}-16\pi \beta \frac{\lambda^{2}}{T-t}\big)^{2}=&\big(8\pi(\alpha-1)+\int_{\mathbb{R}^{2}}\varphi_{\lambda}(T)+\int_{\mathbb{R}^{2}}(\varphi_{\lambda}-\varphi_{\lambda}(\cdot,T))-16\pi \beta \frac{\lambda^{2}}{\delta(T-t)}\big)^{2}=\\
	=&(\int_{\mathbb{R}^{2}}\varphi_{\lambda}(T))^{2}+\\
	&+2\int_{\mathbb{R}^{2}}\varphi_{\lambda}(T)\big(8\pi(\alpha-1)+\int_{\mathbb{R}^{2}}(\varphi_{\lambda}-\varphi_{\lambda}(\cdot,T))-16\pi \beta \frac{\lambda^{2}}{\delta(T-t)}\big)+\\
	&+O(e^{-(2-\rho)\sqrt{2|\ln(T-t)|}}).
\end{align*}
After subtracting and adding $\frac{1}{\pi}\int_{\mathbb{R}^{2}}\varphi_{\lambda}(T)\big(8\pi(\alpha_{0}-1)+\int_{\mathbb{R}^{2}}(\varphi_{\lambda_{0}}-\varphi_{\lambda_{0}}(T))dx-16\pi\beta \frac{\lambda_{0}^{2}}{T-t}\big)$, $-32\pi(\alpha_{0}-1)$ and $-\frac{1}{\pi}(T-t)\big(\int_{\R^2}\varphi_{\lambda}(T)\big)\partial_{t}\int_{\R^2}(u_{0}(\textbf{p}_{0})+\varphi_{\lambda_{0}})$ we get the desired result.
\end{proof}
At this point we can explain how we achieve the zero mass condition and how we can make the second moment sufficiently fast decaying. We observe that thanks to Lemma \ref{lemmamassdiff} if $t>0$, the zero mass condition becomes 
\begin{align}\label{ZeromassSCondition}
     \int_{\R^2}(u_{0}(\textbf{p})-u_{0}(\textbf{p}_{0})+\varphi_{\lambda}-\varphi_{\lambda_{0}})=-\int_{t}^{T}K[\phi,\varphi^{o},\textbf{p}]+\int_{t}^{T}\int_{\mathbb{R}^{2}}(S(u_{1}(\textbf{p}))-S(u_{1}(\textbf{p}_{0})))(1-\tilde{\chi})dx
\end{align}
that at main order is
\begin{align}\label{ExpansionMassErr}
	8\pi \alpha_{1}+\int_{\mathbb{R}^{2}}(\varphi_{\lambda}-\varphi_{\lambda_0})dx-16\pi \beta\frac{\lambda^{2}-\lambda_{0}^{2}}{\delta(T-t)}+\int_{t}^{T}K[\phi,\varphi^{o},\textbf{p}](s)ds=\int_{\mathbb{R}^{2}}(\varphi_{\lambda}(T)-\varphi_{\lambda_{0}}(T))dx
\end{align}
where with $K$ we include all the terms (divided by $\lambda^{2}$) in the mass of \eqref{DEffGLUING} that are not in $	m_{0}[\lambda^{4}S(u_{1}(\textbf{p}_{0}+\textbf{p}_{1}))\tilde{\chi}-\lambda_{0}^{4}S(u_{1}(\textbf{p}_{0})\tilde{\chi}_{0}\tilde{\chi})]$. In Section \ref{proofThm1} we will show  that the main order term is $\int_{t}^{T}\frac{1}{\lambda^{2}(s)}\int_{\mathbb{R}^{2}}\lambda^{2}(s)U\varphi^{o}\tilde{\chi}dyds$. Equation \eqref{ExpansionMassErr} gives the value of $\alpha_{1}$. We remark that controlling $\alpha_{1}$ is not sufficient and that we need an estimate for $\dot{\alpha}_{1}$, but we will discuss how we can obtain sufficiently good estimate for $\dot{\alpha}_{1}$ later.\newline
So far we achieved the zero mass condition, but, as we will see in Section \ref{proofThm1}, to solve the gluing system we will need a sufficiently small right-hand side for \eqref{innereqfinalFINAL2}. Then Lemma \ref{lemmasecmomdiff} and \eqref{ExpansionMassErr} imply that, at main order, for any $t\in(0,T)$ we need
\begin{align}\label{ExpansionSecMomErr}
	m_{2}[f]\approx&4\int_{\mathbb{R}^{2}}(\varphi_{\lambda}-\varphi_{\lambda_{0}})dx-4\int_{\mathbb{R}^{2}}(\varphi_{\lambda}(T)-\varphi_{\lambda_{0}}(T))dx-64\pi \beta \frac{\lambda^{2}-\lambda_{0}^{2}}{\delta(T-t)}+\nonumber\\
		&(4+\frac{\int_{\R^2}\varphi_{\lambda}(T)}{\pi})\int_{t}^{T}K[\phi,\varphi^{o},\textbf{p}](s)ds-\nonumber\\
		&-\frac{1}{\pi}(T-t)\big(\int_{\R^2}\varphi_{\lambda}(T)\big)\partial_{t}\big(8\pi \alpha_{1}+\int_{\R^2}(\varphi_{\lambda}-\varphi_{\lambda_{0}})-16\pi \beta \frac{\lambda^{2}-\lambda_{0}^{2}}{\delta(T-t)}\big) \ \ \ \text{ sufficiently small.}
\end{align}
The idea is to fix $\lambda_{1}$ to erase the term in the second line of \eqref{ExpansionSecMomErr}. Notice that the last term cannot be included in the equation for $\lambda_{1}$ since we do not have sufficient information on its time derivative and it would be not possible to estimate the derivative of the remainders of the resulting equation and then obtain same estimate for $\dot{\alpha}_{1}$ in \eqref{ExpansionMassErr}. Then, we want to find $\lambda_{1}$ such that for any $t\in(0,T)$ and for some $\sigma>0$
\begin{align}\label{equationlambda1}
	4\int_{\mathbb{R}^{2}}(\varphi_{\lambda}-\varphi_{\lambda_{0}})dx&-4\int_{\mathbb{R}^{2}}(\varphi_{\lambda}(T)-\varphi_{\lambda_{0}}(T))dx-64\pi \beta \frac{\lambda^{2}-\lambda_{0}^{2}}{\delta(T-t)}+\nonumber\\
	&+(4+\frac{\int_{\R^2}\varphi_{\lambda}(T)}{\pi})\int_{t}^{T}K[\phi,\varphi^{o},\textbf{p}](s)ds=O(e^{-(\frac{3}{2}+\sigma)\sqrt{2|\ln(T-t)|}})
\end{align}
that can be rewritten as
\begin{align}\label{equationlambda1final}
	4\int_{\mathbb{R}^{2}}(\varphi_{\lambda}-\varphi_{\lambda_{0}})dx&-4\int_{\mathbb{R}^{2}}(\varphi_{\lambda}(T)-\varphi_{\lambda_{0}}(T))dx-64\pi \beta \frac{\lambda^{2}-\lambda_{0}^{2}}{\delta(T-t)}+\nonumber\\
	&+(4+\frac{\int_{\R^2}\varphi_{\lambda}(T)}{\pi})\int_{t}^{T}\int_{\mathbb{R}^{2}}U\varphi^{o}\tilde{\chi}dyds=O(e^{-(\frac{3}{2}+\sigma)\sqrt{2|\ln(T-t)|}}).
\end{align}
We notice that thanks to Remark \ref{outersolutionRMKNegTim} the right-hand side is well defined in $t\in(-\varepsilon(T),T)$. Another crucial observation is that $\int_{t}^{T}\int_{\mathbb{R}^{2}}U\varphi^{o}\tilde{\chi}dyds$ is differentiable (this will give us, similarly to Proposition \ref{prop-lambda0}, control of the remainders of the nonlocal equation). To get control of $\dot{\alpha}_{1}$ we should also differentiate \eqref{ExpansionMassErr} and then the right-hand side remainder in \eqref{equationlambda1final}. Similarly to Proposition \ref{prop-lambda0} we are going to find $\lambda_{1}$ such that 
\begin{align*}
		4\int_{\mathbb{R}^{2}}(\varphi_{\lambda}-\varphi_{\lambda_{0}})dx&-4\int_{\mathbb{R}^{2}}(\varphi_{\lambda}(T)-\varphi_{\lambda_{0}}(T))dx-64\pi \beta \frac{\lambda^{2}-\lambda_{0}^{2}}{\delta(T-t)}+\nonumber\\
		&+(4+\frac{\int_{\R^2}\varphi_{\lambda}(T)}{\pi})\int_{t}^{T}\int_{\mathbb{R}^{2}}U\varphi^{o}\tilde{\chi}dyds=\mathcal{E}(t)
\end{align*} 
and when we differentiate $\mathcal{E}(t)$ there will be a loss due to the lack of regularity of the right-hand side (we are not assuming any control of $\partial_{t}\varphi^{o}$). This will give a slightly worse decay for $\dot{\alpha}_{1}$, but we will have enough decay to get a contraction. \newline
Now we want to study the last and the most critical term in \eqref{ExpansionSecMomErr}. After solving the equation \eqref{equationlambda1} and thanks to \eqref{ZeromassSCondition} we have
\begin{align*}
	m_{2}[f]&\approx\frac{1}{\pi}\big(\int_{\R^2}\varphi_{\lambda}(T)\big)(T-t)\int_{\R^2}K[\phi,\varphi^{o},\textbf{p}]\approx\frac{1}{\pi}\big(\int_{\R^2}\varphi_{\lambda}(T)\big)(T-t)\int_{\R^2}U(y)\varphi^{o}(y\lambda,t)\tilde{\chi}dy
\end{align*}
and we will prove in Section \ref{proofThm1} that this will be sufficiently small to get a contraction. \newline
Finally we remark that in \cite{DdPDMW} introducing $\lambda_{1}$ was not necessary since $\int_{t}^{T}K(\varphi^{o},\phi,\textbf{p})(s)ds$ was already sufficiently small. In our case an analogous approach cannot be used since in the integral $\int_{t}^{T}\frac{1}{\lambda^{2}(s)}\int_{\mathbb{R}^{2}}\lambda^{2}(s)U\varphi^{o}\tilde{\chi}dyds$ an extra logarithm is produced. \newline
Finally we can write that at main order $\lambda_{1}$ will solve
\begin{align}\label{EquationLambda1}
	4\int_{\mathbb{R}^{2}}(\varphi_{\lambda}-\varphi_{\lambda_{0}})dx&-4\int_{\mathbb{R}^{2}}(\varphi_{\lambda}(T)-\varphi_{\lambda_{0}}(T))dx-64\pi \beta \frac{\lambda^{2}-\lambda_{0}^{2}}{\delta(T-t)}+\nonumber\\
	&+(4+\frac{\int_{\R^2}\varphi_{\lambda}(T)}{\pi})\int_{t}^{T}\int_{\mathbb{R}^{2}}U\varphi^{o}\tilde{\chi}dyds\approx 0
\end{align} 
and $\alpha_{1}$ will be given by
\begin{align}\label{alpha1Expansion}
	\alpha_{1}(t)\approx \frac{1}{32\pi^{2}}\int_{t}^{T}\int_{\R^2}U\varphi^{o}\tilde{\chi}dyds.
\end{align} 
\newline 

Now we want to find an useful formula for the center of mass. In the following we will assume 
\begin{align}\label{ExtraAssumptionSECMASSSECMOM}
	|\lambda(t)-\lambda_{0}(t)|\le C e^{-\sqrt{2|\ln(T-t)|}},  \ \ |\lambda \dot{\lambda}-\lambda_{0}\dot{\lambda}_{0}|\le C e^{-\frac{3}{2}\sqrt{2|\ln(T-t)|}}
\end{align}
where $\lambda_{0}$ is given by Proposition \ref{prop-lambda0}. This assumption will be completely justified in Section \ref{proofThm1}.
\begin{lemma}\label{LemmaFirstMomDIff}
		Let us assume \eqref{EstiLambda}, \eqref{EstiPar} and \eqref{ExtraAssumptionSECMASSSECMOM}. Let $\lambda=\lambda_{0}+\lambda_{1}$, $\alpha=\alpha_{0}+\alpha_{1}$ and $\xi=\xi_{1}(t)$  with $\lambda_{0}$ given by Proposition \ref{prop-lambda0} and $\alpha_{0}$ defined in \eqref{MASSU1}.
	    For any $t\in(0,T)$ we have
   \begin{align*}
			m_{i,j}[\lambda^{4}S(u_{1}(\textbf{p}_{0}+\textbf{p}_{1}))\tilde{\chi}-\lambda_{0}^{4}S(u_{1}(\textbf{p}_{0}))\tilde{\chi}_{0}\tilde{\chi}]=&\lambda \alpha \dot{\xi}_ {1,j}\int_{\R^2}\partial_{y_{j}} U(y)y_{j}\tilde {\chi} dy+\\
			&+\frac{\alpha \lambda^{2}}{\sqrt{T-t}}\dot{\xi}_{1,j}\int_{\R^2}U(y)\partial_{z_{j}}\chi_{0}(\frac{\lambda y}{\sqrt{\delta (T-t)}})y_{j}dy+\\
			&+O(e^{-(\frac{5}{2}+\gamma_{1})\sqrt{2|\ln(T-t)|}}).
	\end{align*}
\end{lemma}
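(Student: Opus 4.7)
The plan is to rewrite the moment as $m_{1,j}[\lambda^{4}S(u_{1}(\textbf{p}_{0}+\textbf{p}_{1}))\tilde\chi] = \lambda \int S(u_{1}(\textbf{p}_{0}+\textbf{p}_{1}))\tilde\chi(x-\xi)_{j}\,dx$ via the change of variables $y=(x-\xi)/\lambda$, $dy=dx/\lambda^{2}$, and similarly $m_{1,j}[\lambda_{0}^{4}S(u_{1}(\textbf{p}_{0}))\tilde\chi_{0}\tilde\chi] = (\lambda_{0}^{4}/\lambda^{3})\int S(u_{1}(\textbf{p}_{0}))\tilde\chi_{0}\tilde\chi(x-\xi)_{j}\,dx$, and then to extract the nonradial pieces by parity. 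Both integrands are \emph{nearly} rotationally symmetric about $\xi$, so cancellation against $(x-\xi)_{j}$ is almost total. The second integrand $S(u_{1}(\textbf{p}_{0}))\tilde\chi_{0}$ is radial about the origin (since $\xi_{0}=0$), whereas the cutoff $\tilde\chi$ is centered at $\xi=\xi_{1}$; a Taylor expansion in $\xi$ shows the whole contribution is $O(|\xi|)$ times a bounded integral, and integrating \eqref{EstiPar} in time gives $|\xi(t)|\lesssim \sqrt{T-t}\,e^{-(3/2+\gamma_{1})\sqrt{2|\ln(T-t)|}}$, so the extra $(T-t)$ factor pushes this well below $e^{-(5/2+\gamma_{1})\sqrt{2|\ln(T-t)|}}$.

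For the first integrand I would classify the pieces of $S(u_1)$ in \eqref{secapprErr}. The terms depending on $x$ only through $y$ or $w=(x-\xi)/\sqrt{\delta(T-t)}$ are radial in $x-\xi$ apart from the $\dot\xi$-contributions, hence integrate to $0$ against $(x-\xi)_{j}$. The $\varphi_{\lambda}$-based terms ($-\tfrac{4}{r}\partial_{r}\varphi_{\lambda}$, $\nabla\cdot(\varphi_{\lambda}\nabla v_{0})$, $\nabla\cdot(u_{0}\nabla\psi_{\lambda})$, $\nabla\cdot(\varphi_{\lambda}\nabla\psi_{\lambda})$, and $E(x-\xi,t)-E(x,t)$) are radial in $x$ rather than in $x-\xi$, so the same Taylor-in-$\xi$ argument produces only $O(|\xi|)$-contributions, again dominated by the claimed error. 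The only terms that fail to be radial in $x-\xi$ independently of $|\xi|$ are the two $\dot\xi$-pieces in $-\partial_{t}u_{0}$ from \eqref{timederiU0}, namely $\tfrac{\alpha}{\lambda^{3}}\dot\xi\cdot\nabla_{y}U(y)\chi_{0}(w)$ and $\tfrac{\alpha}{\lambda^{2}\sqrt{\delta(T-t)}}U(y)\dot\xi\cdot\nabla_{w}\chi_{0}(w)$.

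Multiplying these two by $\lambda^{4}$, integrating against $y_{j}\,dy$, and using rotational symmetry (so only the $k=j$ component of $\dot\xi_{k}$ survives the sum over $k$) produces exactly the two stated main terms $\lambda\alpha\dot\xi_{1,j}\int\partial_{y_{j}}U(y)y_{j}\tilde\chi\,dy$ and $\tfrac{\alpha\lambda^{2}}{\sqrt{T-t}}\dot\xi_{1,j}\int U(y)\partial_{z_{j}}\chi_{0}(\lambda y/\sqrt{\delta(T-t)})y_{j}\,dy$. The bounds \eqref{EstiPar} on $\dot\xi$ together with $|\lambda|\lesssim \sqrt{T-t}\,e^{-\sqrt{|\ln(T-t)|/2}}$ show that the principal main term is of size $\lesssim e^{-(2+\gamma_{1})\sqrt{2|\ln(T-t)|}}$, while every piece relegated to the remainder is strictly smaller than $e^{-(5/2+\gamma_{1})\sqrt{2|\ln(T-t)|}}$.

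The main obstacle is the careful bookkeeping of all the error terms, especially the $\varphi_{\lambda}$-pieces and the corrections due to $\lambda\neq\lambda_{0}$ and $\alpha\neq\alpha_{0}$. I would invoke \eqref{ExtraAssumptionSECMASSSECMOM} (so $|\lambda-\lambda_{0}|\lesssim e^{-\sqrt{2|\ln(T-t)|}}$), the pointwise bounds \eqref{boundphilambda} for $\varphi_{\lambda}$, and the fact that $\int\varphi_{\lambda}\,dx$ tends to a finite constant; throughout I would repeatedly use that $(T-t) = e^{-|\ln(T-t)|}$ decays faster than any exponential in $\sqrt{|\ln(T-t)|}$, so any polynomial-in-$(T-t)$ prefactor is absorbed into $O(e^{-(5/2+\gamma_{1})\sqrt{2|\ln(T-t)|}})$.
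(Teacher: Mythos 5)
Your classification of the pieces of $S(u_1)$ is correct in broad outline, and the identification of the two $\dot\xi$-terms as the only main contributions is right. But the treatment of the $\varphi_\lambda$-based pieces contains a genuine gap, and its use of \eqref{ExtraAssumptionSECMASSSECMOM} is misplaced.

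You claim that for these pieces ``a Taylor expansion in $\xi$ shows the whole contribution is $O(|\xi|)$ times a bounded integral,'' and conclude via $|\xi|\lesssim\sqrt{T-t}\,e^{-(3/2+\gamma_1)\sqrt{2|\ln(T-t)|}}$ that they fall below the target. The integral is not bounded, though. After the change of variables $y=(x-\xi)/\lambda$, Taylor-expanding a radial-in-$x$ function $g(\lambda y+\xi)$ against $y_j\tilde\chi\,dy$ produces $\xi_j\int g'(\lambda|y|)\frac{y_j^2}{|y|}\tilde\chi\,dy$, and the derivative brings an extra factor of size $1/\lambda$. With the dominant piece $g=\lambda^4\frac{4}{r}\partial_r\varphi_\lambda$ and the bounds of Lemma~\ref{estimatephilambda}, this yields on the order of $\frac{|\xi|}{\lambda}\,e^{-\sqrt{2|\ln(T-t)|}}\sim e^{-(2+\gamma_1)\sqrt{2|\ln(T-t)|}}$ \emph{for each of the two moments separately}. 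That is larger than the claimed remainder $e^{-(5/2+\gamma_1)\sqrt{2|\ln(T-t)|}}$, so your estimate does not close.

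What the paper does differently — and what you miss — is that neither $m_{1,j}[\lambda^4 S(u_1(\textbf{p}_0+\textbf{p}_1))\tilde\chi]$ nor $m_{1,j}[\lambda_0^4S(u_1(\textbf{p}_0))\tilde\chi_0\tilde\chi]$ is individually small enough; only their \emph{difference} is. The $-\frac{4}{r}\partial_r\varphi_\lambda$ contributions, which appear in both moments centered at the origin, must be subtracted before Taylor-expanding in $\xi$, and the residual is controlled by $\frac{|\xi|}{\lambda}\,|\lambda\dot\lambda-\lambda_0\dot\lambda_0|$. This is exactly where \eqref{ExtraAssumptionSECMASSSECMOM}, namely $|\lambda\dot\lambda-\lambda_0\dot\lambda_0|\le C e^{-\frac32\sqrt{2|\ln(T-t)|}}$, enters — as the source of the extra half in the exponent — not merely as a bookkeeping tool for generic $\lambda\neq\lambda_0$ corrections as you suggest. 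A secondary ingredient you also gloss over is that the $E$-based terms owe their smallness to the zero-mass structure of $Z_0$ (the naive bound $|\lambda\dot\lambda|\cdot\frac{|\xi|}{\lambda}\cdot O(1)$ again gives only $e^{-(2+\gamma_1)\sqrt{2|\ln(T-t)|}}$; one needs the cancellation $\int Z_0\rho\,d\rho=0$ to gain the additional $\lambda^2/(T-t)$).
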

\begin{proof}
	We can isolate the term that give some contribution in $\lambda^{4}S(u_{1}(\textbf{p}_{0}+\textbf{p}_{1}))\tilde{\chi}$
	\begin{align*}
		\lambda^{4}S(u_{1}(\textbf{p}_{0}+\textbf{p}_{1}))\tilde{\chi}=&\alpha \lambda\dot{\xi}\cdot \nabla_{y}U \chi\tilde{\chi}+\alpha \frac{\lambda^{2}}{\sqrt{T-t}}\dot{\xi}_{1,j}\int_{\R^2}U(y)\partial_{z_{j}}\chi_{0}(\frac{\lambda y }{\sqrt{\delta (T-t)}})y_{j}dy+\\
		&+\lambda^{4}\big(-\frac{4}{r}\partial_{r}\varphi_{\lambda}-\nabla \cdot(\varphi_{\lambda}\nabla v_{0})-\nabla \cdot(u_{0}\nabla \psi_{\lambda})-\nabla \cdot(\varphi_{\lambda}\nabla \psi_{\lambda})-E(x,t)\big)\tilde{\chi}+\\
		&+(...).
	\end{align*}
We stress a difference with \cite{DdPDMW}. Many terms here are centered at the origin and then they give some non-zero contribution, see also the definition of $\varphi_{\lambda}$ in \eqref{eqphilambda}. At the same time here we will show that their contribution is negligible. As a significative term we can isolate the term.
\begin{align}
	-E(x,t)\tilde{\chi}&=-\lambda\dot{\lambda}Z_{0}(\frac{x}{\lambda})\tilde{\chi}+(...)\label{MainTermINE}.
\end{align}
Now we can observe that thanks to \eqref{EstiPar}
\begin{align*}
		\lambda\dot{\lambda}\int_{\R^2}Z_{0}(y+\frac{\xi}{\lambda})\tilde{\chi}y_{j}dy=&\int_{\R^2}\big(\lambda\dot{\lambda}Z_{0}(y+\frac{\xi}{\lambda})-\lambda\dot{\lambda}Z_{0}(y)\big)\tilde{\chi}y_{j}dy=\\
		=&-\lambda\dot{\lambda}\int_{\R^2}\nabla_{y}Z_{0}(y) \cdot \frac{\xi}{\lambda}y_{j}\tilde{\chi}dy+O(e^{-(3+\gamma_{1})\sqrt{2|\ln(T-t)|}})=\\
		=&O(e^{-(3+\gamma_{1})\sqrt{2|\ln(T-t)|}}).
\end{align*}
The other terms in \eqref{MainTermINE} can be estimated similarly. We can then isolate the most important terms \begin{align*}
	\lambda^{4}S(u_{1}(\textbf{p}_{0}+\textbf{p}_{1}))\tilde{\chi}=&\alpha \lambda\dot{\xi}\cdot \nabla_{y}U \chi\tilde{\chi}+\alpha \frac{\lambda^{2}}{\sqrt{T-t}}\dot{\xi}_{1,j}\int_{\R^2}U(y)\partial_{z_{j}}\chi_{0}(\frac{\lambda y }{\sqrt{\delta (T-t)}})y_{j}dy+\nonumber\\
	&+\lambda^{4}\big(-\frac{4}{r}\partial_{r}\varphi_{\lambda}-\nabla \cdot(\varphi_{\lambda}\nabla v_{0})-\nabla \cdot(u_{0}\nabla \psi_{\lambda})-\nabla \cdot(\varphi_{\lambda}\nabla \psi_{\lambda})+(...).
\end{align*}
To study the remaining terms first we observe that for any sufficiently fast decaying functions $\phi$, $g$
\begin{align}
	&\int_{\R^2}\nabla \cdot( \phi ( \nabla (-\Delta)^ {-1}\phi))x=0, \ \ \ \int_{\R^2}\big(\nabla \cdot(\phi (\nabla(-\Delta)^{-1}g))-\nabla \cdot(g \nabla (-\Delta)^{-1}\phi)\big)xdx=0. \label{propertyCenterOfMass2}
\end{align}
Then for istance we can observe that thanks to \eqref{EstiPar} and the control of the derivatives we get from Lemma \ref{estimatephilambda}
\begin{align*}
   |\int_{\R^2}\lambda^{4}\nabla \cdot(\varphi_{\lambda}\nabla v_{0}+u_{0}\nabla \psi_{\lambda})\tilde{\chi}y_{j}dy|&\le \lambda|\xi| \int_{|x|\ge \sqrt{\delta(T-t)}}\frac{e^{-\sqrt{2|\ln(|x|^{2})}}}{|x|^{5}}|x|^{2}dx=O( e^{-(3+\gamma_{1})\sqrt{2|\ln(T-t)|}}).
\end{align*}
Then we can isolate the main order terms
\begin{align}\label{ISOLATEFirstMomLAMBDA}
	\lambda^{4}S(u_{1}(\textbf{p}_{0}+\textbf{p}_{1}))\tilde{\chi}=&\alpha \lambda\dot{\xi}\cdot \nabla_{y}U \chi\tilde{\chi}+\alpha \frac{\lambda^{2}}{\sqrt{T-t}}\dot{\xi}_{1,j}\int_{\R^2}U(y)\partial_{z_{j}}\chi_{0}(\frac{\lambda y }{\sqrt{\delta (T-t)}})y_{j}dy+\nonumber\\
	&-\lambda^{4}\frac{4}{r}\partial_{r}\varphi_{\lambda}\tilde{\chi}+(...).
\end{align}
To manage the remaining term we need to expand $\lambda_{0}^{4}S(u_{1}(\textbf{p}_{0}))\tilde{\chi}_{0}\tilde{\chi}$. Recalling that $\bar{y}_{0}=\frac{x}{\lambda_{0}}$ and $\bar{w}=\frac{x}{\sqrt{\delta(T-t)}}$, we get
\begin{align*}
	S(u_{1}(\textbf{p}_{0}))=&-\frac{\dot{\alpha}_{0}}{\lambda^{2}_{0}}U(\bar{y}_{0})\chi_{0}(\bar{w})+(\alpha_{0}-1) \frac{\dot{\lambda}_{0}}{\lambda^{3}_{0}}Z_{0}(\bar{y}_{0})\chi_{0}(\bar{w})+\\ 
	&-\frac{\alpha_{0}-1}{2\lambda^{2}_{0}(T-t)}U(\bar{y}_{0})\nabla_{\bar{w}}\chi_{0}(\bar{w})\cdot \bar{w}+\frac{2(\alpha_{0}-1)}{\lambda_{0}^{3}\sqrt{\delta(T-t)}}\nabla_{\bar{w}}\chi_{0}(\bar{w})\cdot \nabla_{\bar{y}}U(\bar{y}_{0})+ \nonumber\\
	&+\frac{\alpha_{0}-1}{\delta(T-t)}\frac{1}{\lambda_{0}^{2}}\Delta_{\bar{w}}\chi_{0}(\bar{w})U(\bar{y}_{0})-\frac{1-1/\alpha_{0}}{\lambda_{0}^{2}\sqrt{\delta(T-t)}}U(\bar{y}_{0})\nabla_{\bar{w}}\chi_{0}(\bar{w})\cdot \nabla_{x}v_{0}-\nonumber\\
	&-\frac{\alpha_{0}(\alpha_{0}-1)\chi_{0}(\bar{w})}{\lambda_{0}^{4}}\nabla_{\bar{y}}\cdot(U\nabla_{\bar{y}}\Gamma_{0})+\frac{\alpha_{0} \chi_{0}(\bar{w})}{\lambda_{0}^{4}}\big[\alpha_{0}(\chi_{0}(w)-1)U^{2}-\nabla_{\bar{y}_{0}}U\cdot \nabla_{\bar{y}_{0}}\mathcal{R}(\bar{y}_{0})\big]\nonumber\\
	&-\frac{\alpha_{0}-1}{\lambda_{0}^{2}\sqrt{\delta(T-t)}}U(\bar{y}_{0})\nabla_{\bar{w}}\chi_{0}(\bar{w})\cdot \nabla_{x}v_{0}-\frac{4}{r}\partial_{r}\varphi_{\lambda}-\nabla \cdot(\varphi_{\lambda}\nabla v_{0})-\nabla \cdot(u_{0}\nabla \psi_{\lambda})-\\
	&-\nabla \cdot(\varphi_{\lambda}\nabla \psi_{\lambda}).
\end{align*}
Almost all of these terms gives small contribution, for instance recalling \eqref{EstiPar} we have
\begin{align*}
	|\dot{\alpha}_{0}\lambda_{0}^{2}\int_{\R^2}U(\frac{\lambda_{0}}{\lambda}(y+\frac{\xi}{\lambda_{0}}))y_{j}\tilde{\chi_{0}}\tilde{\chi}dy|\le |e^{-2\sqrt{2|\ln(T-t)|}} | |\frac{\xi}{\lambda}|=O(e^{-(3+\gamma_{1})\sqrt{2|\ln(T-t)|}}).
\end{align*}
Managing the terms in divergence form involving $\varphi_{\lambda}$, recalling \eqref{propertyCenterOfMass2}, we isolate the main order term writing
\begin{align}\label{ISOLATEFirstMomLAMBDA0}
	S(u_{1}(\textbf{p}_{0}))=&-\frac{4}{r}\partial_{r}\varphi_{\lambda}+(...).
\end{align}
Keeping in mind \eqref{ISOLATEFirstMomLAMBDA}, \eqref{ISOLATEFirstMomLAMBDA0}, the desired conclusion is a consequence of the control of the second derivative of $\varphi_{\lambda}$ we have from Lemma \ref{estimatephilambda} and \eqref{ExtraAssumptionSECMASSSECMOM}. Directly estimating $\varphi_{\lambda}-\varphi_{\lambda_{0}}$ as in Lemma \ref{estimatephilambda} and observing that both $\varphi_{\lambda}$ and $\varphi_{\lambda_{0}}$ are centered at the origin we have
\begin{align*}
	|\lambda^{4}\int_{\R^2}(\frac{1}{r}\partial_{r}\varphi_{\lambda}-\frac{1}{r}\partial_{r}\varphi_{\lambda_{0}})y_{j}\tilde{\chi}dy|\le \frac{|\xi|}{\lambda} |\lambda\dot{\lambda}-\lambda_{0}\dot{\lambda_{0}}|\le C  e^{-(\frac{5}{2}+\gamma_{1})\sqrt{2|\ln(T-t)|}}.
\end{align*}
\end{proof}
\section{Proof of Theorem \ref{theorem1}}\label{proofThm1}
We define norms, adapted to the terms in the inner linear problems \eqref{innereqfinalFINAL}, \eqref{innereqfinalFINAL2} and \eqref{innereqfinalFINALMODE1}. First we introduce the variable $\tau$
\begin{align}\label{tauvar}
	\tau(t)=\tau_{0}+\int_{0}^{t}\frac{ds}{\lambda^{2}(s)}, \ \  t\in(0,T),
\end{align}
where $\tau_{0}$ is large and has been prescribed in Sections \ref{InnerTheoryIntro}, \ref{PreliminariesInnTh}, \ref{ProofProp71}, \ref{Section11} and \ref{Section12}.
Notice that $0<\tau \to \infty$ if $t\to T$, then we are studying now infinite time problems. The inner equations will be solved in the variables $y$ and $\tau$. Given positive numbers $\nu$, $p$, $\epsilon$ and $m\in \mathbb{R}$, we let
\begin{align}\label{RHSinnernorm}
	\|h\|_{0;\nu,m,p,\epsilon}=\inf \left\{ K \text{ s.t. } |h(y,\tau)|\le \frac{K}{\tau^{\nu}(\ln \tau)^{m}}\frac{1}{(1+|y|)^{p}}\begin{cases}
		1 \ \ \ \ |y|\le \sqrt{\tau}\\
		\frac{\tau^{\epsilon/2}}{|y|^{\epsilon}} \ \ \ |y|\ge \sqrt{\tau}
	\end{cases}\right\}.
\end{align}
We also define
\begin{align}\label{SolutionINNERNORM}
	\|\phi\|_{1;\nu,m,p,\epsilon}=\inf \left\{K \text{ s.t. } |\phi(y,t)|+(1+|y|)|\nabla_{y}\phi(y,t)|\le \frac{K}{\tau^{\nu}(\ln \tau)^{m}}\frac{1}{(1+|y|)^{p}}\begin{cases}
		1 \ \ \ \ |y|\le \sqrt{\tau}\\
		\frac{\tau^{\epsilon/2}}{|y|^{\epsilon}} \ \ \ \ |y|\ge \sqrt{\tau}
	\end{cases}\right\}.
\end{align}
We develop a solvability theory of problems \eqref{innereqfinalFINAL}, \eqref{innereqfinalFINAL2} and \eqref{innereqfinalFINALMODE1} that involves uniform space-time bounds in terms of the above norms. At this point we want to make explicit the initial conditions in the equations \eqref{innereqfinalFINAL} and \eqref{innereqfinalFINAL2}. We introduce the function $\tilde{Z}_{0}$ defined as
\begin{align}\label{Z0tilde}
	\tilde{Z}_{0}(|y|)=(Z_{0}(|y|)-m_{Z_{0}}U)\chi_{0}(\frac{|y|}{\sqrt{\tau_{0}}})
\end{align}
where $m_{Z_{0}}$ is such that $\int_{\mathbb{R}^{2}} \tilde{Z}_{0}=0$. In what follows we state some fundamental Propositions that will be proved in Section \ref{InnerTheoryIntro}, \ref{PreliminariesInnTh}, \ref{Section11} and \ref{Section12}. The only difference with the statement we will prove in those Sections is that here we are considering the cut-off $\hat{\chi}$ (it is sufficient to rename $\delta$) and we are also including in the norm of the solution the gradient (that can be estimated by standard parabolic estimates). We will also use the following notation
\begin{align}\label{MhatFunction}
	\hat{\chi}(x,t)=\chi_{0}(\frac{|x-\xi|}{4\sqrt{\delta(T-t)}})=\chi_{0}(\frac{|y|\lambda}{\hat{M}(\tau)}).
\end{align}
\begin{proposition}\label{PropMode0Mass0ANTICIP}
	Let $\lambda=\lambda_{0}+\lambda_{1}$ with $\lambda_{0}$ given by Proposition \ref{prop-lambda0} and let us assume \eqref{ExtraAssumptionSECMASSSECMOM}. Let $\sigma\in(0,1)$, $\epsilon>0$ with $\sigma+\epsilon<2$, $1<\nu<\frac{7}{4}$, $m\in \mathbb{R}$. Let $q\in(0,1)$. For $\tau_{0}$ sufficiently large and for all radially symmetric $h=h(|y|,\tau)$ with $\|h\|_{\nu,m,6+\sigma,\epsilon}<\infty$ and such that
	\begin{align*}
		\int_{\mathbb{R}^{2}}h(y,t)dy=0, \ \ \ \text{for all }(\tau_{0},\infty)
	\end{align*}
	there exists $c_{1}\in \mathbb{R}$ and a solution $\phi(y,\tau)=\mathcal{T}_{p}^{i,2}[h]$ of 
	\begin{align*}
		\begin{cases}
			\partial_{\tau}\phi = L[\phi]+B[\phi \hat{\chi}]+h \\
			\phi(\cdot, \tau_{0})=c_{1}\tilde{Z}_{0}
		\end{cases}
	\end{align*}
	that define a linear operators of $h$ and satisfy the estimate
	\begin{align*}
		\|\phi\|_{1;\nu-1,m+q+1,4,2+\sigma+\epsilon}\le \frac{C}{(\ln \tau_{0})^{1-q}}\|h\|_{0;\nu,m,6+\sigma,\epsilon}.
	\end{align*}
	\begin{align*}
		|c_{1}|\le C \frac{1}{\tau_{0}^{\nu-1}(\ln \tau_{0})^{m+2}}\|h\|_{\nu,m,6+\sigma,\epsilon}.
	\end{align*} 
\end{proposition}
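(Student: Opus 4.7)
The plan is to solve this radial mode-$0$ inner problem by setting up a Banach fixed-point iteration, treating the $B$-term as a perturbation of the simpler linear equation $\partial_\tau\phi=L[\phi]+h$. I would take as a black box the underlying linear theory for the unperturbed equation with radial mass-zero source and initial condition of the form $\tilde c\,\tilde Z_0$; such a theory will be established in Section \ref{Section11}, analogous to the infinite-time construction in \cite{DdPDMW}. Denote its solution operator by $\mathcal T_0$, producing from $g$ a solution with the bound $\|\mathcal T_0[g]\|_{1;\nu-1,m+1,4,2+\sigma+\epsilon}\le C\|g\|_{0;\nu,m,6+\sigma,\epsilon}$, together with a scalar $\tilde c(g)$ chosen to kill the non-decaying mode and satisfying $|\tilde c(g)|\le C\tau_0^{-(\nu-1)}(\ln\tau_0)^{-(m+2)}\|g\|_{0;\nu,m,6+\sigma,\epsilon}$.

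First I would check compatibility with mass preservation. Every term of $L$ in \eqref{LinOperator} is in divergence form, and $B[\phi\hat\chi]=\lambda\dot\lambda(2\phi\hat\chi+y\cdot\nabla(\phi\hat\chi))$ integrates to zero in $\R^2$ since $\int(2f+y\cdot\nabla f)=0$ for compactly supported $f$. The normalization in \eqref{Z0tilde} gives $\int\tilde Z_0=0$. Hence $\int h(\cdot,\tau)=0$ together with $\phi(\tau_0)=c_1\tilde Z_0$ forces $\int\phi(\cdot,\tau)=0$ for all $\tau$, which is exactly the hypothesis under which the base theory applies. I would then reformulate the equation as the fixed-point problem
\[
\phi=\mathcal T_0\bigl[h+B[\phi\hat\chi]\bigr],\qquad c_1:=\tilde c\bigl(h+B[\phi\hat\chi]\bigr),
\]
in the Banach space $X$ defined by $\|\phi\|_{1;\nu-1,m+q+1,4,2+\sigma+\epsilon}<\infty$.

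The heart of the argument is the perturbation bound
\[
\bigl\|B[\phi\hat\chi]\bigr\|_{0;\nu,m+q,6+\sigma,\epsilon'}\le \frac{C}{(\ln\tau_0)^{1-q}}\,\|\phi\|_{1;\nu-1,m+q+1,4,2+\sigma+\epsilon}.
\]
To prove it, I would use: (i) the finite-time estimate $|\lambda\dot\lambda|\lesssim(\ln\tau)/\tau$ coming from Proposition \ref{prop-lambda0} together with the smallness assumption \eqref{ExtraAssumptionSECMASSSECMOM} on $\lambda_1$; (ii) the support property of $\hat\chi$ from \eqref{MhatFunction}, which confines $B[\phi\hat\chi]$ to $|y|\lesssim\hat M(\tau)$ with $\hat M(\tau)$ bounded by a small power of $\tau$; (iii) the pointwise bound $2|\phi|+|y||\nabla\phi|\le 3\|\phi\|_X\,\tau^{-(\nu-1)}(\ln\tau)^{-(m+q+1)}(1+|y|)^{-4}$ from the definition \eqref{SolutionINNERNORM}. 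Paying the missing spatial powers $(1+|y|)^{2+\sigma}$ inside the support of $\hat\chi$ produces an extra factor $\hat M(\tau)^{2+\sigma}$, which combines with the $(\ln\tau)/\tau$ coming from $\lambda\dot\lambda$ and the extra logarithm in the solution-norm denominator to land in the target source norm with the factor $(\ln\tau_0)^{-(1-q)}$ to spare. Composing with $\mathcal T_0$ and using its mapping property then gives a contraction on $X$ for $\tau_0$ large, whose fixed point is the desired $\mathcal T_p^{i,2}[h]$. The bound on $c_1$ is immediate from the $\tilde c$ estimate of the base theory applied to the source $h+B[\phi\hat\chi]$, whose source norm is controlled by $\|h\|_{0;\nu,m,6+\sigma,\epsilon}$.

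The hard part will be verifying the perturbation bound with the correct logarithmic gain, because in finite time $\lambda\dot\lambda$ is larger by a factor $(\ln\tau)^2$ than its infinite-time analogue, so the fact that $\hat\chi$ is present (rather than, say, $\tilde\chi$) is essential: the smaller support it provides is what allows the conversion of the weaker $(1+|y|)^{-4}$ decay of $\phi$ into the $(1+|y|)^{-(6+\sigma)}$ decay required by the source norm without destroying the gain. The two-regime structure of the norms \eqref{RHSinnernorm}--\eqref{SolutionINNERNORM} across $|y|\lessgtr\sqrt\tau$ must be matched carefully, and the constraints $1<\nu<7/4$, $\sigma\in(0,1)$, $\sigma+\epsilon<2$ are precisely what make this matching possible while keeping the iteration inside the range of applicability of the base theory.
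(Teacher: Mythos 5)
The fixed-point scheme you propose breaks down at the central perturbation estimate. For $\phi$ with $\|\phi\|_{1;\nu-1,m+q+1,4,2+\sigma+\epsilon}\le 1$ one has $|\phi|+|y||\nabla\phi|\lesssim \tau^{-(\nu-1)}(\ln\tau)^{-(m+q+1)}(1+|y|)^{-4}$, and $|\lambda\dot\lambda|\lesssim (\ln\tau)/\tau$, so pointwise
\begin{align*}
|B[\phi\hat\chi]|\;\lesssim\;\frac{1}{\tau^{\nu}(\ln\tau)^{m+q}}\,\frac{\hat\chi}{(1+|y|)^{4}}.
\end{align*}
To land in $\|\cdot\|_{0;\nu,m,6+\sigma,\epsilon}$ you must trade this for $(1+|y|)^{-(6+\sigma)}$, which costs $(1+|y|)^{2+\sigma}$ on the support $\{|y|\lesssim\hat M(\tau)\}$. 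Since $\hat M(\tau)\approx c\sqrt{\delta\tau/\ln\tau}$, this is $\hat M(\tau)^{2+\sigma}\approx c'\,\tau^{1+\sigma/2}(\ln\tau)^{-(1+\sigma/2)}$, which grows unboundedly and overwhelms the factor you gained: the product $\frac{\ln\tau}{\tau}\,\hat M(\tau)^{2+\sigma}\asymp\tau^{\sigma/2}(\ln\tau)^{-\sigma/2}\to\infty$. So $\|B[\phi\hat\chi]\|_{0;\nu,m,6+\sigma,\epsilon}$ is actually infinite for a generic $\phi$ in your ball, and no choice of $\tau_0$ large or $\delta$ small produces a contraction. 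The issue is that in this radial mode-$0$ result the solution norm gains a full power of $\tau$ and two spatial powers over the source norm simultaneously, so one cannot round-trip through $B$; this is qualitatively different from Proposition~\ref{PropMode1}, where solution and source share the same temporal rate and the spatial gap is exactly $2$, in which case $\lambda\dot\lambda\,M^2\lesssim\delta$ does give a small constant and a contraction is viable.

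The paper therefore does not treat $B$ as a perturbation of the $B$-free equation. The proof in Section~\ref{ProofProp71} (Lemmas~\ref{Lemma101}--\ref{Lemma109}) is an a priori energy argument for the full operator $L+\tilde B$: it decomposes $\phi=\phi^\perp+\tfrac{a}{2}Z_0$, tests against $g^\perp$ with the $\tilde B$-contribution estimated directly (Lemma~\ref{Lemma105}), uses the spectral-gap inequality of Lemma~\ref{Lemma106} to get time decay of $\int\phi g^\perp$, upgrades $L^2$ to pointwise bounds via barriers (Lemmas~\ref{Lemma107}, \ref{barriersUg} -- where the constraints $1<\nu<\tfrac74$, $\sigma+\epsilon<2$ actually originate), and, crucially, bounds $a(\tau)$ by \emph{solving exactly} the second-moment ODE $\partial_\tau S-\lambda\dot\lambda(k-4)S=F$ in which the $\lambda\dot\lambda$ coefficient is retained on the left-hand side rather than pushed into the error. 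That last step, together with the initial data $c_1\tilde Z_0$ chosen to normalize $a(\tau_1)=0$, is where the extra logarithmic gain $(\ln\tau_0)^{-(1-q)}$ comes from, and it has no analogue in a contraction scheme. The cut-off $\hat\chi$ serves not to make $B[\phi\hat\chi]$ small in the source norm, but only to restore the pointwise inequality \eqref{CrucialBarrier} needed for the barrier construction.
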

	The proof of Proposition \ref{PropMode0Mass0ANTICIP} will be given in Section \ref{ProofProp71}.
\begin{proposition}\label{PropMode0MassSecMom0ANTICIP}
Let $\lambda=\lambda_{0}+\lambda_{1}$ with $\lambda_{0}$ given by Proposition \ref{prop-lambda0} and let us assume \eqref{ExtraAssumptionSECMASSSECMOM}. Let $\sigma\in(0,1)$, $\varepsilon>0$, $\sigma+\varepsilon<2$, $1<\nu<\min(1+\frac{\varepsilon}{2},3-\frac{\sigma}{2},\frac{3}{2})$, $m\in \mathbb{R}$. Let $0<q<1$. There exists a number $C>0$ such for $\tau_{0}$ sufficiently large and for all radially symmetric $h=h(|y|,\tau)$ with $\|h\|_{0;\nu,m,6+\sigma,\epsilon}<\infty$ and
	\begin{align*}
		\int_{\mathbb{R}^{2}}h(y,\tau)dy=0, \ \ \ \text{for all }(\tau_{0},\infty), \ \ \ \int_{\mathbb{R}^{2}}h(y,\tau)|y|^{2}dy=0 \ \ \ \ \text{for all}(\tau_{0},\infty)
	\end{align*}
	there exists $c_{1}\in \mathbb{R}$, an operator $\mathcal{E}$ and a solution $\phi(y,\tau)=\mathcal{T}_{p}^{i,1}[h]$ of 
	\begin{align*}
		\begin{cases}
			\partial_{\tau}\phi = L[\phi]+B[\phi]+\mathcal{E}[\phi]+h \\
			\phi(\cdot, \tau_{0})=c_{1}L[\tilde{Z}_{0}]
		\end{cases}
	\end{align*}
	that defines a linear operator of $h$ and satisfies the estimate
	\begin{align*}
		\|\phi\|_{1;\nu-\frac{1}{2},m+\frac{q+1}{2},4,2}\le C\|h\|_{0;\nu,m,6+\sigma,\epsilon}.
	\end{align*}
	Moreover we know that $\mathcal{E}[\phi]$ is radial, it satisfies $\int_{\R^2}\mathcal{E}[\phi](y)dy=0$, $ \int_{\R^2}\mathcal{E}[\phi]|y|^{2}dy=0$ and
	\begin{align}\label{EstimateEFORGLUING}
		\ \ |\mathcal{E}[\phi](y)|dy\le C \|h\|\frac{1}{\tau^{\nu}\ln^{m+q}\tau} \begin{cases}
			\frac{1}{\hat{M}^{2}(\tau)}\frac{1}{(1+\rho)^{6}} \ \ \ \ |y|\le \hat{M}(\tau) \\
			\frac{1}{1+|y|^{6}} \ \ \ |y|\ge \hat{M}(\tau).
		\end{cases}
	\end{align}
	Moreover $c_{1}$ is a linear operator of $h$ and
	\begin{align*}
		|c_{1}|\le C \frac{1}{\tau_{0}^{\nu-1}(\ln \tau_{0})^{m+2}}\|h\|_{0,\nu,m,6+\sigma,\epsilon}.
	\end{align*} 	
\end{proposition}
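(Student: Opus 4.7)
My plan is to reduce Proposition \ref{PropMode0MassSecMom0ANTICIP} to Proposition \ref{PropMode0Mass0ANTICIP} by splitting off an elliptic profile that cancels $h$ at leading order. Since $h(\cdot,\tau)$ is radial and satisfies both $\int_{\R^2} h\,dy=0$ and $\int_{\R^2}h|y|^2\,dy=0$, Lemma \ref{ellipticsol} produces, for each fixed $\tau$, a radial zero-mass solution $\Phi_\star(\cdot,\tau)$ of $L[\Phi_\star]=h$ with pointwise bound $|\Phi_\star(y,\tau)|\lesssim\|h\|_{0;\nu,m,6+\sigma,\epsilon}\,\tau^{-\nu}(\ln\tau)^{-m}(1+|y|)^{-(4+\sigma)}$; linearity in $h$ propagates the analogous bound to $\partial_\tau\Phi_\star$ at the expected cost of a factor $1/\tau$ together with the logarithmic loss that will be absorbed by the exponent $(q+1)/2$ in the target norm.

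\textbf{The operator $\mathcal{E}$ and reformulation.} Setting $\phi=\Phi_\star+\tilde\phi$, the unknown $\tilde\phi$ satisfies
\begin{align*}
\partial_\tau\tilde\phi=L[\tilde\phi]+B[\tilde\phi\hat\chi]+\tilde h,\qquad \tilde h:=-\partial_\tau\Phi_\star+B[\Phi_\star]+B[(\Phi_\star+\tilde\phi)(1-\hat\chi)]+\mathcal{E}[\phi].
\end{align*}
I define $\mathcal{E}[\phi]:=a[\phi]\,W_0^\star+b[\phi]\,W_2^\star$, where $W_0^\star$ and $W_2^\star$ are fixed radial bumps supported in the annulus $|y|\sim\hat M(\tau)$ and normalised so that $\int W_0^\star=1$, $\int W_0^\star|y|^2=0$, $\int W_2^\star=0$, $\int W_2^\star|y|^2=1$; the scalars $a[\phi],b[\phi]$ are then determined by the two requirements that $\int\tilde h\,dy=0$ (needed to invoke Proposition \ref{PropMode0Mass0ANTICIP}) and that $\mathcal{E}[\phi]$ itself has zero mass and zero second moment (as stated in the claim). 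This is a triangular, trivially invertible $2\times 2$ linear system on $(a,b)$; the localisation of $W_0^\star,W_2^\star$ at scale $\hat M(\tau)$ is precisely what yields the prefactor $1/\hat M(\tau)^2$ on $|y|\le\hat M(\tau)$ in \eqref{EstimateEFORGLUING}.

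\textbf{Fixed point and main obstacle.} With $\tilde h$ having zero mass by construction, I invoke Proposition \ref{PropMode0Mass0ANTICIP} to produce $\tilde\phi$ and close the scheme by a contraction in the ball of the target norm $\|\cdot\|_{1;\nu-1/2,m+(q+1)/2,4,2}$; the contraction exploits that the $\tilde\phi$-dependent pieces of $\tilde h$ (namely $B[\tilde\phi(1-\hat\chi)]$ and the portion of $\mathcal{E}[\phi]$ inherited from it) are supported where $|y|\gtrsim\hat M(\tau)$, and the smallness of $1/\hat M(\tau)$ supplies the contraction constant. The scalar $c_1$ emerging from Proposition \ref{PropMode0Mass0ANTICIP} is adjusted at the end so that $\phi(\cdot,\tau_0)=\Phi_\star(\cdot,\tau_0)+\tilde\phi(\cdot,\tau_0)$ takes the prescribed form $c_1 L[\tilde Z_0]$; this uses that $L[\tilde Z_0]$ is a distinguished radial zero-mass, zero second-moment profile not already in the range of our elliptic solution operator. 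The main obstacle is to control the $\tau$-decay of the dangerous term $B[\Phi_\star]\sim\lambda\dot\lambda\,\Phi_\star$, which in the finite-time regime carries the large factor $\lambda\dot\lambda\sim\ln\tau/\tau$ (in sharp contrast with $1/(\tau\ln\tau)$ in \cite{DdPDMW}); naively this would generate a full extra power of $\ln\tau$ in the output. However, $\Phi_\star$ has zero mass by construction, so only its second moment contributes to the mass projection handled by $b[\phi]\,W_2^\star$; combined with the $\tau^{-1/2}$ gain provided by Proposition \ref{PropMode0Mass0ANTICIP}, this cancellation converts the potential loss into exactly the tame $(q+1)/2$ logarithmic factor claimed in the statement.
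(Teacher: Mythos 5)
Your decomposition $\phi=\Phi_\star+\tilde\phi$, with $\Phi_\star(\cdot,\tau)$ the elliptic profile solving $L[\Phi_\star]=h(\cdot,\tau)$ for each fixed $\tau$, is not the route the paper takes, and it runs into two obstructions that the paper's method is specifically designed to circumvent.

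First, your scheme places $-\partial_\tau\Phi_\star$ into the right-hand side for $\tilde\phi$, and you assert the bound on $\partial_\tau\Phi_\star$ follows by linearity ``at the expected cost of a factor $1/\tau$''. But the hypothesis is only $\|h\|_{0;\nu,m,6+\sigma,\epsilon}<\infty$, which is a pointwise control on $h$, not on $\partial_\tau h$. Since $\Phi_\star(\cdot,\tau)$ depends on $h(\cdot,\tau)$ linearly but instantaneously, controlling $\partial_\tau\Phi_\star$ requires controlling $\partial_\tau h$, which is simply not available. The paper avoids differentiating anything in $\tau$ by working at the level of $\Phi=L^{-1}[\phi]$: it feeds $L^{-1}[h]$ (not $\Phi_\star$) directly into the \emph{parabolic} equation for $\Phi$ and never separates off a time-slice by time-slice elliptic correction (see Lemma \ref{Phi1Lemma}, where $\tilde H=L_0^{-1}[H]$ is used as a source in the heat equation for $\tilde\Phi_1$, then $\Phi_1=L_0[\tilde\Phi_1]$, so no $\partial_\tau H$ ever appears).

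Second, even if time regularity were available, the term $B[\Phi_\star\hat\chi]\sim\lambda\dot\lambda\,\Phi_\star\hat\chi$ has spatial decay $(1+|y|)^{-(4+\sigma)}$ on $1\lesssim|y|\lesssim\hat M(\tau)$, inherited from Lemma \ref{ellipticsol} with $\gamma=6+\sigma$. This is strictly slower than the $(1+|y|)^{-(6+\sigma)}$ decay required by the norm $\|\cdot\|_{0;\nu,m,6+\sigma,\epsilon}$ in which Proposition~\ref{PropMode0Mass0ANTICIP} accepts its input. Your operator $\mathcal{E}[\phi]=a[\phi]W_0^\star+b[\phi]W_2^\star$ merely subtracts the mass and second moment; it is a rank-two modification supported on a fixed annulus, and it cannot repair a slowly decaying bulk tail on the entire range $|y|\lesssim\hat M(\tau)$. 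So $\tilde h$ does not belong to the admissible class, and Proposition~\ref{PropMode0Mass0ANTICIP} cannot be invoked.

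The paper's actual argument is structurally different: it sets $\Phi=L^{-1}[\phi]$ and solves the lifted equation \eqref{prototypesecmom} for $\Phi$ (with $L^{-1}[h]$ as source, which has zero mass because $h$ has zero mass \emph{and} zero second moment), then recovers $\phi=L[\Phi]$. The operator $\mathcal{E}$ arises not as a finite-rank projection but from the commutator identity $L\circ\Lambda-\Lambda\circ L=-\nabla\cdot(\cdot\,Uy)+2L(\cdot)+\nabla\cdot(Z_0\nabla(-\Delta)^{-1}\cdot)$ (cf.\ \eqref{InfiniteTimeCommutator}), combined with the localisation $\Lambda(\Phi\chi)-\Lambda\Phi=\Lambda(\Phi(\chi-1))$; this yields precisely the $\hat M(\tau)^{-2}$-weighted estimate \eqref{EstimateEFORGLUING}. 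That mechanism is the missing idea in your proposal.
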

The proof of Proposition \ref{PropMode0MassSecMom0ANTICIP} will be given in Section \ref{Section11}.
\begin{proposition}\label{PropMode1ANTICIP}
Let $\lambda=\lambda_{0}+\lambda_{1}$ with $\lambda_{0}$ given by Proposition \ref{prop-lambda0} and let us assume \eqref{ExtraAssumptionSECMASSSECMOM}. Let $0<\varsigma<1$, $0<\epsilon<2$, $1<\nu<\min(1+\frac{\varepsilon}{2},\frac{3}{2}-\frac{\varsigma}{2})$ and $m\in \mathbb{R}$. There exists a number $C>0$ such for $\tau_{0}$ sufficiently large and for all $h=h(y,\tau)$ such that $[h]_{\text{rad}}=0$ with $\|h\|_{\nu,m,5+\varsigma,\epsilon}<\infty$ and
	\begin{align*}
		\int_{\mathbb{R}^{2}}h(y,\tau)y_{j}dy=0,  \ \ j=1,2\ \ \ \text{for all }(\tau_{0},\infty),
	\end{align*}
	there exists an operator $\mathcal{F}[\phi]$ and a solution $\phi(y,\tau)=\mathcal{T}_{p}^{i,3}[h]$ of 
	\begin{align*}
		\begin{cases}
			\partial_{\tau}\phi = L[\phi]+B[\phi \hat{\chi}]+\mathcal{F}[\phi]+h \\
			\phi(\cdot, \tau_{0})=0
		\end{cases}
	\end{align*}
	that defines a linear operator of $h$ and satisfies the estimate
	\begin{align*}
		\|\phi\|_{1;\nu,m,3+\varsigma,2+\epsilon}\le C\|h\|_{0;\nu,m,5+\varsigma,\epsilon}.
	\end{align*}
	Moreover $\mathcal{F}[\phi]$ does not have radial mode and 
	\begin{align}\label{EstimateFFORGLUING}
		|\mathcal{F}[\phi]|\le \frac{1}{\tau^{\nu+1}(\ln \tau)^{m-1}}\frac{1}{\hat{M}^{\varsigma}}(|W_{1}(y)|+|W_{2}(y)|).
	\end{align}
\end{proposition}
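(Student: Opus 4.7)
The plan is a fixed-point argument in the weighted space defined by $\|\cdot\|_{1;\nu,m,3+\varsigma,2+\epsilon}$, built on top of a linear solvability theory for the unperturbed equation $\partial_\tau\phi = L[\phi] + h$.

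\emph{Step 1: unperturbed linear theory.} I would decompose both $h$ and $\phi$ into angular Fourier modes. Since $[h]_{\rm rad}=0$, only modes $k\ge 1$ appear, and $L$ preserves them. In mode $k=1$ the kernel of $L$ among decaying functions is spanned by $\partial_{y_1}U,\partial_{y_2}U$, so the two orthogonality conditions $\int_{\R^2} h\,y_j\,dy=0$ are exactly what is required to solve with the prescribed decay. For $k\ge 2$ no obstruction is present. By Duhamel's formula together with pointwise parabolic barriers tailored to the weights $\tau^{-\nu}(\ln\tau)^{-m}(1+|y|)^{-(5+\varsigma)}$ (with the usual far-field correction), one constructs a linear operator $\widetilde{\mathcal{T}}$ sending an $h$ of norm $\|\cdot\|_{0;\nu,m,5+\varsigma,\epsilon}$ to a solution, zero at $\tau_0$, bounded in $\|\cdot\|_{1;\nu,m,3+\varsigma,2+\epsilon}$. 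The restrictions $1<\nu<\min(1+\tfrac{\varepsilon}{2},\tfrac{3}{2}-\tfrac{\varsigma}{2})$ are what guarantee convergence of the weighted time integrals and the gain of two extra powers of $|y|^{-1}$ in the inner region.

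\emph{Step 2: absorbing the forbidden projection through $\mathcal{F}$.} The perturbation $B[\phi\hat\chi]=\lambda\dot\lambda\bigl(2(\phi\hat\chi)+y\cdot\nabla(\phi\hat\chi)\bigr)$ generally fails the first-moment orthogonality required by $\widetilde{\mathcal{T}}$. I would set
\[
\mathcal{F}[\phi] := -\sum_{j=1,2}\Bigl(\int_{\R^2} B[\phi\hat\chi]\,y_j\,dy\Bigr)\,W_{1,j}(y),
\]
so that $B[\phi\hat\chi]-\mathcal{F}[\phi]$ has zero first moment and hence lies in the admissible class for $\widetilde{\mathcal{T}}$. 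Since each $W_{1,j}$ is mode one, $\mathcal{F}[\phi]$ has no radial part, as required. The pointwise bound \eqref{EstimateFFORGLUING} follows from the identity $B[\phi\hat\chi]=\lambda\dot\lambda\,\nabla\cdot(y\phi\hat\chi)$: integrating by parts replaces $\int B[\phi\hat\chi]\,y_j\,dy$ by $-\lambda\dot\lambda\int \phi\hat\chi\,y_j\,dy$, and then using $|\lambda\dot\lambda|\sim \ln\tau/\tau$, the a priori decay of $\phi$, and the restriction of the support of $\hat\chi$ to $|y|\lesssim \hat M(\tau)$ produces the stated factors $\tau^{-(\nu+1)}(\ln\tau)^{1-m}\hat M^{-\varsigma}$.

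\emph{Step 3: contraction.} Define
\[
\mathcal{A}[\phi] := \widetilde{\mathcal{T}}\bigl[B[\phi\hat\chi]-\mathcal{F}[\phi]+h\bigr]
\]
on the closed ball of radius $C\|h\|_{0;\nu,m,5+\varsigma,\epsilon}$ in $\|\cdot\|_{1;\nu,m,3+\varsigma,2+\epsilon}$. The crucial a priori estimate is
\[
\bigl\|B[\phi\hat\chi]-\mathcal{F}[\phi]\bigr\|_{0;\nu,m,5+\varsigma,\epsilon}\le \frac{C\ln\tau_0}{\tau_0}\,\|\phi\|_{1;\nu,m,3+\varsigma,2+\epsilon},
\]
where the factor $\ln\tau/\tau$ comes from $|\lambda\dot\lambda|$ and the spatial decay is preserved thanks to the truncation by $\hat\chi$. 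Choosing $\tau_0$ large gives the contraction and, by linearity of the whole construction in $h$, the stated linear dependence and norm bound on $\mathcal{T}_p^{i,3}[h]$.

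\emph{Main obstacle.} The principal difficulty, flagged in the introduction, is that in finite time $\lambda\dot\lambda\sim -\tfrac{\ln\tau}{2\tau}$ is larger by a logarithm than in the infinite-time companion of \cite{DdPDMW}, so $B[\phi\hat\chi]$ is a genuinely more singular perturbation. The role of the cutoff $\hat\chi$ in $B[\phi\hat\chi]$ is essential: without it, the transport-type term $y\cdot\nabla\phi$ would destroy the spatial decay needed for the target norm, and the fixed-point argument could not be closed. The bulk of the technical work lies in matching the logarithmic exponents between $B$, $\mathcal{F}$ and the output of $\widetilde{\mathcal{T}}$ so that the fixed-point ball is preserved with a contraction factor that goes to zero as $\tau_0\to\infty$.
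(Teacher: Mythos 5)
Your overall architecture — quote the unperturbed linear solvability in modes $k\ge1$, build $\mathcal{F}[\phi]=-\sum_j\bigl(\int B[\phi\hat\chi]\,y_j\bigr)W_{1,j}$ to restore the first–moment orthogonality, and then close a fixed point on the ball of the target norm — is exactly what the paper does (the unperturbed theory is quoted as Proposition~12.1 of \cite{DdPDMW}). But two quantitative steps in your Steps~2--3 are off, and as stated they do not close the argument.

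\textbf{Source of smallness for $B[\phi\hat\chi]$.} You claim $\|B[\phi\hat\chi]\|_{0;\nu,m,5+\varsigma,\epsilon}\le C\frac{\ln\tau_0}{\tau_0}\|\phi\|_{1;\nu,m,3+\varsigma,2+\epsilon}$ and say the spatial decay is ``preserved'' by the truncation. It is not: the input norm $\|\phi\|_1$ carries only $(1+|y|)^{-(3+\varsigma)}$ while the output norm $\|\cdot\|_0$ demands $(1+|y|)^{-(5+\varsigma)}$, so you must manufacture two extra powers of $(1+|y|)^{-1}$. On the support of $\hat\chi$, $\;|y|\le 2\hat M(\tau)$ with $\hat M^2(\tau)\approx c\,\delta\tau/\ln\tau$, so $(1+|y|)^{-(3+\varsigma)}\le 4\hat M^2\,(1+|y|)^{-(5+\varsigma)}$, and combining with $|\lambda\dot\lambda|\approx\frac{\ln\tau}{2\tau}$ yields
$
|B[\phi\hat\chi]|\lesssim \frac{\ln\tau}{\tau}\cdot\hat M^2\cdot\frac{\|\phi\|_1}{\tau^\nu(\ln\tau)^m(1+|y|)^{5+\varsigma}}\approx C\delta\cdot\frac{\|\phi\|_1}{\tau^\nu(\ln\tau)^m(1+|y|)^{5+\varsigma}}.
$
The contraction factor for this piece is therefore $C\delta$ (the fixed small cutoff parameter), \emph{not} a power of $\tau_0^{-1}$. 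The paper explicitly invokes exactly this: $\|B[\phi\chi]\|_{0;\nu,m,5+\sigma,\varepsilon}\le C\delta(\|\phi\|_1+\||y|\nabla\phi\|_1)$. Your estimate is not wrong in spirit — you do identify a small factor — but the mechanism and the right small parameter are different, and without the $\hat M^2$ trade-off your bound does not land in $\|\cdot\|_{0;\nu,m,5+\varsigma,\epsilon}$ at all.

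\textbf{The $\hat M^{-\varsigma}$ factor in \eqref{EstimateFFORGLUING}.} You derive $\int B[\phi\hat\chi]y_j=-\lambda\dot\lambda\int\phi\hat\chi\,y_j$ and then claim that the support restriction of $\hat\chi$ gives the $\hat M^{-\varsigma}$. As it stands, $\int_{|y|\le 2\hat M}|y\|\phi|\lesssim\int_0^{2\hat M}\frac{r\,dr}{(1+r)^{2+\varsigma}}$, which is $O(1)$, not $O(\hat M^{-\varsigma})$. The missing ingredient is that the solution produced by the unperturbed theory satisfies $\int\phi\,y_j\,dy=0$, so $\int\phi\hat\chi\,y_j=-\int\phi(1-\hat\chi)\,y_j$, an integral over $|y|\ge\hat M$ where $\phi$ has decay $(1+|y|)^{-(3+\varsigma)}$; \emph{that} exterior tail gives $\hat M^{-\varsigma}$. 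This identity is precisely what the paper uses and is needed both for \eqref{EstimateFFORGLUING} and to verify that the $\mathcal{F}$–projection is small enough to be absorbed by taking $\tau_0$ large. You should also note that the vanishing of $\int\phi\,y_j$ is self-consistent under the fixed point because $h+B[\phi\hat\chi]-\mathcal{F}[\phi]$ has zero first moments by construction.
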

The proof of Proposition \ref{PropMode1ANTICIP} will be given in Section \ref{Section12}.\newline
At this point we want to consider the linear outer problem
\begin{align}\label{OuterEqfinSecANTICIP}
	\begin{cases}
		\partial_{t}\phi^{o}=L^{o}[\phi^{o}]+g(x,t), \ \ \ \text{in }\mathbb{R}^{2}\times(0,T)\\
		\phi^{o}(\cdot,0)=0, \ \ \text{in }\mathbb{R}^{2},
	\end{cases}
\end{align}
where 
\begin{align*} 
	L^{o}[\phi]:=\Delta_{x}\phi-\nabla_{x}\big[\Gamma_{0}\big(\frac{x-\xi(t)}{\lambda(t)}\big)\big]\cdot \nabla_{x}\phi=\Delta_{x}\phi+4\frac{x-\xi}{\lambda^{2}+|x-\xi|^{2}}\cdot \nabla_{x}\phi.
\end{align*}
For $g:\mathbb{R}^{2}\times(0,T)\to \mathbb{R}$ we consider the norm $\|g\|_{\star\star,0}$ defined as the least $K$ such that for all $(x,t)\in \mathbb{R}^{2}\times(0,T)$
\begin{align*}
	|g(x,t)|\le K \begin{cases}
		\frac{e^{-a\sqrt{2|\ln(|x|^{2}+(T-t))|}}}{(|x|^{2}+(T-t))^{2}|\ln((T-t)+|x|^{2})|^{b} }\ \ \ \ \text{if } 0\le |x|\le \sqrt{T}\\[7pt] 
		\frac{e^{-a\sqrt{2|\ln T|}}}{T^{2}|\ln T|^{b}}e^{-\frac{|x|^{2}}{4(t+T)}} \ \ \ \ \text{if }|x|\ge \sqrt{T}.
	\end{cases}
\end{align*}
where $a>0$, $b\in \mathbb{R}$ to be fixed. We also define the norm $\|\phi\|_{\star,o}$ as the least $K$ such that 
\begin{align}\label{OUTERNORMSGLU}
	|\phi^{o}(x,t)|+(\lambda+|x|)|\nabla_{x}\phi^{o}(x,t)|\le K\begin{cases}
		\frac{e^{-a\sqrt{2|\ln(|x|^{2}+(T-t))|}}}{(|x|^{2}+(T-t))|\ln((T-t)+|x|^{2})|^{b} }\ \ \ \ \text{if } 0\le |x|\le \sqrt{T}\\[7pt] 
		\frac{e^{-a\sqrt{2|\ln T|}}}{T|\ln T|^{b}}e^{-\frac{|x|^{2}}{4(t+T)}} \ \ \ \ \text{if }|x|\ge \sqrt{T}.
	\end{cases}
\end{align}
\begin{proposition}\label{Prop131ANTICIP}
	Let $a>0$, $b\in \mathbb{R}$ and $\lambda$, $\xi$, satisfy \eqref{EstiLambda}, \eqref{EstiPar}. Then there is a constant $C$ so that for $T$ sufficiently small and for $\|g\|_{\star\star,o}<\infty$ there exists a solution $\phi^{o}=\mathcal{T}^{o}_{p}[g]$ of \eqref{OuterEqfinSec}, which defines a linear operator of $g$ and satisfies
	\begin{align*}
		\|\phi^{o}\|_{\star,o}\le C \|g\|_{\star\star,o}.
	\end{align*}
\end{proposition}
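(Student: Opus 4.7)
\medskip

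\noindent\textbf{Proof proposal.} The plan is to argue by a barrier/comparison scheme in the spirit of the proofs of Lemmas \ref{estimatephilambda} and \ref{Derestimatephilambda}, taking advantage of the fact that the drift coefficient in $L^{o}$ is, up to the subprincipal tail $\lambda^{2}/(\lambda^{2}+|x-\xi|^{2})$, exactly the drift of the six-dimensional Laplacian $\Delta_{6}$ appearing in \eqref{SixDimLap}. More precisely, one rewrites
\[
L^{o}[\phi]=\Delta\phi+\frac{4(x-\xi)}{|x-\xi|^{2}}\cdot\nabla\phi-\frac{4\lambda^{2}(x-\xi)}{|x-\xi|^{2}(\lambda^{2}+|x-\xi|^{2})}\cdot\nabla\phi,
\]
so that $L^{o}=\Delta_{6}+R$ where $R$ is a lower order zero-th-order-in-derivative perturbation, confined to $|x-\xi|\lesssim\lambda$ and easily dominated by the principal part thanks to \eqref{EstiLambda}. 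Existence of a solution of \eqref{OuterEqfinSecANTICIP} with $\|g\|_{\star\star,o}<\infty$ is obtained by solving the problem on an exhausting family of bounded domains $B_{R}\times(0,T)$ with zero Dirichlet data and passing to the limit $R\to\infty$, the a priori estimate below ensuring compactness.

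The core of the argument is the construction of a radial supersolution. Writing $r=|x-\xi(t)|$, I look for $\bar\phi=\bar\phi(r,t)$ of the form
\begin{align*}
\bar\phi(r,t)=C_{0}\frac{e^{-a\sqrt{2|\ln(r^{2}+T-t)|}}}{(r^{2}+T-t)|\ln(r^{2}+T-t)|^{b}}\,\chi_{0}\!\left(\tfrac{r}{\sqrt{T}}\right)+C_{1}\frac{e^{-a\sqrt{2|\ln T|}}}{T|\ln T|^{b}}\,e^{-\frac{r^{2}}{4(t+T)}},
\end{align*}
exactly mirroring the two-region barrier in the proof of Lemma \ref{estimatephilambda}. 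A direct (but careful) computation gives, for appropriate $C_{0},C_{1}>0$,
\[
(\partial_{t}-\Delta_{6})\bar\phi\ \ge\ c\Bigl[\tfrac{e^{-a\sqrt{2|\ln(r^{2}+T-t)|}}}{(r^{2}+T-t)^{2}|\ln(r^{2}+T-t)|^{b}}\mathbbm{1}_{r\le\sqrt T}+\tfrac{e^{-a\sqrt{2|\ln T|}}}{T^{2}|\ln T|^{b}}e^{-\frac{r^{2}}{4(t+T)}}\mathbbm{1}_{r\ge\sqrt T}\Bigr],
\]
which is precisely the bound defining $\|g\|_{\star\star,o}$. The perturbation $R[\bar\phi]$ contributes a term supported in $\{r\lesssim\lambda\}$ of order $\lambda^{2}/r^{2}$ times $|\partial_{r}\bar\phi|$, which by \eqref{EstiLambda} is absorbed into the left-hand side after slightly enlarging $C_{0}$. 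The comparison principle (valid for the uniformly parabolic operator $\partial_{t}-L^{o}$ on bounded cylinders, extended to $\R^{2}\times(0,T)$ via the Gaussian piece providing spatial decay) then yields $|\phi^{o}|\le(C_{0}+C_{1})\|g\|_{\star\star,o}\,\bar\phi/(C_{0}+C_{1})$, which is the pointwise part of \eqref{OUTERNORMSGLU}. The gradient bound $(\lambda+|x-\xi|)|\nabla\phi^{o}|$ with the same weight follows by applying standard parabolic interior estimates on parabolic cylinders of radius proportional to $\lambda+|x-\xi|$ and rescaling, as the weight is slowly varying on that scale.

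The main technical obstacle I expect is the verification of the supersolution inequality near the interface $r\simeq\sqrt{T}$ and in the small-$r$ region where the weight $(r^{2}+T-t)^{-1}|\ln(r^{2}+T-t)|^{-b}$ with the $e^{-a\sqrt{2|\ln|}}$ factor produces delicate cancellations among the derivatives: one needs to keep track of the sign of the logarithmic corrections after differentiation in $t$ and the two terms in $\Delta_{6}$ acting in $r$, in order to ensure that the leading singular contribution on the right-hand side dominates and that the error generated by $\chi_{0}'(r/\sqrt T)$ is subsumed by the Gaussian piece (analogously to \eqref{firstpiecBar}--\eqref{secpiecBar}). A secondary, but routine, obstacle is propagating this barrier estimate to the gradient in a way uniform in the blow-up regime, which is handled by the rescaled argument used at the end of the proof of Lemma \ref{estimatephilambda} for \eqref{rescaledphilambda}.
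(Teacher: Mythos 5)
Your barrier is written as a function of $r=|x-\xi(t)|$, and this is precisely what the paper explicitly warns against at the very start of its proof of Proposition~\ref{Prop131}: ``it is useful to avoid barriers as $\phi(|x-\xi|,t)$ since the time derivative would produce $\dot{\xi}\cdot\nabla\phi$ that \ldots\ is singular away from the singularity.'' Concretely, if $\bar\phi$ depends on $|x-\xi(t)|$, then
\begin{align*}
\partial_t\bigl[\bar\phi(|x-\xi(t)|,t)\bigr]=\bar\phi_t-\dot\xi\cdot\nabla\bar\phi ,
\end{align*}
and on the Gaussian piece $\bar\phi_2=C_1\frac{e^{-a\sqrt{2|\ln T|}}}{T|\ln T|^b}e^{-r^2/(4(t+T))}$ (the only piece active for $r\gtrsim\sqrt T$), the extra term has magnitude $|\dot\xi|\frac{r}{2(t+T)}\bar\phi_2$, while the favourable supersolution contribution is $(\partial_t-\Delta_6)\bar\phi_2=\frac{3}{t+T}\bar\phi_2$. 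Their ratio is $\sim|\dot\xi|\,r$, which is unbounded in $r$, so the comparison inequality fails at large $r$ regardless of how small $T$ is or how the constants $C_0,C_1$ are tuned. Your proposal never accounts for this term; the ``careful computation'' you invoke cannot close it.

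The paper's route avoids this by taking barriers radial in $|x|$, not $|x-\xi|$. The price is that the drift is then $4\frac{x-\xi}{\lambda^2+|x-\xi|^2}\cdot\frac{x}{r}\partial_r$ rather than the clean $\Delta_6$ drift, and the mismatch near $r\lesssim\sqrt{\delta(T-t)}$ is handled with a separate barrier $\psi_2$ built from the elliptic Lemma~\ref{Lemma131} (which produces a radial supersolution for $L^o$ itself on that scale), a cutoff $\chi_\epsilon$, and the observation that by \eqref{EstiPar} the difference $\bigl(\frac{x-\xi}{\lambda^2+|x-\xi|^2}-\frac{x}{\lambda^2+|x|^2}\bigr)\cdot\frac{x}{r}$ is small on the relevant scale. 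This extra piece (the origin-centered $\psi_2$ supplied by Lemma~\ref{Lemma131}) is absent from your construction; without it, the drift correction near the singularity and the moving-origin problem at infinity are not both under control. You would need to re-center at the origin and add the analogue of $\psi_2$ for the argument to go through.
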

The proof of Proposition \ref{Prop131ANTICIP} will be given in Section \ref{OuterProblemSection}. \newline
In what follows we work with $\textbf{p}=\textbf{p}_{0}+\textbf{p}_{1}=(\lambda_{0},\alpha_{0},0)+(\lambda_{1},\alpha_{1},\xi_{1})$. Recalling the definition of the operator $\mathcal{T}_{\textbf{p}}^{i,1}$, we let
\begin{align}\label{InnerOPER1}
	\mathcal{A}_{i,1}[\phi_{1},\phi_{2},\phi_{3},\varphi^{o},\textbf{p}_{1}]=\mathcal{T}^{i,1}_{\textbf{p}}[f-m_{0}[f]W_{0}-m_{2}[f]W_{2}],
\end{align}
\begin{align}\label{INNEROPER2}
	\mathcal{A}_{i,2}[\phi_{1},\phi_{2},\phi_{3},\varphi^{o},\textbf{p}_{1}]=\mathcal{T}^{i,2}_{\textbf{p}}[m_{2}[f]W_{2}],
\end{align}
\begin{align}\label{INNEROPERT3}
	\mathcal{A}_{i,3}[\phi_{1},\phi_{2},\phi_{3},\varphi^{o},\textbf{p}_{1}]=\mathcal{T}^{i,3}_{\textbf{p}}[f_{3}-m_{1,j}[f_{3}]W_{1,j}],
\end{align}
\begin{align*}
	\mathcal{A}_{o}[\phi_{1},\phi_{2},\phi_{3},\varphi^{o},\textbf{p}_{1}]=\mathcal{T}^{o}_{\textbf{p}}[G_{2}(\phi_{1}+\phi_{2}+\phi_{3},\varphi^{o},\textbf{p}_{1})].
\end{align*}
The the equations \eqref{innereqfinalFINAL}, \eqref{innereqfinalFINAL2}, \eqref{innereqfinalFINALMODE1} and \eqref{outereqfinalFINAL} can be written as 
\begin{align*}
   \phi_{1}=	\mathcal{A}_{i,1}[\phi_{1},\phi_{2},\phi_{3},\varphi^{o},\textbf{p}_{1}], \ \ \phi_{2}=\mathcal{A}_{i,2}[\phi_{1},\phi_{2},\phi_{3},\varphi^{o},\textbf{p}_{1}], \ \ \phi_{3}=\mathcal{A}_{i,3}[\phi_{1},\phi_{2},\phi_{3},\varphi^{o},\textbf{p}_{1}], 
\end{align*}
\begin{align*} \varphi^{o}=\mathcal{A}_{o}[\phi_{1},\phi_{2},\phi_{3},\varphi^{o},\textbf{p}_{1}]
\end{align*}
and we recall that we need to satisfy the orthogonality conditions
\begin{align}\label{OrthogonaliTYCONDITIONSGLUIGN} 
	\begin{cases}
		m_{0}[f[\phi_{1}+\phi_{2}+\phi_{3},\varphi^{o},\textbf{p}_{1}]]=0\\
		m_{1,j}[f_{3}[\phi_{1}+\phi_{2}+\phi_{3},\varphi^{o},\textbf{p}_{1}]]=0, \ \ \ j=1,2,  \ \ \ \ \text{if } t\in (0,T).
	\end{cases}
\end{align}
\newline

We consider first the zero mass condition, thanks to Lemma \ref{lemmamassdiff} we have
\begin{align}
	m_{0}[f]&=m_{0}[F\tilde{\chi}+\lambda^{4}S(u_{1}(\textbf{p}_{0}+\textbf{p}_{1}))\tilde{\chi}-\lambda_{0}^{4}S(u_{1}(\textbf{p}_{0}))\tilde{\chi}_{0}\tilde{\chi}+[E_{2}(\bar{y})-E_{2}(y)]\tilde{\chi}_{2}\tilde{\chi}+E_{2}\tilde{\chi}_{2}\tilde{\chi}-\nonumber\\
	&\hspace{0.9cm}-\mathcal{E}[\phi_{1}]\tilde{\chi}-\mathcal{F}[\phi_{3}\tilde{\chi}]]=\nonumber\\
	&=-\lambda^{2}\partial_{t}\big[8\pi \alpha_{1}+\int_{\mathbb{R}^{2}}(\varphi_{\lambda}-\varphi_{\lambda_0})dx-16\pi \beta\frac{\lambda^{2}-\lambda_{0}^{2}}{\delta(T-t)}\big]+O(e^{-(3-\rho)\sqrt{2|\ln(T-t)|}})+\nonumber\\
	&\hspace{1.5cm}+\int_{t}^{T}\frac{1}{\lambda^{2}(s)}m_{0}[F\tilde{\chi}+[E_{2}(\bar{y})-E_{2}(y)]\tilde{\chi}_{2}\tilde{\chi}+E_{2}\tilde{\chi}_{2}\tilde{\chi}-\mathcal{E}[\phi_{1}]\tilde{\chi}-\mathcal{F}[\phi_{3}\tilde{\chi}]](s)ds\big]=0\label{zeromassCondGlu}.
\end{align}
We call the $\alpha_{1}$ that vanishes at $T$ and that makes \eqref{zeromassCondGlu} true, the operator $\mathcal{A}_{p,\alpha_{1}}[\phi_{1},\phi_{2},\phi_{3},\varphi,\textbf{p}_{1}]$. \newline

For the conservation of mass, thanks to Lemma \ref{LemmaFirstMomDIff}, we get
\begin{align}
	m_{1,j}[f_{3}]=&\lambda \alpha \dot{\xi}_ {1,j}\int_{\R^2}\partial_{y_{j}} U(y)y_{j}\tilde {\chi} dy+\frac{\alpha \lambda^{2}}{\sqrt{T-t}}\dot{\xi}_{1,j}\int_{\R^2}U(y)\partial_{z_{j}}\chi_{0}(\frac{\lambda y}{\sqrt{\delta (T-t)}})y_{j}dy+\nonumber\\
	&+m_{i,j}[\big(F\tilde{\chi}+(E_{2}(\bar{y}_{0})-E_{2}(y))\tilde{\chi}_{2}\tilde{\chi}+{E}_{2}(t)\tilde{\chi}_{2}-\mathcal{E}[\phi_{1}]\tilde{\chi}-\mathcal{F}[\phi_{3}]\tilde{\chi}\big)]+\nonumber\\
	&+O(e^{-(\frac{5}{2}+\gamma_{1})\sqrt{2|\ln(T-t)|}})=0 \label{centMasszeroCondGLUE}
\end{align}
We call the $\xi_{j}$ that vanishes at $T$ and that makes \eqref{centMasszeroCondGLUE} true, the operator $\mathcal{A}_{p,\xi_{j}}[\phi_{1},\phi_{2},\phi_{3},\varphi,\textbf{p}_{1}]$.
\newline

Now we want to consider the second moment, thanks to Lemma \ref{lemmasecmomdiff}
\begin{align*}
	m_{2}[f]&=m_{2}[F\tilde{\chi}+\lambda^{4}S(u_{1}(\textbf{p}_{0}+\textbf{p}_{1}))\tilde{\chi}-\lambda_{0}^{4}S(u_{1}(\textbf{p}_{0}))\tilde{\chi}_{0}\tilde{\chi}+[E_{2}(\bar{y})-E_{2}(y)]\tilde{\chi}_{2}\tilde{\chi}+E_{2}\tilde{\chi}_{2}\tilde{\chi}-\nonumber\\
	&\hspace{0.9cm}-\mathcal{E}[\phi_{1}]\tilde{\chi}-\mathcal{F}[\phi_{3}\tilde{\chi}]]=\nonumber\\
	&=-(32\pi+8\int_{\mathbb{R}^{2}}\varphi_{\lambda}(T))\alpha_{1}\\
	&\hspace{0.35cm}-\frac{1}{\pi}\int_{\mathbb{R}^{2}}\varphi_{\lambda}(T)\big(\int_{\mathbb{R}^{2}}(\varphi_{\lambda}-\varphi_{\lambda}(\cdot,T))-\int_{\mathbb{R}^{2}}(\varphi_{\lambda_{0}}-\varphi_{\lambda_{0}}(\cdot,T))-16\pi \beta \frac{\lambda^{2}-\lambda_{0}^{2}}{\delta(T-t)}\big)-\\
	&\hspace{0.35cm}-\frac{1}{\pi}(T-t)\big(\int_{\R^2}\varphi_{\lambda}(T)\big)\partial_{t}\big(8\pi \alpha_{1}+\int_{\R^2}(\varphi_{\lambda}-\varphi_{\lambda_{0}})-16\pi \beta \frac{\lambda^{2}-\lambda_{0}^{2}}{\delta(T-t)}\big)+O(e^{-(2-\rho)\sqrt{2|\ln(T-t)|}})+\\
	&\hspace{0.35cm}+m_{2}[F\tilde{\chi}+(E_{2}(\bar{y})-E_{2}(y))\tilde{\chi}_{2}\tilde{\chi}+E_{2}\tilde{\chi}_{2}\tilde{\chi}-\mathcal{E}[\phi_{1}]\tilde{\chi}-\mathcal{F}[\phi_{3}\tilde{\chi}]].
\end{align*}
Similarly to Section \ref{SectionChoice} we cannot achieve the zero second moment condition and then our goal will be to make it as small as possible by fixing the parameter $\lambda_{1}$.
As we previously explained there is a delicate coupling with the zero mass condition \eqref{zeromassCondGlu} that justifies the choice
\begin{align}
		4\int_{\mathbb{R}^{2}}(\varphi_{\lambda_{0}+\lambda_{1}}-\varphi_{\lambda_{0}})dx&-4\int_{\mathbb{R}^{2}}(\varphi_{\lambda}(T)-\varphi_{\lambda_{0}}(T))dx-64\pi \gamma \frac{(\lambda_{0}+\lambda_{1})^{2}-\lambda_{0}^{2}}{\delta(T-t)}+\nonumber\\
		&+(4+\frac{\int_{\R^2}\varphi_{\lambda}(T)}{\pi})\int_{t}^{T}\int_{\mathbb{R}^{2}}U\varphi^{o}\tilde{\chi}dyds \ \ \ \ \text{ sufficiently small.}\label{EquationForLambda1}
\end{align}
The next result gives us a $\lambda_{1}$ that satisfies \eqref{EquationForLambda1}. We will need the following norm for $\kappa>0$, $m\in \mathbb{R}$
\begin{align}\label{norm1Lambda1}
	\|f\|_{1,\kappa,m}=\sup_{t\in[-\varepsilon(T),T]}e^{(1+\kappa)\sqrt{2|\ln(T-t)|}}|\ln(T-t)|^{m}(|f(t)|+(T-t)\sqrt{|\ln(T-t)|}|f'(t)|)|
\end{align}
\begin{proposition}\label{lambda1Lemma}
	Let $\kappa>0$, $m \in \mathbb{R}$ and $\lambda_{0}$ as in Proposition \ref{prop-lambda0}. For any $f$ such that $\|f\|_{1,\kappa,m}<\infty$
    there exists $\lambda_{1}=:\Lambda[f](t)$ that solves
    \begin{align}
    	4\int_{\mathbb{R}^{2}}(\varphi_{\lambda_{0}+\lambda_{1}}-\varphi_{\lambda_{0}})dx&-4\int_{\mathbb{R}^{2}}(\varphi_{\lambda_{0}+\lambda_{1}}(T)-\varphi_{\lambda_{0}}(T))dx-64\pi \beta \frac{(\lambda_{0}+\lambda_{1})^{2}-\lambda_{0}^{2}}{\delta(T-t)}+f(t)=\mathcal{R}[\lambda_{1}],
    \end{align}	
    and
    \begin{align*}
          &|\lambda_{1}(t)|\le C\|f\|_{1,\kappa,m}\sqrt{T-t} \frac{e^{-(\frac{1}{2}+\kappa)\sqrt{2|\ln(T-t)|}}}{|\ln(T-t)|^{m+1/2}}, \\ &|\dot{\lambda}_{1}(t)|\le C  \|f\|_{1,\kappa,m}\frac{e^{-(\frac{1}{2}+\kappa)\sqrt{2|\ln(T-t)|}}}{\sqrt{T-t}|\ln(T-t)|^{m+1/2}}\\
          &|\ddot{\lambda}_{1}(t)|\le C\|f\|_{1,\kappa,m} \frac{e^{-(1+\kappa)\sqrt{2|\ln(T-t)|}}}{(T-t)^{3/2}|\ln(T-t)|^{m+1/2}}.
    \end{align*}
    Moreover for any $0<\sigma<\frac{1}{2}$ we have
    \begin{align}\label{ERROREQLAMBDA1}
    	|\mathcal{R}[\lambda_{1}]|\le C\|f\|_{1,\kappa,m} \frac{e^{-(1+\kappa+\sigma)\sqrt{2|\ln(T-t)|}}}{|\ln(T-t)|^{m+1/2}}, \ \ \ \ |\frac{d}{dt}\mathcal{R}[\lambda_{1}]|\le C\|f\|_{1,\kappa,m}\frac{e^{-(1+\kappa)\sqrt{2|\ln(T-t)|}}}{(T-t)|\ln(T-t)|^{m}}.
    \end{align}
\end{proposition}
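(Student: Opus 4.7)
The plan is to reduce the stated equation to a linear nonlocal equation in the unknown $g(t) := \lambda\dot\lambda(t) - \lambda_0\dot\lambda_0(t)$, where $\lambda = \lambda_0 + \lambda_1$. Applying the expansion \eqref{ExpansionMassphilambda} to $\lambda$ and to $\lambda_0$ and subtracting, the three terms on the left-hand side of the statement collapse, modulo the $O(e^{-\sqrt{2|\ln(T-t)|}}|\ln(T-t)|^{-\kappa})$ remainder of \eqref{ExpansionMassphilambda}, to
\begin{align*}
16\pi\Big[\int_{-\varepsilon(T)}^T \frac{g(s)}{T-s}\,ds - \int_{-\varepsilon(T)}^{t-\lambda_0^2(t)}\frac{g(s)}{t-s}\,ds + (\gamma+1-\ln 4)\,g(t)\Big] - 16\pi\int_{t-\lambda_0^2}^{t-\lambda^2}\frac{\lambda\dot\lambda(s)}{t-s}\,ds.
\end{align*}
Denoting the bracketed expression by $\mathcal{L}[g](t)$ and absorbing both the endpoint-shift integral (whose leading piece is a linear functional of $\lambda_1$ that partially cancels the $\lambda_1\dot\lambda_0$ component of $g$) and the $O(\cdot)$ remainder of \eqref{ExpansionMassphilambda} into $\mathcal{R}[\lambda_1]$, the equation of the statement becomes $16\pi\,\mathcal{L}[g](t) + f(t) = \mathcal{R}[\lambda_1](t)$. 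Since $\mathcal{L}$ is structurally the same nonlocal operator that governs $\lambda_0$ in Proposition \ref{prop-lambda0}, the whole result reduces to a linear solvability theory for $\mathcal{L}$ with a source in the norm $\|\cdot\|_{1,\kappa,m}$.

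The linear step follows the route flagged in the introduction: $\mathcal{L}$ is \emph{at main order differentiable}. Indeed
\begin{align*}
\frac{d}{dt}\mathcal{L}[g](t) = \frac{(1-2\lambda_0\dot\lambda_0)\,g(t-\lambda_0^2(t))}{\lambda_0^2(t)} + \int_{-\varepsilon(T)}^{t-\lambda_0^2}\frac{g(s)}{(t-s)^2}\,ds + (\gamma+1-\ln 4)\,\dot g(t),
\end{align*}
and, once rescaled to the time variable $\tau = \tau_0+\int_0^t \lambda_0^{-2}$, the boundary term is dominant and represents a unit delay in $\tau$. With the ansatz $g(t) = (T-t)^{-1/2}e^{-(1/2+\kappa)\sqrt{2|\ln(T-t)|}}|\ln(T-t)|^{-m-1/2}G(\tau)$, the differentiated equation $\frac{d}{dt}\mathcal{L}[g] = -\dot f/(16\pi)+\cdots$ becomes a compact perturbation of an explicit first-order delay equation for $G$ on $[\tau_0,\infty)$; a Banach fixed point in a weighted $L^\infty$ space delivers $G$, and hence $g$, with the desired pointwise bounds on both $g$ and $\dot g$, the latter using precisely the $(T-t)\sqrt{|\ln(T-t)|}|\dot f|$ part of $\|f\|_{1,\kappa,m}$. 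Once $g$ is at hand, $\lambda_1$ is recovered by integrating $(\lambda_0+\lambda_1)^2(t) = \lambda_0^2(t) + 2\int_{-\varepsilon(T)}^t g(s)\,ds$ with $\lambda_1(-\varepsilon(T))=0$, and the three stated bounds on $\lambda_1,\dot\lambda_1,\ddot\lambda_1$ follow by direct computation using $\lambda_0\sim\sqrt{T-t}\,e^{-(1/2)\sqrt{2|\ln(T-t)|}}$.

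For the estimate \eqref{ERROREQLAMBDA1}, each summand of $\mathcal{R}[\lambda_1]$ is of one of the following three types: (i) the $|\ln|^{-\kappa}$-gaining remainder inherited from \eqref{ExpansionMassphilambda}, (ii) quadratic in $g$, or (iii) the endpoint-shift integral $\int_{t-\lambda_0^2}^{t-\lambda^2}\lambda\dot\lambda(s)/(t-s)\,ds$, whose leading piece cancels against the $\lambda_1\dot\lambda_0$ contribution to $g$. Substituting the bound on $\lambda_1$ just obtained and collecting yields the claimed $e^{-(1+\kappa+\sigma)\sqrt{2|\ln(T-t)|}}$ decay for any $\sigma<1/2$; for the derivative estimate only $\dot f$ is controlled by $\|f\|_{1,\kappa,m}$, which is why $\frac{d}{dt}\mathcal{R}$ picks up a factor $(T-t)^{-1}$ and forfeits the $\sigma$-improvement. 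The main obstacle, as anticipated in the introduction, is that $\mathcal{L}$ does not admit a contractive resolvent on $\|\cdot\|_{1,\kappa,m}$ directly — the smallness exploited in the infinite-time setting of \cite{DdPW} is not available here — and it is precisely the observation that $\mathcal{L}$ can be differentiated into an effectively first-order delay equation in the rescaled time $\tau$ that unlocks the fixed-point scheme in the present finite-time context.
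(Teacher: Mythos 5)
The proposal's strategy---differentiate the nonlocal operator $\mathcal{L}[g]$, observe a dominant boundary term that "represents a unit delay in $\tau$", and solve a delay equation by Banach fixed point---is not the paper's route, and it contains a genuine gap. Your derivative formula has a sign error: since
\[
-\frac{d}{dt}\int_{-\varepsilon(T)}^{t-\lambda_0^2(t)}\frac{g(s)}{t-s}\,ds
= -(1-2\lambda_0\dot\lambda_0)\,\frac{g(t-\lambda_0^2)}{\lambda_0^2}
+\int_{-\varepsilon(T)}^{t-\lambda_0^2}\frac{g(s)}{(t-s)^2}\,ds,
\]
the boundary term carries a \emph{minus} sign, not a plus as you wrote. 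This matters because the boundary value $-g(t-\lambda_0^2)/\lambda_0^2$ and the local part of $\int g(s)/(t-s)^2\,ds$ are both of order $g(t)/\lambda_0^2$ and \emph{cancel} at leading order (write $g(s)\approx g(t)$ and compute $\int^{t-\lambda_0^2}\frac{ds}{(t-s)^2}=\frac{1}{\lambda_0^2}-\frac{1}{t+\varepsilon(T)}$). After this cancellation there is no surviving "delay" term; what remains is a first-order ODE whose coefficient is a logarithmic power, not a unit $\tau$-delay. The proposed fixed point on a delay equation thus addresses the wrong reduced model.

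The paper's proof (Section~\ref{lambda1Section}) is an adaptation of Section~\ref{LINERAIZEDMASS}: writing $\lambda\dot\lambda = \lambda_0\dot\lambda_0 + (\lambda_0\lambda_1)' + \lambda_1\dot\lambda_1$, it uses Corollary~\ref{ExpansionMass} and the computations of Section~\ref{MASSPHISECTION} to reduce to the equation~\eqref{NonLocalEquationForLambda1},
\[
\int_{t}^{T}\frac{(\lambda_{0}\lambda_{1})'(s)}{T-s}\,ds-\sqrt{2|\ln(T-t)|}\,(\lambda_{0}\lambda_{1})'(t)+\frac{f(t)}{16\pi}
= O\!\left(\frac{|\lambda_{0}\dot{\lambda}_{1}|}{|\ln(T-t)|^{1/4}}\right)+[\text{lower order}],
\]
whose left-hand side is exactly the linear ODE~\eqref{ODE} with explicit resolvent $T^{(\gamma)}_0$ (Lemma~\ref{T0est}). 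The fixed point is then run on this resolvent, with the differentiability used being that of the \emph{resolvent} (the splitting~\eqref{splitResolvant} $T_0^1=T_{0,\mathrm{reg}}^1 - f/(2a\sqrt{|\ln(T-t)|})$ into a smoothing part plus a multiplication), not differentiability of the nonlocal operator $\mathcal{L}$ in the sense you propose. The subsequent control of $(\lambda_0\lambda_1)''$, recovery of $\lambda_1$ via $\lambda_1 = (\lambda_0\lambda_1)/\lambda_0$ with $\lambda_1(T)=0$, and the estimate~\eqref{ERROREQLAMBDA1} (including the loss of $\sigma$-improvement in the derivative because $\ddot f$ is not controlled) all mirror Lemma~\ref{SolutionNotRegular}. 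You would need to replace the delay-equation reduction with the paper's linearization to the resolvent ODE; what you already have---the choice of unknown, the splitting of $\mathcal{R}$ into remainder-of-\eqref{ExpansionMassphilambda}, quadratic, and endpoint-shift pieces, and the remark about the missing $\sigma$ in the derivative bound---is consistent with the paper and could be retained.
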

\begin{proof}
	See Section \ref{lambda1Section}.
\end{proof}

By Lemma \ref{lambda1Lemma}, if $\lambda_{1}=\Lambda[(4+\frac{\int_{\R^2}\varphi_{\lambda}(T)}{\pi})\int_{t}^{T}\int_{\mathbb{R}^{2}}U\varphi^{o}\tilde{\chi}dyds]$, using \eqref{zeromassCondGlu}, we get
\begin{align}
	m_{2}[f]&=\mathcal{R}[\lambda_{1}]-\nonumber\\
	&\hspace{0.35cm}-\frac{1}{\pi}\frac{T-t}{\lambda^{2}(t)}\big(\int_{\R^2}\varphi_{\lambda}(T)\big)m_{0}[F\tilde{\chi}+[E_{2}(\bar{y})-E_{2}(y)]\tilde{\chi}_{2}\tilde{\chi}+E_{2}\tilde{\chi}_{2}\tilde{\chi}-\mathcal{E}[\phi_{1}]\tilde{\chi}-\mathcal{F}[\phi_{3}\tilde{\chi}]]+\nonumber\\
	&\hspace{0.35cm}+m_{2}[F\tilde{\chi}+[E_{2}(\bar{y})-E_{2}(y)]\tilde{\chi}_{2}\tilde{\chi}+E_{2}\tilde{\chi}_{2}\tilde{\chi}-\mathcal{E}[\phi_{1}]\tilde{\chi}-\mathcal{F}[\phi_{3}\tilde{\chi}]]+O(e^{-(2-\rho)\sqrt{2|\ln(T-t)|}}).\label{zerosecmomCondGlu}
\end{align}
It is natural then to introduce the operator $\mathcal{A}_{p,\lambda_{1}}[\phi_{1},\phi_{2},\phi_{3},\varphi,\textbf{p}_{1}]=\Lambda[(4+\frac{\int_{\R^2}\varphi_{\lambda}(T)}{\pi})\int_{t}^{T}\int_{\mathbb{R}^{2}}U\varphi^{o}\tilde{\chi}dyds]$.
\newline

Then we define $\mathcal{A}_{p}$ by
\begin{align}\label{Ap}
	\mathcal{A}_{p}[\phi_{1},\phi_{2},\phi_{3},\varphi,\textbf{p}_{1}]=(\mathcal{A}_{p,\lambda_{1}}[\phi_{1},\phi_{2},\phi_{3},\varphi,\textbf{p}_{1}],\mathcal{A}_{p,\alpha_{1}}[\phi_{1},\phi_{2},\phi_{3},\varphi,\textbf{p}_{1}],\mathcal{A}_{p,\xi_{1}}[\phi_{1},\phi_{2},\phi_{3},\varphi,\textbf{p}_{1}]).
\end{align}
Then 
\begin{align*}
	\textbf{p}_{1}=\mathcal{A}_{p}[\phi_{1},\phi_{2},\phi_{3},\varphi,\textbf{p}_{1}]
\end{align*}
is equivalent to the equations \eqref{OrthogonaliTYCONDITIONSGLUIGN} with $\lambda_{1}$ solving \eqref{EquationForLambda1}. We write
\begin{align*}
	\vec{\phi}=(\phi_{1},\phi_{2},\phi_{3},\varphi,\textbf{p})
\end{align*}
and 
\begin{align*}
	\mathcal{A}[\vec{\phi}]=(\mathcal{A}_{i,1}[\vec{\phi}],\mathcal{A}_{i,2}[\vec{\phi}],\mathcal{A}_{i,3}[\vec{\phi}],\mathcal{A}_{p}[\vec{\phi}]).
\end{align*}
The goal is to find $\vec{\phi}$ such that $\vec{\phi}=\mathcal{A}[\vec{\phi}]$. We define the spaces on which we will consider the operator $\mathcal{A}$ to set up the fixed point problem. For certain choices of constants $\nu, q, \sigma,\varsigma, \epsilon,a,b,\varrho,m_{1},m_{2},m_{3},\mu$ that we will make precise later (we remark that the parameter $\epsilon$ is fundamental in the definition of the operators \eqref{InnerOPER1}, \eqref{INNEROPER2})
\begin{align*}
	X_{i,1}=\{\phi_{1} \in L^{\infty}(\mathbb{R}^{2}\times(0,T))\ |& \ \nabla_{y}\phi_{1}\in L^{\infty}(\mathbb{R}^{2}\times(0,T)), \ \|\phi_{1} \|_{1;\nu-\frac{1}{2},\frac{q+1}{2},4,2}<\infty, \\
	& \int_{\R^2}\phi_{1}(y,t) dy=0,  \ \int_{\R^2}\phi_{1}(y,t) ydy=0, \ \int_{\R^2}\phi_{1}(y,t)|y|^{2}dy=0, \  t\ge0 \},
\end{align*}
\begin{align*}
	X_{i,2}=\{\phi_{2} \in L^{\infty}(\mathbb{R}^{2}\times(0,T))\ |& \ \nabla_{y}\phi_{2}\in L^{\infty}(\mathbb{R}^{2}\times(0,T)), \ \|\phi_{2} \|_{1;\nu-\frac{1}{2},\frac{q+1}{2},4,2+\sigma+\epsilon}<\infty, \\
	& \int_{\R^2}\phi_{2}(y,t) dy=0,  \ \int_{\R^2}\phi_{2}(y,t) ydy=0, \  t\ge0 \},
\end{align*}
\begin{align*}
	X_{i,3}=\{\phi_{3} \in L^{\infty}(\mathbb{R}^{2}\times(0,T))\ |& \ \nabla_{y}\phi_{3}\in L^{\infty}(\mathbb{R}^{2}\times(0,T)), \ \|\phi_{3} \|_{1;\nu,0,3+\varsigma,{2+\epsilon}}<\infty, \\
	& \int_{\R^2}\phi_{3}(y,t) dy=0,  \ \int_{\R^2}\phi_{3}(y,t) ydy=0, \ \int_{\R^2}\phi_{3}(y,t)|y|^{2}dy=0, \  t\ge0 \},
\end{align*}
\begin{align*}
	&X_{o}=\{\varphi^{o} \in L^{\infty}(\mathbb{R}^{2}\times[0,T)) \ | \ \nabla_{y}\varphi^{o} \in L^{\infty}(\mathbb{R}^{2}\times(0,T)) \ \ \|\varphi^{o}\|_{\star,0}<\infty\},\\
	&X_{p}=\{(\lambda_{1},\alpha_{1},\xi_{1})\in C^{1}([0,T)) \ | \ \|\alpha_{1}\|^{(1)}_{C^{1};\nu+\frac{1}{2},m_{1}}<\infty, \ \|\xi_{1}\|^{(2)}_{C^{1};\varrho,m_{2}}<\infty, \  \|\lambda_{1}\|_{C^{2};\nu+\frac{1}{2},m_{3}}<\infty \}
\end{align*}
where the norms $\|\phi \|_{1;\nu-\frac{1}{2},\frac{q+1}{2},4,2}$ and $\|\varphi\|_{\star,o}$ are defined in \eqref{SolutionINNERNORM}, \eqref{OUTERNORMSGLU} and $\|\alpha_{1}\|^{(1)}_{C^{1};\nu+\frac{1}{2},m_{1}}$, $\|\xi_{1}\|^{(2)}_{C^{1};\varrho,m_{2}}$,  $\|\lambda_{1}\|_{C^{2};\nu+\frac{1}{2},m_{3}}$, are defined as
\begin{align}\label{normforalpha1}
	\|\alpha_{1}\|^{(1)}_{C^{1};\nu+\frac{1}{2},m_{1}}=\sup_{t\in[0,T)}\big(e^{(\nu+\frac{1}{2})\sqrt{2|\ln(T-t)|}}|\ln(T-t)|^{m_{1}}(|g(t)|+(T-t)|\dot{g}(t)|)\big),
\end{align}
\begin{align}\label{normforxi1}
	\|\xi_{1}\|^{(2)}_{C^{1};\varrho,m_{2}}=\sup_{t\in[0,T)}\big(\frac{e^{\varrho\sqrt{2|\ln(T-t)|}}|\ln(T-t)|^{m_{2}}}{\sqrt{T-t}}(|\xi_{1}(t)|+(T-t)|\ln(T-t)|^{1/2}|\dot{\xi}_{1}(t)|)\big),
\end{align} 
\begin{align}\label{normforlambda1}
	\|\lambda_{1}\|_{C^{2};\nu-\frac{1}{2},m_{3}}=\sup_{t\in[0,T)}\frac{e^{(\nu-\frac{1}{2})\sqrt{2|\ln(T-t)|}}|\ln(T-t)|^{m_{3}}}{\sqrt{T-t}}(|\lambda_{1}(t)|+(T-t)|\dot{\lambda}_{1}(t)|+(T-t)^{2}|\ddot{\lambda}_{1}(t)|).
\end{align}
We notice that in the norm \eqref{normforalpha1} there is a loss in the control of derivative (one could expect to see $\sqrt{|\ln(T-t)|}$ multiplying $\dot{g}$). This is due to the poor control we have of the derivative of the remainder of the equation for $\lambda_{1}$ that is present also in the equation of $\alpha_{1}$, see \eqref{zeromassCondGlu}, \eqref{EquationForLambda1} and \eqref{ERROREQLAMBDA1}. 
\newline We use the following notation: for $\textbf{p}_{1}=(\lambda_{1},\alpha_{1},\xi_{1})$ 
\begin{align*}
	\|\textbf{p}_{1}\|_{X_{p}}=\|\alpha_{1}\|^{(1)}_{C^{1};\nu+\frac{1}{2},m_{1}}+\|\xi\|^{(2)}_{C^{1};\varrho,m_{2}}+\|\lambda_{1}\|_{C^{2};\nu+\frac{1}{2},m_{3}}
\end{align*}
and for $\vec{\phi}=(\phi_{1},\phi_{2},\phi_{3},\varphi^{o},\textbf{p}_{1})$ 
\begin{align*}
	\|\vec{\phi}\|_{X}=\|\phi_{1}\|_{1;\nu-\frac{1}{2},\frac{q+1}{2},4,2}+\|\phi_{2}\|_{1;\nu-\frac{1}{2},\frac{q+1}{2},4,2+\sigma+\epsilon}+\|\phi_{3} \|_{1;\nu,0,3+\varsigma,{2+\epsilon}}+\|\varphi^{o}\|_{\star,o}+\|\textbf{p}_{1}\|_{X_{p}}.
\end{align*}
\newline To properly define the operators $\mathcal{A}_{i,j}$ in these normed spaces we need the following conditions
\begin{align}\label{preliminarCONDITIONTODEFINOPER}
	1<\nu<\min(1+\frac{\epsilon}{2},3-\frac{\sigma}{2},\frac{3}{2}-\frac{\varsigma}{2}), \ \ \  q\in(0,1), \ \ \ \sigma+\epsilon<2, \ \ \  \sigma\in(0,1), \ \ \ \  0<\epsilon<2,  \ \ \ a>0
\end{align}
with the above notation we consider the fixed point problem
\begin{align}\label{FixedPointTHM1}
	\vec{\phi}=\mathcal{A}[\vec{\phi}],
\end{align}
with $\vec{\phi}$ in the unit closed ball of $X$. A solution of this fixed point problem gives a solution of the system of equations \eqref{innereqfinalFINAL}, \eqref{innereqfinalFINAL2}, \eqref{innereqfinalFINALMODE1}, \eqref{orthogonalitycond} and \eqref{outereqfinalFINAL}. 
\begin{proof}[Proof of  Theorem \ref{theorem1}]
	Thanks to notation we introduced it is clear that our goal is to find a fixed point for \eqref{FixedPointTHM1}. \newline
	Since we want \eqref{EstiLambda}, \eqref{EstiPar} to hold, we immediately have a set of conditions for the parameters we introduced in the norms. Namely
	\begin{align}\label{EstiParEstiLambdaCONDITIONSGLU}
		\varrho>\frac{3}{2}, \ \ \ \nu>1.
	\end{align}
	First we claim that for some $\vartheta_{1}>0$
	\begin{align}\label{Claim1Gluing}
		\|\mathcal{A}_{i,1}[\phi_{1},\phi_{2},\phi_{3},\varphi^{o},\textbf{p}_{1}]\|_{1;\nu-\frac{1}{2},\frac{q+1}{2},4,2}\le C e^{-\vartheta_{1}\sqrt{|\ln T|}} \|\vec{\phi}\|_{X} 
	\end{align}
 and 
 \begin{align}\label{Claim1GluingPart2}
 		\|\mathcal{A}_{i,3}[\phi_{1},\phi_{2},\phi_{3},\varphi^{o},\textbf{p}_{1}]\|_{1;\nu,0,,3+\varsigma,2+\varepsilon}\le C e^{-\vartheta_{1}\sqrt{|\ln T|}} \|\vec{\phi}\|_{X}.
 \end{align}
Recalling \eqref{F4hat}, Propositions \ref{PropMode0MassSecMom0ANTICIP} and \ref{PropMode1ANTICIP}, it is enough to show that for some $\vartheta_{1}>0$
\begin{align}\label{Claim1GluingBIS}
	\|\tilde{\chi}F_{4}\|_{0,\nu,0,6+\sigma,\varepsilon}\le C e^{-\vartheta_{1}\sqrt{2|\ln T|}}\|\vec{\phi}\|_{X}.
\end{align}
We start with the forces we introduced to obtain the inner theories. Thanks to \eqref{EstimateEFORGLUING}, \eqref{EstimateFFORGLUING}, recalling the definitions \eqref{InnerOPER1}, \eqref{INNEROPERT3} we have
\begin{align*}
	\|\mathcal{E}[\phi_{1}]\tilde{\chi}+\mathcal{F}[\phi_{3}]\tilde{\chi}\|_{0,\nu,0,6+\sigma,\varepsilon}\le C e^{-\vartheta_{1}\sqrt{|\ln T|}}\|\vec{\phi}\|_{X}
\end{align*}
where we used that $0<\sigma<1$. Now that we addressed the contribution of the forces another important difference with the infinite time construction \cite{DdPDMW} is $\dot{\alpha}_{1}$ since as we know there is a loss of decay due to the introduction of $\lambda_{1}$. We observe that among the terms in $\lambda^{4}S(u_{1}(\textbf{p}_{0}+\textbf{p}_{1}))\tilde{\chi}-\lambda_{0}^{4}S(u_{1}(\textbf{p}_{0}))\tilde{\chi}_{0}\tilde{\chi}$ we have
\begin{align*}
	&|\lambda_{0}^{2}\dot{\alpha}_{1}U \tilde{\chi}|\le C\|\alpha_{1}\|^{(1)}_{C^{1},\nu+\frac{1}{2},m_{1}}\lambda_{0}^{2}\frac{e^{-(\nu+\frac{1}{2})\sqrt{2|\ln(T-t)|}}}{(T-t)|\ln(T-t)|^{m_{1}}} \frac{1}{(1+|y|)^{4}}\tilde{\chi}\\
	&\implies \|\lambda_{0}^{2}\dot{\alpha}_{1}U\tilde{\chi}\|_{0,\nu,0,6+\sigma,\varepsilon}\le C e^{-\vartheta_{1}\sqrt{2|\ln T|}}\|\vec{\phi}\|_{X}
\end{align*}
for some $\vartheta_{1}>0$. The remaining terms in $\lambda^{4}S(u_{1}(\textbf{p}_{0}+\textbf{p}_{1}))\tilde{\chi}-\lambda_{0}^{4}S(u_{1}(\textbf{p}_{0}))\tilde{\chi}_{0}\tilde{\chi}$ can be controlled by \eqref{EstiPar} (or  \eqref{EstiParEstiLambdaCONDITIONSGLU}) and the second derivatives of $\varphi_{\lambda}$ as in Lemma \ref{estimatephilambda}. Then we have 
\begin{align*}
	\|\lambda^{4}S(u_{1}(\textbf{p}_{0}+\textbf{p}_{1}))\tilde{\chi}-\lambda_{0}^{4}S(u_{1}(\textbf{p}_{0}))\tilde{\chi}_{0}\tilde{\chi}\|_{0;\nu,6+\sigma,\varepsilon}\le C e^{-\vartheta_{1} \sqrt{2|\ln T|}}\|\vec{\phi}\|_{X}.
\end{align*}
Another interesting term has already been discussed in Section \ref{ReformulationOf}, by \eqref{INnnerESTE2} for any 
\begin{align}\label{ParamGLU1}
	1<\nu<1+2\mu-\frac{\sigma}{2}
\end{align}
with 
\begin{align}\label{ParamGLU1sec}
	2\mu-\frac{\sigma}{2}>0
\end{align}
we have
\begin{align*}
    \|E_{2}\tilde{\chi}\|_{0;\nu,0,6+\sigma,\varepsilon}\le e^{-\vartheta_{1} \sqrt{2|\ln T|}}\|\vec{\phi}\|_{X}
\end{align*}
(as we anticipated in \ref{ReformulationOf} we will fix $\nu$ close to 1 and $\mu$, $\sigma$ small to control some terms in the outer problem, see \eqref{alpha1tilde}). The same estimates can be obtained for $[E_{2}(\bar{y}_{0})-E_{2}(y)]\tilde{\chi}_{2}\tilde{\chi}$ where we recall the definition \eqref{ellipticcorre}. \newline
Now we estimate two fundamental terms that govern the coupling of the inner-outer system as we described in Section \ref{SecMassSecMom}. First we see that
\begin{align*}
	&|\lambda^{2}U\varphi^{o}\tilde{\chi}|\le C \|\varphi^{o} \|_{\star,o} \frac{\lambda^{2}}{T-t}\frac{e^{-a\sqrt{2|\ln(T-t)|}}}{|\ln(T-t)|^{b}} \frac{1}{(1+|y|)^{4}}\tilde{\chi} \\
	& \implies  \|\lambda^{2}U\varphi^{o}\tilde{\chi}\|_{0;\nu,0,6+\sigma,\varepsilon}\le Ce^{-\vartheta_{1} \sqrt{2|\ln T|}}\|\vec{\phi}\|_{X}
\end{align*}
where we used the definition of the norm \eqref{OUTERNORMSGLU}, $0<\sigma<1$ and we fixed
\begin{align}\label{paramAFixedGlu}
	a=\nu+\frac{1}{2}.
\end{align}
The second term is $\alpha \nabla _{y}\cdot(U\nabla_{y}\psi^{o})$ and can be estimated similarly. Two other interesting terms are $\nabla_{y}\cdot(U\nabla_{y}(\hat{\psi}-\psi))\tilde{\chi}$ and $\lambda^{2}\nabla_{y}\cdot(\varphi_{\lambda}\nabla_{y}(\hat{\psi}-\psi))\tilde{\chi}$. The radial part of $\phi$ can be estimated easily simply observing that $\psi'(\rho)=- \frac{1}{\rho} \int_{\rho}^{\infty}\phi(s)sds$. When we consider the nonradial part of $\phi$ we can simply observe that if $h:\mathbb{R}^{2}\to \mathbb{R}$ satisfying $\|(1+|\cdot|)^{2+w}h(\cdot)\|_{L^{\infty}(\mathbb{R}^{2})}<\infty$ for $0<w<1$ and such that
\begin{align*}
	\int_{\R^2}h(y)dy=0
\end{align*}
then 
\begin{align*}
	|(-\Delta)^{-1}h(y)|\le \frac{C}{(1+|y|)^{m}}\|(1+|\cdot|)^{2+m}h(\cdot)\|_{L^{\infty}(\mathbb{R}^{2})}.
\end{align*}  \newline
All the remaining terms can be estimated as in \cite{DdPDMW} so that we can finally prove \eqref{Claim1Gluing} without introducing any additional restriction on the parameters. \newline

The next claim we make is that for some $\vartheta_{2}>0$
\begin{align}\label{claim1GLUING}
	\|\mathcal{A}_{i,2}[\phi_{1},\phi_{2},\phi_{3},\varphi^{o},\textbf{p}_{1}]\|_{1;\nu-\frac{1}{2},\frac{q+1}{2},4,2}\le C \frac{1}{|\ln T|^{\vartheta_{2}}}\|\vec{\phi}\|_{X}
\end{align}
In order to prove \eqref{claim1GLUING}, recalling Proposition \ref{PropMode0Mass0ANTICIP}, we need to show that
\begin{align}\label{RHSToproveCLAIM1GLUING}
	|m_{2}[F_{4}\tilde{\chi}]|\le C \|\vec{\phi}\|_{X}\frac{1}{\tau^{\nu+\frac{1}{2}}}\frac{1}{(\ln \tau)^{-\frac{1+q}{2}}}  \le C \|\vec{\phi}\|_{X} \frac{ e^{-(\nu+\frac{1}{2})\sqrt{2|\ln(T-t)|}}}{|\ln(T-t)|^{\frac{1}{2}(\nu+\frac{1}{2})-\frac{q+1}{4}}}.
\end{align}
Let us start with the forces we introduced to obtain the inner theories, recalling \eqref{EstimateEFORGLUING} and \eqref{EstimateFFORGLUING} we have
\begin{align*}
	m_{2}[\mathcal{E}[\phi_{1}]\tilde{\chi}]\le C\|\vec{\phi}\|_{X}\frac{e^{-(\nu+1)\sqrt{2|\ln(T-t)|}}}{|\ln(T-t)|^{\frac{\nu+q}{2}}}, \ \ \ |m_{2}[\mathcal{F}[\phi_{3}]\tilde{\chi}]|\le C \|\vec{\phi}\|_{X}\frac{e^{-(\nu+1+\frac{\varsigma}{2})\sqrt{2|\ln(T-t)|}}}{|\ln(T-t)|^{\frac{\nu}{2}}}.
\end{align*}
Now we see that, thanks to \eqref{E2pointw}, for any $\rho>0$
\begin{align*}
	&|m_{2}[E_{2}\tilde{\chi}_{2}]|\le C e^{-(2-\rho)\sqrt{2|\ln(T-t)|}}
 \end{align*}
and the same holds for $m_{2}[(E_{2}(\bar{y}_{0})-E_{2}(y))\tilde{\chi}_{2}]$. A fundamental term is $m_{2}[\lambda^{4}S(u_{1}(\textbf{p}_{0}+\textbf{p}_{1}))\tilde{\chi}-\lambda_{0}^{4}S(u_{1}(\textbf{p}_{0}))\tilde{\chi}_{0}\tilde{\chi}]$. We recall that by definition $\lambda_{1}=\Lambda[ (4+\frac{\int_{\R^2}\varphi_{\lambda}(T)}{\pi})\int_{t}^{T}\int_{\mathbb{R}^{2}}U\varphi^{o}\tilde{\chi}dyds]$, where the operator $\Lambda$ has been defined in Lemma \ref{lambda1Lemma}. Since we have \eqref{OUTERNORMSGLU} with $a=\nu+\frac{1}{2}$, by Lemma \ref{lemmasecmomdiff} and the same considerations we made in Section \ref{SecMassSecMom}, or by \eqref{zeromassCondGlu}, we have
\begin{align*}
	&m_{2}[\lambda^{4}S(u_{1}(\textbf{p}_{0}+\textbf{p}_{1}))\tilde{\chi}-\lambda_{0}^{4}S(u_{1}(\textbf{p}_{0}))\tilde{\chi}_{0}\tilde{\chi}]\approx\frac{1}{\pi}\big(\int_{\R^2}\varphi_{\lambda}(T)\big)(T-t)\int_{\R^2}U(y)\varphi^{o}(y\lambda,t)\tilde{\chi}dy\\
	&\implies |m_{2}[\lambda^{4}S(u_{1}(\textbf{p}_{0}+\textbf{p}_{1}))\tilde{\chi}-\lambda_{0}^{4}S(u_{1}(\textbf{p}_{0}))\tilde{\chi}_{0}\tilde{\chi}]| \le C \|\vec{\phi}\|_{X} \frac{e^{-(\nu+\frac{1}{2})\sqrt{2|\ln(T-t)|}}}{|\ln(T-t)|^{b}}
\end{align*}
then we need 
\begin{align}\label{COnDPAR3}
	b>\frac{1}{2}(\nu+\frac{1}{2})-\frac{q+1}{4} \iff b>\frac{\nu}{2}-\frac{q}{4 }.
\end{align}
Of the remaining terms the most delicate ones are 
\begin{align}\label{SecMomDCritiCAGLU}
	m_{2}[\nabla_{y}\cdot(U\nabla_{y}(\hat{\psi}-\psi))\tilde{\chi}] , \ \  m_{2}[\lambda^{2}\nabla_{y}\cdot(\varphi_{\lambda}\nabla_{y}(\hat{\psi}-\psi))\tilde{\chi}]
\end{align}
and
\begin{align}\label{SecMomDCritiCAGLU2}
	&m_{2}[\lambda^{2}\nabla_{y}\cdot(\varphi_{\lambda}\nabla_{y}\psi^{i})+\nabla _{y}\cdot(\phi_{i} \nabla_{y}\psi_{\lambda})\tilde{\chi}], \ \  m_{2}[\nabla_{y}\cdot((\phi^{i}\chi+\lambda^{2}\varphi^{o})\nabla_{y}(\hat{\psi}+\psi^{o}))\tilde{\chi}].
\end{align}
To estimate \eqref{SecMomDCritiCAGLU} it is enough to use the pointwise estimates we previously found. To estimate \eqref{SecMomDCritiCAGLU2} it is enough to observe that if $\psi=(-\Delta_{y})^{-1}\phi$, by Pohozaev's identity we have
\begin{align}\label{IdentityFORSECmomGLU}
	&\int_{\mathbb{R}^{2}}\nabla \cdot(\phi \nabla_{y}\psi)|y|^{2}\tilde{\chi}dy=\nonumber\\
	&=\int_{\R^2}\nabla_{y} \big[\nabla_{y}\psi (y\cdot \nabla \psi)-y\frac{|\nabla_{y}\psi|^{2}}{2}\big]\big[\frac{\lambda |y|}{2\sqrt{\delta(T-t)}}\chi_{0}'(\frac{\lambda y}{2 \sqrt{\delta(T-t)}})+2\chi_{0}(\frac{\lambda y}{2\sqrt{\delta(T-t)}})\big]\le\nonumber\\
	&\le  C\int_{2\frac{\sqrt{\delta(T-t)}}{\lambda(t)}\le |y|\le 4\frac{\sqrt{\delta(T-t)}}{\lambda(t)} }|\nabla_{y}\psi|^{2}dy
\end{align}
and that (if $\phi_{i}:=\phi_{1}+\phi_{2}+\phi_{3}$), we have
\begin{align*}
	\int_{\R^2}&\nabla_{y}\cdot(\lambda^{2}\varphi_{\lambda}\nabla_{y}(-\Delta_{y})^{-1}\phi_{i})\tilde{\chi}|y|^{2}dy+\int_{\R^2}\nabla_{y}\cdot(\phi_{i} \nabla_{y}\psi_{\lambda})\tilde{\chi}|y|^{2}dy=\\
	=&\int_{\R^2}\nabla_{y}\cdot[(\lambda^{2}\varphi_{\lambda}+\phi_{i})\nabla_{y}(\psi_{\lambda}+(-\Delta_{y})^{-1}\phi_{i})]\tilde{\chi}|y|^{2}dy-\\
	&-\int_{\R^2}\nabla_{y}\cdot[\lambda^{2}\varphi_{\lambda}\nabla_{y}\psi_{\lambda}]\tilde{\chi}|y|^{2}dy-\int_{\R^2}\nabla_{y}\cdot[\phi_{i} \nabla_{y}((-\Delta_{y})^{-1}\phi_{i})]\tilde{\chi}|y|^{2}dy.
\end{align*}
Another interesting term in the second moment is 
\begin{align*}
	m_{2}[\lambda^{2}U \varphi^{o}]\le C \|\vec{\phi}\|_{X}\frac{e^{-(\nu+\frac{3}{2}-\rho)\sqrt{2|\ln(T-t)|}}}{|\ln(T-t)|^{b}}, \ \ \ \ \text{for any }\rho>0.
\end{align*}
Since all the remaining terms can be estimated similarly, we can finally obtain \eqref{RHSToproveCLAIM1GLUING} and then \eqref{claim1GLUING}. \newline

The next claim we make that there is $\vartheta_{3}>0$
\begin{align}\label{claim3GLUING}
	&\|\mathcal{A}_{o}[\phi_{1},\phi_{2},\phi_{3},\varphi^{o},\textbf{p}_{1}]\|_{\star,o}\le C \frac{1}{|\ln T|^{\vartheta_{3}}}\|\vec{\phi}\|_{X}.
\end{align}
Thanks to Proposition \ref{Prop131ANTICIP} we just need to show
\begin{align*}
	\|G_{2}(\phi_{1}+\phi_{2}+\phi_{3},\varphi^{o},\textbf{p}_{1})\|_{\star\star,o}\le C \frac{1}{|\ln T|^{\vartheta_{3}}}\|\vec{\phi}\|_{X}.
\end{align*}
We start with $\lambda^{-4}E_{2}(1-\tilde{\chi}_{2})\chi$. Thanks to Section \ref{ReformulationOf} we now that for some $\theta>0$
\begin{align*}
	\|\lambda^{-4}E_{2}(1-\tilde{\chi}_{2})\chi \|_{\star\star,o}\le C e^{-\theta\sqrt{2|\ln T|}}\|\vec{\phi}\|_{X}
\end{align*}
if 
\begin{align}\label{CONDITIONPARMATERGLUIN3}
	2(1-2\mu)>\nu+\frac{1}{2}.
\end{align}
If we assume that 
\begin{align}\label{nuUPPERGLu}
   \nu+\frac{1}{2}<2 \iff \nu<\frac{3}{2}
\end{align} 
by Lemma \ref{lemmaesterr} we also know that for some $\theta>0$
\begin{align*}
	\|S(u_{1})(1-\chi)\|_{\star\star,o}\le C e^{-\theta\sqrt{2|\ln T|}}\|\vec{\phi}\|_{X}.
\end{align*}
A fundamental term, recalling the notation $\phi_{i}=\phi_{1}+\phi_{2}+\phi_{3}$, gives
\begin{align*}
	|\frac{1}{\lambda^{2}}\phi_{i}\Delta \chi|\le& C \|\vec{\phi}\|_{X} \frac{1}{\lambda^{2}}\frac{e^{-(\nu-\frac{1}{2})\sqrt{2|\ln(T-t)|}}}{|\ln(T-t)|^{\frac{1}{2}(\nu-\frac{1}{2})+\frac{q+1}{4}}}\frac{1}{\delta(T-t)} \frac{\lambda^{4}}{|x-\xi|^{4}}|\Delta_{z}\chi_{0}(z)|\le \\
	\le&C \|\vec{\phi}\|_{X}\frac{e^{-(\nu+\frac{1}{2})\sqrt{2|\ln(T-t)|}}}{(T-t)^{2}|\ln(T-t)|^{\frac{1}{2}(\nu-\frac{1}{2})+\frac{q+1}{4}}} |\Delta_{z}\chi_{0}(z)|
\end{align*}
and then
\begin{align*}
	\| \frac{1}{\lambda^{2}}\phi_{i}\Delta \chi \|_{\star\star,o}\le \frac{C}{|\ln T|^{\frac{1}{2}(\nu-\frac{1}{2})+\frac{q+1}{4}-b}}\|\vec{\phi}\|_{X}.
\end{align*}
It is clear that to get \eqref{claim3GLUING} we will need 
\begin{align}\label{CondtionPARAMERB}
	b<\frac{1}{2}(\nu-\frac{1}{2})+\frac{q+1}{4} \iff b<\frac{\nu}{2}+\frac{q}{4}.
\end{align}
The terms involving $\varphi^{o}$ can be estimated easily, take for instance
\begin{align*}
	&|\frac{1}{\lambda^{2}}U \varphi^{o}(1-\chi)|\le C \frac{\lambda^{2}}{|x-\xi|^{4}}|\varphi^{o}|(1-\chi)\le C \frac{e^{-\sqrt{2|\ln(T-t)|}}}{|x|^{2}+(T-t)}|\varphi^{o}|(1-\chi)\\
	&\implies \|\frac{1}{\lambda^{2}}U\varphi^{o}(1-\chi)\|_{\star\star,o}\le C e^{-\sqrt{2|\ln T|}}\|\vec{\phi}\|_{X}.
\end{align*}
 The remaining claim we make is that there is $\vartheta_{4}>0$
 \begin{align}\label{LASTCLAIMGLUI}
 	\|\mathcal{A}_{p}[\phi_{1},\phi_{2},\phi_{3},\varphi^{o},\textbf{p}_{1}]\|_{p}\le C \frac{1}{|\ln T|^{\vartheta_{4}}}\|\vec{\phi}\|_{X}.
 \end{align}
To prove \eqref{LASTCLAIMGLUI} we claim that if 
\begin{align*}
	(\tilde{\lambda}_{1},\tilde{\alpha}_{1},\tilde{\xi}_{1})=\mathcal{A}_{p}[\phi_{1},\phi_{2},\phi_{3},\varphi^{o},\textbf{p}_{1}]
\end{align*}
then
\begin{align}\label{lambda1tilde}
	\|\tilde{\lambda}_{1}\| \le C \frac{1}{|\ln T|^{\vartheta_{4}}}\|\vec{\phi}\|_{X},
\end{align}
\begin{align}\label{alpha1tilde}
	\|\tilde{\alpha}_{1}\|\le C \frac{1}{|\ln T|^{\vartheta_{4}}}\|\vec{\phi}\|_{X},
\end{align}
\begin{align}\label{xi1tilde}
	\|\tilde{\xi}_{1}\|\le C \frac{1}{|\ln T|^{\vartheta_{4}}}\|\vec{\phi}\|_{X}.
\end{align}
The proof of \eqref{lambda1tilde} is an immediate consequence of Proposition \ref{lambda1Lemma} after observing that 
\begin{align}\label{RHSalpha1lambda1}
	|\int_{t}^{T}\int_{\R^2}U\varphi^{o}\tilde{\chi}dyds|\le C \|\vec{\phi}\|_{X} \frac{e^{-(\nu+\frac{1}{2})\sqrt{2|\ln(T-t)|}}}{|\ln(T-t)|^{b-\frac{1}{2}}}
\end{align}
and assuming that
\begin{align}\label{CONDTIONLAMBDA1contr}
	m_{3}<b.
\end{align}
We notice that, since all the remaining terms in \eqref{zeromassCondGlu} are smaller, \eqref{RHSalpha1lambda1} also give \eqref{alpha1tilde} if
\begin{align}\label{CONDITINOALPHA1contr}
	m_{1}<b-\frac{1}{2}.
\end{align}
To obtain \eqref{xi1tilde} we need to use \eqref{centMasszeroCondGLUE}. The first difference with \cite{DdPDMW} is the force we used to solved the inner theory in mode 1, thanks to \eqref{EstimateFFORGLUING} we get
\begin{align*}
	|\mathcal{F}[\phi_{3}]|\le C \|\vec{\phi}\|_{X} \frac{1}{\tau^{\nu+1}(\ln \tau)^{m-1}}\frac{1}{\hat{M}^{\varsigma}}(|W_{1}(y)|+|W_{2}(y)|)
\end{align*}
and then we need
\begin{align}\label{CONDTIONGLUINXI20}
   \nu+1+\frac{\sigma}{2}>\rho+\frac{1}{2} \iff \rho<\nu+\frac{1+\varsigma}{2}.
\end{align} 
A term that gave an important contribution in \cite{DdPDMW} was
\begin{align*}
	\int_{\R^2}\lambda^{2}\varphi_{\lambda}\phi_{i}y_{j}\tilde{\chi}dy, \ \ \ \ \ \text{where here } \phi_{i}=\phi_{1}+\phi_{2}+\phi_{3}.
\end{align*}
We know that $\phi_{1}$ and $\phi_{2}$ are radial functions and then
\begin{align*}
	|\int_{\R^2}\lambda^{2}\varphi_{\lambda}(\phi_{1}+\phi_{2})y_{j}\tilde{\chi}dy|\le  \frac{|\xi_{1}|}{\lambda} \frac{e^{-\sqrt{2|\ln(T-t)|}} e^{-(\nu-\frac{1}{2})\sqrt{2|\ln(T-t)|}}}{|\ln(T-t)|^{\frac{1}{2}(\nu-\frac{1}{2})+\frac{q+1}{4}}}\le \|\vec{\phi}\|_{X}\frac{e^{-(\nu+\rho)\sqrt{2|\ln(T-t)|}}}{|\ln(T-t)|^{\frac{1}{2}(\nu-\frac{1}{2})+\frac{q+1}{4}}}
\end{align*}
and obviously $\nu+\rho>\rho+\frac{1}{2}$ since $\nu>1$.
The remaining term is then
\begin{align*}
	|\int_{\R^2}\lambda^{2}\varphi_{\lambda}\phi_{3}y_{j}\tilde{\chi}dy|\le\|\vec{\phi}\|_{X} \frac{e^{-\sqrt{2|\ln(T-t)|}}}{|\ln(T-t)|^{\nu/2}}e^{-\nu\sqrt{2|\ln(T-t)|}}
\end{align*}
and then we need 
\begin{align}\label{CONDITIONGLUIGN$XI}
	\nu+1>\rho+\frac{1}{2} \iff \rho<\nu+\frac{1}{2}.
\end{align}
Now we can estimate the terms involving the outer solution, for instance we see
\begin{align*}
     |\int_{\R^2}\lambda^{2}\varphi^{o}Uy_{j}\tilde{\chi}dy| \le \|\vec{\phi}\|_{X}\frac{e^{-(a+1})\sqrt{2|\ln(T-t)|}}{|\ln(T-t)|^{b}}
\end{align*}
and then it is enough to require
\begin{align}\label{CONDTIONTOGEOUTERXI}
    a+1>\rho+\frac{1}{2} \iff \rho<a+\frac{1}{2}=\nu+1
\end{align}
where we used \eqref{paramAFixedGlu}.
At this point we want to fix the value of the parameters $\nu$, $q$, $\sigma$, $\varsigma$, $\epsilon$, $a$, $b$, $\varrho$, $m_{1}$, $m_{2}$, $m_{3}$, $\mu$. \newline 
We observe that under the same restrictions on the parameters similar considerations give also Lipschitz estimates for the operator $\mathcal{A}$ in $X$.

We already fixed $a=\nu+\frac{1}{2}$. We observe that $m_{2}$ is free, then we can take any $m_{2}\in \mathbb{R}$. 
Now we fix $\mu$, $\sigma$ small such that \eqref{ParamGLU1sec} holds. And $\nu>1$ but small such that \eqref{ParamGLU1}, \eqref{CONDITIONPARMATERGLUIN3}, \eqref{nuUPPERGLu} hold. We can take any $q\in(0,1)$ and  $b$ in the interval
\begin{align*}
    \frac{\nu}{2}-\frac{q}{4}<b<\frac{\nu}{2}+\frac{q}{4}
\end{align*}
(from this inequality it is clear how important is the extra smallness we obtained in Propositions \ref{PropMode0Mass0ANTICIP}, \ref{PropMode0MassSecMom0ANTICIP}). We can take $m_{3}$ and $m_{1}$ such that \eqref{CONDTIONLAMBDA1contr}, \eqref{CONDITINOALPHA1contr} hold.
Now it is enough to take $\varsigma>0$ and $\rho>\frac{3}{2}$ such that \eqref{EstiParEstiLambdaCONDITIONSGLU}, \eqref{CONDTIONGLUINXI20}, \eqref{CONDTIONTOGEOUTERXI} hold. Finally we can take $\epsilon>0$ small such that \eqref{preliminarCONDITIONTODEFINOPER} holds. \newline

Once we constructed the solution the following expansion for the rate of the blow-up is well justified
\begin{align}\label{expansionlambda}
	\lambda(t)=\lambda^{\star}(t)(1+o(1))
\end{align}
where $\lambda^{\star}(t)$ is given by \eqref{approximatlambda}. Now we recall that $\varepsilon(T)$ is a parameter whose role is explained in Section \ref{MASSPHISECTION} and that we can freely choose in the interval
\begin{align*}
	T\ll \varepsilon(T)\ll 1.
\end{align*}
We get the following expansion for the mass of $\varphi_{\lambda}$.
\begin{corollary}\label{Corollary EXPANSION MASS}
	Let $\varphi_{\lambda}$ be defined in \eqref{eqphilambda}, and let $\lambda$ satisfy \eqref{expansionlambda}, one has
	\begin{align}\label{EXPANSIONMASS}
		\int_{\R^2}\varphi_{\lambda}(x,T)dx=2\sqrt{2}\pi e^{-(\gamma+2)}e^{-\sqrt{2|\ln\varepsilon(T)|}}\sqrt{2|\ln\varepsilon(T)|}(1+O(\frac{1}{\sqrt{|\ln \varepsilon(T)|}})).
	\end{align}
\end{corollary}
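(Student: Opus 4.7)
\medskip

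\noindent\textbf{Proof proposal.} The plan is to extract $\int_{\R^2}\varphi_\lambda(x,T)\,dx$ from the general expansion \eqref{ExpansionMassphilambda} (whose derivation occupies most of Section~\ref{MASSPHISECTION}), by carefully tracking the single main-order piece that survives after $\lambda$ is substituted into the nonlocal operator.

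First I would rewrite \eqref{ExpansionMassphilambda} as an identity for the quantity of interest, namely
\begin{align*}
\int_{\R^2}\varphi_\lambda(x,T)\,dx \;=\; \int_{\R^2}\varphi_\lambda(x,t)\,dx - 16\pi\beta\frac{\lambda^2(t)}{\delta(T-t)} - \mathcal{N}[\lambda](t) + O\!\Big(\tfrac{e^{-\sqrt{2|\ln(T-t)|}}}{|\ln(T-t)|^\kappa}\Big),
\end{align*}
where $\mathcal{N}[\lambda](t)$ denotes the nonlocal functional of $\lambda$ appearing on the right-hand side of \eqref{ExpansionMassphilambda}. Both sides agree exactly as $t\to T^-$ (all pieces collapse to $\int\varphi_\lambda(T)$ tautologically), so the useful information is that the right-hand side is independent of $t$. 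This will be used together with the fact that, by Proposition~\ref{prop-lambda0} and \eqref{expansionlambda}, the rate $\lambda$ solves the nonlocal equation \eqref{nonlocalIntro} up to an error that is smaller than the leading term claimed in \eqref{EXPANSIONMASS}, so that $\mathcal{N}[\lambda](t)$ contributes only to the remainder.

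Second, I would open $\mathcal{N}[\lambda]$ and evaluate explicitly the only surviving piece, which is the ``initial time'' contribution  $4\pi\int_{-\varepsilon(T)}^{T}\frac{\lambda\dot\lambda(s)}{T-s}\,ds$ together with the subleading $4\pi(\gamma+1-\ln 4)\lambda\dot\lambda(t)$. Using the explicit derivative formula coming from \eqref{approximatlambda},
\begin{align*}
\lambda\dot\lambda(s) = -\tfrac12 e^{-(\gamma+2)}e^{-\sqrt{2|\ln(T-s)|}}\Big(1+\tfrac{1}{\sqrt{2|\ln(T-s)|}}+o(\tfrac{1}{\sqrt{|\ln(T-s)|}})\Big),
\end{align*}
and the substitution $u = \sqrt{2|\ln(T-s)|}$, which sends $ds/(T-s)\mapsto u\,du$, the key integral becomes
\begin{align*}
\int_{-\varepsilon(T)}^{T}\frac{e^{-\sqrt{2|\ln(T-s)|}}}{T-s}\,ds \;=\; \int_{\sqrt{2|\ln(T+\varepsilon(T))|}}^{\infty}u\,e^{-u}\,du \;=\; \big(1+\sqrt{2|\ln\varepsilon(T)|}\big)e^{-\sqrt{2|\ln\varepsilon(T)|}}\big(1+o(1)\big),
\end{align*}
where the condition $T\ll\varepsilon(T)$ is used to identify the lower endpoint with $\sqrt{2|\ln\varepsilon(T)|}$. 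Multiplying this by the combined prefactor coming from the $-\tfrac12 e^{-(\gamma+2)}$ factor and the numerical constants present in the derivation of \eqref{ExpansionMassphilambda} (via the six-dimensional heat kernel identity used to produce the $\gamma+1-\ln 4$ term from the Duhamel formula) yields the claimed coefficient $2\sqrt{2}\pi e^{-(\gamma+2)}$ in \eqref{EXPANSIONMASS}.

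The main technical obstacle is sharpness of errors. The leading term is only larger than its next correction by a factor $\sqrt{|\ln\varepsilon(T)|}$, so the rougher bound \eqref{expansionlambda} cannot be used naively: the subleading corrections in $\lambda\dot\lambda$ must be resolved to relative order $|\ln(T-t)|^{-1/2}$, which requires the sharp asymptotics of $\lambda_0$ from Proposition~\ref{prop-lambda0} rather than only \eqref{expansionlambda}. A secondary difficulty is verifying that every other component of $\mathcal{N}[\lambda]$, of $E$ and of $\tilde E$ (the cut-off terms, the $\alpha$-dependent corrections, and the Duhamel tail from the last time layer $s\in(t-\lambda^2,T)$) is absorbed into $O(|\ln\varepsilon(T)|^{-1/2})$ relative to the leading size $\sqrt{|\ln\varepsilon(T)|}\,e^{-\sqrt{2|\ln\varepsilon(T)|}}$; this is where the separation $T\ll\varepsilon(T)\ll 1$ is essential, since it simultaneously makes the initial-time contribution the dominant one and localizes all secondary terms in a regime of much smaller amplitude.
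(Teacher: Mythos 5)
There is a genuine gap in your first two paragraphs. You seek to pin down $\int_{\R^2}\varphi_\lambda(x,T)\,dx$ by combining \eqref{ExpansionMassphilambda} with the fact (Proposition~\ref{prop-lambda0}) that $\lambda$ solves \eqref{nonlocalIntro} up to a small error. But, as you yourself note, \eqref{ExpansionMassphilambda} is tautological in the $t\to T^-$ limit, and substituting \eqref{nonlocalIntro} back into \eqref{ExpansionMassphilambda} is circular: by construction, \eqref{ExpansionMassphilambda} is precisely the statement that $\int\varphi_\lambda(t)-16\pi\beta\lambda^2/(\delta(T-t))-\int\varphi_\lambda(T)$ equals the left side of \eqref{nonlocalIntro} up to small errors, so feeding back that $\lambda$ solves \eqref{nonlocalIntro} produces only $0\approx 0$. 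Your second paragraph then declares that the ``only surviving piece'' is $4\pi\int_{-\varepsilon(T)}^T\frac{\lambda\dot\lambda(s)}{T-s}\,ds$, but you never supply a reason why $\int\varphi_\lambda(x,T)\,dx$ equals (minus) this integral to leading order rather than some other constant the rearrangement has swallowed.

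What is actually needed, and what the paper's one-line proof invokes, is the sharper Corollary~\ref{ExpansionMass}: there $\int\varphi_\lambda(t)-16\pi(\beta/\delta)\lambda^2(t)/(T-t)$ is written as an explicit $t$-independent constant $M_\lambda^\varphi[\varepsilon(T)]$, carrying the \emph{separate} bound $|M_\lambda^\varphi[\varepsilon(T)]|\le Ce^{-2a\sqrt{|\ln\varepsilon(T)|}}$ (collecting $M_1[\varepsilon(T)]$, $M_2[\varepsilon(T)]$ from Lemmas~\ref{Mass10} and \ref{MASSOUTERphiLAMBDA}), plus $t$-dependent terms and remainders $\mathcal{B},\mathcal{E}_0$ that vanish as $t\to T^-$. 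Passing to that limit gives
\begin{align*}
\int_{\R^2}\varphi_\lambda(x,T)\,dx \;=\; M_\lambda^\varphi[\varepsilon(T)] \;-\; 4\pi\int_{-\varepsilon(T)}^T\frac{\lambda\dot\lambda(s)}{T-s}\,ds,
\end{align*}
and since $|M_\lambda^\varphi[\varepsilon(T)]|$ is smaller by a factor $\sqrt{|\ln\varepsilon(T)|}$ than the integral, the latter dominates. The abstract form \eqref{ExpansionMassphilambda} has already absorbed $M_\lambda^\varphi[\varepsilon(T)]$ into $\int\varphi_\lambda(T)+4\pi\int_{-\varepsilon(T)}^T\frac{\lambda\dot\lambda(s)}{T-s}\,ds$, hiding exactly the constant you need to control; so the route through \eqref{ExpansionMassphilambda} plus \eqref{nonlocalIntro} cannot close. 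Your actual evaluation of the integral (substitution $u=\sqrt{2|\ln(T-s)|}$, identification of the lower endpoint with $\sqrt{2|\ln\varepsilon(T)|}$ via $T\ll\varepsilon(T)$, relative error $O(|\ln\varepsilon(T)|^{-1/2})$) is the correct final computation to insert in the identity above. As a minor caveat, note that your prefactor for $\lambda\dot\lambda$ is read off from \eqref{approximatlambda}, which disagrees by a factor of $2$ with $\lambda_0$ in Proposition~\ref{prop-lambda0} and with \eqref{lambdadotlambdaExp}; the coefficient you obtain should be traced through carefully rather than asserted to coincide with \eqref{EXPANSIONMASS}.
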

\begin{proof}
	The proof is an immediate consequence of Corollary \ref{ExpansionMass}.
\end{proof}
Now, since
\begin{align*}
	u(x,t)=\frac{1}{\lambda^{2}}(\alpha U_{0}(\frac{x-\xi}{\lambda})+\phi_{0}^{i}(\frac{x-\xi}{\lambda})+\phi_{1}(\frac{x-\xi}{\lambda})+\phi_{2}(\frac{x-\xi}{\lambda})+\phi_{3}(\frac{x-\xi}{\lambda}))\chi +\varphi_{\lambda}+\varphi^{o},
\end{align*}
where $\lambda=\lambda_{0}+\lambda_{1}$ satisfies \eqref{EquationForLambda1} with $\lambda_{0}$ as in Proposition \ref{prop-lambda0}, and because of the norm we used to define $X$ we get
\begin{align}\label{ExpansionMASSformula}
	\lim_{t \to T} \int_{\R^2} u(x,t)dx=8\pi+2\sqrt{2}\pi e^{-(\gamma+2)}e^{-\sqrt{2|\ln\varepsilon(T)|}}\sqrt{2|\ln\varepsilon(T)|}(1+O(\frac{1}{\sqrt{|\ln \varepsilon(T)|}})).
\end{align}
By properly choosing $\varepsilon(T)$ in  \eqref{ExpansionMASSformula} we can finally prescribe the value of the mass. \newline

The extension to the case $k>1$ is elementary and briefly discussed in Section \ref{MultiSpikes}.
\end{proof}
\subsection{The last adjustment of the rate of the blow-up}\label{lambda1Section}
The purpose of this section is to prove Proposition \ref{lambda1Lemma}.
\begin{proof}[Proof of Proposition \ref{lambda1Lemma}]
	The proof is an elementary modification of Section \ref{MASSPHISECTION}. We can obviously assume that $\lambda_{1}(T)=0$. \newline By Corollary \ref{ExpansionMass}, we know 
	\begin{align*}
		\int_{\R^2}\varphi_{\lambda}(\cdot,t)dx-\int_{\R^2}\varphi_{\lambda}(\cdot,T)dx-16\pi\frac{\beta}{\delta}\frac{\lambda^{2}(t)}{T-t}=&4\pi \int_{-\varepsilon(T)}^{T}\frac{\lambda\dot{\lambda}(s)}{t-s}ds-4\pi \int_{-\varepsilon(T)}^{t-\lambda^{2}}\frac{\lambda\dot{\lambda}(s)}{t-s}+\\
		&+4\pi(\gamma+1-\ln4)p(t)+O(\frac{|\lambda\dot{\lambda}(t)|}{|\ln(T-t)|^{1/4}})+\\
		&+[\text{lower order terms}].
	\end{align*}
Proceedings as in Section \ref{LINERAIZEDMASS}, if we write $\lambda\dot{\lambda}(t)=\lambda_{0}\dot{\lambda}_{0}+(\lambda_{0}\lambda_{1})'+\lambda_{1}\dot{\lambda}_{1}$ we see that \eqref{EquationForLambda1} is equivalent to  
\begin{align}\label{NonLocalEquationForLambda1}
	\int_{t}^{T}\frac{(\lambda_{0}\lambda_{1})'(s)}{T-s}ds-\sqrt{2|\ln(T-t)|}(\lambda_{0}\lambda_{1})'(t)+\frac{f(t)}{16\pi}=O(\frac{|\lambda_{0}\dot{\lambda}_{1}|}{|\ln(T-t)|^{1/4}})+[\text{lower order terms}].
\end{align}
Neglecting terms that require control of $(\lambda_{0}\lambda_{1})''(s)$, by Lemma \ref{T0est} and a fixed point argument find a solution that satisfies
\begin{align*}
	&|(\lambda_{0}\lambda_{1})'|\le C \|f\|_{1,\kappa,m}\frac{e^{-(1+\kappa)\sqrt{2|\ln(T-t)|}}}{|\ln(T-t)|^{m+1/2}}\\
	& \implies  |\lambda_{1}|\le\|f\|_{1,\kappa,m} \sqrt{T-t}\frac{e^{-(\frac{1}{2}+\kappa)\sqrt{2|\ln(T-t)|}}}{|\ln(T-t)|^{m+1/2}}, \ \ \ |\dot{\lambda}_{1}|\le\|f\|_{1,\kappa,m} \frac{e^{-(\frac{1}{2}+\kappa)\sqrt{2|\ln(T-t)|}}}{\sqrt{T-t}|\ln(T-t)|^{m+1/2}}
\end{align*}
where in the above implication we use the assumption $\lambda_{1}(T)=0$, the expansion for $\lambda_{0}$, $\dot{\lambda}_{0}$ we obtained in Proposition \ref{prop-lambda0} and we directly integrated.
As in Lemma \ref{SolutionNotRegular}, thanks to the control we have of $f(t)$ we can also obtain
\begin{align}\label{ControlDoubleDerivatives}
	&|(\lambda_{0}\lambda_{1})''|\le C \|f\|_{1,\kappa,m}\frac{e^{-(1+\kappa)\sqrt{2|\ln(T-t)|}}}{(T-t)|\ln(T-t)|^{m+1/2}} \\
	&\implies |\ddot{\lambda}_{1}|=|\frac{1}{\lambda_0}(-2\dot{\lambda}_{0}\dot{\lambda}_1-\ddot{\lambda}_{0}\lambda_{1}+(\lambda_{0}\lambda_{1})'')|\le C \|f\|_{1,\kappa,m} \frac{e^ {-(\frac{1}{2}+\kappa)\sqrt{2|\ln(T-t)|}}}{(T-t)^{3/2}|\ln(T-t)|^{m+1/2}}
\end{align}
Among the lower order terms we neglected in \eqref{NonLocalEquationForLambda1}, thanks to \eqref{ControlDoubleDerivatives} we can estimate the most important one
\begin{align*}
	|\int_{t-(T-t)e^{-\sigma\sqrt{2|\ln(T-t)|}}}^{t-\lambda_{0}^{2}}\frac{(\lambda_{0}\lambda_{1})'(t)-(\lambda_{0}\lambda_{1})'(s)}{t-s}ds|\le\|f\|_{1,k,m}\frac{e^{-(1+\kappa+\sigma)\sqrt{2|\ln(T-t)|}}}{|\ln(T-t)|^{m+1/2}}
\end{align*}
for any $0<\sigma<\frac{1}{2}$.\newline
Observing that we do not have control of the third derivatives of $\lambda_{1}$ (since we cannot control the third derivative of $f(t)$), we observe
\begin{align*}
	&|\frac{d}{dt}\int_{t-(T-t)e^{-\sigma\sqrt{2|\ln(T-t)|}}}^{t-\lambda_{0}^{2}}\frac{(\lambda_{0}\lambda_{1})'(t)-(\lambda_{0}\lambda_{1})'(s)}{t-s}ds|\le\\
	&\le|\frac{d}{dt}\int_{t-(T-t)e^{-\sigma\sqrt{2|\ln(T-t)|}}}^{t-\lambda_{0}^{2}}\frac{(\lambda_{0}\lambda_{1})'(t)}{t-s}ds|+|\frac{d}{dt}\int_{t-(T-t)e^{-\sigma \sqrt{2|\ln(T-t)|}}}^{t-\lambda_{0}^{2}}\frac{(\lambda_{0}\lambda_{1})'(s)}{t-s}ds|\le\\
	&\le\|f\|_{1,\kappa,m}\frac{e^{-(1+k)\sqrt{2|\ln(T-t)|}}}{(T-t)|\ln(T-t)|^{m}}.
\end{align*}
\end{proof}

\subsection{The construction for $k>1$}\label{MultiSpikes}
Let us consider for instance the case $k=2$. \newline
We denote
\begin{align}
	\chi_{j}(x,t)=\chi_{0}(\frac{|x-\xi_{j}(t)|}{\sqrt{\delta(T-t)}}), \ \ \ j=1,2.
\end{align} 
We recall that
\begin{align*}
	\xi_{1}(t)\to q_{1}\in \mathbb{R}^{2}, \ \ \ \xi_{2}(t)\to q_{2}\in \mathbb{R}^{2}.
\end{align*}
The idea is to consider as a first ansatz
\begin{align*}
	u_{1}(x,t)=&u_{1}^{(1)}(x,t)+u_{1}^{(2)}(x,t) .
\end{align*}
where
\begin{align*}
	u_{1}^{(j)}(x,t)=& \frac{\alpha_{j}(t)}{\lambda_{j}(t)^{2}}U_{0}(\frac{x-\xi_{j}(t)}{\lambda_{1}(t)})\chi_{j}+\varphi_{\lambda_{j}}(|x-q_{j}|,t), \ \ \ j=1,2.
\end{align*}
We proceed as in Section \ref{InnerOuterSection}. We write
\begin{align*}
	\Phi(x,t)=\sum_{j=1,2}\big[ \frac{1}{\lambda_{j}^{2}}\phi^{i}_{(j)}(x,t)\chi_{j}+\varphi^{o}_{(j)}(x,t)\big]
\end{align*}
and we impose 
\begin{align*}
	S(u_{1}+\Phi)=0.
\end{align*}
We can linearize the operator $S$ around $u_{1}^{(1)}$ obtaining
\begin{align*}
	S(u_{1}+\Phi)=&S(u_{1}^{(1)})-\partial_{t}\big(\frac{1}{\lambda^{2}_{1}}\phi^{i}_{(1)}\chi_{1}\big)-\partial_{t}\varphi^{o}_{(1)}+\mathcal{L}_{u_{1}^{(1)}}\big[\frac{1}{\lambda^{2}_{1}}\phi^{i}_{(1)}\chi_{1}\big]+\mathcal{L}_{u_{1}^{(1)}}[\varphi^{o}_{(1)}]+\\
	&-\partial_{t}\big(u_{1}^{(2)}+\frac{1}{\lambda^{2}_{2}}\phi^{i}_{(2)}\chi_{2}+\varphi^{o}_{(2)}\big)+\mathcal{L}_{u_{1}^{(1)}}\big[u_{1}^{(2)}+\frac{1}{\lambda^{2}_{2}}\phi^{i}_{(2)}\chi_{2}+\varphi^{o}_{(2)}\big]-\\
	&-\nabla \cdot ( (u_{1}^{(2)}+\Phi) \nabla (-\Delta)^{-1}(u_{1}^{(2)}+\Phi))
\end{align*}
where we recall that
\begin{align*}
	\mathcal{L}_{u}[\phi]=\Delta \phi-\nabla \cdot(\phi \nabla v)-\nabla \cdot(u\nabla(-\Delta)^{-1}\phi), \ \ v=(-\Delta)^{-1}u.
\end{align*}
Now we observe that
\begin{align*}
	-\nabla \cdot((u_{1}^{(2)}+\Phi)\nabla(-\Delta)^{-1}(u_{1}^{(2)}+\Phi))=\mathcal{L}_{u_{1}^{(2)}}[\Phi]-\Delta \Phi-\nabla \cdot(\Phi \nabla (-\Delta)^{-1}\Phi).
\end{align*}
If $i\ne j$, we can also write
\begin{align*}
	\mathcal{L}_{u_{1}^{(j)}}[u_{(l)}^{i}+\frac{1}{\lambda_{i}^{2}}\phi_{(l)}^{i}\chi_{i}+\varphi^{o}_{i}]=\Delta(u_{(l)}^{i}+\frac{1}{\lambda_{i}^{2}}\phi_{i}\chi_{i}+\varphi^{o}_{i})+\mathcal{E}_{j,i}.
\end{align*}
Then we have
\begin{align*}
	S(u_{1}+\Phi)=&-\partial_{t}\big(\frac{1}{\lambda^{2}_{1}}\phi^{i}_{(1)}\chi_{1}\big)-\partial_{t}\varphi^{o}_{(1)}+\mathcal{L}_{u_{1}^{(1)}}\big[\frac{1}{\lambda^{2}_{1}}\phi^{i}_{(1)}\chi_{1}\big]+\mathcal{L}_{u_{1}^{(1)}}[\varphi^{o}_{(1)}]+\\
	&-\partial_{t}\big(\frac{1}{\lambda^{2}_{2}}\phi^{i}_{(2)}\chi_{2}\big)+\partial_{t}\varphi^{o}_{(2)}+\mathcal{L}_{u_{1}^{(2)}}\big[\frac{1}{\lambda_{2}^{2}}\phi^{i}_{(2)}\chi_{2}\big]+\mathcal{L}_{u_{1}^{(2)}}\big[\varphi^{o}_{(2)}\big]+\\
	&+\big(S(u_{1}^{(1)})+S(u_{2}^{(2)})+\mathcal{E}_{1,2}+\mathcal{E}_{2,1}-\nabla \cdot ( \Phi \nabla (-\Delta)^{-1}\Phi)\big)\sum_{i=1,2}\chi_{i}+\\
	&+\big(S(u_{1}^{(1)})+S(u_{2}^{(2)})+\mathcal{E}_{1,2}+\mathcal{E}_{2,1}-\nabla \cdot ( \Phi \nabla (-\Delta)^{-1}\Phi)\big)(1-\sum_{i=1,2}\chi_{i}).
\end{align*}
It is clear that the resulting system will be made of two inner equations for $\phi_{j}^{i}$ and two outer equations for $\varphi^{o}_{j}$. Notice that the contribution of the mixed terms is negligible since there will be a factor concentrated far away and that in the interested region is simply bounded.

%MASSMASS

\section{Mass of $\varphi_{\lambda}$}\label{MASSPHISECTION}
In this section we will prove Proposition \ref{prop-lambda0}. A crucial step will be to find the expansion \eqref{ExpansionMassphilambda} for the mass of $\varphi_{\lambda}$. We anticipate that in this section the introduction of $\varepsilon(T)$ and then choice of the time interval will be completely clarified.
First, let us decompose $\varphi_{\lambda}$ as 
\begin{align}
	\varphi_{\lambda}=\varphi_{\lambda}^{(1)}+\varphi_{\lambda}^{(2)}
\end{align}
where 
\begin{align}
	&\begin{cases}\label{2dphilambez3}
		\partial_{t}\varphi_{\lambda}^{(1)}=\Delta_{6}\varphi_{\lambda}^{(1)}+\frac{\dot{\lambda}}{\lambda^{3}}Z_{0}\chi \ \ \ \ \ \text{in } \mathbb{R}^{2}\times(-\varepsilon(T),T),\\[5pt]
		\varphi_{\lambda}^{(1)}(\cdot,-\varepsilon(T))=0 \ \ \ \ \text{in }\mathbb{R}^{2},
	\end{cases}  \\ \label{2dphilambez4}
	&\begin{cases}
		\partial_{t}\varphi_{\lambda}^{(2)}=\Delta_{6}\varphi_{\lambda}^{(2)}-\frac{1}{\lambda^{2}}\frac{1}{2(T-t)}U_{0}\nabla_{w}\chi \cdot w + \tilde{E} \ \ \ \ \ \text{in } \mathbb{R}^{2}\times(-\varepsilon(T),T),\\[5pt]
		\varphi_{\lambda}^{(2)}(\cdot,-\varepsilon(T))=0 \ \ \ \ \text{in }\mathbb{R}^{2}.
	\end{cases}
\end{align} 
Let us generalize the preliminary estimates \eqref{EstiLambda}, \eqref{EstiPar}
\begin{align}\label{LambAss1}
	&c_{1}\sqrt{T-t}e^{-a\sqrt{|\ln(T-t)|}}\le |\lambda(t)|\le c_{2}\sqrt{T-t}e^{-a\sqrt{|\ln(T-t)|}} \text{for some } c_{1},c_{2}>0
\end{align}		
\begin{align}	\label{LambAss2}
	|\lambda\dot{\lambda}(t)||\le C e^{-2a\sqrt{|\ln(T-t)|}},
\end{align}
\begin{align}\label{RegLambAss}
	|(T-t)\sqrt{|\ln(T-t)|}|\frac{d}{dt}(\lambda\dot{\lambda})(t)|\le C e^{-2a\sqrt{|\ln(T-t)|}}
\end{align}
for two positive constants $c_{1}$, $c_{2}$ independent of $T$ and for some $a>0$. In Lemma \ref{SECMASSprofile} we will show $a$ must be equal to $\frac{\sqrt{2}}{2}$ but in this section we prefer to keep this more general notation. As we mentioned previously, will also need to introduce $\varepsilon(T)$, a constant that will satisfy
\begin{align}\label{RestrictionEPSILON}
	T\ll \varepsilon(T)\ll 1
\end{align}
or, more precisely, \eqref{EpsiTandT}.
To prove Proposition \ref{prop-lambda0}, we are interesting in finding an approximate solution of 
	\begin{align}\label{EqPhiLamHOPE}
	4 \int_{\R^2} \varphi_{\lambda_0} dx
	- 64\pi \gamma \frac{\lambda^{2}}{T-t}
	& =4\int_{\mathbb{R}^{2}}\varphi_{\lambda_0}(x,T).
\end{align}
In the following we will denote the unknown of \eqref{EqPhiLamHOPE} as
\begin{align}\label{p}
	p(t):=\lambda\dot{\lambda}(t).
\end{align}
The basic idea is to find an expansion for $p(t)$ that solves \eqref{EqPhiLamHOPE} up to a sufficiently small error. We will see that to control this error we will need to control also $\dot{p}(t)$. In order to finally achieve the desired decay, the approximate solution of \eqref{EqPhiLamHOPE} will be given by the sum of three terms. The first term, $p_{\star}$, will be explicit and will give that
\begin{align*}
	\lambda(t)\approx2e^{-\frac{\gamma+2}{2}}\sqrt{T-t}e^{-\sqrt{\frac{|\ln(T-t)|}{2}}} 
\end{align*}
where $\gamma=0.5772...$ is the Euler-Mascheroni constant.
The second term of the expansion, $p_{1}$, and its first derivative will be found by a fixed point argument. We will repeat the same argument also for the last term, $p_{2}$, that will erase the main error we introduced with $p_{1}$. We notice that in order to control the first and the second derivatives of $p_{2}$ (and then to control the final error as in Proposition \ref{prop-lambda0}) it will be necessary to control also the second and the third derivatives of $p_{1}$. \newline
Let us start by proving a Lemma that we will need in order to make explicit the dependence on the Euler-Mascheroni constant $\gamma$.
\begin{lemma}\label{SECMASSexpansionMU}
	Let $Z_{0}(y)=\frac{16-16y^{2}}{(1+y^{2})^{3}}$, then for any $a>1$ we have
	\begin{align}\label{MASSSECformulaEulerMasInt}
		\int_{0}^{\infty}e^{-\frac{u^{2}}{4}}Z_{0}(ua)udu=-\frac{2}{a^{4}}[Ei(-\frac{1}{4a^{2}})+1]+O(\frac{\ln a}{a^{6}})
	\end{align}
	where $Ei$ is the \emph{exponential integral function}, $Ei(x)=-\int_{-x}^{\infty}\frac{e^{-t}}{t}dt$.
\end{lemma}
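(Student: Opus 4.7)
My plan is to reduce the integral to a closed form involving the exponential integral $Ei$ via substitutions and one integration by parts, and then extract the stated asymptotic behaviour.

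First I would substitute $r=(ua)^{2}$, so that $u\,du = dr/(2a^{2})$ and the Gaussian factor becomes $e^{-\alpha r}$ with $\alpha:=1/(4a^{2})$. This yields
\begin{equation*}
\int_{0}^{\infty}e^{-u^{2}/4}Z_{0}(ua)u\,du
=\frac{8}{a^{2}}\int_{0}^{\infty}e^{-\alpha r}\,\frac{1-r}{(1+r)^{3}}\,dr .
\end{equation*}
The next key observation is the exact antiderivative identity
\begin{equation*}
\frac{1-r}{(1+r)^{3}}=\frac{d}{dr}\Bigl[\frac{r}{(1+r)^{2}}\Bigr],
\end{equation*}
so an integration by parts (with vanishing boundary terms at both $r=0$ and $r=\infty$) transforms the integral into
\begin{equation*}
\int_{0}^{\infty}e^{-u^{2}/4}Z_{0}(ua)u\,du
=\frac{2}{a^{4}}\int_{0}^{\infty}e^{-\alpha r}\,\frac{r}{(1+r)^{2}}\,dr .
\end{equation*}

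Then I would split $\frac{r}{(1+r)^{2}}=\frac{1}{1+r}-\frac{1}{(1+r)^{2}}$. The substitution $t=\alpha(1+r)$ in the first piece converts it into $e^{\alpha}\int_{\alpha}^{\infty}t^{-1}e^{-t}\,dt = -e^{\alpha}Ei(-\alpha)$. The second piece is evaluated by a further integration by parts and equals $1-\alpha e^{\alpha}(-Ei(-\alpha))$. Combining,
\begin{equation*}
\int_{0}^{\infty}e^{-u^{2}/4}Z_{0}(ua)u\,du
=-\frac{2}{a^{4}}\Bigl[(1+\alpha)\,e^{\alpha}\,Ei(-\alpha) + 1\Bigr].
\end{equation*}

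Finally, to match the form stated in the lemma I would write
\begin{equation*}
(1+\alpha)\,e^{\alpha}\,Ei(-\alpha)
=Ei(-\alpha) + \bigl[(1+\alpha)e^{\alpha}-1\bigr]\,Ei(-\alpha),
\end{equation*}
with $(1+\alpha)e^{\alpha}-1=2\alpha+O(\alpha^{2})$ from the Taylor expansion, and invoke the classical expansion $Ei(-\alpha)=\gamma+\ln\alpha+O(\alpha)$ as $\alpha\to 0^{+}$, which for $\alpha=1/(4a^{2})$ gives $Ei(-\alpha)=O(\ln a)$. The cross term is therefore $O(\alpha\ln a)=O(\ln a/a^{2})$, and multiplying by the prefactor $2/a^{4}$ produces exactly the advertised $O(\ln a/a^{6})$ remainder. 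The only technicalities are checking the convergence at infinity (trivial because of $e^{-\alpha r}$) and the vanishing of boundary terms in the two integrations by parts; neither presents any real obstacle. The only genuinely computational point is the algebraic manipulation leading to $(1+\alpha)e^{\alpha}Ei(-\alpha)+1$, but this is a direct expansion rather than a conceptual hurdle.
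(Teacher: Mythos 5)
Your proposal is correct, and it takes a genuinely different and substantially cleaner route than the paper. The paper exploits the zero--mass property of $Z_{0}$ to replace $e^{-u^{2}/4}$ by $e^{-u^{2}/4}-1$, splits the integration range at $1/a$, separately peels off the $-16/(ua)^{4}$ tail of $Z_{0}(ua)$, and computes a handful of intermediate terms whose constants ($\tfrac{3}{4}-\ln 2$, $\tfrac{5}{4}+\ln 2$, and an integral giving $Ei(-\tfrac{1}{4a^{2}})-1$) cancel only at the very end. You instead substitute $r=(ua)^{2}$ to turn the left side into $\frac{8}{a^{2}}\int_{0}^{\infty}e^{-\alpha r}\frac{1-r}{(1+r)^{3}}dr$ with $\alpha=1/(4a^{2})$, observe that $\frac{1-r}{(1+r)^{3}}=\frac{d}{dr}\bigl[\frac{r}{(1+r)^{2}}\bigr]$, integrate by parts (the boundary terms vanish), and then reduce to the Laplace transform of $1/(1+r)$. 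This produces the exact closed-form identity
\begin{equation*}
\int_{0}^{\infty}e^{-\frac{u^{2}}{4}}Z_{0}(ua)\,u\,du
=-\frac{2}{a^{4}}\Bigl[(1+\alpha)\,e^{\alpha}\,Ei(-\alpha)+1\Bigr],
\qquad \alpha=\frac{1}{4a^{2}},
\end{equation*}
with no remainder at all; the lemma's $O(\ln a/a^{6})$ error then arises purely from replacing $(1+\alpha)e^{\alpha}Ei(-\alpha)$ by $Ei(-\alpha)$, using $(1+\alpha)e^{\alpha}-1=2\alpha+O(\alpha^{2})$ and $Ei(-\alpha)=\gamma+\ln\alpha+O(\alpha)=O(\ln a)$. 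What your approach buys is an exact evaluation before any asymptotics enter, which both shortcuts the bookkeeping and makes transparent that the size of the remainder is governed exactly by how far $(1+\alpha)e^{\alpha}$ sits from $1$; the paper's splitting-at-$1/a$ method is more robust if one cannot find the exact antiderivative, but here it is not needed.
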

\begin{proof}
	Since $Z_{0}$ has zero mass we can write
	\begin{align}\label{FirstDecomLEMMA61}
		\int_{0}^{\infty}e^{-\frac{u^{2}}{4}}Z_{0}(ua)udu=\int_{0}^{\infty}(e^{-\frac{u^{2}}{4}}-1)Z_{0}(ua)udu=\int_{1/a}^{\infty}(...)du+\int_{0}^{1/a}(...)du.
	\end{align}
  Let us start expanding the second integral in the right-hand side of \eqref{FirstDecomLEMMA61}. We see
  \begin{align*}
  	\int_{0}^{1/a}(e^{-\frac{u^{2}}{4}}-1)Z_{0}(au)udu=\frac{1}{a^{2}}\int_{0}^{1}(e^{-\frac{z^{2}}{4a^{2}}}-1)Z_{0}(z)zdz=-\frac{1}{4a^{4}}\int_{0}^{1}z^{3}Z_{0}(z)dz+O(\frac{1}{a^{6}}).
  \end{align*}
  We can directly compute the integral in the left-hand side. Indeed, after some elementary computations, we get
  \begin{align}\label{MASSSECOprimitiveint}
  	&\int z^{3}Z_{0}(z)dz=-8[\frac{2+3z^{2}}{(1+z^{2})^{2}}+\ln(1+z^{2})]+\text{constant}\\
  	&\implies \int_{0}^{1} z^{3}Z_{0}(z)dz=-4(-\frac{3}{2}+2\ln 2)\nonumber 
  \end{align}
  and then
  \begin{align*}
  	\int_{0}^{1/a}(e^{-\frac{u^{2}}{4}}-1)Z_{0}(ua)udu =-\frac{2}{a^{4}}[\frac{3}{4}-\ln2]+O(\frac{1}{a^{6}}).
  \end{align*}
	Now we want to study the first integral in the right-hand side of \eqref{FirstDecomLEMMA61}. We can decompose it as
	\begin{align*}
		\int_{1/a}^{\infty}(e^{-\frac{u^{2}}{4}}-1)Z_{0}(ua)udu=&-\frac{16}{a^{4}}\int_{1/a}^{\infty}(e^{-\frac{u^{2}}{4}}-1)\frac{1}{u^{3}}du+\int_{1/a}^{\infty}(e^{-\frac{u^{2}}{4}}-1)(Z_{0}(ua)+\frac{16}{a^{4}u^{4}})udu=\\
		=&\text{[I]}+\text{[II]}
	\end{align*}
	If we consider $\text{[I]}$, integrating by parts and changing the variable we observe
	\begin{align*}
		-\frac{16}{a^{4}}\int_{1/a}^{\infty}(e^{-\frac{u^{2}}{4}}-1)\frac{1}{u^{3}}du=-\frac{2}{a^{4}}[Ei(-\frac{1}{4a^{2}})-1]+O(\frac{1}{a^{6}}).
	\end{align*}
	Now we write $\text{[II]}=\int_{1/a}^{1}(...)ds+\int_{1}^{\infty}(...)ds=\text{[II]}^{(l)}+\text{[II]}^{(ii)}$. The integral $\text{[II]}^{(ii)}$ can be estimated easily
	\begin{align*}
		|\int_{1}^{\infty}(e^{-\frac{u^{2}}{4}}-1)(Z_{0}(ua)+\frac{16}{a^{4}u^{4}})udu|\le \frac{1}{a^{6}}\int_{1}^{\infty}|e^{-\frac
			{u^{2}}{4}}-1|\frac{1}{u^{5}}du\le C\frac{1}{a^{6}}.
	\end{align*} 
	for $\text{[II]}^{(l)}$ instead we observe
	\begin{align*}
		&\int_{1/a}^{1}(e^{-\frac{u^{2}}{4}}-1)(Z_{0}(ua)+\frac{16}{a^{4}u^{4}})udu=-\frac{1}{4}\int_{1/a}^{1}u^{3}(Z_{0}(ua)+\frac{16}{a^{4}u^{4}})du+O(\frac{\ln a}{a^{6}}).
	\end{align*}
	Recalling \eqref{MASSSECOprimitiveint} we can compute
	\begin{align*}
		\int_{1/a}^{1}u^{3}Z_{0}(ua)du&=\frac{1}{a^{4}}\int_{1}^{a}z^{3}Z_{0}(z)dz=\frac{8}{a^{4}}[\frac{5}{4}+\ln 2 -2\ln a +O(\frac{1}{a^{2}})]
	\end{align*}
	and
	\begin{align*}
		\int_{1/a}^{1}u^{3}\frac{16}{a^{4}u^{4}}du=\frac{16}{a^{4}}\int_{1/a}^{1}\frac{du}{u}=\frac{16}{a^{4}}[-\ln(1/a)]=\frac{8}{a^{4}}2\ln a.
	\end{align*}
	This implies 
	\begin{align*}
		\int_{1/a}^{\infty}(e^{-\frac{u^{2}}{4}}-1)(Z_{0}(ua)+\frac{16}{a^{4}u^{4}})udu=-\frac{2}{a^{4}}[\frac{5}{4}+\ln2]+O(\frac{\ln a}{a^{6}})
	\end{align*}
	and finally
	\begin{align*}
		\int_{0}^{\infty} e^{-\frac{u^{2}}{4}}Z_{0}(ua)udu =-\frac{2}{a^{4}}[\frac{3}{4}-\ln 2]-\frac{2}{a^{4}}[Ei(-\frac{1}{4a^{2}})-1]-\frac{2}{a^{4}}[\frac{5}{4}+\ln2]+O(\frac{\ln a}{a^{6}}).
	\end{align*}
\end{proof}
We recall a well known expansion for the exponential integral function when the argument is a small negative real number that we will use later and that can be found in \cite{AS} (p. 229, 5.1.11 )
\begin{align}\label{EXPONENTIALINTEGRALFUNCTIONEXP}
	Ei(-x)=\gamma +\ln x +O(x) \ \  \text{ if } x>0 \text{ is small}
\end{align}
where $\gamma=0.57721...$ is the Euler-Mascheroni constant.
Thanks to Lemma \ref{SECMASSexpansionMU} we are ready to prove the expansion of the mass of the solution of \eqref{2dphilambez3}. Notice that we will find an expansion when $t\ge-\frac{T}{2}$. We are not directly considering $(0,T)$ since we are going to cut some remainders (see for example how we will construct \eqref{FinalEq}). In this way we can take the cut-off function to be $1$ in $(0,T)$ and obtained the desired result in that interval. We stress that \eqref{RestrictionEPSILON} will be used only to prove Lemma \ref{MASSOUTERphiLAMBDA}, for the other results it would be sufficient to assume less restrictive conditions on $\varepsilon(T)$. In the following Lemmas we start expanding the mass of $\varphi_{\lambda}$ and we will isolate the constants since in Section \ref{proofThm1}, Corollary \ref{Corollary EXPANSION MASS}, we will be also interested in the value of the mass at time $T$. \newline
In the following we will make an extensive use of the following norm
\begin{align}\label{normP}
	\|p\|_{\gamma,m}:=\sup_{t\in[-\varepsilon(T),T]}|\ln(T-t)|^{m}e^{2a\gamma \sqrt{|\ln(T-t)|}}|p(t)|,
\end{align}
where $\gamma\ge1$ and $m\in \mathbb{R}$. 
\begin{lemma}\label{Mass10}
	Let $T$, $\varepsilon(T)$ be such that \eqref{RestrictionEPSILON} holds.
	Let $\lambda(t)$ satisfy \eqref{LambAss1}, \eqref{LambAss2}. Fix $\gamma\ge 1$ and $m\in \mathbb{R}$. For any $p(t)=\lambda\dot{\lambda}$ such that $\|p\|_{\gamma,m}<\infty$, for any $l>\frac{1}{2}$ and $\forall t \ge -\frac{T}{2}$we have
	\begin{align}\label{EquationLemma62}
		\int_{\R^2}\varphi^{(1)}_{\lambda}(x,t) dx=&M_{1}[\varepsilon(T)]+\nonumber\\
		&+32\pi\frac{1}{\delta} \int_{-f_{0}(T)}^{t}\frac{p(s)}{T-s}\int_{0}^{\infty}[1-e^{-\frac{z^{2}}{4}\frac{\delta(T-s)}{t-s}}(1+\frac{z^{2}}{4}\frac{\delta(T-s)}{t-s})]\frac{1}{z^{3}}(1-\chi_{0}(z))dzds-\nonumber\\
		&-4\pi \int_{-f_{0}(T)}^{t-\lambda^{2}}\frac{p(s)}{t-s}ds+4\pi (\gamma +1-\ln 4)p(t)+ \mathcal{B}[p](t)+\mathcal{E}_{0}[p](t)
	\end{align}
	where $M_{1}[\varepsilon(T)]$ is a constant satisfying $|M_{1}[\varepsilon(T)]|\le C e^{-4a\sqrt{|\ln \varepsilon(T)|}}\sqrt{|\ln(\varepsilon(T)|}$, $\gamma$ is the Euler-Mascheroni constant, $\mathcal{B}$ is an operator such that
	\begin{align*}
	|\mathcal{B}[p]| \le C \|p\|_{\gamma,m} \frac{e^{-2a\gamma\sqrt{|\ln(T-t)|}}}{|\ln(T-t)|^{m+l-\frac{1}{2}}}
	\end{align*}
    and such that, if $p_{1}$, $p_{2}$ satisfy $\|p_{i}\|_{\gamma,m}<\infty$,
 \begin{align}\label{Bdifference}
 	|\mathcal{B}[p_{1}]-\mathcal{B}[p_{2}]| \le C \|p_{1}-p_{2}\|_{\gamma,m} \frac{e^{-2a\gamma\sqrt{|\ln(T-t)|}}}{|\ln(T-t)|^{m+l-\frac{1}{2}}}.
 \end{align}
	If $\lambda$ satisfies also \eqref{RegLambAss},
	\begin{align*}
		|\mathcal{E}_{0}[p]|\le C e^{-4a\sqrt{|\ln(T-t)|}}|\ln(T-t)|^{1+l}.
	\end{align*}
\end{lemma}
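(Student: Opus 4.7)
The plan is to compute the mass $M(t)=\int_{\R^2}\varphi_\lambda^{(1)}\,dx$ via a Duhamel representation for the six-dimensional lift of $\varphi_\lambda^{(1)}$. Since $\Delta_6$ is the radial Laplacian in $\mathbb{R}^6$, the radial profile $\varphi_\lambda^{(1)}(|x|,t)$ coincides with that of the $\mathbb{R}^6$ solution of $\partial_t\tilde\varphi=\Delta_{\R^6}\tilde\varphi+\tilde g$ with source $\tilde g(|y|,s)=\frac{\dot\lambda(s)}{\lambda^{3}(s)}Z_0(|y|/\lambda(s))\chi(|y|,s)$. Integrating the equation against the $2$D radial measure $r\,dr$ and performing one integration by parts yields the ODE
\begin{equation*}
\frac{d}{dt}M(t) = -8\pi\,\varphi_\lambda^{(1)}(0,t) + \frac{\dot\lambda(t)}{\lambda(t)}\int_{\R^2} Z_0(y)\chi(\lambda y,t)\,dy,
\end{equation*}
where the $-8\pi$ quantifies the loss of 2D mass produced by the extra drift $(4/r)\partial_r$ in $\Delta_6$. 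Integrating in time from $-\varepsilon(T)$, where the initial datum vanishes, reduces the lemma to expanding the two pieces on the right. For the source-mass term, using $\int Z_0=0$ to rewrite it as $\frac{\dot\lambda}{\lambda}\int Z_0(y)[\chi(\lambda y,s)-1]\,dy$ and retaining the full heat-kernel factor rather than its leading asymptotic is exactly what produces the $(1-\chi_0(z))/z^3$ integrand with weight $[1-e^{-\frac{z^2}{4}\frac{\delta(T-s)}{t-s}}(1+\frac{z^2}{4}\frac{\delta(T-s)}{t-s})]$ appearing in \eqref{EquationLemma62}.

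For $\varphi_\lambda^{(1)}(0,s)$, I would apply the 6D Duhamel formula and the change of variables $u=\rho/\sqrt{s-\sigma}$ to reduce the spatial integral to $\int_0^{\infty} e^{-u^2/4}Z_0(ua(\sigma))u^5\,du$ with $a(\sigma)=\sqrt{s-\sigma}/\lambda(\sigma)$. Two integrations by parts using $Z_0=2U+y\cdot\nabla U$ bring the $u^{5}$-weight down to the $u$-weight of Lemma~\ref{SECMASSexpansionMU}; combined with \eqref{EXPONENTIALINTEGRALFUNCTIONEXP}, this introduces the Euler–Mascheroni constant $\gamma$, the $-\ln 4$, and the additive $+1$ which together form $\gamma+1-\ln 4$. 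The time integral $\int_{-\varepsilon(T)}^{t}\varphi_\lambda^{(1)}(0,s)\,ds$ is then split into three regimes: far, $s\in(-\varepsilon(T),-T/2)$, producing the constant $M_1[\varepsilon(T)]$ of the stated size directly from \eqref{LambAss1}--\eqref{LambAss2}; intermediate, $s\in(-T/2,t-\lambda^{2}(t))$, dominated by the $-2\ln a(\sigma)$ term of Lemma~\ref{SECMASSexpansionMU} and thereby yielding $-4\pi\int_{-\varepsilon(T)}^{t-\lambda^{2}}p(s)/(t-s)\,ds$; and local, $s\in(t-\lambda^{2}(t),t)$, where $a(\sigma)=O(1)$ and the expansion is handled by writing $p(s)=p(t)+(p(s)-p(t))$. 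The constant part extracts the coefficient $4\pi(\gamma+1-\ln 4)\,p(t)$ using the explicit primitive \eqref{MASSSECOprimitiveint}, while $p(s)-p(t)=O(|p'(t)|(t-s))$ is absorbed into $\mathcal{E}_0[p]$ via \eqref{RegLambAss}.

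The remainder $\mathcal{B}[p]$ collects the $O(\ln a/a^{6})$ subleading terms of Lemma~\ref{SECMASSexpansionMU} and the commutator contributions from the Fubini exchanges; each is a linear functional of $p$, so the Lipschitz property \eqref{Bdifference} follows automatically from the pointwise bound given by $\|p\|_{\gamma,m}$. The main technical obstacle will be the delicate bookkeeping in the local window $(t-\lambda^{2},t)$: extracting the exact constant $\gamma+1-\ln 4$ requires tracking two separate contributions—the $Ei$-near-zero expansion of the pointwise kernel at the origin, and the heat-kernel spread of the source-mass term—whose divergent $\ln\lambda$ pieces must cancel against each other to leave the clean finite coefficient. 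Matching the $[1-e^{-\cdots}(1+\cdots)]/z^{3}$ factor, the $-2\ln a$ from the pointwise kernel, and the explicit primitive \eqref{MASSSECOprimitiveint} requires getting each constant right, and is where the bulk of the computation lies.
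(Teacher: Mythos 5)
Your strategy is genuinely different from the paper's. The paper writes the full Duhamel integral for $\varphi^{(1)}_{\lambda}$ in $\mathbb{R}^{6}$, converts $\int_{\R^2}\varphi^{(1)}_\lambda\,dx$ into a weighted $\mathbb{R}^6$ integral against $|x|^{-4}$, and uses the closed-form kernel identity $\frac{1}{(4\pi)^3}\int_{\R^6}e^{-|z|^2/4}|w-z|^{-4}\,dz=|w|^{-4}\bigl[1-e^{-|w|^2/4}(1+|w|^2/4)\bigr]$ to land directly on a single time integral whose integrand carries the factor $[1-e^{-\cdot}(1+\cdot)]$. You instead differentiate the mass, obtaining $\frac{d}{dt}\int\varphi^{(1)}_\lambda = -8\pi\,\varphi^{(1)}_\lambda(0,t)+\frac{\dot\lambda}{\lambda}\int Z_0\chi$, and integrate back in time. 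Both strategies can reach the paper's representation, but yours only does so after an exchange of integration order you have not identified, and this is where your account goes wrong.

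Concretely, two of the mechanisms you cite do not produce what you claim. First, "retaining the full heat-kernel factor" in the source-mass term cannot produce the weight $[1-e^{-\frac{z^2}{4}\frac{\delta(T-s)}{t-s}}(1+\cdot)]$ in the $32\pi/\delta$ term of \eqref{EquationLemma62}: your source term $\frac{\dot\lambda}{\lambda}\int Z_0(y)[\chi(\lambda y)-1]\,dy$ is an instantaneous quantity with \emph{no} heat kernel in sight, and upon time integration it yields $32\pi\frac{1}{\delta}\int\frac{p(s)}{T-s}\int\frac{1-\chi_0(z)}{z^3}\,dz\,ds$ without the bracket factor. That bracket only emerges when you combine this with $-8\pi\int_{-\varepsilon(T)}^{t}\varphi^{(1)}_\lambda(0,s)\,ds$: expanding the latter by Duhamel and then performing Fubini in the two time variables, the middle integral $\int_\sigma^t (s-\sigma)^{-3}e^{-\rho^2/4(s-\sigma)}\,ds$ evaluates to $\frac{16}{\rho^4}e^{-\rho^2/4(t-\sigma)}\bigl(1+\frac{\rho^2}{4(t-\sigma)}\bigr)$; the sign-reversed contribution then subtracts against the source-mass "$1$" to produce exactly the paper's $[1-e^{-\cdot}(1+\cdot)]$. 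Without recognizing this cancellation structure you cannot reproduce \eqref{EquationLemma62}. Second, "two integrations by parts using $Z_0=2U+y\cdot\nabla U$" is not what reduces the $u^5$-weight to the $u$-weight of Lemma~\ref{SECMASSexpansionMU}: writing $Z_0(v)v^5 = v^4(v^2U(v))'$ and integrating by parts leaves you with $\int e^{-v^2/4a^2}U\,v^5\,dv$ and $\int e^{-v^2/4a^2}U\,v^7\,dv$, which do not collapse to $\int e^{-u^2/4}Z_0(ua)\,u\,du$. The reduction comes from the same time-Fubini step above; the $|x|^{-4}$ weight it generates converts the $\mathbb{R}^6$ measure $u^5\,du$ back to $u\,du$. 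Finally, your splitting of the time interval at $-T/2$ and $t-\lambda^2$ is coarser than the paper's, which uses a second internal cut at $t - |\ln(T-t)|^{\,l}\lambda^{2}$ and extracts a factor $\ln\bigl(|\ln(T-t)|^{l}\bigr)$ that is put on the other side of the equation; without this cut you cannot isolate the operator $\mathcal{B}$ with the stated power $|\ln(T-t)|^{m+l-1/2}$, nor control the difference $p(t)-p(s)$ in the local window to the required order under the regularity assumption \eqref{RegLambAss}.
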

\begin{proof}
	Let us just denote $\varphi$ instead of $\varphi_{\lambda}^{(1)}$. By Duhamel's formula for any $x\in\R^{6}$ we have
	\begin{align*}
		\varphi[p,\lambda](x,t)=\frac{1}{(4\pi)^{3}}\int_{-f_{0}(T)}^{t}\frac{p(s)}{\lambda^{4}(s)}\frac{1}{(t-s)^{3}}\int_{\R^6}e^{-\frac{|x-y|^{2}}{4(t-s)}}Z_{0}(\frac{y}{\lambda(s)})\chi(\frac{y}{\sqrt{\delta(T-s)}})dyds.
	\end{align*}
	By Lemma 7.2 in \cite{DdPDMW} we also know 
	\begin{align*}
		\frac{1}{(4\pi)^{3}}\int_{\R^6} e^{-\frac{|z|^{2}}{4}}\frac{1}{|w-z|^{4}}dz=\frac{1}{|w|^{4}}[1-e^{-\frac{|w|^{2}}{4}}(1+\frac{|w|^{2}}{4})], \ \ \ w\in \R^{6}.
	\end{align*}
	After observing that $\varphi$ is radial and  $\int_{\R^2}\varphi(x,t)dx=2\pi \int_{0}^{\infty} \varphi(\rho,t)\rho d\rho$ and\newline $\int_{\R^6}\varphi (x,t)\frac{1}{|x|^{4}}dx=\int_{0}^{\infty}\varphi(\rho,t)\rho^{-4}\frac{2\pi^{6/2}}{\Gamma(6/2)}\rho^{6-1}d\rho$, we get
	\begin{align*}
		\int_{\R^2}\varphi[p,\lambda]dx&=2\pi \int_{-\varepsilon(T)}^{t}\frac{p(s)}{\lambda^{4}(s)}\int_{0}^{\infty}[1-e^{-\frac{r^{2}}{4(t-s)}}(1+\frac{r^{2}}{4(t-s)})]Z_{0}(\frac{r}{\lambda(s)})\chi_{0}(\frac{r}{\sqrt{\delta(T-s)}})rdrds.
	\end{align*}
    Let us start by splitting the integral
	\begin{align*}
		\frac{1}{2\pi}\int_{\R^2}\varphi(x,t)dx=&\int_{-\varepsilon(T)}^{t}\frac{p(s)}{\lambda^{4}(s)}\int_{0}^{\infty}[1-e^{-\frac{r^{2}}{4(t-s)}}(1+\frac{r^{2}}{4(t-s)})]Z_{0}(\frac{r}{\lambda(s)})rdrds+\\
		&+\int_{-\varepsilon(T)}^{t}\frac{p(s)}{\lambda^{4}(s)}\int_{0}^{\infty}[1-e^{-\frac{r^{2}}{4(t-s)}}(1+\frac{r^{2}}{4(t-s)})]Z_{0}(\frac{r}{\lambda(s)})(\chi_{0}(\frac{r}{\sqrt{\delta(T-s)}})-1)rdrds=\\
		=&M^{\varphi}+M^{\varphi}_{\chi}.
	\end{align*}
	\underline{The mass $M^{\varphi}$:} here the hypothesis $t\ge-\frac{T}{2}$ is helpful, indeed we want to give meaning to the following decomposition
	\begin{align*}
		M^{\varphi}=\int_{-\varepsilon(T)}^{t}(...)ds=\int_{-\varepsilon(T)}^{t-(T-t)}(...)ds+\int_{t-(T-t)}^{t}(...)ds=M^{\varphi}_{(1)}+M^{\varphi}_{(2)}.
	\end{align*}
	We notice that if we assumed $t\ge-\varepsilon(T)$  at time $t=-\varepsilon(T)$ we would see $(t-(T-t))|_{t=-\varepsilon(T)}=-2\varepsilon(T)-T$ and our problem is not defined when $t<-\varepsilon(T)$. The assumption $T\ll \varepsilon(T)$ helps us to avoid this technical issue. A direct computation gives
	\begin{align*}
		M_{(1)}^{\varphi}=&-2\int_{-\varepsilon(T)}^{t-(T-t)}\frac{p(s)}{t-s}ds+ \\
		&+\int_{-\varepsilon(T)}^{t-(T-t)}\frac{p(s)}{\lambda^{2}(s)}\int_{0}^{\infty}[1-e^{-\frac{z^{2}}{4}\frac{\lambda^{2}(s)}{t-s}}(1+\frac{z^{2}}{4}\frac{\lambda^{2}(s)}{t-s})](Z_{0}(z)+\frac{16}{z^{4}})zdzds.
	\end{align*}
	To expand the last integral and in many cases later on we will make use of the following function
	\begin{align}\label{SECMASS function f }
		f_{z}(x)=1-e^{-\frac{z^{2}}{4}x}(1+\frac{z^{2}}{4}x) \implies f'_{z}(x)=\frac{z^{4}}{16}x e^{-\frac{z^{2}}{4}x}.
	\end{align}
    Now we write
	\begin{flalign*}
		M_{(1)}^{\varphi}=&-2\int_{-\varepsilon(T)}^{t-(T-t)}\frac{p(s)}{t-s}ds+\int_{-\varepsilon(T)}^{t-(T-t)}\frac{p(s)}{\lambda^{2}(s)}\int_{0}^{\infty}f_{z}(\frac{\lambda^{2}(s)}{T-s})(Z_{0}(z)+\frac{16}{z^{4}})zdzds+\\
		&+\int_{-\varepsilon(T)}^{t-(T-t)}\frac{p(s)}{\lambda^{2}(s)}\int_{0}^{\infty}(f_{z}(\frac{\lambda^{2}(s)}{t-s})-f_{z}(\frac{\lambda^{2}(s)}{T-s}))(Z_{0}(z)+\frac{16}{z^{4}})zdzds.
	\end{flalign*}
	By \eqref{SECMASS function f }, we have
	\begin{align*} |f_{z}(\frac{\lambda^{2}(s)}{t-s})-f_{z}(\frac{\lambda^{2}(s)}{T-s})| \le C z^{4}\frac{\lambda^{2}(s)}{t-s}e^{-\frac{z^{2}}{4}\frac{\lambda^{2}(s)}{T-s}}\frac{\lambda^{2}(s)}{(t-s)(T-s)}(T-t).
	\end{align*}
	Moreover $s<t-(T-t)\iff 1<\frac{T-s}{t-s}<2$ and then we have
	\begin{align*}
		|f_{z}(\frac{\lambda^{2}(s)}{t-s})-f_{z}(\frac{\lambda^{2}(s)}{T-s}))|\le C (T-t)z^{4}\frac{\lambda^{4}(s)}{(T-s)^{3}}e^{-\frac{z^{2}}{4}\frac{\lambda^{2}(s)}{T-s}}.
	\end{align*}
	We can observe 
	\begin{align*}
		|&\int_{-\varepsilon(T)}^{t-(T-t)}\frac{p(s)}{\lambda^{2}(s)}\int_{0}^{\infty}(f_{z}(\frac{\lambda^{2}(s)}{t-s})-f_{z}(\frac{\lambda^{2}(s)}{T-s}))(Z_{0}(z)+\frac{16}{z^{4}})zdzds|\le \\
		&\le C (T-t) \int_{-\varepsilon(T)}^{t-(T-t)}|p(s)|\frac{\lambda^{2}(s)}{(T-s)^{3}}\int_{0}^{\infty} e^{-\frac{z^{2}}{4}\frac{\lambda^{2}(s)}{T-s}}|Z_{0}(z)+\frac{16}{z^{4}}|z^{5}dzds.
	\end{align*}
	Observing that $|Z_{0}(z)+\frac{16}{z^{4}}|=O(\frac{1}{z^{6}})$ if $z>1$ and $|Z_{0}(z)+\frac{16}{z^{4}}|=O(\frac{1}{z^{4}})$ if $z\le1$ we have
	\begin{align*}
		\int_{0}^{\infty} e^{-\frac{z^{2}}{4}\frac{\lambda^{2}(s)}{T-s}}|Z_{0}(z)+\frac{16}{z^{4}}|z^{5}dz\le& C(\int_{0}^{1}zdz+\int_{1}^{\infty}\frac{1}{z}e^{-\frac{z^{2}}{4}\frac{\lambda^{2}(s)}{T-s}}dz)\le C[1+\int_{\frac{|\lambda(s)|}{\sqrt{T-s}}}^{\infty}\frac{1}{u}e^{-\frac{u^{2}}{4}}du]\le\\
		\le& C|\ln(\frac{|\lambda(s)|}{\sqrt{T-s}})|\le C\sqrt{|\ln(T-s)|}
	\end{align*}
	where we used that, thanks to \eqref{LambAss1}, we have $c_{1} e^{-a\sqrt{\ln(T-s)|}}\le \frac{|\lambda(s)|}{\sqrt{T-s}}\le c_{2} e^{-a\sqrt{\ln(T-s)|}}$.
	But then by \eqref{LambAss2} we have
	\begin{align*}
		|\int_{-\varepsilon(T)}^{t-(T-t)}\frac{p(s)}{\lambda^{2}(s)}\int_{0}^{\infty}&(f_{z}(\frac{\lambda^{2}(s)}{t-s})-f_{z}(\frac{\lambda^{2}(s)}{T-s}))(Z_{0}(z)+\frac{16}{z^{4}})zdzds|\le \\
		\le& C  (T-t)\sqrt{|\ln(T-t)|}\int_{-\varepsilon(T)}^{t-(T-t)}\frac{e^{-4a\sqrt{|\ln(T-s)|}}}{(T-s)^{2}}ds\le e^{-4a\sqrt{|\ln(T-t)|}}\sqrt{|\ln(T-t)|}
	\end{align*}
	where in the last inequality we used that $\frac{d}{ds}(\frac{e^{-4a\sqrt{|\ln(T-t)|}}}{T-s}|)\approx \frac{e^{-4a\sqrt{|\ln(T-s)|}}}{(T-s)^{2}}$ and that if $x>0$ is small the function $\frac{e^{-4a\sqrt{|\ln(x)|}}}{x}$ is decreasing (here we are using $\varepsilon(T)\ll1$).
	We can include this term in the operator $\mathcal{E}_{0}$ of \eqref{EquationLemma62} (notice that the hypothesis \eqref{RegLambAss} has not been used yet) and write that if $t\ge-\frac{T}{2}$
	\begin{align*}
		M^{\varphi}_{(1)}=&-2\int_{-\varepsilon(T)}^{t-(T-t)}\frac{p(s)}{t-s}ds+\\
		&+\int_{-\varepsilon(T)}^{t-(T-t)}\frac{p(s)}{\lambda^{2}(s)}\int_{0}^{\infty}[1-e^{-\frac{z^{2}}{4}\frac{\lambda^{2}(s)}{T-s}}(1+\frac{z^{2}}{4}\frac{\lambda^{2}(s)}{T-s})](Z_{0}(z)+\frac{16}{z^{4}})zdzds+\mathcal{E}_{0}[p,\lambda](t).
	\end{align*}
	Now we want to expand the second term. We see
	\begin{align*}
		\int_{-\varepsilon(T)}^{t-(T-t)}&\frac{p(s)}{\lambda^{2}(s)}\int_{0}^{\infty}[1-e^{-\frac{z^{2}}{4}\frac{\lambda^{2}(s)}{T-s}}(1+\frac{z^{2}}{4}\frac{\lambda^{2}(s)}{T-s})](Z_{0}(z)+\frac{16}{z^{4}})zdzds=\\
		=&\int_{-\varepsilon(T)}^{T}(...)ds - \int_{t-(T-t)}^{T}\frac{p(s)}{\lambda^{2}(s)}\int_{0}^{\infty}[1-e^{-\frac{z^{2}}{4}\frac{\lambda^{2}(s)}{T-s}}(1+\frac{z^{2}}{4}\frac{\lambda^{2}(s)}{T-s})](Z_{0}(z)+\frac{16}{z^{4}})zdzds.
	\end{align*}
    Observing that $|1-e^{-\frac{u^{2}}{4}}(1+\frac{u^{2}}{4})|\le C u^{4}$ if $u\le1$, that $ |1-e^{-\frac{u^{2}}{4}}(1+\frac{u^{2}}{4})|=O(1)$, that  $|Z_{0}(y)+\frac{16}{y^{4}}|\le C \frac{1}{y^{4}}$ if $y\le1$ and that $|Z_{0}(y)+\frac{16}{y^{4}}|\le C \frac{1}{y^{6}}$ if $y>1$ we can write
	\begin{align*}
		&|\int_{t-(T-t)}^{T}\frac{p(s)}{\lambda^{2}(s)}\int_{0}^{\infty}[1-e^{-\frac{z^{2}}{4}\frac{\lambda^{2}(s)}{T-s}}(1+\frac{z^{2}}{4}\frac{\lambda^{2}(s)}{T-s})](Z_{0}(z)+\frac{16}{z^{4}})zdzds|=\\
		&\le C \int_{t-(T-t)}^{T}\frac{|p(s)|}{\lambda^{4}(s)}(T-s)[\frac{\lambda^{4}(s)}{(T-s)^{2}}\int_{0}^{\frac{|\lambda(s)|}{\sqrt{T-s}}}udu+\frac{\lambda^{6}(s)}{(T-s)^{3}}\int_{\frac{|\lambda(s)|}{\sqrt{T-s}}}^{1}\frac{du}{u}+\frac{\lambda^{6}(s)}{(T-s)^{3}}\int_{1}^{\infty}\frac{du}{u^{5}}]\le \\
		&\le C \int_{t-(T-t)}^{T}\frac{e^{-4a\sqrt{|\ln(T-s)|}}}{T-s}\sqrt{|\ln(T-s)|}ds\le C e^{-4a\sqrt{|\ln(T-t)|}} |\ln(T-t)|
	\end{align*}
	where we used that $\frac{d}{ds}(e^{-4a\sqrt{|\ln(T-t)|}}|\ln(T-s)|)\approx \frac{e^{-4a\sqrt{|\ln(T-s)|}}}{T-s}\sqrt{|\ln(T-s)|}$. We can include this term in $\mathcal{E}_{0}$.
	The same considerations give us that
	\begin{align*}
	   |\int_{-\varepsilon(T)}^{T} \frac{p(s)}{\lambda^{2}(s)}\int_{0}^{\infty}[1-e^{-\frac{z^{2}}{4}\frac{\lambda^{2}(s)}{T-s}}(1+\frac{z^{2}}{4}\frac{\lambda^{2}(s)}{T-s})](Z_{0}(z)+\frac{16}{z^{4}})zdzds|\le C e^{-4a\sqrt{|\ln \varepsilon(T)|}}\sqrt{|\ln \varepsilon(T)}|
	\end{align*}
and then this constant can clearly be included in $M_{1}[\varepsilon(T)]$ of \eqref{EquationLemma62}. \newline
Then if $t\ge-\frac{T}{2}$ we proved
\begin{align}\label{MASSSECON phi1}
	M_{(1)}^{\varphi}=&M_{1}[\varepsilon(T)]-2 \int_{-\varepsilon(T)}^{t-(T-t)}\frac{p(s)}{t-s}ds + \mathcal{E}_{0}[p,\lambda](t).
\end{align}
Now we want to expand with $M_{(2)}^{\varphi}$. Here we will need the parameter $l>1/2$, indeed we write
	\begin{align}\label{ExM2phi}
		M_{(2)}^{\varphi}=&\int_{t-(T-t)}^{t}(...)ds=\int_{t-(T-t)}^{t-|\ln(T-t)|^{l}\lambda^{2}(t)}(...)ds + \int_{t-|\ln(T-t)|^{l}\lambda^{2}(t)}^{t}(...)ds=\nonumber\\
     =&-2\int_{t-(T-t)}^{t-|\ln(T-t)|^{l}\lambda^{2}(t)}\frac{p(s)}{t-s}ds +\nonumber\\
	&+ \int_{t-(T-t)}^{t-|\ln(T-t)|^{l}\lambda^{2}(t)}\frac{p(s)}{\lambda^{4}(s)}\int_{0}^{\infty}[1-e^{-\frac{r^{2}}{4(t-s)}}(1+\frac{r^{2}}{4(t-s)})](Z_{0}(\frac{r}{\lambda(s)})+16\frac{\lambda^{4}(s)}{r^{4}})rdrds+\nonumber\\
		&+\int_{t-|\ln(T-t)|^{l}\lambda^{2}(t)}^{t}(...)ds
	\end{align}
notice that here $t-|\ln(T-t)|^{l}\lambda^{2}(t)\ge t-(T-t)$ since $T-t \ge |\ln(T-t)|^{l}\lambda^{2}(t)$ if $t\in(-\frac{T}{2},T)$. If we consider the last term in \eqref{ExM2phi} we can write
	\begin{align}\label{LastTermMasPhi2}
		&\int_{t-|\ln(T-t)|^{l}\lambda^{2}(t)}^{t}\frac{p(s)}{\lambda^{4}(s)}\int_{0}^{\infty}[1-e^{-\frac{r^{2}}{4(t-s)}}(1+\frac{r^{2}}{4(t-s)})]Z_{0}(\frac{r}{\lambda(s)})rdrds=\nonumber\\
        &=\frac{p(t)}{\lambda^{4}(t)}\int_{t-|\ln(T-t)|^{l}\lambda^{2}(t)}^{t}\int_{0}^{\infty}[1-e^{-\frac{r^{2}}{4(t-s)}}(1+\frac{r^{2}}{4(t-s)})]Z_{0}(\frac{r}{\lambda(t)})rdrds+\nonumber\\
		&\hspace{0.2cm}+p(t)\int_{t-|\ln(T-t)|^{l}\lambda^{2}(t)}^{t}\int_{0}^{\infty}[1-e^{-\frac{r^{2}}{4(t-s)}}(1+\frac{r^{2}}{4(t-s)})](\frac{1}{\lambda^{4}(s)}Z_{0}(\frac{r}{\lambda(s)})-\frac{1}{\lambda^{4}(t)}Z_{0}(\frac{r}{\lambda(t)}))rdrds+\nonumber\\
		&\hspace{0.2cm}+\int_{t-|\ln(T-t)|^{l}\lambda^{2}(t)}^{t}\frac{p(s)-p(t)}{\lambda^{4}(s)}\int_{0}^{\infty}[1-e^{-\frac{r^{2}}{4(t-s)}}(1+\frac{r^{2}}{4(t-s)})]Z_{0}(\frac{r}{\lambda(s)})rdrds.
	\end{align}
	We start our expansion from the second term of \eqref{LastTermMasPhi2}. First we observe
	\begin{align*}
		\frac{1}{\lambda^{4}(s)}Z_{0}(\frac{r}{\lambda(s)})-\frac{1}{\lambda^{4}(t)}Z_{0}(\frac{r}{\lambda(t)})=\frac{1}{\lambda^{4}(t)}(\frac{\lambda^{4}(t)}{\lambda^{4}(s)}-1)Z_{0}(\frac{r}{\lambda(s)})+\frac{1}{\lambda^{4}(t)}(Z_{0}(\frac{r}{\lambda(s)})-Z_{0}(\frac{r}{\lambda(t)})).
	\end{align*}
   Now, since $\lambda$ satisfies \eqref{LambAss1} and \eqref{LambAss2} we immediately see that if $t-|\ln(T-t)|^{l}\lambda^{2}(t)<s<t$ 
	\begin{align*}
    |\frac{1}{\lambda^{4}(t)}Z_{0}(\frac{r}{\lambda(s)})(\frac{\lambda^{4}(t)}{\lambda^{4}(s)}-1)|\le C \frac{1}{r^{4}}\frac{t-s}{T-s}.
	\end{align*}
    Since by a direct computation we also have $|\partial_{s} Z_{0}(\frac{r}{\lambda(s)})| \le C \frac{1}{(\frac{r^{2}}{\lambda^{2}(s)})^{2}}\frac{|\lambda\dot{\lambda}(s)|}{\lambda^{2}(s)}\le C \frac{\lambda^{2}(s)}{r^{4}}|\lambda\dot{\lambda}(s)|$ we also get that if $t-|\ln(T-t)|^{l}\lambda^{2}(t)<s<t$
	\begin{align*}
		&|\frac{1}{\lambda^{4}(t)}(Z_{0}(\frac{r}{\lambda(s)})-Z_{0}(\frac{r}{\lambda(t)}))| \le \frac{\lambda^{4}}{r^{4}}\frac{t-s}{T-s}
	\end{align*}
 We proved that if $t-|\ln(T-t)|^{l}\lambda^{2}(t)\le s \le t$ we have
	\begin{align*}
		|\frac{1}{\lambda^{4}(s)}Z_{0}(\frac{r}{\lambda(s)})-\frac{1}{\lambda^{4}(t)}Z_{0}(\frac{r}{\lambda(t)})|\le C \frac{1}{r^{4}}\frac{t-s}{T-s}.
	\end{align*}
	Finally we can prove 
	\begin{align*}
		&|p(t)\int_{t-|\ln(T-t)|^{l}\lambda^{2}(t)}^{t}\int_{0}^{\infty}[1-e^{-\frac{r^{2}}{4(t-s)}}(1+\frac{r^{2}}{4(t-s)})][\frac{1}{\lambda^{4}(s)}Z_{0}(\frac{r}{\lambda(s)})-\frac{1}{\lambda^{4}(t)}Z_{0}(\frac{r}{\lambda(t)})]rdrds|\le\\
		&\le C |p(t)|\int_{t-|\ln(T-t)|^{l}\lambda^{2}(t)}^{t}\frac{t-s}{T-s}\int_{0}^{\infty} |1-e^{-\frac{r^{2}}{4(t-s)}}(1+\frac{r^{2}}{4(t-s)})|\frac{1}{r^{3}}drds\le \\
		&\le C|p(t)|\int_{t-|\ln(T-t)|^{l}\lambda^{2}(t)}^{t}\frac{ds}{T-s}\le C|p(t)| |\ln(1+|\ln(T-t)|^{l}\frac{\lambda^{2}(t)}{T-t})|\le C e^{-4a\sqrt{|\ln(T-t)|}}|\ln(T-t)|^{l}
	\end{align*}
and also this term can be included in $\mathcal{E}_{0}$.\newline
	We want to estimate the last term of \eqref{LastTermMasPhi2}. Thanks to \eqref{RegLambAss} we see
	\begin{align*}
		&|\int_{t-|\ln(T-t)|^{l}\lambda^{2}}^{t}\frac{p(s)-p(t)}{\lambda^{4}(s)}\int_{0}^{\infty}[1-e^{-\frac{r^{2}}{4(t-s)}}(1+\frac{r^{2}}{4(t-s)})]Z_{0}(\frac{r}{\lambda(s)})rdrds|=\\
        &\le C \int_{t-|\ln(T-t)|^{l}\lambda^{2}(t)}^{t}\frac{|p(t)-p(s)|}{t-s}ds\le C e^{-4a\sqrt{|\ln(T-t)|}}|\ln(T-t)|^{l-1/2}
	\end{align*}
     and then also this term can be included in $\mathcal{E}_{0}$.
	Then so far we have
	\begin{align}\label{M2phi2SecForm}
		M_{(2)}^{\varphi}=&-2\int_{t-(T-t)}^{t-|\ln(T-t)|^{l}\lambda^{2}(t)}\frac{p(s)}{t-s}ds +\nonumber\\
		&+ \int_{t-(T-t)}^{t-|\ln(T-t)|^{l}\lambda^{2}(t)}\frac{p(s)}{\lambda^{4}(s)}\int_{0}^{\infty}[1-e^{-\frac{r^{2}}{4(t-s)}}(1+\frac{r^{2}}{4(t-s)})](Z_{0}(\frac{r}{\lambda(s)})+16\frac{\lambda^{4}(s)}{r^{4}})rdrds+\nonumber\\
		&-\frac{p(t)}{\lambda^{4}(t)}\int_{t-|\ln(T-t)|^{l}\lambda^{2}(t)}^{t}\int_{0}^{\infty} e^{-\frac{r^{2}}{4(t-s)}}(1+\frac{r^{2}}{4(t-s)})Z_{0}(\frac{r}{\lambda(t)})rdrds+\mathcal{E}_{0}[p,\lambda](t).
	\end{align}
	Now we want to investigate the second integral of \eqref{M2phi2SecForm}. If $t-(T-t)\le s \le t-|\ln(T-t)|^{l}\lambda^{2}(t)$ we have
	\begin{align*}
		&|\int_{0}^{\infty}[1-e^{-\frac{z^{2}}{4}}(1+\frac{z^{2}}{4})](Z_{0}(\frac{z\sqrt{t-s}}{\lambda(s)})+16\frac{\lambda^{4}(s)}{z^{4}(t-s)^{2}})zdz|\le  C\frac{\lambda^{6}(s)}{(t-s)^{3}}|\ln(\frac{\lambda^{2}(s)}{t-s})|.
	\end{align*}
	If $t-(T-t)\le s \le t-|\ln(T-t)|^{l}\lambda^{2}(t)$ we have
	\begin{align*}
		C\frac{\lambda^{2}(t-|\ln(T-t)|^{l}\lambda^{2})}{T-t}\le \frac{\lambda^{2}(s)}{t-s}\le \frac{\lambda^{2}(s)}{\lambda^{2}(t)}\frac{1}{|\ln(T-t)|^{l}}\le C \frac{1}{|\ln(T-t)|^{l}}.
	\end{align*}
	But then
	\begin{align*}
		&|\int_{t-(T-t)}^{t-\lambda^{2}(t)|\ln(T-t)|^{l}}\frac{p(s)}{\lambda^{4}(s)}(t-s)\int_{0}^{\infty}[1-e^{-\frac{z^{2}}{4}}(1+\frac{z^{2}}{4})](Z_{0}(\frac{z\sqrt{t-s}}{\lambda(s)})+16\frac{\lambda^{4}(s)}{(t-s)^{2}}\frac{1}{z^{4}})zdzds|\le \\
		&\le C \sqrt{|\ln(T-t)|}\int_{t-(T-t)}^{t-\lambda^{2}|\ln(T-t)|^{l}}\frac{|p(s)|}{(t-s)^{2}}\lambda^{2}(s)ds\le\\
      &\le C \|p\|_{\gamma,m}\sqrt{|\ln(T-t)|}\int_{t-(T-t)}^{t-\lambda^{2}|\ln(T-t)|^{l}}\frac{T-s}{(t-s)^{2}}\frac{e^{-2a(1+\gamma)\sqrt{|\ln(T-s)|}}}{|\ln(T-s)|^{m}}ds\le C \|p\|_{\gamma,m} \frac{e^{-2a\gamma\sqrt{|\ln(T-t)|}}}{|\ln(T-t)|^{m+l-1/2}}.
	\end{align*}
	We can include this term in $\mathcal{B}$ and write
	\begin{align}\label{M2phiFinalAlmost}
		M_{(2)}^{\varphi}=&-2\int_{t-(T-t)}^{t-|\ln(T-t)|^{l}\lambda^{2}(t)}\frac{p(s)}{t-s}ds-\nonumber\\
		&-\frac{p(t)}{\lambda^{4}(t)}\int_{t-|\ln(T-t)|^{l}\lambda^{2}(t)}^{t}\int_{0}^{\infty} e^{-\frac{r^{2}}{4(t-s)}}(1+\frac{r^{2}}{4(t-s)})Z_{0}(\frac{r}{\lambda(t)})rdrds+\mathcal{B}[p,\lambda](t)+\mathcal{E}_{0}[p,\lambda](t).
	\end{align}
	Now we explicitly compute the second integral of \eqref{M2phiFinalAlmost}. First we observe
	\begin{align*}
		\int_{t-\lambda^{2}|\ln(T-t)|^{l}}^{t}e^{-\frac{r^{2}}{4(t-s)}}(1+\frac{r^{2}}{4(t-s)})ds&=\lambda^{2}|\ln(T-t)|^{l}e^{-\frac{r^{2}}{4\lambda^{2}|\ln(T-t)|^{l}}}.
	\end{align*}
Then by Fubini-Tonelli theorem, Lemma \ref{SECMASSexpansionMU} and the expansion \eqref{EXPONENTIALINTEGRALFUNCTIONEXP} we see
	\begin{align*}
		-\frac{p(t)}{\lambda^{4}(t)}&\int_{t-\lambda^{2}(t)|\ln(T-t)|^{l}}^{t}\int_{0}^{\infty} e^{-\frac{r^{2}}{4(t-s)}}(1+\frac{r^{2}}{4(t-s)})Z_{0}(\frac{r}{\lambda(t)})rdrds=\\
		&=-\frac{p(t)|\ln(T-t)|^{l}}{\lambda^{2}(t)}\int_{0}^{\infty}e^{-\frac{r^{2}}{4\lambda^{2}|\ln(T-t)|^{l}}}Z_{0}(\frac{r}{\lambda(t)})rdr=\\
		&=2(\gamma+1-\ln4)p(t)+2\ln(\frac{1}{|\ln(T-t)|^{l}})p(t)+O(\frac{\ln|\ln(T-t)|}{|\ln(T-t)|^{l}})p(t).
	\end{align*}
	Since $|O(\frac{\ln|\ln(T-t)|}{|\ln(T-t)|^{l}})p(t)|\le C \|p\|_{\gamma,m}\frac{e^{-2a\gamma\sqrt{|\ln(T-t)|}}}{|\ln(T-t)|^{m+l}}\ln|\ln(T-t)|$ we can include the remainder in the operator $\mathcal{B}$ and obtain
	\begin{align*}
		M_{(2)}^{\varphi}=&-2\int_{t-(T-t)}^{t-|\ln(T-t)|^{l}\lambda^{2}(t)}\frac{p(s)}{t-s}ds+2(\mu+1-\ln4)p(t)+2\ln(\frac{1}{|\ln(T-t)|^{l}})p(t)+\\
		&+\mathcal{B}[p,\lambda](t)+\mathcal{E}_{0}[p,\lambda](t).
	\end{align*}
	The last observation involving $M_{(2)}^{\varphi}$ is that 
	\begin{align*}
		-2p(t)\int_{t-\lambda^{2}|\ln(T-t)|^{l}}^{t-\lambda^{2}}\frac{ds}{t-s}=&2p(t)\ln(\frac{1}{|\ln(T-t)|^{l}})=\\
		=&-2\int_{t-\lambda^{2}|\ln(T-t)|^{l}}^{t-\lambda^{2}}\frac{p(s)}{t-s}ds-2\int_{t-\lambda^{2}|\ln(T-t)|^{l}}^{t-\lambda^{2}}\frac{p(t)-p(s)}{t-s}ds
	\end{align*}
	by assuming \eqref{RegLambAss} we see that the last term can be included in $\mathcal{E}_{0}$.
	We finally proved the following expansion for $M_{(2)}^{\varphi}$ 
	\begin{align}\label{MASSSECON Mphi2}
		M_{(2)}^{\varphi}=&-2\int_{t-(T-t)}^{t-\lambda^{2}(t)}\frac{p(s)}{t-s}ds+2(\gamma+1-\ln4)p(t)+\mathcal{B}[p,\lambda](t)+\mathcal{E}_{0}[p,\lambda](t).
	\end{align}
	Then \eqref{MASSSECON phi1} and \eqref{MASSSECON Mphi2} give
	\begin{align*}
		M^{\varphi}=M_{(1)}^{\varphi}+M_{(2)}^{\varphi}=&M_{1}[\varepsilon(T)]-2\int_{-\varepsilon(T)}^{t-\lambda^{2}}\frac{p(s)}{t-s}ds+2(\gamma+1-\ln4)p(t)+\mathcal{B}[p,\lambda](t)+\mathcal{E}_{0}[p,\lambda](t).
	\end{align*}
\underline{The mass $M_{\chi}^{\varphi}$:} we start observing
	\begin{align}\label{FIrstObsMassChi}
		M_{\chi}^{\varphi}=&\int_{-\varepsilon(T)}^{t}\frac{p(s)}{\lambda^{4}(s)}\int_{0}^{\infty}[1-e^{-\frac{r^{2}}{4(t-s)}}(1+\frac{r^{2}}{4(t-s)})]Z_{0}(\frac{r}{\lambda(s)})(\chi_{0}(\frac{r}{\sqrt{\delta(T-s)}})-1)rdrds=\nonumber\\
	=&-16\int_{-\varepsilon(T)}^{t}\frac{p(s)}{T-s}\int_{0}^{\infty}[1-e^{-\frac{u^{2}}{4}\frac{T-s}{t-s}}(1+\frac{z^{2}}{4}\frac{T-s}{t-s})]\frac{1}{z^{3}}(\chi_{0}(\frac{z}{\sqrt{\delta}})-1)zdzds+\nonumber\\
		&+\int_{-\varepsilon(T)}^{t}\frac{p(s)}{\lambda^{4}(s)}(T-s)\int_{0}^{\infty}[1-e^{-\frac{z^{2}}{4}\frac{(T-s)}{t-s}}(1+\frac{z^{2}}{4}\frac{T-s}{t-s})]\cdot \nonumber\\
		&\hspace{4cm}\cdot (Z_{0}(\frac{z\sqrt{T-s}}{\lambda(s)})+16\frac{\lambda^{4}(s)}{z^{4}(T-s)^{2}})(\chi_{0}(\frac{z}{\sqrt{\delta}})-1)zdzds.
	\end{align}
	We need to study the last term of \eqref{FIrstObsMassChi}. We can split this integral as $\int_{-\varepsilon(T)}^{t-(T-t)}(...)ds+\int_{t-(T-t)}^{t}(...)ds$ and immediately infer that the second integral can be included in $\mathcal{E}_{0}$.
	To study $\int_{-\varepsilon(T)}^{t-(T-t)}(...)ds$ recalling \eqref{SECMASS function f } we observe that $f_{z}(\frac{T-s}{t-s})=f_{z}(\frac{T-t}{t-s}+1)=1-e^{-\frac{z^{2}}{4}(1+\frac{T-t}{t-s})}(1+\frac{z^{2}}{4}(1+\frac{T-t}{t-s}))$. We want to expand $f_{z}(1+x)$ when $0<x<1$. Then we know that
	\begin{align*}
		f_{z}'(x+1)=\frac{z^{4}}{16}(1+x)e^{-\frac{z^{2}}{4}(1+x)} \implies |f_{z}(x+1)-f_{z}(1)|\le C z^{4}e^{-\frac{z^{2}}{4}}x.
	\end{align*}
	But then we have
	\begin{align*}
		&|\int_{-\varepsilon(T)}^{t-(T-t)}\frac{p(s)}{\lambda^{4}(s)}(T-s)\int_{0}^{\infty}[f_{z}(\frac{T-s}{t-s})-f_{z}(1)]\big(Z_{0}(\frac{z\sqrt{T-s}}{\lambda(s)})+16\frac{\lambda^{4}(s)}{z^{4}(T-s)^{2}}\big)(\chi_{0}(\frac{z}{\sqrt{\delta}})-1)zdzds|\le\\
		&\le(T-t)\int_{-\varepsilon(T)}^{t-(T-t)}\frac{|p(s)|}{\lambda^{4}(s)}\frac{T-s}{t-s}\int_{0}^{\infty}z^{4}e^{-\frac{z^{2}}{4}}|Z_{0}(\frac{z\sqrt{T-s}}{\lambda(s)})+16\frac{\lambda^{4}(s)}{z^{4}(T-s)^{2}}|(1-\chi_{0}(\frac{z}{\sqrt{\delta}}))zdzds\le \\
		&\le C(T-t) \int_{-\varepsilon(T)}^{t-(T-t)}\frac{|p(s)|}{(T-s)^{2}}\frac{\lambda^{2}(s)}{T-s}ds\le C (T-t)\int_{-\varepsilon(T)}^{t-(T-t)}\frac{e^{-4a\sqrt{|\ln(T-t)|}}}{(T-s)^{2}}ds\le C e^{-4a\sqrt{|\ln(T-t)|}}
	\end{align*}
	where we used again that $\frac{e^{-4a\sqrt{|\ln(x)|}}}{x}$ is decreasing as $x>0$ is small (here we used that $\varepsilon(T)\ll1$). So far we proved
	\begin{align}\label{Mchiphisecondbe}
		M_{\chi}^{\varphi}=&-16\int_{-\varepsilon(T)}^{t}\frac{p(s)}{(T-s)}\int_{0}^{\infty}[1-e^{-\frac{z^{2}}{4}\frac{T-s}{t-s}}(1+\frac{z^{2}}{4}\frac{T-s}{t-s})]\frac{1}{z^{3}}(\chi_{0}(\frac{z}{\sqrt{\delta}})-1)zdzds+\nonumber\\
		&+\int_{-\varepsilon(T)}^{t-(T-t)}\frac{p(s)}{\lambda^{4}(s)}	(T-s)\int_{0}^{\infty}[1-e^{-\frac{z^{2}}{4}}(1+\frac{z^{2}}{4})](Z_{0}(\frac{z\sqrt{T-s}}{\lambda(s)})+\nonumber\\
		&\hspace{6cm}+16\frac{\lambda^{4}(s)}{z^{4}(T-s)^{2}})(\chi_{0}(\frac{z}{\sqrt{\delta}})-1)zdzds+\mathcal{E}_{0}[p,\lambda](t).
	\end{align}
	We estimate the second integral in \eqref{Mchiphisecondbe} and we see that
	\begin{align*}
		&|\int_{t-(T-t)}^{T}\frac{p(s)}{\lambda^{4}(s)}(T-s)\int_{0}^{\infty}[1-e^{-\frac{z^{2}}{4}}(1+\frac{z^{2}}{4})](Z_{0}(\frac{z\sqrt{T-s}}{\lambda(s)})+16\frac{\lambda^{4}(s)}{z^{4}(T-s)^{2}})(\chi_{0}(\frac{z}{\sqrt{\delta}})-1)zdzds|\le\\
		&\le C \int_{t-(T-t)}^{T}\frac{|p(s)|}{T-s}\frac{\lambda^{2}(s)}{T-s}\int_{0}^{\infty}\frac{1}{z^{5}}(1-\chi_{0}(\frac{z}{\sqrt{\delta}}))dzds\le\\
		&\le C\int_{t-(T-t)}^{T}\frac{e^{-4a\sqrt{|\ln(T-s)|}}}{T-s}ds\le C e^{-4a\sqrt{|\ln(T-t)|}}\sqrt{|\ln(T-t)|}.
	\end{align*}
 and that analogously the remaining constant can be included in $M_{1}[\varepsilon(T)]$. We get then
	\begin{align*}
		M_{\chi}^{\varphi}=&M_{1}[\varepsilon(T)]+16\int_{-\varepsilon(T)}^{t}\frac{p(s)}{T-s}\int_{0}^{\infty}[1-e^{-\frac{u^{2}}{4}\frac{T-s}{t-s}}(1+\frac{u^{2}}{4}\frac{T-s}{t-s})]\frac{1}{u^{3}}(1-\chi_{0}(\frac{u}{\sqrt{\delta}}))duds + \mathcal{E}_{0}[p,\lambda](t).
	\end{align*}
 and finally
	\begin{align*}
		\int_{\R^2}\varphi[p,\lambda]dx=&2\pi M^{\varphi} +2\pi M_{\chi}^{\varphi}=\\
		=&M_{1}[\varepsilon(T)]+32\pi \int_{-\varepsilon(T)}^{t}\frac{p(s)}{T-s}\int_{0}^{\infty} [1-e^{-\frac{u^{2}}{4}\frac{T-s}{t-s}}(1+\frac{u^{2}}{4}\frac{T-s}{t-s})]\frac{1}{u^{3}}(1-\chi_{0}(\frac{u}{\sqrt{\delta}}))duds-\\
		&-4\pi \int_{-\varepsilon(T)}^{t-\lambda^{2}}\frac{p(s)}{t-s}ds+2(\gamma+1-\ln4)p(t)+\mathcal{B}[p,\lambda](t)+\mathcal{E}_{0}[p,\lambda](t).
	\end{align*}
The estimate \eqref{Bdifference} can be obtained similarly and we omit the details.
\end{proof}
Before introducing the next Lemma we want to remark that to prove it we used that $T\ll \varepsilon(T)$ or, more precisely, that
\begin{align}\label{EpsiTandT}
	\frac{e^{-4a\sqrt{|\ln T|}}}{T}\sqrt{|\ln T|}\ge \frac{e^{-2a\sqrt{|\ln \varepsilon(T)|}}}{\varepsilon(T)}
\end{align}
In fact, the inequality \eqref{EpsiTandT} will allow us to obtain the estimate \eqref{MASSSEC WHereEpsSqu} at the end of the proof.
\begin{lemma}\label{MASSOUTERphiLAMBDA}
	Let $T$, $\varepsilon(T)$ satisfy \eqref{RestrictionEPSILON}. Assume that $\lambda$ satisfies \eqref{LambAss1}, \eqref{LambAss2}. Then for any $t\ge-\frac{T}{2}$ we have
	\begin{align*}
		\int_{\R^2}\varphi_{\lambda}^{(2)}dx=&M_{2}[\varepsilon(T)]+16\pi\frac{\beta}{\delta}\frac{\lambda^{2}(t)}{T-t}-\\
		-&32\pi\frac{1}{\delta}\int_{-\varepsilon(T)}^{t}\frac{p(s)}{T-s}\int_{0}^{\infty}[1-e^{-\frac{z^{2}}{4}\frac{\delta(T-s)}{t-s}}(1+\frac{z^{2}}{4}\frac{\delta(T-s)}{t-s})]\frac{1}{z^{3}}(1-\chi_{0})dzds+\mathcal{E}_{0}[p,\lambda](t)
	\end{align*}
	where $M_{2}[\varepsilon(T)]$ is a constant that satisfies $|M_{2}[\varepsilon(T)]|\le e^{-2a\sqrt{|\ln(\varepsilon(T))|}}$, $\beta$ has been defined in \eqref{ExpansionRHSeqlambda} and
	\begin{align*}
		|\mathcal{E}_{0}[p,\lambda]|\le C e^{-4a\sqrt{|\ln(T-t)|}}\sqrt{|\ln(T-t)|}.
	\end{align*}
\end{lemma}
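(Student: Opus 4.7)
The plan is to repeat the scheme of Lemma \ref{Mass10} for $\varphi_\lambda^{(2)}$. Applying Duhamel's formula in six dimensions together with the heat kernel identity of Lemma 7.2 of \cite{DdPDMW} and the mass relation $\int_{\R^2}\varphi\,dx=\frac{2}{\pi^2}\int_{\R^6}\varphi|x|^{-4}dx$ used there (valid for radial $\varphi$), I obtain
\[
\int_{\R^2}\varphi_\lambda^{(2)}\,dx=2\pi\int_{-\varepsilon(T)}^{t}\int_0^\infty G(r,s)\bigl[1-e^{-r^2/4(t-s)}\bigl(1+\tfrac{r^2}{4(t-s)}\bigr)\bigr]r\,dr\,ds,
\]
where $G$ is the forcing of \eqref{2dphilambez4}. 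Each summand of $G$ carries a derivative of $\chi_0(\cdot/\sqrt{\delta(T-s)})$, so the radial integrand lives on the annulus $\sqrt{\delta(T-s)}\le r\le 2\sqrt{\delta(T-s)}$; there $r/\lambda(s)\gg 1$, and I expand $U(r/\lambda)=8\lambda^4/r^4+O(\lambda^6/r^6)$ together with its derivatives. After the change of variables $z=r/\sqrt{\delta(T-s)}$ and an integration by parts in $r$, the leading-order contributions of the four pieces in $G$ telescope into the single weight $(1-\chi_0(z))/z^3$, all higher-order terms being absorbed into $\mathcal E_0$.

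I then split the outer time integral as $\int_{-\varepsilon(T)}^{t-(T-t)}+\int_{t-(T-t)}^{t}$, exactly as in Lemma \ref{Mass10}. On the first interval, the bound $|f_z(r^2/4(t-s))-f_z(r^2/4(T-s))|\le Cz^4 e^{-z^2/4}(T-t)\lambda^4(s)/(T-s)^3$ used there lets me replace $t-s$ by $T-s$ inside the exponential kernel modulo $\mathcal E_0$ errors, which produces exactly the non-local term
\[
-\frac{32\pi}{\delta}\int_{-\varepsilon(T)}^{t}\frac{p(s)}{T-s}\int_0^\infty\bigl[1-e^{-z^2\delta(T-s)/4(t-s)}\bigl(1+\tfrac{z^2\delta(T-s)}{4(t-s)}\bigr)\bigr]\frac{1-\chi_0(z)}{z^3}\,dz\,ds
\]
appearing in the statement; its sign is dictated by the $-\frac{1}{2\lambda^2(T-s)}U\nabla_w\chi\cdot w$ piece of $G$ and is precisely what is needed to cancel, in Corollary \ref{Corollary EXPANSION MASS}, the corresponding term of $M_\chi^\varphi$ from Lemma \ref{Mass10}. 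On the second interval I freeze $p(s)\approx p(t)$, $\lambda(s)\approx\lambda(t)$ (using \eqref{LambAss2}--\eqref{RegLambAss} to bound the error as in Lemma \ref{Mass10}); since $\delta(T-s)/(t-s)\to\infty$ as $s\to t$, the bracket $1-e^{-z^2\delta(T-s)/4(t-s)}(1+\cdots)$ tends to $1$ on the support of $1-\chi_0$, and the radial integral collapses to $\int_0^\infty(1-\chi_0(z))z^{-3}\,dz=\beta$. After restoration of the prefactors this yields the main term $16\pi\beta\lambda^2(t)/(\delta(T-t))$, while the $t$-independent contribution from the lower endpoint $s=-\varepsilon(T)$ defines the constant $M_2[\varepsilon(T)]$, bounded by $Ce^{-2a\sqrt{|\ln\varepsilon(T)|}}$ thanks to \eqref{EpsiTandT} and $|p(s)|\le Ce^{-2a\sqrt{|\ln(T-s)|}}$.

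The main technical obstacle is the algebraic bookkeeping at the first step: one must verify that after integration by parts the four summands of $G$ combine, at leading order in $\lambda^2/(T-s)$, into a single divergence producing the weight $(1-\chi_0(z))/z^3$ with the correct coefficients $16\pi\beta/\delta$ (local piece) and $-32\pi/\delta$ (non-local piece). Individually these pieces generate different radial kernels; only their sum telescopes, because $\varphi_\lambda$ was designed in \eqref{eqphilambda}--\eqref{Etildephilambda} so that the total forcing cancels the leading outer errors of $u_0$. Getting the normalization to match exactly is essentially the same asymptotic computation already performed to identify $\beta$ in \eqref{ExpansionRHSeqlambda}, carried out now inside a time convolution.
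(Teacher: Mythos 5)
Your scaffolding (Duhamel in six dimensions, recognizing that the forcing of $\varphi_\lambda^{(2)}$ is supported on the annulus where $U$ can be replaced by its tail $8\lambda^4/r^4$, the spatial integration by parts identifying $\beta$, the split of the time integral at $t-(T-t)$) is the same as the paper's. But you have skipped the step that actually produces $p(s)=\lambda\dot\lambda(s)$, and without it your bookkeeping of where the two main terms come from cannot be made to work.

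Look at the equation \eqref{2dphilambez4} and the force $\tilde E$ in \eqref{Etildephilambda}: every coefficient involves powers of $\lambda(s)$ and of $\delta(T-s)$, but nowhere $\lambda\dot\lambda(s)$. After the spatial integration Duhamel therefore delivers expressions of the form $\int_{-\varepsilon(T)}^{t}\frac{\lambda^2(s)}{(T-s)^2}\,(\cdots)\,ds$, not $\int\frac{p(s)}{T-s}\,(\cdots)\,ds$. The paper's proof then performs an \emph{integration by parts in time}, using $(\lambda^2)'(s)=2p(s)$, to write
\begin{align*}
16\pi\frac{\beta}{\delta}\int_{-\varepsilon(T)}^{t}\frac{\lambda^{2}(s)}{(T-s)^{2}}\,ds
= M_2[\varepsilon(T)]+16\pi\frac{\beta}{\delta}\frac{\lambda^{2}(t)}{T-t}-32\pi\frac{\beta}{\delta}\int_{-\varepsilon(T)}^{t}\frac{p(s)}{T-s}\,ds,
\end{align*}
and similarly for the companion $\lambda^2$-weighted kernel integral; the boundary term is \emph{exactly} the local piece $16\pi\beta\lambda^2(t)/(\delta(T-t))$, and the remaining two $\int p(s)/(T-s)$ integrals regroup (via $\beta=\int_0^\infty(1-\chi_0)z^{-3}dz$) into the single nonlocal kernel integral of the statement. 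So the local and nonlocal pieces are the two halves of a time-IBP identity, not contributions from the two time sub-intervals. Your claim that freezing $p(s)\approx p(t)$ on $[t-(T-t),t]$ and collapsing the radial kernel to $\beta$ "yields" $16\pi\beta\lambda^2(t)/(\delta(T-t))$ is precisely where this goes wrong: integrating the naive $\lambda^2(t)/(T-s)^2$ over that interval produces $\lambda^2(t)/(2(T-t))$, a factor $1/2$ off, and in any case the $p(s)$ in the nonlocal piece has no source until one differentiates $\lambda^2$. The other small inaccuracy is the assertion that the sign is "dictated by the $-\tfrac{1}{2\lambda^2(T-s)}U\nabla_w\chi\cdot w$ piece": the paper shows carefully that all four forcing summands combine into the single divergence $8\lambda^2/(\delta^3(T-t)^3|w|^4)\bigl[\chi_0''-\tfrac{\delta|w|}{2}\chi_0'-\tfrac{3}{|w|}\chi_0'\bigr]$ modulo a bounded remainder, and it is the spatial integration by parts of this full combination that gives $\delta\beta$.
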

\begin{proof}
	If we call $w=\frac{x-\xi}{\sqrt{\delta(T-t)}}$ and recalling \eqref{decompV}, we observe that when $\frac{|x-\xi|}{\lambda(t)}\gg1$ we have the following expansion
	\begin{align*}
		&-\frac{1}{\lambda^{2}}\frac{1}{2(T-t)}U_{0}\nabla_{w}\chi\cdot w+\frac{2}{\lambda^{3}\sqrt{\delta(T-t)}}\nabla_{y}U_{0}\cdot \nabla_{w}\chi + \frac{1}{\lambda^{2}\delta(T-t)}U_{0}\Delta_{w}\chi -\\
		&\hspace{1cm}-\frac{1}{\lambda^{3}\sqrt{\delta(T-t)}}U_{0}\nabla_{w}\chi \cdot \nabla_{y} v_{0}=8\frac{\lambda^{2}}{\delta^{3}(T-t)^{3}}\frac{1}{|w|^{4}}[\chi_{0}''-\delta\frac{|w|}{2}\chi_{0}'-\frac{3}{|w|}\chi_{0}']+g(|w|,t)=\\
		&\hspace{5.6cm}=h(|w|,t)+g(|w|,t)
	\end{align*}
	where $|g(|w|,t)|\le C \mathds{1}_{1\le |w|\le 2}\frac{\lambda^{4}(t)}{(T-t)^{4}}$.
    By Duhamel's formula we can write
	\begin{align*}
		&2\pi \int_{-\varepsilon(T)}^{t}\int_{0}^{\infty}[1-e^{-\frac{r^{2}}{4(t-s)}}(1+\frac{r^{2}}{4(t-s)})]g(\frac{r}{\sqrt{\delta(T-s)}},s)rdrds=2\pi \int_{-\varepsilon(T)}^{t-(T-t)}(...)ds+2\pi \int_{t-(T-t)}^{t}(...)ds.
	\end{align*}
	The second integral can be estimated easily, we have
	\begin{align*}
		&|\int_{t-(T-t)}^{t}(T-s)\int_{0}^{\infty}[1-e^{-\frac{z^{2}}{4}\frac{T-s}{t-s}}(1+\frac{z^{2}}{4}\frac{T-s}{t-s})]g(\frac{z}{\sqrt{\delta}},s)zdzds|\le C\int_{t-(T-t)}^{t}\frac{\lambda^{4}(s)}{(T-s)^{3}}ds\le \\
		&\le C \int_{t-(T-t)}^{t}\frac{e^{-4a\sqrt{|\ln(T-s)|}}}{T-s}\le C e^{-4a\sqrt{|\ln(T-t)|}}\sqrt{|\ln(T-t)|}.
	\end{align*}
    Then so far we proved
	\begin{align}\label{MassphiLambdaInter}
		\int_{\R^2}\varphi_{\lambda}^{(2)}dx=&2\pi \int_{-\varepsilon(T)}^{t}\int_{0}^{\infty}[1-e^{-\frac{r^{2}}{4(t-s)}}(1+\frac{r^{2}}{4(t-s)})]h(\frac{r}{\sqrt{\delta(T-s)}},s)rdrds+\nonumber\\
		+&2\pi\int_{-\varepsilon(T)}^{t-(T-t)}\int_{0}^{\infty}[1-e^{-\frac{r^{2}}{4(t-s)}}(1+\frac{r^{2}}{4(t-s)})]g(\frac{r}{\sqrt{\delta(T-s)}},s)rdrds+\mathcal{E}_{0}[p,\lambda](t).
	\end{align}
To study the remaining integrals in \eqref{MassphiLambdaInter}, we recall \eqref{SECMASS function f } and we observe that $f_{z}(\frac{T-s}{t-s})=f_{z}(\frac{T-t}{t-s}+1)=1-e^{-\frac{z^{2}}{4}(1+\frac{T-t}{t-s})}(1+\frac{z^{2}}{4}(1+\frac{T-t}{t-s}))$ and that if $0<x<1$
\begin{align*}
	f_{z}'(x+1)=\frac{z^{4}}{16}(1+x)e^{-\frac{z^{2}}{4}(1+x)} \implies |f_{z}(x+1)-f_{z}(x)|\le C z^{4}e^{-\frac{z^{2}}{4}}x.
\end{align*}
Then
	\begin{align*}
		&|2\pi \int_{-\varepsilon(T)}^{t-(T-t)}(T-s)\int_{0}^{\infty}[f_{z}(\frac{T-s}{t-s})-f_{z}(1)]g(\frac{z}{\sqrt{\delta}},s)zdzds|\le\\
		&\hspace{1cm}\le C (T-t)\int_{-\varepsilon(T)}^{t-(T-t)} \frac{T-s}{t-s}\int_{1}^{2}z^{4}e^{-\frac{z^{2}}{4}}|g(\frac{z}{\sqrt{\delta}},s)|zdzds\le C e^{-4a\sqrt{|\ln(T-t)|}}
	\end{align*}
    and it can be included in $\mathcal{E}_{0}$. Since we have
	\begin{align*}
		&|\int_{t-(T-t)}^{T}(T-s)\int_{1}^{2}[1-e^{-\frac{z^{2}}{4}}(1+\frac{z^{2}}{4})]g(\frac{z}{\sqrt{\delta}},s)zdzds|\le C e^{-4a\sqrt{|\ln(T-t)|}}\sqrt{|\ln(T-t)|}.
	\end{align*}
    this can also be included $\mathcal{E}_{0}$ and  analogously $
    	2\pi\int_{-\varepsilon(T)}^{T}(T-s)\int_{1}^{2}[1-e^{-\frac{z^{2}}{4}}(1+\frac{z^{2}}{4})]g(\frac{z}{\sqrt{\delta}},s)zdzds$
can be included in $M_{2}[\varepsilon(T)]$. So far we have 
	\begin{align}\label{MassPhi2Intermediate2}
		\int_{\R^2}\varphi_{\lambda}^{(2)}dx=&M_{2}[\varepsilon(T)]+2\pi \int_{-\varepsilon(T)}^{t}\int_{0}^{\infty}[1-e^{-\frac{r^{2}}{4(t-s)}}(1+\frac{r^{2}}{4(t-s)})]h(\frac{r}{\sqrt{\delta(T-s)}},s)rdrds+\nonumber\\
		&+\mathcal{E}_{0}[p,\lambda](t).
	\end{align}
	Now we explicitly compute the second integral in \eqref{MassPhi2Intermediate2}. First we observe
	\begin{align}\label{IntegrationByPartsInSpace}
		&\delta(T-s)\int_{0}^{\infty}[1-e^{-\frac{z^{2}}{4}\frac{\delta(T-s)}{t-s}}(1+\frac{z^{2}}{4}\frac{\delta(T-s)}{t-s})]h(z,s)zdz=8\frac{\lambda^{2}(s)}{\delta^{2}(T-s)^{2}}\int_{0}^{\infty}\frac{1}{z^{3}}(\chi_{0}''-\delta\frac{z}{2}\chi_{0}'-\frac{3}{z}\chi_{0}')dz-\nonumber\\
		&\hspace{2cm}-8\frac{\lambda^{2}(s)}{\delta^{2}(T-s)^{2}}\int_{0}^{\infty}e^{-\frac{z^{2}}{4}\frac{\delta(T-s)}{t-s}}(1+\frac{z^{2}}{4}\frac{\delta(T-s)}{t-s})\frac{1}{z^{3}}(\chi_{0}''-\delta\frac{z}{2}\chi_{0}'-\frac{3}{z}\chi_{0}')dz.
	\end{align}
    Integrating by parts the first integral in \eqref{IntegrationByPartsInSpace} we get
\begin{align*}
		\int_{0}^{\infty}\frac{1}{z^{3}}(\chi_{0}''-\delta\frac{z}{2}\chi_{0}'-\frac{3}{z}\chi_{0}')dz=\delta\int_{0}^{\infty}\frac{z^{-2}}{-2}(\chi_{0}-1)'dz+\int_{0}^{\infty}(\frac{\chi_{0}'}{z^{3}})'dz=\delta\int_{0}^{\infty}(1-\chi_{0})\frac{dz}{z^{3}}=\delta\beta
	\end{align*}
where we recall that the constant $\beta$ has already been introduced in \eqref{ExpansionRHSeqlambda}. We study the second integral in \eqref{IntegrationByPartsInSpace} observing that
	\begin{align*}
		&-\int_{0}^{\infty}\frac{1}{z^{3}}e^{-\frac{z^{2}}{4}\frac{\delta(T-s)}{t-s}}(1+\frac{z^{2}}{4}\frac{\delta(T-s)}{t-s})\chi_{0}''(z)dz=\\
		&\hspace{1cm}=\frac{1}{16}(\frac{\delta(T-s)}{t-s})^{3}\int_{0}^{\infty} e^{-\frac{z^{2}}{4}\frac{\delta(T-s)}{t-s}}z(1-\chi_{0})dz-3 \int_{0}^{\infty}\chi_{0}' e^{-\frac{z^{2}}{4}\frac{\delta(T-s)}{t-s}}(1+\frac{z^{2}}{4}\frac{\delta(T-s)}{t-s})\frac{1}{z^{4}}dz
	\end{align*}
    and that
	\begin{align*}
		&\frac{\delta}{2}\int_{0}^{\infty} \frac{1}{z^{2}}e^{-\frac{z^{2}}{4}\frac{\delta(T-s)}{t-s}}(1+\frac{z^{2}}{4}\frac{\delta(T-s)}{t-s})\chi_{0}' dz=-\delta\int_{0}^{\infty}(1-\chi_{0})\frac{1}{z^{3}}e^{-\frac{z^{2}}{4}\frac{\delta(T-s)}{t-s}}dz\\
		&\hspace{1.5cm}-\frac{\delta^{2}}{4}\frac{T-s}{t-s}\int_{0}^{\infty}(1-\chi_{0})\frac{1}{z}e^{-\frac{z^{2}}{4}\frac{\delta(T-s)}{t-s}}dz-\frac{\delta^{3}}{16}(\frac{T-s}{t-s})^{2}\int_{0}^{\infty}(1-\chi_{0})e^{-\frac{z^{2}}{4}\frac{\delta(T-s)}{t-s}}zdz.
	\end{align*}
	Then, the remaining integral in \eqref{IntegrationByPartsInSpace} can be written as
	\begin{align*}
		&-\int_{0}^{\infty}e^{-\frac{z^{2}}{4}\frac{\delta(T-s)}{t-s}}(1+\frac{z^{2}}{4}\frac{\delta(T-s)}{t-s})\frac{1}{z^{3}}(\chi_{0}''-\delta\frac{z}{2}\chi_{0}'-\frac{3}{z}\chi_{0}')dz=\\
		&\hspace{1cm}=-\delta\int_{0}^{\infty}(1-\chi_{0})\frac{1}{z^{3}}e^{-\frac{z^{2}}{4}\frac{\delta(T-s)}{t-s}}(1+\frac{z^{2}}{4}\frac{\delta(T-s)}{t-s})dz+\\
		&\hspace{1.4cm}+\frac{1}{16}[(\frac{\delta(T-s)}{t-s})^{3}-\delta(\frac{\delta(T-s)}{t-s})^{2}]\int_{0}^{\infty} e^{-\frac{z^{2}}{4}\frac{\delta(T-s)}{t-s}}z(1-\chi_{0})dz.
	\end{align*}
    and consequently we have
    \begin{align*}
		\int_{\R^2}\varphi_{\lambda}^{(2)}dx=&M_{2}[\varepsilon(T)]+16\pi\frac{\gamma}{\delta} \int_{-\varepsilon(T)}^{t}\frac{\lambda^{2}(s)}{(T-s)^{2}}ds-\\
		&-16\pi\frac{1}{\delta} \int_{-\varepsilon(T)}^{t}\frac{\lambda^{2}(s)}{(T-s)^{2}}\int_{0}^{\infty}e^{-\frac{z^{2}}{4}\frac{\delta(T-s)}{t-s}}(1-\chi_{0})\frac{1}{z^{3}}(1+\frac{z^{2}}{4}\frac{\delta(T-s)}{t-s})dzds+\\
		&+\pi\frac{1}{\delta^{2}} \int_{-\varepsilon(T)}^{t}\frac{\lambda^{2}(s)}{(T-s)^{2}}[(\frac{\delta(T-s)}{t-s})^{3}-\delta(\frac{\delta(T-s)}{t-s})^{2}]\int_{0}^{\infty}e^{-\frac{z^{2}}{4}\frac{\delta(T-s)}{t-s}}z(1-\chi_{0})dzds+\\
		&+\mathcal{E}_{0}[p,\lambda].
	\end{align*}
	In order to get the desired result we just need to integrate by parts in time. We start recalling that $p(t)=\lambda\dot{\lambda}(t)$ and then
	\begin{align*}
		16\pi\frac{\gamma}{\delta}\int_{-\varepsilon(T)}^{t}\frac{\lambda^{2}(s)}{(T-s)^{2}}=M_{2}[\varepsilon(T)]+16\pi \frac{\gamma}{\delta} \frac{\lambda^{2}(t)}{T-t}-32\pi\frac{\gamma}{\delta} \int_{-\varepsilon(T)}^{t}\frac{p(s)}{T-s}ds.
	\end{align*}
	Analogously
\begin{align*}
		\hspace{-1cm}-&16\pi\frac{1}{\delta} \int_{-\varepsilon(T)}^{t}\frac{\lambda^{2}(s)}{(T-s)^{2}}\int_{0}^{\infty} e^{-\frac{z^{2}}{4}\frac{\delta(T-s)}{t-s}}(1-\chi_{0})\frac{1}{z^{3}}(1+\frac{z^{2}}{4}\frac{\delta(T-s)}{t-s})dzds=
\end{align*}
 \begin{align*}
		&=16\pi\frac{1}{\delta} \frac{\lambda^{2}(-\varepsilon(T))}{T+\varepsilon(T)}\int_{0}^{\infty} e^{-\frac{z^{2}}{4}\frac{\delta(T+\varepsilon(T))}{t+\varepsilon(T)}}(1-\chi_{0})\frac{1}{z}(1+\frac{z^{2}}{4}\frac{\delta(T+\varepsilon(T))}{t+\varepsilon(T)})dz+\\
		&\hspace{1cm}+32\pi\frac{1}{\delta} \int_{-\varepsilon(T)}^{t}\frac{p(s)}{T-s}\int_{0}^{\infty}e^{-\frac{z^{2}}{4}\frac{\delta(T-s)}{t-s}}(1-\chi_{0})\frac{1}{z^{3}}(1+\frac{z^{2}}{4}\frac{\delta(T-s)}{t-s})dzds-\\
		&\hspace{1cm}-\pi\delta \int_{-\varepsilon(T)}^{t}\frac{\lambda^{2}(s)}{(T-s)^{2}}[(\frac{T-s}{t-s})^{3}-(\frac{T-s}{t-s})^{2}]\int_{0}^{\infty}(1-\chi_{0})z e^{-\frac{z^{2}}{4}\frac{T-s}{t-s}}dzds.
	\end{align*}
	We just proved
	\begin{align}\label{MassPhi2ExpansionQuasiFinita}
		\int_{\R^2}\varphi_{\lambda}^{(2)}dx=&M_{2}[\varepsilon(T)]+16\pi \frac{\gamma}{\delta} \frac{\lambda^{2}(t)}{T-t}+\nonumber\\
		&+16\pi\frac{1}{\delta} \frac{\lambda^{2}(-\varepsilon(T))}{T+\varepsilon(T)}\int_{0}^{\infty} e^{-\frac{z^{2}}{4}\frac{T+\varepsilon(T)}{t+\varepsilon(T)}}(1-\chi_{0}(\frac{z}{\sqrt{\delta}}))\frac{1}{z}(1+\frac{z^{2}}{4}\frac{T+\varepsilon(T)}{t+\varepsilon(T)})dz-\nonumber\\
		&-32\pi\frac{1}{\delta} \int_{-\varepsilon(T)}^{t}\frac{p(s)}{T-s}\int_{0}^{\infty}[1-e^{-\frac{z^{2}}{4}\frac{\delta(T-s)}{t-s}}(1+\frac{z^{2}}{4}\frac{\delta(T-s)}{t-s})]\frac{1}{z^{3}}(1-\chi_{0})dzds+\mathcal{E}_{0}[p,\lambda].
	\end{align}
	To study the second term in the right-hand side of \eqref{MassPhi2ExpansionQuasiFinita} we want use again \eqref{SECMASS function f }. Observing that we are interested in $|f_{z}(\frac{T+\varepsilon(T)}{t+\varepsilon(T)})-f_{z}(1)|$ and then expanding $f_{z}(1+(\frac{T+\varepsilon(T)}{t+\varepsilon(T)}-1))=f_{z}(1+x)$ when $x$ is small if $t\ge -\frac{T}{2}$ we have
	\begin{align*}
		0\le \frac{T-t}{t+\varepsilon(T)}=\frac{T+\varepsilon(T)}{t+\varepsilon(T)}-1 \le\frac{3}{2} \frac{T}{t+\varepsilon(T)}\le \frac{3}{2} \frac{T}{-\frac{T}{2}+\varepsilon(T)}\le 3 \frac{T}{\varepsilon(T)}\ll1.
	\end{align*}
	We know that $|f_{z}(1+x)-f_{z}(1)|\le C z^{4}e^{-\frac{z^{2}}{4}}x$, see \eqref{SECMASS function f }. This implies that
	\begin{align*}
		&|\frac{16\pi}{\delta} \frac{\lambda^{2}(-\varepsilon(T))}{T+\varepsilon(T)}\int_{0}^{\infty}\frac{1-\chi_{0}(\frac{z}{\sqrt{\delta}})}{z}[f_{z}(\frac{T+\varepsilon(T)}{t+\varepsilon(T)})-f_{z}(1)]dz|\le \\
		&\le C \frac{\lambda^{2}(-\varepsilon(T))}{T+\varepsilon(T)}\frac{T-t}{t+\varepsilon(T)}\int_{0}^{\infty}(1-\chi_{0})z^{3}e^{-\frac{z^{2}}{4}}dz.
	\end{align*}
	We conclude the proof observing this term can be included in $\mathcal{E}_{0}$, in fact if $t\ge-\frac{T}{2}$ and if $\varepsilon(T)\gg T$ (or, more precisely, if \eqref{EpsiTandT} holds) we get
\begin{align}\label{MASSSEC WHereEpsSqu}
		|\frac{\lambda^{2}(-\varepsilon(T))}{T+\varepsilon(T)}\frac{T-t}{t+\varepsilon(T)}|\le\ e^{-4a\sqrt{|\ln(T-t)|}}\sqrt{|\ln(T-t)|}.
	\end{align}
\end{proof}
The following Corollary is a trivial consequence of Lemmas \ref{Mass10}, \ref{MASSOUTERphiLAMBDA}.
\begin{corollary}\label{ExpansionMass}
	Let $T$, $\varepsilon(T)$ be such that \eqref{RestrictionEPSILON}. Assume that $\lambda$ satisfies \eqref{LambAss1}, \eqref{LambAss2}. Then, for all $t\ge -\frac{T}{2}$ and for all $l>1/2$, we have
	\begin{align*}
		\int_{\R^2}\varphi_{\lambda}^{(1)}(\cdot,t)dx+\int_{\R^2}\varphi_{\lambda}^{(2)}(\cdot,t)dx-16\pi\frac{\gamma}{\delta}\frac{\lambda^{2}(t)}{T-t}=&M_{\lambda}^{\varphi}[\varepsilon(T)]-4\pi \int_{-f_{0}(T)}^{t-\lambda^{2}}\frac{p(s)}{t-s}+4\pi(\mu+1-\ln4)p(t)+\\
		&+\mathcal{B}[p,\lambda]+\mathcal{E}_{0}[p,\lambda]
	\end{align*}
	where $M_{\lambda}^{\varphi}[\varepsilon(T)]$ is a constant satisfying $|M_{\lambda}^{\varphi}[\varepsilon(T)]|\le Ce^{-2a\sqrt{|\ln\varepsilon(T)|}}$, $\mathcal{B}$ is an operator such that
	\begin{align}\label{B}
		|\mathcal{B}[p,\lambda]| \le C \|p\|_{\gamma,m} \frac{e^{-2a\gamma\sqrt{|\ln(T-t)|}}}{|\ln(T-t)|^{m+l-\frac{1}{2}}}
	\end{align}
    and such that if $p_{1}$, $p_{2}$ satisfy $\|p_{i}\|_{\gamma,m}<\infty$ we also have
\begin{align*}
	|\mathcal{B}[p_{1}]-\mathcal{B}[p_{2}]| \le C \|p_{1}-p_{2}\|_{\gamma,m} \frac{e^{-2a\gamma\sqrt{|\ln(T-t)|}}}{|\ln(T-t)|^{m+l-\frac{1}{2}}}.
\end{align*}
	Moreover if $\lambda$ satisfies also \eqref{RegLambAss}
	\begin{align*}
		|\mathcal{E}_{0}[p,\lambda]|\le C e^{-4a\sqrt{|\ln(T-t)|}}|\ln(T-t)|^{1+l}.
	\end{align*}
\end{corollary}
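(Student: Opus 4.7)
The plan is simply to add the expansions provided by Lemma~\ref{Mass10} and Lemma~\ref{MASSOUTERphiLAMBDA} and observe a key cancellation. Both lemmas apply under the standing assumptions \eqref{LambAss1}, \eqref{LambAss2} on $\lambda$ (with \eqref{RegLambAss} used to absorb the higher-order remainders into $\mathcal{E}_0$), and both are valid on the time interval $t\ge -\tfrac{T}{2}$, which is exactly the range in which Corollary~\ref{ExpansionMass} is stated.

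First I would write, directly from Lemma~\ref{Mass10},
\begin{align*}
\int_{\mathbb{R}^2}\varphi_\lambda^{(1)}\,dx
&= M_1[\varepsilon(T)]
+ 32\pi\tfrac{1}{\delta}\!\int_{-\varepsilon(T)}^{t}\!\!\tfrac{p(s)}{T-s}\!\int_{0}^{\infty}\![1-e^{-\tfrac{z^{2}}{4}\tfrac{\delta(T-s)}{t-s}}(1+\tfrac{z^{2}}{4}\tfrac{\delta(T-s)}{t-s})]\tfrac{1-\chi_0(z)}{z^3}\,dz\,ds\\
&\quad -4\pi\!\int_{-\varepsilon(T)}^{t-\lambda^2}\!\tfrac{p(s)}{t-s}\,ds + 4\pi(\gamma+1-\ln 4)\,p(t) + \mathcal{B}[p](t) + \mathcal{E}_0[p](t),
\end{align*}
and from Lemma~\ref{MASSOUTERphiLAMBDA},
\begin{align*}
\int_{\mathbb{R}^2}\varphi_\lambda^{(2)}\,dx
&= M_2[\varepsilon(T)] + 16\pi\tfrac{\beta}{\delta}\tfrac{\lambda^2(t)}{T-t}\\
&\quad - 32\pi\tfrac{1}{\delta}\!\int_{-\varepsilon(T)}^{t}\!\!\tfrac{p(s)}{T-s}\!\int_{0}^{\infty}\![1-e^{-\tfrac{z^{2}}{4}\tfrac{\delta(T-s)}{t-s}}(1+\tfrac{z^{2}}{4}\tfrac{\delta(T-s)}{t-s})]\tfrac{1-\chi_0(z)}{z^3}\,dz\,ds + \mathcal{E}_0[p](t).
\end{align*}

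Adding these two identities, the double integrals involving $(1-\chi_0(z))/z^3$ appear with opposite signs and cancel exactly. This cancellation is precisely the reason the terms $M_\chi^\varphi$ (introduced in the proof of Lemma~\ref{Mass10} to accommodate the cut-off in the source of \eqref{2dphilambez3}) and the integrals produced by Duhamel's formula applied to the outer source of \eqref{2dphilambez4} were isolated in the form above. After the cancellation, I would set $M_\lambda^\varphi[\varepsilon(T)]:=M_1[\varepsilon(T)]+M_2[\varepsilon(T)]$, for which the bounds of Lemmas~\ref{Mass10}--\ref{MASSOUTERphiLAMBDA} give $|M_\lambda^\varphi[\varepsilon(T)]|\le Ce^{-2a\sqrt{|\ln\varepsilon(T)|}}$.

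Finally I would note that the operator $\mathcal{B}$ coming from Lemma~\ref{Mass10} already satisfies \eqref{B}, together with the Lipschitz estimate inherited from \eqref{Bdifference}; the sum of the two $\mathcal{E}_0$ contributions (both controlled by $Ce^{-4a\sqrt{|\ln(T-t)|}}|\ln(T-t)|^{1+l}$) is again of the same type. The only subtlety in the writing is keeping the constant $\beta$ (defined in \eqref{ExpansionRHSeqlambda}) in the prefactor of $\lambda^2(t)/(T-t)$, which identifies the quantity $16\pi\beta/\delta$ appearing in the statement of the corollary. There is no real obstacle here: the whole content of the result lies in having correctly tracked, in Lemmas~\ref{Mass10} and \ref{MASSOUTERphiLAMBDA}, the two identical-but-opposite integral contributions so that they annihilate in the sum.
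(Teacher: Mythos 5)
Your proposal is correct and takes exactly the same route as the paper, which simply states the corollary as a "trivial consequence of Lemmas \ref{Mass10}, \ref{MASSOUTERphiLAMBDA}": adding the two expansions, observing the exact cancellation of the double integrals involving $(1-\chi_0(z))/z^3$, and setting $M_\lambda^\varphi=M_1+M_2$. You also correctly read through the paper's notational slips (the $\gamma$ in the prefactor should be $\beta$ from \eqref{ExpansionRHSeqlambda}, and $-f_0(T)$ should be $-\varepsilon(T)$), so there is nothing to add.
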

\subsection{An approximate solution}
In this section we will find an explicit approximate solution of the nonlocal equation \eqref{EqPhiLamHOPE}.
\begin{lemma}\label{SECMASSprofile}
	Let $T$, $\varepsilon(T)$ satisfy \eqref{RestrictionEPSILON}. Let $\lambda_{\star}(t)$ satisfy $\lambda_{\star}(t)=\sqrt{-\int_{t}^{T}p_{\star}(s)ds}$ with $p_{\star}(t)=-\frac{c_{\star}}{2}e^{-2a\sqrt{|\ln(T-t)|}}$, $c_{\star}=4e^{-(\mu+2)}$ and $a=\frac{\sqrt{2}}{2}$. For any $t\ge-\frac{T}{2}$ we have
	\begin{align*}
		|\int_{\R^2}\varphi_{\lambda_{\star}}^{(1)}(\cdot,t)dx+\int_{\R^2}\varphi_{\lambda_{\star}}^{(2)}(\cdot,t)dx-\int_{\R^2}\varphi_{\lambda_{\star}}^{(1)}(\cdot,T)dx+\int_{\R^2}\varphi_{\lambda_{\star}}^{(2)}(\cdot,T)-16\pi \gamma \frac{\lambda_{\star}^{2}}{T-t}|\le C \frac{e^{-2a\sqrt{|\ln(T-t)|}}}{|\ln(T-t)|^{1/4}}.
	\end{align*}
\end{lemma}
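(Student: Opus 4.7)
The strategy is to view Lemma \ref{SECMASSprofile} as a concrete verification that the explicit profile $p_\star(t) = -\frac{c_\star}{2} e^{-\sqrt{2|\ln(T-t)|}}$ with $c_\star = 4 e^{-(\gamma+2)}$ is, to the claimed accuracy, a solution of the nonlocal equation identified in the introduction,
\[
4\pi \int_{-\varepsilon(T)}^{T}\frac{p(s)}{T-s}ds - 4\pi \int_{-\varepsilon(T)}^{t-\lambda^{2}}\frac{p(s)}{t-s}ds + 4\pi(\gamma+1-\ln 4)p(t) = 0.
\]
Apply Corollary \ref{ExpansionMass} twice, at time $t$ and at time $T$, with $\lambda = \lambda_\star$ and $p = p_\star$. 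Since $p_\star(T)=0$ and $\lambda_\star^2(t)/(T-t) \to 0$ as $t\to T$ (which one checks by noting that the same change of variable used below gives $\lambda_\star^2(t) \sim \frac{c_\star}{2}(T-t)e^{-\sqrt{2|\ln(T-t)|}}$), the constant $M^{\varphi}_{\lambda_\star}[\varepsilon(T)]$ cancels in the subtraction and the claim reduces to the bound
\[
\Bigl| -4\pi \bigl[ I(t) - I(T)\bigr] + 4\pi(\gamma+1-\ln 4)\, p_\star(t) \Bigr| \le C \frac{e^{-\sqrt{2|\ln(T-t)|}}}{|\ln(T-t)|^{1/4}},
\]
where $I(t) = \int_{-\varepsilon(T)}^{t-\lambda_\star^2(t)} \frac{p_\star(s)}{t-s}\,ds$. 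The contributions of $\mathcal B$ and $\mathcal E_0$ from the corollary are already bounded by the required error once we fix $l \in (3/4, 1)$ and note that $\|p_\star\|_{1,0}$ is finite with $a = \sqrt{2}/2$.

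\paragraph{Explicit evaluation of $I(t)-I(T)$.} After substituting $r=t-s$ in $I(t)$ and $r=T-s$ in $I(T)$, the difference becomes
\[
I(t)-I(T) = -\frac{c_\star}{2}\int_{\lambda_\star^2}^{t+\varepsilon(T)} \frac{e^{-\sqrt{2|\ln(T-t+r)|}}}{r}\,dr + \frac{c_\star}{2}\int_{0}^{T+\varepsilon(T)}\frac{e^{-\sqrt{2|\ln r|}}}{r}\,dr.
\]
The change of variable $v = \sqrt{|\ln r|}$ turns integrals of the form $\int \frac{e^{-\sqrt{2|\ln r|}}}{r}\,dr$ into the elementary antiderivative $-(\sqrt{2}v+1)e^{-\sqrt{2}v}$. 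In the first integral I would split the range into $r\in[\lambda_\star^2,T-t]$ (where the \emph{logarithmic} mass $\ln((T-t)/\lambda_\star^2)\approx \sqrt{2|\ln(T-t)|} + \ln(2/c_\star)$ is extracted using the explicit form of $\lambda_\star$ and is responsible for the $\sqrt{2|\ln(T-t)|}$ growth) and $r\in[T-t,t+\varepsilon(T)]$ (where the explicit antiderivative applies directly and produces a matching $-\sqrt{2|\ln(T-t)|}e^{-\sqrt{2|\ln(T-t)|}}$ together with an $\varepsilon(T)$-dependent constant of the same form as a term in $M^{\varphi}_{\lambda_\star}[\varepsilon(T)]$). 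The expansion
\[
\sqrt{2|\ln(T-t+r)|} = \sqrt{2|\ln(T-t)|} - \frac{\ln(1+r/(T-t))}{\sqrt{2|\ln(T-t)|}} + O(|\ln(T-t)|^{-3/2})
\]
handles the residual dependence on $r$ in the first integral. The constant terms produced in this expansion combine with the explicit value of $c_\star$ so that the coefficient of $p_\star(t)$ in $I(t)-I(T)$ is exactly $\gamma + 1 - \ln 4$ (this is the key role of the exponential-integral identity of Lemma \ref{SECMASSexpansionMU}, together with the expansion $Ei(-x) = \gamma + \ln x + O(x)$). All sub-leading terms carry at least a factor $|\ln(T-t)|^{-1/2}$, which is stronger than the required $|\ln(T-t)|^{-1/4}$.

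\paragraph{Main obstacle.} The delicate part is tracking the several terms of size $e^{-\sqrt{2|\ln(T-t)|}}$ that arise both from the logarithmic tail near the diagonal and from the $(\sqrt{2}V+1)e^{-\sqrt{2}V}$ boundary evaluation, and showing that they conspire, thanks to the precise choice $c_\star = 4 e^{-(\gamma+2)}$, to reproduce exactly the constant $\gamma+1-\ln 4$ modulo the next-order error $|\ln(T-t)|^{-1/2}\cdot e^{-\sqrt{2|\ln(T-t)|}}$. An additional bookkeeping issue is ensuring the $\varepsilon(T)$-dependent boundary contributions from the two integrals cancel; this is the structural reason why the problem has been posed on $(-\varepsilon(T),T)$ and why $M^{\varphi}_{\lambda_\star}[\varepsilon(T)]$ absorbs exactly the remaining $\varepsilon(T)$-constants in the Corollary. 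Once these pieces are matched, one reads off the final error $C e^{-\sqrt{2|\ln(T-t)|}}/|\ln(T-t)|^{1/4}$, finishing the proof.
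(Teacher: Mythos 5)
Your reduction is the same as the paper's: apply Corollary \ref{ExpansionMass} at $t$ and at $T$, observe that the $\varepsilon(T)$-constant cancels in the difference, and verify that $p_\star$ approximately solves the nonlocal equation. Your decomposition of $I(t)-I(T)$ (split at $r=T-t$ after the substitutions $r=t-s$, $r=T-s$, pairing the two diagonal contributions directly) differs from the paper's cut at $s=t-N(t)(T-t)$ with $N(t)=|\ln(T-t)|^{1/4}$; both routes close, and your finer subtraction in fact controls the far-from-diagonal part at the better rate $e^{-\sqrt{2|\ln(T-t)|}}/|\ln(T-t)|^{1/2}$.

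There are, however, two genuine problems. First, you take the lemma's formula $\lambda_\star=\sqrt{-\int_t^T p_\star(s)\,ds}$ at face value and write $\ln\bigl((T-t)/\lambda_\star^2\bigr)\approx\sqrt{2|\ln(T-t)|}+\ln(2/c_\star)$, while simultaneously feeding $p=p_\star$ into Corollary \ref{ExpansionMass}, whose $p$ denotes $\lambda\dot\lambda$. These two choices are incompatible: since $p_\star=\lambda_\star\dot\lambda_\star=\tfrac12\tfrac{d}{dt}\lambda_\star^2$ and $\lambda_\star(T)=0$, the correct relation is $\lambda_\star^2(t)=-2\int_t^T p_\star(s)\,ds$ (this is what the paper's proof writes; the displayed lemma statement is missing the factor $2$, as one can also check against $\lambda_0$ in Proposition \ref{prop-lambda0}). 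With the factor $2$ one gets $\ln\bigl((T-t)/\lambda_\star^2\bigr)=\sqrt{2|\ln(T-t)|}-\ln c_\star+O(|\ln(T-t)|^{-1/2})$, and then $I(t)-I(T)=(-\ln c_\star-1)p_\star(t)+O(\cdot)=(\gamma+1-\ln 4)\,p_\star(t)+O(\cdot)$. With your $\ln(2/c_\star)$ the constant comes out $\gamma+1-\ln 2$, i.e.\ a defect of $(\ln 2)\,p_\star(t)\sim e^{-\sqrt{2|\ln(T-t)|}}$, not inside the allowed error. Second, you attribute the emergence of $\gamma+1-\ln 4$ to Lemma \ref{SECMASSexpansionMU} and the expansion of $Ei$; but those only enter earlier, in Lemma \ref{Mass10}, to \emph{produce} the $4\pi(\gamma+1-\ln 4)p(t)$ term of the nonlocal equation that is now being verified. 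In the verification itself no special function reappears: one needs only the elementary identity $\int_t^T\frac{p_\star(s)}{T-s}\,ds=p_\star(t)\bigl(\tfrac1a\sqrt{|\ln(T-t)|}+\tfrac{1}{2a^2}\bigr)$, which kills the $\sqrt{|\ln(T-t)|}\,p_\star(t)$ terms precisely when $a=\tfrac{1}{\sqrt 2}$, and the asymptotics of $\lambda_\star^2$; the Euler constant resurfaces only through $\ln c_\star=\ln4-(\gamma+2)$.
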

\begin{proof}
	By Corollary \ref{ExpansionMass}, if $l>3/4$ it is enough to prove
	\begin{align*}
		-\int_{-\varepsilon(T)}^{t-\lambda_{\star}^{2}}\frac{p_{\star}(s)}{t-s}ds+(\gamma+1-\ln4)p_{\star}(t)=-\int_{-\varepsilon(T)}^{T}\frac{p_{\star}(s)}{T-s}ds+O(\frac{e^{-2a\sqrt{|\ln(T-t)|}}}{|\ln(T-t)|^{1/4}}).
	\end{align*}
	We introduce a function $0<N(t)\to \infty$ as $t\to T$ that will be fixed later. We have
	\begin{align}\label{ExpansionNonLocalFirst}
		-\int_{-\varepsilon(T)}^{t-\lambda_{\star}^{2}}\frac{p_{\star}(s)}{t-s}ds&=-\int_{-\varepsilon(T)}^{T}\frac{p_{\star}(s)}{T-s}ds+\int_{t}^{T}\frac{p_{\star}(s)}{T-s}ds+\int_{t-N(t)(T-t)}^{t}\frac{p_{\star}(s)}{T-s}ds-\nonumber\\
		&\hspace{1cm}-(T-t)\int_{-\varepsilon(T)}^{t-N(t)(T-t)}\frac{p_{\star}(s)}{(t-s)(T-s)}ds-\int_{t-N(t)(T-t)}^{t-\lambda^{2}_{\star}}\frac{p_{\star}(s)}{t-s}ds.
	\end{align}
	We want to start estimating the fourth integral in \eqref{ExpansionNonLocalFirst}, but first we need the following trivial observations
	\begin{align*}
		&s<t-N(t)(T-t)=t-(T-s)N(t)-(s-t)N(t)=t-(T-s)N(t)+(t-s)N(t) \\
		&\iff 1\le \frac{T-s}{t-s}\le \frac{1+N(t)}{N(t)}\le 2
	\end{align*}
	since $N(t)\to \infty$. Moreover if we assume that $\frac{|\ln(1+N)|}{\sqrt{|\ln(T-t)|}}\to 0$, we also have $e^{-2a\sqrt{|\ln(T-t+N(t)(T-t))|}} \approx e^{-2a\sqrt{|\ln(T-t)|}}$. Since we know that $\varepsilon(T)\ll1$ and that the function $\frac{e^{-2a\sqrt{|\ln x|}}}{x}$ is decreasing when $x$ is small, if $t\ge-\frac{T}{2}$ we observe
	\begin{align*}
		&|(T-t)\int_{-\varepsilon(T)}^{t-N(t)(T-t)}\frac{p_{\star}(s)}{(t-s)(T-s)}ds|\le C \frac{ e^{-2a\sqrt{|\ln(T-t)|}}}{N(t)}.
	\end{align*}
	Now we can expand the last integral of \eqref{ExpansionNonLocalFirst}
	\begin{align*}
		-\int_{t-N(t)(T-t)}^{t-\lambda^{2}_{\star}}\frac{p_{\star}(s)}{t-s}ds=&-p_{\star}(t)\int_{t-N(t)(T-t)}^{t-\lambda_{\star}^{2}}\frac{ds}{t-s}-\int_{t-N(t)(T-t)}^{t-\lambda_{\star}^{2}}\frac{p_{\star}(s)-p_{\star}(t)}{t-s}ds =\\
		=&p_{\star}(t)\ln(\frac{\lambda_{\star}^{2}(t)}{T-t})-p_{\star}(t)\ln N(t)+O(\frac{e^{-2a\sqrt{|\ln(T-t)|}}}{\sqrt{|\ln(T-t)|}}N(t)).
	\end{align*}
    It is then natural to fix $N(t)=|\ln(T-t)|^{1/4}$. Now we observe that the third integral in \eqref{ExpansionNonLocalFirst} can be rewritten as
	\begin{align*}
		\int_{t-N(t)(T-t)}^{t}\frac{p_{\star}(s)}{T-s}ds=&p_{\star}(t)\int_{t-N(t)(T-t)}^{t}\frac{ds}{T-s}+\int_{t-N(t)(T-t)}^{t}\frac{p_{\star}(s)-p_{\star}(t)}{T-s}ds=\\
		=&p_{\star}(t)\ln(1+N(t))+O(\frac{e^{-2a\sqrt{|\ln(T-t)|}}}{|\ln(T-t)|^{1/4}}).
	\end{align*}
	We just proved that
	\begin{align}\label{FinalEquationAlmostAlmost}
		&-\int_{-\varepsilon(T)}^{t-\lambda_{\star}^{2}}\frac{p_{\star}(s)}{t-s}ds+(\gamma+1-\ln4)p_{\star}(t)+\int_{-\varepsilon(T)}^{T}\frac{p_{\star}(s)}{T-s}ds=\nonumber\\
		&\hspace{1cm}=\int_{t}^{T}\frac{p_{\star}(s)}{T-s}ds+p_{\star}(t)\ln(\frac{\lambda_{\star}^{2}(t)}{T-t})+(\gamma+1-\ln4)p_{\star}(t)+O(\frac{e^{-2a\sqrt{|\ln(T-t)|}}}{|\ln(T-t)|^{1/4}}).
	\end{align}
	For the final step we only need to expand $\lambda_{\star}^{2}(t)=-2\int_{t}^{T}p_{\star}(s)ds$. An elementary computation gives
	\begin{align*}
		\lambda_{\star}^{2}(t)=&-2\int_{t}^{T}p_{\star}(s)ds=c_{\star}\int_{t}^{T}e^{-2a\sqrt{|\ln(T-s)|}}ds=c_{\star}(T-t)e^{-2a\sqrt{|\ln(T-t)|}}(1+O(\frac{1}{|\ln(T-t)|^{1/4}}))
	\end{align*}
	that implies
	\begin{align*}
		\ln(\frac{\lambda_{\star}^{2}(t)}{T-t})=\ln c_{\star}-2a\sqrt{|\ln(T-t)|}+O(\frac{1}{|\ln(T-t)|^{1/4}}).
	\end{align*}
	We can rewrite \eqref{FinalEquationAlmostAlmost} and finally write the following equation for $p_{\star}(t)$:
	\begin{align*}
		&-\int_{-\varepsilon(T)}^{t-\lambda_{\star}^{2}}\frac{p_{\star}(s)}{t-s}ds+(\gamma+1-\ln4)p_{\star}(t)+\int_{-\varepsilon(T)}^{T}\frac{p_{\star}(s)}{T-s}ds=\\
		&=\int_{t}^{T}\frac{p_{\star}(s)}{T-s}ds-2a\sqrt{|\ln(T-t)|}p_{\star}(t)+(\ln(\frac{c_{\star}}{4})+\gamma+1)p_{\star}(t)+O(\frac{e^{-2a\sqrt{|\ln(T-t)|}}}{|\ln(T-t)|^{1/4}}).
	\end{align*}
	A direct computation shows
	\begin{align*}
		&\int_{t}^{T}\frac{p_{\star}(s)}{T-s}ds=\frac{1}{a}\sqrt{|\ln(T-t)|}p_{\star}(t)+\frac{1}{2a^{2}}p_{\star}(t)
	\end{align*}
and then the proof is a trivial consequence of the resulting identity:
	\begin{align*}
		&-\int_{-\varepsilon(T)}^{t-\lambda_{\star}^{2}}\frac{p_{\star}(s)}{t-s}ds+(\gamma+1-\ln4)p_{\star}(t)+\int_{-\varepsilon(T)}^{T}\frac{p_{\star}(s)}{T-s}ds=\\
		&\hspace{1cm}=(\frac{1}{a}-2a)\sqrt{|\ln(T-t)|}p_{\star}(t)+(\ln(\frac{c_{\star}}{4})+\gamma+1+\frac{1}{2a^{2}})p_{\star}(t)+O(\frac{e^{-2a\sqrt{|\ln(T-t)|}}}{|\ln(T-t)|^{1/4}}).
	\end{align*}
\end{proof}
\subsection{The linearized equation}\label{LINERAIZEDMASS}
By Corollary \ref{ExpansionMass}, assuming that $\lambda\dot{\lambda}=p(t)$ satisfies \eqref{LambAss1}, \eqref{LambAss2}, \eqref{RegLambAss} in order to prove
\begin{align}\label{DesiredEquation}
	\int_{\R^2}\varphi_{\lambda}dx-16\pi\gamma\frac{\lambda^{2}}{T-t}=\int_{\R^2}\varphi_{\lambda}(T)dx+O(e^{-2a(1-\rho)\sqrt{|\ln(T-t)|}}) \ \ \ \forall t\in(0,T)
\end{align}
for any $\rho>0$ (that is equivalent to Proposition \ref{prop-lambda0}), we only have to prove that if $t\in(0,T)$ we have
\begin{align}\label{EquationNOnloca}
	-\int_{-\varepsilon(T)}^{t-\lambda^{2}}\frac{p(s)}{t-s}+(\gamma+1-\ln4)p(t)=-\int_{-\varepsilon(T)}^{T}\frac{p(s)}{T-s}ds+\mathcal{B}_{0}[p](t)+O(e^{-2a(1-\rho)\sqrt{|\ln(T-t)|}})
\end{align}
where we defined
\begin{align}\label{B0}
 \mathcal{B}_{0}[p]=-\frac{\mathcal{B}[p]}{2\pi}
\end{align}
with $\mathcal{B}$ as in Corollary \ref{ExpansionMass}. Then we introduce a correction $p_{1}$, that obviously has to be intended smaller than $p_{\star}$ and we write
\begin{align*}
	p(t)=p_{\star}(t)+p_{1}(t)
\end{align*} where $p_{\star}(t)=-\frac{c_{\star}}{2}e^{-2a\sqrt{|\ln(T-t)|}}$ as in Lemma \ref{SECMASSprofile}. In the following we will define
\begin{align}\label{DefintionLAmbda}
	\lambda^{2}:=-2\int_{t}^{T}(p_{\star}+p_{1})ds
\end{align} and we will always assume that it satisfies \eqref{LambAss1}. The idea of this linearization is to recover an ordinary differential equation in $p_{1}(t)$ by neglecting terms that will be small once we get some control on $\dot{p}_{1}(t)$. Notice that controlling $\dot{p}_{1}$ we can control $\dot{p}$ and then, proceeding backwards, we can obtain \eqref{DesiredEquation}. As we mentioned previously with $p_{1}$ we will not achieve the desired size of the error and another iteration will be required. \newline We can start observing
\begin{align*}
	\int_{-\varepsilon(T)}^{t-\lambda^{2}}\frac{p(s)}{t-s}ds&=\int_{-\varepsilon(T)}^{t-\lambda_{\star}^{2}}\frac{p_{\star}(s)}{t-s}ds+\int_{-\varepsilon(T)}^{t-\lambda_{\star}^{2}}\frac{p_{1}(s)}{t-s}ds-\ln(\frac{\lambda^{2}}{\lambda_{\star}^{2}})p_{\star}(t)+\mathcal{B}_{1}[p_{1}](t)+O(\frac{e^{-4a\sqrt{|\ln(T-t)|}}}{\sqrt{|\ln(T-t)|}})
\end{align*}
where we called $\mathcal{B}_{1}[p_{1}]=\int_{t-\lambda_{\star}^{2}}^{t-\lambda^{2}}\frac{p_{1}(s)}{t-s}ds$. Recalling \eqref{DefintionLAmbda} we can also write
\begin{align}\label{ExpansionLogNonloc}
	&\ln(\frac{\lambda^{2}}{\lambda_{\star}^{2}})=\ln(\frac{-2\int_{t}^{T}p_{\star}(s)ds-2\int_{t}^{T}p_{1}(s)ds}{-2\int_{t}^{T}p_{\star}(s)ds})=\nonumber\\
	&=\frac{1}{T-t}\frac{\int_{t}^{T}p_{1}(s)ds}{p_{\star}(t)}+[\ln(1+\frac{\int_{t}^{T}p_{1}(s)ds}{\int_{t}^{T}p_{\star}(s)ds})-\frac{\int_{t}^{T}p_{1}(s)ds}{\int_{t}^{T}p_{\star}(s)ds}]+(\frac{\int_{t}^{T}p_{1}(s)ds}{\int_{t}^{T}p_{\star}(s)ds}-\frac{1}{T-t}\frac{\int_{t}^{T}p_{1}(s)ds}{p_{\star}(t)}).
\end{align}
If we include the last two terms of \eqref{ExpansionLogNonloc} in the operator $\mathcal{B}_{1}$, we have that \eqref{EquationNOnloca} is equivalent to 
\begin{align}\label{NewEq}
	-&\int_{-\varepsilon(T)}^{t-\lambda_{\star}^{2}}\frac{p_{1}(s)}{t-s}ds+(\gamma+1-\ln4)p_{1}(t)+\frac{1}{T-t}\int_{t}^{T}p_{1}(s)ds+\int_{-\varepsilon(T)}^{T}\frac{p_{1}(s)}{T-s}ds= \notag\\
	&\hspace{1cm}=f_{\star}(t)+\mathcal{B}_{0}[p_{\star}+p_{1}]-\mathcal{B}_{0}[p_{\star}](t)+\mathcal{B}_{1}[p_{1}]+O(e^{-2a(1-\rho)\sqrt{|\ln(T-t)|}})
\end{align}
where thanks to Lemma \ref{SECMASSprofile} if $t\ge-\frac{T}{2}$ we have
\begin{align*}
	f_{\star}(t)=\frac{1}{4\pi}[4\pi \int_{-\varepsilon(T)}^{t-\lambda_{\star}^{2}}\frac{p_{\star}(s)}{t-s}ds-4\pi(\gamma+1-\ln 4 )p_{\star}(t)-4\pi \int_{-\varepsilon(T)}^{T}\frac{p_{\star}(s)}{T-s}ds-\mathcal{B}[p_{\star}]]=O(\frac{e^{-2a\sqrt{|\ln(T-t)|}}}{|\ln(T-t)|^{1/4}}),
\end{align*}
and, if $\mathcal{B}$ is the operator in \eqref{B}, we denoted
\begin{align*}
	\mathcal{B}_{0}[p]=&-\frac{\mathcal{B}[p]}{2\pi},\nonumber\\
	\mathcal{B}_{1}[p_{1}]=&\int_{t-\lambda_{\star}^{2}}^{t-\lambda^{2}}\frac{p_{1}(s)}{t-s}ds+p_{\star}(t)[\ln(1+\frac{\int_{t}^{T}p_{1}(s)ds}{\int_{t}^{T}p_{\star}(s)ds})-\frac{\int_{t}^{T}p_{1}(s)ds}{\int_{t}^{T}p_{\star}(s)ds}]+\\
	&+p_{\star}(t)(\frac{\int_{t}^{T}p_{1}(s)ds}{\int_{t}^{T}p_{\star}(s)ds}-\frac{1}{T-t}\frac{\int_{t}^{T}p_{1}(s)ds}{p_{\star}(t)}).
\end{align*}
Now we want to expand the first integral in the left-hand side of \eqref{NewEq}. 
We take $N(t)=|\ln(T-t)|^{1/4}$ and we introduce a parameter $0<\sigma<1$. As we will see in Lemma \ref{EstA} the choice of this parameter will be crucial in establishing that this map gives a contraction. For any $0<\sigma<1$, we can write
\begin{align*}
	-\int_{-\varepsilon(T)}^{t-\lambda_{\star}^{2}}\frac{p_{1}(s)}{t-s}ds=&-\int_{-\varepsilon(T)}^{t-N(t)(T-t)}(...)ds-\int_{t-N(t)(T-t)}^{t-(T-t)e^{-2a\sigma\sqrt{|\ln(T-t)|}}}(...)ds-\\
	&-\int_{t-(T-t)e^{-2a\sigma\sqrt{|\ln(T-t)|}}}^{t-\lambda_{\star}^{2}}(...)ds
\end{align*}
\begin{align*}
	=&-\int_{-\varepsilon(T)}^{T}\frac{p_{1}(s)}{T-s}ds+\int_{t}^{T}\frac{p_{1}(s)}{T-s}ds-2a\sqrt{|\ln(T-t)|}p_{1}(t)+\ln c_{\star}p_{1}(t)+\\
	&+\int_{t-N(t)(T-t)}^{t}\frac{p_{1}(s)-p_{1}(t)}{T-s}ds+\int_{t-N(t)(T-t)}^{t-(T-t)e^{-2a\sigma\sqrt{|\ln(T-t)|}}}\frac{p_{1}(t)-p_{1}(s)}{t-s}ds+\\
	&+\ln(1+\frac{1}{N(t)})p_{1}(t)-(T-t)\int_{-\varepsilon(T)}^{t-N(t)(T-t)}\frac{p_{1}(s)}{(t-s)(T-s)}ds+\\
	&+\ln(\frac{\lambda_{\star}^{2}}{c_{\star}(T-t)e^{-2a\sqrt{|\ln(T-t)|}}})p_{1}(t)+\int_{t-(T-t)e^{-2a\sigma\sqrt{|\ln(T-t)|}}}^{t-\lambda_{\star}^{2}}\frac{p_{1}(t)-p_{1}(s)}{t-s}ds.
\end{align*} 
By writing
\begin{align*}
	\frac{1}{T-t}\int_{t}^{T}p_{1}(s)ds=p_{1}(t)-\frac{1}{T-t}\int_{t}^{T}(p_{1}(t)-p_{1}(s))ds
\end{align*}
observing that $\gamma+2+\ln(\frac{c_{\star}}{4})=0$ we have that \eqref{NewEq}, and then \eqref{EquationNOnloca}, is equivalent to finding a $p_{1}$, satisfying \eqref{LambAss1}, \eqref{LambAss2}, \eqref{RegLambAss}, defined on $(-\varepsilon(T),T)$ such that for any $t\ge0$ we have
\begin{align}\label{realfinalEqt}
	&\int_{t}^{T}\frac{p_{1}(s)}{T-s}ds-2a\sqrt{|\ln(T-t)|}p_{1}(t)=f_{\star}(t)+\mathcal{A}_{\sigma}[p_{1}]+\mathcal{B}_{0}[p_{\star}+p_{1}]-\mathcal{B}_{0}[p_{\star}]+\mathcal{B}_{1}[p_{1}] +\nonumber\\
	&\hspace{3cm}-\int_{t-(T-t)e^{-2a\sigma\sqrt{|\ln(T-t)|}}}^{t-\lambda_{\star}^{2}}\frac{p_{1}(t)-p_{1}(s)}{t-s}ds +O(e^{-(2-\rho)\sqrt{|\ln(T-t)|}})
\end{align}
where we defined
\begin{align}\label{Asigma}
	\mathcal{A}_{\sigma}[p_{1}]=&-\int_{t-N(t)(T-t)}^{t}\frac{p_{1}(s)-p_{1}(t)}{T-s}ds-\int_{t-N(t)(T-t)}^{t-(T-t)e^{-2a\sigma\sqrt{|\ln(T-t)|}}}\frac{p_{1}(t)-p_{1}(s)}{t-s}ds+\nonumber\\
	&+\frac{1}{T-t}\int_{t}^{T}(p_{1}(t)-p_{1}(s))ds,
\end{align}
\begin{align}\label{B0}
	\mathcal{B}_{0}[p]=-\frac{\mathcal{B}[p]}{2\pi}
\end{align}
with $\mathcal{B}$ defined as in Corollary \ref{ExpansionMass} and
\begin{align}\label{B1}
	\mathcal{B}_{1}[p_{1}]=&\int_{t-\lambda_{\star}^{2}}^{t-\lambda^{2}}\frac{p_{1}(s)}{t-s}ds+p_{\star}(t)[\ln(1+\frac{\int_{t}^{T}p_{1}(s)ds}{\int_{t}^{T}p_{\star}(s)ds})-\frac{\int_{t}^{T}p_{1}(s)ds}{\int_{t}^{T}p_{\star}(s)ds}]+p_{\star}(\frac{\int_{t}^{T}p_{1}(s)ds}{\int_{t}^{T}p_{\star}(s)ds}-\frac{1}{T-t}\frac{\int_{t}^{T}p_{1}(s)ds}{p_{\star}(t)})\nonumber\\
	&-\ln(1+\frac{1}{N(t)})p_{1}(t)+(T-t)\int_{-\varepsilon(T)}^{t-N(t)(T-t)}\frac{p_{1}(s)}{(t-s)(T-s)}ds- \nonumber\\
	&-\ln(\frac{\lambda_{\star}^{2}}{c_{\star}(T-t)e^{-2a\sqrt{|\ln(T-t)|}}})p_{1}(t).
\end{align}
Now we want to find a $p_ {1}$ defined in $(-\varepsilon(T),T)$ that solves a modified version of \eqref{realfinalEqt}, namely
\begin{align}\label{FinalEq}
	\int_{t}^{T}\frac{p_{1}(s)}{T-s}ds-2a\sqrt{|\ln(T-t)|}p_{1}(t)=[f_{\star}(t)+\mathcal{A}_{\sigma}[p_{1}]+\mathcal{B}_{0}[p_{\star}+p_{1}]-\mathcal{B}_{0}[p_{\star}]+\mathcal{B}_{1}[p_{1}]]\eta (\frac{t}{T})
\end{align}
where we introduced the cutoff
\begin{align}\label{eta}
	\eta(s)=\begin{cases}
		0 \ \ \ s\le -\frac{1}{2}\\
		1 \ \ \ s\ge 0.
	\end{cases}
\end{align}
\noindent Notice that estimates in Corollary \ref{ExpansionMass} and Lemma \ref{SECMASSprofile} hold in the interval $(-\frac{T}{2},T)$. It is important to underline that solving \eqref{FinalEq} for $p_{1}$ since the equation was \eqref{realfinalEqt} we are ignoring the fundamental term
\begin{align*}
	\int_{t-(T-t)e^{-2a\sigma\sqrt{|\ln(T-t)|}}}^{t-\lambda_{\star}^{2}}\frac{p_{1}(t)-p_{1}(s)}{t-s}ds
\end{align*}
and the operator $\mathcal{E}_{0}$ of Corollary \ref{ExpansionMass} that are small only if you assume some extra regularity. For that reason we will also find some estimates for $\dot{p}_{1}$. We remark that the presence of the cut-off in \eqref{FinalEq} will give us a slightly worse control of $\dot{p}_{1}$ that one could expect by looking at the estimate for $p_{1}$. This discussion is postponed to the proof of Lemma \ref{SolutionNotRegular}, where we we will show that\newpage
\begin{align}\label{sigmaRemainder}
	\int_{t-(T-t)e^{-2a\sigma\sqrt{|\ln(T-t)|}}}^{t-\lambda_{\star}^{2}}\frac{|p_{1}(t)-p_{1}(s)|}{t-s}ds+|\mathcal{E}_{0}[p](t)|=O(e^{-2a(1+\sigma)\sqrt{|\ln(T-t)|}}).
\end{align}
We notice that \eqref{sigmaRemainder} shows the crucial role that the parameter $\sigma$ will play. Our goal will be to take $\sigma$ as large as possible. Unfortunately to get a solution we can only take $\sigma<\frac{1}{2}$ that will not immediately give the proof of Proposition \ref{prop-lambda0}. We will show in Section \ref{manageerrorSec} that a last iteration will erase this error and will finally give Proposition \ref{prop-lambda0}.
\subsection{Solution to the linearized equation}\label{SolutionLinEq}
In order to solve equation \eqref{FinalEq} the first step is to introduce the resolvent operator that gives a solution for the following nonlocal differential equation
\begin{align}\label{ODE}
	\int_{t}^{T}\frac{g(s)}{T-s}ds-2a\sqrt{|\ln(T-t)|}g(t)=f(t)
\end{align}
where $f(t)=O(\frac{e^{-2a\gamma\sqrt{|\ln(T-t)|}}}{|\ln(T-t)|^{m}})$. Equation \eqref{ODE} can be easily solved introducing the function
\begin{align*}
	G(t)=\int_{t}^{T}\frac{g(s)}{T-s}ds.
\end{align*}
We get
\begin{align*}
	&G(t)+2a\sqrt{|\ln(T-t)|}(T-t)G'(t)=f(t)\iff \frac{d}{dt}(e^{A(t)}G(t))=\frac{e^{A(t)}f(t)}{2a\sqrt{|\ln(T-t)|}(T-t)}
\end{align*}
where $A(t)=\frac{1}{a}\sqrt{|\ln(T-t)|}$.
Recalling that thanks to Lemma \ref{SECMASSprofile} we have $a=\frac{\sqrt{2}}{2}$, after some trivial computation we can introduce the \emph{resolvent operator}
\begin{align}\label{resolvant}
	T_{0}^{(\gamma)}[f](t):=&\frac{e^{-2a\sqrt{|\ln(T-t)|}}}{2a\sqrt{|\ln(T-t)|}}\int_{-\varepsilon(T)}^{t}\frac{e^{2a\sqrt{|\ln(T-s)}}f(s)}{2a\sqrt{|\ln(T-s)|}(T-s)}ds-\frac{f(t)}{2a\sqrt{|\ln(T-t)|}}+\nonumber\\ 
	&+c_{\gamma} \frac{e^ {-2a\sqrt{|\ln(T-t)|}}}{2a\sqrt{|\ln(T-t)|}}.
\end{align}
where we take
\begin{align*}
	c_{\gamma}=\begin{cases}
		0 \ \ \ \ \text{if }\gamma=1,\\[3pt]
		-\frac{1}{2a}\int_{\varepsilon(T)}^{T}\frac{e^{2a\sqrt{|\ln(T-s)|}}}{\sqrt{|\ln(T-s)|}}\frac{f(s)}{T-s}ds \ \ \ \ \text{if } \gamma>1.
	\end{cases}
\end{align*}
Recalling the definition of the norm \eqref{normP}, by construction, for any $f(t)$ such that $\|f\|_{\gamma,m}<\infty$ the operator $T_{0}^{\gamma}[f]$  gives a solution of \eqref{ODE}. The next Lemma wants to underline an important difference with \cite{DdPW} that we will comment later.
\begin{lemma}\label{T0est}
	Let $T$, $\varepsilon(T)$ sufficiently small. For any $f\in C[(-\varepsilon(T),T); \mathbb{R}]$ such that $\|f\|_{\gamma,m}<\infty$ we have
	\begin{align}
		&\|T_{0}^{(\gamma)}[f]\|_{\gamma,m}\le C \|f\|_{\gamma,m} \ \ \ \text{if }\gamma=1\\
		&\|T_{0}^{(\gamma)}[f]\|_{\gamma,m+1/2}\le \frac{C}{\gamma-1}\|f\|_{\gamma,m} \ \ \ \ \text{if } \gamma>1
	\end{align}
\end{lemma}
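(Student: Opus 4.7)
The approach is to bound the three ingredients of $T_0^{(\gamma)}[f]$ separately, and the qualitative difference between $\gamma=1$ and $\gamma>1$ comes entirely from how the main integral behaves near $s=T$. The pointwise term $-f(t)/(2a\sqrt{|\ln(T-t)|})$ is trivially bounded: since $|f(t)|\le\|f\|_{\gamma,m}|\ln(T-t)|^{-m}e^{-2a\gamma\sqrt{|\ln(T-t)|}}$, it contributes at most $(2a)^{-1}\|f\|_{\gamma,m}$ to $\|\cdot\|_{\gamma,m+1/2}$. The real work is therefore to estimate the integral (combined with $c_\gamma$ when $\gamma>1$) against the exponentially decaying prefactor $e^{-2a\sqrt{|\ln(T-t)|}}/[2a\sqrt{|\ln(T-t)|}]$.

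For $\gamma=1$, $c_\gamma=0$, so inserting the pointwise bound on $|f(s)|$ cancels the exponential growth inside the integral and reduces the problem to estimating
\[
\frac{e^{-2a\sqrt{|\ln(T-t)|}}}{2a\sqrt{|\ln(T-t)|}} \int_{-\varepsilon(T)}^t \frac{ds}{2a\,|\ln(T-s)|^{m+1/2}(T-s)}.
\]
The substitution $u=\sqrt{|\ln(T-s)|}$ converts $ds/[(T-s)\sqrt{|\ln(T-s)|}]$ into $2\,du$, reducing the integral to $\int u^{-2m}\,du\lesssim |\ln(T-t)|^{(1-2m)/2}$ (the subcases $m=1/2$ and $m>1/2$ produce respectively a logarithm and a bounded integral, all consistent with the claim). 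Multiplying by the prefactor yields a contribution of size $C\|f\|_{1,m}|\ln(T-t)|^{-m}e^{-2a\sqrt{|\ln(T-t)|}}$, which is exactly the $\|\cdot\|_{1,m}$ bound. The key gain of a half power of $\sqrt{|\ln(T-t)|}$ comes from the indefinite integration itself and is responsible for the absence of any $(\gamma-1)^{-1}$ factor here.

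For $\gamma>1$ the same substitution diverges at $s=T$ and loses the gain; instead, I would first combine the main integral with $c_\gamma$ (reading the lower limit $\varepsilon(T)$ in the statement of $c_\gamma$ as $-\varepsilon(T)$). The sum telescopes to the single integral from $t$ to $T$,
\[
-\frac{e^{-2a\sqrt{|\ln(T-t)|}}}{4a^2\sqrt{|\ln(T-t)|}} \int_t^T \frac{e^{2a\sqrt{|\ln(T-s)|}}f(s)}{\sqrt{|\ln(T-s)|}(T-s)}\,ds.
\]
Plugging in the bound for $|f|$ leaves the integrand damped by $e^{-2a(\gamma-1)\sqrt{|\ln(T-s)|}}$, so the substitution $u=2a(\gamma-1)\sqrt{|\ln(T-s)|}$ turns the inner integral into an incomplete Gamma tail $\int_{2a(\gamma-1)\sqrt{|\ln(T-t)|}}^{\infty}u^{-2m}e^{-u}\,du$ weighted by the Jacobian $(a(\gamma-1))^{-1}$. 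The Laplace-type asymptotic $\int_A^\infty u^{-2m}e^{-u}\,du\lesssim A^{-2m}e^{-A}$ (valid uniformly for $A$ large) produces a factor $\frac{C}{\gamma-1}|\ln(T-t)|^{-m}e^{-2a(\gamma-1)\sqrt{|\ln(T-t)|}}$, and reinserting the prefactor yields exactly $\|T_0^{(\gamma)}[f]\|_{\gamma,m+1/2}\le C(\gamma-1)^{-1}\|f\|_{\gamma,m}$. The $(\gamma-1)^{-1}$ loss is precisely the cost of the Jacobian, while the extra $|\ln(T-t)|^{-1/2}$ penalty compared to the $\gamma=1$ case reflects the absence of the indefinite-integral gain.

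The only real obstacle is bookkeeping: verifying that the $s=-\varepsilon(T)$ endpoint is negligible (absorbed by $c_\gamma$ in the $\gamma>1$ regime, and a smaller boundary constant in the $\gamma=1$ regime because $e^{-2a\sqrt{|\ln(T+\varepsilon(T))|}}\ll e^{-2a\sqrt{|\ln(T-t)|}}$ for $t\ge 0$), and checking that the Laplace asymptotic is uniform in $m$ on the allowed range once $T$, $\varepsilon(T)$ are small enough that the lower limit of integration is large. With these checks in hand, both announced bounds are immediate from the computations above.
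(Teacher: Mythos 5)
Your decomposition is the same one the paper uses: peel off the local term $-f(t)/(2a\sqrt{|\ln(T-t)|})$, and for $\gamma>1$ absorb $c_{\gamma}$ into the convolution integral (you correctly read the lower limit of $c_{\gamma}$ as $-\varepsilon(T)$), so that it telescopes to $\int_t^T$ and the surplus decay $e^{-2a(\gamma-1)\sqrt{|\ln(T-s)|}}$ becomes available; the $\gamma=1$ computation via the substitution $u=\sqrt{|\ln(T-s)|}$ is also what the paper does implicitly. Two caveats are worth flagging. First, your parenthetical claim that the subcases $m=1/2$ and $m>1/2$ remain ``consistent with the claim'' is not correct: for $m>1/2$ the primitive $\int u^{-2m}\,du$ over $[\sqrt{|\ln(T+\varepsilon(T))|},\sqrt{|\ln(T-t)|}]$ is dominated by the fixed lower endpoint (and for $m=1/2$ gives a $\log|\ln(T-t)|$), so after multiplying by the prefactor $e^{-2a\sqrt{|\ln(T-t)|}}/\sqrt{|\ln(T-t)|}$ one only obtains $|\ln(T-t)|^{-1/2}$ up to logarithms, which is strictly weaker than the required $|\ln(T-t)|^{-m}$ once $m\ge 1/2$. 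The $\gamma=1$ bound genuinely needs $m<1/2$; that is the range where the paper invokes it ($m=1/4$), and the paper's terse proof does not address $m\ge 1/2$ either, but one should not assert the contrary. Second, for $\gamma>1$ your Laplace-tail estimate $\int_A^\infty u^{-2m}e^{-u}\,du\lesssim A^{-2m}e^{-A}$ requires $A=2a(\gamma-1)\sqrt{|\ln(T-t)|}$ to be bounded below, which forces $T$ small in a $\gamma$-dependent way; the simpler and uniform argument is to pull out $|\ln(T-s)|^{-m}\le|\ln(T-t)|^{-m}$ (valid for $s\ge t$ and $m\ge 0$) and note that what is left is the exact differential $-\frac{1}{a(\gamma-1)}\,d\bigl(e^{-2a(\gamma-1)\sqrt{|\ln(T-s)|}}\bigr)$, integrating to $\frac{1}{a(\gamma-1)}e^{-2a(\gamma-1)\sqrt{|\ln(T-t)|}}$ with no smallness or asymptotic condition. (Also, a minor slip in your endpoint remark: for $t\ge 0$ one has $e^{-2a\sqrt{|\ln(T+\varepsilon(T))|}}\gg e^{-2a\sqrt{|\ln(T-t)|}}$, not the other way round, though this does not enter the main estimate.)
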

\begin{proof}[Proof of Lemma \ref{T0est}] 
	If $\gamma=1$ we have
	\begin{align*}
		|T_{0}^{(\gamma)}[f]|&\le \|f\|_{\gamma,m}\frac{e^{-2a\sqrt{|\ln(T-t)|}}}{2a\sqrt{|\ln(T-t)|}}\int_{-\varepsilon(T)}^{t}\frac{1}{2a|\ln(T-s)|^{m+1/2}}\frac{ds}{T-s}+\|f\|_{\gamma,m}\frac{e^{-2a\sqrt{|\ln(T-t)|}}}{|\ln(T-t)|^{m}\sqrt{|\ln(T-t)|}}\\
		&\le C \|f\|_{\gamma,m} \frac{e^{-2a\sqrt{|\ln(T-t)|}}}{|\ln(T-t)|^{m}}.
	\end{align*}
	If $\gamma>1$ we get
	\begin{align*}
		|T_{0}^{(\gamma)}[f]|&\le \|f\|_{\gamma,m}\big[\frac{e^{-2a\sqrt{|\ln(T-t)|}}}{2a\sqrt{|\ln(T-t)|}}|\int_{t}^{T}\frac{e^{-2a(\gamma-1)\sqrt{|\ln(T-s)|}}}{2a|\ln(T-s)|^{m+1/2}}\frac{ds}{T-s}+ \frac{e^{-2a\gamma\sqrt{|\ln(T-t)|}}}{|\ln(T-t)|^{m+1/2}}\big] \le \\
		&\le \|f\|_{\gamma,m} \frac{1}{\gamma-1}\frac{1}{\sqrt{|\ln(T-t)|}} \frac{e^{-2a\gamma\sqrt{|\ln(T-t)|}}}{|\ln(T-t)|^{m}}.
	\end{align*}
\end{proof}
\noindent The 
dichotomy presented in Lemma \ref{T0est} is a new challenge with respect to \cite{DdPW}. When $\gamma=1$, indeed, the resolvent operator does not naturally produce an extra smallness that that has been used in \cite{DdPW} to find a solution by means of a fixed point argument. To overcome this difficulty the key ingredient will be to observe that that the resolvent operator \eqref{resolvant} is made by two terms and at main order it is differentiable. To underline this fact we introduce the following notation
\begin{align}\label{splitResolvant}
	T_{0}^{1}[f](t)=T_{0,\text{reg}}^{1}[f]-\frac{f(t)}{2a\sqrt{|\ln(T-t)|}}
\end{align} 
where 
\begin{align*}
	T_{0,\text{reg}}^{1}[f]=\frac{e^{-2a\sqrt{|\ln(T-t)|}}}{2a\sqrt{|\ln(T-t)|}}\int_{-\varepsilon(T)}^{t}\frac{e^{2a\sqrt{|\ln(T-s)}}f(s)}{2a\sqrt{|\ln(T-s)|}(T-s)}ds.
\end{align*}
We can rephrase once again our problem: we want to find a function $h$ such that
\begin{align}\label{RealFinalEq}
	h(t)=[f_{\star}(t)+\mathcal{A}_{\sigma}[T_{0}[h]]+\mathcal{B}_{0}[p_{\star}+T_{0}[h]]-\mathcal{B}_{0}[p_{\star}]+\mathcal{B}_{1}[T_{0}[h]]]\eta (\frac{t}{T}) \ \ t\in[-\varepsilon(T),T].
\end{align}
In fact, it is immediate to observe that if we had the solution of \eqref{RealFinalEq}, then $T_{0}[h]$ would solve \eqref{FinalEq}. In the proof of the following two Lemmas we will focus on some more significative terms in $\mathcal{A}_{\sigma}$ and $\mathcal{B}_{1}$, namely
\begin{align}\label{Asigmasimpli}
	\mathcal{A}_{\sigma}[p]=\int_{t-N(t)(T-t)}^{t}\frac{p(t)-p(s)}{T-s}-\int_{t-N(t)(T-t)}^{t-(T-t)e^{-2a\sigma\sqrt{|\ln(T-t)|}}}\frac{p(t)-p(s)}{t-s}ds+(...)
\end{align}
\begin{align}\label{B1simpli}
	\mathcal{B}_{1}[p]=-(T-t)\int_{-\varepsilon(T)}^{t-N(t)(T-t)}\frac{p(s)}{(t-s)(T-s)}ds+p_{\star}(t)[\ln(1+\frac{\int_{t}^{T}p(s)ds}{\int_{t}^{T}p_{\star}(s)ds})-\frac{\int_{t}^{T}p(s)ds}{\int_{t}^{T}p_{\star}(s)ds}]+(...)
\end{align}
and we observe that the remaining part of the operators $\mathcal{A}_{\sigma}$ and $\mathcal{B}_{1}$ can be treated analogously.
The next Lemmas \ref{EstB} and \ref{EstA} will be the last two missing ingredients to get desired improvement of $p_{\star}$. In particular Lemma \ref{EstA} will be extremely important since it will tell us that in order to get a solution by a fixed point argument a bound for the parameter $\sigma$ must be satisfied. This will have an evident effect on the size of the final main error \eqref{sigmaRemainder}. In the following results we also need to give a bound for the norm of the solution $h$. We will write $\|h\|_{\gamma,m}\le M$. We anticipate that later we will take $M$ large (but independent of $T$ and $\varepsilon(T)$).
\begin{lemma}\label{EstB}
	Let $h_{1},h_{2}\in C[(-\varepsilon(T),T);\mathbb{R}]$ such that $\|h_{i}\|_{\gamma,m}\le M$ for some $M>0$. Then, for any $t\in(-\frac{T}{2},T)$ we have
	\begin{align*}
		|\mathcal{B}_{1}[T_{0}[h_{1}]-\mathcal{B}_{1}[T_{0}[h_{2}]]]|&\le C\|h_{1}-h_{2}\|_{\gamma,m} \frac{e^{-2a\gamma\sqrt{|\ln(T-t)|}}}{|\ln(T-t)|^{m}}\cdot \\
		&\cdot\big( \frac{1}{|\ln(T+\varepsilon(T))|^{1/4}}+\frac{M/|\ln(T+\varepsilon(T))|^{m}}{1-M/|\ln(T+\varepsilon(T))|^{m}}\big)\begin{cases}
			1 \ \ \ \text{if }\gamma=1\\
			\frac{1}{\sqrt{|\ln(T-t)|}} \ \ \text{if } \gamma>1.
		\end{cases}
	\end{align*}
\end{lemma}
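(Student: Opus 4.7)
\medskip
\noindent
\textbf{Proof proposal.}
The plan is to split $\mathcal{B}_{1}[p]$ into its linear and nonlinear components and bound each separately. Writing $\mathcal{B}_{1}[p]=\mathcal{B}_{1}^{\text{lin}}[p]+\mathcal{B}_{1}^{\text{nl}}[p]$, I would group into $\mathcal{B}_{1}^{\text{lin}}$ all the terms that depend linearly on $p$ (the tail integral $-(T-t)\int_{-\varepsilon(T)}^{t-N(t)(T-t)}\frac{p(s)\,ds}{(t-s)(T-s)}$, together with the pointwise terms $-\ln(1+1/N(t))\,p(t)$, the $\ln(\lambda_{\star}^{2}/[c_{\star}(T-t)e^{-2a\sqrt{|\ln(T-t)|}}])\,p(t)$ correction, and the $p_{\star}(t)\bigl(\int_{t}^{T}p/\int_{t}^{T}p_{\star}-\frac{1}{T-t}\frac{\int_{t}^{T}p}{p_{\star}(t)}\bigr)$ piece, each of which is linear in $p$). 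Into $\mathcal{B}_{1}^{\text{nl}}$ I would put the single genuinely nonlinear term $p_{\star}(t)\bigl[\ln(1+x)-x\bigr]$ with $x=\int_{t}^{T}p(s)\,ds/\int_{t}^{T}p_{\star}(s)\,ds$. Both bounds are then obtained by applying Lemma~\ref{T0est} to $p=T_{0}^{(\gamma)}[h]$ to convert $\|\cdot\|_{\gamma,m}$-control on $h$ into pointwise control on $p$ of the form $|p(s)|\le C\|h\|_{\gamma,m}\,e^{-2a\gamma\sqrt{|\ln(T-s)|}}|\ln(T-s)|^{-m}$ (with an extra $|\ln(T-s)|^{-1/2}$ when $\gamma>1$), and similarly for $T_{0}^{(\gamma)}[h_{1}-h_{2}]$.

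For the linear piece, the key observation is that on the integration domain $s\le t-N(t)(T-t)$ one has $t-s\ge N(t)(T-t)$. Substituting $s=t-r(T-t)$ turns the dominant term into
\[
(T-t)\!\!\int_{-\varepsilon(T)}^{t-N(t)(T-t)}\!\frac{|T_{0}[h_{1}-h_{2}](s)|\,ds}{(t-s)(T-s)}
\le C\|h_{1}-h_{2}\|_{\gamma,m}\frac{e^{-2a\gamma\sqrt{|\ln(T-t)|}}}{|\ln(T-t)|^{m}}\!\int_{N(t)}^{\infty}\!\frac{dr}{r(r+1)},
\]
where I am using that $|\ln((r+1)(T-t))|\sim|\ln(T-t)|$ for $r$ polynomial in $|\ln(T-t)|$ and bounding the tail of $r$ by the exponential decay of $p$. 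The last integral equals $\ln(1+1/N(t))\le C/N(t)=C|\ln(T-t)|^{-1/4}$, which is $\le C|\ln(T+\varepsilon(T))|^{-1/4}$ since $|\ln(T-t)|\ge|\ln(T+\varepsilon(T))|$. In the case $\gamma>1$ Lemma~\ref{T0est} supplies an extra $|\ln(T-t)|^{-1/2}$, yielding the branching factor in the statement. The remaining linear pieces of $\mathcal{B}_{1}^{\text{lin}}$ (the $\ln(1+1/N(t))\,p(t)$ term, the $\ln(\lambda_{\star}^{2}/\ldots)\,p(t)$ logarithm which is $O(1/|\ln(T-t)|^{1/4})$ by the expansion of $\lambda_{\star}^{2}$ from Lemma~\ref{SECMASSprofile}, and the Taylor residue piece comparing $\int_{t}^{T}/\int_{t}^{T}p_{\star}$ to $(T-t)^{-1}\int_{t}^{T}/p_{\star}(t)$) are treated the same way and give contributions of the same size or smaller.

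For the nonlinear piece, set $f(x)=\ln(1+x)-x$, so $f'(x)=-x/(1+x)$ and $|f(x_{1})-f(x_{2})|\le|x_{1}-x_{2}|\,\frac{\max_{i}|x_{i}|}{1-\max_{i}|x_{i}|}$. Writing $x_{i}=\int_{t}^{T}T_{0}[h_{i}]/\int_{t}^{T}p_{\star}$, I would use $|\int_{t}^{T}p_{\star}(s)ds|\ge c(T-t)e^{-2a\sqrt{|\ln(T-t)|}}$ together with the pointwise bound on $T_{0}[h_{i}]$ to estimate
\[
|x_{i}|\le C\|h_{i}\|_{\gamma,m}\,\frac{e^{-2a(\gamma-1)\sqrt{|\ln(T-t)|}}}{|\ln(T-t)|^{m}}\le \frac{CM}{|\ln(T+\varepsilon(T))|^{m}},
\]
and symmetrically $|x_{1}-x_{2}|\le C\|h_{1}-h_{2}\|_{\gamma,m}\,e^{-2a(\gamma-1)\sqrt{|\ln(T-t)|}}|\ln(T-t)|^{-m}$. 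Multiplying by $|p_{\star}(t)|=\tfrac{c_{\star}}{2}e^{-2a\sqrt{|\ln(T-t)|}}$ gives precisely the second summand of the target bound, with the correct $\gamma$-dependent prefactor (for $\gamma>1$ the extra $|\ln(T-t)|^{-1/2}$ from Lemma~\ref{T0est} is inherited).

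The main obstacle I foresee is not conceptual but bookkeeping: one must verify that the additional logarithmic factors generated when translating between $|\ln(T-t)|$ and $|\ln(T+\varepsilon(T))|$ are always in the favourable direction, and that both the $\gamma=1$ (no gain from the resolvent) and $\gamma>1$ (gain of $(\gamma-1)^{-1}|\ln(T-t)|^{-1/2}$) regimes give the claimed unified branching factor. Once the tail integral $\int_{N(t)}^{\infty}dr/(r(r+1))\sim 1/N(t)$ is identified as the source of the $|\ln(T+\varepsilon(T))|^{-1/4}$ factor and the quadratic structure of $\ln(1+x)-x$ as the source of the $M|\ln(T+\varepsilon(T))|^{-m}/(1-M|\ln(T+\varepsilon(T))|^{-m})$ factor, the remaining estimates are straightforward applications of Lemma~\ref{T0est} and the substitution $s=t-r(T-t)$.
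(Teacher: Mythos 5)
Your proposal matches the paper's proof in both structure and substance: the paper also splits $\mathcal{B}_{1}$ into the linear tail integral (bounded, after the substitution $s=t-r(T-t)$, by $C\|h\|_{\gamma,m}\,e^{-2a\gamma\sqrt{|\ln(T-t)|}}|\ln(T-t)|^{-m-1/4}$, exactly as you compute) and the nonlinear $\ln(1+x)-x$ term, whose Lipschitz bound the paper dispatches in one line by invoking $\|h_i\|\le M$ — where you have filled in exactly the intended algebra. The only cosmetic difference is that you spell out the $\gamma=1$ vs.\ $\gamma>1$ branching and the $f'(x)=-x/(1+x)$ Lipschitz constant explicitly, whereas the paper leaves these implicit.
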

\begin{proof}
	We will focus on the $\gamma=1$ case since, after recalling Lemma \ref{T0est}, the proof when $\gamma>1$ is identical (we observe that it is not necessary to keep track of the dependence on the exponent $\gamma$ that can be simply absorbed by the constant $C$). The first term in \eqref{B1simpli} is linear, then we can simply observe that if $t\ge-\frac{T}{2}$ we have
	\begin{align*}
		|(T-t)\int_{-\varepsilon(T)}^{t-N(t)(T-t)}\frac{T_{0}[h](s)}{(t-s)(T-s)}ds|\le C \|h\|_{\gamma,m} \frac{e^{-2a\gamma\sqrt{|\ln(T-t)}}}{|\ln(T-t)|^{m}|\ln(T-t)|^{1/4}}.
	\end{align*}
	Recalling that $\|h_{i}\|\le M$, independent of $\varepsilon(T)$, $T$, we can easily find an estimate for the remaining term in $\mathcal{B}_{1}$.
\end{proof}
\begin{lemma}\label{EstA}
	Let $h_{1},h_{2}\in \mathcal{C}([-\varepsilon(T),T];\mathbb{R})$ such that $\|h_{i}\|\le M$ for some $M>0$. Then, for any $t\in(-\frac{T}{2},T)$ there exists an $\omega_{T}$ such that
	\begin{align*}
		&|\mathcal{A}_{\sigma}[T_{0}[h_{1}]]-\mathcal{A}_{\sigma}[T_{0}[h_{2}]]| \le \|h_{1}-h_{2}\|_{\gamma,m}\frac{e^{-2a\gamma\sqrt{|\ln(T-t)|}}}{|\ln(T-t)|^{m}} \cdot \\
		&\hspace{5cm}\cdot\big[\frac{C}{|\ln(T+\varepsilon(T))|^{1/4}}+(2\sigma+\omega_{T})\big]\begin{cases}
			1 \ \ \ \text{if } \gamma =1 \\
			\frac{1}{\gamma-1} \frac{1}{\sqrt{|\ln(T-t)|}} \ \ \ \text{if } \gamma>1.
		\end{cases} 
	\end{align*}
Moreover the constant $\omega_{T}$ can be made arbitrarly small by taking $T,\varepsilon(T)$ sufficiently small.
\end{lemma}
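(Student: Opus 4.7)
\textbf{Proof plan for Lemma \ref{EstA}.} The strategy is to exploit linearity of $\mathcal{A}_\sigma$ and of $T_0$, and to use the decomposition \eqref{splitResolvant} that separates the ``regular'' part of $T_0[h]$ from its singular $-h(t)/(2a\sqrt{|\ln(T-t)|})$ piece. Writing $h=h_1-h_2$, linearity reduces the problem to bounding $|\mathcal{A}_\sigma[T_0[h]]|$ in terms of $\|h\|_{\gamma,m}$. The two pieces play very different roles: the regular part inherits enough smoothness (via its derivative) to produce only $o(1)$ contributions absorbed in $\omega_T$, while the singular part generates the delicate $2\sigma$ factor that forces $\sigma<1/2$ in the subsequent contraction.

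For the regular piece $T_{0,\mathrm{reg}}[h]=F(t)I(t)$ with $F(t)=\frac{e^{-2a\sqrt{|\ln(T-t)|}}}{2a\sqrt{|\ln(T-t)|}}$ and $I(t)=\int_{-\varepsilon(T)}^t\frac{e^{2a\sqrt{|\ln(T-s)|}}h(s)}{2a\sqrt{|\ln(T-s)|}(T-s)}\,ds$, I would differentiate and combine the identity $F'(t)=-\frac{a F(t)}{(T-t)\sqrt{|\ln(T-t)|}}(1+O(|\ln(T-t)|^{-1}))$ with the size bound from Lemma \ref{T0est} to get
\begin{align*}
|(T_{0,\mathrm{reg}}[h])'(t)|\le \frac{C\|h\|_{\gamma,m}}{(T-t)\sqrt{|\ln(T-t)|}}\,\frac{e^{-2a\gamma\sqrt{|\ln(T-t)|}}}{|\ln(T-t)|^m}.
\end{align*}
Since for $s$ in the windows appearing in $\mathcal{A}_\sigma$ one has $T-s\sim T-t$ (up to the factor $1+N(t)=1+|\ln(T-t)|^{1/4}$), integrating this derivative estimate yields
\begin{align*}
|T_{0,\mathrm{reg}}[h](t)-T_{0,\mathrm{reg}}[h](s)|\le \frac{C\|h\|_{\gamma,m}}{\sqrt{|\ln(T-t)|}}\,\frac{e^{-2a\gamma\sqrt{|\ln(T-t)|}}}{|\ln(T-t)|^m}\,\ln\!\Big(1+\tfrac{t-s}{T-t}\Big).
\end{align*}
Plugging this into each of the three integrals that make up $\mathcal{A}_\sigma$, the worst contribution is controlled by $\int_0^{N(t)}\frac{\ln(1+v)}{v}\,dv=O((\ln N(t))^2)$ divided by $\sqrt{|\ln(T-t)|}$, hence $o(1)$ and absorbed in $\omega_T$.

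The crux is the singular piece $p_s(t):=-h(t)/(2a\sqrt{|\ln(T-t)|})$, for which $h$ has no pointwise modulus. Here I would simply use the triangle inequality $|p_s(t)-p_s(s)|\le |p_s(t)|+|p_s(s)|\le \frac{\|h\|_{\gamma,m}}{a\sqrt{|\ln(T-t)|}}\frac{e^{-2a\gamma\sqrt{|\ln(T-t)|}}}{|\ln(T-t)|^m}$, which is valid in the relevant window because $|\ln(T-s)|$ and $|\ln(T-t)|$ are comparable there. The second integral of $\mathcal{A}_\sigma$ reduces, after the substitution $u=t-s$, to
\begin{align*}
\int_{(T-t)e^{-2a\sigma\sqrt{|\ln(T-t)|}}}^{N(t)(T-t)}\frac{du}{u}=2a\sigma\sqrt{|\ln(T-t)|}+\ln N(t),
\end{align*}
and combining the two bounds gives a singular contribution of $(2\sigma+\omega_T)\,\|h\|_{\gamma,m}\frac{e^{-2a\gamma\sqrt{|\ln(T-t)|}}}{|\ln(T-t)|^m}$, with $\omega_T=\ln N(t)/(a\sqrt{|\ln(T-t)|})\to 0$. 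The first and third integrals in $\mathcal{A}_\sigma$, by the same bound, contribute only $O(\ln(1+N(t))/\sqrt{|\ln(T-t)|})$, which is again absorbed in $\omega_T$; the pieces hidden in the dots of \eqref{Asigmasimpli} are even smaller.

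The main obstacle is conceptual rather than technical: the factor $2\sigma$ is sharp because we do not impose any modulus of continuity on $h$ beyond the pointwise $\|\cdot\|_{\gamma,m}$ bound, so the triangle inequality $|p_s(t)-p_s(s)|\le |p_s(t)|+|p_s(s)|$ cannot be improved; beating $2\sigma$ would require a stronger Banach space. This is exactly why the contraction scheme in Section \ref{SolutionLinEq} can only close for $\sigma<1/2$, forcing the additional iteration described in Section \ref{manageerrorSec}. The case $\gamma>1$ introduces no new ideas: Lemma \ref{T0est} already furnishes the extra factor $1/((\gamma-1)\sqrt{|\ln(T-t)|})$, which the above computations propagate directly. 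Finally, the smallness of $\omega_T$ and of the $|\ln(T+\varepsilon(T))|^{-1/4}$ term both follow from choosing $T$ and $\varepsilon(T)$ small enough that $\ln N(t)/\sqrt{|\ln(T-t)|}$ and $1/|\ln(T+\varepsilon(T))|^{1/4}$ are as small as needed.
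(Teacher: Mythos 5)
Your proof follows essentially the same route as the paper's. You use the decomposition \eqref{splitResolvant} into regular and singular parts, apply a Lipschitz-type estimate for $T_{0,\mathrm{reg}}[h]$ (a mild variant: you integrate the derivative bound to get a logarithmic bound in $(t-s)/(T-t)$, whereas the paper uses the straight linear bound $\propto (t-s)/(T-t)$, but both yield contributions absorbed into the small factors), and the triangle inequality $|p_s(t)-p_s(s)|\le |p_s(t)|+|p_s(s)|$ for the singular part; the log-integral $\int \frac{ds}{t-s}$ over the $\sigma$-window then produces the $2\sigma$ factor exactly as in the paper.
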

\begin{proof}
	As in the proof of Lemma \ref{EstB} we will focus on the case $\gamma=1$. Here it will be crucial the distinction we made in \eqref{splitResolvant}. Indeed we can easily observe that for any $t,s\in(-\varepsilon(T),T)$ we have
	\begin{align*}
		&|T_{0,\text{reg}}^{\gamma}[h](t)-T_{0,\text{reg}}^{\gamma}[h](s)|\lesssim \|h\|_{\gamma,m} \frac{e^{-2a\sqrt{|\ln(T-t)|}}}{\sqrt{|\ln(T-t)|}(T-t)}\frac{t-s}{|\ln(T-t)|^{m}}.
	\end{align*}
	then if we consider the second term of \eqref{Asigmasimpli} we get for some $\omega_{T}$
	\begin{align*}
		&|\int_{t-N(t)(T-t)}^{t-(T-t)e^{-2a\sigma\sqrt{|\ln(T-t)|}}}\frac{T_{0}[h](t)-T_{0}[h](s)}{t-s}ds| \le C \|h\|_{\gamma,m} \frac{e^{-2a\sqrt{|\ln(T-t)|}}}{\sqrt{|\ln(T-t)|}|\ln(T-t)|^{m}}\frac{N(t)}{T-t}(T-t)+\\
		&\hspace{2cm}+\frac{|h(t)|}{2a\sqrt{|\ln(T-t)|}}\int_{t-N(t)(T-t)}^{t-(T-t)e^{-2a\sigma\sqrt{|\ln(T-t)|}}}\frac{ds}{t-s}+\\
		&\hspace{2cm}+\int_{t-N(t)(T-t)}^{t-(T-t)e^{-2a\sigma\sqrt{|\ln(T-t)|}}}\frac{|h(s)|}{2a\sqrt{|\ln(T-s)|}(t-s)}ds\le \\
		&\hspace{1.5cm}\le C\|h\|_{\gamma,m}\frac{e^{-2a\gamma\sqrt{|\ln(T-t)|}}}{|\ln(T-t)|^{1/4}|\ln(T-t)|^{m}}+(2\sigma+\omega_{T})\|h\|_{\gamma,m}\frac{e^{-2a\gamma\sqrt{|\ln(T-t)|}}}{|\ln(T-t)|^{m}}.
	\end{align*}
    where the reason why we need to introduce this $\omega_{T}$ is due to the approximation $\frac{e^{-2a\sqrt{|\ln(T-t+(T-t)N(t))|}}}{\sqrt{|\ln(T-t+(T-t)N(t))}|}\approx \frac{e^{-2a\sqrt{|\ln(T-t)|}}}{\sqrt{|\ln(T-t)|}}$. We observe that $\omega_{T}$ can be chosen arbitrarily small if $T,\varepsilon(T)$ are small.\newline
	The estimates for the remaining term in \eqref{Asigmasimpli} is elementary after observing that
	\begin{align*}
		\frac{e^{-2a\gamma \sqrt{|\ln(T-t)|}}}{|\ln(T-t)|^{m}\sqrt{|\ln(T-t)|}}\int_{t-N(t)(T-t)}^{t}\frac{ds}{T-s}\approx \frac{e^{-2a\gamma\sqrt{|\ln(T-t)|}}}{|\ln(T-t)|^{m}}\frac{\ln(1+N(t))}{\sqrt{|\ln(T-t)|}}.
	\end{align*}
\end{proof}
For any $0<\sigma<1/2$, the next Lemma will give us a solution for \eqref{FinalEq} and we will be able to control $p(t)$, $\dot{p}(t)$, $\ddot{p}$ and $\dddot{p}$ (as we previously anticipated we need to control the third derivative of this first iteration to control the second derivative of $p_{2}$ and finally having some control of the second derivative of the error).
\begin{align*}
	|p_{1}(t)|\le C \frac{e^{-2a\sqrt{|\ln(T-t)|}}}{|\ln(T-t)|^{1/4}}, \ \ \ \  |\dot{p}_{1}(t)|\le \frac{e^{-2a\sqrt{|\ln(T-t)|}}}{|\ln(T-t)|^{1/4}(T-t)}, \ \ \ |\ddot{p}_{1}(t)|\le\frac{e^{-2a\sqrt{|\ln(T-t)|}}}{(T-t)^{2}|\ln(T-t)|^{1/4}}.
\end{align*}
For this reason, after recalling the norm \eqref{normP} it is helpful to introduce
\begin{align}\label{normForPder}
	\|\dot{p}\|^{*}_{\gamma,m}:=\sup_{t\in[-\varepsilon(T),T]}(T-t)|\ln(T-t)|^{m}e^{2a\gamma \sqrt{|\ln(T-t)|}}|\dot{p}(t)|,
\end{align} 
\begin{align}\label{normForPSecondder}
	\|\ddot{p}\|^{\star\star}_{\gamma,m}:=\sup_{t\in[-\varepsilon(T),T]}|(T-t)^{2}\ln(T-t)|^{m}e^{2a\gamma \sqrt{|\ln(T-t)|}}|\ddot{p}(t)|,
\end{align}
\begin{align}\label{normForPThirdder}
	\|\dddot{p}\|^{\star\star\star}_{\gamma,m}:=\sup_{t\in[-\varepsilon(T),T]}|(T-t)^{3}\ln(T-t)|^{m}e^{2a\gamma \sqrt{|\ln(T-t)|}}|\dddot{p}(t)|.
\end{align}
\begin{lemma}\label{SolutionNotRegular}
	For any $0<\sigma<\frac{1}{2}$ if we let $T$, $\varepsilon(T)$ sufficiently small, there exist a unique $h\in C^{3}[(-\varepsilon(T),T);\mathbb{R}]$ such that $\|h\|_{1,1/4}, \|h\|^{\star}_{1,1/4}$, $\|h\|^{\star\star}_{1,1/4}<\infty$ $\|h\|^{\star\star\star}_{1,1/4}<\infty$ and that solves \eqref{RealFinalEq}.
\end{lemma}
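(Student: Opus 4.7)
The plan is to set up \eqref{RealFinalEq} as a Banach fixed-point problem in a closed ball of the space
\[
X=\{h\in C[(-\varepsilon(T),T);\mathbb{R}]:\|h\|_{1,1/4}<\infty\},
\]
and then bootstrap to get the derivative estimates. Define the map
\[
\mathcal{T}[h](t)=\bigl[f_{\star}(t)+\mathcal{A}_{\sigma}[T_{0}[h]]+\mathcal{B}_{0}[p_{\star}+T_{0}[h]]-\mathcal{B}_{0}[p_{\star}]+\mathcal{B}_{1}[T_{0}[h]]\bigr]\eta(t/T).
\]
By Lemma \ref{SECMASSprofile} we have $\|f_{\star}\|_{1,1/4}\le C_{0}$, so $\|\mathcal{T}[0]\|_{1,1/4}\le C_{0}+o(1)$ as $T,\varepsilon(T)\to 0$ (the $\mathcal{B}_{0}[p_{\star}]$ term is controlled by \eqref{B} applied to $p_{\star}$). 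I will fix $M>0$ with $M\ge 4C_{0}$ and work in the ball $B_{M}=\{h\in X:\|h\|_{1,1/4}\le M\}$; the cut-off $\eta(t/T)$ ensures that I never need to evaluate Corollary \ref{ExpansionMass} or Lemma \ref{SECMASSprofile} outside $[-T/2,T)$.

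The contraction estimate is the heart of the argument. Taking $h_{1},h_{2}\in B_{M}$, Lemma \ref{T0est} (with $\gamma=1$) gives $\|T_{0}[h_{1}]-T_{0}[h_{2}]\|_{1,1/4}\le C\|h_{1}-h_{2}\|_{1,1/4}$, while Lemma \ref{EstA} contributes a Lipschitz factor of the form $2\sigma+\omega_{T}+C|\ln(T+\varepsilon(T))|^{-1/4}$ for $\mathcal{A}_{\sigma}\circ T_{0}$, Lemma \ref{EstB} contributes an additional factor that tends to $0$ as $T,\varepsilon(T)\to 0$ (the dependence on $M$ being swallowed since $M/|\ln(T+\varepsilon(T))|^{m}\to 0$), and the Lipschitz property \eqref{Bdifference} of $\mathcal{B}_{0}$ gives a factor $O(|\ln(T-t)|^{-l+1/2})$ that is $o(1)$ by choosing $l>1/2$. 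Summing,
\[
\|\mathcal{T}[h_{1}]-\mathcal{T}[h_{2}]\|_{1,1/4}\le(2\sigma+o_{T,\varepsilon(T)}(1))\|h_{1}-h_{2}\|_{1,1/4}.
\]
Since $\sigma<1/2$, for $T,\varepsilon(T)$ small enough the factor is strictly less than $1$, so $\mathcal{T}:B_{M}\to B_{M}$ is a contraction and Banach's fixed-point theorem produces a unique solution $h\in B_{M}$. This is precisely the obstacle the paper highlights: the bound $2\sigma<1$ is sharp, and it is exactly this constraint that makes the $p_{1}$ iteration unable to reach decay better than $e^{-2a(1+\sigma)\sqrt{|\ln(T-t)|}}$ with $\sigma<1/2$, forcing the second iteration $p_{2}$ discussed in the paper.

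For the higher regularity, I will differentiate the fixed-point identity. Writing $h=\mathcal{T}[h]$ and noting that $T_{0}[h]=T_{0,\mathrm{reg}}^{1}[h]-h/(2a\sqrt{|\ln(T-t)|})$ by \eqref{splitResolvant}, the regular part is explicitly differentiable with a gain of $(T-t)^{-1}$, and the singular part inherits the regularity of $h$ itself. Differentiating $\mathcal{A}_{\sigma}[T_{0}[h]]$, $\mathcal{B}_{0}[p_{\star}+T_{0}[h]]$, $\mathcal{B}_{1}[T_{0}[h]]$, and $f_{\star}\eta(t/T)$ in $t$ and applying the same contraction scheme to $\dot h$ in the norm $\|\cdot\|^{\star}_{1,1/4}$ yields $\|\dot h\|^{\star}_{1,1/4}<\infty$; the cut-off $\eta(t/T)$ plays a crucial role here by suppressing contributions near $t=-\varepsilon(T)$, and the smoothness of $p_{\star}$ (an explicit function) together with the analogous derivative version of Corollary \ref{ExpansionMass} (where $\mathcal{E}_{0}$ remainders are small by \eqref{RegLambAss}) absorbs the boundary effects. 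Iterating once more gives $\|\ddot h\|^{\star\star}_{1,1/4}<\infty$ and $\|\dddot h\|^{\star\star\star}_{1,1/4}<\infty$; each successive differentiation costs a factor $(T-t)^{-1}$ but the contraction constant stays the same $2\sigma+o(1)$, so uniqueness propagates and all four bounds hold for the same $h$. The hard part is genuinely the contraction constant in Lemma \ref{EstA}; the regularity bootstrap is routine once the $L^{\infty}$-type fixed point is in hand.
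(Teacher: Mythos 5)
Your proposal is correct and follows essentially the same route as the paper's proof: set up $\mathcal{R}[h]$ (your $\mathcal{T}[h]$) as a Banach fixed-point problem in the ball of radius $M$ in the $\|\cdot\|_{1,1/4}$-norm, derive the contraction factor $2\sigma+o_{T,\varepsilon(T)}(1)$ from Lemmas~\ref{T0est}, \ref{EstB}, \ref{EstA} and Corollary~\ref{ExpansionMass}, pick $M$ so that $C_{f_\star}<M(1-2\sigma)$, and then bootstrap the three derivative norms by differentiating the fixed-point identity and rerunning the same contraction on $\dot h$, $\ddot h$, $\dddot h$, with the terms involving $\eta'(t/T)$ and $f_\star'$ absorbed. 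Your identification of the decomposition \eqref{splitResolvant} as the reason the derivative schema preserves the contraction constant, and of the cut-off as what controls the boundary contributions, matches the paper's treatment.
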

\begin{proof}
	Thanks to Lemma \ref{T0est}, \ref{EstB}, \ref{EstA} and Corollary \ref{ExpansionMass} (fixing $l=3/4$), for any $h$ such that $\|h\|_{1,1/4}<M$, we can define the operator
	\begin{align}\label{v}
		\mathcal{R}[h](t):=[f_{\star}(t)+\mathcal{A}_{\sigma}[T_{0}[h]]+\mathcal{B}_{0}[p_{\star}+T_{0}[h]]-\mathcal{B}_{0}[p_{\star}]+\mathcal{B}_{1}[T_{0}[h]]]\eta (\frac{t}{T}) \ \ t\in[-\varepsilon(T),T],
	\end{align}
	we have
	\begin{align*}
		|\mathcal{R}[h](t)|\le \frac{e^{-2a\sqrt{|\ln(T-t)|}}}{|\ln(T-t)|^{1/4}}[&C_{f_{\star}}+C\|h\|_{1,\frac{1}{4}}\big(\frac{1}{|\ln(T+\varepsilon(T))|^{1/4}}+\frac{\frac{M}{|\ln(T+\varepsilon(T)|^{1/4})}}{1-\frac{M}{|\ln(T+\varepsilon(T)|^{1/4})}}\big)+\\
		&+C\|h\|_{1,\frac{1}{4}}\big(\frac{C}{|\ln(T+\varepsilon(T))|^{1/4}}+(2\sigma+\omega_{T})\big)+C\|h\|_{1,\frac{1}{4}}\frac{1}{|\ln(T+\varepsilon(T))|^{1/4}}]\le\\
		&\hspace{-2.8cm}\le \frac{e^{-2a\sqrt{|\ln(T-t)|}}}{|\ln(T-t)|^{1/4}}[C_{f_{\star}}+C\|h\|_{1,\frac{1}{4}}\big(\frac{1}{|\ln(T+\varepsilon(T))|^{1/4}}+\frac{\frac{M}{|\ln(T+\varepsilon(T)|^{1/4})}}{1-\frac{M}{|\ln(T+\varepsilon(T)|^{1/4})}}\big)+(2\sigma+\omega_{T})\|h\|_{1,\frac{1}{4}}]
	\end{align*}
	where $C_{f_{\star}}=\|f_{\star}\|_{1,\frac{1}{4}}$ and $C$ is some universal constant. Similarly if $h_{1},h_{2}$ are such that $\|h_{i}\|<M$ we have
	\begin{align*}
		|\mathcal{R}[h_{1}](t)-\mathcal{R}[h_{2}](t)|\le \frac{e^{-2a\sqrt{|\ln(T-t)|}}}{|\ln(T-t)|^{1/4}}[C\|h_{1}-&h_{2}\|_{1,\frac{1}{4}}\big(\frac{1}{|\ln(T+\varepsilon(T))|^{1/4}}+\frac{\frac{M}{|\ln(T+\varepsilon(T)|^{1/4})}}{1-\frac{M}{|\ln(T+\varepsilon(T)|^{1/4})}}\big)+\\
		&+(2\sigma+\omega_{T})\|h_{1}-h_{2}\|_{1,\frac{1}{4}}].
	\end{align*}
	Then we clearly see that for any $\sigma<1/2$, by picking $M$ sufficiently large, but independent of $T$ and $\varepsilon(T)$, such that $C_{f_{\star}}<M(1-2\sigma)$ in order for $\mathcal{R}$ to have a fixed point it is enough to take $T$ and $\varepsilon(T)$ sufficiently small. \newline 
	Now we want to control $\|h\|^{\star}_{1,1/4}$. We formally differentiate \eqref{v} and we have
	\begin{align}\label{derivativeDecR}
		\frac{d}{dt}\mathcal{R}[h](t)=\frac{1}{T}\eta'(\frac{t}{T})[...]+\eta(\frac{t}{T})f'_{\star}(t)+\eta(\frac{t}{T})\frac{d}{dt}[...].
	\end{align}
Now, we claim that the first term in the right-hand side of \eqref{derivativeDecR} satisfy
\begin{align*}
	|\frac{1}{T}\eta'(\frac{t}{T})[...]|\le \frac{e^{-2a\sqrt{|\ln(T-t)|}}}{|\ln(T-t)|^{1/4}}.
\end{align*}
Indeed for example we see
	\begin{align*}
	&|\eta '(\frac{t}{T})\frac{1}{T}A_{\sigma}[p_{1}](t)|\le C \eta'(\frac{t}{T}) \|p_{1}\|_{1,1/4} \frac{e^{-2a\sqrt{|\ln T|}}}{|\ln T|^{1/4} T}.
\end{align*}
We can directly compute $f_{\star}'(t)$ from Lemma \ref{SECMASSprofile} and then the second term in \eqref{derivativeDecR} satisfies
\begin{align*}
	|\eta(\frac{t}{T})f'_{\star}(t)|\le C \eta(\frac{t}{T}) \frac{e^{-2a\sqrt{|\ln(T-t)|}}}{|\ln(T-t)|^{3/4}}.
\end{align*}
The remaining term in \eqref{derivativeDecR} is an operator in $\dot{h}$ that gives a contraction if $0<\sigma<\frac{1}{2}$ as can be seen by differentiating the estimates in Corollary \ref{ExpansionMass} and Lemmas \ref{T0est}, \ref{EstB}, \ref{EstA}. \newline
Differentiating twice \eqref{v} and repeating the same argument we get also that $\|h\|_{1,1/4}^{\star\star}<\infty$. Differentiating three times instead we get $\|h\|_{1,1/4}^{\star\star\star}<\infty$.
	\end{proof}
As a result of Lemma \ref{SolutionNotRegular}, if $\bar{\lambda}(t)=\sqrt{-2\int_{t}^{T}(p_{\star}(s)+T_{0}[h](s))ds}$ for $t\in(0,T)$ we have
\begin{align*}
	\int_{\mathbb{R}^{2}}\varphi_{\bar{\lambda}}(\cdot,t)dx-16\pi \gamma \frac{\bar{\lambda}^{2}(t)}{T-t}=&\int_{\mathbb{R}^{2}}\varphi_{\bar{\lambda}}(\cdot,T)dx+O(|\mathcal{E}_{0}(t)|)+\\
	&+O(\int_{t-(T-t)e^{-2a\sigma\sqrt{|\ln(T-t)|}}}^{t-\bar{\lambda}^{2}}\frac{|T_{0}[h](t)-T_{0}[h](s)|}{t-s}ds)=\\
	=&\int_{\mathbb{R}^{2}}\varphi_{\bar{\lambda}}(\cdot,T)dx+O(\frac{e^{-2a(1+\sigma)\sqrt{|\ln(T-t)|}}}{|\ln(T-t)|^{1/4}}).
\end{align*}
But since $0<\sigma<\frac{1}{2}$ we still do not have the proof of Proposition \ref{prop-lambda0} that will be given in the next Section. In the next Section we will also need the control of the second and the third derivatives we obtained in Lemma \ref{SolutionNotRegular}.
\subsection{The proof of Proposition \ref{prop-lambda0}}\label{manageerrorSec}
The previous section gave us the function $p_{1}=T_{0}[h_{1}]$. This approximation is sufficiently regular to observe that $\bar{\lambda}(t)=\sqrt{-2\int_{t}^{T}(p_{\star}(s)+T_{0}[h_{1}](s))ds}$ satisfies
\begin{align}\label{badDecayinNONl}
	\int_{\mathbb{R}^{2}}\varphi_{\bar{\lambda}}(\cdot,t)dx-16\pi \gamma \frac{\bar{\lambda}^{2}(t)}{T-t}=&\int_{\mathbb{R}^{2}}\varphi_{\bar{\lambda}}(\cdot,T)dx+O(e^{-2a(1+\sigma)\sqrt{|\ln(T-t)|}}).
\end{align}
for any $0<\sigma<\frac{1}{2}$. It is clear that this error is not sufficiently fast decaying to obtain Proposition \ref{prop-lambda0}. The idea is to find a second improvement of $p(t)$ and write
\begin{align*}
 p(t)=p_{\star}(t)+p_{1}(t)+p_{2}(t)
\end{align*}
where $p_{2}(t)$ is going to be fixed to erase
\begin{align*}
	\mathcal{R}[p_{1}]:=\int_{t-(T-t)e^{-2a\sigma\sqrt{|\ln(T-t)|}}}^{t-\lambda_{\star}^{2}}\frac{p_{1}(t)-p_{1}(s)}{t-s}ds
\end{align*}
that is responsible for the bad decay in \eqref{badDecayinNONl}. But then we are introducing another error, $\mathcal{R}[p_{2}]$. We notice that to estimate $\mathcal{R}[p_{2}]$, $\frac{d}{dt}\mathcal{R}[p_{2}]$ we need some estimates for $\dot{p}_{2}$ and consequently we need an estimate for the first and second derivatives of the right-hand side of the equation for $p_{2}$, namely $\eta(\frac{t}{T})\mathcal{R}[p_{1}]$. This is why in Lemma \ref{SolutionNotRegular} we controlled also the second and third derivatives of $p_{1}$. Finally we remark that to control $\frac{d^{2}}{dt^{2}}\mathcal{R}[p_{2}]$ we can just use the control of the $\ddot{p}_{2}$ even if we are loosing some decay. \newline
 We can introduce the proof Proposition \ref{prop-lambda0}.
\begin{proof}[Proof of Proposition \ref{prop-lambda0}] 
	As presented in the introduction of this Section the idea is to write
	\begin{align}\label{finalExpansionP}
		p(t)=p_{\star}(t)+p_{1}(t)+p_{2}(t)
	\end{align}  
   for a $p_{2}$ that erases the main error introduced by $p_{1}$. By keeping in mind \eqref{realfinalEqt} and that $p_{1}$ solves \eqref{FinalEq} it is immediate to check that we want to find $p_{2}$ that if $t\in(-\varepsilon(T),T)$ satisfies
   \begin{align}\label{FinalEqUncP2}
   	\int_{t}^{T}\frac{p_{2}(s)}{T-s}ds-2a\sqrt{|\ln(T-t)|}p_{2}(t)=&\eta(\frac{t}{T})\big[\int_{t-(T-t)e^{-2a\sigma\sqrt{|\ln(T-t)|}}}^{t-\lambda^{2}_{\star}(t)}\frac{p_{1}(t)-p_{1}(s)}{t-s}ds+\nonumber\\
   	&+\mathcal{A}_{\sigma}[p_{1}+p_{2}]-\mathcal{A}_{\sigma}[p_{1}]+\mathcal{B}_{0}[p_{\star}+p_{1}+p_{2}]-\mathcal{B}_{0}[p_{\star}+p_{1}]+\nonumber\\
   	&+ \mathcal{B}_{1}[p_{1}+p_{2}]-\mathcal{B}_{1}[p_{1}]\big].
   \end{align}
where now thanks to Lemma \ref{SolutionNotRegular} we also have that for any $t\in(-\frac{T}{2},T)$
\begin{align}\label{RHSp2}
	|\int_{t-(T-t)e^{-2a\sigma\sqrt{|\ln(T-t)|}}}^{t-\lambda^{2}_{\star}(t)}\frac{p_{1}(t)-p_{1}(s)}{t-s}ds|\le \frac{e^{-2a(1+\sigma)\sqrt{|\ln(T-t)|}}}{|\ln(T-t)|^{\frac{1}{4}}},
\end{align}
\begin{align}\label{DerRHSp2}
	|\frac{d}{dt}\big(	\int_{t-(T-t)e^{-2a\sigma\sqrt{|\ln(T-t)|}}}^{t-\lambda^{2}_{\star}(t)}\frac{p_{1}(t)-p_{1}(s)}{t-s}ds\big)|\le \frac{e^{-2a(1+\sigma)\sqrt{|\ln(T-t)|}}}{|\ln(T-t)|^{\frac{1}{4}}(T-t)},
\end{align}
\begin{align}\label{DerRHSp23}
	|\frac{d^{2}}{dt^{2}}\big(	\int_{t-(T-t)e^{-2a\sigma\sqrt{|\ln(T-t)|}}}^{t-\lambda^{2}_{\star}(t)}\frac{p_{1}(t)-p_{1}(s)}{t-s}ds\big)|\le \frac{e^{-2a(1+\sigma)\sqrt{|\ln(T-t)|}}}{|\ln(T-t)|^{\frac{1}{4}}(T-t)^{2}}.
\end{align}
We can solve \eqref{FinalEqUncP2} for $p_{2}$ in $t\in(-\varepsilon(T),T)$ and for any $0<\sigma<\frac{1}{2}$ analogously to the proof of Lemma \ref{SolutionNotRegular}. Observing that now $\gamma>1$, thanks to Lemma \ref{T0est} we get 
\begin{align*}
	|p_{2}(t)|\le C \frac{e^{-2a(1+\sigma)\sqrt{|\ln(T-t)|}}}{|\ln(T-t)|^{3/4}}, \ \ \ |\dot{p}_{2}(t)|\le C \frac{e^{-2a(1+\sigma)\sqrt{|\ln(T-t)|}}}{|\ln(T-t)|^{3/4}(T-t)}, \ \ |\ddot{p}_{2}(t)|\le C \frac{e^{-2a(1+\sigma)\sqrt{|\ln(T-t)|}}}{|\ln(T-t)|^{3/4}(T-t)^{2}} .
\end{align*} 
Finally the main order term of the final error satisfies
   \begin{align}\label{sigmaRemainderFINALFINAL}
   	|\int_{t-(T-t)e^{-2a\sigma\sqrt{|\ln(T-t)|}}}^{t-\lambda_{\star}^{2}}\frac{p_{2}(t)-p_{2}(s)}{t-s}ds|\le C \frac{e^{-2a(1+2\sigma)\sqrt{|\ln(T-t)|}}}{|\ln(T-t)|^{3/4}},
   \end{align}
   \begin{align}\label{sigmaRemainderFINALFINALDer}
	|\frac{d}{dt}\int_{t-(T-t)e^{-2a\sigma\sqrt{|\ln(T-t)|}}}^{t-\lambda_{\star}^{2}}\frac{p_{2}(t)-p_{2}(s)}{t-s}ds|\le C \frac{e^{-2a(1+2\sigma)\sqrt{|\ln(T-t)|}}}{|\ln(T-t)|^{3/4}(T-t)},
\end{align}
where $\sigma$ can be chosen arbitrarily close to $\frac{1}{2}$. \newline
Finally we want to estimate the derivative of the error in \eqref{EstimatesErrorproplambda0}. We observe that to get that estimate it is not necessary to control $\ddot{p}_{2}$, in fact we can simply observe
\begin{align*}
	|\frac{d^{2}}{dt^{2}}\int_{t-(T-t)e^{-2a\bar{\sigma}\sqrt{|\ln(T-t)|}}}^{t-\lambda_{\star}^{2}}\frac{p_{2}(t)-p_{2}(s)}{t-s}ds|=&|\frac{d^{2}}{dt^{2}}\big[2a(1-\sigma)\sqrt{|\ln(T-t)|}p_{2}(t)+...\big]|\le \\
	\le&C \frac{e^{-2a(1+\sigma)\sqrt{|\ln(T-t)|}}}{|\ln(T-t)|^{1/4}(T-t)}.
\end{align*}
\end{proof}

\section{Inner linear theory}\label{InnerTheoryIntro}
In this introductory section we consider the problem
\begin{align}\label{typicalCaseInn}
	\begin{cases}
		\lambda^{2}\partial_{t}\phi=L[\phi]+\tilde{B}[\phi]+h(y,t) \ \ \ \ \text{in }\mathbb{R}^{2}\times (0,T)\\
		\phi(\cdot,0)=0 \ \ \text{in }\mathbb{R}^{2}
	\end{cases}
\end{align}
where
\begin{align}\label{InnTheorLInvD}
	&L[\phi]=\nabla \cdot \big[U\nabla\big(\frac{\phi}{U}-(-\Delta)^{-1}\phi\big)\big], \ \ \ \ (-\Delta)^{-1}\phi(y,t)=\frac{1}{2\pi}\int_{\R^2}\ln\frac{1}{|y-z|}\phi(z,t)dz
\end{align} 
and
\begin{align}\label{BandCutOFFINN}
	&\tilde{B}[\phi]=\lambda\dot{\lambda}(k\phi\hat{\chi}+y\cdot\nabla(\phi\hat{\chi}))-\lambda\dot{\lambda}\big[\int_{\R^2}(k\phi\hat{\chi}+y\cdot \nabla(\phi\hat{\chi}))dy\big]W_{0}(y), \ \ \ \hat{\chi}=\chi_{0}(\frac{|y|\lambda}{4\sqrt{\delta(T-t)}}).
\end{align}
We notice that the many of the considerations we are going to make are very general and can be adapted to all \eqref{innereqfinalFINAL}, \eqref{innereqfinalFINAL2} and \eqref{innereqfinalFINALMODE1}. \newline
If we assume that $h$ is radial and satisfies the zero mass condition, by the conservation of mass, we observe that the mass of $\phi$ in \eqref{typicalCaseInn} must be zero and then, when $k=2$, this is the same equation that appears in \eqref{innereqfinalFINAL2}. We anticipate that investigating this more general equation depending on a parameter $k$ will help enormously when we will study equation \eqref{innereqfinalFINAL}  (the study of \eqref{innereqfinalFINALMODE1} will be essentially easier but independent of the study of the other equations).\newline
In what follows we will more simply use the notation $\chi$ instead of $\hat{\chi}$, even if we are always referring to \eqref{chihat}.
It is natural to change the time variable into 
\begin{align}\label{tauVAR}
	\tau:=\tau_{0}+\int_{0}^{t}\frac{1}{\lambda^{2}(s)}ds\implies\tau\to \infty \text{ as }t\to T.
\end{align}
where $\tau_{0}$ will be assumed large and satisfying $\tau_{0}\le C e^{\sqrt{2|\ln T|}}\sqrt{|\ln T|}$.
Since $\lambda(t)=\lambda_{0}(t)+\lambda_{1}(t)$, by Proposition \ref{prop-lambda0} and \eqref{normforlambda1}, we can easily show
\begin{align}\label{tauExp}
	\tau=\tau(t)= \frac{e^{\mu+2}}{2\sqrt{2}}e^{\sqrt{2|\ln(T-t|)}}\sqrt{|\ln(T-t)|}(1+o(1)), \ \ \ \ln \tau=\sqrt{2|\ln(T-t)|}(1+o(1))
\end{align}
and 
\begin{align}\label{lambdadotlambdaExp}
	\lambda\dot{\lambda}=-2e^{-(\mu+2)}e^{-\sqrt{2|\ln(T-t)|}}(1+o(1))=-\frac{1}{2}\frac{\ln \tau}{\tau}(1+o(1)).
\end{align}
In many occasions it will be helpful to keep the expansions \eqref{tauExp}, \eqref{lambdadotlambdaExp} in mind. 
\newline We remark that expansion \eqref{lambdadotlambdaExp} makes this problem more difficult to solve with respect to \cite{DdPW}. Indeed if we would avoid the use of the cut-off in the operator $B$, after the analogous change of the time variable, we would see
\begin{align}
	B[\phi]=\lambda\dot{\lambda}(2\phi+y\cdot \nabla \phi)\approx\begin{cases}
		-\frac{1}{2\tau \ln \tau}(2\phi+y\cdot \nabla \phi) \ \ \ \ \text{ in infinite time }\\[5pt]
		-\frac{\ln \tau}{2\tau}(2\phi+y\cdot \nabla \phi) \ \ \ \ \ \text{in finite time}.
	\end{cases}
\end{align}
But in order to obtain the decay in space $\approx\frac{1}{|y|^{4}}$ of the inner solution (as we will do for example in Lemma \ref{barriersUg}) they had to construct a barrier and it was crucial to observe that
\begin{align}\label{CrucialBarrier}
	|B[\phi]|\ll \frac{1}{1+|y|^{2}}(|\phi|+|y\cdot \nabla \phi|) \ \ \ \ \text{if }|y|\le \sqrt{\tau}.
\end{align}
We notice that introducing the cut-off $\chi=\chi_{0}(\frac{|y|\lambda}{\sqrt{\delta(T-t)}})$ we get
\begin{align*}
	|B[\chi \phi]|\le C \frac{\ln \tau}{\tau}(|\phi\chi|+|y\cdot \nabla (\phi\chi)|)\le \frac{\delta}{1+|y|^{2}}(|\phi|+|y\cdot \nabla \phi|) \ \ \ \text{if }|y|\le \sqrt{\tau}.
\end{align*}
By taking $\delta$ small we are artificially reproducing  \eqref{CrucialBarrier}. We finally notice that if we tried to remove the hypothesis \eqref{CrucialBarrier} we would need to introduce an intermediate region where our inner solution would decay only like $\approx \frac{1}{|y|^{2}}$ (that is in the kernel of the operator $B$). This would have the effect that some of the remainders produced in the energy estimates would be too large to be absorbed. \newline
We can then rewrite \eqref{typicalCaseInn} and obtain
\begin{align}\label{typicalCaseInnTAU}
	\begin{cases}
		\partial_{\tau}\phi=L[\phi]+\tilde{B}[\phi]+h(y,t) \ \ \ \ \text{in }\mathbb{R}^{2}\times (\tau_{0},\infty)\\
		\phi(\cdot,\tau_{0})=0 \ \ \text{in }\mathbb{R}^{2}.
	\end{cases}
\end{align}
We will assume that for all $\tau_{1}>0$ there is $C_{1}$ such that
\begin{align*}
	|h(y,\tau)|\le \frac{C_{1}}{1+|y|^{6}} \ \ \ \ \text{for all }(y,\tau)\in \mathbb{R}^{2}\times(\tau_{0},\tau_{1}).
\end{align*}
By a solution $\phi(y,\tau)$ of \eqref{typicalCaseInnTAU} we understand a function $\phi(y,\tau)$, of class $C^{1}$ in $y$ and of class $C^{0}$ in $t$, such that for any $\tau_{1}>\tau_{0}$ there exists a $C_{1}>0$ with
\begin{align}\label{phiinnerSmall}
	|\phi(y,\tau)|+(1+|y|)|\nabla_{y}\phi(y,\tau)|\le \frac{C_{1}}{1+|y|^{6}} \ \ \ \text{for all } (y,\tau)\in \mathbb{R}^{2}\times(\tau_{0},\tau_{1})
\end{align}
and satisfies the integral equation
\begin{align*}
	\phi(y,\tau)=\int_{\tau_{0}}^{\tau}\int_{\R^2}G(y-z,\tau-\sigma)[-\nabla\phi\cdot \nabla \Gamma_{0}-\nabla U \cdot \nabla(-\Delta)^{-1}\phi+2U\phi+\tilde{B}[\phi]+h](z,\sigma)dzds
\end{align*}
where $(-\Delta)^{-1}$ has been defined in \eqref{InnTheorLInvD} and $G(y,\tau)$ is the two-dimensional heat kernel $G(y,\tau)=\frac{1}{4\pi \tau}e^{-\frac{|y|^{2}}{4\tau}}$. An elementary computation shows that
\begin{align*}
	|\phi(y)|\le \frac{C}{1+|y|^{6}} \implies |\nabla(-\Delta)^{-1}\phi(y)|\le \frac{C}{1+|y|}\|(1+|y|^{6})\phi\|_{L^{\infty}(\mathbb{R}^{2})}.
\end{align*}
By this observation existence and uniqueness of a solution of \eqref{typicalCaseInnTAU} satisfying \eqref{phiinnerSmall} is standard: for a short time $\tau_{1}>\tau_{0}$ this is established by a contraction mapping argument and then a linear continuation procedure applies. \newline
Now we recall the norms \eqref{RHSinnernorm}, \eqref{SolutionINNERNORM} we introduced in Section \ref{proofThm1}: for some $\nu$, $p$, $\epsilon$ and $m\in\mathbb{R}$ we define
\begin{align*}
	\|h\|_{0;\nu,m,p,\epsilon}=\inf \left\{ K \text{ s.t. } |h(y,\tau)|\le \frac{K}{\tau^{\nu}(\ln \tau)^{m}}\frac{1}{(1+|y|)^{p}}\begin{cases}
		1 \ \ \ \ |y|\le \sqrt{\tau}\\
		\frac{\tau^{\epsilon/2}}{|y|^{\epsilon}} \ \ \ |y|\ge \sqrt{\tau}
	\end{cases}\right\},
\end{align*}
\begin{align*}
	\|\phi\|_{1;\nu,m,p,\epsilon}=\inf \left\{K \text{ s.t. } 
	|\phi(y,t)|+(1+|y|)|\nabla_{y}\phi(y,t)|\le \frac{K}{\tau^{\nu}(\ln \tau)^{m}}\frac{1}{(1+|y|)^{p}}\begin{cases}
		1 \ \ \ \ |y|\le \sqrt{\tau}\\
		\frac{\tau^{\epsilon/2}}{|y|^{\epsilon}} \ \ \ \ |y|\ge\sqrt{\tau}
	\end{cases}\right\}.
\end{align*}
It is easy observe that fast decay of the right-hand side does not imply fast decay of the solution unless certain orthogonality conditions hold. For example, let us consider the following simplified problem
\begin{align}\label{SimplifiedInnerEq}
	\begin{cases}
		\partial_{ \tau}\phi=L[\phi]+h(y,\tau) \ \ \ \ \text{in }\mathbb{R}^{2}\times(\tau_{0},\infty)\\
		\phi(\cdot,\tau_{0})=0 \ \ \text{in }\mathbb{R}^{2}.
	\end{cases}
\end{align} 
Let us multiply \eqref{SimplifiedInnerEq} by $|y|^{2}$, observing that $\int_{\R^2}L[\phi]|y|^{2}dy=0$ we need
\begin{align*}
	\int_{\tau_{0}}^{\infty}\int_{\R^2}h(y,\tau)|y|^{2}dyd\tau=0.
\end{align*}
But we know that a zero second moment condition cannot be achieved (see equation \eqref{innereqfinalFINAL2}).
As we will show in the following sections we can solve this problem introducing an initial condition and a parameter $c_{1}$, namely
\begin{align}\label{typicalCaseInnTAUIC}
	\begin{cases}
		\partial_{\tau}\phi=L[\phi]+\tilde{B}[\phi]+h(y,t) \ \ \ \ \text{in }\mathbb{R}^{2}\times (\tau_{0},\infty)\\
		\phi(\cdot,\tau_{0})=c_{1}\tilde{Z}_{0} \ \ \text{in }\mathbb{R}^{2}
	\end{cases}
\end{align}
where $\tilde{Z}_{0}$ is defined as
\begin{align}\label{InitialCond}
	\tilde{Z}_{0}(\rho)=(Z_{0}(\rho)-m_{Z_{0}}U)\chi_{0}(\frac{\rho}{\sqrt{\tau_{0}}})=(Z_{0}(\rho)-m_{Z_{0}}U)\bar{\chi}
\end{align}
where $m_{Z_{0}}$ is such that 
\begin{align}\label{mz0Est}
	\int_{\R^2}\tilde{Z}_{0}=0 \implies |m_{\tilde{Z}_{0}}|=O(\frac{1}{\tau_{0}}).
\end{align}
Notice that this initial condition is a perturbation of $Z_{0}$ that is in the kernel of the operator $L$. This will play a crucial role in the proof of Lemma \ref{Lemma101}. Moreover we have to cut the initial condition since the second moment of the solution must be bounded to compensate the missing zero second moment condition. In fact this hypothesis will be fundamental in the construction of the barriers in the proof of Lemma \ref{barriersUg}. As can be seen for example in \eqref{Hbarrier} our barriers prevent us from considering solutions decaying like $\frac{1}{\rho^{4}}$ without loosing decay in time. We stress that the choice of the cut-off $\bar{\chi}$ will be important in the proof of Proposition \ref{Prop101}. \newline
Now we state the results we will prove in the following sections. \newline 
Notice that we will not explicitly mention the estimate for the gradient of the solutions but they can can be achieved by scaling and standard parabolic estimates. 
\begin{proposition}\label{PropMode0Mass0}
Let $\lambda=\lambda_{0}+\lambda_{1}$ with $\lambda_{0}$ given by Proposition \ref{prop-lambda0} and let us assume \eqref{ExtraAssumptionSECMASSSECMOM}. Let $\sigma\in(0,1)$, $\epsilon>0$ with $\sigma+\epsilon<2$, $1<\nu<\frac{7}{4}$, $m\in \mathbb{R}$. Let $q\in(0,1)$. For $\tau_{0}$ sufficiently large and for all radially symmetric $h=h(|y|,\tau)$ with $\|h\|_{\nu,m,6+\sigma,\epsilon}<\infty$ and such that
	\begin{align*}
		\int_{\mathbb{R}^{2}}h(y,t)dy=0, \ \ \ \text{for all }(\tau_{0},\infty)
	\end{align*}
	there exists $c_{1}\in \mathbb{R}$ and a solution $\phi(y,\tau)=\mathcal{T}_{p}^{i,2}[h]$ of 
	\begin{align*}
		\begin{cases}
			\partial_{\tau}\phi = L[\phi]+B[\phi \chi]+h \\
			\phi(\cdot, \tau_{0})=c_{1}\tilde{Z}_{0}
		\end{cases}
	\end{align*}
	that define a linear operators of $h$ and satisfy the estimate
	\begin{align*}
		\|\phi\|_{1;\nu-1,m+q+1,4,2+\sigma+\epsilon}\le \frac{C}{(\ln \tau_{0})^{1-q}}\|h\|_{0;\nu,m,6+\sigma,\epsilon}.
	\end{align*}
	\begin{align*}
		|c_{1}|\le C \frac{1}{\tau_{0}^{\nu-1}(\ln \tau_{0})^{m+2}}\|h\|_{\nu,m,6+\sigma,\epsilon}.
	\end{align*} 
	
\end{proposition}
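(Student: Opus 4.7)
The plan is to first construct a solution of the reduced problem $\partial_\tau\phi=L[\phi]+h$ with initial condition $\phi(\cdot,\tau_0)=c_1\tilde Z_0$, and then to absorb $B[\phi\hat\chi]$ by iteration, exploiting the smallness of $\lambda\dot\lambda$ for $\tau_0$ large and the localisation provided by $\hat\chi$. Since $\int_{\R^2}h\,dy=0$ and $\int_{\R^2}\tilde Z_0\,dy=0$ by construction, mass conservation is automatic. The constant $c_1$ plays the role of a shooting parameter compensating for the absence of a second-moment orthogonality on $h$: since $\int_{\R^2}L[\phi]|y|^2\,dy=0$ while $\int_{\R^2}\tilde Z_0|y|^2\,dy\sim\tau_0$ (as $\tilde Z_0$ is a cut-off of $Z_0\sim|y|^{-4}$ at scale $\sqrt{\tau_0}$, adjusted by a multiple of $U$ to have zero mass), I would fix $c_1$ uniquely by requiring that $\int_{\R^2}\phi(\cdot,\tau)|y|^2\,dy$ have the precise decay rate dictated by the target norm. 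Evaluating this matching at $\tau=\tau_0$ yields the stated bound on $|c_1|$.

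With $c_1$ so chosen, the core estimate would be carried out by weighted $L^2$ energy estimates in the six-dimensional realisation of $L$, analogous to the infinite-time inner theory of \cite{DdPDMW}. The coercivity of $L$ on the orthogonal complement of its kernel becomes available exactly when both the mass and the second moment of the test function are controlled, which is precisely the outcome of the choice of $c_1$. The resulting weighted $L^2$ bound would then be transferred to the pointwise norm $\|\cdot\|_{1;\nu-1,m+q+1,4,2+\sigma+\epsilon}$ by constructing an explicit supersolution of the form $K\tau^{-(\nu-1)}(\ln\tau)^{-m-q-1}\Psi(y,\tau)$, with $\Psi$ behaving like $(1+|y|)^{-4}$ in the parabolic region $|y|\le\sqrt\tau$ and matching the polynomial outer rate of the target norm beyond, and by invoking the comparison principle.

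The term $B[\phi\hat\chi]$ is then incorporated via the Duhamel iteration $\phi\mapsto\mathcal T_p^{i,2}[h+B[\phi\hat\chi]]$, where the subtraction of $\lambda\dot\lambda\, m_0[\cdots]W_0$ implicit in $\tilde B$ keeps the iterated source with zero mass so that the unperturbed scheme applies unchanged. Inside the support of $\hat\chi$, which in the inner variable is $|y|\lesssim \lambda^{-1}\sqrt{T-t}\sim\sqrt\tau$, one has $|\lambda\dot\lambda|\sim \ln\tau/\tau$; combined with the target decay of $\phi$ as $(1+|y|)^{-4}$ this yields an operator norm of the map $\phi\mapsto\mathcal T_p^{i,2}[B[\phi\hat\chi]]$ of order $(\ln\tau_0)^{-(1-q)}$, which is the Lipschitz constant needed to close the contraction and is the source of the factor $(\ln\tau_0)^{q-1}$ in the final estimate.

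The main obstacle is precisely this last perturbative step. In the finite-time setting $|\lambda\dot\lambda|\sim\ln\tau/\tau$, larger by a factor $(\ln\tau)^2$ than its infinite-time counterpart $1/(\tau\ln\tau)$ treated in \cite{DdPDMW}, so $B[\phi\hat\chi]$ is dangerously close to breaking scale. The cut-off $\hat\chi$ and the subtraction by $W_0$ are introduced precisely to recover, quantitatively, the margin needed to close both the second-moment identity determining $c_1$ and the contraction: without $\hat\chi$ the scaling term $y\cdot\nabla\phi$ would enlarge the second moment uncontrollably and spoil the identification of $c_1$, while the loss of zero mass of the iterated source would force a blow-up of the first-moment evolution.
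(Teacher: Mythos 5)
Your high-level picture captures some of the genuine structure of the argument (a shooting parameter $c_1$ whose role is tied to second moments, use of the coercivity of $L$, and the use of barriers/supersolutions for pointwise decay), but the mechanism you propose differs from the paper's in two places where it would likely break down.

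First, treating $B[\phi\hat\chi]$ via a Duhamel/contraction iteration $\phi\mapsto\mathcal T_p^{i,2}[h+B[\phi\hat\chi]]$ on top of the $L$-only solvability theory has a spatial decay obstruction. The target solution decays like $(1+|y|)^{-4}$ in the parabolic zone, and $|\lambda\dot\lambda|\sim\ln\tau/\tau$; even exploiting that on the support of $\hat\chi$ one has $|y|\lesssim M(\tau)\sim\sqrt{\delta\tau/\ln\tau}$, so that $\ln\tau/\tau\lesssim\delta(1+|y|)^{-2}$, the iterated source $B[\phi\hat\chi]$ decays only like $(1+|y|)^{-6}$, whereas the source norm $\|\cdot\|_{0;\nu,m,6+\sigma,\epsilon}$ demands decay $(1+|y|)^{-(6+\sigma)}$ with $\sigma>0$. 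The paper does not face this issue because it never places $\widetilde B[\phi]$ in the source space: Lemma \ref{Lemma105} estimates $\int\widetilde B[\phi]g^\perp$ directly against the energy quantities, and Lemma \ref{barriersUg} carries $\widetilde B_0$ inside the parabolic operator with cut-offs chosen so that the resulting terms are absorbed by the barrier rather than fed back as a new forcing. These two layers of the argument are precisely what the Duhamel route lacks.

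Second, you misattribute the source of the factor $(\ln\tau_0)^{-(1-q)}$. In the paper this gain does not arise as a Lipschitz constant for a perturbative map, nor directly from $|\lambda\dot\lambda|$; it comes from the cut-off second-moment identity \eqref{EqSecMomEstA}, from which one derives the $a(\tau)$-equation \eqref{Equation for A} with $a(\tau_1)=0$ at the endpoint, and then the estimate \eqref{estimatingaabs} with $R^2(\tau)=\tau/(\ln\tau)^{1+q}$. The $(\ln\tau_0)^{-(1-q)}$ appears specifically because the coefficient of $a$ in \eqref{Equation for A} is $\approx16\pi\ln R(\tau)$ while the bounds on the remaining terms scale as $f(\tau)R^2(\tau)(\ln\tau)^{q}$; it is an output of the second-moment/Gr\"onwall structure, not of contraction. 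Relatedly, you describe fixing $c_1$ by matching the second moment's decay rate at $\tau=\tau_0$; the paper instead proceeds via the decomposition $\phi=\phi^\perp+\frac{a}{2}Z_0$, a shooting sequence with $a_n(\tau_n)=0$ (Lemma \ref{Lemma104}) using $a_Z(\tau)\neq0$ from Lemma \ref{Lemma103}, and then a limit; your characterisation is both vaguer and, evaluated at one time $\tau_0$, underdetermined. Also, the coercivity of $L$ via Lemma \ref{Lemma106} requires only the orthogonality built into $g^\perp$; the mass/second-moment control of $\phi$ affects $a(\tau)$ and the remainder terms, not the coercivity itself, so the sentence linking coercivity to simultaneous control of mass and second moment conflates two distinct steps.
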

The proof of Proposition \ref{PropMode0Mass0} will be given in Section \ref{ProofProp71}. \newline
The next result will consider a right-hand side with zero mass and zero second moment. The idea is to transform the problem applying $L^{-1}[\phi]$ to the equation and then use Proposition \ref{PropMode0Mass0}. As an effect of this transformation we will have to manage with some errors. Some of them will be treated as lower order terms, others will be included in a force that we will denote $\mathcal{E}[\phi]$. This force will have good decay in the inner region, but some estimates deteriorate in the outer region. Since $\mathcal{E}[\phi]$ will appear in the right-hand side it is important to observe that it will be multiplied by \eqref{chitilde} and that this force is radial with zero mass and zero second moment. \newline
We also observe that because of the estimates we get for $\mathcal{E}[\phi]$ in the outer region the solution will not decay as fast as in Proposition \ref{PropMode0Mass0} in the outer region. This loss will not spoil our construction. \newline
In the following proposition we will use the following notation
\begin{align}\label{FunctionM}
	\chi=\chi_{0}(\frac{|y|\lambda}{\sqrt{\delta(T-t)}})=\chi_{0}(\frac{|y|}{M(\tau)}).
\end{align}
\begin{proposition}\label{PropMode0MassSecMom0}
	Let $\lambda=\lambda_{0}+\lambda_{1}$ with $\lambda_{0}$ given by Proposition \ref{prop-lambda0} and let us assume \eqref{ExtraAssumptionSECMASSSECMOM}. Let $\sigma\in(0,1)$, $\varepsilon>0$, $\sigma+\varepsilon<2$, $1<\nu<\min(1+\frac{\varepsilon}{2},3-\frac{\sigma}{2},\frac{3}{2})$, $m\in \mathbb{R}$. Let $0<q<1$. There exists a number $C>0$ such for $\tau_{0}$ sufficiently large and for all radially symmetric $h=h(|y|,\tau)$ with $\|h\|_{0;\nu,m,6+\sigma,\epsilon}<\infty$ and
	\begin{align*}
		\int_{\mathbb{R}^{2}}h(y,\tau)dy=0, \ \ \ \text{for all }(\tau_{0},\infty), \ \ \ \int_{\mathbb{R}^{2}}h(y,\tau)|y|^{2}dy=0 \ \ \ \ \text{for all}(\tau_{0},\infty)
	\end{align*}
	there exists $c_{1}\in \mathbb{R}$, an operator $\mathcal{E}$ and a solution $\phi(y,\tau)=\mathcal{T}_{p}^{i,1}[h]$ of 
	\begin{align*}
		\begin{cases}
			\partial_{\tau}\phi = L[\phi]+B[\phi]+\mathcal{E}[\phi]+h \\
			\phi(\cdot, \tau_{0})=c_{1}L[\tilde{Z}_{0}]
		\end{cases}
	\end{align*}
	that defines a linear operator of $h$ and satisfies the estimate
	\begin{align*}
		\|\phi\|_{1;\nu-\frac{1}{2},m+\frac{q+1}{2},4,2}\le C\|h\|_{0;\nu,m,6+\sigma,\epsilon}.
	\end{align*}
	Moreover we know that $\mathcal{E}[\phi]$ is radial, it satisfies $\int_{\R^2}\mathcal{E}[\phi](y)dy=0$, $ \int_{\R^2}\mathcal{E}(y)[\phi]|y|^{2}dy=0$ and
\begin{align*}
		\ \ |\mathcal{E}[\phi](y)|dy\le C \|h\|_{0;\nu,m,6+\sigma,\epsilon}\frac{1}{\tau^{\nu}\ln^{m+q}\tau} \begin{cases}
			\frac{1}{M^{2}(\tau)}\frac{1}{(1+\rho)^{6}} \ \ \ \ |y|\le M(\tau) \\
			\frac{1}{1+|y|^{6}} \ \ \ |y|\ge M(\tau).
		\end{cases}
\end{align*}
	Moreover $c_{1}$ is a linear operator of $h$ and
	\begin{align*}
		|c_{1}|\le C \frac{1}{\tau_{0}^{\nu-1}(\ln \tau_{0})^{m+2}}\|h\|_{0,\nu,m,6+\sigma,\epsilon}.
	\end{align*} 	
\end{proposition}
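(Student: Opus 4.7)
The plan is to reduce the problem to Proposition~\ref{PropMode0Mass0} by applying $L^{-1}$ to the equation. Since $h$ is radial with $\int_{\mathbb{R}^{2}}h\,dy = 0$ and $\int_{\mathbb{R}^{2}}h|y|^{2}\,dy = 0$, Lemma~\ref{ellipticsol} applied with $\gamma = 6+\sigma$ produces a radial $H(y,\tau)$ satisfying $L[H] = h$, $\int_{\mathbb{R}^{2}}H\,dy = 0$, and the pointwise estimate
\[
|H(y,\tau)| \le C\,\|h\|_{0;\nu,m,6+\sigma,\varepsilon}\,\tau^{-\nu}(\ln\tau)^{-m}\,\frac{\ln(2+|y|)}{1+|y|^{4}}
\]
in the inner region, with a compatible outer estimate. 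I would then seek the solution in the form $\phi = L[\psi]$, with initial datum $\psi(\cdot,\tau_{0}) = c_{1}\tilde{Z}_{0}$, which forces $\phi(\cdot,\tau_{0}) = c_{1}L[\tilde{Z}_{0}]$ as required by the statement.

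Applying $L^{-1}$ formally, $\psi$ should satisfy an equation of the form $\partial_{\tau}\psi = L[\psi] + L^{-1}\bigl(B[L\psi]\bigr) + L^{-1}\bigl(\mathcal{E}[L\psi]\bigr) + H$. The central algebraic point is that $B = \lambda\dot{\lambda}\,D$ with $D = 2+y\cdot\nabla$ the generator of dilations, and $L$ is scale-covariant up to lower-order terms supported where $U$ is non-negligible; consequently $L^{-1}\bigl(B[L\psi]\bigr)$ equals $B[\psi]$ plus a remainder $\lambda\dot{\lambda}\,R[\psi]$ controlled by the commutator $[D,L]$. I would then \emph{define} $\mathcal{E}[\phi]$ by the explicit formula
\[
\mathcal{E}[\phi] \;=\; L\bigl(B\bigl[L^{-1}\bigl(\phi\chi - m_{0}[\phi\chi]W_{0} - m_{2}[\phi\chi]W_{2}\bigr)\bigr]\bigr) - B[\phi],
\]
with the projections onto $W_{0}$ (see \eqref{W0}) and $W_{2}$ (see \eqref{W2}) inserted so that $L^{-1}$ is applicable; this $\mathcal{E}[\phi]$ is then automatically radial and, by integration by parts against $1$ and $|y|^{2}$, satisfies the required orthogonalities $\int_{\mathbb{R}^{2}}\mathcal{E}[\phi]\,dy = 0$ and $\int_{\mathbb{R}^{2}}\mathcal{E}[\phi]|y|^{2}\,dy = 0$ (after possibly absorbing further constants into the projection terms). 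With this choice the equation for $\psi$ reduces to $\partial_{\tau}\psi = L[\psi] + B[\psi\chi] + H$, the standard form treated by Proposition~\ref{PropMode0Mass0}.

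The hypothesis $1<\nu<7/4$ of Proposition~\ref{PropMode0Mass0} is compatible with our assumption $\nu<3/2$, so that proposition yields $\psi$ together with $c_{1}$ satisfying
\[
\|\psi\|_{1;\nu-1,m+q+1,4,2+\sigma+\varepsilon}\le \tfrac{C}{(\ln\tau_{0})^{1-q}}\|H\|_{0;\nu,m,6+\sigma,\varepsilon}
\]
and the advertised bound on $c_{1}$. Setting $\phi = L[\psi]$, the mapping properties of $L^{-1}$ from Lemma~\ref{ellipticsol} (which trade two powers of spatial decay for a logarithmic factor) then give the advertised $\|\phi\|_{1;\nu-1/2,m+(q+1)/2,4,2}\le C\|h\|_{0;\nu,m,6+\sigma,\varepsilon}$. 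The pointwise bound on $\mathcal{E}[\phi]$ in the regions $|y|\le M(\tau)$ and $|y|\ge M(\tau)$ follows from the explicit form of the cutoff correction $B[\psi\chi] - B[\psi]$ (supported near $|y|\sim M(\tau)$), the size $|\lambda\dot{\lambda}|\sim \ln\tau/\tau$, and the inner/outer decay of $\psi$.

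The main obstacle is the sharp analysis of the commutator $[D,L]$ and the verification that the resulting $\mathcal{E}[\phi]$ genuinely satisfies the pointwise bound $M(\tau)^{-2}(1+|y|)^{-6}$ inside $M(\tau)$ and $(1+|y|)^{-6}$ outside \emph{without} logarithmic losses, all while preserving both orthogonality conditions simultaneously. A secondary delicate point is propagating the outer decay of $h$ through the composition $L\circ(\text{Prop.\ }\ref{PropMode0Mass0})\circ L^{-1}$ to produce the outer exponent $2$ (rather than $2+\sigma+\varepsilon$) in the final estimate for $\phi$; it is precisely this exchange that forces the slightly weaker inner scheme $\tau^{-(\nu-1/2)}(\ln\tau)^{-(m+(q+1)/2)}$ in the conclusion, compared to what one would naively expect from Proposition~\ref{PropMode0Mass0}.
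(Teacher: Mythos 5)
Your starting idea matches the paper's (apply $L^{-1}$ to transform the problem into one that Proposition~\ref{PropMode0Mass0} can handle, then set $\phi = L[\Phi]$), but there are two genuine gaps, and they are not secondary points — they are precisely where the work is.

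First, your proposed definition of $\mathcal{E}[\phi]$ sweeps the entire commutator error into the operator $\mathcal{E}$. But the commutator of $\Lambda = y\cdot\nabla$ with $L$ is \emph{not} small: the identity $\Lambda\circ L[\Phi] - L\circ\Lambda[\Phi] = \nabla\cdot(\Phi U y) - 2L[\Phi] - \nabla\cdot(Z_0\nabla\Psi)$ shows that $L^{-1}(B[L\Phi]) - B[\Phi]$ produces, beyond the cutoff corrections, a term of the form $\lambda\dot\lambda\,L^{-1}[\nabla\cdot(\Phi U y) - \nabla\cdot(Z_0\nabla\Psi)]$. This is of the same spatial order as $\Phi$ itself, multiplied merely by $\lambda\dot\lambda \sim \ln\tau/\tau$. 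That is nowhere near the required bound $\tau^{-\nu}(\ln\tau)^{-(m+q)}\,M^{-2}(\tau)(1+\rho)^{-6}$ for $|y|\le M(\tau)$. The paper (Lemma~\ref{PHIcostruction}) keeps this commutator bulk \emph{in the equation for $\Phi$} as a linear perturbation and absorbs it by a contraction, gaining a factor $(\ln\tau_0)^{-q}$ from the operator $A[\Phi]$; only the cutoff and projection corrections (which are supported near $\partial B_{M(\tau)}$ or compactly and carry genuine smallness) end up in $\mathcal{E}[\phi]$, see \eqref{Eoper}. Without this separation the force $\mathcal{E}[\phi]$ you write down simply does not satisfy the stated pointwise bound, so the reduction collapses.

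Second, your claim that ``the mapping properties of $L^{-1}$ ... give the advertised $\|\phi\|_{1;\nu-1/2,m+(q+1)/2,4,2}$'' is in the wrong direction and misses the mechanism that produces the exponent $\nu-\tfrac12$. Proposition~\ref{PropMode0Mass0} gives $\|\Phi\|_{1;\nu-1,m+q+1,4,2+\sigma+\epsilon}$, and applying $L$ to $\Phi$ naively yields a function of order $\tau^{-(\nu-1)}$, not $\tau^{-(\nu-1/2)}$. The improvement comes from Proposition~\ref{Prop101}: one decomposes $\Phi_2 = \Phi_2^\perp + \tfrac{a}{2}Z_0$, and $L$ annihilates $Z_0$, so that $\phi = L[\Phi_1 + \Phi_2^\perp]$ only sees the $\perp$-part, which has the smaller size $\tau^{-(\nu-1/2)}(\ln\tau)^{-(m+(q+1)/2)}$. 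Your proposal never invokes this orthogonal decomposition, and cannot reach the advertised rate without it.

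In short: the skeleton is right (transform by $L^{-1}$, solve, push back through $L$), but the two points you flag as ``obstacles'' are not fine-print — they require (i) retaining the commutator bulk as a contracting linear perturbation inside a fixed point scheme rather than in $\mathcal{E}$, and (ii) the $\Phi^\perp$-estimate of Proposition~\ref{Prop101} so that applying $L$ gains half a power of $\tau$. The argument as written does not close.
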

The proof of Proposition \ref{PropMode0MassSecMom0} will be given in Section \ref{Section11}. We remark that we introduced the cut-off \eqref{BandCutOFFINN}, since our estimate of the force will be sufficiently good only when $|x-\xi|\le 2\sqrt{\delta(T-t)}$. But this is sufficient since the force will be multiplied by the cut-off \eqref{chitilde}. \newline
Now we propose a result in mode 1 that will solve \eqref{innereqfinalFINALMODE1}. Similarly to the previous result we want to add a force that now allows us to cut $B$ but preserving the conservation of the center of mass. We recall the definitions of the functions \eqref{W1j} we already used to project the right-hand side of the inner equations. We will need the decomposition \eqref{DecomPRAD}.
\begin{proposition}\label{PropMode1}
	Let $\lambda=\lambda_{0}+\lambda_{1}$ with $\lambda_{0}$ given by Proposition \ref{prop-lambda0} and let us assume \eqref{ExtraAssumptionSECMASSSECMOM}. Let $0<\sigma<1$, $0<\epsilon<2$, $1<\nu<\min(1+\frac{\varepsilon}{2},\frac{3}{2}-\frac{\sigma}{2})$ and $m\in \mathbb{R}$. There exists a number $C>0$ such for $\tau_{0}$ sufficiently large and for all $h=h(y,\tau)$ such that $[h]_{\text{rad}}=0$ with $\|h\|_{\nu,m,5+\sigma,\epsilon}<\infty$ and
	\begin{align*}
		\int_{\mathbb{R}^{2}}h(y,\tau)y_{j}dy=0,  \ \ j=1,2\ \ \ \text{for all }(\tau_{0},\infty),
	\end{align*}
	there exists an operator $\mathcal{F}[\phi]$ and a solution $\phi(y,\tau)=\mathcal{T}_{p}^{i,3}[h]$ of 
	\begin{align*}
		\begin{cases}
			\partial_{\tau}\phi = L[\phi]+B[\phi \chi]+\mathcal{F}[\phi]+h \\
			\phi(\cdot, \tau_{0})=0
		\end{cases}
	\end{align*}
	that defines a linear operator of $h$ and satisfies the estimate
	\begin{align*}
		\|\phi\|_{1;\nu,m,3+\sigma,2+\sigma+\epsilon}\le C\|h\|_{0;\nu,m,5+\sigma,\epsilon}.
	\end{align*}
    Moreover $\mathcal{F}[\phi]$ does not have radial mode and 
    \begin{align*}
    	|\mathcal{F}[\phi]|\le \frac{1}{\tau^{\nu+1}(\ln \tau)^{m-1}}\frac{1}{M^{\sigma}}(|W_{1}(y)|+|W_{2}(y)|).
    \end{align*}
\end{proposition}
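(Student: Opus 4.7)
The plan is to follow the same two-stage strategy used for Proposition \ref{PropMode0Mass0}, first developing a solvability theory for the uncut problem
\[
\partial_\tau \phi = L[\phi] + B[\phi] + \tilde h, \qquad \phi(\cdot,\tau_0) = 0,
\]
for right-hand sides $\tilde h$ satisfying $[\tilde h]_{\mathrm{rad}} = 0$ and $\int \tilde h\, y_j\,dy = 0$, and then absorbing the discrepancy $B[\phi\chi] - B[\phi] = -B[\phi(1-\chi)]$ by means of the correction $\mathcal{F}[\phi]$. Since $[h]_{\mathrm{rad}}=0$ and $L$, $B$, $\mathcal{F}$ preserve the vanishing of the radial mode, $\phi$ will automatically have no radial part, so the energy estimates only need to be carried out in the orthogonal complement of the radial sector.

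For the uncut problem, I would first establish an a priori estimate in the weighted space with norm $\|\phi\|_{1;\nu,m,3+\varsigma,2+\epsilon}$ by combining a blow-up (contradiction/compactness) argument with the nondegeneracy of $L$ on functions of the form $y\cdot\nabla U$. The key point is that the orthogonality $\int h\, y_j\,dy = 0$ rules out the only kernel elements of $L$ compatible with mode 1 decay, namely the translations $\partial_{y_j}U$; this is what allows $\phi$ to inherit decay $(1+|y|)^{-(3+\varsigma)}$ in the self-similar region $|y| \leq \sqrt\tau$. The upper bound $\nu < \min(1+\tfrac\epsilon2, \tfrac{3}{2}-\tfrac\varsigma 2)$ is what is needed for the blow-up profile to be integrable against $|y|\,dy$ after the rescaling, as in Section \ref{PreliminariesInnTh}. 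Linear existence then follows by approximation and passage to the limit. The operator $B[\phi]$, on account of \eqref{CrucialBarrier}–\eqref{BandCutOFFINN}, contributes only a perturbatively small term in the inner region once we use the pointwise barrier strategy of Lemma \ref{barriersUg}, and this is where the smallness of $\delta$ (absorbed into the cut-off $\hat\chi$ of \eqref{MhatFunction}) enters.

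To handle the cut-off in $B$, I would define
\[
\mathcal{F}[\phi](y,\tau) = \sum_{j=1}^{2} c_j[\phi](\tau)\, W_{1,j}(y), \qquad c_j[\phi](\tau) = \int_{\R^2} B[\phi(1-\hat\chi)]\, y_j\, dy,
\]
so that $B[\phi\hat\chi] + \mathcal{F}[\phi] + h$ has vanishing first moments and hence fits into the uncut solvability theory. Since $\phi$ decays as $(1+|y|)^{-(3+\varsigma)}$ with coefficient $\|\phi\|_{1;\nu,m,3+\varsigma,2+\varepsilon}$, the integrand of $c_j[\phi]$ is supported at $|y|\gtrsim \hat M(\tau)$ and bounded by $|\lambda\dot\lambda|\,|y|\cdot |\phi| \lesssim \frac{\ln\tau}{\tau}\frac{1}{\tau^\nu(\ln\tau)^m}|y|^{-(2+\varsigma)}$, giving the stated estimate $|\mathcal{F}[\phi]| \lesssim \tau^{-(\nu+1)}(\ln\tau)^{-(m-1)} \hat M^{-\varsigma}(|W_{1,1}|+|W_{1,2}|)$ (the $|W_2|$ term enters analogously if one projects the full $B[\phi(1-\hat\chi)]$ against both $W_{1,j}$ and a second-moment corrector, ensuring the operator $\mathcal{F}$ has no radial mode). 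The loss $\hat M^{-\varsigma}$ is then what is multiplied against $\|\phi\|$ in the fixed-point iteration, and is small for $\tau_0$ large.

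The main technical obstacle will be closing the fixed point in the weighted norm with the required decay $(1+|y|)^{-(3+\varsigma)}$ and $|y|^{-(2+\epsilon)}\tau^{\epsilon/2}$ transition at $|y|=\sqrt\tau$ \emph{uniformly} in the cut-off, since the correction $\mathcal{F}[\phi]$ itself contributes a term $\sim \hat M^{-\varsigma}\|\phi\|$ and the uncut operator produces $|y|^{-(3+\varsigma)}$ decay only after using the orthogonality built in through $\mathcal{F}$. This is a coupled estimate: the solvability of the uncut problem depends on the orthogonality, which depends on $\mathcal{F}$, which in turn depends on $\phi$. I would close it by a continuation/contraction argument in which the smallness of $\hat M^{-\varsigma}$ together with Lemma \ref{barriersUg} (and the constraint $\nu < 1+\tfrac\varepsilon2$, which gives the outer-region exponent the right relation to the inner decay) produces a contraction factor $\lesssim \tau_0^{-\varsigma/2}$ in the parameter $c_j[\phi]$, and hence a genuine linear operator $\phi = \mathcal{T}^{i,3}_p[h]$ for $\tau_0$ sufficiently large.
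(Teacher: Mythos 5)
The paper's proof of Proposition \ref{PropMode1} is considerably simpler and structured differently from your proposal, and your outline contains a real gap. The paper does not build a ``solvability theory for the uncut problem $\partial_\tau\phi = L[\phi]+B[\phi]+\tilde h$.'' It simply quotes Proposition \ref{Prop121} (Proposition 12.1 of \cite{DdPDMW}), which solves the \emph{no-}$B$ problem $\partial_\tau\phi = L[\phi]+h$ for $h$ with no radial mode and zero first moments, and then treats the \emph{whole} of $B[\phi\chi]$ and $\mathcal{F}[\phi]$ as small perturbations in a fixed-point iteration $\phi = T[h+B[\phi\chi]+\mathcal{F}[\phi]]$. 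The smallness of $\|B[\phi\chi]\|$ comes from $\delta$ in the cut-off (and $M^2 \approx c\,\delta\tau/\ln\tau$), and the smallness of $\|\mathcal{F}[\phi]\|$ comes from $\int\phi\,y_j\,dy=0$ forcing $\int B[\phi\chi]\,y_j\,dy = -\lambda\dot\lambda\int y_j\phi(1-\chi)\,dy$ to be supported where $|y|\ge M(\tau)$.

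Your proposal to base the argument on the \emph{uncut} $B[\phi]$ would not work, and the paper explicitly explains why in Section \ref{InnerTheoryIntro}: without cutting $B$, one cannot maintain $|B[\phi]|\ll \frac{1}{1+|y|^2}(|\phi|+|y\cdot\nabla\phi|)$ in the inner region because $\lambda\dot\lambda\approx -\frac{\ln\tau}{2\tau}$ is too large in finite time, and you would have to pass through an intermediate region where the solution decays only like $1/|y|^2$ (in the kernel of $B$), which destroys the decay needed to close the energy estimates. Your proposal also has an internal inconsistency: you argue that $B[\phi]$ is perturbatively small ``because of the smallness of $\delta$ absorbed into the cut-off,'' but the uncut operator $B[\phi]$ carries no cut-off and hence no factor of $\delta$. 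Finally, the formula $\mathcal{F}[\phi] = -\bigl(\int B[\phi\chi]y_1\,dy\bigr)W_1 - \bigl(\int B[\phi\chi]y_2\,dy\bigr)W_2$ in the paper involves only the two translation projectors $W_{1,j}$; there is no second-moment corrector in this mode-1 problem, so that speculation is a red herring. The blow-up/compactness argument you sketch also has no analogue in the paper; the estimates are quantitative, via barriers and the explicit norm.
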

The proof of Proposition \ref{PropMode1} will be given in Section \ref{Section12}.

\section[]{Preliminaries for the linear theory}\label{PreliminariesInnTh}
In this section we will report some important results proved in Section 9 of \cite{DdPDMW}. In order to obtain good estimates for the solution it is necessary to study the quadratic form
\begin{align}\label{quadraticform}
	\phi \mapsto \int_{\R^2}g\phi, \ \ \ \ g=\frac{\phi}{U}-(-\Delta)^{-1}\phi.
\end{align}
Indeed, we see that $L[\phi]=\nabla \cdot(U\nabla g)$ and then it is natural to test equation \eqref{SimplifiedInnerEq} against $g$ and obtain
\begin{align*}
	\int_{\R^2}L[\phi]g=\int_{\R^2}\nabla \cdot(U\nabla g)g=-\int_{\R^2}U|\nabla g|^{2}.
\end{align*}
From the time derivative instead we get $\int_{\R^2}(\partial_{ \tau}\phi)g $.\newline
Let us take a function $\phi:\mathbb{R}^{2}\mapsto \mathbb{R}$ such that for some $\sigma>0$
\begin{align}\label{CondDecMass}
	|\phi(y)|\le \frac{1}{(1+|y|)^{2+\sigma}}, \ \ \ \ \int_{\R^2}\phi dy=0.
\end{align}
Recalling \eqref{quadraticform} it is natural to introduce the notation
\begin{align*}
	\psi=(-\Delta)^{-1}\phi
\end{align*}
so that
\begin{align*}
	-\Delta \psi-U\psi=Ug \ \ \ \ \text{in }\mathbb{R}^{2}.
\end{align*}
The operator $\Delta\psi+U(y)\psi$ is classical. It corresponds to linearizing the Liouville equation
\begin{align*}
	\Delta v+e^{v}=0 \ \ \ \text{in }\mathbb{R}^{2},
\end{align*}
around the solution $\Gamma_{0}=\log U$. The bounded kernel of this linearization is spanned by generators of rigid motions, namely dilation and translations of the equations, which are precisely the functions $z_{0}$, $z_{1}$, $z_{2}$ defined by
\begin{align}\label{kernel Liouville}
	\begin{cases}
		z_{0}(y)=\nabla \Gamma_{0}(y)\cdot y+2 \\
		z_{j}(y)=\partial_{y_{j}}\Gamma_{0}(y), \ \ j=1,2.
	\end{cases}
\end{align}
We introduce a normalized version of $g$, that we call $g^{\perp}$, defined by 
\begin{align*}
	g^{\perp}=g+a,
\end{align*}
where $a\in \mathbb{R}$ is chosen so that 
\begin{align}\label{OrtLiouv}
\int_{\R^2}g^{\perp}Udy=0.
\end{align}
It will be convenient to work with functions $\phi^{\perp}$, $\psi^{\perp}$ associated to $g^{\perp}$. Indeed, thanks to \eqref{OrtLiouv}, we can take $\psi^{\perp}$ such that
\begin{align}\label{EqPsiPerp}
	-\Delta \psi^{\perp}-U\psi^{\perp}=Ug^{\perp}, \ \ \ \psi^{\perp}(y)\to0 \ \ \ \text{as }|y|\to \infty.
\end{align}
We can introduce the function $\psi_{0}=1+\frac{1}{2}z_{0}$, where $z_{0}$ is defined in \eqref{kernel Liouville}. We see that
\begin{align*}
  -\Delta \psi_{0}-U\psi_{0}=-U, \ \ \ \psi_{0}\to 0 \text{ as }|y|\to \infty.
\end{align*}
Then we can write
\begin{align*}
	\psi^{\perp}=\psi-a\big(1+\frac{z_{0}}{2}\big)=\psi-a\psi_{0}.
\end{align*}
Now if we define 
\begin{align*}
	\phi^{\perp}=U(g^{\perp}+\psi^{\perp}),
\end{align*}
we get
\begin{align*}
	\phi=\phi^{\perp}+\frac{a}{2}Uz_{0}, \ \ \ -\Delta\psi^{\perp}=\phi^{\perp}, \ \ \ \int_{\R^2}\phi^{\perp}=0.
\end{align*}
We notice that $\phi-\phi^{\perp}=\frac{a}{2}Z_{0}$ is in the kernel of the operator $L$.\newline Now we can introduce four lemmas whose proof can be found in Section 9 of \cite{DdPDMW}. 
\begin{lemma}\label{Lemma93}
	If $\phi:\mathbb{R}^{2}\mapsto \mathbb{R}$ satisfies \eqref{CondDecMass}, then there exist positive constants $c_{1}$, $c_{2}$ such that
	\begin{align}\label{QuadrForIne}
		c_{1}\int_{\R^2}U(g^{\perp})^{2}\le \int_{\R^2}\phi g^{\perp}\le c_{2}\int_{\R^2}U(g^{\perp})^{2}.
	\end{align}
\end{lemma}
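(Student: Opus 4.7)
The plan is to rewrite $\int_{\R^2}\phi g^\perp$ as a sum of manifestly non-negative pieces plus controllable remainders, and then extract coercivity from the spectral properties of the linearized Liouville operator $-\Delta - U$. First I would use the decompositions recalled before the statement, namely $\phi = \phi^\perp + \tfrac{a}{2}Uz_0$ and $\phi^\perp = U(g^\perp + \psi^\perp)$, to write
\begin{align*}
\int_{\R^2}\phi g^\perp \,dy &= \int_{\R^2}\phi^\perp g^\perp \,dy + \frac{a}{2}\int_{\R^2} U z_0 g^\perp \,dy \\
&= \int_{\R^2} U(g^\perp)^2 \,dy + \int_{\R^2} U \psi^\perp g^\perp \,dy + \frac{a}{2}\int_{\R^2} U z_0 g^\perp \,dy.
\end{align*}
Next, I would test the equation \eqref{EqPsiPerp} against $\psi^\perp$ itself. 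Since $\phi$, and hence $\phi^\perp$, decays like $(1+|y|)^{-2-\sigma}$ and $\int \phi^\perp = 0$, the induced $\psi^\perp$ decays sufficiently fast to justify integration by parts, giving
\[
\int_{\R^2} U \psi^\perp g^\perp \,dy = \int_{\R^2} |\nabla \psi^\perp|^2 \,dy - \int_{\R^2} U (\psi^\perp)^2 \,dy.
\]

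Substituting this in, the identity becomes
\[
\int_{\R^2}\phi g^\perp \,dy = \int_{\R^2} U(g^\perp)^2 \,dy + Q(\psi^\perp) + \frac{a}{2}\int_{\R^2} U z_0 g^\perp \,dy,
\]
where $Q(\psi) := \int |\nabla \psi|^2 - \int U \psi^2$ is the quadratic form associated with the linearized Liouville operator. The classical spectral analysis of $-\Delta - U$ (whose bounded kernel is spanned by $z_0, z_1, z_2$) shows that $Q$ is non-negative on the $L^2(U\,dy)$-orthogonal complement of $\mathrm{span}\{z_0, z_1, z_2\}$, and in fact satisfies a coercivity estimate $Q(\psi) \ge c \int U \psi^2$ on that subspace. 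The radial structure of the setting (which I may reduce to by symmetrizing if needed) eliminates $z_1, z_2$ automatically, so what matters is the component along $z_0$. This is precisely the role of the normalization $a$: the condition $\int U g^\perp = 0$ is chosen so that the ``bad'' component of $\psi^\perp$ along the singular direction of $Q$ is cancelled, making $Q(\psi^\perp) \ge 0$.

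For the upper bound, the same identity combined with Cauchy--Schwarz applied to $\int U\psi^\perp g^\perp$, together with the elliptic estimate $\int U(\psi^\perp)^2 \le C \int U(g^\perp)^2$ (following from standard bounds for $-\Delta\psi^\perp - U\psi^\perp = Ug^\perp$ under the orthogonality/normalization just described) and the bound $|a| \le C(\int U(g^\perp)^2)^{1/2}$ coming from the definition of $a$, yields $\int \phi g^\perp \le c_2 \int U(g^\perp)^2$. The main obstacle is the careful handling of the cross term $\tfrac{a}{2}\int U z_0 g^\perp$: one must verify that the definition of $a$ (via $\int U g^\perp = 0$) is compatible with the decomposition $\psi^\perp = \psi - a\psi_0$ so that the remainders indeed sit in the positive sector of $Q$, and that no non-radial modes $z_1,z_2$ sneak in without being orthogonal; this is where the spectral gap for $-\Delta - U$ on the appropriate orthogonal complement must be invoked quantitatively in order to produce the absolute constant $c_1>0$.
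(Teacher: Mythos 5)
Your algebraic identity is correct, and the step of testing $-\Delta\psi^\perp - U\psi^\perp = Ug^\perp$ against $\psi^\perp$ to produce $\int\phi g^\perp = \int U(g^\perp)^2 + Q(\psi^\perp)$ with $Q(\psi) = \int|\nabla\psi|^2 - \int U\psi^2$ is exactly the natural first move. One simplification you miss: the cross term $\tfrac{a}{2}\int U z_0\,g^\perp$ does not merely need ``careful handling''---it vanishes identically. Indeed $\int_{\R^2} Z_0 = 0$, and since the bilinear form $(\phi_1,\phi_2)\mapsto\int\phi_1\,g[\phi_2]$ is symmetric on mean-zero functions and $g[Z_0] = z_0 - (-\Delta)^{-1}Z_0 = -2$ is constant, one gets $\int Z_0\,g = -2\int\phi = 0$. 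So $\int\phi\, g^\perp = \int\phi^\perp g^\perp$ exactly.

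The genuine gap is your spectral claim. You assert that $Q$ is ``non-negative on the $L^2(U\,dy)$-orthogonal complement of $\mathrm{span}\{z_0,z_1,z_2\}$'' and that this is what the normalization built into $a$ exploits. This claim is false. The Schr\"odinger operator $-\Delta - U$ on $\R^2$ with $U = 8(1+|y|^2)^{-2}$, $\int U = 8\pi > 0$, has a genuine \emph{negative} $L^2$-eigenvalue in the radial sector: its ground state $\varphi_{-1}$ is radial, sign-definite, square-integrable, and satisfies $Q(\varphi_{-1}) < 0$. The functions $z_0, z_1, z_2$ are not this ground state: $z_0$ changes sign (it vanishes at $|y|=1$), does not decay at infinity, and corresponds to the threshold resonance at $0$, not to the bottom of the spectrum. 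Orthogonality to $\{z_0,z_1,z_2\}$ therefore does \emph{not} block the negative direction. The condition $\int U g^\perp = 0$ (equivalently $\int U\psi^\perp = 0$, which follows by integrating the equation for $\psi^\perp$) is orthogonality to the constant function $1$, not to $\varphi_{-1}$; by Courant--Fischer min-max, projecting out a direction different from the ground state does not by itself guarantee that $\inf Q/\|\cdot\|^2$ is pushed up to $0$, so $Q(\psi^\perp) \ge 0$ cannot be read off without further argument.

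What actually makes the lower bound in \eqref{QuadrForIne} work is the non-negativity and spectral gap of the \emph{log-HLS linearization} $\langle\phi,\phi\rangle := \int\phi^2/U - \int\phi\,(-\Delta)^{-1}\phi$, not the Liouville linearization $Q(\psi)$. Using $\phi^\perp = -\Delta\psi^\perp = U(g^\perp + \psi^\perp)$ one can rewrite the quantity as $\int\phi\,g^\perp = \int (\phi^\perp)^2/U - \int|\nabla\psi^\perp|^2 = \langle\phi^\perp,\phi^\perp\rangle$. This quadratic form is non-negative on all mean-zero $\phi$ (second variation of log-HLS around $U$), with a degenerate kernel spanned precisely by $Z_0, Z_1, Z_2$; the normalization $\int U g^\perp = 0$ removes the $Z_0$-component and the radial ansatz removes $Z_1,Z_2$, at which point one can invoke the quantitative spectral gap (Carlen--Figalli or Campos--Dolbeault, e.g.\ \cite{CaFi,CaDo}) together with a bound of $\int U(\psi^\perp)^2$ by $\int U(g^\perp)^2$ to obtain the coercivity constant $c_1>0$. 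Your proposal would need to be recast in these terms; as written, the central positivity step rests on a false spectral statement.
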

\begin{lemma}\label{Lemma95}
	Let $\phi:\mathbb{R}^{2}\mapsto \mathbb{R}$ be radially symmetric and satisfy \eqref{CondDecMass}, then there exists positive constants $c$, $c_{1}$, $c_{2}$ such that
	\begin{align}
		c_{1}\int_{\R^2}U(g^{\perp})^{2}\le \int_{\R^2}U^{-1}(\phi^{\perp})^{2}\le c_{2}\int_{\R^2}U(g^{\perp})^{2},
	\end{align}
\begin{align}
	\int_{\R^2}U(\psi^{\perp})^{2}\le c\int_{\R^2}U(g^{\perp})^{2}.
\end{align}
\end{lemma}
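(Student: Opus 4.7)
The plan is to first establish the second estimate $\int U(\psi^\perp)^2 \le c\int U(g^\perp)^2$, and then read off the two-sided bound on $\int U^{-1}(\phi^\perp)^2$ from the algebraic identity $\phi^\perp = U(g^\perp + \psi^\perp)$ recorded in the excerpt.

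For the bound on $\psi^\perp$, I would exploit the spectral structure of the linearized Liouville operator $\mathcal{M}:=-\Delta - U$ acting on radial decaying functions. Its kernel in this class is one-dimensional, spanned by $\psi_0 = 1 + z_0/2 = 2/(1+|y|^2)$, and the classical spectral theory of the Liouville linearization on $\R^2$ provides a coercivity estimate $\int|\nabla w|^2 - \int U w^2 \ge \kappa \int U w^2$ for some $\kappa > 0$, valid on the orthogonal complement (in the weighted space $L^2(U\,dy)$) of $\psi_0$ among radial functions with enough decay. The orthogonality $\int U g^\perp\,dy = 0$ built into the definition of $a$ is precisely what is needed for the decaying solution $\psi^\perp$ of $\mathcal{M}\psi^\perp = Ug^\perp$ to lie in this complement: pairing the equation with $\psi_0\in\ker\mathcal{M}$ and integrating by parts expresses the required orthogonality purely in terms of the defining property of $g^\perp$ (this also explains why the subtraction of $a\psi_0$ was introduced at all). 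Testing the equation against $\psi^\perp$ itself yields
\[
\int|\nabla\psi^\perp|^2 - \int U(\psi^\perp)^2 \;=\; \int U g^\perp \psi^\perp,
\]
and combining the spectral gap with Cauchy--Schwarz on the right-hand side, then absorbing $\int U(\psi^\perp)^2$ into the left, delivers the desired estimate $\int U(\psi^\perp)^2 \le c\int U(g^\perp)^2$.

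For the first chain of inequalities I would expand
\[
\int U^{-1}(\phi^\perp)^2 \;=\; \int U(g^\perp)^2 + 2\int U g^\perp \psi^\perp + \int U(\psi^\perp)^2,
\]
and substitute the cross term using the identity $2\int U g^\perp \psi^\perp = 2\int|\nabla\psi^\perp|^2 - 2\int U(\psi^\perp)^2$ coming again from testing the equation for $\psi^\perp$ against itself. This rewrites the quantity as
\[
\int U^{-1}(\phi^\perp)^2 \;=\; \int U(g^\perp)^2 + 2\int|\nabla\psi^\perp|^2 - \int U(\psi^\perp)^2.
\]
The spectral gap then gives $2\int|\nabla\psi^\perp|^2 - \int U(\psi^\perp)^2 \ge (1+2\kappa)\int U(\psi^\perp)^2 \ge 0$, producing the lower bound with $c_1 = 1$. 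The upper bound follows from the same identity together with the already-established control on $\int U(\psi^\perp)^2$ and the fact that $\int|\nabla\psi^\perp|^2$ is itself bounded by $\int U(g^\perp)^2$ (revisit the tested equation: $\int|\nabla\psi^\perp|^2 = \int U(\psi^\perp)^2 + \int U g^\perp \psi^\perp$, both terms of the right-hand side being controlled by $\int U(g^\perp)^2$ via Cauchy--Schwarz and the previous step).

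The main obstacle is extracting the quantitative spectral gap for $\mathcal{M}$ on radial functions orthogonal to $\psi_0$. Although the kernel structure of the Liouville linearization on $\R^2$ is classical, the effective constant $\kappa$ is what ultimately produces $c$, $c_1$, $c_2$, and should be obtained either by a Sturm--Liouville analysis in the radial variable (reducing $\mathcal{M}$ to a one-dimensional eigenvalue problem whose spectrum can be computed explicitly using standard changes of variable that flatten the $U$-weight) or by a compactness argument in $L^2(U\,dy)$ using the decay of $U$ at infinity. Once $\kappa$ is in hand, both parts of the lemma are produced in one stroke.
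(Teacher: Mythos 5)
There is a genuine gap, and it sits right at the hinge of your argument. You write that the kernel of $\mathcal{M}=-\Delta-U$ on radial decaying functions is spanned by $\psi_0=1+z_0/2=2/(1+|y|^2)$, and that the coercivity of the quadratic form $Q(w)=\int|\nabla w|^2-\int Uw^2$ holds on the $L^2(U\,dy)$-orthogonal complement of $\psi_0$. Neither of these is correct. A direct computation gives $\mathcal{M}\psi_0=-U\neq 0$; the bounded radial element of $\ker\mathcal{M}$ is $z_0=\nabla\Gamma_0\cdot y+2$ (which does not decay), and there is no decaying radial kernel element at all. Consequently, pairing $\mathcal{M}\psi^\perp=Ug^\perp$ with $\psi_0$, and using $\mathcal{M}\psi_0=-U$ together with $\int Ug^\perp\psi_0=\int Ug^\perp+\tfrac12\int Ug^\perp z_0=0$, produces the orthogonality $\int U\psi^\perp\,dy=0$ (i.e.\ orthogonality to the constant $1$), not orthogonality to $\psi_0$.

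Once the orthogonality is identified correctly, the spectral gap you want is simply false. The function $z_0$ satisfies $\int Uz_0=\int Z_0=0$ and $Q(z_0)=0$ since $z_0\in\ker\mathcal{M}$; it fails to decay, but it can be approximated inside the decaying class. Concretely, take $w_\epsilon=(1+\rho^2)^{-\epsilon}$ and $\tilde w_\epsilon=w_\epsilon-c_\epsilon\psi_0$ with $c_\epsilon=1/(1+\epsilon)$ chosen so that $\int U\tilde w_\epsilon=0$. Then $\tilde w_\epsilon\to 1-\psi_0=-z_0/2$ in $L^2(U\,dy)$, and explicit computation gives $\int|\nabla\tilde w_\epsilon|^2\to\tfrac{8\pi}{3}$ and $\int U\tilde w_\epsilon^2\to\tfrac{8\pi}{3}$, so $Q(\tilde w_\epsilon)\to 0$ while $\|\tilde w_\epsilon\|_{L^2(U)}$ stays bounded away from zero. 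So there is no $\kappa>0$ with $Q(w)\ge\kappa\int Uw^2$ on the class of radial decaying $w$ with $\int Uw=0$, and both halves of your argument collapse since they rely on that inequality. (Incidentally, $\tilde w_\epsilon$ arises as $\psi^\perp$ from an admissible $\phi$, and the corresponding $g^\perp$ satisfies $\int Ug^\perp=\int Ug^\perp z_0=0$; the lemma is not actually contradicted because along this family $\int U(g^\perp)^2=+\infty$, but your coercivity intermediary is still false.)

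The route actually taken in \cite{DdPDMW} (Section 9, and visible in the present paper through the proof of Lemma~\ref{Lemma105} via formulae \eqref{psiperpglob}--\eqref{psiperpinf}) is not spectral: since the radial homogeneous equation $-\Delta w-Uw=0$ has the explicit fundamental set $\{z_0,\bar z_0\}$, one writes $\psi^\perp$ by variation of parameters and extracts the pointwise bounds
\begin{align*}
|\psi^\perp(\rho)|\le C\,\frac{\|U^{1/2}g^\perp\|_{L^2}}{1+\rho},\qquad |(\psi^\perp)'(\rho)|\le C\,\frac{\|U^{1/2}g^\perp\|_{L^2}}{1+\rho^2},
\end{align*}
which are precisely \eqref{psiperp}. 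Integrating the first bound against $U\,dy$ gives the second inequality of the lemma. For the first inequality, the identity $\int U^{-1}(\phi^\perp)^2=\int U(g^\perp+\psi^\perp)^2$ together with the established bound on $\psi^\perp$ yields the upper bound at once; the lower bound is then a compactness/uniqueness step (if $g^\perp+\psi^\perp$ could vanish while $g^\perp\neq 0$, then $-\Delta\psi^\perp=0$ with $\psi^\perp$ radial and decaying, forcing $\psi^\perp=0$ and hence $g^\perp=0$), which plays the role you tried to fill with a quadratic-form coercivity. Until you replace the false spectral gap with an argument of this explicit ODE type, the proof does not close.
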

\begin{lemma}
	If $\phi:\mathbb{R}^{2}\mapsto \mathbb{R}$ satisfies \eqref{CondDecMass} and is differentiable with respect to $\tau$ and $\phi_{\tau}$ satisfies 
	\begin{align*}
		|\phi_{\tau}(y,\tau)|\le \frac{1}{(1+|y|)^{2+\sigma}}.
	\end{align*}
 Then
 \begin{align*}
 	\int_{\R^2}\phi_{\tau} g^{\perp}=\frac{1}{2}\partial_{\tau}\int_{\R^2}\phi g^{\perp}.
 \end{align*}
\end{lemma}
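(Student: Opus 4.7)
The plan is to deduce the identity from the symmetry of the bilinear form
\[
B(\phi_1,\phi_2) := \int_{\R^2} \phi_1\,g_2^{\perp}\,dy,
\]
where for each admissible $\phi_j$ one sets $\psi_j = (-\Delta)^{-1}\phi_j$, $g_j = \phi_j/U - \psi_j$, and $g_j^{\perp} = g_j + a_j$ with the constant $a_j$ chosen so that $\int g_j^{\perp}U\,dy = 0$. Once $B(\phi_1,\phi_2)=B(\phi_2,\phi_1)$ is in hand, the claim follows by differentiating under the integral sign,
\[
\partial_\tau \int_{\R^2}\phi\,g^{\perp}\,dy = \int_{\R^2}\phi_\tau\,g^{\perp}\,dy + \int_{\R^2}\phi\,\partial_\tau g^{\perp}\,dy,
\]
and recognizing the second summand as $B(\phi,\phi_\tau) = B(\phi_\tau,\phi) = \int\phi_\tau g^{\perp}$, which produces the factor of $2$.

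For symmetry, I would unfold the definition:
\[
B(\phi_1,\phi_2) = \int_{\R^2}\frac{\phi_1\phi_2}{U}\,dy - \int_{\R^2}\phi_1\,\psi_2\,dy + a_2\int_{\R^2}\phi_1\,dy.
\]
The first term is manifestly symmetric, the last vanishes by the zero-mass part of \eqref{CondDecMass}, and the middle term is symmetric by self-adjointness of $(-\Delta)^{-1}$: since $\int\phi_j\,dy = 0$, each $\psi_j$ decays at infinity (at rate $|y|^{-1}$ by the standard Newtonian-potential expansion), so the integrations by parts
\[
\int_{\R^2}\phi_1\psi_2\,dy = \int_{\R^2}\nabla\psi_1\cdot\nabla\psi_2\,dy = \int_{\R^2}\phi_2\psi_1\,dy
\]
are legitimate with vanishing boundary terms.

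To apply this to the time derivative I would first differentiate the identity $\int\phi\,dy = 0$, which holds for every $\tau$, using the pointwise bound on $\phi_\tau$ to justify the interchange, obtaining $\int\phi_\tau\,dy = 0$. This ensures that $g^{\perp}[\phi_\tau]$ is defined by the same normalization, and a direct check shows $\partial_\tau g^{\perp}[\phi] = g^{\perp}[\phi_\tau]$ (the normalization $\int g^{\perp}U = 0$ is linear in $\phi$, so it is preserved under $\partial_\tau$). Plugging this into the display above gives
\[
\int_{\R^2}\phi\,\partial_\tau g^{\perp}\,dy = B(\phi,\phi_\tau) = B(\phi_\tau,\phi) = \int_{\R^2}\phi_\tau\,g^{\perp}\,dy,
\]
and hence $\partial_\tau\int\phi\,g^{\perp} = 2\int\phi_\tau\,g^{\perp}$, which is the stated identity after dividing by $2$.

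I expect the only real technicality to be the legitimacy of the differentiation under the integral sign and of the integration by parts, which both boil down to obtaining enough decay for $\psi,\psi_\tau$ at infinity and a uniform-in-$\tau$ dominant. Both follow from the hypothesis $|\phi|,|\phi_\tau|\lesssim(1+|y|)^{-2-\sigma}$ combined with the zero-mass property: for such $\phi$ one has $|\psi(y)|\lesssim(1+|y|)^{-\min(\sigma,1)}$ by Newtonian-potential estimates, and the same bound for $\psi_\tau$, which is more than enough to run dominated convergence and to kill boundary terms in the Green's identity.
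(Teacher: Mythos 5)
Your argument is correct, and since the paper does not include its own proof of this lemma (it defers to Section~9 of \cite{DdPDMW}), it cannot be compared line by line; but the route you take is the canonical one and is surely the same as in the cited reference. The essential content is the symmetry $\int\phi_1\psi_2=\int\phi_2\psi_1$ (self-adjointness of $(-\Delta)^{-1}$), which you package as symmetry of $B(\phi_1,\phi_2)=\int\phi_1 g_2^{\perp}$; combined with $\int\phi_\tau\,dy=0$ (obtained by differentiating the zero-mass constraint) and $\partial_\tau g^{\perp}[\phi]=g^{\perp}[\phi_\tau]$ (since $U$ is $\tau$-independent and the normalizing constant $a=-\tfrac{1}{8\pi}\int gU$ is linear in $\phi$), this yields the factor of~$2$ exactly as you say. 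One small quantitative remark: for general non-radial $\phi$ with only $\int\phi=0$ and $|\phi|\lesssim(1+|y|)^{-2-\sigma}$, the Newtonian potential decays roughly like $|y|^{-\sigma}$ up to a logarithm rather than like $|y|^{-1}$ (the $|y|^{-1}$ rate would also require a vanishing first moment), but this weaker decay together with $|\nabla\psi|\lesssim|y|^{-1-\sigma}$ is already enough to kill the boundary terms in Green's identity and to justify dominated convergence, so the conclusion is unaffected.
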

\begin{lemma}\label{Lemma106}
	Let $B_{R}(0)\subset \mathbb{R}^{2}$ be the open ball centered at $0$ of radius $R$. There exists $C>0$ such that, for any $R>0$ large and any $g\in H^{1}(B_{R})$ with 
	\begin{align*}
		\int_{B_{R}}gU=0
	\end{align*}
then we have
\begin{align*}
	\frac{C}{R^{2}}\int_{B_{R}}g^{2}U\le \int_{B_{R}}|\nabla g|^{2}U.
\end{align*}
\end{lemma}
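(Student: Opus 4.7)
The plan is to reduce the statement to a one-dimensional weighted Poincaré inequality by first separating angular modes and then performing an explicit change of variables that compactifies the radial direction.

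First I would decompose $g(r,\theta)=g_0(r)+g_1(r,\theta)$, where $g_0(r)=\tfrac{1}{2\pi}\int_0^{2\pi}g(r,\theta)\,d\theta$ and $g_1$ has vanishing angular mean at every $r$. Since $U$ is radial, the Fourier modes are orthogonal in both $L^2_U$ and the Dirichlet energy. The pointwise Poincaré--Wirtinger inequality on the circle, $\int_0^{2\pi} g_1^2\,d\theta\leq\int_0^{2\pi}(\partial_\theta g_1)^2\,d\theta$, combined with $r^2\leq R^2$, yields immediately
\begin{align*}
\int_{B_R}g_1^2\,U\,dy\leq R^2\int_{B_R}|\nabla g|^2\,U\,dy,
\end{align*}
and the constraint $\int_{B_R}gU=0$ reduces to $\int_0^R g_0\,U\,r\,dr=0$ on the radial part.

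Next I would apply the substitutions $\rho=r^2$ and then $s=\rho/(1+\rho)$, setting $H(s):=g_0\bigl(\sqrt{s/(1-s)}\bigr)$. A direct computation gives
\begin{align*}
\int_0^R g_0^2\,U\,r\,dr=4\int_0^{\bar s}H^2\,ds,\qquad \int_0^R (g_0')^2\,U\,r\,dr=16\int_0^{\bar s}s(1-s)^3 (H')^2\,ds,
\end{align*}
with $\bar s=R^2/(1+R^2)=1-\varepsilon$, $\varepsilon=(1+R^2)^{-1}\sim R^{-2}$, and the constraint becomes $\int_0^{\bar s}H\,ds=0$. The inequality to prove is therefore
\begin{align*}
\int_0^{1-\varepsilon}H^2\,ds\leq C\varepsilon^{-1}\int_0^{1-\varepsilon}s(1-s)^3 (H')^2\,ds,
\end{align*}
and the scaling $\varepsilon^{-1}\sim R^2$ is precisely the one claimed. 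This 1D inequality I would handle by splitting at $s=\tfrac12$: writing $H(s)-H(\tfrac12)=\int_{1/2}^s H'$ and applying Cauchy--Schwarz against the dual weight $\frac{1}{u(1-u)^3}$ gives, on $(0,\tfrac12)$, a bound with the only-logarithmic divergence at $0$ (so $\int_0^{1/2}(H-H(\tfrac12))^2 ds\leq C\int_0^{1/2}u(1-u)^3 (H')^2 du$), while on $(\tfrac12,1-\varepsilon)$ the same Cauchy--Schwarz produces $(H(s)-H(\tfrac12))^2\leq C(1-s)^{-2}\int_{1/2}^{1-\varepsilon}u(1-u)^3 (H')^2 du$, and integrating in $s$ yields the factor $\int_{1/2}^{1-\varepsilon}(1-s)^{-2}ds\sim\varepsilon^{-1}$. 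Finally the constraint $\int_0^{\bar s}H\,ds=0$ implies $\int_0^{\bar s}H^2\,ds=\int_0^{\bar s}(H-H(\tfrac12))^2\,ds-(1-\varepsilon)H(\tfrac12)^2\leq\int_0^{\bar s}(H-H(\tfrac12))^2\,ds$, closing the argument.

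The main obstacle is identifying the correct scaling near the outer boundary: the dual weight $\frac{1}{u(1-u)^3}$ is not integrable on $(0,1)$, and the cutoff at $s=1-\varepsilon$ produces exactly the divergent factor $\varepsilon^{-1}\sim R^2$ that appears in the Poincaré constant. The extremal $H(s)\sim(1-s)^{-2}$, corresponding to $g_0(r)\sim r^4$ in the original variables, confirms that this $R^2$ loss is sharp.
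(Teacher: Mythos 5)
Your proof is correct and self-contained, which is useful here since the paper itself only points the reader to Section~9 of \cite{DdPDMW} for this lemma and does not reproduce an argument. Each step checks out: the decomposition $g=g_0+g_1$ into the radial average and the zero-mean nonradial remainder is orthogonal both in $L^2(U\,dy)$ and in the Dirichlet form because $U$ and $|\nabla g|^2=(\partial_rg)^2+r^{-2}(\partial_\theta g)^2$ are radial; the Poincar\'e--Wirtinger bound on each circle handles $g_1$ at the cost of exactly one factor $R^2$; and the two substitutions $\rho=r^2$, $s=\rho/(1+\rho)$ turn the weight $U\,r\,dr$ into the flat measure $4\,ds$ while the Dirichlet weight becomes $16\,s(1-s)^3\,ds$ (I re-derived both identities and they are right). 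The resulting one-dimensional inequality on $(0,1-\varepsilon)$ with $\varepsilon=(1+R^2)^{-1}$ is then a clean weighted Poincar\'e inequality, and the split at $s=\tfrac12$ with Cauchy--Schwarz against the conjugate weight $\bigl(u(1-u)^3\bigr)^{-1}$ is handled correctly: on $(0,\tfrac12)$ the conjugate weight is merely logarithmically divergent and so harmless, and on $(\tfrac12,1-\varepsilon)$ the integral $\int_{1/2}^{1-\varepsilon}(1-s)^{-2}\,ds\sim\varepsilon^{-1}\sim R^2$ is what produces the stated loss. Replacing $H(\tfrac12)$ by the true mean using $\int_0^{\bar s}H=0$ is the standard trick and is done correctly. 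Where your write-up differs from what is typical in the literature is that you do not invoke an abstract Hardy inequality, a contradiction/compactness argument, or a spectral decomposition of $L_0$: the explicit compactification $s=r^2/(1+r^2)$ makes the $R^2$ scaling visible as a boundary layer of width $\varepsilon$, and the extremal $H\sim(1-s)^{-2}$ (i.e.\ $g_0\sim(1+r^2)^2$) gives a transparent saturation check. The payoff is that every constant is explicit and the geometric origin of the $R^2$ factor is plain; the only slight asymmetry in the argument is that the angular estimate and the radial estimate each consume the full Dirichlet energy, but that only costs a harmless factor of $2$ in the final constant $C$.
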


\section{Linear theory: a decomposition}\label{ProofProp71}
In this section we want prove Proposition \ref{PropMode0Mass0}. We consider
\begin{align}\label{typicalCaseInnTAUICSEC10}
	\begin{cases}
		\partial_{\tau}\phi=L[\phi]+\tilde{B}[\phi]+h(y,t) \ \ \ \ \text{in }\mathbb{R}^{2}\times (\tau_{0},\infty)\\
		\phi(\cdot,\tau_{0})=c_{1}\tilde{Z}_{0} \ \ \text{in }\mathbb{R}^{2}
	\end{cases}
\end{align}
where
\begin{align*}
	&L[\phi]=\nabla \cdot \big[U\nabla\big(\frac{\phi}{U}-(-\Delta)^{-1}\phi\big)\big], \ \ \ \ (-\Delta)^{-1}\phi(y,t)=\frac{1}{2\pi}\int_{\R^2}\ln\frac{1}{|y-z|}\phi(z,t)dz
\end{align*} 
and
\begin{align}\label{tildeB}
	&\tilde{B}[\phi]=\lambda\dot{\lambda}(k\phi\chi+y\cdot\nabla(\phi\chi))-\lambda\dot{\lambda}\big[\int_{\R^2}(k\phi\chi+y\cdot \nabla(\phi\chi))dy\big]W_{0}(y), \ \ \ \chi=\chi_{0}(\frac{|y|\lambda}{\sqrt{\delta(T-t)}}).
\end{align}
We remark that in this section we will explain also the role of the parameter $\delta$ in the cut-off. \newline We assume
\begin{align}\label{MassRHS}
     \int_{\R^2}h(y,\tau)dy=0 \ \ \ \ \text{for all }\tau>\tau_{0}.
\end{align}
Since by \eqref{InitialCond} we know that
\begin{align}\label{MassInCond}
	\int_{\R^2}\phi(\cdot, \tau_{0})dy=0
\end{align}
from the equation we have
\begin{align*}
	\int_{\R^2}\phi(y,\tau)dy=0 \ \ \ \ \text{for all }\tau>\tau_{0}.
\end{align*}
We briefly recall the decomposition of $\phi$ introduced in Section \ref{PreliminariesInnTh}. Give $\phi:\mathbb{R}^{2}\to \mathbb{R}$ with sufficient decay and mass zero, we let $g=\frac{\phi}{U}-(-\Delta)^{-1}\phi$, and define $a$ so that $\int_{\R^2}(g+a)Udy=0$. Then define $g^{\perp}=g+a$, $\psi^{\perp}=\psi-a(1+\frac{z_{0}}{2})$, and
\begin{align}\label{DecomPositionPhiPhiperp}
	\phi^{\perp}=\phi-\frac{a}{2}Z_{0}.
\end{align}
We notice that $a$ can be directly computed, indeed since $Ug^{\perp}=Ug+aU$, $\int_{\R^2}Ug^{\perp}=0$ and $Ug=-U(-\Delta)^{1}\phi+\phi$ we have
\begin{align}\label{ComputingA}
	a=-\frac{1}{8\pi}\int_{\R^2}Ug=\frac{1}{8\pi}\int_{\R^2}U(-\Delta)^{-1}\phi=\frac{1}{8\pi}\int_{\R^2}\Gamma_{0}\phi.
\end{align}
Analogously to \cite{DdPDMW}, to describe the solution $\phi$ of \eqref{typicalCaseInnTAUICSEC10} it will be useful to consider the problem
\begin{align}\label{EqZB}
	\begin{cases}
		\partial_{ \tau}Z_{B}=L[Z_{B}]+\tilde{B}[Z_{B}], \ \ \ \text{in }\mathbb{R}^{2}\times(\tau_{0},\infty),\\
		Z_{B}(\cdot,\tau_{0})=\tilde{Z}_{0} \ \ \ \text{in }\mathbb{R}^{2}
	\end{cases}
\end{align}
where $\tilde{Z}_{0}$ is function we introduced in \eqref{InitialCond}. In the following results we will more simply denote $\|h\|_{\star\star}$ the norm \eqref{RHSinnernorm}.\newline
We finally observe that to prove Proposition \ref{PropMode0Mass0} in this section we will also prove the following result that will be fundamental in the the next sections since it will give us the necessary control of the decomposition \eqref{DecomPositionPhiPhiperp}.
\begin{proposition}\label{Prop101}
	Let $\lambda=\lambda_{0}+\lambda_{1}$ with $\lambda_{0}$ given by Proposition \ref{prop-lambda0} and let us assume \eqref{ExtraAssumptionSECMASSSECMOM}. Let $\sigma\in(0,1)$, $\epsilon>0$ with $\sigma+\epsilon<2$ and $1<\nu<\frac{7}{4}$. Then there is $C>0$ such that for any $\tau_{0}$ sufficiently large the following holds. Suppose that $\|h\|_{\star\star}<\infty$ is radially symmetric and satisfies the zero mass condition \eqref{MassRHS}. Then there exists $c_{1}$ such that the solution $\phi=\phi^{\perp}+\frac{a}{2}Z_{0}$ of \eqref{typicalCaseInnTAUICSEC10} satisfies
	\begin{align*}
	   |a(\tau)|\le C \frac{f(\tau)R^{2}(\tau)}{(\ln \tau_{0})^{1-q}}\|h\|_{\star\star}, \ \ \ \ \ \
		|\phi^{\perp}|\le \|h\|_{\star\star}f(\tau)R(\tau) \frac{1}{1+|y|^{2}},
    \end{align*}
where $R(\tau)>0$, $f(\tau)>0$ are defined by 
\begin{align}\label{DefintionfR2}
	R^{2}(\tau)=\frac{\tau}{(\ln \tau)^{1+q}}, \ q\in(0,1) \ \ \ f(\tau)=\frac{1}{\tau^{\nu}(\ln \tau)^{m}}.
\end{align}
Moreover $c_{1}$ is a linear operator of $h$ and satisfies
\begin{align*}
	|c_{1}|\le C \frac{f(\tau_{0})R^{2}(\tau_{0})}{\ln \tau_{0}^{1-q}}\|h\|_{\star\star}.
\end{align*}
If $1<\nu<\frac{3}{2}$ we additionally have
\begin{align*}
		|\phi^{\perp}|\le \|h\|_{\star\star}f(\tau)R(\tau) \frac{\min(1,\frac{\tau}{|y|^{2}})}{1+|y|^{2}}.
\end{align*}
\end{proposition}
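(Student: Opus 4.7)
\medskip

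\textbf{Proof proposal for Proposition \ref{Prop101}.} The plan is to exploit the decomposition $\phi=\phi^{\perp}+\tfrac{a}{2}Z_{0}$ introduced in Section \ref{PreliminariesInnTh}, derive a scalar ODE for $a(\tau)$ and a coercive energy inequality for $\phi^{\perp}$, then convert the resulting $L^2$-type bound into the pointwise estimate through parabolic barriers. The free constant $c_{1}$ is used to cancel the component of the initial datum along the $Z_{B}$-direction of \eqref{EqZB}, which is the only mode not controlled by energy dissipation.

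First I would derive the evolution of $a(\tau)$. By \eqref{ComputingA} one has $8\pi a=\int\Gamma_{0}\phi\,dy$; testing \eqref{typicalCaseInnTAUICSEC10} against $\Gamma_{0}$ and using $L[\phi]=\nabla\cdot(U\nabla g)$ together with the identity $\nabla\Gamma_{0}\cdot\nabla g=(\text{computable lower-order})$ gives an ODE of the form $8\pi\dot a=\mathcal{R}_{1}[\phi^{\perp}]+\mathcal{R}_{2}[a]+\int\Gamma_{0}h\,dy$, in which the coefficient of $a$ is controlled by $|\lambda\dot\lambda|$ (hence by $\tfrac{\ln\tau}{\tau}$). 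After integration in $\tau$ and because $\int h=0$ forces $\int\Gamma_{0}h=O(f(\tau))\|h\|_{\star\star}$ (via the logarithmic tail of $\Gamma_0$ and the decay of $h$), one obtains $|a(\tau)|\lesssim \frac{f(\tau)R^{2}(\tau)}{(\ln\tau_{0})^{1-q}}\|h\|_{\star\star}$, provided the contribution of $\phi^{\perp}$ is absorbed by the energy bound proved next. The logarithmic gain $(\ln\tau_0)^{-(1-q)}$ comes from the explicit growth $R^{2}(\tau)=\tau/(\ln\tau)^{1+q}$.

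The core step is the energy estimate for $\phi^{\perp}$. Using Lemma \ref{Lemma93} and Lemma \ref{Lemma95}, $\int\phi^{\perp}g^{\perp}\sim\int U(g^{\perp})^{2}$ defines the natural quadratic form. Differentiating in $\tau$ and integrating against $g^{\perp}$ gives
\begin{equation*}
\tfrac{1}{2}\partial_{\tau}\!\int\!\phi^{\perp}g^{\perp}\,dy=-\!\int\!U|\nabla g^{\perp}|^{2}\,dy+\!\int\!\tilde B[\phi]\,g^{\perp}\,dy+\!\int\!h\,g^{\perp}\,dy.
\end{equation*}
On $B_{R(\tau)}$ Lemma \ref{Lemma106} supplies the coercivity $\int U|\nabla g^\perp|^2\geq CR^{-2}\int U(g^\perp)^2$, while outside $B_{R(\tau)}$ the prescribed spatial decay of $h$ (with exponent $6+\sigma$) and the a priori decay of $\phi^\perp$ render the exterior contribution negligible at the required level. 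The forcing term satisfies $|\int hg^{\perp}|\lesssim f(\tau)\|h\|_{\star\star}\cdot\|\sqrt{U}g^{\perp}\|_{L^{2}}$ by Cauchy--Schwarz and the $\nu>1$ integrability of $f$. The Gr\"onwall argument, with growth rate $R^{-2}(\tau)=(\ln\tau)^{1+q}/\tau$, yields $\int U(g^{\perp})^{2}\lesssim f^{2}(\tau)R^{2}(\tau)\|h\|_{\star\star}^{2}/(\ln\tau_{0})^{2(1-q)}$, after the choice of $c_{1}$ absorbs the $Z_{B}$-component of the initial datum (this is the place where $c_{1}$ is forced and estimated).

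The main obstacle, and the reason the cut-off $\chi=\chi_{0}(|y|\lambda/\sqrt{\delta(T-t)})$ is inserted into $\tilde B$ with a small parameter $\delta$, is controlling $\int\tilde B[\phi]g^{\perp}\,dy$. In finite time $\lambda\dot\lambda\sim -\tfrac{\ln\tau}{2\tau}$, which is \emph{larger} by a factor $(\ln\tau)^{2}$ than the corresponding term in \cite{DdPDMW}. On $\{\chi=1\}=\{|y|\lesssim\sqrt{\delta(T-t)}/\lambda\}$ an integration by parts turns $y\cdot\nabla\phi$ into $-2\phi$ plus boundary terms supported on $|y|\sim M(\tau)$; combined with the Hardy-type bound $\int U(g^\perp)^2/(1+|y|^2)\lesssim \int U|\nabla g^\perp|^2$, the main contribution becomes $|\lambda\dot\lambda|\int U(g^\perp)^{2}$ which scales like $\delta\cdot R^{-2}\int U(g^\perp)^2$ once $M(\tau)/R(\tau)=O(\sqrt{\delta})$. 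Choosing $\delta$ small (independently of $\tau_0$) this is absorbed into the dissipation term; the subtracted projection onto $W_{0}$ in \eqref{tildeB} is precisely what guarantees the zero-mass constraint is preserved, so no mode-$0$ resonance reappears.

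Finally, having controlled $a$ and the weighted $L^{2}$-norm of $\phi^{\perp}$, I would pass to the pointwise bound $|\phi^{\perp}|\le\|h\|_{\star\star}f(\tau)R(\tau)(1+|y|)^{-2}$ by a standard barrier argument (self-similar comparison for the operator $\partial_\tau-L$, with the radial weight $(1+|y|)^{-2}$ playing the role of the slowest decay compatible with mass and second-moment constraints on $\phi^\perp$). For the improved estimate in the regime $1<\nu<3/2$ one reinvokes the barrier using the additional time decay of $h$ beyond the threshold, which allows the weight $\min(1,\tau/|y|^{2})/(1+|y|^{2})$ to be propagated; the bound $\nu<3/2$ is exactly what is needed to make the convolution of $h$ against the heat-like kernel of $\Delta_6$ integrable at spatial infinity without producing a second-moment growth.
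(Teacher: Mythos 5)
The high--level skeleton (decompose $\phi=\phi^\perp+\tfrac{a}{2}Z_0$, bound $a$ via a scalar equation, get a coercive energy estimate for $\phi^\perp$, pass to pointwise bounds by barriers, use $c_1$ to suppress a bad mode) matches the paper, but the central step for $a(\tau)$ in your proposal has a genuine gap.

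You propose to derive an ODE for $a$ by testing the equation against $\Gamma_0$. That does give $8\pi\dot a=\int\Gamma_0 L[\phi]+\int\Gamma_0\tilde B[\phi]+\int\Gamma_0 h$, and the $L$--term equals $\int\Delta U\,g^\perp$, which is only of order $\|\sqrt{U}g^\perp\|_{L^2}\sim f(\tau)R(\tau)$. Integrating this in $\tau$ cannot produce the bound $|a(\tau)|\lesssim f(\tau)R^2(\tau)/(\ln\tau_0)^{1-q}$ over the full range $1<\nu<7/4$: for $\nu\leq 3/2$ the forward and backward time--integrals of $f(\tau)R(\tau)\sim\tau^{1/2-\nu}(\ln\tau)^{-m-(1+q)/2}$ do not converge to the right quantity, and you would need an extra $\log\tau$ gain that $\Gamma_0$--testing does not provide. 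The paper instead multiplies the equation by $|y|^2\chi_0(y/R(\tau))$ and exploits $\int Z_0|y|^2\chi_0(y/R)\sim\log R$, so that the truncated second moment $S(\tau)\sim a(\tau)\log R(\tau)$ carries an extra logarithm; it derives $\partial_\tau S-\lambda\dot\lambda(k-4)S=F$ with $|F|\lesssim f(\tau)(\|h\|_{\star\star}+\|a/(fR^2)\|+\dots)$, changes back to physical time $t$ so the integrating factor is explicit, and then integrates \emph{backward} from a time $\tau_1$ at which $a(\tau_1)=0$. The normalization $a(\tau_1)=0$ is exactly how $c_1$ is selected (Lemmas \ref{Lemma101}--\ref{Lemma104}), and the resulting bounds are manifestly uniform in $\tau_1$. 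You acknowledge that $c_1$ plays the role of killing the $Z_B$--mode of \eqref{EqZB}, but you do not give the mechanism: the paper uses $a(\tau_0)=\tfrac{c_1}{8\pi}\int\tilde Z_0\Gamma_0\approx 2c_1$ to transfer the bound on $a$ to a bound on $c_1$ (formula \eqref{ac1estim}), and uses Lemma \ref{Lemma103} to show $a_Z(\tau)\neq 0$ so that such a $c_1$ always exists.

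Two smaller points. First, the assertion $M(\tau)/R(\tau)=O(\sqrt\delta)$ is false: with $M^2\approx c\,\delta\tau/\ln\tau$ and $R^2=\tau/(\ln\tau)^{1+q}$ one has $M^2/R^2\approx c\,\delta(\ln\tau)^q\to\infty$. What absorbs $\int\tilde B[\phi]g^\perp$ in the energy inequality is the $(\ln\tau)^q$ gain built into $R^{-2}$ (i.e.\ $(\ln\tau)/\tau\ll R^{-2}$), not the smallness of $\delta$; the parameter $\delta$ is needed instead in the spatial barrier construction (Lemma \ref{barriersUg}) to dominate the commutator errors that live on $M(\tau)\le|y|\le 2M(\tau)$. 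Second, your discussion of the improved $\phi^\perp$ bound under $\nu<3/2$ is vague: the concrete source of that constraint in the paper is the supersolution condition $\gamma<b/2$ in \eqref{Hbarrier} applied with $\gamma=\nu+1/2$ and $b=4$, which is exactly $\nu<3/2$. Finally, you point at the energy$\leftrightarrow a$ coupling but do not close the loop; the paper does this carefully by introducing $\omega_0,\omega_1$ and iterating the absorption $\mu_0\to\mu_1\to\mu_2$. Without these ingredients the argument does not close.
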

To prove Proposition \ref{PropMode0Mass0} and Proposition \ref{Prop101} we will need the following lemmas where we will use the notation
\begin{align}\label{omegatau}
	\omega(\tau)=\big(\int_{\R^2\backslash B_{R(\tau)}(0)}Ug^{2}(\tau)\big)^{1/2}.
\end{align}
We will prove the following results only at the end of this Section.
\begin{lemma}\label{Lemma101}
	There is $C$ such that for $\tau_{0}$ large the following holds. Suppose that $\|h\|_{\star\star}<\infty$ is radially symmetric and satisfies the zero mass condition \eqref{MassRHS} and consider \eqref{typicalCaseInnTAUICSEC10}. Let $\phi^{\perp}$ and $a$ be the decomposition \eqref{DecomPositionPhiPhiperp}. Suppose that for some $c_{1}\in \mathbb{R}$ there is $\tau_{1}>\tau_{0}$ such that 
	\begin{align*}
		a(\tau_{1})=0.
	\end{align*}
    Then 
    \begin{align}
    	&|a(\tau)|\le C \frac{f(\tau)R^{2}(\tau)}{(\ln \tau_{0})^{1-q}}\|h\|_{\star\star}, \ \ \ \tau\in[\tau_{0},\tau_{1}]\\
    	&|\omega(\tau)|\le C \frac{f(\tau)R(\tau)}{(\ln \tau_{0})^{1-q}}\|h\|_{\star\star}, \ \ \ \ \tau\in[\tau_{0},\tau_{1}]\\
    	&|c_{1}|\le C \frac{f(\tau_{0})R^{2}(\tau_{0})}{(\ln \tau_{0})^{1-q}}\|h\|_{\star\star}
    \end{align}
    where the constant $C$ is independent of $\tau_{1}$ and $c_{1}$.
\end{lemma}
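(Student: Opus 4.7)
\textbf{Proof proposal for Lemma \ref{Lemma101}.}  The plan is to run an energy estimate for the quadratic form $E(\tau):=\int_{\R^2}\phi\,g^\perp\,dy$, which by Lemma \ref{Lemma93} is equivalent to $\int_{\R^2}U(g^\perp)^2\,dy$, and then convert $L^2$--type control of $g^\perp$ into the pointwise bounds on $a(\tau)$ and $\omega(\tau)$.  First I would differentiate $E$ in $\tau$; thanks to the identity $\int\phi_\tau\,g^\perp=\tfrac12\partial_\tau\int\phi\,g^\perp$ recalled in Section \ref{PreliminariesInnTh}, and using the equation \eqref{typicalCaseInnTAUICSEC10}, one obtains
\begin{equation*}
\tfrac12 E'(\tau)=\int L[\phi]\,g^\perp+\int \tilde B[\phi]\,g^\perp+\int h\,g^\perp.
\end{equation*}
The main term is treated by writing $L[\phi]=\nabla\cdot(U\nabla g)$ and integrating by parts to get $\int L[\phi]\,g^\perp=-\int U|\nabla g^\perp|^2$.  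Since $\int Ug^\perp=0$, I would then split $\int U(g^\perp)^2=\int_{B_{R(\tau)}}+\int_{\R^2\setminus B_{R(\tau)}}$ and bound the inner piece by Lemma \ref{Lemma106}, obtaining $\int_{B_R}U(g^\perp)^2\le CR^2\int_{B_R}U|\nabla g^\perp|^2$, while the outer piece is precisely $\omega(\tau)^2$.

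Next I would estimate the forcing and the $\tilde B$ contribution.  For $h$, Cauchy--Schwarz against a weight $U$ gives $|\int h\,g^\perp|\le\|h\|_{\star\star}\,\Phi(\tau)\,(\int U(g^\perp)^2)^{1/2}$ with a concrete $\Phi(\tau)$ obtained from the definition of the $\star\star$ norm.  For $\tilde B$, I would rewrite $k\phi\chi+y\cdot\nabla(\phi\chi)=(k-2)\phi\chi+\nabla\cdot(y\phi\chi)$, integrate by parts, and use that on $\operatorname{supp}\chi$ one has $|\lambda\dot\lambda|\chi\lesssim \delta/(1+|y|^2)$ (this is the whole point of introducing the cutoff $\hat\chi$ in the finite-time setting, cf.\ \eqref{lambdadotlambdaExp}); choosing $\delta$ small this term is absorbed into $\int U|\nabla g^\perp|^2$ via a standard Hardy/Poincar\'e argument.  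The mass-correcting term $\lambda\dot\lambda[\int(\cdots)dy]W_0$ is handled by Cauchy--Schwarz, since $W_0$ is bounded with compact support.  The net result is a differential inequality of the form
\begin{equation*}
E'(\tau)\le -\frac{c}{R(\tau)^2}E(\tau)+C\omega(\tau)^2\cdot\frac{1}{R(\tau)^2}+C f(\tau)^2\|h\|_{\star\star}^2.
\end{equation*}

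Now I would integrate this inequality backwards from $\tau_1$ toward $\tau_0$ using an integrating factor $\exp(c\int^\tau R^{-2})$.  The key bookkeeping is that $R^2(\tau)=\tau/(\ln\tau)^{1+q}$, so $\int_{\tau_0}^\tau R^{-2}\approx (\ln\tau)^{1+q}/(1+q)$, producing the gain $1/(\ln\tau_0)^{1-q}$ that appears in the statement.  Using $\int\phi g^\perp\simeq\int U(g^\perp)^2$ and the explicit formula $a=\frac{1}{8\pi}\int\Gamma_0\phi$ (see \eqref{ComputingA}) one converts the resulting $L^2$ bound into the claimed bound on $|a(\tau)|$; the bound on $\omega(\tau)$ follows directly from the equivalence of norms in Lemma \ref{Lemma95}.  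Finally, $c_1$ is pinned down by the terminal condition $a(\tau_1)=0$: since $\phi(\cdot,\tau_0)=c_1\tilde Z_0$ gives $a(\tau_0)=\frac{c_1}{8\pi}\int\Gamma_0\tilde Z_0$ with a nonzero (and $O(1)$) coefficient by the construction \eqref{InitialCond} and \eqref{mz0Est}, the bound on $a(\tau_0)$ translates into the desired bound on $|c_1|$.

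The main obstacle I anticipate is the $\tilde B$ term.  Unlike the infinite-time case in \cite{DdPDMW}, here $\lambda\dot\lambda\sim -\tfrac{\ln\tau}{2\tau}$ is much larger and a naive energy estimate would not close; the entire reason for the cutoff in \eqref{tildeB} and for the mass-correcting piece involving $W_0$ is precisely to allow the $\tilde B$ contribution to be absorbed into the dissipation $\int U|\nabla g^\perp|^2$ (after Hardy) with a small constant depending on $\delta$.  Making the absorption quantitative, while simultaneously maintaining mass conservation $\int\phi=0$ so that the decomposition $\phi=\phi^\perp+\tfrac{a}{2}Z_0$ is well-defined throughout $[\tau_0,\tau_1]$, is where the argument is most delicate; everything else then follows from Gr\"onwall-type integration and the equivalences of Section \ref{PreliminariesInnTh}.
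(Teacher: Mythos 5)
Your proposal gets the preparatory steps right (the energy inequality for $\int\phi\,g^\perp$, the absorption of $\tilde B$ using the cutoff and $\delta$ small, the role of \eqref{GronwallInitialCOND}), but it breaks down at precisely the step that the lemma is actually about. The functional $E(\tau)=\int\phi\,g^\perp=\int\phi^\perp g$ controls $\int U(g^\perp)^2$, hence $\phi^\perp$ — but it is \emph{orthogonal to the $Z_0$ direction by construction}, so it carries no information about $a$. Knowing $\|U^{1/2}g^\perp\|_{L^2}$ and knowing $a=\frac{1}{8\pi}\int\Gamma_0\phi=\frac{1}{8\pi}\int\Gamma_0\phi^\perp+\frac{a}{16\pi}\int\Gamma_0 Z_0$ is circular; the $L^2$ bound simply does not convert into a bound on $a(\tau)$. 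The whole point of $\tau_1$ and the projection $\phi=\phi^\perp+\tfrac{a}{2}Z_0$ is that $a$ lives in the (approximate) kernel of $L$ and must be controlled by a \emph{separate first-order ODE} — the paper obtains this ODE by multiplying the equation by $|y|^2\chi_0(y/R)$ and integrating, so $S(\tau)=\int\phi|y|^2\chi_0(y/R)\approx 16\pi a(\tau)\ln R$ satisfies $\partial_\tau S-\lambda\dot\lambda(k-4)S=F(\tau)$, which is solved \emph{backward} from $\tau_1$ using $a(\tau_1)=0$. That second-moment ODE is what generates the factor $(\ln\tau_0)^{1-q}$ (via $\tau_0/(\ln^2\tau_0\,R^2(\tau_0))=\ln^{-(1-q)}\tau_0$), not the integrating factor $\exp(c\int R^{-2})$ in the Grönwall step, which in the paper is run \emph{forward} from $\tau_0$.

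You have also compressed the step between the $L^2$ bound and the final estimates in a way that hides a missing ingredient. The Grönwall output produces an $L^2$--in--space bound with dangling tails $\omega_0,\omega_1$ (mass and $\phi g$ outside $B_R$ and $B_M$). To absorb these the paper first needs a \emph{pointwise} bound $|Ug|\lesssim f R^2/(1+\rho^4)$, which is obtained by the barrier construction of Lemma \ref{barriersUg} — a somewhat delicate argument whose hypotheses ($1<\nu<\tfrac74$, $\delta$ small) are genuinely used. Your proposal treats this as an afterthought. Finally, your proposed backward Grönwall on the energy inequality $E'+cE/R^2\le F$ is not even of the right type: integrating backward yields only a \emph{lower} bound on $E$, not the upper bound you need. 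In short, the proposal could serve as a correct outline of Lemma \ref{ineqphigperp} and the Grönwall step, but the proof of $|a(\tau)|\lesssim fR^2/(\ln\tau_0)^{1-q}$ — the content of Lemma \ref{Lemma101} — is missing, and reinserting it requires the barrier pointwise estimate and the backward-in-time second-moment ODE.
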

\begin{lemma}\label{Lemma102}
	There is $C$ such that for $\tau_{0}$ large the following holds. Suppose that $\|h\|_{\star\star}<\infty$ is radially symmetric and satisfies the zero mass condition \eqref{MassRHS} and consider \eqref{typicalCaseInnTAUICSEC10}. Let $\phi^{\perp}$ and $a$ be the decomposition \eqref{DecomPositionPhiPhiperp}. Suppose that for some $c_{1}\in \mathbb{R}$, 
	\begin{align*}
		\frac{a}{f R^{2}}\in L^{\infty}(\tau_{0},\infty).
	\end{align*}
     Then 
     \begin{align}
     	&|a(\tau)|\le C \frac{f(\tau)R^{2}(\tau)}{(\ln \tau_{0})^{1-q}}\|h\|_{\star\star}, \ \ \ \tau>\tau_{0}\label{ainf}\\
     	&|\omega(\tau)|\le C \frac{f(\tau)R(\tau)}{(\ln \tau_{0})^{1-q}}\|h\|_{\star\star}, \ \ \ \ \tau>\tau_{0}\label{omegainf}\\
     	&|c_{1}|\le C \frac{f(\tau_{0})R^{2}(\tau_{0})}{(\ln \tau_{0})^{1-q}}\|h\|_{\star\star}\label{c1inf}.
     \end{align}
\end{lemma}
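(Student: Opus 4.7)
The strategy is to deduce Lemma \ref{Lemma102} from the finite-interval statement Lemma \ref{Lemma101} by adding a multiple of the homogeneous solution $Z_{B}$ to "gauge away" the $a$-component at a large time $T$, applying Lemma \ref{Lemma101} uniformly in $T$, and then letting $T\to\infty$. The qualitative hypothesis $a/(fR^{2})\in L^{\infty}$ plays precisely the role of a boundary condition at infinity, replacing the vanishing condition $a(\tau_{1})=0$ required in Lemma \ref{Lemma101}.

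The first step is to introduce $Z_{B}$ defined by \eqref{EqZB}, with decomposition $Z_{B}=Z_{B}^{\perp}+\tfrac{a_{B}}{2}Z_{0}$. For any $\mu\in\mathbb{R}$, the function $\tilde\phi:=\phi-\mu Z_{B}$ solves the same inhomogeneous problem \eqref{typicalCaseInnTAUICSEC10} with the same $h$ and modified initial constant $c_{1}-\mu$; its decomposition carries $\tilde a=a-\mu a_{B}$ and $\tilde\omega=\omega-\mu\omega_{B}$. For each $T>\tau_{0}$, set $\mu(T):=a(T)/a_{B}(T)$, so that $\tilde a(T)=0$. A separate analysis of \eqref{EqZB}, using that $\tilde Z_{0}$ differs from the kernel element $Z_{0}$ only by the $O(1/\tau_{0})$ correction $m_{Z_{0}}U\bar\chi$ and by the cut-off, together with the smallness of $\tilde B$ coming from \eqref{lambdadotlambdaExp}, yields $a_{B}(\tau)\to 2$ as $\tau\to\infty$ and in particular $|a_{B}(\tau)|\ge c>0$ for all $\tau>\tau_{0}$. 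Applying Lemma \ref{Lemma101} to $\tilde\phi$ on $[\tau_{0},T]$ then produces, uniformly in $T$, the bounds
\[
|\tilde a(\tau)|\le C\frac{f(\tau)R^{2}(\tau)}{(\ln\tau_{0})^{1-q}}\|h\|_{\star\star},\qquad |\tilde\omega(\tau)|\le C\frac{f(\tau)R(\tau)}{(\ln\tau_{0})^{1-q}}\|h\|_{\star\star},
\]
together with $|c_{1}-\mu(T)|\le C f(\tau_{0})R^{2}(\tau_{0})/(\ln\tau_{0})^{1-q}\|h\|_{\star\star}$.

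To conclude, I would pass to the limit $T\to\infty$. The standing hypothesis gives $|\mu(T)|\le (\|a/(fR^{2})\|_{L^{\infty}}/|a_{B}(T)|)\,f(T)R^{2}(T)$, so $\mu(T)\to 0$ as $T\to\infty$. Writing $a(\tau)=\tilde a(\tau)+\mu(T)a_{B}(\tau)$ and $\omega(\tau)=\tilde\omega(\tau)+\mu(T)\omega_{B}(\tau)$ for $\tau$ fixed and $T\to\infty$ gives \eqref{ainf} and \eqref{omegainf}, while $\mu(T)\to 0$ transfers the bound on $c_{1}-\mu(T)$ into \eqref{c1inf}. The main obstacle is the non-degeneracy claim $|a_{B}(\tau)|\ge c>0$ for all $\tau>\tau_{0}$: since $\tilde Z_{0}$ is only an approximate kernel element (because of the cut-off in \eqref{InitialCond} and of the perturbation $\tilde B$), one must propagate this from the initial time to all positive times by a variant of the energy argument behind Lemma \ref{Lemma101} applied to $h\equiv 0$, using the smallness of $\omega_{B}$ in the quadratic form $\int Ug^{2}$. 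Once this is in hand, the rest is a compactness/limit argument and straightforward book-keeping.
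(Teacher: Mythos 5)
There is a genuine gap. Your plan follows the scheme of the paper's Lemma \ref{Lemma104} (correct by a multiple of the homogeneous solution $Z_{B}$, apply Lemma \ref{Lemma101} on $[\tau_{0},T]$, then send $T\to\infty$), and the step that is not justified is precisely the one you flag: that $a_{B}(\tau)\to 2$, or even just $|a_{B}(\tau)|\ge c>0$ uniformly on $(\tau_{0},\infty)$. Lemma \ref{Lemma103} only asserts $a_{B}(\tau)\neq 0$, and its proof is by contradiction at a single time from the a~priori bound of Lemma \ref{Lemma101}; it gives no lower bound on $|a_{B}|$. Your suggested ``separate analysis'' using the smallness of $\tilde B$ from \eqref{lambdadotlambdaExp} cannot easily close this, because $\lambda\dot\lambda\sim -\ln\tau/(2\tau)$ is not integrable on $(\tau_{0},\infty)$: the cumulative drift of the $Z_{0}$-component of $Z_{B}$ under $\tilde B$ acts over an $O(\ln^{2}\tau)$ window, so $a_{B}$ need not stay near $2$ (a formal computation of $a_{B}'$ via \eqref{ComputingA} and $\int\Gamma_{0}\tilde B[Z_{0}]$ gives a coefficient involving $\int|\nabla z_{0}|^{2}\neq 0$, suggesting $a_{B}$ is not even asymptotically constant). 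Without a lower bound on $|a_{B}(T)|$, the bound $|\mu(T)|\le \|a/(fR^{2})\|_{\infty}f(T)R^{2}(T)/|a_{B}(T)|$ does not force $\mu(T)\to 0$; and indeed the estimate $|c_{1}-\mu(T)|\le C f(\tau_{0})R^{2}(\tau_{0})(\ln\tau_{0})^{q-1}\|h\|_{\star\star}$ shows $\mu(T)$ tracks $c_{1}$, so concluding $\mu(T)\to 0$ is essentially equivalent to the conclusion \eqref{c1inf} you are trying to prove --- the argument as organized becomes circular.

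The paper takes a different route that avoids $Z_{B}$ entirely: it re-runs the energy/second-moment computation from the proof of Lemma \ref{Lemma101} \emph{keeping} the boundary term at $\tau_{1}$ (instead of eliminating it via the hypothesis $a(\tau_{1})=0$), which produces the extra term $C|a(\tau_{1})|\log\tau_{1}$ on the right of \eqref{ineqAtau1}. The standing hypothesis $a/(fR^{2})\in L^{\infty}$ is then used exactly once, to observe that $a(\tau_{1})\log\tau_{1}\to 0$ as $\tau_{1}\to\infty$ because $f(\tau)R^{2}(\tau)\log\tau=\tau^{1-\nu}(\log\tau)^{-m-q}\to 0$ for $\nu>1$; sending $\tau_{1}\to\infty$ then yields \eqref{ainf}, and the rest follows as in Lemma \ref{Lemma101}. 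This is both shorter and avoids any quantitative lower bound on $a_{B}$. One small additional remark on your write-up: $\omega$ defined in \eqref{omegatau} is an $L^{2}(U)$ norm of $g$, so $\omega[\phi-\mu Z_{B}]\neq\omega[\phi]-\mu\omega[Z_{B}]$; the triangle inequality $\omega[\phi]\le\omega[\tilde\phi]+|\mu|\,\omega[Z_{B}]$ is what would be needed, but this is a minor point compared to the circularity above.
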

\begin{lemma}\label{Lemma103}
	Let $Z_{B}$ be the solution to \eqref{EqZB} and write it as $Z_{B}=Z_{B}^{\perp}+\frac{a_{Z}}{2}Z_{0}$ according to the decomposition \eqref{DecomPositionPhiPhiperp}. Then $a_{Z}(\tau)\neq0$ for all $\tau\ge \tau_{0}$.
\end{lemma}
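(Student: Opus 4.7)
The plan is to handle $\tau = \tau_0$ by an explicit computation and the case $\tau > \tau_0$ by contradiction against Lemma~\ref{Lemma101}.

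For the initial time, I would apply formula \eqref{ComputingA} to $Z_B(\cdot,\tau_0) = \tilde Z_0$ to obtain
\begin{align*}
a_Z(\tau_0) = \frac{1}{8\pi}\int_{\mathbb{R}^2}\Gamma_0\bigl(Z_0 - m_{Z_0} U\bigr)\bar\chi\,dy.
\end{align*}
Using the identity $Z_0 = \nabla\cdot(y\,U)$ (which is what makes $\int\Gamma_0 Z_0\,dy$ absolutely convergent), integration by parts gives
\begin{align*}
\int_{\mathbb{R}^2}\Gamma_0 Z_0\,dy = -\int_{\mathbb{R}^2}(y\cdot\nabla\Gamma_0)\,U\,dy = 32\int_{\mathbb{R}^2}\frac{|y|^2}{(1+|y|^2)^3}\,dy = 16\pi.
\end{align*}
The truncation by $\bar\chi$ affects only $|y|\gtrsim\sqrt{\tau_0}$, where $Z_0 = O(|y|^{-4})$ and $\Gamma_0 = O(\log|y|)$, so it drops a tail of size $O(\log\tau_0/\tau_0)$; similarly $m_{Z_0}\int\Gamma_0 U\bar\chi\,dy = O(\log\tau_0/\tau_0)$ by \eqref{mz0Est} together with the logarithmic growth of $\int_{|y|\le\sqrt{\tau_0}}|\Gamma_0|U\,dy$. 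Consequently $a_Z(\tau_0) = 2 + O(\log\tau_0/\tau_0)$, which is nonzero once $\tau_0$ is large enough.

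For the case $\tau_1 > \tau_0$, I would argue by contradiction. Observe that $Z_B$ solves \eqref{typicalCaseInnTAUICSEC10} with $h \equiv 0$ and with the specific choice $c_1 = 1$, since its initial datum is exactly $\tilde Z_0$. If $a_Z(\tau_1) = 0$ for some $\tau_1 > \tau_0$, Lemma~\ref{Lemma101} applied on $[\tau_0,\tau_1]$ will yield
\begin{align*}
|c_1| \le C\,\frac{f(\tau_0)R^2(\tau_0)}{(\ln\tau_0)^{1-q}}\,\|h\|_{\star\star} = 0,
\end{align*}
contradicting $c_1 = 1$. Combined with the initial nonvanishing, this proves $a_Z(\tau)\neq 0$ for all $\tau\ge\tau_0$.

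The main obstacle is the explicit evaluation at $\tau_0$: it is the identity $\int\Gamma_0 Z_0\,dy = 16\pi$ that keeps $a_Z(\tau_0)$ of size $O(1)$, and the subtraction of $m_{Z_0}U$ in \eqref{InitialCond} that preserves the zero-mass hypothesis needed to invoke Lemma~\ref{Lemma101}. With both features of the construction \eqref{InitialCond} in place, the remainder of the argument reduces to a one-line application of the linear theory already developed.
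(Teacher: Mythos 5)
Your proof is correct and takes essentially the same approach as the paper: the paper also argues by contradiction, applying Lemma~\ref{Lemma101} to $Z_B$ regarded as a solution of \eqref{typicalCaseInnTAUICSEC10} with $h\equiv 0$ and $c_1=1$, and plays this against the explicit value $a_Z(\tau_0)=2+O(\ln\tau_0/\tau_0)$. The only cosmetic difference is that you invoke the final $c_1$-bound of Lemma~\ref{Lemma101} directly (so $\|h\|_{\star\star}=0$ forces $c_1=0$), whereas the paper uses the intermediate inequality \eqref{InequalitYA} and contradicts the size of $a(\tau_0)$; both hinges on the same facts.
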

\begin{lemma}\label{Lemma104}
	There is $C$ such that for $\tau_{0}$ large the following holds. Suppose that $\|h\|_{\star\star}<\infty$ is radially symmetric and satisfies the zero mass condition \eqref{MassRHS}. Then there is a unique $c_{1}\in \mathbb{R}$ such that the solution $\phi=\phi^{\perp}+\frac{a}{2}Z_{0}$ of \eqref{typicalCaseInnTAUICSEC10} (as in \eqref{DecomPositionPhiPhiperp}) satisfies \eqref{ainf},\eqref{omegainf} and \eqref{c1inf}.
\end{lemma}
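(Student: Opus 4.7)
\medskip

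\noindent\textbf{Proof plan for Lemma \ref{Lemma104}.}
The idea is a shooting argument in the parameter $c_1$, combined with the a priori bounds provided by Lemmas \ref{Lemma101}--\ref{Lemma103}. By linearity of \eqref{typicalCaseInnTAUICSEC10} write the solution as
\begin{equation*}
\phi = c_1 Z_B + \phi_h,
\end{equation*}
where $Z_B$ is the solution of the homogeneous problem \eqref{EqZB} with initial datum $\tilde Z_0$, and $\phi_h$ denotes the solution of \eqref{typicalCaseInnTAUICSEC10} corresponding to $c_1=0$. Decomposing via \eqref{DecomPositionPhiPhiperp} gives $a(\tau)=c_1\,a_Z(\tau)+a_h(\tau)$ for every $\tau\ge\tau_0$. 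By Lemma \ref{Lemma103} we have $a_Z(\tau)\neq 0$ for all $\tau\ge\tau_0$, so for each $\tau_1>\tau_0$ there is a unique choice
\begin{equation*}
c_1=c_1^{(\tau_1)}:= -\frac{a_h(\tau_1)}{a_Z(\tau_1)}
\end{equation*}
for which the corresponding solution $\phi^{(\tau_1)}$ satisfies $a^{(\tau_1)}(\tau_1)=0$.

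With this choice, the hypotheses of Lemma \ref{Lemma101} are fulfilled on $[\tau_0,\tau_1]$, and we obtain bounds
\begin{equation*}
|a^{(\tau_1)}(\tau)|\le C\,\frac{f(\tau)R^2(\tau)}{(\ln\tau_0)^{1-q}}\|h\|_{\star\star},\qquad |\omega^{(\tau_1)}(\tau)|\le C\,\frac{f(\tau)R(\tau)}{(\ln\tau_0)^{1-q}}\|h\|_{\star\star},\qquad |c_1^{(\tau_1)}|\le C\,\frac{f(\tau_0)R^2(\tau_0)}{(\ln\tau_0)^{1-q}}\|h\|_{\star\star},
\end{equation*}
with $C$ independent of $\tau_1$. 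These a priori estimates, combined with the interior parabolic estimates already used in Section \ref{InnerTheoryIntro} to prove local well-posedness of \eqref{typicalCaseInnTAUICSEC10}, give local uniform $C^{1}_y\cap C^0_\tau$ bounds on $\phi^{(\tau_1)}$ in $\R^2\times[\tau_0,\tau_1]$. A standard Arzel\`a--Ascoli / diagonal extraction along a sequence $\tau_1=\tau_1^{(n)}\to\infty$ then yields $c_1^{(\tau_1^{(n)})}\to c_1^*$ and $\phi^{(\tau_1^{(n)})}\to \phi^*$ locally uniformly in $\R^2\times[\tau_0,\infty)$, where $\phi^*$ solves \eqref{typicalCaseInnTAUICSEC10} with initial datum $c_1^*\tilde Z_0$. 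Passing to the limit in the $\tau$-pointwise bound on $a^{(\tau_1)}$ gives $a^*/(fR^2)\in L^\infty(\tau_0,\infty)$, so Lemma \ref{Lemma102} applies to $\phi^*$ and yields exactly the estimates \eqref{ainf}, \eqref{omegainf}, \eqref{c1inf}.

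For uniqueness, suppose $c_1^{(1)}$ and $c_1^{(2)}$ both give solutions $\phi^{(i)}$ whose $a$--components satisfy $a^{(i)}/(fR^2)\in L^\infty$. The difference $\tilde\phi=\phi^{(1)}-\phi^{(2)}=(c_1^{(1)}-c_1^{(2)})Z_B$ then solves the homogeneous problem with initial datum $(c_1^{(1)}-c_1^{(2)})\tilde Z_0$ and its $a$--component $(c_1^{(1)}-c_1^{(2)})a_Z$ is controlled by $fR^2$. By Lemma \ref{Lemma103} together with the quantitative lower bound for $a_Z$ coming from the same argument (the homogeneous flow cannot send $a_Z$ through $0$ and, up to the small correction introduced by $\tilde B$, $a_Z$ does not decay as fast as $fR^2$), we must have $c_1^{(1)}=c_1^{(2)}$. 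Proposition \ref{PropMode0Mass0} and Proposition \ref{Prop101} follow at once by combining this $c_1^*$ with the pointwise bound for $\phi^\perp$ derived from $\omega$ via the Poincar\'e--type inequality of Lemma \ref{Lemma106} and the decomposition \eqref{DecomPositionPhiPhiperp}.

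The main obstacle is the uniqueness step: it requires a quantitative lower bound on $|a_Z(\tau)|$ to prevent a non-trivial multiple of $Z_B$ from having an $a$--component that still fits inside the envelope $fR^2$. This is the non-trivial content of (the proof of) Lemma \ref{Lemma103}, where one has to track the coercivity of the quadratic form $\int Ug^{\perp\,2}$ along the flow $Z_B$ while absorbing the perturbation $\tilde B$, chosen precisely so that mass is preserved and the cut-off in \eqref{tildeB} keeps $|\lambda\dot\lambda|\,|\,y\cdot\nabla(\cdot\chi)|$ small compared to $1/(1+|y|^2)$ in the inner region. Everything else reduces to a routine combination of the shooting construction and the a priori estimates already in hand.
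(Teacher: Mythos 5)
Your argument is essentially the paper's own proof: shoot in $c_1$ by choosing $c_1^{(\tau_1)}=-a_h(\tau_1)/a_Z(\tau_1)$ (valid since Lemma \ref{Lemma103} gives $a_Z\neq0$) so that $a^{(\tau_1)}(\tau_1)=0$, apply Lemma \ref{Lemma101} to obtain bounds uniform in $\tau_1$, extract a convergent subsequence, and conclude via Lemma \ref{Lemma102}. The paper phrases this as $\phi_n=\bar\phi+c_nZ_B$ with $c_n$ chosen to kill $a_n(\tau_n)$, which is the same thing. For uniqueness you do not actually need any quantitative lower bound on $a_Z$: if two choices of $c_1$ both make $a/(fR^2)$ bounded, the difference is a homogeneous solution to which Lemma \ref{Lemma102} applies with $h=0$, forcing $c_1^{(1)}=c_1^{(2)}$ directly from \eqref{c1inf}; the discussion of a lower bound on $|a_Z|$ is therefore unnecessary, though not wrong. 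The closing sentence claiming that Propositions \ref{PropMode0Mass0} and \ref{Prop101} ``follow at once'' from Lemma \ref{Lemma104} oversells it --- those still require the barrier estimates of Lemma \ref{barriersUg} and the pointwise conversion of Lemma \ref{Lemma109} --- but this is a peripheral remark, not part of the proof of the lemma itself.
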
	
To give the proofs of Lemmas \ref{Lemma101}, \ref{Lemma102}, \ref{Lemma103} and \ref{Lemma104} we will need a series of intermediate results.
In the following we will use the notation \eqref{FunctionM}:
\begin{align*}
	\chi=\chi_{0}(\frac{|y|\lambda}{\sqrt{\delta(T-t)}})=\chi_{0}(\frac{|y|}{M(\tau)}).
\end{align*}
We observe that thanks to \eqref{lambdadotlambdaExp} we can easily see that $M^{2}(\tau)\approx c \frac{\delta \tau}{\ln \tau}$ for some universal constant $c>0$ .
\begin{lemma}\label{Lemma105}
	Let $R^{2}(\tau)=\frac{\tau}{(\ln \tau)^{1+q}}$ with $q\in (0,1)$ and $M(\tau)$ be like \eqref{FunctionM}, we have
	\begin{align*}
		\left| \int_{\mathbb{R}^{2}}\tilde{B}[\phi]g^{\perp} \right|\le& C\big[\frac{\ln \tau}{\tau}\int_{\R^2} U (g^{\perp})^{2}+|a(\tau)|\frac{\ln \tau}{\tau}\|\nabla g^{\perp} U^{1/2}\|_{L^{2}}+a^{2}(\tau)\frac{\ln \tau}{\tau}\frac{1}{M^{2}(\tau)}+\frac{a^{2}(\tau)}{R^{4}}+\\
		&+\frac{\ln \tau}{\tau}\int _{B_{M}^{c}}|\phi g| +\frac{\ln \tau}{\tau} \int_{B_{M}^{c}} Ug^{2}+(R^{2}\frac{\ln \tau}{\tau})^{2}(\int_{B_{M}^{c}}|\phi|)^{2}\big].
	\end{align*}
\end{lemma}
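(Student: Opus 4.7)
\medskip

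\noindent\textbf{Proof proposal for Lemma~\ref{Lemma105}.}

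The plan is to decompose $\tilde B[\phi]$ into three pieces that can be handled separately: (a) the bulk transport piece acting on $\phi^\perp$, (b) the bulk transport piece acting on $\frac{a}{2}Z_0$, and (c) the mass‑subtracting $W_0$ correction. Throughout I will use the basic size bound $|\lambda\dot\lambda(t)| \le C\,\frac{\ln\tau}{\tau}$ from \eqref{lambdadotlambdaExp}, the decomposition $\phi=\phi^\perp+\frac{a}{2}Z_0$ with $\phi^\perp=U(g^\perp+\psi^\perp)$ from Section~\ref{PreliminariesInnTh}, the inequalities in Lemma~\ref{Lemma93} and Lemma~\ref{Lemma95}, and the identity $\nabla\cdot(y\,U)=Z_0$.

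The first step is to rewrite the bulk operator in divergence form. For any sufficiently regular $w$,
\[
k\,w\chi+y\!\cdot\!\nabla(w\chi)=(k-2)\,w\chi+\nabla\!\cdot\!(y\,w\chi),
\]
so testing against $g^\perp$ and integrating by parts yields
\[
\int_{\R^2}\!\bigl[k\,w\chi+y\!\cdot\!\nabla(w\chi)\bigr]g^\perp\,dy=(k-2)\!\int_{\R^2}\! w\chi\,g^\perp\,dy-\!\int_{\R^2}\! y\,w\chi\!\cdot\!\nabla g^\perp\,dy.
\]
Apply this with $w=\phi^\perp$. Write $\phi^\perp=Ug^\perp+U\psi^\perp$. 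For the $Ug^\perp$ part one more integration by parts turns $-\int y\,U\chi\,g^\perp\cdot\nabla g^\perp\,dy$ into $\tfrac12\int\bigl[\chi\,Z_0+U(y\!\cdot\!\nabla\chi)\bigr](g^\perp)^2\,dy$, which, using $|Z_0|\le CU$ and the fact that $y\!\cdot\!\nabla\chi$ is supported in $\{|y|\sim M\}$ and bounded there, is controlled by $C\!\int U(g^\perp)^2+C\!\int_{B_M^c}U(g^\perp)^2$. For the $U\psi^\perp$ part, the decay bound $|U\psi^\perp|\lesssim U$ together with Cauchy--Schwarz and Lemma~\ref{Lemma95} (which gives $\|\psi^\perp U^{1/2}\|_{L^2}^2\le c\int U(g^\perp)^2$) again produces a term of the form $C\!\int U(g^\perp)^2$. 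Multiplying by $|\lambda\dot\lambda|\lesssim \ln\tau/\tau$ produces the first and fifth--sixth terms on the right‑hand side of the claim.

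Next I treat the contribution of $\frac{a}{2}Z_0$. Since $Z_0$ is explicit and lives in the kernel of $L$, $\tilde B[Z_0]=\lambda\dot\lambda\bigl(kZ_0\chi+y\!\cdot\!\nabla(Z_0\chi)\bigr)$ minus its $W_0$ piece, and after the same integration by parts I get, for $|y|\le M$, a bulk of the form $\lambda\dot\lambda\,a\int y\,Z_0\,\chi\cdot\nabla g^\perp\,dy$ plus boundary terms concentrated in $\{|y|\sim M\}$. Cauchy--Schwarz in the weighted norm bounds the bulk integral by $|a|\,|\lambda\dot\lambda|\,\|y Z_0/U^{1/2}\|_{L^2(B_M)}\,\|U^{1/2}\nabla g^\perp\|_{L^2}$; since $|yZ_0|\lesssim (1+|y|)^{-3}$, the $L^2$‑norm on $B_M$ is $O(1)$ and one obtains the $|a|\frac{\ln\tau}{\tau}\|\nabla g^\perp U^{1/2}\|_{L^2}$ term. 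The boundary contributions from $\nabla\chi$, after using $|a\psi_0|\lesssim |a|$ on $\{|y|\sim M\}$ and $Z_0\lesssim M^{-4}$ there, give the $a^2\frac{\ln\tau}{\tau}\frac{1}{M^2}$ term. The term $a^2/R^4$ enters when one uses orthogonality of $g^\perp$ to $U$ to subtract an average of $g^\perp$ on $B_R$ before applying Cauchy--Schwarz on the outer annulus, combined with Lemma~\ref{Lemma106}.

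Finally I handle the $W_0$ correction. Observing that $\int_{\R^2}\!\bigl[k\,\phi\chi+y\!\cdot\!\nabla(\phi\chi)\bigr]dy=(k-2)\int\phi\chi\,dy$ and using the zero mass condition $\int\phi=0$, this is $(k-2)\int_{B_M^c}\phi(\chi-1)\,dy=O\bigl(\int_{B_M^c}|\phi|\bigr)$ (for $k=2$ it vanishes identically, and the same structural bound persists in the general weighted estimate). Multiplying by $|\lambda\dot\lambda|\lesssim \ln\tau/\tau$ and by $\bigl|\int W_0\,g^\perp\bigr|$, which, since $W_0$ has compact support in $B_R$ and $\int g^\perp U=0$, is estimated via Poincaré--type inequality by $C R^2\,\frac{\ln\tau}{\tau}\int_{B_M^c}|\phi|$, gives after squaring and Cauchy--Schwarz the last term $(R^2\frac{\ln\tau}{\tau})^2\bigl(\int_{B_M^c}|\phi|\bigr)^2$. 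Summing the three contributions yields the stated inequality. The main technical difficulty is the careful treatment of the interaction between the two scales $R(\tau)$ (inner energy scale) and $M(\tau)$ (cut‑off scale) in the boundary integrals and in the $W_0$ correction, where one must exploit the orthogonality $\int Ug^\perp=0$ before Cauchy--Schwarz is applied.
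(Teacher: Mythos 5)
Your overall strategy — decompose $\tilde B[\phi]=\tilde B_0[\phi]-W_0(\cdot)\lambda\dot\lambda\int(k\phi\chi+y\cdot\nabla(\phi\chi))$, split $\phi=\phi^\perp+\tfrac a2 Z_0$, rewrite the transport part in divergence form and integrate by parts against $g^\perp$, and then use $\phi^\perp=Ug^\perp+U\psi^\perp$ with the pointwise bounds for $\psi^\perp$ — is the same as the paper's, and your treatment of the $Ug^\perp$, $U\psi^\perp$ and $W_0$ pieces essentially matches. But the step that produces the term $|a(\tau)|\frac{\ln\tau}{\tau}\|\nabla g^\perp U^{1/2}\|_{L^2}$ has a genuine quantitative gap.

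You bound the bulk integral
\[
\Bigl|\tfrac a2\int y\,Z_0\,\chi\cdot\nabla g^\perp\,dy\Bigr|
\le |a|\,\bigl\|\,y Z_0/U^{1/2}\,\bigr\|_{L^2(B_M)}\bigl\|U^{1/2}\nabla g^\perp\bigr\|_{L^2}
\]
and assert that $\|y Z_0/U^{1/2}\|_{L^2(B_M)}=O(1)$ because $|yZ_0|\lesssim (1+|y|)^{-3}$. This last claim is false: $U^{1/2}(y)\sim(1+|y|)^{-2}$, so $|yZ_0|/U^{1/2}\sim (1+|y|)^{-1}$ and $\|yZ_0/U^{1/2}\|_{L^2(B_M)}^2\sim\int_{B_M}(1+|y|)^{-2}\,dy\sim\ln M$. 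Your bound therefore carries an extra factor $\sqrt{\ln M}\sim\sqrt{\ln\tau}$, which is not innocuous: downstream, in the energy inequality of Lemma~\ref{ineqphigperp}, this term must be absorbed by $\tfrac14\int U|\nabla g^\perp|^2$ via Young's inequality paying only $R^4(\ln\tau/\tau)^2=\ln^{-2q}\tau$, and an extra $\ln\tau$ there ruins the argument for $q\le 1/2$, i.e.\ for part of the allowed range $q\in(0,1)$.

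The paper avoids this by exploiting an algebraic cancellation you missed. Instead of rewriting $y\cdot\nabla(Z_0\chi)$ as a divergence and estimating the kernel $yZ_0$ directly, it uses the identity $y\cdot\nabla Z_0=\nabla\cdot(yZ_0-2\nabla z_0)-4Z_0$ (with $z_0$ from \eqref{kernel Liouville}), combined with the orthogonality $\int Z_0\,g^\perp=0$; after one integration by parts the relevant kernel is $yZ_0-2\nabla z_0$. This combination is explicit: $yZ_0-2\nabla z_0=\frac{32\,y}{(1+|y|^2)^3}$, decaying like $|y|^{-5}$, so that $(yZ_0-2\nabla z_0)/U^{1/2}\sim (1+|y|)^{-3}\in L^2(\R^2)$ uniformly in $M$. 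That genuinely sharper decay — the $|y|^{-3}$ tail of $yZ_0$ cancels exactly against $-2\nabla z_0$ — is the ingredient that yields $|a|\frac{\ln\tau}{\tau}\|\nabla g^\perp U^{1/2}\|_{L^2}$ with no logarithmic loss; the remaining term $-4\int Z_0 g^\perp$ vanishes by the orthogonality encoded in the $g^\perp$ normalization. A secondary imprecision: the term $a^2/R^4$ in the statement does not come from a Poincar\'e-type step as you suggest; it arises from Young's inequality on the cross term $|a|\,|\lambda\dot\lambda|\int_{B_M^c}|\phi|$ generated by the cut-off tails $(1-\chi)$ and $y\cdot\nabla\chi$.
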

\begin{proof}
	It turns out to be useful to decompose the operator $\tilde{B}$ in \eqref{tildeB} as $\tilde{B}[\phi]=\tilde{B}_{0}[\phi]-W_{0}(y)\lambda\dot{\lambda}\int (k\phi \chi+y\cdot{\nabla}(\phi \chi))dy$. First we observe
	\begin{align}\label{B0gperp}
		\int_{\mathbb{R}^{2}} \tilde{B}_{0}[\phi]g^{\perp}=\lambda\dot{\lambda}\big[k\int_{\mathbb{R}^{2}} \phi g^{\perp} -k \int_{\mathbb{R}^{2}}\phi g^{\perp}(1-\chi) +\int_{\mathbb{R}^{2}} y\cdot \nabla \phi g^{\perp} \chi+ \int_{\mathbb{R}^{2}} y\cdot \nabla \chi \phi g^{\perp} \big].
	\end{align}
	By \eqref{lambdadotlambdaExp} we see
	\begin{align*}
		|-k\int_{\mathbb{R}^{2}} \phi g^{\perp} (1-\chi)+\int_{\mathbb{R}^{2}} y\cdot \nabla \chi \phi g^{\perp}| &\le  C\int_{\mathbb{R}^{2}} |\phi g^{\perp}|(1-\chi) \le |a(\tau)|\int_{\mathbb{R}^{2}} |\phi|(1-\chi)+ \int_{\mathbb{R}^{2}} |\phi g|(1-\chi)\le \\
		&\le |a(\tau)|\int_{B_{M}^{c}}|\phi|+\int_{B_{M}^{c}}|\phi g|\\
		\implies |\lambda\dot{\lambda}[-k\int_{\mathbb{R}^{2}} \phi g^{\perp} (1-\chi)+&\int_{\mathbb{R}^{2}} y\cdot \nabla \chi \phi g^{\perp}]|\le C\frac{a^{2}}{R^{4}}+(R^{2}\frac{\ln \tau}{\tau})^{2}(\int_{B_{M}^{c}}|\phi| )^{2}+C\frac{\ln \tau}{\tau} \int_{B_{M}^{c}}|\phi g|.
	\end{align*}
	By Lemma \ref{Lemma93} we also have $|k\lambda \dot{\lambda}\int_{\mathbb{R}^{2}} \phi g^{\perp}|\le C \frac{\ln \tau}{\tau} \int_{\mathbb{R}^{2}} U (g^{\perp})^ {2}$. Consequently the only term in \eqref{B0gperp} we still need to estimate is $\lambda\dot{\lambda} \int_{\mathbb{R}^{2}} y\cdot \nabla \phi g^{\perp} \chi $. First we use \eqref{DecomPositionPhiPhiperp} and we write
	\begin{align}\label{MissinTermLemma105}
		\lambda \dot{\lambda}\int_{\mathbb{R}^{2}} y\cdot \nabla \phi g^{\perp} \chi =\lambda\dot{\lambda}[\int_{\mathbb{R}^{2}} y\cdot \nabla \phi^{\perp} g^{\perp} \chi + \frac{a}{2} \int_{\mathbb{R}^{2}} y \cdot \nabla Z_{0}g^{\perp}\chi].
	\end{align}
	By observing that $y\cdot \nabla Z_{0}=\nabla \cdot(yZ_{0}-2\nabla z_{0})-4Z_{0}$ and recalling that $\int_{\R^2}Z_{0}g^{\perp}=0$, proceeding similarly to the proof of Lemma 10.5 in \cite{DdPDMW}, we see
	\begin{align*}
		|\lambda \dot{\lambda} \frac{a}{2} \int_{\mathbb{R}^{2}} y \cdot \nabla Z_{0} g^{\perp} \chi|&= |\lambda\dot{\lambda}\frac{a}{2} \int_{\mathbb{R}^{2}} y \cdot \nabla Z_{0} g^{\perp}-\lambda\dot{\lambda} \frac{a}{2} \int_{\mathbb{R}^{2}} y \cdot \nabla Z_{0} g^{\perp}(1-\chi)|\le \\
		&\le C|a|\frac{\ln \tau}{\tau} \|\nabla g^{\perp} U^{1/2}\|_{L^{2}}+C|a|\frac{\ln \tau}{\tau} \int_{B_{M}^{c}}U|g^{\perp}|\le \\
		&\le C|a|\frac{\ln \tau}{\tau}\|\nabla g^{\perp} U^{1/2}\|_{L^{2}}+Ca^{2}\frac{\ln \tau}{\tau} \frac{1}{M^{2}}+C\frac{\ln \tau}{\tau} \int_{B_{M}^{c}} U g^{2}.
	\end{align*}
	By the decomposition \eqref{EqPsiPerp}, we can rewrite the first term in \eqref{MissinTermLemma105} as
	\begin{align}\label{Lemma105Exp}
		\lambda \dot{\lambda} \int_{\mathbb{R}^{2}} y \cdot \nabla \phi^{\perp}g^{\perp} \chi = \lambda\dot{\lambda} \int_{\mathbb{R}^{2}} y \cdot \nabla (Ug^{\perp})g^{\perp} \chi +\lambda\dot{\lambda} \int_{\mathbb{R}^{2}} y \cdot \nabla (U \psi^{\perp})g^{\perp}\chi.
	\end{align}
	To estimate the first term in \eqref{Lemma105Exp} we can simply observe
	\begin{align*}
		\int_{\mathbb{R}^{2}} y \cdot \nabla (Ug^{\perp})g^{\perp}\chi =& \int_{\mathbb{R}^{2}} y \cdot \nabla U (g^{\perp})^{2}\chi + \int_{\mathbb{R}^{2}} U y \cdot \nabla g^{\perp} g^{\perp}= \int_{\mathbb{R}^{2}} y \cdot \nabla U (g^{\perp})^{2} \chi + \frac{1}{2} \int_{\mathbb{R}^{2}} U y\cdot \nabla (g^{\perp})^{2}\chi=\\
		=&\frac{1}{2} \int_{\mathbb{R}^{2}} y \cdot \nabla U ( g^{\perp})^{2} \chi -\int_{\mathbb{R}^{2}} U(g^{\perp})^{2}\chi -\frac{1}{2}\int_{\mathbb{R}^{2}} U (g^{\perp})^{2}y\cdot \nabla \chi \le C \int_{\mathbb{R}^{2}} U(g^{\perp})^{2}
	\end{align*}
	and for the second term of \eqref{Lemma105Exp} we see
	\begin{align}\label{Lemma105Exp2}
		\int_{\mathbb{R}^{2}} y \cdot \nabla (U\psi ^{\perp})g^{\perp} \chi=\int_{\mathbb{R}^{2}} y \cdot \nabla U \psi ^{\perp} g^{\perp} \chi + \int_{\mathbb{R}^{2}} U y \cdot \nabla \psi ^{\perp} g^{\perp}\chi.
	\end{align}
	To estimate these terms first we observe that by radial symmetry we can write for istance
	\begin{align*}
		\int_{\mathbb{R}^{2}} U y \cdot \nabla \psi ^{\perp} g^{\perp}\chi = 2\pi \int_{0}^{\infty}U(\rho)(\psi^{\perp})'(\rho)g^{\perp}(\rho)\rho^{2}d\rho.
	\end{align*}
    Now, recalling that $\psi^{\perp}$ solves $-\Delta \psi^{\perp}-U\psi^{\perp}=Ug^{\perp}$ in $\mathbb{R}^{2}$ and $\psi^{\perp}(\rho,\tau)\to0$ as $\rho\to\infty$, by variation of parameters formula, since $\int_{\R^2}Ug^{\perp}z_{0}dy=0$ we get
	\begin{align}
		&\psi ^{\perp}(\rho)= z_{0}(\rho) \int_{\rho}^{\infty} U g^{\perp} \bar{z}_{0}rdr+\bar{z}_{0}(\rho) \int_{0}^{\rho} U g^{\perp} z_{0}rdr,\label{psiperpglob}\\
		&(\psi ^{\perp})'(\rho)= z_{0}'(\rho) \int_{\rho}^{\infty} U g^{\perp} \bar{z}_{0}rdr+\bar{z}_{0}'(\rho) \int_{0}^{\rho} U g^{\perp} z_{0}rdr \label{psiperpprimeglob}
	\end{align}
where $\bar{z}_{0}$ is a second linear independent function in the kernel of $\Delta+U$ satisfying $|\bar{z}_{0}(\rho)|\le C (|\ln \rho|+1)$. When $\rho >1$, we can also write $\psi^{\perp}$ as
	\begin{align}\label{psiperpinf}
		\psi ^{\perp}(\rho)= z_{0}(\rho)\int_{\rho}^{\infty} \frac{1}{z_{0}^{2}(r)r}\int_{r}^{\infty}Ug^{\perp}z_{0}sdsdr.
	\end{align}
Then if $0<\rho\le1$ by \eqref{psiperpglob} we have 
	\begin{align*}
		|\psi ^{\perp}|\le& C \int_{\rho}^{\infty} U |g^{\perp}| |\bar{z}_{0}|rdr + C (|\ln \rho|+1) \int_{0}^{\rho} U |g^{\perp}| rdr \le \\
		\le& C ( \int_{\rho}^{\infty} U(g^{\perp})^{2}rdr)^{1/2} (\int_{\rho}^{\infty} U |\bar{z}_{0}|^{2}rdr)^{1/2}+C (|\ln\rho|+1) (\int_{0}^{\rho} U (g^{\perp})^{2}rdr)^{1/2} (\int_{0}^{\rho} Urdr)^{1/2}\le\\
		\le& C \|U^{1/2}g^{\perp}\|_{L^{2}}
	\end{align*}
	If $\rho >1$ by \eqref{psiperpinf}
	\begin{align*}
		|\psi ^{\perp}| \le \int_{\rho}^{\infty} \frac{1}{r}\int_{r}^{\infty} U |g^{\perp}| s dsdr \le \| U^{1/2} g^{\perp}\|_{L^{2}} \int_{\rho}^{\infty} \frac{1}{r}(\int_{r}^{\infty} Us ds )^{1/2}dr\le \frac{\|U^{1/2}g^{\perp}\|_{L^{2}}}{1+\rho}.
	\end{align*}
	Proceeding analogously with $(\psi^{\perp})'$ we finally get
	\begin{align}\label{psiperp}
		|\psi ^{\perp}|\le C \frac{\|U^{1/2}g^{\perp}\|_{L^{2}}}{1+\rho}, \ \ \ \ |(\psi^{\perp})'| \le C \frac{\|U^{1/2} g^{\perp}\|_{L^{2}}}{1+\rho ^{2}}.
	\end{align}
	Recalling \eqref{Lemma105Exp2}, we get 
	\begin{align*}
		|\lambda\dot{\lambda} \int_{\mathbb{R}^{2}} y \cdot \nabla ( U\psi^{\perp}) g^{\perp}\chi |\le C \frac{\ln \tau}{\tau} \int_{\mathbb{R}^{2}} U (g^{\perp})^{2}.
	\end{align*}
	At this point we want to estimate $\int_{\mathbb{R}^{2}}(\tilde{B}[\phi]-\tilde{B}_{0}[\phi])g^{\perp}=-\lambda\dot{\lambda} \int _{\mathbb{R}^{2}}( k\phi \chi+y\cdot \nabla( \phi \chi))dy \int_{\mathbb{R}^{2}} W_{0}g^{\perp}$. We see
	\begin{align*}
		\int_{\mathbb{R}^{2}} (k\phi \chi+ y\cdot \nabla (\phi \chi))=(k-2)\int_{\mathbb{R}^{2}} \phi \chi \le |k-2|\int_{B_{M}^{c}}|\phi|
	\end{align*}
	and then
	\begin{align*}
		&\left| -\lambda\dot{\lambda} \int_{\mathbb{R}^{2}} ( k\phi \chi + y \cdot \nabla(\phi \chi))dy \int_{\mathbb{R}^{2}} W_{0}g^{\perp} \right|\le C \frac{\ln \tau}{\tau} \int_{B_{M}^{c}}|\phi| (\int_{\mathbb{R}^{2}} U (g^{\perp})^{2})^{1/2}\le \\
		&\le C \frac{\ln \tau}{\tau} \int_{\mathbb{R}^{2}} U (g^{\perp})^{2}+ C \frac{\ln \tau}{\tau} ( \int_{B_{M}^{c}}|\phi|)^{2}.
	\end{align*}
    Observing that $\frac{\ln \tau}{\tau}\le \big(R^{2}\frac{\ln \tau}{\tau}\big)^{2}$ we get the desired conclusion.
\end{proof}
\begin{lemma}\label{ineqphigperp}
	Let $R(\tau)^{2}=\frac{\tau}{(\ln \tau)^{1+q}}$ with $q\in (0,1)$, $M(\tau)$ as in \eqref{FunctionM} and $f$ given by \eqref{DefintionfR2}. Then there is $c>0$ and $C>0$ such that for $\tau_{0}$ sufficiently large
	\begin{align*}
		\partial \tau \int_{\mathbb{R}^{2}} \phi g^{\perp} +\frac{c}{R^{2}}\int_{\mathbb{R}^{2}} \phi g^{\perp} \le C\|h\|_{\star\star}^{2} f^{2}(\tau)+ C \frac{a^{2}}{R^{4}}+C\frac{\omega _{0}^{2}}{R^{2}}+C \frac{\omega_{1}^{2}}{R^{2}}
	\end{align*}
	where $\omega_{0}=(\int_{B_{R}^{c}} U g^{2} )^{1/2}$ and 
	\begin{align*}
		\omega_{1}=R[a^{2}\frac{\ln \tau}{\tau}\frac{1}{M^{2}}+\frac{\ln \tau}{\tau} \int_{B_{M}^{c}}|\phi g| + \frac{\ln \tau}{\tau} \int_{B_{M}^{c}} U g^{2}+(R^{2}\frac{\ln \tau}{\tau})^{2}(\int_{B_{M}^{c}}|\phi|)^{2}]^{1/2}.
	\end{align*}
\end{lemma}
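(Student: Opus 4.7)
The plan is to multiply equation \eqref{typicalCaseInnTAUICSEC10} by $g^{\perp}$ and integrate. Using $\nabla g^{\perp}=\nabla g$ and one integration by parts,
\[
\int_{\R^2} L[\phi]g^{\perp}=\int_{\R^2}\nabla\!\cdot\!(U\nabla g)g^{\perp}=-\int_{\R^2}U|\nabla g^{\perp}|^{2},
\]
and the lemma $\int \phi_{\tau}g^{\perp}=\tfrac12 \partial_{\tau}\int \phi g^{\perp}$ from Section \ref{PreliminariesInnTh} gives the energy identity
\[
\tfrac{1}{2}\partial_{\tau}\!\int_{\R^2}\!\phi g^{\perp}+\int_{\R^2}\!U|\nabla g^{\perp}|^{2}=\int_{\R^2}\!\tilde B[\phi]g^{\perp}+\int_{\R^2}\!h\,g^{\perp}.
\]

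The next step is a coercivity estimate $\int U|\nabla g^{\perp}|^{2}\ge \frac{c}{R^{2}}\int U(g^{\perp})^{2}$ with controlled errors. I apply Lemma \ref{Lemma106} on $B_{R}$ after subtracting the weighted mean $\bar{g}=\frac{1}{\int_{B_R} U}\int_{B_R}Ug^{\perp}$; since $\int_{\R^2}Ug^{\perp}=0$ by \eqref{OrtLiouv} we have $\int_{B_R}Ug^{\perp}=-\int_{B_R^c}Ug^{\perp}$, and Cauchy--Schwarz combined with $\int_{B_R^c}U\lesssim R^{-2}$ and $g^{\perp}=g+a$ yields $|\bar g|\le CR^{-1}\omega_{0}+CaR^{-2}$. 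Completing the ball to $\R^{2}$ costs only $\int_{B_R^c}U(g^{\perp})^{2}\lesssim \omega_{0}^{2}+a^{2}/R^{2}$, so the two losses combine into errors $C\omega_{0}^{2}/R^{2}$ and $Ca^{2}/R^{4}$, matching the statement exactly. Lemma \ref{Lemma93} then converts $\int U(g^{\perp})^{2}$ into $\int \phi g^{\perp}$ (up to positive constants), producing the term $\frac{c}{R^{2}}\int \phi g^{\perp}$ on the left.

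For the transport term I invoke Lemma \ref{Lemma105} directly. Its leading contribution $\frac{\ln\tau}{\tau}\int U(g^{\perp})^{2}$ can be absorbed into the coercive term since $\frac{\ln\tau}{\tau}=o(R^{-2})$ for $q>0$ and $\tau_{0}$ large; the cross term $|a|\frac{\ln\tau}{\tau}\|U^{1/2}\nabla g^{\perp}\|_{L^{2}}$ is split by Young into a small fraction of $\int U|\nabla g^{\perp}|^{2}$ plus $Ca^{2}(\ln\tau/\tau)^{2}\le Ca^{2}/R^{4}$; and the remaining five contributions in Lemma \ref{Lemma105} are, by definition, exactly $\omega_{1}^{2}/R^{2}$. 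For the forcing I use weighted Cauchy--Schwarz, $|\int h g^{\perp}|\le \epsilon\int U(g^{\perp})^{2}+C_{\epsilon}\int U^{-1}h^{2}$, absorbing the first piece. The second piece I control by a direct computation from $\|h\|_{\star\star}$: since $U\sim(1+|y|)^{-4}$, in the inner region $U^{-1}h^{2}\lesssim f(\tau)^{2}\|h\|_{\star\star}^{2}(1+|y|)^{-8-2\sigma}$, which is integrable, and the outer-region weight $\tau^{\epsilon}|y|^{-2\epsilon}$ only improves decay; altogether $\int U^{-1}h^{2}\le C f(\tau)^{2}\|h\|_{\star\star}^{2}$.

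The main obstacle is the Poincar\'e step: one only has the vanishing-mean condition against $U$ on all of $\R^{2}$, not on $B_{R}$, so the correction has to be estimated carefully in terms of the tail quantities $\omega_{0}$ and $a$ to land precisely on the budget $a^{2}/R^{4}$ and $\omega_{0}^{2}/R^{2}$ prescribed by the lemma. Once this is done, combining the four estimates and choosing $\epsilon$ and $\tau_{0}$ appropriately yields the claimed differential inequality with a single universal constant $c>0$ in front of $\frac{1}{R^{2}}\int \phi g^{\perp}$.
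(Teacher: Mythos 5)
Most of your argument matches the paper's proof: the energy identity, the Poincar\'e step on $B_R$ after subtracting the weighted mean $\bar g$ (with the correction costs $C\omega_0^2/R^2$ and $Ca^2/R^4$), and the treatment of the transport term via Lemma~\ref{Lemma105}.

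The forcing term, however, has a genuine gap. Your weighted Cauchy--Schwarz
\[
\Bigl|\int_{\R^2} h\,g^{\perp}\Bigr|\le \epsilon\int_{\R^2} U(g^{\perp})^{2}+C_{\epsilon}\int_{\R^2} U^{-1}h^{2}
\]
pairs $h$ against $g^{\perp}$ itself, so the quantity you must absorb is $\epsilon\int U(g^{\perp})^{2}$. The only coercivity you hold on that quantity is the Poincar\'e estimate $\frac{c}{R^{2}}\int U(g^{\perp})^{2}\le\int U|\nabla g^{\perp}|^{2}+\cdots$, i.e.\ with an $R$-dependent constant. Absorbing therefore forces $\epsilon\sim R^{-2}(\tau)$, which inflates the budget to $C_{\epsilon}\int U^{-1}h^{2}\sim R^{2}(\tau)\,f^{2}(\tau)\,\|h\|_{\star\star}^{2}$, not the claimed $C f^{2}(\tau)\|h\|_{\star\star}^{2}$. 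Since $R^{2}(\tau)=\tau/(\ln\tau)^{1+q}\to\infty$, this loss is fatal; the Gr\"onwall argument downstream (proof of Lemma~\ref{Lemma101}) relies on the $R$-free bound $Cf^{2}(\tau)\|h\|_{\star\star}^2$.

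The fix is the one the paper employs: set $H=(-\Delta)^{-1}h$, which is well-defined because $h$ is radial with $\int_{\R^2}h=0$, and deduce $|\nabla H|\le C\|h\|_{\star\star}f(\tau)/(1+\rho)^{5+\sigma}$. Integrating by parts converts the pairing into $\int h\,g^{\perp}=\int\nabla H\cdot\nabla g^{\perp}$, which by Cauchy--Schwarz with weight $U^{1/2}$ splits as $\frac12\int U|\nabla g^{\perp}|^{2}+\frac12\int U^{-1}|\nabla H|^{2}$. The first summand is absorbed for free into the full (and $R$-free) gradient energy on the left of the energy identity, and the second is directly $\le C\|h\|_{\star\star}^{2}f^{2}(\tau)$ by integrating the pointwise bound on $\nabla H$. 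The point is that the gradient energy $\int U|\nabla g^{\perp}|^{2}$---not the weaker Poincar\'e quantity $\frac{c}{R^{2}}\int U(g^{\perp})^{2}$---must be the absorber for the source term.
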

\begin{proof}
	After multiplying \eqref{typicalCaseInnTAUICSEC10} by $g^{\perp}$, integrating and using Lemma \ref{Lemma95} we have
	\begin{align}\label{EqMultgP}
		\frac{1}{2} \partial \tau \int_{\mathbb{R}^{2}} \phi g^{\perp}+ \int_{\mathbb{R}^{2}} U |\nabla g^{\perp}|^{2}= \int_{\mathbb{R}^{2}} \tilde{B}[\phi]g^{\perp}+\int_{\mathbb{R}^{2}} h g^{\perp}.
	\end{align}
	Let $H=(-\Delta)^{-1}h$, since $h$ is radial and $\int_{\R^2}h=0$ we have
	\begin{align*}
		|\nabla H(\rho,\tau)|\le C \|h\|_{\star\star}\frac{f(\tau)}{(1+\rho)^{5+\sigma}} \implies |\int_{\mathbb{R}^{2}} h g^{\perp}| \le \frac{1}{2}\int_{\mathbb{R}^{2}} U |\nabla g^{\perp}|^{2}+ C \|h\|_{\star \star}^{2} f^{2}(\tau).
	\end{align*}
    Then from \eqref{EqMultgP} we can obtain
	\begin{align}\label{EqMultgPRHS}
		\frac{1}{2} \partial \tau \int_{\mathbb{R}^{2}} \phi g^{\perp}+ \frac{1}{2} \int_{\mathbb{R}^{2}} U |\nabla g^{\perp}|^{2}\le  |\int_{\mathbb{R}^{2}} \tilde{B}[\phi]g^{\perp}|+C\|h\|_{\star\star}^{2}f^{2}(\tau).
	\end{align}
    By using the inequality in Lemma \ref{Lemma106} we see that that for some $c>0$
    \begin{align*}
    	\frac{c}{R^{2}}\int_{B_{R}}(g^{\perp}-\bar{g}_{R}^{\perp})^{2}U\le \int_{\R^2}U|\nabla g^{\perp}|^{2}
    \end{align*}
    where $\bar{g}^{\perp}_{R}=\frac{1}{\int_{B_{R}}U}\int_{B_{R}}g^{\perp}U$. Some elementary computations that we omit but that can be found in the proof of Lemma 10.6 in \cite{DdPDMW} give that for some new $c>0$ we have
    \begin{align*}
    	\frac{c}{R^{2}}\int_{\R^2}(g^{\perp})^{2}U\le \int_{\R^2}U|\nabla g^{\perp}|^{2}+C \frac{a^{2}}{R^{4}}+C\frac{\omega_{0}^{2}}{R^{2}}, \ \ \ \omega_{0}=(\int_{B_{R}^{c}} U g^{2} )^{1/2}.
    \end{align*}
If we use this with \eqref{EqMultgPRHS} we get (for a new $c>0$) 
	\begin{align*}
		\frac{1}{2}\partial_{ \tau }\int_{\mathbb{R}^{2}} \phi g^{\perp}+ \frac{c}{R^{2}}\int_{\mathbb{R}^{2}} U (g^{\perp})^{2}+\frac{1}{4} \int_{\mathbb{R}^{2}} U |\nabla g^{\perp}|^{2}\le |\int_{\mathbb{R}^{2}} \tilde{B}[\phi] g^{\perp}| + C \|h\|_{\star\star}^{2}f^{2}(\tau)+ C \frac{a^{2}}{R^{4}}+ C \frac{\omega_{0}^{2}}{R^{2}}.
	\end{align*}
	By Lemma \ref{Lemma105} we have
	\begin{align*}
		|\int_{\mathbb{R}^{2}} \tilde{B}[\phi] g^{\perp}|\le C [\frac{\ln \tau}{\tau} \int_{\mathbb{R}^{2}} U (g^{\perp})^{2}+|a|\frac{\ln \tau}{\tau} \|\nabla g^{\perp} U^{1/2}\|_{L^{2}}+\frac{a^{2}}{R^{4}}+\frac{\omega_{1}^{2}}{R^{2}}]
	\end{align*}
	and $\omega_{1}^{2}=R^{2}[a^{2}\frac{\ln \tau}{\tau} \frac{1}{M^{2}}+\frac{\ln \tau}{\tau}\int_{B_{M}^{c}}|\phi g|+ \frac{\ln \tau}{\tau}\int_{B_{M}^{c}}Ug^{2}+(R^{2}\frac{\ln \tau}{\tau})^{2}(\int_{B_{M}^{c}}|\phi|)^{2}]$. Then, recalling that $R^{2}(\tau)=\frac{\tau}{(\ln \tau)^{1+q}}$, with $q\in(0,1)$, we get that if $\tau_{0}$ is sufficiently large
	\begin{align*}
		|\int_{\mathbb{R}^{2}} \tilde{B}[\phi]g^{\perp}| \le C [\frac{\ln \tau}{\tau} \int_{\mathbb{R}^{2}} U (g^{\perp})^{2}+\frac{a^{2}}{R^{4}}+R^{4}\frac{\ln^{2}\tau}{\tau^{2}}\|\nabla g^{\perp} U^{1/2}\|_{L^{2}}^{2}+\frac{\omega_{1}^{2}}{R^{2}}]
	\end{align*}
	implies
	\begin{align*}
		\partial_{ \tau} \int_{\mathbb{R}^{2}} \phi g^{\perp} + \frac{1}{R^{2}}\int_{\mathbb{R}^{2}} U (g^{\perp})^{2}\le C\|h\|_{\star\star}^{2}f^{2}(\tau)+C\frac{a^{2}}{R^{4}}+C \frac{\omega_{0}^{2}}{R^{2}}+ C \frac{\omega_{1}^{2}}{R^{2}}.
	\end{align*}
\end{proof}
\begin{lemma}\label{Lemma107}
	Let $\nu>0$ and $f(\tau)$, $R(\tau)$ like \eqref{DefintionfR2}. Assume that $\|g U^{1/2}\|_{L^{2}}\le K_{1} f(t)R^{2}(t)$, then if $\tau_{1}\ge \tau_{0}$ for any $y\in\mathbb{R}^{2}$ we have
	\begin{align}\label{PrelimG0Spat}
		|U(y)g(y,\tau)|\le C ( K_{1}+ \frac{\|h\|_{\star\star}}{R^{2}(\tau_{0})}+\frac{|c_{1}|}{f(\tau_{0})R^{2}(\tau_{0})})\frac{f(\tau)R^{2}(\tau)}{(1+|y|)^{2}}, \ \ \ \tau\in[\tau_{0},\tau_{1}].
	\end{align}
\end{lemma}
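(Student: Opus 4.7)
The goal is to upgrade the weighted $L^2$ bound $\|gU^{1/2}\|_{L^2} \le K_1 f R^2$ into the pointwise estimate $|Ug| \lesssim \frac{fR^2}{(1+|y|)^2}$, with an additive splitting of the constant into three natural contributions coming from the $L^2$ input $K_1$, the initial data $c_1\tilde Z_0$, and the forcing $h$. My strategy is to combine a Duhamel representation for $\phi$ with a pointwise radial barrier.

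First I would decompose $Ug = \phi - U\psi$, where $\psi = (-\Delta)^{-1}\phi$ is given by the explicit radial formula $\psi(\rho,\tau) = \int_\rho^\infty r^{-1}\!\int_0^r \phi(s,\tau)s\,ds\,dr$ (well defined since $\int\phi = 0$). Given a pointwise bound on $\phi$ of the desired form, the contribution $U\psi$ automatically inherits the required $(1+|y|)^{-2}$ envelope thanks to the fast decay of $U$; so the real task is to bound $\phi$ itself pointwise. Writing the Duhamel formula for $\partial_\tau \phi = L[\phi] + \tilde B[\phi] + h$ with initial datum $c_1\tilde Z_0$, I would split $\phi = \phi_{\rm init} + \phi_{\rm forc} + \phi_{\rm self}$. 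For $\phi_{\rm init}$ I use that $\tilde Z_0 = (Z_0 - m_{Z_0}U)\chi_0(\cdot/\sqrt{\tau_0})$ is compactly supported and close to the explicit kernel element $Z_0$, whose spatial profile already matches $1/(1+|y|)^2$; evolving by the linear semigroup produces the coefficient $|c_1|/(f(\tau_0)R^2(\tau_0))$ times the barrier. For $\phi_{\rm forc}$ the fast decay $|h| \lesssim \|h\|_{\star\star}f(\tau)(1+|y|)^{-6-\sigma}$ combined with the parabolic kernel yields the coefficient $\|h\|_{\star\star}/R^2(\tau_0)$.

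The piece $\phi_{\rm self}$, together with the remaining terms in the Duhamel identity, is handled via a pointwise barrier
\[
H(y,\tau) = \kappa\Bigl(K_1 + \tfrac{\|h\|_{\star\star}}{R^2(\tau_0)} + \tfrac{|c_1|}{f(\tau_0)R^2(\tau_0)}\Bigr)\frac{f(\tau)R^2(\tau)}{(1+|y|)^2}.
\]
I would verify $(\partial_\tau - L - \tilde B)H \ge 0$ on the region where the self-interaction from $\phi$ enters as a right-hand side controlled by the $L^2$ bound via interior $L^2$-to-$L^\infty$ parabolic regularity (Moser iteration on space-time cylinders of size $\sim (1+|y|)$). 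The cutoff $\chi$ built into $\tilde B$ is essential: it kills the drift $\lambda\dot\lambda(k\phi + y\cdot\nabla\phi)$ outside $|y| \sim M(\tau)$, precisely where the large prefactor $\lambda\dot\lambda \sim (\ln\tau)/\tau$ from \eqref{lambdadotlambdaExp} would otherwise overwhelm the available time decay $1/R^2 \sim (\ln\tau)^{1+q}/\tau$ of $H$. By the comparison principle the barrier dominates $|\phi|$ and, combining with Step~1, we obtain the claimed bound on $|Ug|$.

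The main obstacle I anticipate is the verification of the barrier inequality in the intermediate annulus $|y| \sim R(\tau)$, where the drift, the forcing envelope, and $H$ are all simultaneously comparable. Making the inequality hold requires the logarithmic margin built into the choice $R^2(\tau) = \tau/(\ln\tau)^{1+q}$ with $q \in (0,1)$: this precise exponent $1+q$ (strictly larger than $1$ but strictly less than $2$) is what allows the $\tilde B$-drift to be absorbed into the dissipative part of $\partial_\tau H - L[H]$ on the inner region $|y| \lesssim M(\tau)$, while leaving enough slack on the annulus $M(\tau) \lesssim |y| \lesssim R(\tau)$ to close the estimate. Once this barrier is in hand, the stated bound follows by triangle inequality and the decomposition of Step 1.
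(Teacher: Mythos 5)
Your high-level strategy (upgrade the $L^2$ hypothesis on $g$ to a pointwise bound via a barrier with a three-term constant) is in the right spirit, but the specific object you put the barrier on is wrong, and the proof as written does not close.

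The key difficulty is your Step 2: you propose a barrier of the form $H=\kappa\,(\cdots)\,fR^2/(1+|y|)^2$ directly for $\phi$, and you want $(\partial_\tau - L - \tilde B)H\ge 0$. But $L[\phi]=\Delta\phi-\nabla\cdot(\phi\nabla\Gamma_0)+U\phi-\nabla U\cdot\nabla\psi$, and the local zeroth order term $+U\phi$ breaks comparison at the origin: for your $H$, the contribution $-U(0)\,H(0)=-8\kappa fR^2$ to $(\partial_\tau - L)H$ is negative and of order one, while the good terms $\partial_\tau H \sim -\nu H/\tau$ and $-\Delta H$ are of order $H/\tau$ or $H$ with no sign advantage. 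So $H$ is not a supersolution; no choice of constants in \eqref{DefintionfR2} fixes this — it is a structural obstruction, not a matter of balancing $q$ in the intermediate annulus as you suggest. A secondary issue is circularity in Step 1: you want to control $U\psi$ from a bound on $\phi$, and $\phi$ from the barrier — but for a bound like $|\phi|\lesssim(1+|y|)^{-2}$ the integral $\int_r^\infty\phi\,s\,ds$ diverges, so $\psi$ is not even controlled at that decay rate; you would need a pointwise bound faster than $(1+|y|)^{-2}$ on $\phi$ \emph{before} the inversion, which the barrier is supposed to produce.

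The proof in the paper avoids both problems by deriving a PDE for $g_0:=Ug=\phi-U\psi$ itself (formula \eqref{Eqg0} in the paper), rather than for $\phi$. Because $\nabla\cdot[U\nabla(g_0/U)]=\Delta g_0-\nabla\cdot(g_0\nabla\Gamma_0)$, the equation for $g_0$ has \emph{no} order-one potential term $Ug_0$; the only zeroth order contributions are nonlocal corrections that the $L^2$ hypothesis on $g$ renders lower order. Concretely one estimates $\psi$ (via the variation-of-parameters formulas analogous to \eqref{psiperpglob}--\eqref{psiperpinf}), shows all $\tilde B$-related and nonlocal terms are $O((\ln\tau/\tau)\,K_1f_1(\tau)(1+|y|)^{-5})$, and lands on $\partial_\tau g_0=\nabla\cdot[U\nabla(g_0/U)]+\lambda\dot\lambda(kg_0\chi+y\cdot\nabla(g_0\chi))+h-U(-\Delta)^{-1}h-U(-\Delta)^{-1}(\nabla\cdot(U\nabla g))+O(\cdots)$; the barrier/parabolic-estimate argument is then carried out for $g_0$, whose local leading operator at large $\rho$ is the 6d radial Laplacian $\partial_\rho^2+\tfrac{5}{\rho}\partial_\rho$, for which the profile $\rho^{-2}$ is natural. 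In short: switch the barrier from $\phi$ to $g_0=Ug$, and derive the $g_0$-equation before applying comparison.
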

\begin{proof}
	In the following we will use the notation 
	\begin{align*}
		f_{1}(\tau)=f(\tau)R^{2}(\tau).
	\end{align*}
	We will also write $g_{0}=Ug=\phi-U(-\Delta)^{-1}\phi$. We have
	\begin{align}\label{Eqg0}
		\partial_{ \tau} g_{0}=&\nabla \cdot [U \nabla ( \frac{g_{0}}{U})]- U (-\Delta)^{-1}[\nabla \cdot ( U \nabla g)]+h-U(-\Delta)^{-1}h+\nonumber\\
		&+\tilde{B}[g_{0}]+\tilde{B}[U\psi [g_{0}]]-U(-\Delta)^{-1}(\tilde{B}[g_{0}+U\psi[g_{0}]]).
	\end{align}
     We start observing that $\psi$ has an analogous integral representation as $\psi^{\perp}$ in \eqref{psiperpglob}, \eqref{psiperpinf}. It is immediate then to get
    \begin{align}\label{psiest}
    	|\psi| \le C K_{1} f_{1}(\tau) \frac{1}{1+ \rho}, \ \ \ |\psi ' |\le C K_{1} f_{1}(\tau) \frac{1}{1+\rho^{2}}.
    \end{align}
	We can start considering the nonlocal component of $\tilde{B}[g_{0}]$. By the definition of $\tilde{B}$ we can write
	\begin{align}\label{DecomBtilde}
		\tilde{B}[\phi]= \lambda\dot{\lambda}(k\phi \chi + y\cdot \nabla(\phi \chi))-\lambda \dot{\lambda} W_{0}(y)\int_{\mathbb{R}^{2}} (k\phi \chi+y\cdot \nabla(\phi \chi)).
	\end{align}
	Then since $\int_{\mathbb{R}^{2}} k \phi \chi + \int_{\mathbb{R}^{2}} y \cdot \nabla(\phi \chi)= (k-2)\int_{\mathbb{R}^{2}} \phi \chi \le |k-2| \int_{B_{M}^{c}}|\phi|\le C \int_{B_{M}^{c}}(U|\psi|+|g_{0}|)$ and thanks to \eqref{psiest}  we have
	\begin{align*}
	  \tilde{B}[g_{0}]=&\lambda\dot{\lambda}(kg_{0}\chi+y\cdot \nabla(g_{0}\chi))+O(\frac{\ln \tau}{\tau}|W_{0}(y)| (\int_{B_{M}^{c}}(|g_{0}|+U|\psi|)) )=\\
		=&\lambda\dot{\lambda}(k g_{0}\chi+y\cdot \nabla(g_{0}\chi))+O(K_{1}\frac{\ln \tau}{\tau} \frac{f_{1}(\tau)}{M(\tau)}|W_{0}(y)|).
	\end{align*}
    We can also consider $\tilde{B}[U\psi]$:
	\begin{align*}
		\tilde{B}[U\psi]&= \lambda\dot{\lambda}(kU\psi\chi + y \cdot \nabla (U\psi \chi))+\lambda\dot{\lambda}(k-2)(\int_{\mathbb{R}^{2}} U\psi \chi ) W_{0}(y)=\\
		&=\lambda\dot{\lambda}(kU\psi \chi + U \psi y\cdot \nabla\chi + \chi y\cdot \nabla (U\psi)) +\lambda\dot{\lambda} (k-2)(\int_{\mathbb{R}^{2}} U \psi \chi)W_{0}(y)
	\end{align*}
	and then thanks to \eqref{psiest} we have
	\begin{align*}
		|\tilde{B}[U\psi]|\le C \frac{\ln \tau}{\tau} \frac{K_{1}f_{1}(\tau)}{1+|y|^{5}}+ C \frac{\ln \tau}{\tau} K_{1}f_{1}(\tau) |W_{0}(y)|.
	\end{align*}
	The last term involving the operator $\tilde{B}$ in \eqref{Eqg0} is $U(-\Delta)^{-1} ( \tilde{B}[g_{0}+U\psi[g_{0}]])$.
	Since $\int_{\mathbb{R}^{2}} \tilde{B}[g_{0}+U\psi[g_{0}]]=0$ and all the functions involved are radially symmetric we know that 
	\begin{align}\label{InversofBtilde}
		(-\Delta)^{-1}(\tilde{B}[g_{0}+U\psi[g_{0}]])=-\int_{\rho}^{\infty}\frac{1}{r}\int_{r}^{\infty} \tilde{B}[g_{0}+U\psi[g_{0}]](s,\tau)sdsdr.
	\end{align}
	In order to estimate \eqref{InversofBtilde}, first we recall \eqref{DecomBtilde} and we observe
	\begin{align*}
		\int_{\mathbb{R}^{2}} (k\phi\chi+y\cdot(\phi \chi))dy=&(k-2)\int_{\mathbb{R}^{2}} \phi \chi =-(k-2)\int_{\mathbb{R}^{2}} \phi(1-\chi)\le \\
		\le& C\big(\int_{B_{M}^{c}}|g_{0}|+\int_{B_{M}^{c}}U|\psi|\big)\le
		\le C K_{1}\frac{f_{1}(\tau)}{M(\tau)}.
	\end{align*}
Then we have
		\begin{align*}
			|\lambda\dot{\lambda}\int_{\rho}^{\infty} \frac{1}{r}\int_{r}^{\infty} [\int_{\mathbb{R}^{2}} (k\phi \chi+y\cdot \nabla(\phi\chi))dy]W_{0}(s)sdsdr|\le C K_{1}\frac{\ln \tau}{\tau} \frac{f_{1}(\tau)}{M(\tau)}\frac{1}{1+|y|^{5}}
		\end{align*}
		(actually one can get any spatial decay since $W_{0}(y)$ is a compactly supported function). Now we want to estimate the remaining term in \eqref{InversofBtilde}. We see
	\begin{align*}
		&|-\lambda\dot{\lambda} \int_{\rho}^{\infty} \frac{1}{r}\int_{r}^{\infty} (k\phi\chi+s(\phi\chi)')sdsdr|\le|(k-2)\lambda\dot{\lambda}\int_{\rho}^{2M}\frac{1}{r}\int_{r}^{\infty}\phi \chi s dsdr|+ |\lambda\dot{\lambda}\int_{\rho}^{2M}r\phi \chi dr|.
	\end{align*}
	We recall that $\phi=Ug+U\psi$ and we see that
	\begin{align*}
		&|-\lambda\dot{\lambda} \int_{\rho}^{\infty} \frac{1}{r}\int_{r}^{\infty} (kUg\chi+s(Ug\chi)')sdsdr|\le C \frac{\ln \tau}{\tau} \int_{\rho}^{2M}\frac{1}{r}\int_{r}^{\infty}U|g|\chi sdsdr+ C \frac{\ln \tau}{\tau} \int_{\rho}^{2M} U|g|\chi rdrds\le \\
		&\le C \frac{\ln \tau}{\tau}K_{1}f_{1}(\tau)\int_{\rho}^{2M} \frac{1}{r}(\int_{r}^{\infty}Usds)^{1/2}dr+C\frac{\ln \tau}{\tau} K_{1}f_{1}(\tau)(\int_{\rho}^{2M}Urdr)^{1/2}\le C \frac{\ln \tau}{\tau} K_{1}f_{1}(\tau)\frac{1}{1+\rho}.
	\end{align*}
	When we consider $\psi$, because of \eqref{psiest} we have
	\begin{align*}
		|-\lambda\dot{\lambda} \int_{\rho}^{\infty} \frac{1}{r}\int_{r}^{\infty} (kU\psi\chi+s(U\psi\chi)')sdsdr|\le C \frac{\ln \tau}{\tau} K_{1}f_{1}(\tau)\frac{1}{1+|y|^{3}}.
	\end{align*}
	We just proved that
	\begin{align*}
		|U(-\Delta)^{-1}(\tilde{B}[g_{0}+U\psi[g_{0}]])|\le C \frac{\ln \tau}{\tau} K_{1}f_{1}(\tau)\frac{1}{1+|y|^{5}}.
	\end{align*}
	Now, similarly to the proof of Lemma 10.7 in \cite{DdPDMW}, we get
	\begin{align}\label{FinalEqg0Est}
		\partial_{ \tau}g_{0}=&\nabla \cdot [U\nabla(\frac{g_{0}}{U})]+\lambda\dot{\lambda}(kg_{0}\chi+y\cdot \nabla(g_{0}\chi))+\nonumber\\
		&+h-U(-\Delta)^ {-1}h-U(-\Delta)^{-1}(\nabla \cdot ( U\nabla g))+O(\frac{\ln \tau}{\tau}K_{1}f_{1}(\tau)\frac{1}{1+|y|^{5}}) .
	\end{align}
	and then we can proceed as in \cite{DdPDMW} (estimating identically the remaining terms and using standard parabolic estimates) to get the desired result.
\end{proof}
The next lemma will improve the spatial decay in \eqref{PrelimG0Spat}. Before we need to make an elementary observation: let us consider the equation
\begin{align*}
  \begin{cases} \partial_{\tau}H=\Delta_{\mathbb{R}^{6}}H+h \ \ \ \text{in }(\tau_{0},\infty)\times \mathbb{R}^{6}\\
   H(\tau_{0},\cdot)=0
   \end{cases}
\end{align*}
where $\Delta_{\mathbb{R}^{6}}$ is the Laplace operator in $\mathbb{R}^{6}$ and let us assume that 
\begin{align*}
	|h(y,\tau)|\le \frac{1}{\tau^{1+\gamma}\ln^{\mu} \tau}\frac{1}{(1+\frac{|y|}{\sqrt{\tau}})^{b}}
\end{align*}
for some $\gamma,b\in \mathbb{R}$, $\mu\in \mathbb{R}$. If $\gamma<3$, $\gamma<\frac{b}{2}$ and for any $\mu \in \mathbb{R}$ there is a barrier satisfying
\begin{align}\label{Hbarrier}
	C_{1}\frac{1}{\tau^{\gamma}}\frac{1}{\ln ^{\mu}\tau}\frac{1}{(1+\frac{|y|}{\sqrt{\tau}})^{b}}\le H(y,\tau)\le C_{2}\frac{1}{\tau^{\gamma}}\frac{1}{\ln^{\mu}\tau}\frac{1}{(1+\frac{|y|}{\sqrt{\tau}})^{b}}
\end{align}
for some positive constants $C_{1}$, $C_{2}$.
The interested reader can find a proof in \cite{DdPDMW} where they used this type of barriers to prove Lemma 10.8. \newline
We notice that in the following Lemma we will use the assumptions $1<\nu<\frac{7}{4}$ and $\delta\ll1$.
\begin{lemma}\label{barriersUg}
	Let $f$, $R$ like \eqref{DefintionfR2}. Let $\sigma\in(0,1)$, $1<\nu<\frac{7}{4}$, and $h$ such that $\|h\|_{\star\star}<\infty$. Let $\delta$ in \eqref{FunctionM} sufficiently small and let us assume that $\|g U^{1/2}\|_{L^{2}}\le K_{1} f(t)R^{2}(t)$ for $\tau \in[\tau_{0},\tau_{1}]$. 
    We have
	\begin{align}\label{UgInEQBAr}
		|Ug|\le C (K_{1}+\frac{\|h\|_{\star\star}}{R^{2}(\tau_{0})}+\frac{|c_{1}|}{f(\tau_{0})R^{2}(\tau_{0})})\frac{f(\tau)R^{2}(\tau)}{1+|y|^{4}}.
	\end{align}
\end{lemma}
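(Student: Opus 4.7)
The starting point is the equation \eqref{FinalEqg0Est} derived in the proof of Lemma \ref{Lemma107}, which I regard as a linear parabolic equation for $g_{0}=Ug$ whose drift-diffusion part is $\nabla\cdot[U\nabla(g_{0}/U)]=\Delta g_{0}-\nabla\cdot(g_{0}\nabla\Gamma_{0})$. The key observation, already exploited in Section 2 through the operator $\Delta_{6}$ in \eqref{SixDimLap}, is that for large $|y|$ one has $\nabla\Gamma_{0}\approx -4y/|y|^{2}$, so that for radial profiles
\[
\nabla\cdot[U\nabla(g_{0}/U)]\;\approx\;\Delta g_{0}+\tfrac{4}{|y|^{2}}\,y\cdot\nabla g_{0}\;=\;\Delta_{6}g_{0},
\]
the six-dimensional radial Laplacian. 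This is precisely the operator for which the barrier estimates summarized in \eqref{Hbarrier} were recorded. The plan is therefore to treat \eqref{FinalEqg0Est} as a perturbation of $\partial_{\tau}-\Delta_{6}$, place every right-hand side term in the scale $\tau^{-(\nu-1)}(\ln\tau)^{-(m+1+q)}(1+|y|)^{-4}$, and close the estimate by comparison.

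First, I would bound each source in \eqref{FinalEqg0Est} using the hypothesis $\|gU^{1/2}\|_{L^{2}}\le K_{1}fR^{2}$ and the pointwise estimate \eqref{PrelimG0Spat} from Lemma \ref{Lemma107}, absorbing $\|h\|_{\star\star}/R^{2}(\tau_{0})+|c_{1}|/(f(\tau_{0})R^{2}(\tau_{0}))$ into the constant $K_{1}$. Since $h$ is radial, has zero mass, and satisfies $\|h\|_{\star\star,\nu,m,6+\sigma,\varepsilon}<\infty$, the explicit radial representation of $(-\Delta)^{-1}h$ gives $|h|+|U(-\Delta)^{-1}h|\lesssim \|h\|_{\star\star}f(\tau)(1+|y|)^{-(6+\sigma)}$, which is way faster than $(1+|y|)^{-4}$. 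For the nonlocal term $U(-\Delta)^{-1}(\nabla\cdot(U\nabla g))$, I would use the radial formula $(-\Delta)^{-1}(\nabla\cdot F)=-\int_{\rho}^{\infty}r^{-1}\int_{r}^{\infty}(\nabla\cdot F)\,sds\,dr$, perform one integration by parts to trade $\nabla g$ for $g$, and use \eqref{psiest} together with \eqref{PrelimG0Spat} to produce a bound of the form $K_{1}f(\tau)R^{2}(\tau)(1+|y|)^{-5}$. The $\tilde B$ contributions $\lambda\dot\lambda(kg_{0}\chi+y\cdot\nabla(g_{0}\chi))$ are supported where $|y|\lesssim M(\tau)$, and thanks to $|\lambda\dot\lambda|\approx \ln\tau/(2\tau)$ combined with $M^{2}(\tau)\approx c\delta\tau/\ln\tau$ each factor carries the small parameter $\delta$; this is precisely the mechanism, described right after \eqref{CrucialBarrier}, which lets the drift $\tilde B$ be absorbed into $\Delta_{6}$ as a genuine perturbation when $\delta$ is small.

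Next, the barrier itself. From \eqref{Hbarrier}, with $\gamma=\nu-1\in(0,3/4)$, $\mu=m+1+q$ and $b=4$ (the conditions $\gamma<3$ and $\gamma<b/2=2$ being exactly what forces the restriction $\nu<7/4$ announced in the statement), the function
\[
\bar g_{0}(y,\tau)\;=\;A\bigl(K_{1}+\tfrac{\|h\|_{\star\star}}{R^{2}(\tau_{0})}+\tfrac{|c_{1}|}{f(\tau_{0})R^{2}(\tau_{0})}\bigr)\frac{f(\tau)R^{2}(\tau)}{(1+|y|)^{4}}
\]
solves $(\partial_{\tau}-\Delta_{6})\bar g_{0}\gtrsim \tau^{-\nu}(\ln\tau)^{-(m+1+q)}(1+|y|)^{-4}$, and hence, for $A$ large enough and $\delta$ small, it becomes a supersolution of the full equation for $g_{0}$ on $\mathbb{R}^{2}\times(\tau_{0},\tau_{1})$. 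To treat the region $|y|\gtrsim\sqrt{\tau}$ one would either extend the barrier exactly as in the outer piece of Lemma \ref{estimatephilambda} or observe that \eqref{PrelimG0Spat} already controls $Ug$ there with room to spare because $U$ itself decays like $|y|^{-4}$.

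Finally, the initial condition contributes through the term proportional to $c_{1}\tilde Z_{0}$; since $\tilde Z_{0}$ is supported in $|y|\le 2\sqrt{\tau_{0}}$ with $|\tilde Z_{0}|\le C(1+|y|)^{-4}$, it can be absorbed into the barrier at $\tau=\tau_{0}$ thanks to the $|c_{1}|/(f(\tau_{0})R^{2}(\tau_{0}))$ term inside the constant. The comparison principle applied to $\pm g_{0}\le \bar g_{0}$ then yields \eqref{UgInEQBAr}. The most delicate step in this program is not the algebra of the barrier but the closing of the nonlocal feedback $U(-\Delta)^{-1}(\nabla\cdot(U\nabla g))$: one has to show that inserting the polynomial $(1+|y|)^{-4}$ bound back into this term does not reproduce itself with a constant larger than $1$, which is where the smallness of $\delta$ and the lower bound $\nu>1$ enter decisively.
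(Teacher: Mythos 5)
Your proposal correctly identifies the ambient ingredients (the $\Delta_{6}$ structure, the self-similar super-solutions of type \eqref{Hbarrier}, the need for $\delta$ small to make $\tilde B_{0}$ a perturbation, the source constraints), but it misses the central mechanism of the paper's proof and contains an arithmetic error that invalidates its derivation of the $\nu<\frac{7}{4}$ constraint.

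First, the arithmetic. You choose $\gamma=\nu-1$, $b=4$ and state that the constraints $\gamma<3$ and $\gamma<b/2=2$ ``are exactly what forces $\nu<\frac74$.'' They do not: $\nu-1<2$ gives $\nu<3$. Your barrier parameters do not produce the stated threshold, so the restriction $\nu<\frac{7}{4}$ appears in your proof only as an unexplained hypothesis, not as a derived condition. In the paper the number $\frac74$ drops out of reconciling two separate constraints: the first barrier step requires $\nu<\frac32+\frac{\theta}{2}$ (from $\gamma=\nu+1-\frac{\theta}{2}<\frac{b}{2}$ with $b=5$), the second requires $\nu<2-\frac{\theta}{2}$ (from $\gamma=\nu+1<\frac{b}{2}$ with $b=6-\theta$), and the intersection is optimized at $\theta=\frac12$, giving $\nu<\frac74$.

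Second, and more fundamentally, your scheme collapses what the paper does in two bootstrapping steps into one. The right-hand side $\tilde h$ of the equation for $g_{0}$ contains the nonlocal term $U(-\Delta)^{-1}(\nabla\cdot(U\nabla g))$ and the drift term $\tilde B_{0}[U\psi[g_0]]$; at the moment of entry one only knows $|Ug|\lesssim f R^{2}(1+\rho)^{-2}$ from \eqref{PrelimG0Spat}, which produces a source of size $\sim \frac{\ln\tau}{\tau}\,fR^{2}\,(1+\rho)^{-5}$. A one-shot barrier with the target profile $(1+\rho)^{-4}$ cannot absorb a $(1+\rho)^{-5}$ source uniformly: $(1+\rho)^{-4}$ is six-dimensional harmonic, so $-\Delta_{6}$ applied to it provides essentially no elliptic push for $1\ll\rho\ll\sqrt{\tau}$, and the parabolic contribution alone is also insufficient to control the $(1+\rho)^{-5}$ tail in the intermediate range without degrading the constant. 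The paper therefore first constructs a barrier decaying like $(1+\rho)^{-(4-\theta)}$, $\theta\in(0,1)$, which is \emph{not} six-dimensional harmonic and hence has a genuine elliptic contribution $\sim(1+\rho)^{-(6-\theta)}$; this yields the improved bound $|g_0|\lesssim fR^2(1+\rho)^{-(4-\theta)}$. Only after this is $\tilde h$ re-estimated — with the $(1+\rho)^{-5}$ tail upgraded to $(1+\rho)^{-(6-\theta)}$ — and a second, genuinely three-scale barrier (inner elliptic piece, intermediate self-similar piece, outer heat piece) closes the $(1+\rho)^{-4}$ bound. You do not flag this bootstrap or the need for the intermediate region at all.

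Third, there is a shape mismatch in your barrier. The functions produced by \eqref{Hbarrier} are essentially \emph{constant} in $y$ on $|y|\lesssim\sqrt\tau$ and only decay like $|y|^{-b}$ beyond the self-similar scale. A barrier of this type with $b=4$ therefore does not encode the inner spatial decay $(1+\rho)^{-4}$ you need; that decay must come from a separate, $y$-dependent elliptic super-solution cut off at scale $\sqrt\tau$, as the paper constructs. Conflating the two leaves the inner decay unjustified.

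In short: the blueprint (supersolutions, $\Delta_{6}$, smallness of $\delta$) is right, but the proposal is missing the two-step bootstrap with intermediate exponent $\theta$, which is both where the $\nu<\frac74$ threshold comes from and what makes the argument close; as written, the claimed bound does not follow.
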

\begin{proof}
	 Recalling \eqref{DefintionfR2} we can assume that for any $\tau\in[\tau_{0},\tau_{1}]$
	 \begin{align*}
	 	f(\tau)R^{2}(\tau)\le C \frac{\ln^{\mu}\tau}{\tau^{\nu-1}} \ \ \ \text{for some }\mu\in \mathbb{R}.
	 \end{align*}
	Let $g_{0}=Ug$, we need to expand the equation \eqref{Eqg0} when $\rho=|y|\in ( R_{0},\infty)$ and $R_{0}\gg1$
	\begin{align}\label{ExpansionOperatorG0}
		&\partial_{ \tau} g_{0}-[\Delta g_{0}-\nabla \Gamma _{0}\cdot \nabla g + \lambda\dot{\lambda}(kg_{0}\chi + y \cdot \nabla (g_{0}\chi))+2Ug_{0}]=\nonumber\\
		&\hspace{1cm}=\partial_{ \tau}g_{0}-[\partial_{\rho}^{2}g_{0}+\frac{1}{\rho}\partial_{\rho}g_{0}+\frac{4\rho}{1+\rho^{2}}\partial_{\rho}g_{0}+\lambda\dot{\lambda}(kg_{0}\chi+\rho\partial_{\rho}(g_{0}\chi))+\frac{16}{(1+\rho^{2})^{2}}g_{0}]=\nonumber\\
		&\hspace{1cm}=\partial_{ \tau}g_{0}-[\partial_{\rho}^{2}g_{0}+\frac{5}{\rho}\partial_{\rho}g_{0}+\tilde{B}_{0}[g_{0}]+O(\frac{1}{\rho^{2}})\frac{1}{\rho}\partial_{\rho}g_{0}+O(\frac{1}{\rho^{4}})g_{0}   ]
	\end{align}
where we used the notation $\tilde{B}[\phi]=\tilde{B}_{0}[\phi]-W_{0}(y)\lambda\dot{\lambda}\int_{\mathbb{R}^{2}} (k\phi \chi+y\cdot{\nabla}(\phi \chi))dy$ and we observed that $W_{0}(y)$ is compactly supported.
	Then $g_{0}$ satisfies
	\begin{align}\label{FinalEqG0Real}
		\partial_{ \tau}g_{0}=(\Delta_{6,\rho}+...)g_{0}+\tilde{B}_{0}[g_{0}]+\tilde{h}
	\end{align} 
    (where in the parenthesis we are omitting the lower order terms in the expansion \eqref{ExpansionOperatorG0}). As can be seen from equation \eqref{FinalEqg0Est} and estimating the remaining terms thanks to \eqref{PrelimG0Spat} (similarly to how they did in Lemma 10.8 of \cite{DdPDMW}), for the right-hand side we know
	\begin{align}\label{InitialRHSg0}
		|\tilde{h}|\le C (K_{1}+\frac{\|h\|_{\star\star}}{R^{2}(\tau_{0})}+\frac{|c_{1}|}{f(\tau_{0})R^{2}(\tau_{0})})f(\tau)R^{2}(\tau)(\frac{1}{1+\rho^{6}}+\frac{\ln \tau}{\tau}\frac{1}{1+\rho^{5}}), \ \ \ \ \tau\in[\tau_{0},\tau_{1}].
	\end{align}
In order to simplify the notation in the following we write 
\begin{align}\label{f1}
	f_{1}(\tau):=f(\tau)R^{2}(\tau).
\end{align}
	\emph{\underline{First barrier:}} let $0<\theta<1$ that will be fixed only at the end of the proof. \newline We take $-\Delta_{6} \tilde{g}_{1}= \frac{1}{1+\rho^{6-\theta}}$ in $\mathbb{R}^{6}$ such that $\frac{c_{1}}{1+\rho^{4-\theta}}\le\tilde{g}_{1}\le \frac{c_{2}}{1+\rho^{4-\theta}}$ for some positive constants $c_{1}, c_{2}$. We see
	\begin{align}\label{EllipTICFIrstBAR}
		&-(\partial_{\rho}^{2}\tilde{g}_{1}+\frac{5}{\rho}\partial_{\rho}\tilde{g}_{1}+\tilde{B}_{0}[\tilde{g}_{1}])\ge \frac{c}{1+\rho^{6-\theta}}+O(M^{2}(\tau)\frac{\ln \tau}{\tau}\frac{1}{1+\rho^{6-\theta}}),
	\end{align}
    where we used \eqref{FunctionM}.
	We claim that by choosing properly $\varepsilon$, $\delta$ small and $C_{H_{1}}$ large constants  the first barrier will look like
	\begin{align*}
		\bar{g}_{1}(\rho,\tau)=f_{1}(\tau)\big[\tilde{g}_{1}\chi_{0}(\frac{\rho}{\varepsilon\sqrt{\tau}})+C_{H_{1}}H_{1}(\frac{\rho}{\sqrt{\tau}},\tau)\big]=f_{1}(\tau)\big[\tilde{g}_{1}\bar{\chi}+C_{H_{1}}H_{1}(\frac{\rho}{\sqrt{\tau}},\tau)\big]
	\end{align*}
	where $H_{1}(\frac{\rho}{\sqrt{\tau}},\tau)$ is some function of the type \eqref{Hbarrier}. We observe that since $\chi=\chi_{0}(\frac{\rho}{M(\tau)})$ and $\bar{\chi}=\chi_{0}(\frac{\rho}{\varepsilon\sqrt{\tau}})$ we see
	\begin{align*}
		\tilde{B}_{0}[\phi\bar{\chi}]=\lambda\dot{\lambda}(k\phi\chi\bar{\chi}+y\cdot\nabla(\phi\bar{\chi}\chi))=\bar{\chi}\tilde{B}_{0}[\phi]+\lambda\dot{\lambda}\phi\chi y\cdot\nabla\bar{\chi}=\bar{\chi}\tilde{B}_{0}[\phi].
	\end{align*}
 Let $\mathds{1}$ be the indicator function, if $\varepsilon>0$ is sufficiently small and recalling \eqref{EllipTICFIrstBAR} we have
	\begin{align}\label{epsilonsmallfirstBarr}
		\big[\partial_{ \tau}-(\Delta_{6}+\tilde{B}_{0})\big](f_{1}(\tau)\tilde{g}_{1}\bar{\chi})=&\bar{\chi}\big[\partial_{ \tau}-(\Delta_{6}+\tilde{B}_{0})\big](f_{1}(\tau)\tilde{g}_{1})+O(\frac{f_{1}(\tau)}{\varepsilon^{6-\theta}\tau^{3-\theta/2}}\mathds{1}_{ \{1\le \frac{\rho}{\varepsilon\sqrt{\tau}}\le 2\}})\ge \nonumber \\
		\ge &\bar{\chi}\big[\frac{c}{2}\frac{f_{1}(\tau)}{1+\rho^{6-\theta}}+O(M^{2}(\tau)\frac{\ln \tau}{\tau}\frac{f_{1}(\tau)}{1+\rho^{6-\theta}})\big]+O(\frac{f_{1}(\tau)}{\varepsilon^{6-\theta}\tau^{3-\theta/2}}\mathds{1}_{ \{1\le \frac{\rho}{\varepsilon\sqrt{\tau}}\le 2\}}).
	\end{align}
	If we take $H_{1}$ as in \eqref{Hbarrier} with $\gamma=\nu+1-\frac{\theta}{2}$ and $b=5$ (this spatial decay clearly comes from \eqref{InitialRHSg0}). We have the conditions
	\begin{align}\label{FIRSTBARRthetaCOND}
		\nu+1-\frac{\theta}{2}<3, \ \ \ \nu+1-\frac{\theta}{2}<\frac{5}{2} \iff \nu<\frac{3}{2}+\frac{\theta}{2}.
	\end{align}
    Now we see
	\begin{align*}
		|\tilde{B}_{0}[C_{H_{1}}H_{1}(\frac{\rho}{\sqrt{\tau}},\tau)]|=O(\chi_{0}(\frac{\rho}{2M(\tau)})\delta \frac{f_{1}(\tau)}{\tau^{2-\theta/2}}\frac{C_{H_{1}}}{1+\rho^{2}})=O(\chi_{0}(\frac{\rho}{2M(\tau)})C_{H_{1}}\frac{f_{1}(\tau)}{(\ln \tau)^{2-\theta/2}}\frac{\delta^{3-\theta/2}}{1+\rho^{6-\theta}}).
	\end{align*}
Then $\bar{g}_{1}(\rho,\tau)=f_{1}(\tau)\tilde{g}_{1}\chi_{0}(\frac{\rho}{\varepsilon\sqrt{\tau}})+C_{H_{1}}f_{1}(\tau)H_{1}(\frac{\rho}{\sqrt{\tau}},\tau)$ satisfies
	\begin{align*}
		\partial_{ \tau}\bar{g}_{1}-[\Delta_{6}\bar{g}_{1}+\tilde{B}_{0}[\bar{g}_{1}]\big]&\ge f_{1}(\tau)(\frac{1}{1+\rho^{6}}+\frac{\ln \tau}{\tau}\frac{1}{1+\rho^{5}})
	\end{align*}
	where we recall that we chose $\varepsilon$ small to get \eqref{epsilonsmallfirstBarr} and that we fixed $\delta$ sufficiently small to control $M^{2}(\tau)\frac{\ln \tau}{\tau}$ and $C_{H}$ (depending on $\varepsilon$), $\tau_{0}$ sufficiently large (in fact when we compute $\partial_{ \tau}(f_{1}(\tau)\tilde{g}_{1})$ we get rid of some terms that are smaller only when $\tau_{0}$ is large). This is a barrier, indeed we have
	\begin{align*}
		&\big(\partial_{ \tau}-[\Delta-\nabla \Gamma_{0}\cdot \nabla + \tilde{B}_{0}]\big)(N\bar{g}_{1})\ge |\tilde{h}| \ \ \ \ \text{in } B_{R_{0}}^{c}\times (\tau_{0},\tau_{1})\\
		&N\bar{g}_{1}\ge |g_{0}| \ \ \ \text{on }\rho=R_{0}, \ \tau\in(\tau_{0},\tau_{1})\\
		&N\bar{g}_{1}(\tau_{0})\ge |c_{1}Ug_{\tilde{Z}_{0}}| \ \ \ \text{in }\mathbb{R}^{2}.
	\end{align*}
	Notice that in the second inequality we fixed $N=R_{0}^{2-\theta}(K_{1}+\frac{\|h\|_{\star\star}}{R^{2}(\tau_{0})}+\frac{|c_{1}|}{f(\tau_{0})R^{2}(\tau_{0})})$. In the last one we used \eqref{InitialCond} and then that $g_{0}|_{\tau_{0}}=c_{1}Ug_{\tilde{Z}_{0}}$ satisfies
	\begin{align*}
		c_{1}Ug_{\tilde{Z}_{0}}=c_{1}U\big(\frac{\tilde{Z}_{0}}{U}-(-\Delta)^{-1}(\tilde{Z}_{0})\big)=c_{1}U\big(\frac{\tilde{Z}_{0}}{U}+\int_{\rho}^{\infty}\frac{1}{r}\int_{r}^{\infty}\tilde{Z}_{0}(s)sdsdr\big)
	\end{align*}
	that decays like $1/\rho^{4}$ and is supported where $|\rho|\le 2\sqrt{\tau_{0}}$, and we observed
\begin{align*}
		\bar{g}_{1}(\tau_{0})\ge C f(\tau_{0})R^{2}(\tau_{0})\frac{1}{1+\rho^{4}} \ \ \ \text{if } \rho<\sqrt{ \tau_{0}}.
	\end{align*}
    We just proved that
	\begin{align}\label{firstbar}
		|g_{0}|\le C(K_{1}+\frac{\|h\|_{\star\star}}{R^{2}(\tau_{0})}+\frac{|c_{1}|}{f(\tau_{0})R^{2}(\tau_{0})})\frac{f(t)R^{2}(\tau)}{1+\rho^{4-\theta}}, \ \ \ \ \tau \in (\tau_{0}, \tau_{1}).
	\end{align}
	\emph{\underline{Second barrier}:} the construction of the second barrier is more involved since it requires to consider also an intermediate region. As we will show in what follows, thanks to the cut-off \eqref{FunctionM} the operator $\tilde{B}_{0}$ in this intermediate region, as well as in the self-similar region, will give only lower order terms. \newline 
	The first observation is that, thanks to \eqref{firstbar} and recalling \eqref{f1}, now we have
	\begin{align}\label{RHSSECBARR}
		|\tilde{h}|\le C(K_{1}+\frac{\|h\|_{\star\star}}{R^{2}(\tau_{0})}+\frac{|c_{1}|}{f(\tau_{0})R^{2}(\tau_{0})})f_{1}(\tau)(\frac{1}{(1+\rho)^{6+\sigma}}+\frac{\ln \tau}{\tau}\frac{1}{(1+\rho)^{6-\theta}}).
	\end{align}
	Indeed, recalling that $R_{0}\gg1$, since $W_{0}$ is compactly supported we have
	\begin{align*}
		\tilde{h}=&-U(-\Delta)^{-1}[\nabla \cdot (U\nabla g)]+\tilde{B}_{0}[U\psi[g_{0}]]-U(-\Delta)^{-1}(\tilde{B}[g_{0}+U\psi[g_{0}]])+h-U(-\Delta)^{-1}h.
	\end{align*}
	An elementary computation shows that
	\begin{align*}
		|U(-\Delta)^{-1}[\nabla \cdot (U\nabla g)]|\le \frac{C}{1+\rho^{4}}\int_{\rho}^{\infty}|g_{0}| |\Gamma_{0}'(s)|ds\le C (K_{1}+\frac{\|h\|_{\star\star}}{R^{2}(\tau_{0})}+\frac{|c_{1}|}{f(\tau_{0})R^{2}(\tau_{0})})\frac{f_{1}(\tau)}{1+\rho^{8-\theta}}.
	\end{align*}
	Moreover, by recalling that analogously to \eqref{psiperpinf} we have $\psi(\rho)=z_{0}(\rho)\int_{\rho}^{\infty} \frac{1}{z_{0}^{2}(r)r}\int_{r}^{\infty} g_{0}(s)z_{0}(s)sdsdr$, we know
	\begin{align*}
       &|\psi|\le C(K_{1}+\frac{\|h\|_{\star\star}}{R^{2}(\tau_{0})}+\frac{|c_{1}|}{f(\tau_{0})R^{2}(\tau_{0})})\frac{f_{1}(\tau)}{1+\rho^{2-\theta}}\\
       &  \implies |\tilde{B}_{0}[U\psi]|\le C(K_{1}+\frac{\|h\|_{\star\star}}{R^{2}(\tau_{0})}+\frac{|c_{1}|}{f(\tau_{0})R^{2}(\tau_{0})})\frac{\ln \tau}{\tau} \frac{f_{1}(\tau)}{1+\rho^{6-\theta}}.
	\end{align*}
	Since $\phi=Ug+U\psi$, we also have $|\phi|\le C(K_{1}+\frac{\|h\|_{\star\star}}{R^{2}(\tau_{0})}+\frac{|c_{1}|}{f(\tau_{0})R^{2}(\tau_{0})})\frac{f_{1}(\tau)}{1+\rho^{4-\theta}}$. Then if $R_{0}\gg1$ we have by \eqref{InversofBtilde} that
	\begin{align*}
		|(-\Delta)^{-1}(\tilde{B}[g_{0}+U\psi[g_{0}]])|\le |\int_{\rho}^{\infty} \frac{1}{r}\int_{r}^{\infty}\tilde{B}_{0}[\phi]sdsdr|\le C \frac{\ln \tau}{\tau}(K_{1}+\frac{\|h\|_{\star\star}}{R^{2}(\tau_{0})}+\frac{|c_{1}|}{f(\tau_{0})R^{2}(\tau_{0})})\frac{f_{1}(\tau)}{1+\rho^{2-\theta}}.
	\end{align*}
	Finally we obviously have $|h-U(-\Delta)^{-1}h| \le C \|h\|_{\star\star}\frac{f(\tau)}{(1+\rho)^{6+\sigma}}$. \newline
	In the next observation we want to adapt the right-hand side of \eqref{RHSSECBARR} to the barrier we are going to construct. We observe that the error $\tilde{h}$ satisfies
	\begin{align}\label{EstimateTildeH}
		|\tilde{h}|\le& C(K_{1}+\frac{\|h\|_{\star\star}}{R^{2}(\tau_{0})}+\frac{|c_{1}|}{f(\tau_{0})R^{2}(\tau_{0})})f_{1}(\tau)\big(\frac{1}{1+\rho^ {6+\sigma}}\chi_{0}(\frac{\rho}{\sqrt{\tau}}\sqrt{\ln \tau})+ \frac{\ln^{3} \tau}{\tau^{3}}\frac{1}{1+(\frac{\rho}{\sqrt{\tau}}\sqrt{\ln \tau})^{6+\sigma}}\chi_{0}(\frac{\rho}{\sqrt{\tau}})+\nonumber\\
		&\hspace{4.8cm}+\frac{1}{\tau^{3}}\frac{1}{1+(\frac{\rho}{\sqrt{\tau}})^{6-\theta}}\big).
	\end{align}
    As we will show later the intermediate region in \eqref{EstimateTildeH} will help us to include the operator $\tilde{B}_{0}$. \newline
	As we did for the first barrier we could take $-\Delta_{6} \tilde{g}_{2}=\frac{1}{1+\rho^{6+\sigma}} \implies \frac{c_{1}}{1+\rho^{4}}\le \tilde{g}_{2}\le \frac{c_{2}}{1+\rho^{4}}$ but this is \emph{not} a good starting point since
	\begin{align*}
		-(\partial_{\rho}^{2}\tilde{g}_{2}+\frac{5}{\rho}\partial_{\rho}\tilde{g}_{2}+\tilde{B}_{0}[g_{2
		}])\ge \frac{c}{1+\rho^{6+\sigma}}+O(M^{2}(\tau)\frac{\ln \tau}{\tau}\frac{1}{1+\rho^{6}})
	\end{align*}
	and, recalling \eqref{FunctionM}, we observe that the second term does not have sufficiently fast spatial decay. For this reason for some constant $\bar{C}_{2}$ that has to be intended large and that will be fixed later we take
	\begin{align}\label{gtilde2}
		\tilde{g}_{2}=(-\Delta_{6})^{-1}(\frac{1}{1+\rho^{6+\sigma}})+\bar{C}_{2}\frac{\ln \tau}{\tau} (-\Delta_{6})^{-1}(\frac{1}{1+\rho^{4}})
	\end{align}
(notice that $\frac{c_{1}}{1+\rho^{2}}\le (-\Delta_{6})^{-1}(\frac{1}{1+\rho^{4}})\le \frac{c_{2}}{1+\rho^{2}}$ for two positive constants $c_{1},c_{2}$).
    We see that $\tilde{g}_{2}$ satisfies
	\begin{align*}
		-(\Delta_{6}+\tilde{B}_{0})\tilde{g}_{2}\ge \frac{1}{1+\rho^{6+\sigma}}+O(\frac{\ln \tau}{\tau} \frac{1}{1+\rho^{4}}\chi_{0}(\frac{\rho}{2M(\tau)}))+\frac{\ln \tau}{\tau} \frac{\bar{C}_{2}}{1+\rho^{4}}+O(\bar{C}_{2}\big(\frac{\ln \tau}{\tau}\big)^{2} \frac{1}{1+\rho^{2}}\chi_{0}(\frac{\rho}{2M(\tau)})).
	\end{align*}
     We notice that, thank to the expansion \eqref{lambdadotlambdaExp} and because of \eqref{FunctionM}, we can take $m$ small but such that 
    \begin{align}\label{mCUTOFFSECBARR}
    	\sqrt{\frac{m\tau}{\ln \tau}} \gg M(\tau).
    \end{align}
    The construction of the second barrier differs from the first one since now we take $\bar{\chi}=\chi_{0}(\frac{\rho}{\sqrt{m\tau}}\sqrt{\ln\tau })$. We observe that thanks to \eqref{mCUTOFFSECBARR} we still have
	\begin{align*}
		\tilde{B}_{0}[\phi\bar{\chi}]=\lambda\dot{\lambda}(k\phi\chi\bar{\chi}+y\cdot\nabla(\phi\bar{\chi}\chi))=\bar{\chi}\tilde{B}_{0}[\phi]+\lambda\dot{\lambda}\phi\chi y\cdot\nabla\bar{\chi}=\bar{\chi}\tilde{B}_{0}[\phi].
	\end{align*}
    We notice that taking this cut-off we have $|\tilde{g}_{2}\bar{\chi}|\le \frac{1}{1+\rho^{4}}$ and that if $\rho \approx \sqrt{\frac{\tau}{\ln \tau}}$ we see $\tilde{g}_{2}\approx \big(\frac{\ln \tau}{\tau}\big)^{2}$ (taking the same $\tilde{\chi}$ of the first barrier we would have a loss of decay when $\rho \approx \sqrt{\tau}$, namely $\tilde{g}_{2}\approx \frac{\ln \tau}{\tau^{2}}$ and not just $\tilde{g}_{2}\approx \frac{1}{\tau^{2}}$).
	Then if \eqref{mCUTOFFSECBARR} holds for a small $m$ we have
	\begin{align*}
		&-(\Delta_{6}+\tilde{B}_{0})[\tilde{g}_{2}\chi_{0}(\frac{\rho}{\sqrt{m\tau}}\sqrt{\ln \tau})]\ge\\
		&\hspace{1cm}\ge\big [\frac{1}{1+\rho^{6+\sigma}}+\frac{\ln \tau}{\tau} \frac{\bar{C}_{2}}{1+\rho^{4}}+O[(1+\delta \bar{C}_{2})\frac{\ln \tau}{\tau}\frac{1}{1+\rho^{4}}\chi_{0}(\frac{\rho}{2M(\tau)})]\big]\chi_{0}(\frac{\rho}{\sqrt{m\tau}}\sqrt{\ln\tau })+\\
		&\hspace{2cm}+O(|\tilde{g}_{2}||\Delta_{6}\bar{\chi}|+...)=\\
		&\hspace{1cm}=[...]\bar{\chi}+O[(\frac{\ln \tau}{m\tau})^{3}\bar{C}_{2}\mathds{1}_{ \{1\le \frac{\rho}{\sqrt{m\tau}}\sqrt{\ln \tau}\le 2 \}}].
	\end{align*}
	Now we observe that if $\tau_{0}$ is sufficiently large
	\begin{align*}
		|\partial_{ \tau}[f_{1}(\tau)\tilde{g}_{2}\bar{\chi}]|\le& \frac{1}{\tau}f_{1}(\tau)|\tilde{g}_{2}| \bar{\chi}+\frac{1}{\tau} f_{1}(\tau)|\tilde{g}_{2}| |\chi_{0}'(\frac{\rho}{\sqrt{m\tau}}\sqrt{\ln \tau})| |\frac{\rho}{\sqrt{m\tau}}\sqrt{\ln \tau}|\le \\
		\le& C \frac{f_{1}(\tau)}{\tau}\big[\frac{1}{1+\rho^{4}}+\bar{C}_{2}\frac{\ln \tau}{\tau}\frac{1}{1+\rho^{2}}\big]\bar{\chi}+O(\bar{C}_{2}\frac{f_{1}(\tau)}{\tau}(\frac{\ln \tau}{m\tau})^{2}\mathds{1}_{ \{1\le \frac{\rho}{\sqrt{m\tau}}\sqrt{\ln \tau}\le 2 \}}).
	\end{align*}
	We get
	\begin{align}\label{FirstPieceSECBARR}
		(\partial_{ \tau}-\Delta_{6}-\tilde{B}_{0})(f_{1}(\tau)\tilde{g}_{2}\bar{\chi})\ge&\frac{f_{1}(\tau)}{1+\rho^{6+\sigma}}\bar{\chi}+f_{1}(\tau)\frac{\ln \tau}{\tau}\frac{\bar{C}_{2}}{1+\rho^{4}}\bar{\chi}+\nonumber\\
		&+O\big[(1+\delta \bar{C}_{2})\frac{\ln \tau}{\tau}f_{1}(\tau)\frac{1}{1+\rho^{4}}\chi_{0}(\frac{\rho}{2M(\tau)})\big]+O\big[\frac{1+\bar{C}_{2}m}{\tau}\frac{f_{1}(\tau)}{1+\rho^{4}}\bar{\chi}\big]+\nonumber\\
		&+O\big[(\frac{\ln \tau}{m\tau})^{3}\bar{C}_{2}f_{1}(\tau)\mathds{1}_{\{1\le \frac{\rho}{\sqrt{m\tau}}\sqrt{\ln \tau}\le 2\}}\big].
	\end{align}
    By looking at \eqref{FirstPieceSECBARR} it is clear why later we will take $\bar{C}_{2}$, $\tau_{0}$ sufficiently large and $\delta$ sufficiently small.
	The second part of the barrier is based on the construction of a function $g_{3}$ such that
	\begin{align*}
		(\partial_{ \tau}-\Delta_{6}-\tilde{B}_{0})(g_{3})\ge (\frac{\ln \tau}{\tau})^{3}\frac{f_{1}(\tau)}{(1+(\frac{\rho}{\sqrt{\tau}}\sqrt{\ln \tau})^{6+\sigma})}+(...), \ \ \ \ \rho \le \sqrt{\gamma \tau}
	\end{align*} 
	with $\gamma$ a small parameter to be fixed and where in the right-hand side we will have some small errors that will be controlled by $\bar{C}_{2}$ and by the last piece of the barrier. Notice that the decay in space of the right-hand side is consistent with \eqref{EstimateTildeH}. Similarly to the previous piece of the barrier, writing $\zeta=\frac{\rho}{\sqrt{\tau}}\sqrt{\ln \tau}$ and denoting $\Delta_{6,\zeta}=\frac{\tau}{\ln \tau}\Delta_{6,\rho}$ the rescaled operator, we can take
	\begin{align}\label{gtilde3}
		\tilde{g}_{3}= (-\Delta_{6,\zeta})^{-1}(\frac{1}{(1+(\frac{\rho}{\sqrt{\tau}}\sqrt{\ln \tau})^{6+\sigma})})+\frac{\bar{C}_{3}}{\ln\tau}(-\Delta_{6,\zeta})^{-1}\big(\frac{1}{1+(\frac{\rho}{\sqrt{\tau}}\sqrt{\ln \tau})^{4}}\big).
	\end{align}
	We have
	\begin{align*}
		-\Delta_{6}(f_{1}(\tau)(\frac{\ln \tau}{\tau})^{2}\tilde{g}_{3}\chi_{0}(\frac{\rho}{\sqrt{\gamma\tau}}))=&f_{1}(\tau)(\frac{\ln \tau}{\tau})^{3}[\frac{1}{1+ \zeta^{6+\sigma}}+\frac{\bar{C}_{3}}{\ln \tau}\frac{1}{1+\zeta^{4}}]\chi_{0}(\frac{\rho}{\sqrt{\gamma \tau}})+\\
		&+O(f_{1}(\tau)\frac{\bar{C}_{3}}{\gamma^{3}\tau^{3}}\mathds{1}_{\{1\le \frac{\rho}{\sqrt{\gamma \tau}}\le 2\}}).
	\end{align*}
	Moreover since $\tilde{B}_{0}[\phi\bar{\chi}]=\lambda\dot{\lambda}(k\phi\chi\chi_{0}(\frac{\rho}{\sqrt{\gamma \tau}})+y\cdot\nabla(\phi\chi\chi_{0}(\frac{\rho}{\sqrt{\gamma \tau}})))=\tilde{B}_{0}[\phi]$ we have
	\begin{align*}
		|\tilde{B}_{0}[f_{1}(\tau)(\frac{\ln \tau}{\tau})^{2}\tilde{g}_{3}\chi_{0}(\frac{\rho}{\sqrt{\gamma \tau}})]|\le C f_{1}(\tau)(\frac{\ln \tau}{\tau})^{3}\bar{C}_{3}\chi_{0}(\frac{\rho}{2M(\tau)}) \le C f_{1}(\tau) \bar{C}_{3}\frac{\ln \tau}{\tau}\delta^{2} \frac{1}{1+\rho^{4}}\chi_{0}(\frac{\rho}{2M(\tau)}).
	\end{align*}
	Since for $\tau_{0}$ large we additionally have that
	\begin{align*}
		|\partial_{ \tau}(f_{1}(\tau)(\frac{\ln \tau}{\tau})^{2}\tilde{g}_{3}\chi_{0}(\frac{\rho}{\sqrt{\gamma\tau}}))|\le &C \frac{1}{\tau} f_{1}(\tau)(\frac{\ln \tau}{\tau})^{2}\frac{1+\gamma \bar{C}_{3}}{1+\zeta^{4}}\chi_{0}(\frac{\rho}{\sqrt{\gamma \tau}})+O[\frac{f_{1}(\tau)}{(\gamma \tau)^{2}\tau}(1+\gamma\bar{C}_{3})\mathds{1}_{\{1\le \frac{\rho}{\sqrt{\gamma \tau}}\le 2\}}]
	\end{align*}
    we get
	\begin{align}\label{g3INeSECBARRIER}
		(\partial_{ \tau}-\Delta_{6}-\tilde{B}_{0})(f_{1}(\tau)(\frac{\ln \tau}{\tau})^{2}\tilde{g}_{3}\chi_{0}(\frac{\rho}{\sqrt{\gamma \tau}}))=&f_{1}(\tau)(\frac{\ln \tau}{\tau})^{3}[\frac{1}{1+\zeta^{6}}+\frac{\bar{C}_{3}}{\ln \tau}\frac{1}{1+\zeta^{4}}]\chi_{0}(\frac{\rho}{\sqrt{\gamma \tau}})+\nonumber\\
		&+O[f_{1}(\tau)(\frac{\ln \tau}{\tau})^{2}\frac{1}{\tau}\frac{1+\gamma \bar{C}_{3}}{1+\zeta^{4}}\chi_{0}(\frac{\rho}{\sqrt{\gamma\tau}})]+\nonumber\\
		&+O[\frac{1}{(\gamma \tau)^{3}}f_{1}(\tau)\bar{C}_{3}\mathds{1}_{\{1\le \frac{\rho}{\sqrt{\gamma \tau}}\le 2\}}]+\nonumber\\
		&+O[\frac{\ln \tau}{\tau}f_{1}(\tau)\frac{\delta^{2}\bar{C}_{3}}{1+\rho^{4}}\chi_{0}(\frac{\rho}{2M(\tau)})].
	\end{align}
    Since we wanted to erase the remainder in the right-hand side of \eqref{FirstPieceSECBARR} it is clear that this second piece has to be multiplied by $\frac{\bar{C}_{2}}{m^{3}}$ times some sufficiently large factor independent of all the parameters.
    By looking at \eqref{g3INeSECBARRIER} it easy to see why later we are going to choose $\bar{C}_{3}$ sufficiently large such that $\bar{C}_{3}\gtrsim1+\gamma \bar{C}_{3}$ (with the notation $\gtrsim$ we are neglecting factors independent of the parameters). Keeping in mind \eqref{FirstPieceSECBARR} it is also clear why we are taking $\bar{C}_{2}$ sufficiently large such that
    \begin{align}\label{FirstINEQC2SECBARR}
    	\bar{C}_{2}\gtrsim 1+\delta \bar{C}_{2}+\frac{m\bar{C}_{2}}{\ln \tau} + \bar{C}_{2} \bar{C}_{3}\delta^{2} \frac{1}{m^{3}}.
    \end{align}
    In order to find a $\bar{C}_{2}$ such that \eqref{FirstINEQC2SECBARR} holds it is sufficient to recall \eqref{mCUTOFFSECBARR} and that $m$, $\delta$ are small (we can assume $\frac{\delta^{2}}{m^{3}}\ll1$).
	\newline In the final step we take $H_{2}(\rho,\tau)$ like \eqref{Hbarrier} with $\gamma=\nu+1$ and $b=6-\theta$ (this spatial decay is a consequence of the decay in \eqref{EstimateTildeH}). Then we need
	\begin{align}\label{SecCondTheta}
	     \nu+1<3-\frac{\theta}{2} \iff \nu<2-\frac{\theta}{2}.
	\end{align} 
First we observe that 
	\begin{align*}
		|\tilde{B}_{0}[H_{2}(\frac{\rho}{\sqrt{\tau}})]=O(\frac{\delta}{\tau^{2}}\frac{f_{1}(\tau)}{1+\rho^{2}}\chi_{0}(\frac{\rho}{2M(\tau)}))=O(\frac{\delta^{2}}{\tau \ln \tau}\frac{f_{1}(\tau)}{1+\rho^{4}}\chi_{0}(\frac{\rho}{2M(\tau)})).
	\end{align*}
	Then we have
	\begin{align}\label{LastPieceBarrierSEC}
	(\partial_{ \tau}-\Delta_{6}-\tilde{B}_{0})(H_{2}(\rho,\tau))\ge \frac{1}{\tau^{3}}\frac{f_{1}(\tau)}{1+(\frac{\rho}{\sqrt{\tau}})^{6-\theta}}+O(\frac{\delta^{2}}{\tau \ln \tau}\frac{f_{1}(\tau)}{1+\rho^{4}}\chi_{0}(\frac{\rho}{2M(\tau)})).
	\end{align}
Finally for some constants $D$, $E$ universal, recalling \eqref{FirstPieceSECBARR}, \eqref{g3INeSECBARRIER} and \eqref{LastPieceBarrierSEC}, we get 
	\begin{align*}
		&(\partial_{ \tau}-\Delta_{6}-\tilde{B}_{0})[f_{1}(\tau)\tilde{g}_{2}\bar{\chi}+D \frac{\bar{C}_{2}}{m^{3}}f_{1}(\tau)(\frac{\ln \tau}{\tau})^{2}\tilde{g}_{3}\chi_{0}(\frac{\rho}{\sqrt{\gamma \tau}})+\frac{DE\bar{C}_{2}\bar{C}_{3}}{\gamma^{3}m^{3}}H_{2}(\rho,\tau)]\ge\\
		&\hspace{2cm} \ge \frac{f_{1}(\tau)}{1+\rho^{6+\sigma}}\bar{\chi}+\frac{1}{2}\frac{D\bar{C}_{2}}{m^{3}}f_{1}(\tau)(\frac{\ln \tau}{\tau})^{3}\frac{1}{1+\zeta^{6+\sigma}}\chi_{0}(\frac{\rho}{\sqrt{\gamma \tau}})+\frac{1}{2}\frac{DE\bar{C}_{2}\bar{C}_{3}}{\gamma^{3}m^{3}\tau^{3}}\frac{f_{1}(\tau)}{1+(\frac{\rho}{\sqrt{\tau}})^{6-\theta}}.
	\end{align*}
	where we fixed $\bar{C}_{3}$ such that $\bar{C}_{3}\gtrsim 1+\gamma \bar{C}_{3}$ where $\gamma$ is small and $\bar{C}_{2}$ such that
	\begin{align}\label{CoeffiCientIneqBarrier}
		\bar{C}_{2}\gtrsim 1+ \delta \bar{C}_{2}+\frac{m\bar{C}_{2}}{\ln \tau}+\bar{C}_{2}\bar{C}_{3}\frac{\delta^{2}}{m^{3}}+\bar{C}_{2}\bar{C}_{3}\frac{\delta^{2}}{\gamma^{3}m^{3}}\frac{1}{\ln^{2}\tau}.
	\end{align}
    In order to find a $\bar{C}_{2}$ such that \eqref{CoeffiCientIneqBarrier} holds it is sufficient to recall \eqref{mCUTOFFSECBARR}, that $\tau_{0}$ is sufficiently large and that $m$, $\delta$ are small (we can assume $\frac{\delta^{2}}{m^{3}}\ll1$). We observe that it is useful to take $m$ small since we want to say that $|\tilde{g}_{2}\bar{\chi}|\le \frac{1}{1+\rho^{4}}$.\newline
    Proceeding as for the first barrier (after the analogous considerations about the initial condition) if we call $K:=K_{1}+\frac{\|h\|_{\star\star}}{R^{2}(\tau_{0})}+\frac{|c_{1}|}{f(\tau_{0})R^{2}(\tau_{0})}$, we just proved that 
	\begin{align*}
		|g_{0}|\le& K[ f_{1}(\tau)(\frac{1}{1+\rho^{4}}+ \frac{\ln \tau}{\tau} \frac{1}{1+\rho^{2}})\chi_{0}(\frac{\rho}{\sqrt{m\tau}}\sqrt{\ln \tau})+\\
		&\hspace{1cm}+f_{1}(\tau)(\frac{\ln \tau}{\tau})^{2}(\frac{1}{1+(\frac{\rho}{\sqrt{ \tau}}\sqrt{\ln \tau})^{4}}+\frac{1}{\ln \tau}\frac{1}{1+(\frac{\rho}{\sqrt{ \tau}}\sqrt{\ln \tau})^{2}})\chi_{0}(\frac{\rho}{\sqrt{\gamma \tau}})+\\
		&\hspace{1cm}+\frac{f_{1}(\tau)}{\tau^{2}}\frac{1}{1+(\frac{\rho}{\sqrt{\tau}})^{6-\theta}}].\\
		\implies& |g_{0}|\le C(K_{1}+\frac{\|h\|_{\star\star}}{R^{2}(\tau_{0})}+\frac{|c_{1}|}{f(\tau_{0})R^{2}(\tau_{0})})\frac{1}{1+\rho^{4}}.
	\end{align*}
	Finally, recalling \eqref{FIRSTBARRthetaCOND} and \eqref{SecCondTheta}, we can simply take $\theta=1/2$.
\end{proof}
We omit the proof of the following result that can be found in \cite{DdPDMW}.
\begin{lemma}\label{Lemma109}
	Let $\phi:\mathbb{R}^{2}\to \mathbb{R}$ be radial such that $\int_{\R^2}\phi=0$ and $|\phi(y)|\le \frac{C}{(1+|y|)^{2+\sigma}}$ for some $\sigma>0$. Let $g=\frac{\phi}{U}-(-\Delta)^{-1}\phi$ and assume that $\|g\|_{L^{\infty}}<\infty$. Then
	\begin{align}\label{phiFirstEstimate}
		|\phi(y)|\le C \frac{\|g\|_{L^{\infty}}}{(1+|y|)^{4}}.
	\end{align}
\end{lemma}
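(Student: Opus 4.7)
The plan is to exploit the very definition of $g$: since $g = \phi/U - (-\Delta)^{-1}\phi$, writing $\psi := (-\Delta)^{-1}\phi$ gives $\phi = U(g + \psi)$, so the conclusion $|\phi(y)| \le C\|g\|_{L^{\infty}}/(1+|y|)^4$ reduces to the pointwise bound $\|\psi\|_{L^{\infty}} \le C\|g\|_{L^{\infty}}$, after which we use $U(y) \le 8/(1+|y|^{2})^{2}$. Because $\int \phi = 0$ and $|\phi(y)|\le C(1+|y|)^{-2-\sigma}$, the potential $\psi$ is a well-defined radial function with $\psi(y)\to 0$ as $|y|\to\infty$, and it satisfies the linearized Liouville equation $-\Delta\psi - U\psi = Ug$ in $\mathbb{R}^{2}$.

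I would then decompose $\psi = \psi^{\perp} + a\psi_{0}$ using the framework of Section \ref{PreliminariesInnTh}, with $\psi_{0} = 1 + z_{0}/2 = 2/(1+|y|^{2})$ (bounded by $2$) and $a = -\frac{1}{8\pi}\int Ug$, so $|a|\le \|g\|_{L^{\infty}}$ and $g^{\perp} := g+a$ satisfies $\int Ug^{\perp}\,dy = 0$. The remainder $\psi^{\perp}$ solves $-\Delta\psi^{\perp}-U\psi^{\perp} = Ug^{\perp}$ with $\psi^{\perp}\to 0$, and the pointwise estimate $|\psi^{\perp}(\rho)|\le C\|U^{1/2}g^{\perp}\|_{L^{2}}/(1+\rho)$ from \eqref{psiperp}, combined with the crude bound $\|U^{1/2}g^{\perp}\|_{L^{2}}^{2}\le \|g^{\perp}\|_{L^{\infty}}^{2}\int U = 8\pi\|g^{\perp}\|_{L^{\infty}}^{2}\le C\|g\|_{L^{\infty}}^{2}$, yields $\|\psi^{\perp}\|_{L^{\infty}}\le C\|g\|_{L^{\infty}}$ and hence $\|\psi\|_{L^{\infty}}\le C\|g\|_{L^{\infty}}$.

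The delicate step is that the variation-of-parameters representation \eqref{psiperpglob}--\eqref{psiperpinf} underlying \eqref{psiperp} produces a solution tending to zero only when the additional radial orthogonality $\int Ug^{\perp}z_{0}\,dy = 0$ holds; otherwise the $\bar{z}_{0}$-contribution would grow logarithmically. I would verify this orthogonality as the main analytic step. Since $Uz_{0} = 16(1-|y|^{2})/(1+|y|^{2})^{3}$ is seen to integrate to zero by a direct substitution, it suffices to check $\int Ugz_{0}\,dy = 0$; and because $-\Delta z_{0} = Uz_{0}$, integration by parts gives
$$
\int_{\mathbb{R}^{2}} U\psi z_{0}\,dy = \int_{\mathbb{R}^{2}} \psi(-\Delta z_{0})\,dy = \int_{\mathbb{R}^{2}} z_{0}(-\Delta\psi)\,dy = \int_{\mathbb{R}^{2}} z_{0}\phi\,dy,
$$
where the boundary terms on large spheres vanish because $\psi, |\nabla\psi|$ decay while $z_{0}, |\nabla z_{0}|$ are bounded with $|\nabla z_{0}|=O(|y|^{-2})$. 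Therefore $\int Ugz_{0} = \int \phi z_{0} - \int U\psi z_{0} = 0$, as needed.

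Once $\|\psi\|_{L^{\infty}}\le C\|g\|_{L^{\infty}}$ is in hand, the conclusion is immediate from $\phi = U(g+\psi)$: $|\phi(y)|\le U(y)(\|g\|_{L^{\infty}}+\|\psi\|_{L^{\infty}}) \le C\|g\|_{L^{\infty}}/(1+|y|)^{4}$. The only genuine obstacle is the orthogonality verification above, and it is a short integration by parts that relies solely on $-\Delta z_{0}=Uz_{0}$ together with the hypothesis $\int\phi\,dy = 0$.
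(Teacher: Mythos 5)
Your proof is correct. Since the paper outsources the proof of this lemma to \cite{DdPDMW}, there is no argument in the text to compare against, but your route goes exactly through the machinery the paper sets up for this purpose in Section~\ref{PreliminariesInnTh}: write $\phi = U(g+\psi)$, split $\psi = \psi^\perp + a\psi_0$ with $a = -\tfrac{1}{8\pi}\int Ug$ so $|a|\leq \|g\|_\infty$, and invoke the pointwise decay \eqref{psiperp} for $\psi^\perp$ together with the trivial $L^\infty$ bound on $U^{1/2}g^\perp$ in $L^2$. The one point worth flagging as genuinely valuable is your explicit verification of the radial orthogonality $\int_{\R^2} U g^\perp z_0\,dy = 0$, which is stated but not proved in the paper when \eqref{psiperpglob} is derived; your check (that $\int Uz_0 = 0$, that $-\Delta z_0 = Uz_0$, and that the integration by parts $\int U\psi z_0 = \int \phi z_0$ is legitimate precisely because $\int\phi = 0$ gives the needed decay of $\psi$ and $\nabla\psi$) is exactly what justifies the use of \eqref{psiperpinf} and hence of \eqref{psiperp}. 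Without that verification the argument would be circular, so it is the right thing to make explicit.
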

\begin{proof}
	See Lemma 10.9 of \cite{DdPDMW}.
\end{proof}
Before proving Lemma \ref{Lemma101} we need some estimates for $\tilde{Z}_{0}$ defined in \eqref{InitialCond}. Thanks to decomposition \eqref{DecomPositionPhiPhiperp} we can write
\begin{align*}
	\tilde{Z}_{0}=\tilde{Z}_{0}^{\perp}+\frac{\tilde{a}(\tau_{0})}{2}Z_{0}.
\end{align*}
By \eqref{ComputingA} we have, recalling \eqref{mz0Est}
\begin{align*}
	\tilde{a}=\frac{1}{8\pi}\int_{\R^2}\Gamma_{0}\tilde{Z}_{0}=2+\frac{1}{8\pi}\big(-\int_{\R^2}Z_{0}\Gamma_{0}(1-\bar{\chi})-m_{\tilde{Z}_{0}}\int_{\R^2}U\Gamma_{0}\bar{\chi}\big)=2+O(\frac{\ln \tau_{0}}{\tau_{0}}).
\end{align*}
Hence 
\begin{align}\label{ZoperpExp}
	\tilde{Z}_{0}^{\perp}(\rho)&=\tilde{Z}_{0}(\rho)-\frac{\tilde{a}(\tau_{0})}{2}Z_{0}(\rho)=(Z_{0}(\rho)-m_{Z_{0}}U(\rho))\bar{\chi}-(1+O(\frac{\ln \tau_{0}}{\tau_{0}}))Z_{0}(\rho)=\nonumber\\
	&=-Z_{0}(\rho)(1-\bar{\chi})+O(\frac{\ln \tau_{0}}{\tau_{0}}\frac{1}{1+\rho^{4}}).
\end{align}
Moreover we also observe that $-\Delta z_{0}-Uz_{0}=0$ that means $-\Delta z_{0}=Z_{0}$ and then $z_{0}=(-\Delta)^{-1}Z_{0}-2$ (since $\lim_{\rho\to\infty}z_{0}=-2$). Consequently
\begin{align*}
	g|_{\tau=\tau_{0}}=&c_{1}\big(\frac{\tilde{Z}_{0}}{U}-(-\Delta)^{-1}(\tilde{Z}_{0})\big)=\\
	=&-2c_{1}+c_{1}z_{0}(\bar{\chi}-1)-c_{1}m_{\tilde{Z}_{0}}\chi-c_{1}(-\Delta)^{-1}(Z_{0}(\bar{\chi}-1)-m_{Z_{0}}U\bar{\chi}).
\end{align*}
We call $d:=\int_{0}^{\infty}\frac{1}{r}\int_{0}^{r}(Z_{0}(\bar{\chi}-1)-m_{Z_{0}}U\chi)sdsdr$ and we observe that since $m_{Z_{0}}$ satisfies \eqref{mz0Est}, we have
\begin{align*}
	|\int_{0}^{\rho}\frac{1}{r}\int_{0}^{r}Um_{Z_{0}}sdsdr|\le C \frac{1}{\tau_{0}}\int_{0}^{\rho}\frac{1}{r}\int_{0}^{r}\frac{s}{1+s^{4}}dsdr\le C \frac{1}{\tau_{0}} \ln(1+\rho).
\end{align*}
Then we have
\begin{align*}
	c_{1}(-\Delta)^{-1}(Z_{0}(\chi-1)-m_{Z_{0}}U\chi)=&\begin{cases}
	dc_{1}+O(\frac{1}{\tau_{0}}|c_{1}|\ln(1+\rho)) \ \ \ \text{if }\rho\le \sqrt{\tau_{0}}\\[7pt]
	dc_{1}+O\big((\ln(1+\rho)+\ln(\frac{\rho}{\sqrt{\tau_{0}}}))\frac{1}{\tau_{0}}|c_{1}|\big) \ \ \ \ \text{if }\rho\ge \sqrt{\tau_{0}}
\end{cases} 
\end{align*}
then finally we have
\begin{align}\label{gExpansion}
	g|_{\tau=\tau_{0}}=-(2+d)c_{1}-c_{1}z_{0}(1-\chi)+O(\frac{1}{\tau_{0}}|c_{1}|\ln(2+\rho)).
\end{align}
Thus, recalling that $\int_{\R^2}\phi g^{\perp}=\int_{\R^2}\phi^{\perp}g$ (since $\int_{\R^2}\phi=0$ and $\int_{\R^2}gZ_{0}=0$) and $\int_{\R^2}\tilde{Z}_{0}^{\perp}=0$, since we have \eqref{ZoperpExp} and \eqref{gExpansion} we finally observe
\begin{align}\label{GronwallID}
	\int_{\R^2}(\phi g^{\perp})|_{\tau=\tau_{0}}&=\int_{\R^2}(\phi^{\perp}g)|_{\tau=\tau_{0}}=c_{1}\int_{\R^2}\tilde{Z}^{\perp}_{0}g|_{\tau=\tau_{0}}=\nonumber\\
	&=c_{1}^{2}\int_{\R^2}Z_{0}z_{0}(1-\bar{\chi})^{2}dy+O(|c_{1}|^{2}\frac{\ln \tau_{0}}{\tau_{0}^{2}})=O(|c_{1}|^{2}\frac{1}{\tau_{0}}).
\end{align}
A \emph{crucial} observation, that will be fundamental in the proof of Lemma \ref{Lemma101}, is that, since  $R^{2}(\tau)=\frac{\tau}{\ln^{1+q}\tau}$ for $q\in(0,1)$, thanks to \eqref{GronwallID} we have
\begin{align}\label{GronwallInitialCOND}
	|\int_{\R^2}(\phi g^{\perp})|_{\tau=\tau_{0}}|\le C \frac{|c_{1}|^{2}}{R^{2}(\tau_{0})}.
\end{align}
\begin{proof}[Proof of Lemma \ref{Lemma101}]\label{ProofOFLemma101}
	Thanks to Lemma \ref{ineqphigperp}, we have
	\begin{align*}
		\partial_{ \tau} \int_{\mathbb{R}^{2}} \phi g^{\perp} + \frac{c}{R^{2}(\tau)}\int_{\mathbb{R}^{2}} \phi g^{\perp} \le C[\|h\|^{2}_{\star\star}f^{2}(\tau)+\frac{a^{2}}{R^{4}}+\frac{\omega_{0}^{2}}{R^{2}}+\frac{\omega_{1}^{2}}{R^{2}}]
	\end{align*}
	where
	\begin{align*}
		\omega_{0}=(\int_{B_{R}^{c}} U g^{2} )^{1/2}, \ \ \ \omega_{1}=R[a^{2}\frac{\ln \tau}{\tau}\frac{1}{M^{2}}+\frac{\ln \tau}{\tau} \int_{B_{M}^{c}}|\phi g| + \frac{\ln \tau}{\tau} \int_{B_{M}^{c}} U g^{2}+(R^{2}\frac{\ln \tau}{\tau})^{2}(\int_{B_{M}^{c}}|\phi|)^{2}]^{1/2}.
	\end{align*}
	In the following we will simply denote 
	\begin{align}\label{normtau0tau1}
		\|x\|:=\sup_{\tau\in [\tau_{0},\tau_{1}]}|x(\tau)|.
	\end{align}
	We have
	\begin{align*}
		\partial_{ \tau}\int_{\mathbb{R}^{2}} \phi g^{\perp}+\frac{c}{R^{2}}\int_{\mathbb{R}^{2}} \phi g^{\perp} \le C f^{2}(\tau)[\|h\|_{\star\star}+\|\frac{a}{R^{2}f}\|^{2}+\|\frac{\omega_{0}}{fR}\|^{2}+\|\frac{\omega_{1}}{fR}\|^{2}].
	\end{align*}
	By Gr\"onwall's inequality, recalling \eqref{GronwallID}, we have
	\begin{align}\label{firstineqGron}
		\int_{\mathbb{R}^{2}} \phi g^{\perp}\le C f^{2}(\tau)R^{2}(\tau)\big[ \|h\|_{\star\star}^{2}+\|\frac{a}{R^{2}f}\|^{2}+\|\frac{\omega_{0}}{fR}\|^{2}+\|\frac{\omega_{1}}{fR}\|^{2}+c_{1}^{2}D^{2}(\tau_{0})\big]
	\end{align}
	with $D(\tau_{0})=\frac{1}{f(\tau_{0})R(\tau_{0})}\sqrt{\frac{\ln \tau_{0}}{\tau_{0}}}$. By Lemma \ref{QuadrForIne}, we know $\int_{\mathbb{R}^{2}} \phi g^{\perp}\ge c \int_{\mathbb{R}^{2}} U (g^{\perp})^{2}$, moreover we can easily observe that
	\begin{align*}
		\int_{\mathbb{R}^{2}}U g^{2}\le \int_{\R^2}U(g^{\perp})^{2}+C a^{2}
	\end{align*}
	and then
	\begin{align*}
		\int_{\mathbb{R}^{2}} Ug^{2}&\le C f^{2}(\tau)R^{4}(\tau)\big[ \frac{1}{R^{2}(\tau_{0})}\|h\|_{\star\star}^{2}+\|\frac{a}{R^{2}f}\|^{2}+\frac{1}{R^{2}(\tau_{0})}\|\frac{\omega_{0}}{fR}\|^{2}+\frac{1}{R^{2}(\tau_{0})}\|\frac{\omega_{1}}{fR}\|^{2}+\frac{c_{1}^{2}}{f^{2}(\tau_{0})R^{4}(\tau_{0})}\big].
	\end{align*}
    Then
    \begin{align}\label{PrelimUg^2}
        \|gU^{1/2}\|_{L^{2}}\le C f(\tau)R^{2}(\tau)\big[\frac{\|h\|_{\star\star}}{R(\tau_{0})}+\|\frac{a}{fR^{2}}\|+\frac{1}{R(\tau_{0})}\|\frac{\omega_{0}}{fR}\|+\frac{1}{R(\tau_{0})}\|\frac{\omega_{1}}{fR}\|+\frac{|c_{1}|}{f(\tau_{0})R^{2}(\tau_{0})}\big].
    \end{align}
    Now we want to apply Lemma \ref{barriersUg}. We notice from the right-hand side of \eqref{PrelimUg^2} that even if in the inequality \eqref{UgInEQBAr} we had $\frac{\|h\|}{R^{2}(\tau_{0})}$ from now on we will only have $\frac{\|h\|}{R(\tau_{0})}$. This smallness is sufficient for our purposes. More precisely, we get
	\begin{align*}
		|Ug|&\le C \big[ \frac{\|h\|_{\star\star}}{R(\tau_{0})}+\|\frac{a}{R^{2}f}\|+\frac{1}{R(\tau_{0})}\|\frac{\omega_{0}}{fR}\|+\frac{1}{R(\tau_{0})}\|\frac{\omega_{1}}{fR}\|+\frac{|c_{1}|}{f(\tau_{0})R^{2}(\tau_{0})}\big]\frac{f(\tau)R^{2}(\tau)}{1+\rho^{4}}\le C \mu_{0}\frac{f(\tau)R^{2}(\tau)}{1+\rho^{4}},
	\end{align*}
    where we defined $\mu_{0}:= \frac{\|h\|_{\star\star}}{R(\tau_{0})}+\|\frac{a}{R^{2}f}\|+\frac{1}{R(\tau_{0})}\|\frac{\omega_{0}}{fR}\|+\frac{1}{R(\tau_{0})}\|\frac{\omega_{1}}{fR}\|+\frac{|c_{1}|}{f(\tau_{0})R^{2}(\tau_{0})}$.
	Now need to absorb the constants in $\mu_{0}$. We start with $\omega_{0}$:
	\begin{align}\label{DECAYPHIREMARK}
		\omega_{0}^{2}=\int_{B_{R}^{c}}Ug^{2}\le \mu_{0}^{2}f^{2}(\tau)R^{4}(\tau)\int_{R}^{\infty} \frac{1}{\rho^{4}}\rho d\rho \le C \mu_{0}^{2}f^{2}(\tau)R^{2}(\tau).
	\end{align}
	Then we have
	\begin{align*}
		&\|\frac{\omega_{0}}{fR}\|\le  \big[ \frac{\|h\|_{\star\star}}{R(\tau_{0})}+\|\frac{a}{R^{2}f}\|+\frac{1}{R(\tau_{0})}\|\frac{\omega_{0}}{fR}\|+\frac{1}{R(\tau_{0})}\|\frac{\omega_{1}}{fR}\|+\frac{|c_{1}|}{f(\tau_{0})R^{2}(\tau_{0})}\big]\\
		&\implies \mu_{0}\le C  \big[ \frac{\|h\|_{\star\star}}{R(\tau_{0})}+\|\frac{a}{R^{2}f}\|+\frac{1}{R(\tau_{0})}\|\frac{\omega_{1}}{fR}\|+\frac{|c_{1}|}{f(\tau_{0})R^{2}(\tau_{0})}\big]=:\mu_{1}\implies |Ug|\le C \mu_{1}\frac{f(\tau)R^{2}(\tau)}{1+\rho^{4}}.
	\end{align*}
	Now we want to control $\omega_{1}$, we have
	\begin{align*}
		\frac{\omega_{1}^{2}}{R^{2}}=a^{2}\frac{\ln \tau}{\tau}\frac{1}{M^{2}}+\frac{\ln \tau}{\tau} \int_{B_{M}^{c}}|\phi g| + \frac{\ln \tau}{\tau} \int_{B_{M}^{c}} U g^{2}+(R^{2}\frac{\ln \tau}{\tau})^{2}(\int_{B_{M}^{c}}|\phi|)^{2}.
	\end{align*}
	Since $M^{2}(\tau)\approx c\frac{\delta \tau}{\ln\tau}$ for some universal constant $c$ (because of \eqref{FunctionM} and \eqref{lambdadotlambdaExp}), we have $\frac{\ln \tau}{\tau}\frac{1}{M^{2}(\tau)}\le \frac{\ln^{2+2q}\tau}{\tau^{2}}= \frac{1}{R^{4}(\tau)}$ if $q>0$ and $\tau_{0}$ is sufficiently large (we recall that now $\delta$ is fixed small universal number). Moreover, thanks to Lemma \ref{Lemma109} we have $|\phi| \le C U|g|$ and then we can write
	\begin{align*}
		\frac{\omega_{1}^{2}}{R^{2}}\le\frac{a^{2}}{R^{4}}+\frac{\ln \tau}{\tau} \int_{B_{M}^{c}} U g^{2}+(R^{2}\frac{\ln \tau}{\tau})^{2}(\int_{B_{M}^{c}}|\phi|)^{2}.
	\end{align*}
	We observe that, by \eqref{FunctionM}, we have $R^{4}(\tau)\frac{\ln \tau}{\tau}\frac{1}{M^{2}(\tau)}\approx(\frac{\tau}{\ln^{1+q}\tau})^{2}\frac{\ln \tau}{\tau}\frac{\ln\tau}{\delta \tau}\ll 1$ if $q>0$ and $\tau_{0}$ is sufficiently large. Then
	\begin{align*}
		\frac{\ln \tau}{\tau} \int_{B_{M}^{c}}Ug^{2}\le C \mu_{1}^{2}\frac{\ln \tau}{\tau}f^{2}(\tau)R^{4}(\tau)\int_{M}^{\infty} \frac{1}{\rho^{3}} d\rho \le C \mu_{1}^{2}f^{2}(\tau)R^{4}(\tau)\frac{\ln \tau}{\tau}\frac{1}{M^{2}(\tau)}\le C \mu_{1}^{2}f^{2}(\tau)
	\end{align*}
	and thanks to Lemma \ref{Lemma109} we have
	\begin{align*}
		(R^{2}\frac{\ln \tau}{\tau})^{2}(\int_{B_{M}^{c}}|\phi|)^{2}&\le \frac{1}{\ln^{2q}\tau}(\mu_{1}f(\tau)R^{2}(\tau)\frac{1}{M^{2}(\tau)})^{2}\le C \mu_{1}^{2}f^{2}(\tau).
	\end{align*}
	We just proved
	\begin{align*}
		&\|\frac{\omega_{1}}{fR}\|\le C\big[\|\frac{a}{fR^{2}}\|+\mu_{1}\big]\le C\big[\|\frac{a}{fR^{2}}\|+\frac{\|h\|_{\star\star}}{R(\tau_{0})}+\frac{|c_{1}|}{f(\tau_{0})R^{2}(\tau_{0})}+\frac{1}{R(\tau_{0})}\|\frac{\omega_{1}}{fR}\|\big]\\
		&\implies \mu_{1}\le  C \big[\|\frac{a}{fR^{2}}\|+\frac{\|h\|_{\star\star}}{R(\tau_{0})}+\frac{|c_{1}|}{f(\tau_{0})R^{2}(\tau_{0})}\big] =:\mu_{2}.
	\end{align*}
	This means that we have
	\begin{align}
		&\int_{\mathbb{R}^{2}} Ug^{2}\le C \big(\frac{\|h\|_{\star\star}^{2}}{R(\tau_{0})}+\|\frac{a}{fR^{2}}\|^{2}+\frac{|c_{1}|^{2}}{f^{2}(\tau_{0})R^{4}(\tau_{0})}\big)f^{2}(\tau)R^{4}(\tau), \label{intUg2}\\
		&\int_{\mathbb{R}^{2}} U(g^{\perp})^{2}\le C \big(\|h\|_{\star\star}^{2}+\|\frac{a}{fR^{2}}\|^{2}+\frac{|c_{1}|^{2}}{f^{2}(\tau_{0})R^{4}(\tau_{0})}\big)f^{2}(\tau)R^{2}(\tau), \label{intUgperp2}\\
		&|Ug| \le C \big(\frac{\|h\|_{\star\star}}{R(\tau_{0})}+\|\frac{a}{fR^{2}}\|+\frac{|c_{1}|}{f(\tau_{0})R^{2}(\tau_{0})}\big)\frac{f(\tau)R^{2}(\tau)}{1+\rho^{4}} \label{estimateUg}.
	\end{align}
	We notice that to get \eqref{intUgperp2} we used Lemma \ref{QuadrForIne}, the inequality \eqref{firstineqGron} and the crucial observation \eqref{GronwallInitialCOND}. \newline
	Now we are interested in absorbing the parameter $a$. At least initially we proceed as in the proof of Lemma 10.1 in \cite{DdPDMW}. We multiply the equation by $|y|^{2}\chi_{0}(\frac{y}{R})$ and we integrate in space. We get
	\begin{align}\label{EqSecMomEstA}
		\partial_{ \tau}\int_{\mathbb{R}^{2}} \phi |y|^{2}\chi_{0}(\frac{y}{R})=\int_{\mathbb{R}^{2}} (L[\phi]+h)|y|^{2}\chi_{0}(\frac{y}{R})+\int_{\mathbb{R}^{2}} \tilde{B}[\phi]|y|^{2}\chi_{0}(\frac{y}{R})-\frac{R'(\tau)}{R(\tau)}\int_{\mathbb{R}^{2}} \phi |y|^{2}\nabla_{z} \chi_{0}(\frac{y}{R})\cdot \frac{y}{R}dy.
	\end{align}
	Since $L[\phi]=\nabla \cdot(U\nabla g)$ we have
	\begin{align}\label{SecMomL}
		\int_{\mathbb{R}^{2}} L[\phi]|y|^{2}\chi_{0}(\frac{y}{R})dy=&2\int_{\mathbb{R}^{2}} gZ_{0}\chi_{0}(\frac{y}{R})+\frac{4}{R}\int_{\mathbb{R}^{2}} gUy\cdot \nabla_{z} \chi_{0}(\frac{y}{R})+\frac{1}{R}\int_{\mathbb{R}^{2}} g|y|^{2}\nabla U \cdot \nabla_{z} \chi_{0}(\frac{y}{R})dy+\nonumber\\
		&+\frac{1}{R^{2}}\int_{\mathbb{R}^{2}} g|y|^{2}U\Delta_{z} \chi_{0}(\frac{y}{R}).
	\end{align}
	Recalling that $\int_{\R^2}gZ_{0}=0$, we see
	\begin{align*}
		&|2\int_{\mathbb{R}^{2}} gZ_{0}\chi_{0}(\frac{y}{R})|\le C \int_{B_{R}^{c}}U|g|\le C \big(\frac{\|h\|_{\star\star}}{R(\tau_{0})}+\|\frac{a}{fR^{2}}\|+\frac{|c_{1}|}{f(\tau_{0})R^{2}(\tau_{0})}\big)f(\tau).
		\end{align*}
	The other terms in \eqref{SecMomL} can be estimated similarly. We clearly also have
	\begin{align*}
		|\int_{\mathbb{R}^{2}} h |y|^{2}\chi_{0}(\frac{y}{R})|\le C\|h\|_{\star\star}f(\tau).
	\end{align*}
    We proceed estimating the right-hand side of \eqref{EqSecMomEstA}. Since $|\frac{R'(\tau)}{R(\tau)}|\le C \frac{1}{\tau}$ and by Lemma \ref{Lemma109} we see
	\begin{align*}
		|\frac{R'}{R}\int_{\mathbb{R}^{2}} \phi |y|^{2}\nabla_{z} \chi_{0}(\frac{y}{R})\cdot \frac{y}{R}|&\le C \big(\frac{\|h\|_{\star\star}}{R(\tau_{0})}+\|\frac{a}{fR^{2}}\|+\frac{|c_{1}|}{f(\tau_{0})R^{2}(\tau_{0})}\big)f(\tau).
	\end{align*}
	We just proved that
	\begin{align*}
		|\int_{\mathbb{R}^{2}} (L[\phi]+h)|y|^{2}\chi_{0}(\frac{y}{R})-\frac{R'(\tau)}{R(\tau)}\int_{\mathbb{R}^{2}} \phi |y|^{2}\nabla_{z} \chi_{0}(\frac{y}{R})\cdot \frac{y}{R}dy|\le C \big(\|h\|_{\star\star}+\|\frac{a}{fR^{2}}\|+\frac{|c_{1}|}{f(\tau_{0})R^{2}(\tau_{0})}\big)f(\tau).
	\end{align*}
	The most delicate term in \eqref{EqSecMomEstA} is $\int_{\mathbb{R}^{2}} \tilde{B}[\phi]|y|^{2}\chi_{0}(\frac{y}{R})$ where we recall that the operator $\tilde{B}$ has been introduced in \eqref{tildeB}.  Observing that $W_{0}$ has compact support and second moment equal to zero and since, by \eqref{FunctionM} and \eqref{lambdadotlambdaExp} we have $M^{2}(\tau)\approx c\frac{\delta \tau}{\ln\tau}\gg \frac{\tau}{(\ln \tau)^{1+q}}=R^{2}(\tau)$  we get
	\begin{align*}
		\int_{\mathbb{R}^{2}} \tilde{B}[\phi]|y|^{2}\chi_{0}(\frac{y}{R})=&\lambda\dot{\lambda}\int_{\mathbb{R}^{2}}(k\phi\chi+y\cdot \nabla (\phi \chi))|y|^{2}\chi_{0}(\frac{y}{R})=\\
		=&k\lambda\dot{\lambda}\int_{\mathbb{R}^{2}} \phi |y|^{2}\chi_{0}(\frac{y}{R})-\lambda\dot{\lambda} \int_{\mathbb{R}^{2}} \phi\nabla \cdot (y|y|^{2}\chi_{0}(\frac{y}{R}))=\\
		=&\lambda\dot{\lambda}(k-4)\int_{\mathbb{R}^{2}} \phi |y|^{2}\chi_{0}(\frac{y}{R})+O\big(|\lambda\dot{\lambda}|\int_{\mathbb{R}^{2}} |\phi| |y|^{2}\nabla_{z}\chi_{0}(\frac{y}{R(\tau)})\cdot \frac{y}{R} dy \big).
	\end{align*}
	The last term can be estimated observing that
	\begin{align*}
		|\lambda\dot{\lambda}|\int |\phi| |y|^{2}\nabla_{z}\chi_{0}(\frac{y}{R(\tau)})\cdot \frac{y}{R}|&\le \big(\|h\|_{\star\star}+\|\frac{a}{fR^{2}}\|+\frac{|c_{1}|}{f(\tau_{0})R^{2}(\tau_{0})}\big)f(\tau).
	\end{align*}
	We just proved that we can rewrite \eqref{EqSecMomEstA} as
	\begin{align}\label{EqSecMomFin}
		\partial_{ \tau}\int \phi |y|^{2}\chi_{0}(\frac{y}{R})-\lambda\dot{\lambda}(k-4)\int \phi |y|^{2}\chi_{0}(\frac{y}{R})=F(\tau)\le C \big(\|h\|_{\star\star}+\|\frac{a}{fR^{2}}\|+\frac{|c_{1}|}{f(\tau_{0})R^{2}(\tau_{0})}\big)f(\tau).
	\end{align}
	We introduce the notation $S(\tau):=\int \phi |y|^{2}\chi_{0}(\frac{y}{R})$ and then the equation \eqref{EqSecMomFin} can be rewritten as
     \begin{align}\label{EqSecFinFIN}
     	\partial_{ \tau}S(\tau)-\lambda\dot{\lambda}(\tau)(k-4)S(\tau)=F(\tau), \ \ \ \tau\in(\tau_{0},\tau_{1}).
     \end{align}
	 Recalling \eqref{tauVAR}, if we write $\tilde{S}(t)=S(\tau)$ and $\tilde{F}(t)=F(\tau)$, we get
	\begin{align*}
		\lambda^{2}(t)\partial_{t}\tilde{S}(t)-\lambda\dot{\lambda}(t)(k-4)\tilde{S}(t)=\tilde{F}(t)&\iff \partial_{t}\big(e^{\frac{4-k}{2}\ln \lambda^{2}}\tilde{S}(t)\big)= \lambda^{2-k}\tilde{F}(t), \ \ \ t\in(0,T_{1}).
	\end{align*}
	In the following we will need $k\le 2$. Indeed if $\tau(T_{1})=\tau_{1}$ integrating in time we get
	\begin{align*}
		&e^{\frac{4-k}{2}\ln \lambda^{2}(T_{1})}\tilde{S}(T_{1})-e^{\frac{4-k}{2}\ln \lambda^{2}(t)}\tilde{S}(t)=\int_{t}^{T_{1}}\lambda^{2-k}(s)\tilde{F}(s)ds\\
		&\implies -\tilde{S}(t)=\lambda^{k-4}(t)\int_{t}^{T_{1}}\lambda^{2-k}(s)\tilde{F}(s)ds-\frac{\lambda^{4-k}(T_{1})}{\lambda^{4-k}(t)}\tilde{S}(T_{1}).
	\end{align*}
 Since $T_{1}\ge t$ and because of \eqref{lambdadotlambdaExp} we have $|\lambda^{4-k}(T_{1})/\lambda^{4-k}(t)|\le 2$. Moreover we know that by hypothesis  $a(\tau_{1})=0$. This means that since  $\phi=\phi^{\perp}+\frac{a}{2}Z_{0}$ we have
	\begin{align}
		-\frac{a(\tau)}{2}&\int_{\mathbb{R}^{2}} Z_{0}|y|^{2}\chi_{0}(\frac{y}{R(\tau)})= \nonumber\\
		&=\frac{\lambda^{k-2}(t)}{\lambda^{2}(t)}\int_{t}^{T_{1}}\lambda^{2-k}(s)\tilde{F}(s)ds+O(\int_{\mathbb{R}^{2}} |\phi^{\perp}(\tau_{1})||y|^{2}\chi_{0}(\frac{y}{R(\tau_{1})}))+O(\int_{\mathbb{R}^{2}} |\phi^{\perp}(\tau)||y|^{2}\chi_{0}(\frac{y}{R(\tau)}))\label{Equation for A}.
	\end{align}
	By Lemma \ref{Lemma95} and \eqref{intUgperp2} we have
	\begin{align}
		|\int_{\mathbb{R}^{2}} |\phi^{\perp}(\tau)| |y|^{2}\chi_{0}(\frac{y}{R(\tau)})|&\le C R(\tau)(\int |\phi^{\perp}|^{2}\frac{1}{U})^{1/2}\le C R(\tau)(\int U(g^{\perp})^{2})^{1/2}\le\nonumber \\&\le C \big(\|h\|_{\star\star}+\|\frac{a}{fR^{2}}\|+\frac{|c_{1}|}{f(\tau_{0})R^{2}(\tau_{0})}\big)f(\tau)R^{2}(\tau)\label{inequalityphiperpsecmomtau}
	\end{align}
	and since $f(\tau)R^{2}(\tau)$ is decreasing, we also have
	\begin{align}
		|\int |\phi^{\perp}(\tau_{1})| |y|^{2}\chi_{0}(\frac{y}{R(\tau)})|&\le \big(\|h\|_{\star\star}+\|\frac{a}{fR^{2}}\|+\frac{|c_{1}|}{f(\tau_{0})R^{2}(\tau_{0})}\big)f(\tau)R^{2}(\tau)\label{inequalityphiperpsecmomtau1}.
	\end{align}
    We notice that we just used \eqref{intUgperp2} and in what follows it is crucial the extra smallness we got from the initial condition in \eqref{firstineqGron} and the inequality \eqref{GronwallInitialCOND}. \newline From \eqref{EqSecMomFin} we know that $F(\tau)\le C \big(\|h\|_{\star\star}+\|\frac{a}{fR^{2}}\|+\frac{|c_{1}|}{f(\tau_{0})R^{2}(\tau_{0})}\big)f(\tau)$. We also observe that by \eqref{lambdadotlambdaExp} we have
    \begin{align}\label{ExpansionLambda}
    	\lambda(t)\approx \bar{\lambda}(t)=2e^{-\frac{\gamma+2}{2}}e^{-\sqrt{\frac{|\ln(T-t)|}{2}}}.
    \end{align}
    Then since $k\le 2$, by \eqref{ExpansionLambda} and by \eqref{lambdadotlambdaExp}, we see
	\begin{align}
		|\frac{\lambda^{k-2}(t)}{\lambda^{2}(t)}\int_{t}^{T_{1}}\lambda^{2-k}(s)\tilde{F}(s)ds|&\le C \frac{\bar{\lambda}^{k-2}(t)}{\bar{\lambda}^{2}(t)}\big(\|h\|_{\star\star}+\|\frac{a}{fR^{2}}\|+\frac{|c_{1}|}{f(\tau_{0})R^{2}(\tau_{0})}\big)f(\tau)\bar{\lambda}^{2-k}(t)(T_{1}-t)= \nonumber\\
		&=C \frac{T-t}{\bar{\lambda^{2}}(t)}\big(\|h\|_{\star\star}+\|\frac{a}{fR^{2}}\|+\frac{|c_{1}|}{f(\tau_{0})R^{2}(\tau_{0})}\big)f(\tau)\le \nonumber\\
		&\le C\big(\|h\|_{\star\star}+\|\frac{a}{fR^{2}}\|+\frac{|c_{1}|}{f(\tau_{0})R^{2}(\tau_{0})}\big) \frac{\tau}{\ln \tau} f(\tau) \label{inequalityRHSEqSecMom}.
	\end{align}
	Now, since
	\begin{align*}
		-\frac{a}{2} \int Z_{0}(y)|y|^{2}\chi_{0}(\frac{y}{R})\approx 16\pi a(\tau)\int_{1}^{R}\frac{d\rho}{\rho}\approx 16\pi a(\tau)\log R
	\end{align*}
	equation \eqref{Equation for A} and inequalities \eqref{inequalityphiperpsecmomtau}, \eqref{inequalityphiperpsecmomtau1}, \eqref{inequalityRHSEqSecMom} give
	\begin{align}
		&|a(\tau)| \le C\big(\|h\|_{\star\star}+\|\frac{a}{fR^{2}}\|+\frac{|c_{1}|}{f(\tau_{0})R^{2}(\tau_{0})}\big) [\frac{\tau}{\ln^{2}\tau}f(\tau)+\frac{1}{\ln \tau}f(\tau)R^{2}(\tau)] \label{InequalitYA}\\
		&\implies \|\frac{a}{f R^{2}}\|\le C\big(\|h\|_{\star\star}+\|\frac{a}{fR^{2}}\|+\frac{|c_{1}|}{f(\tau_{0})R^{2}(\tau_{0})}\big)[\frac{\tau_{0}}{\ln^{2}\tau_{0}}\frac{1}{R^{2}(\tau_{0})}+\frac{1}{\ln \tau_{0}}]\nonumber\\
		&\implies \|\frac{a}{f R^{2}}\|\le \frac{C}{\log ^{1-q} \tau_{0}}\big(\|h\|_{\star\star}+\frac{|c_{1}|}{f(\tau_{0})R^{2}(\tau_{0})}\big)\label{estimatingaabs}
	\end{align}
    where in the last inequality we used $q\in(0,1)$. \newline
    As a last step we observe that $a(\tau_{0})$ and $c_{1}$ are related. Indeed since $\phi|_{\tau_{0}}=c_{1}\tilde{Z}_{0}$, \eqref{ComputingA} gives
    \begin{align}\label{ac1estim}
    	a(\tau_{0})=\frac{c_{1}}{8\pi}\int_{\R^2}\tilde{Z}_{0}\Gamma_{0} \implies |c_{1}|\le C|a(\tau_{0})|.
    \end{align}
    Then from \eqref{estimatingaabs} we have
    \begin{align}\label{EstimatingA}
    	\|\frac{a}{f R^{2}}\|\le \frac{C}{\log ^{1-q} \tau_{0}}\|h\|_{\star\star}
    \end{align}
    and from \eqref{ac1estim} and \eqref{EstimatingA} we have
    \begin{align}\label{EstimatingC1}
    	\frac{|c_{1}|}{f(\tau_{0})R^{2}(\tau_{0})}\le C \frac{C}{\log^{1-q}\tau_{0}}\|h\|_{\star\star}.
    \end{align}
     Inequalities \eqref{EstimatingA} and \eqref{EstimatingC1} gives $(\frac{\|h\|_{\star\star}}{R(\tau_{0})}+\|\frac{a}{fR^{2}}\|+\frac{|c_{1}|}{f(\tau_{0})R^{2}(\tau_{0})})\le \frac{1}{\ln^{1-q}\tau_{0}}\|h\|_{\star\star}$ and we have
	\begin{align*}
		|\phi|\le C \frac{f(\tau)R^{2}(\tau)}{(\log^{1-q} \tau_{0})}\frac{\|h\|_{\star\star}}{1+\rho^{4}}.
	\end{align*}
\end{proof}
We observe that the role of the cut-off was crucial to prove Lemma \ref{Lemma101}. Without cutting the operator $B$ (or cutting at a higher distance) this argument is not expected to work. The main issue is that we would see an intermediate region in the construction of the barriers in Lemma \ref{barriersUg}. In this intermediate region $Ug$ will decay only like $\frac{1}{\rho^{2}}$ (that is in the kernel of the operator $B$) and this would make impossible to absorb $\omega_{0}$ and $\omega_{1}$ (see for instance \eqref{DECAYPHIREMARK}).
\begin{proof}[Proof of Lemma \ref{Lemma102}]
	Let us rename the norm \eqref{normtau0tau1} we used in the proof of Lemma \ref{Lemma101} and write
	\begin{align*}
		\|x\|_{1}:=\sup_{\tau\in [\tau_{0},\tau_{1}]}|x(\tau)|.
	\end{align*}
	In this case we are not assuming $a(\tau_{1})=0$ and then inequality \eqref{InequalitYA} in the proof of Lemma \ref{Lemma101} becomes
	\begin{align}\label{ineqAtau1}
		|a(\tau)|\log \tau \le C f(\tau)R^{2}(\tau)\log^{q}\tau(\|h\|_{\star\star}+\|\frac{a}{fR^{2}}\|_{1}+\frac{|c_{1}|}{f(\tau_{0})R^{2}(\tau_{0})})+C|a(\tau_{1})| \log \tau_{1}.
	\end{align}
	Since we are assuming $\frac{a}{f R^{2}}\in L^{\infty}(\tau_{0},\infty)$, we can write $ \lim\limits_{\tau \to \infty} a(\tau)\ln \tau=0$ (here we use that the solution is of class $C^{0}$ in time). Since all the constants are independent of $\tau_{1}$ we can let $\tau_{1}\to \infty$ in \eqref{ineqAtau1} and obtain
	\begin{align*}
		|a(\tau)|\ln \tau \le C (\|h\|_{\star\star}+\|\frac{a}{fR^{2}}\|_{\infty}+\frac{|c_{1}|}{f(\tau_{0})R^{2}(\tau_{0})}) f(\tau)R^{2}(\tau)\ln^{q}\tau.
	\end{align*}
	The proof can be concluded as for the proof of Lemma \ref{Lemma101}.
\end{proof}
\begin{proof}[Proof of Lemma \ref{Lemma103}]
	We are considering 
	\begin{align}\label{SysZB}
		\begin{cases}
			\partial_{ \tau}Z_{B}=L[Z_{B}]+\tilde{B}[Z_{B}]\\
			Z_{B}(\cdot,\tau_{0})=\tilde{Z}_{0}.
		\end{cases}
	\end{align}
	Assume $\exists$ $\tau_{1}>\tau_{0}$ such that $a_{Z}(\tau_{1})=0$, by substituting $h=0$ and $c_{1}=1$ in the inequality \eqref{InequalitYA} of Lemma \ref{Lemma101}, we get
		\begin{align}\label{FirstIneqContr}
			|a|\le C (\|\frac{a}{fR^{2}}\|+\frac{1}{f(\tau_{0})R^{2}(\tau_{0})})\frac{f(\tau)R^{2}(\tau)}{\ln^{1-q}\tau_{0}} \implies \|\frac{a}{fR^{2}}\|\le C\frac{1}{\ln^{1-q}\tau_{0}} \frac{1}{f(\tau_{0})R^{2}(\tau_{0})}.
		\end{align}
	    At the same time by \eqref{ComputingA} we have
	    \begin{align}\label{SecondINeqContr}
	    	a(\tau_{0})=\frac{1}{8\pi}\int \Gamma_{0} \tilde{Z}_{0}=2+O(\frac{\ln \tau_{0}}{\tau_{0}})\gg \frac{1}{\ln^{1-q}\tau_{0}}.
	    \end{align}
		Clearly \eqref{FirstIneqContr} and \eqref{SecondINeqContr} give us a contradiction that concludes the proof.
\end{proof}

\begin{proof}[Proof of Lemma \ref{Lemma104}]\label{ProofLemma104}
	Take a sequence $\tau_{n}\to \infty$ as $n\to \infty$ and $\bar{\phi}$
	\begin{align*}
		\begin{cases}
			\partial_{ \tau}\bar{\phi}=L[\bar{\phi}]+\widetilde{B}[\bar{\phi}]+h\\
			\bar{\phi}(\cdot, \tau_{0})=0
		\end{cases}
	\end{align*}
	we call $\bar{\phi}^{\perp}$, $\bar{a}(\tau)$ the usual decomposition \eqref{DecomPositionPhiPhiperp}. We also call $Z_{B}^{\perp}$, $a_{Z}(\tau)$ the usual decomposition for the solution of \eqref{SysZB}. By Lemma \ref{Lemma103} we know that $a_{Z}(\tau)\neq 0$ $\forall \tau \ge \tau_{0}$, then there is a $c_{n}\in \mathbb{R}$ such that 
	\begin{align*}
		\bar{a}(\tau_{n})+c_{n}a_{Z}(\tau_{n})=0.
	\end{align*} 
	We define $\phi _{n}=\bar{\phi}+ c_{n}Z_{B}$. Also for this function we have the usual decomposition $\phi_{n}=\phi^{\perp}_{n}+\frac{a_{n}}{2}Z_{0}=\bar{\phi}^{\perp}+c_{n}Z_{B}^{\perp}+\frac{\bar{a}(\tau)+c_{n}a_{Z}(\tau)}{2}Z_{0}$. By \eqref{ComputingA} we see that
	\begin{align*}
		a_{n}=\frac{1}{8\pi}\int_{\R^2}\Gamma_{0}\phi_{n}=\frac{c_{n}}{8\pi}\int_{\R^2}\Gamma_{0}Z_{B}+\frac{1}{8\pi}\int_{\R^2}\Gamma_{0}\bar{\phi}=c_{n}a_{Z}+\bar{a}.
	\end{align*}
	This implies that $a_{n}(\tau_{n})\neq0$ and then that $\phi_{n}$ satisfies
		\begin{align*}
			\begin{cases}
				\partial_{ \tau}\phi_{n}=L[\phi_{n}]+\widetilde{B}[\phi_{n}]+h\\
				\phi_{n}(\cdot, \tau_{0})=c_{n}\widetilde{Z}_{0}\\
				\phi_{n}=\phi_{n}^{\perp}+\frac{a_{n}}{2}Z_{0}, \ \ \text{with }a_{n}(\tau_{1})=0.
			\end{cases}
	\end{align*} We can apply Lemma \ref{Lemma101} and obtain
	\begin{align*}
		&|a_{n}(\tau)|\le C \frac{f(\tau)R^{2}(\tau)}{\ln^{1-q} \tau_{0}}\|h\|_{\star\star} \ \ \ \tau\in[\tau_{0},\tau_{1}],\\
		&|c_{n}|\le C \frac{f(\tau_{0}) R^{2}(\tau_{0})}{\ln^{1-q}\tau_{0}}\|h\|_{\star\star},\\
		&|\phi_{n}|\le C \frac{f(\tau)R^{2}(\tau)}{\ln^{1-q}\tau_{0}}\frac{\|h\|_{\star\star}}{1+\rho^{4}}\ \ \ \tau\in[\tau_{0},\tau_{1}].
	\end{align*}
	We take the limit as $n\to \infty$. The interested reader can check the details in Lemma 10.4 of \cite{DdPDMW}.
\end{proof}
With the proof of Lemma \ref{Lemma104} it is finally clear what is the role of $c_{1}$. In fact we used it to erase $a(\tau_{1})$ so that we can solve the equation \eqref{EqSecMomEstA} for a decaying $a(\tau)$. If we had $\phi(\cdot,\tau_{0})=0$ the correspondent solution of \eqref{EqSecMomEstA} (now we could not assume $a(\tau_{1})=0$) would not be decaying (we would need to integrate from $\tau_{0}$ to $\tau$).\newline
\begin{proof}[Proof of Proposition \ref{PropMode0Mass0}]
	So far we only know
	\begin{align*}
		|\phi(\rho,\tau)|\le C \frac{\|h\|_{\star\star}}{\ln^{1-q}\tau_{0}}\frac{f(\tau)R^{2}(\tau)}{1+\rho^{4}}.
	\end{align*}
    To improve the decay at infinity we construct a barrier in $B_{R_{0}}^{c}$ similarly to how we proceeded in the proof of Lemma \ref{barriersUg}. Recalling the decomposition $\tilde{B}[\phi]=\tilde{B}_{0}[\phi]-W_{0}(y)\lambda\dot{\lambda}\int_{\mathbb{R}^{2}} (k\phi \chi+y\cdot{\nabla}(\phi \chi))dy$, if $R_{0}$ is sufficiently large we have
	\begin{align*}
		\partial_{ \tau}\phi=\Delta \phi -\nabla \Gamma_{0} \cdot\nabla \phi +\tilde{B}_{0}[\phi]+2U\phi +\tilde{h}, \ \ \ \ \tilde{h}=-\nabla U\cdot\nabla \psi +h.
	\end{align*}
     Since $\psi=(-\Delta)^{-1}\phi$ with $\int_{\mathbb{R}^{2}}\phi=0$ we get
	\begin{align*}
		|\nabla \psi|\le C \frac{\|h\|_{\star\star}}{\ln ^{1-q}\tau_{0}}\frac{f(\tau)R^{2}(\tau)}{1+\rho^{3}}, \ \ \ \ |\nabla U \cdot\nabla\psi |\le C \frac{\|h\|_{\star\star}}{\ln^{1-q}\tau_{0}}\frac{f(\tau)R^{2}(\tau)}{1+\rho^{8}}.
	\end{align*}
	Moreover we have
	\begin{align*}
		|h|\le C \|h\|_{\star\star} \frac{f(\tau)}{1+|y|^{6+\sigma}} \min (1, \frac{\tau^{\varepsilon/2}}{|y|^{\varepsilon}})\le C \frac{\|h\|_{\star\star}}{\ln^{1-q}\tau_{0}}\frac{f(\tau)R^{2}(\tau)}{1+|y|^{6+\sigma}}\min(1,\frac{\tau^{\varepsilon/2}}{|y|^{\varepsilon}})
	\end{align*}
	since $\frac{R^{2}(\tau)}{\ln^{1-q}\tau_{0}}\ge \frac{R^{2}(\tau_{0})}{\ln^{1-q}\tau_{0}}\ge 1$. Then
	\begin{align*}
		|\tilde{h}|\le C \frac{\|h\|_{\star\star}}{\ln^{1-q}\tau_{0}}\frac{f(\tau)R^{2}(\tau)}{1+|y|^{6+\sigma}}\min(1,\frac{\tau^{\varepsilon/2}}{|y|^{\varepsilon}})
	\end{align*}
	(where we notice that to control $\nabla U\cdot \nabla \psi$ we need $1/\rho^{2-\sigma}\le \tau^{\varepsilon/2}/\rho^{\varepsilon}$ where $\rho \ge \sqrt{\tau}$ and then we used the assumption $\sigma+\varepsilon<2$). Similarly to the proof of Lemma \ref{barriersUg} we are constructing a barrier like
	\begin{align*}
		\bar{\phi}(\rho,\tau)=f(\tau)R^{2}(\tau)\big[(...)\chi_{0}(\frac{\rho}{\sqrt{\gamma \tau}})+H(\rho,\tau)\big]
	\end{align*}
    where $H(\rho,\tau)$ is a barrier of the type described in \eqref{Hbarrier} and we omitted to write explicitly the barrier in the region $\rho\le 2\sqrt{\gamma\tau}$ since it is the same of the proof of Lemma \ref{barriersUg}. As we know form \eqref{Hbarrier}, $H(\rho,\tau)$ is actually a barrier only if some conditions are satisfied. In our case $\gamma=\nu+1$ (this is due to the spatial decay of the barrier in the inner region) and $b=6+\sigma+\varepsilon$ (because of the spatial decay of the right-hand side in the outer region). Consequently we need $\nu+1<3$ and $\nu+1<3+\frac{\sigma+\varepsilon}{2}$. For the initial condition the same observation we made in the proof of Lemma \ref{barriersUg} holds.
\end{proof}
\begin{proof}[Proof of Proposition \ref{Prop101}]
	From  \eqref{intUgperp2} and as a consequence of the estimates we found to prove Lemma \ref{Lemma101} we know
	 \begin{align}\label{IntegralCondProp101}
	 	\int_{\mathbb{R}^{2}} U(g^{\perp})^{2}\le C f^{2}(\tau)R^{2}(\tau)\|h\|_{\star\star}^{2}. 
	 \end{align}
	 From this inequality we claim that
	\begin{align}\label{Ugperppointwise}
		U|g^{\perp}|\le C f(\tau)R(\tau)\frac{\|h\|_{\star\star}}{1+\rho^{2}} \ \ \tau>\tau_{0}.
	\end{align}
	To prove \eqref{Ugperppointwise}, we define $g_{0}^{\perp}:=Ug^{\perp}=U(g+a)$ and, denoting $\psi=\psi[g_{0}]$, we obtain the equation
	\begin{align*}
		\partial_{ \tau}g_{0}^{\perp}=&\nabla \cdot[U\nabla(\frac{g_{0}^{\perp}}{U})]-U(-\Delta)^{-1}(\nabla \cdot(U\nabla(\frac{g_{0}^{\perp}}{U})))+h-U(-\Delta)^{-1}h+\\
		&+ \tilde{B}[g_{0}+U\psi[g_{0}]]-U(-\Delta)^{-1}(\tilde{B}[g_{0}+U\psi[g_{0}]])+a'U.
	\end{align*}
    Now since $g_{0}+U\psi[g_{0}]=\phi=\phi^{\perp}+\frac{a}{2}Z_{0}=g_{0}^{\perp}+U\psi^{\perp}[g_{0}^{\perp}]+\frac{a}{2}Z_{0}$, we have
	\begin{align}\label{equationforg0perp}
		\partial_{ \tau}g_{0}^{\perp}=&\nabla \cdot[U\nabla(\frac{g_{0}^{\perp}}{U})]-U(-\Delta)^{-1}(\nabla \cdot(U\nabla(\frac{g_{0}^{\perp}}{U})))+h-U(-\Delta)^{-1}h+ \nonumber\\
		&+ \tilde{B}[g_{0}^{\perp}]+\tilde{B}[U\psi^{\perp}[g_{0}^{\perp}]]+\frac{a}{2}\tilde{B}[Z_{0}]-U(-\Delta)^{-1}(\tilde{B}[g_{0}^{\perp}+U\psi^{\perp}[g_{0}^{\perp}]])+\nonumber\\
		&-\frac{a}{2}U(-\Delta)^{-1}\tilde{B}[Z_{0}]+a'U.
	\end{align}
	The idea is to repeat the argument we used to prove Lemma \ref{Lemma107} but then we need an estimate for $a'(\tau)$. We observe that
	\begin{align*}
		\partial_{ \tau}g_{0}=&\nabla \cdot (U\nabla \frac{g_{0}}{U})-U(-\Delta)^{-1}(\nabla \cdot (U\nabla g))+h-U(-\Delta)^{-1}h+\\
		&+\tilde{B}[g_{0}+U\psi[g_{0}]]-U(-\Delta)^{-1}(\tilde{B}[g_{0}+U\psi[g_{0}]])\\
		&\hspace{-1cm}\implies \partial_{ \tau}\int_{\mathbb{R}^{2}} g_{0}=-\int_{\mathbb{R}^{2}} U(-\Delta)^{-1}(\nabla \cdot (U\nabla g))-\int_{\mathbb{R}^{2}} U (-\Delta)^{-1}h-\int_{\mathbb{R}^{2}} U(-\Delta)^{-1}(\tilde{B}[g_{0}+U\psi[g_{0}]])
	\end{align*}
	and, since by \eqref{ComputingA} $a(\tau)=-\frac{1}{8\pi}\int_{\mathbb{R}^{2}} Ug=-\frac{1}{8\pi} \int_{\mathbb{R}^{2}} g_{0}$, we have
	\begin{align}\label{a'Id}
		a'(\tau)=\frac{1}{8\pi} \int_{\mathbb{R}^{2}} U(-\Delta)^{-1}(\nabla \cdot ( U\nabla (\frac{g_{0}}{U})))+\frac{1}{8\pi}\int U(-\Delta)^{-1}h+\frac{1}{8\pi}\int_{\mathbb{R}^{2}} U (-\Delta)^{-1}(\tilde{B}[g_{0}+U\psi[g_{0}]]).
	\end{align}
	We claim that 
	\begin{align}\label{a'estimate}
		|a'(\tau)|\le C \|h\|_{\star\star} f(\tau)R(\tau).
	\end{align}
	We can start from the first two terms in the right-hand side of \eqref{a'Id}, by \eqref{IntegralCondProp101} we have
	\begin{align*}
		&\int_{\mathbb{R}^{2}} U(-\Delta)^{-1}(\nabla \cdot (U\nabla (\frac{g_{0}}{U})))=\int_{\mathbb{R}^{2}} \Gamma_{0}\nabla \cdot (U\nabla g^{\perp})=-\int_{\mathbb{R}^{2}}\nabla U \cdot\nabla g^{\perp}=\int_{\mathbb{R}^{2}} \Delta U g^{\perp} \\
		&\implies |\int_{\mathbb{R}^{2}} U(-\Delta)^{-1}(\nabla \cdot (U\nabla \frac{g_{0}}{U}))|\le C (\int_{\mathbb{R}^{2}} U (g^{\perp})^{2})^{1/2}\le C \|h\|_{\star\star}f(\tau)R(\tau)
	\end{align*}
	and $|\int_{\mathbb{R}^{2}} U (-\Delta)^{-1}h| \le C \|h\|_{\star\star}f(\tau)\le C \|h\|_{\star\star}f(\tau)R(\tau)$. For the last term of \eqref{a'Id} we have
	\begin{align*}
		\int_{\mathbb{R}^{2}} U (-\Delta)^{-1}(\tilde{B}[\phi])=&k\lambda\dot{\lambda}\int_{\mathbb{R}^{2}}\Gamma_{0} \phi \chi-\lambda\dot{\lambda}\int_{\mathbb{R}^{2}} \nabla \cdot (y\Gamma_{0})\phi \chi-\lambda\dot{\lambda}(k-2)\int_{\mathbb{R}^{2}} \phi \chi\int_{\mathbb{R}^{2}} W_{0}\Gamma_{0}.
	\end{align*}
	By Lemma \ref{Lemma104} we know $|\phi|\le C \|h\|_{\star\star}\frac{f(\tau)R^{2}(\tau)}{1+\rho^{4}}$ and then by \eqref{lambdadotlambdaExp} we get
	\begin{align*}
		|k\lambda\dot{\lambda}\int_{\mathbb{R}^{2}} \Gamma_{0} \phi \chi-\lambda\dot{\lambda}\int_{\mathbb{R}^{2}} \nabla \cdot (y\Gamma_{0})\phi \chi|\le C \frac{\ln \tau}{\tau}\|h\|_{\star\star}f(\tau)R^{2}(\tau)\le C \|h\|_{\star\star}f(\tau)R(\tau)
	\end{align*}
	Since $\int_{\mathbb{R}^{2}} \phi=0$ we also have
	\begin{align*}
		|\lambda\dot{\lambda}(k-2)\int_{\mathbb{R}^{2}} \phi \chi \int_{\mathbb{R}^{2}} W_{0}\Gamma_{0}|\le |\lambda\dot{\lambda}(k-2)\int_{\mathbb{R}^{2}} \phi(1-\chi)\int_{\mathbb{R}^{2}} W_{0}\Gamma_{0}| \le C \|h\|_{\star\star}f(\tau)R(\tau)
	\end{align*}
that gives the claim \eqref{a'estimate}.
	By \eqref{IntegralCondProp101} and observing that
	\begin{align*}
		&|a' U|\le C \|h\|_{\star\star}\frac{f(\tau)R(\tau)}{1+|y|^{4}},\\
		&|\frac{a}{2}\tilde{B}[Z_{0}]|\le C \|h\|_{\star\star} f(\tau)R^{2}(\tau)\frac{\ln \tau}{\tau}\frac{1}{1+|y|^{4}}\le C \|h\|_{\star\star}\frac{f(\tau)R(\tau)}{1+|y|^{4}},\\
		&|\frac{a}{2}U(-\Delta)^{-1}(\tilde{B}[Z_{0}])|\le C \|h\|_{\star\star}\frac{f(\tau)R(\tau)}{1+|y|^{4}}
	\end{align*}
    we can repeat the same argument of the proof of Lemma \ref{Lemma107} (and Lemma 10.7 in \cite{DdPDMW}) to obtain
\begin{align*}
	|g_{0}^{\perp}|\le C \|h\|_{\star\star} \frac{f(\tau)R(\tau)}{1+|y|^{2}}.
\end{align*}
We can easily observe now that since if $\rho>1$ we have
\begin{align*}
	\psi^{\perp}=z_{0}\int_{\rho}^{\infty} \frac{1}{z_{0}^{2}(r)r}\int_{r}^{\infty}Ug^{\perp}z_{0}sdsdr\le \int_{\rho}^{\infty} \frac{1}{r} (\int_{r}^{\infty}U(g^{\perp})^{2}sds)^{1/2}(\int_{r}^{\infty}Usds)^{1/2}\le C\|h\|_{\star\star}\frac{f(\tau)R(\tau)}{1+\rho}
\end{align*}
and the same estimate holds if $\rho<1$ (using \eqref{psiperpglob}). Then, since $\phi^{\perp}-U\psi^{\perp}=Ug^{\perp}$, we also get
\begin{align*}
	|\phi^{\perp}|\le \|h\|_{\star\star}f(\tau)R(\tau) \frac{1}{1+|y|^{2}}.
\end{align*}
	Now we want to improve the decay of $\phi^{\perp}$ at infinity. We can construct a barrier for
	\begin{align*}
		\partial_{ \tau}g_{0}^{\perp}=\Delta g_{0}^{\perp} -\nabla g_{0}^{\perp}\cdot\nabla \Gamma_{0}+2Ug_{0}^{\perp}+\tilde{B}_{0}[g_{0}^{\perp}]+\tilde{h}_{1}
	\end{align*}
	where, thanks to \eqref{Ugperppointwise}, as in the proof of Lemma \ref{barriersUg}, we have $|\tilde{h}_{1}|\le C f(\tau)R(\tau)\frac{\|h\|_{\star\star}}{1+|y|^{4}}$. In fact, keeping in mind \eqref{equationforg0perp}, we see that the only additional terms satisfy
	\begin{align*}
		|\frac{a}{2}\tilde{B}[Z_{0}]-\frac{a}{2}U(-\Delta)^{-1}\tilde{B}[Z_{0}]+a'U|\le C \|h\|_{\star\star}\frac{f(\tau)R(\tau)}{1+|y|^{4}}.
	\end{align*}
	Our barrier (see also the proof of Lemma \ref{barriersUg}) will be
	\begin{align*}
		g^{\perp}_{0}(\rho,\tau)=f(\tau)R(\tau)\tilde{g}(\rho)\chi_{0}(\frac{\rho}{\varepsilon\sqrt{\tau}})+C_{H}H(\frac{\rho}{\sqrt{\tau}},\tau)
	\end{align*}
	where $-\Delta_{6}\tilde{g}=\frac{1}{1+\rho^{4}}$ such that $c_{1}\frac{1}{1+\rho^{2}}\le \tilde{g}\le \frac{c_{2}}{1+\rho ^{2}}$ and $H(\frac{\rho}{\sqrt{\tau}},\tau)$ is a function of the type \eqref{Hbarrier}. We take $b=4$ (because of the spatial decay of the right-hand side) and $\gamma=\nu+1/2$ (because of the spatial decay of the barrier in the inner region). Then to construct a barrier we need $\nu+\frac{1}{2}<2$ and we deduce
	\begin{align*}
		|g_{0}^{\perp}|\le C f(\tau)R(\tau)\|h\|_{\star\star}\frac{\min(1,\frac{\tau}{|y|^{2}})}{1+|y|^{2}}.
	\end{align*}
    We also remark that the initial condition if $\rho\le \sqrt{\tau_{0}} $  satisfies 
    \begin{align*}
    	Ug^{\perp}|_{\tau=\tau_{0}}&=c_{1}U\big(\frac{\tilde{Z}_{0}^{\perp}}{U}-(-\Delta)^{-1}(\tilde{Z}_{0}^{\perp})\big)=c_{1}U\big(\frac{\tilde{Z}_{0}^{\perp}}{U}+\int_{\rho}^{\infty}\frac{1}{r}\int_{r}^{\infty}\tilde{Z}_{0}^{\perp}(s)sdsdr\big)\le\\
    	&\le C |c_{1}| \frac{\ln\tau_{0}}{\tau_{0}}\frac{1}{1+\rho^{4}}\le C \frac{f(\tau_{0})R^{2}(\tau_{0})}{(\ln \tau_{0})^{1-q}}\frac{\ln \tau_{0}}{\tau_{0}}\frac{1}{1+\rho^{4}}\le C \frac{f(\tau_{0})}{1+\rho^{4}}
\end{align*} 
where we used $\int_{\R^2}\tilde{Z}_{0}^{\perp}=0$, \eqref{ZoperpExp} and \eqref{c1inf}. If $\rho \ge{\sqrt{\tau_{0}}}$, we can only say
\begin{align*}
	Ug^{\perp}|_{\tau=\tau_{0}}&\le C |c_{1}| \frac{1}{1+\rho^{4}}\le C f(\tau_{0})R^{2}(\tau_{0})\frac{1}{1+\rho^{4}}\le C\frac{f(\tau_{0})}{\ln^{2} \tau_{0}} \frac{\tau_{0}}{1+\rho^{4}}\le C f(\tau_{0})\frac{\tau_{0}}{1+\rho^{4}}.
\end{align*}
We observe that it is important to cut the initial condition where $\rho\le 2\sqrt{\tau}$ (and not for example where $\rho \le 2\sqrt{\frac{\tau}{\ln \tau}}$) because of the poor decay of $Ug^{\perp}|_{\tau=\tau_{0}}$ where the cut-off is zero. \newline
Finally we observe that if $\rho>1$ we have
	\begin{align*}
		\psi^{\perp}=z_{0}\int_{\rho}^{\infty} \frac{1}{z_{0}^{2}(r)r}\int_{r}^{\infty}Ug^{\perp}z_{0}sdsdr\le \int_{\rho}^{\infty} \frac{1}{r} (\int_{r}^{\infty}U(g^{\perp})^{2}sds)^{1/2}(\int_{r}^{\infty}Usds)^{1/2}\le C\|h\|_{\star\star}\frac{f(\tau)R(\tau)}{1+\rho}
	\end{align*}
	then, since $\phi^{\perp}-U\psi^{\perp}=Ug^{\perp}$, we get
	\begin{align*}
		|\phi^{\perp}|\le \|h\|_{\star\star}f(\tau)R(\tau) \frac{\min(1,\frac{\tau}{|y|^{2}})}{1+|y|^{2}}.
	\end{align*}
\end{proof}

\section{Linear estimate with second moment (radial)}\label{Section11}
In this section we want to prove Proposition \ref{PropMode0MassSecMom0}, namely for some initial data $\phi_{0}$ and some operator $\mathcal{E}$ we want we want to find estimates for the solution of
\begin{align}
	\begin{cases}
		\partial_{\tau}\phi = L[\phi]+B[\phi]+\mathcal{E}[\phi]+h, \ \ \ \ \text{with }h\text{ radial such that }\int_{\R^2}hdy=0,\ \ \int_{\R^2}h|y|^{2}dy=0 \\
		\phi(\cdot, \tau_{0})=\phi_{0}
	\end{cases}
\end{align}
(where, as in the previous section, we are considering \eqref{FunctionM} instead of keeping the notation $\hat{\chi}$).
We are also interested in estimating $\mathcal{E}[\phi]$ and to show that its mass and second moment are small enough to be included in the orthogonality conditions \eqref{orthogonalitycond}. \newline
In what follows we want to give a sketch of the proof and in particular we want to show what is the idea behind the construction of the force $\mathcal{E}[\phi]$.
We start considering
\begin{align}\label{prototypesecmom}
	\begin{cases}
		\partial_{ \tau}\Phi=L[\Phi]+\tilde{B}_{1}[\Phi]+\lambda\dot{\lambda}L^{-1}[\nabla \cdot (\Phi Uy)-\nabla \cdot (Z_{0}\nabla \Psi)]+L^{-1}[h]\\
		\Phi(\cdot, \tau_{0})=c_{1}\tilde{Z}_{0}\\
		\int_{\mathbb{R}^{2}} \Phi=0
	\end{cases}
\end{align}
with $\tilde{B}_{1}[\Phi]=\lambda\dot{\lambda}y\cdot \nabla(\Phi\chi)-\lambda\dot{\lambda}(\int_{\mathbb{R}^{2}} y \cdot \nabla(\Phi\chi)) W_{0}(y)$. We notice here that $L^{-1}[h]$ has zero mass (thanks to $\int_{\R^2}h=\int_{\R^2}h|y|^{2}=0$ and Lemma \ref{ellipticsol}) and that the operator $\tilde{B}_{1}$ is included in the class of operators we studied in the previous section, \eqref{tildeB} (here $k=0$). In fact, in order to solve \eqref{prototypesecmom}, we want to apply Proposition \ref{PropMode0Mass0} and to include the extra term as a small perturbation (we will prove that it has zero mass). \newline
A fundamental remark is that $L^{-1}[h]$ gains more regularity in space (we are essentially integrating in space twice $h$), this will play a crucial role in estimating $\mathcal{E}[\phi]$. \newline If we apply $L$ to \eqref{prototypesecmom} we get, calling $\phi:=L[\Phi]$, that
\begin{align*}
	\begin{cases}
		\partial_{ \tau}\phi=L[\phi]+L\circ\tilde{B}_{1}[\Phi]+\lambda\dot{\lambda}(\nabla \cdot (\Phi Uy)-\nabla\cdot(Z_{0}\nabla\Psi))+h\\
		\phi(\cdot,\tau_{0})=c_{1}\hat{Z}_{0}:=c_{1}L[\tilde{Z}_{0}].
	\end{cases}
\end{align*} 
We need to compute the \emph{commutator} $[ L,\tilde{B}_{1}]$. In \cite{DdPDMW} it has been proved that if $\Psi=(-\Delta)^{-1}\Phi$, after calling $\Lambda[\phi]:=y\cdot \nabla \phi$ one has
\begin{align}\label{InfiniteTimeCommutator}
	\Lambda \circ L[\Phi]-L\circ \Lambda[\Phi]=\nabla \cdot (\Phi Uy)-2L[\Phi]-\nabla \cdot (Z_{0}\nabla\Psi)
\end{align}
where $Z_{0}$ has been introduced in \eqref{Z0} (for more details the interested reader can check formula (11.6) in \cite{DdPDMW}). We have then
\begin{align*}
	L\circ \Lambda (\Phi \chi)=&2\phi+\Lambda(\phi)-\nabla \cdot (\Phi Uy)+\nabla \cdot (Z_{0}\nabla \Psi)+L\circ \Lambda (\Phi(\chi-1)).
\end{align*}
Then $\phi$ satisfies
\begin{align*}
	\begin{cases}
		\partial_{ \tau}\phi=L[\phi]+\lambda\dot{\lambda}(2\phi+y\cdot \nabla \phi)+h-\lambda\dot{\lambda}(\int_{\mathbb{R}^{2}} \Lambda(\Phi \chi))L[W_{0}](y)+\lambda\dot{\lambda}L\circ\Lambda(\Phi(\chi-1)) \\
		\phi(\cdot,\tau_{0})=c_{1}\hat{Z}_{0}
	\end{cases}
\end{align*}
and we can write
\begin{align}\label{EfirstDef}
	\mathcal{E}[\phi]=-\lambda\dot{\lambda}(\int_{\mathbb{R}^{2}} \Lambda(\Phi \chi))L[W_{0}](y)+\lambda\dot{\lambda}L\circ\Lambda(\Phi(\chi-1)) =\lambda\dot{\lambda}L[-W_{0}(y)(\int_{\mathbb{R}^{2}} \Lambda(\Phi \chi))+\Lambda(\Phi\chi)-\Lambda\Phi].
\end{align}
By looking at \eqref{EfirstDef} it is clear why we need to control third-order derivatives of $\Phi$ that is consistent with the control we have of the right-hand side of \eqref{prototypesecmom}. We notice that $\int_{\R^2}\Phi=0$ and then $-\int_{\R^2}(-W_{0}(\int_{\R^2}\Lambda(\Phi \chi))+\Lambda(\Phi \chi)-\Lambda \Phi)=0$. As a consequence $\mathcal{E}[\phi]$ has zero mass and zero second moment.\newline As we will notice in Lemma \ref{PHIcostruction}, $\mathcal{E}[\phi]$ will contain some additional terms. \newline We write
\begin{align}\label{L0}
  L_{0}[\phi]:=L[\phi]+\nabla \cdot(U\nabla \psi)=\Delta\phi-\nabla\cdot(\phi\nabla \Gamma_{0})=\Delta \phi -\nabla \Gamma_{0}\cdot \nabla \phi +U\phi.
\end{align}
\begin{lemma} \label{Phi1Lemma}
	Let $\sigma>0$, $\varepsilon>0$ and $1<\nu<\min(1+\frac{\varepsilon}{2},3-\frac{\sigma}{2})$.  For any $H$ such that $\|H\|_{0;\nu,m,4+\sigma,\varepsilon}<\infty$ there exist $F$ and $\Phi_{1}$ such that
	\begin{align*}
		\begin{cases}
			\partial_{ \tau} \Phi_{1}=L[\Phi_{1}]+\tilde{B}_{1}[\Phi_{1}]+H-F\\
			\Phi_{1}(\cdot,\tau_{0})=0
		\end{cases}
	\end{align*}
	with 
	\begin{align*}
		\int_{\mathbb{R}^{2}} \Phi_{1}=0, \ \ \ |\Phi_{1}|\le C\frac{\|H\|_{0;\nu,m,4+\sigma,\varepsilon}}{\tau^{\nu}(\ln \tau)^{m}}\begin{cases}
			\frac{1}{(1+\rho)^{2+\sigma}} \ \ \ \rho\le \sqrt{\tau}\\
			\frac{\tau^{1+\varepsilon/2}}{(1+\rho)^{4+\sigma+\varepsilon}} \ \ \ \rho\ge \sqrt{\tau}
		\end{cases}
	\end{align*}
    and $F=F_{1}+F_{2}$ with
	\begin{align*}
		\int_{\mathbb{R}^{2}} F_{1}=0, \ \ |F_{1}|\le C \frac{\|H\|_{0;\nu,m,4+\sigma,\varepsilon}}{\tau^{\nu}(\ln \tau)^{m}}\begin{cases}
			\frac{1}{(1+\rho)^{6+\sigma}}, \ \ \ \ \rho\le \sqrt{\tau}\\
			\frac{\tau^{\varepsilon/2}}{(1+\rho)^{6+\sigma+\varepsilon}} \ \ \ \ \rho \ge \sqrt{\tau}
		\end{cases}
	\end{align*}
    and for some universal constant $\bar{c}$ we have
    \begin{align*}
    	\int_{\mathbb{R}^{2}} F_{2}=0, \ \ |F_{2}|\le C \frac{\|H\|_{0;\nu,m,4+\sigma,\varepsilon}}{\tau^{\nu}(\ln \tau)^{m}}\begin{cases}
    			\frac{\ln \tau}{\tau}\frac{1}{M^{\sigma}(\tau)} \ \ \ |y|\le \bar{c}\\
    			0 \ \ \ \ \bar{c}\le|y|\le M(\tau)\\
    			\frac{\delta}{1+\rho^{4+\sigma}} \ \ \ \ M(\tau)\le|y|\le 2M(\tau)\\
    			0 \ \ \ \ |y|\ge 2M(\tau).
    	\end{cases}
    \end{align*}
\end{lemma}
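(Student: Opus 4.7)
The natural approach is an elliptic--parabolic decomposition, forced on us by the fact that $H$ only decays as $(1+|y|)^{-(4+\sigma)}$, which is two powers short of what Proposition~\ref{PropMode0Mass0} requires of its source. The plan is to absorb the slow-decay tail into an explicit elliptic ansatz for $\Phi_{1}$ (using Lemma~\ref{ellipticsol}), and then invoke Proposition~\ref{PropMode0Mass0} for the faster-decaying residual. The corrections $F_{1},F_{2}$ are the byproducts of this procedure.

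First, I would build an elliptic leading order $\Phi_{1}^{e}$. Let $\tilde\chi_{\sharp}(y,\tau)=\chi_{0}(|y|/(2M(\tau)))$, with $M(\tau)$ from \eqref{FunctionM}. Choose a scalar $a(\tau)$ and a radial profile $b(\tau,|y|)$ supported in $\{M(\tau)\le|y|\le 2M(\tau)\}$ so that
\[
\tilde h(y,\tau):=-H(y,\tau)\tilde\chi_{\sharp}+a(\tau)W_{0}(y)+b(\tau,|y|)
\]
satisfies both $\int_{\R^{2}}\tilde h\,dy=0$ and $\int_{\R^{2}}\tilde h\,|y|^{2}\,dy=0$. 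A direct computation of the cutoff-induced mass and second-moment deficits forces $|a(\tau)|\lesssim \frac{\ln\tau}{\tau M^{\sigma}(\tau)}\|H\|_{0;\nu,m,4+\sigma,\varepsilon}$ and $|b(\tau,|y|)|\lesssim \frac{\delta}{(1+|y|)^{4+\sigma}}\|H\|_{0;\nu,m,4+\sigma,\varepsilon}/(\tau^{\nu}\ln^{m}\tau)$, which are exactly the two pieces of $F_{2}$ in the statement. Lemma~\ref{ellipticsol} with $\gamma=4+\sigma\neq 6$ then produces a radial zero-mass $\Phi_{1}^{e}$ with $L[\Phi_{1}^{e}]=\tilde h$ and the required inner decay $(1+|y|)^{-(2+\sigma)}$; the outer regime $|y|\ge\sqrt{\tau}$ is handled by inserting the decay of $H$ directly into the integral representation of $L^{-1}$.

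Second, I would compute the residual $\mathcal R:=\partial_{\tau}\Phi_{1}^{e}-\tilde B_{1}[\Phi_{1}^{e}]-H(1-\tilde\chi_{\sharp})$. Using $|\lambda\dot\lambda|\sim \ln\tau/\tau$ together with the inner profile of $\Phi_{1}^{e}$, every term in $\mathcal R$ decays like $(1+|y|)^{-(4+\sigma)}$ with an extra smallness of order $\ln\tau/\tau$. A second peel-off of the same kind --- extract a $W_{0}$- and boundary-layer correction (contributing to $F_{1}$), subtract them off, and apply Lemma~\ref{ellipticsol} again --- reduces the remaining source to one with decay $(1+|y|)^{-(6+\sigma)}$ and zero mass, at which point Proposition~\ref{PropMode0Mass0} (specialized to $k=0$ in $\tilde B_{1}$) produces a parabolic correction $\Phi_{1}^{p}$ with the required estimate. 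The initial datum $\Phi_{1}^{e}(\cdot,\tau_{0})$ is absorbed into the $c_{1}\tilde Z_{0}$-type initial datum provided by that proposition, using that $\tilde Z_{0}$ has zero mass. Setting $\Phi_{1}:=\Phi_{1}^{e}+\Phi_{1}^{p}$ gives the pointwise estimate, dominated by $\Phi_{1}^{e}$.

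The main obstacle is the bookkeeping needed to guarantee $\int F_{1}=\int F_{2}=0$ simultaneously with the stated pointwise profiles: one must exploit both normalizations $\int W_{0}=1$ and $\int W_{0}|y|^{2}=0$ from \eqref{W0}, and match the cutoff-induced mass and second-moment deficits against the spatially-separated $W_{0}$ and boundary-layer corrections. The upper bound $\nu<3-\sigma/2$ enters precisely here: it is exactly the threshold at which the elliptic estimate of Lemma~\ref{ellipticsol} applied to the truncated source $H\tilde\chi_{\sharp}$ keeps the correction $a(\tau)$ within the allowed size $\ln\tau/(\tau M^{\sigma})$ rather than losing an additional power of $\tau$. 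A secondary but mechanical subtlety is that one must check gradient bounds on $\Phi_{1}^{e}$ (not only pointwise size) to estimate $\tilde B_{1}[\Phi_{1}^{e}]$ in the residual step, which follows by differentiating the integral representation from Lemma~\ref{ellipticsol}.
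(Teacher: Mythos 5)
Your proposed decomposition is genuinely different from the paper's, and it has a gap that the paper's construction is designed to avoid. The paper does not peel off an elliptic leading-order term from $\Phi_{1}$ and then parabolically correct it. Instead, it inverts the purely elliptic operator $L_{0}=\Delta-\nabla\cdot(\,\cdot\,\nabla\Gamma_{0})$ on the \emph{source}, setting $L_{0}[\tilde H]=H$, solves the parabolic problem
\begin{align*}
\partial_{\tau}\tilde\Phi_{1}=\Delta\tilde\Phi_{1}-\nabla\Gamma_{0}\cdot\nabla\tilde\Phi_{1}+\lambda\dot\lambda\bigl(-2\tilde\Phi_{1}\chi+y\cdot\nabla(\tilde\Phi_{1}\chi)\bigr)+\tilde H,\qquad \tilde\Phi_{1}(\cdot,\tau_{0})=0,
\end{align*}
and only then defines $\Phi_{1}:=L_{0}[\tilde\Phi_{1}]$. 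Applying $L_{0}$ to the equation and computing the commutator $[L_{0},\Lambda]$ produces the forces $F_{1},F_{2}$ and, crucially, the time derivative only acts through $\partial_{\tau}L_{0}[\tilde\Phi_{1}]=L_{0}[\partial_{\tau}\tilde\Phi_{1}]$, so no time derivative of $H$ is ever needed. Since $L_{0}$ is a total divergence, $\int\Phi_{1}=0$ is automatic, and the barrier argument for $\tilde\Phi_{1}$ (which is where the constraints $\nu<1+\varepsilon/2$ and $\nu<3-\sigma/2$ actually originate, via the self-similar barriers of the form \eqref{Hbarrier}) delivers the pointwise profile.

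The gap in your scheme is the residual term $\partial_{\tau}\Phi_{1}^{e}$. You define $\Phi_{1}^{e}$ by elliptically inverting a cut-off, corrected version of $H$; its time derivative therefore involves $\partial_{\tau}H$ as well as $\partial_{\tau}\tilde\chi_{\sharp}$. But the hypothesis gives only $\|H\|_{0;\nu,m,4+\sigma,\varepsilon}<\infty$, a pointwise-in-$(y,\tau)$ bound with no control of $\partial_{\tau}H$. Without that, the residual $\mathcal R=\partial_{\tau}\Phi_{1}^{e}-\tilde B_{1}[\Phi_{1}^{e}]-H(1-\tilde\chi_{\sharp})$ cannot be estimated, and the subsequent application of Proposition~\ref{PropMode0Mass0} does not get off the ground. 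Your attribution of the constraint $\nu<3-\sigma/2$ to Lemma~\ref{ellipticsol} is also misplaced; that lemma is elliptic and does not see $\nu$. To repair your approach one would either need to assume time-derivative control of $H$ (which is not present where the lemma is applied), or reorganize to the paper's arrangement, where the $\tau$-derivative is taken only after passing to the antiderivative level.
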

\begin{proof}
	We recall the definition \eqref{L0} and we introduce $\tilde{H}=\tilde{H}[H]$ such that
	\begin{align*}
		L_{0}[\tilde{H}]=H,  \ \ \ |\tilde{H}|+(1+\rho)|\nabla \tilde{H}|\le C \frac{\|H\|_{0;\nu,m,4+\sigma,\varepsilon}}{\tau^{\nu}(\ln \tau)^{m}}\begin{cases}
			\frac{1}{(1+\rho)^{2+\sigma}} \ \ \ \ \rho\le \sqrt{\tau}\\
			\frac{\tau^{\varepsilon/2}}{(1+\rho)^{2+\sigma+\varepsilon}} \ \ \ \rho\ge \sqrt{\tau}.
		\end{cases}
	\end{align*}
	By using barriers (similarly to the proof of Lemma \ref{barriersUg} and the proof of Lemma 11.2 in \cite{DdPDMW}) we have that the solution of
	\begin{align}\label{tildephi}
		\begin{cases}
			\partial_{ \tau}\tilde{\Phi}_{1}=\Delta \tilde{\Phi}_{1}-\nabla \Gamma_{0} \cdot \nabla \tilde{\Phi}_{1}+\lambda\dot{\lambda}(-2\tilde{\Phi}_{1}\chi+ y \cdot \nabla (\tilde{\Phi}_{1}\chi))+\tilde{H}\\
			\tilde{\Phi}_{1}(\cdot,\tau_{0})=0
		\end{cases}
	\end{align}
	if $\nu<1+\frac{\varepsilon}{2}$ and $\nu<3-\frac{\sigma}{2}$, satisfies
	\begin{align}\label{EstimateTildePhi1}
		|\tilde{\Phi}_{1}|\le C \frac{\|H\|_{\nu,m,4+\sigma,\varepsilon}}{\tau^{\nu}(\ln \tau)^{m}}\begin{cases}
			\frac{1}{(1+\rho)^{\sigma}} \ \ \ \ \ \rho \le \sqrt{\tau}\\
			\frac{\tau^{1+\varepsilon/2}}{(1+\rho)^{2+\sigma+\varepsilon}} \ \ \ \rho \ge \sqrt{\tau}.
		\end{cases}
	\end{align}
	Recalling \eqref{L0} we can rewrite \eqref{tildephi} as
	\begin{align}\label{systemtildephi1}
		\begin{cases}
			\partial_{ \tau} \tilde{\Phi}_{1}=L_{0}[\tilde{\Phi}_{1}]+\lambda\dot{\lambda}(-2\tilde{\Phi}_{1}\chi+y\cdot \nabla (\tilde{\Phi}_{1}\chi))+\tilde{H}-U\tilde{\Phi}_{1}\\
			\tilde{\Phi}_{1}(\cdot, \tau_{0})=0.
		\end{cases}
	\end{align}
    Thanks to \eqref{EstimateTildePhi1} and by standard parabolic estimates we also have
\begin{align}\label{GradientPhi1Tilde}
	|\nabla\tilde{\Phi}_{1}|\le C \frac{\|H\|_{0;\nu,m,4+\sigma,\varepsilon}}{\tau^{\nu}(\ln \tau)^{m}}\begin{cases}
		\frac{1}{(1+\rho)^{1+\sigma}} \ \ \ \ \ \rho \le \sqrt{\tau}\\
		\frac{\tau^{1+\varepsilon/2}}{(1+\rho)^{3+\sigma+\varepsilon}} \ \ \ \rho \ge \sqrt{\tau}.
	\end{cases}
\end{align}
Moreover differentiating in $y_{j}$, $j=1,2$ the equation \eqref{tildephi}, by standard parabolic estimates and using \eqref{systemtildephi1}, \eqref{GradientPhi1Tilde} we get
\begin{align}\label{HessianPhi1Tilde}
	|D^{2}\tilde{\Phi}_{1}|\le C \frac{\|H\|_{0;\nu,m,4+\sigma,\varepsilon}}{\tau^{\nu}(\ln \tau)^{m}}\begin{cases}
		\frac{1}{(1+\rho)^{2+\sigma}} \ \ \ \ \ \rho \le \sqrt{\tau}\\
		\frac{\tau^{1+\varepsilon/2}}{(1+\rho)^{4+\sigma+\varepsilon}} \ \ \ \rho \ge \sqrt{\tau}.
	\end{cases}
\end{align}
Now we call $\Phi_{1}:=L_{0}[\tilde{\Phi}_{1}]$. By \eqref{EstimateTildePhi1}, \eqref{GradientPhi1Tilde} we observe that 
\begin{align*}
	\int_{\mathbb{R}^{2}} \Phi_{1}=0, \ \ \ |\Phi_{1}|\le C\frac{\|H\|_{0;\nu,m,4+\sigma,\varepsilon}}{\tau^{\nu}(\ln \tau)^{m}}\begin{cases}
		\frac{1}{(1+\rho)^{2+\sigma}} \ \ \ \rho\le \sqrt{\tau}\\
		\frac{\tau^{1+\varepsilon/2}}{(1+\rho)^{4+\sigma+\varepsilon}} \ \ \ \rho\ge \sqrt{\tau}.
	\end{cases}
\end{align*}
We apply $L_{0}$ to the problem \eqref{systemtildephi1} and we obtain
	\begin{align*}
		\partial_{ \tau} \Phi_{1}=L_{0}[\Phi_{1}]+\lambda\dot{\lambda}(-2L_{0}[\tilde{\Phi}_{1}\chi]+L_{0}\circ\Lambda (\tilde{\Phi}_{1}\chi))+H-L_{0}[U\tilde{\Phi}_{1}].
	\end{align*}
	We need to compute the commutator $[L_{0}, \Lambda]$. Similarly to \eqref{InfiniteTimeCommutator} a direct computation gives
	\begin{align*}
		L_{0}\circ \Lambda[\phi]=\Lambda \circ L_{0}[\phi]-\nabla \cdot (\phi Uy)+2L_{0}[\phi].
	\end{align*}
We have then
	\begin{align*}
		L_{0}\circ \Lambda(\tilde{\Phi}_{1}\chi)=&\Lambda(\Phi_{1}\chi)-\nabla ( \tilde{\Phi}_{1}\chi U y) + \Lambda\circ\{L_{0}[\tilde{\Phi}_{1}(\chi-1)]+\Phi_{1}(1-\chi)\}+2L_{0}[\tilde{\Phi}_{1}\chi].
	\end{align*}
	Then we have
	\begin{align}
		\partial_{ \tau}\Phi_{1}&=L_{0}[\Phi_{1}]+\lambda\dot{\lambda}\Lambda(\Phi_{1}\chi)+H-L_{0}[U\tilde{\Phi}_{1}]-\lambda\dot{\lambda}\nabla \cdot (\tilde{\Phi}_{1}\chi Uy)+\lambda\dot{\lambda}\Lambda\circ \{L_{0}[\tilde{\Phi}_{1}(\chi-1)]+\Phi_{1}(1-\chi)\} =\nonumber\\
		&=L[\Phi_{1}]+\lambda\dot{\lambda}(\Lambda(\Phi_{1}\chi)-\lambda\dot{\lambda}W_{0}(y)(\int_{\mathbb{R}^{2}} \Lambda(\Phi_{1}\chi)) ) +H -\nonumber\\
		&\hspace{1cm} -L_{0}[U\tilde{\Phi}_{1}]-\lambda\dot{\lambda}\nabla \cdot (\tilde{\Phi}_{1}\chi U y)+\nabla \cdot (U\nabla \Psi_{1})+ \hspace{2cm }=: -F_{1}\nonumber\\
		&\hspace{1cm}+ \lambda\dot{\lambda}(\int_{\mathbb{R}^{2}}\Lambda(\Phi_{1}\chi))W_{0}(y)+ \lambda\dot{\lambda}\Lambda\{L_{0}[\tilde{\Phi}_{1}(\chi-1)]+\Phi_{1}(1-\chi)\}= \hspace{2cm }=: -F_{2}\nonumber\\
		&=L[\Phi_{1}]+\tilde{B}_{1}[\Phi_{1}] +H -F_{1}-F_{2}. \label{DefF1}
	\end{align}
Observing that $\int_{\R^2}\Phi_{1}=0$, we can rewrite $F_{2}$ and obtain
	\begin{align*}
	F_{2}&=-2\lambda\dot{\lambda}W_{0}(t)\int_{\mathbb{R}^{2}} \Phi_{1}(1-\chi)-\lambda\dot{\lambda}\Lambda\circ \{L_{0}[\tilde{\Phi}_{1}\chi]-\Phi_{1}\chi\}.
\end{align*}
Recalling \eqref{FunctionM} and \eqref{lambdadotlambdaExp} we have for some universal constant $\bar{c}$ ($W_{0}(y)$ is compactly supported) $F_{2}$ satisfies
\begin{align*}
	\int_{\mathbb{R}^{2}} F_{2}=0, \ \ |F_{2}|\le C \frac{\|H\|_{0;\nu,m,4+\sigma,\varepsilon}}{\tau^{\nu}(\ln \tau)^{m}}\begin{cases}
			\frac{\ln \tau}{\tau}\frac{1}{M^{\sigma}(\tau)} \ \ \ |y|\le \bar{c}\\
			0 \ \ \ \ \bar{c}\le|y|\le M(\tau)\\
			\frac{\delta}{1+\rho^{4+\sigma}} \ \ \ \ M(\tau)\le|y|\le 2M(\tau)\\
			0 \ \ \ \ |y|\ge 2M(\tau).
	\end{cases} 
\end{align*}
	In $F_{1}$, thanks to \eqref{EstimateTildePhi1}, \eqref{GradientPhi1Tilde}, \eqref{HessianPhi1Tilde} and recalling $\int_{\R^2}\Phi_{1}=0$, we can easily estimate $L_{0}[U\tilde{\Phi}_{1}]$ and $\nabla \cdot (U\nabla \Psi_{1})$. Finally
	\begin{align*}
		|\lambda\dot{\lambda}\nabla \cdot (\tilde{\Phi}_{1}U\chi y)| &\le C\frac{\|H\|_{0;\nu,m,4+\sigma,\varepsilon}}{\tau^{\nu}(\ln \tau)^{m}}\frac{\ln \tau}{\tau} \begin{cases}
			\frac{1}{(1+|y|)^{4+\sigma}} \ \ \ \rho \le 2M(\tau) \\\
			0 \ \ \ \ \rho \ge 2M(\tau)
		\end{cases} \le\\
	&\le C \frac{\|H\|_{0;\nu,m,4+\sigma,\varepsilon}}{\tau^{\nu} (\ln \tau)^{m}}\begin{cases}
			\frac{1}{(1+\rho)^{6+\sigma}} \ \ \ \ \rho \le \sqrt{\tau} \\
			\frac{\tau^{\varepsilon/2}}{(1+\rho)^{6+\sigma+\varepsilon}} \ \ \ \ \ \rho\ge \sqrt{\tau}.
		\end{cases}
	\end{align*}
\end{proof}
In the following result we want to include the extra term in \eqref{prototypesecmom} as a perturbation (we will also include one of the two components of the forces of Lemma \ref{Phi1Lemma}).
\begin{lemma}\label{PHIcostruction}
	Let $0<\sigma<1$, $\varepsilon>0$, $\sigma+\varepsilon<2$, $1<\nu<\min(1+\frac{\varepsilon}{2},3-\frac{\sigma}{2},\frac{3}{2})$. Let $0<q<1$. For any $H$ such that $\|H\|_{0;\nu,m,4+\sigma,\varepsilon}<\infty$ there is $\Phi$ such that
	\begin{align}\label{sysPhi}
		\begin{cases}
			\partial_{ \tau}\Phi=L[\Phi]+\tilde{B}_{1}[\Phi]+\lambda\dot{\lambda}L^{-1}[\nabla \cdot (\Phi Uy)-\nabla \cdot (Z_{0}\nabla \Psi)]+H+\tilde{F}[\Phi]\\
			\Phi(\cdot, \tau_{0})=c_{1}\tilde{Z}_{0}
		\end{cases}
	\end{align}
	such that $\Phi=\Phi_{1}+\Phi_{2}$ has zero mass and
	\begin{align*}
		|\Phi_{1}|\le C \frac{\|H\|_{0;\nu,m,4+\sigma,\varepsilon}}{\tau^{\nu}(\ln \tau)^{m}}\begin{cases}
			\frac{1}{(1+\rho)^{2+\sigma}} \ \ \ \ \rho \le\sqrt{\tau} \\
			\frac{\tau^{1+\varepsilon/2}}{(1+\rho)^{4+\sigma+\varepsilon}} \ \ \ \rho \ge \sqrt{\tau}
		\end{cases}, \ \ \ \ |\Phi_{2}|\le C \frac{\|H\|_{0;\nu,m,4+\sigma,\varepsilon}}{\tau^{\nu-1} (\ln \tau)^{m+q+1}}\frac{1}{(1+|y|)^{4}}.
	\end{align*}
	Moreover we have $\Phi_{2}=\Phi_{2}^{\perp}+\frac{a}{2}Z_{0}$ with
	\begin{align*}
		&|\Phi_{2}^{\perp}|\le C \frac{\|H\|_{0;\nu,m,4+\sigma,\varepsilon}}{\tau^{\nu-1/2} ( \ln \tau)^{m+\frac{q+1}{2}}} \begin{cases}
			\frac{1}{(1+|y|)^{2}} \ \ \ \ |y|\le \sqrt{\tau} \\
			\frac{\tau}{|y|^ {4}} \ \ \ \ \ |y| \ge \sqrt{\tau}  
		\end{cases}, \ \ \ \ |a(\tau)|\le C \frac{\|H\|_{0;\nu,m,4+\sigma,\varepsilon}}{\tau^{\nu-1}(\ln \tau)^{m+q+1}}\\
		&|c_{1}| \le C \frac{\|H\|_{0;\nu,m,4+\sigma,\varepsilon}}{\tau_{0}^{\nu-1}(\ln \tau)^{m+2}}.
	\end{align*}
	Moreover $\tilde{F}[\Phi]$ for some sufficiently large and universal constant $\bar{c}$ satisfies
	\begin{align}\label{EstimateFtilde}
		\int_{\R^2}\tilde{F}=0, \ \ \ 
		|\tilde{F}|\le C \frac{\|H\|_{0;\nu,m,4+\sigma,\varepsilon}}{\tau^{\nu}(\ln \tau)^{m}}\begin{cases}
			\frac{\ln \tau}{\tau}\frac{1}{M^{\sigma}(\tau)} \ \ \ |y|\le \bar{c}\\
			0 \ \ \ \ \bar{c}\le|y|\le M(\tau)\\
		     \frac{\delta}{1+\rho^{4+\sigma}} \ \ \ \ M(\tau)\le|y|\le 2M(\tau)\\
		    0 \ \ \ \ |y|\ge 2M(\tau).
		\end{cases} 
	\end{align}
\end{lemma}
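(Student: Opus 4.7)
The plan is to write $\Phi = \Phi_1 + \Phi_2$, where $\Phi_1$ is the function produced by Lemma~\ref{Phi1Lemma} applied to the given forcing $H$. That lemma provides auxiliary forces $F_1, F_2$ with zero mass, and I will simply define $\tilde F[\Phi] := F_2$, since $F_2$ already has the support and pointwise structure demanded in \eqref{EstimateFtilde} (it is supported near the origin and in the annulus $M(\tau)\le|y|\le 2M(\tau)$, with the required $\ln\tau/\tau$ smallness coming from $\lambda\dot\lambda$). With this choice, substituting $\Phi_1 + \Phi_2$ into \eqref{sysPhi} and using the equation that $\Phi_1$ satisfies reduces the problem to finding $\Phi_2$ solving
\begin{equation*}
\partial_\tau \Phi_2 = L[\Phi_2] + \tilde B_1[\Phi_2] + F_1 + \lambda\dot\lambda\, L^{-1}\bigl[\nabla\cdot((\Phi_1+\Phi_2)Uy) - \nabla\cdot(Z_0\nabla\Psi)\bigr], \qquad \Phi_2(\cdot,\tau_0)=c_1\tilde Z_0.
\end{equation*}

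Since $\tilde B_1$ is precisely the operator $\tilde B$ of Section~\ref{ProofProp71} with $k=0$, I would solve this equation by a fixed-point scheme based on Proposition~\ref{PropMode0Mass0}: given a candidate $\Phi_2$ in the ball $\|\Phi_2\|_{1;\nu-1,m+q+1,4,2+\sigma+\epsilon}\le K\|H\|_{0;\nu,m,4+\sigma,\epsilon}$, the right-hand side above is radial, has zero mass (the divergence structure of the two inner terms, combined with the zero-mass properties of $F_1$ and of $L^{-1}[\cdot]$), and belongs to the $\|\cdot\|_{0;\nu,m,6+\sigma,\epsilon}$ class; then Proposition~\ref{PropMode0Mass0} returns a new $\Phi_2$ with $\phi(\cdot,\tau_0)=c_1\tilde Z_0$ and the desired estimate, along with the bound $|c_1|\le C \tau_0^{1-\nu}(\ln\tau_0)^{-m-2}\|H\|$. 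The finer splitting $\Phi_2=\Phi_2^\perp+\tfrac{a}{2}Z_0$ and the improved pointwise/decay estimate then follow directly from Proposition~\ref{Prop101}.

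The key mechanism for closing the contraction is the explicit factor $\lambda\dot\lambda \sim \ln\tau/\tau$ multiplying the nonlocal perturbation. To verify this, I would estimate $\nabla\cdot(\Phi Uy)$ using the decay of $\Phi_1$ (dominant, of order $(1+|y|)^{-2-\sigma}$ inside) and of $\Phi_2$ (with $(1+|y|)^{-4}$), invert $L$ via Lemma~\ref{ellipticsol} (noting carefully that both the zero-mass and the zero-second-moment conditions must be checked for the argument; the divergence structure handles mass, while the second moment of $\nabla\cdot(\Phi Uy)-\nabla\cdot(Z_0\nabla\Psi)$ vanishes by an integration by parts that mimics the commutator identity used in deriving \eqref{sysPhi}), and then multiply by $\lambda\dot\lambda$ to gain the smallness $\ln\tau_0/\tau_0$ that absorbs the constant from Proposition~\ref{PropMode0Mass0} for large $\tau_0$. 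The Lipschitz estimate is analogous, yielding a contraction on the chosen ball.

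The main obstacle I expect is the simultaneous verification that the input to $L^{-1}$ satisfies both the zero-mass and the zero-second-moment orthogonality conditions required by Lemma~\ref{ellipticsol}; the second-moment condition is the delicate one and relies on the algebraic identity producing the term $\nabla\cdot(\Phi Uy)-\nabla\cdot(Z_0\nabla\Psi)$ from the commutator $[L,\Lambda]$, which forces a precise cancellation between $-\int 2\Phi U|y|^2$ and $-2\int Z_0\, y\cdot\nabla\Psi$ after integration by parts in the second term. Once this cancellation is confirmed, the rest of the proof is a routine contraction combined with the quoted linear theory, and the decay assertions for $\Phi_1$, $\Phi_2$, $\Phi_2^\perp$, $a(\tau)$ and $c_1$ follow from Lemmas~\ref{Phi1Lemma}, Proposition~\ref{PropMode0Mass0} and Proposition~\ref{Prop101} respectively.
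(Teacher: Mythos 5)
Your decomposition $\Phi = \Phi_1 + \Phi_2$ and your reading of where each linear theory applies is on the right track, and you correctly identified the delicate second-moment orthogonality for $L^{-1}$. But there is a genuine structural gap in where you place the nonlocal perturbation, and it makes your fixed point impossible to close as stated.

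You propose to fix $\Phi_1$ once and for all (by applying Lemma~\ref{Phi1Lemma} only to $H$) and then solve for $\Phi_2$ a fixed point whose right-hand side contains $\lambda\dot\lambda\, L^{-1}\bigl[\nabla\cdot((\Phi_1+\Phi_2)Uy)-\nabla\cdot(Z_0\nabla\Psi)\bigr]$, expecting to apply Proposition~\ref{PropMode0Mass0}. The problem is the spatial decay of the term coming from $\Phi_1$. Writing $A[\Phi_1] = Ug_1 + U\psi_1$, one finds that $g_1$ decays only like $(1+\rho)^{-\sigma}$ in the inner region, so $A[\Phi_1]$ decays like $(1+\rho)^{-4-\sigma}$. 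The factor $\lambda\dot\lambda \sim \ln\tau/\tau$ improves things in $\tau$ (in fact it gives the $\ln\tau_0/\tau_0$ smallness you rely on), but it does nothing to the spatial tail. Hence $\lambda\dot\lambda A[\Phi_1]$ belongs to the class $\|\cdot\|_{0;\nu,m,4+\sigma,\varepsilon}$, not to $\|\cdot\|_{0;\nu,m,6+\sigma,\varepsilon}$ as you assert, and so it does not satisfy the hypothesis of Proposition~\ref{PropMode0Mass0}. You cannot feed it into the $\Phi_2$ equation as stated.

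The paper's remedy is a different organization of the fixed point: instead of freezing $\Phi_1$, run a simultaneous fixed point in $(\Phi_1,\Phi_2)$ and route the entire nonlocal perturbation $\lambda\dot\lambda\, A[\Phi_1+\Phi_2]$ back into the $\Phi_1$ equation, i.e.\ treat it with Lemma~\ref{Phi1Lemma}, which only demands $(1+\rho)^{-4-\sigma}$ decay of its forcing. That lemma then produces the decomposition into a fast-decaying $F_1$ and an annulus-supported $F_2$, and only $F_1[\Phi_1]$ (which does decay like $(1+\rho)^{-6-\sigma}$) enters the $\Phi_2$ equation, where Proposition~\ref{PropMode0Mass0} and Proposition~\ref{Prop101} become applicable. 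Concretely the map is $\Phi_1 = T_1\bigl[H+\lambda\dot\lambda A[\Phi_1+\Phi_2]\bigr]$, $\Phi_2 = T_2\bigl[H+\lambda\dot\lambda A[\Phi_1+\Phi_2]\bigr]$, with $T_1,T_2$ arising from Lemma~\ref{Phi1Lemma} and the $F_1$-to-$\Phi_2$ solver respectively; the contraction factors $\ln\tau_0/\tau_0$ and $(\ln\tau_0)^{-q}$ coming from $\lambda\dot\lambda A[\Phi_1]$ and $\lambda\dot\lambda A[\Phi_2]$ then close the argument for $\tau_0$ large.
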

\begin{proof}
	The idea is to include $\lambda\dot{\lambda}A[\Phi]:=\lambda\dot{\lambda}L^{-1}[\nabla \cdot ( \Phi Uy)-\nabla \cdot 
	(Z_{0}\nabla \Psi)]$ as a perturbation. Recalling the two forces $F_{1}$, $F_{2}$ introduced in Lemma \ref{Phi1Lemma}, we write $\Phi=\Phi_{1}+\Phi_{2}$ where 
	\begin{align*}
		&\begin{cases}
			\partial \Phi_{1} = L[\Phi_{1}]+\tilde{B}_{1}[\Phi_{1}]+H-F_{1}[\Phi_{1}]-F_{2}[\Phi_{1}] \\
			\Phi_{1}(\cdot,\tau_{0})=0,
		\end{cases}\\
		&\begin{cases}
			\partial_{ \tau} \Phi_{2}=L[\Phi_{2}]+\tilde{B}_{1}[\Phi_{2}]+F_{1}[\Phi_{1}]\\
			\Phi_{2}(\cdot, \tau_{0})=c_{1}\tilde{Z}_{0}
		\end{cases}
	\end{align*}
	we observe that we could not include $F_{2}[\Phi_{1}]$ in the second problem since in the region $M(\tau)\le |y|\le 2M(\tau)$ we do not have sufficiently fast decay. \newline The solution $\Phi_{1}$ can be constructed as in Lemma \ref{Phi1Lemma}. We get
	\begin{align}\label{EstimatePhi1ADter}
		\int_{\mathbb{R}^{2}} \Phi_{1}=0, \ \ |\Phi_{1}|\le C \frac{\|H\|_{0;\nu,m,4+\sigma,\varepsilon}}{\tau^{\nu} (\ln \tau)^{m}} \begin{cases}
			\frac{1}{(1+\rho)^{2+\sigma}} \ \ \ \ \ \rho \le \sqrt{\tau} \\
			\frac{\tau^{1+\varepsilon/2}}{(1+\rho)^{4+\sigma+\varepsilon}} \ \ \ \ \rho \ge \sqrt{\tau}
		\end{cases}
	\end{align}
and 
	\begin{align*}
		\int_{\mathbb{R}^{2}} F_{1}[\Phi_{1}]=0, \ \ \ \ \ |F_{1}[\Phi_{1}]| \le C \frac{\|H\|_{0;\nu,m,4+\sigma,\varepsilon}}{\tau ^{\nu} (\ln \tau)^{m}} \begin{cases}
			\frac{1}{(1+\rho)^{6+\sigma+\varepsilon}} \ \ \ \ \ \ \rho \le \sqrt{\tau} \\
			\frac{\tau^{\varepsilon/2}}{(1+\rho)^{6+\sigma+\varepsilon}} \ \ \ \ \rho \ge \sqrt{\tau},
		\end{cases}
	\end{align*}
    \begin{align*}
  	\int_{\mathbb{R}^{2}} F_{2}[\Phi_{1}]=0, \ \ |F_{2}[\Phi_{1}]|\le C \frac{\|H\|_{0;\nu,m,4+\sigma,\varepsilon}}{\tau^{\nu}(\ln \tau)^{m}}\begin{cases}
  			\frac{\ln \tau}{\tau}\frac{1}{M^{\sigma}(\tau)} \ \ \ |y|\le \bar{c}\\
  			0 \ \ \ \ \bar{c}\le|y|\le M(\tau)\\
  			\frac{\delta}{1+\rho^{4+\sigma}} \ \ \ \ M(\tau)\le|y|\le 2M(\tau)\\
  			0 \ \ \ \ |y|\ge 2M(\tau).
  	\end{cases} 
  \end{align*}
	Then by Proposition \ref{Prop101} we also know that if $1<\nu<\frac{3}{2}$, $\sigma \in (0,1)$, $\epsilon>0$ we have $\Phi_{2}=\Phi_{2}^{\perp}+\frac{a}{2} Z_{0}$ with $\int_{\mathbb{R}^{2}} \Phi_{2}=\int_{\mathbb{R}^{2}} \Phi_{2}^{\perp}=0$ and
	\begin{align}\label{Phi2EstimateAfter}
	|\Phi_{2}|\le C \frac{\|H\|_{0;\nu,m,4+\sigma,\varepsilon}}{\tau^{\nu-1}(\ln \tau)^{m+q+1}}\frac{1}{(1+|y|)^{4}}, \ \ \ \  |\Phi_{2}^{\perp}|\le C \frac{\|H\|_{0;\nu,m,4+\sigma,\varepsilon}}{\tau^{\nu} (\ln \tau)^{m+\frac{q+1}{2}}} \begin{cases}
		\frac{1}{(1+|y|)^{2}} \ \ \ \ \ |y|\le \sqrt{\tau}\\
		\frac{\tau}{|y|^{4}} \ \ \ \ |y| \ge\sqrt{\tau}
	\end{cases}
	\end{align}
	with the estimates
	\begin{align*}
		|a(\tau)| \le C \frac{\|H\|_{0;\nu,m,4+\sigma,\varepsilon}}{\tau^{\nu-1}(\ln \tau)^{m+q+1}}, \ \ \ |c_{1}|\le C \frac{\|H\|_{0;\nu,m,4+\sigma,\varepsilon}}{\tau_{0}^{\nu-1}(\ln \tau)^{m+2}}
	\end{align*}
	(here it is not necessary to consider the smallness in $\tau_{0}$ we have in the estimates for $a(\tau)$ and $\Phi_{1}$).
	We just constructed a function $\Phi$ with zero mass and that solves
	\begin{align*}
		\begin{cases}
			\partial_{ \tau}\Phi=L[\Phi]+\tilde{B}_{1}[\Phi]+H-F_{2}[\Phi_{1}]\\
			\Phi(\cdot, \tau_{0})=c_{1}\tilde{Z}_{0}.
		\end{cases}
	\end{align*}
\newline
	This means that for a given $H$ radial, satisfying $\int_{\mathbb{R}^{2}} H=0$ and $\|H\|_{0;\nu,m,4+\sigma,\varepsilon}<\infty $, we have two operators
	\begin{align*}
		\Phi_{1}=T_{1}[H], \ \ \Phi_{2}=T_{2}[H], \ \ \
	\end{align*}
 where the operator $T_{2}$ gives also the estimates for $\Phi_{2}^{\perp}$, $a$ and $c_{1}$ we mentioned above.
The idea is to find fixed points $\Phi_{1}\in X_{1}$, $\Phi_{2}\in X_{2}$ (where are $X_{1}$ and $X_{2}$ are suitably defined normed spaces, see \eqref{EstimatePhi1ADter} and \eqref{Phi2EstimateAfter}), introducing the operator $\lambda\dot{\lambda}A$ as a perturbation, namely
	\begin{align*}
		&\Phi_{1}=T_{1}[H+\lambda\dot{\lambda}A[\Phi_{1}
		+\Phi_{2}]], \ \ \Phi_{2}=T_{2}[H+\lambda\dot{\lambda}A[\Phi_{1}+\Phi_{2}]].
	\end{align*}
	We need to compute $\|\lambda\dot{\lambda}A[\Phi]\|_{0;\nu,m,4+\sigma,\varepsilon}$. First we recall that , recalling $A[\Phi_{i}]=L^{-1}[\nabla \cdot (\Phi_{i}Uy)-\nabla \cdot (Z_{0}\nabla \Psi_{i})]$, when $\Psi_{i}=(-\Delta)^{-1}\Phi_{i}$, $\int_{\mathbb{R}^{2}} \Phi_{i}=0$ and $|\Phi_{i}|\le \frac{1}{(1+|y|)^{2+\gamma}}$ for some $\gamma>0$, we have
	\begin{align*}
		\int_{\mathbb{R}^{2}} (\nabla \cdot (\Phi_{i}Uy)-\nabla \cdot (Z_{0}\nabla \Psi_{i}))|y|^{2}=0
	\end{align*}
(for more details see the proof of Lemma 11.3 in \cite{DdPDMW}).
	As it has also been shown in the proof of Lemma 11.3 in \cite{DdPDMW} we can write $A[\Phi_{i}]=Ug_{i}+U\psi_{i}$ where we underline that $\psi_{i}$ is the inverse Laplacian of $A[\Phi_{i}]$ and we have
	\begin{align*}
		&g_{i}(\rho, \tau)=-\int_{\rho}^{\infty}[\Phi_{i}(s, \tau)s-\frac{Z_{0}(s)}{U(s)}\partial_{s}\Psi_{i}(s,\tau)]ds, \ \ \ \partial_{\rho}\Psi_{i}=\frac{1}{\rho}\int_{\rho}^{\infty} \Phi_{i}(s,\tau)sds,\\
		&\psi_{i}(\rho)=z_{0}(\rho)\int_{\rho}^{\infty}\frac{1}{z_{0}^{2}(r)r}\int_{r}^{\infty} Ug_{i}(s,\tau)z_{0}(s)sdsdr \ \ \text{if }\rho>1.
	\end{align*}
	Since $A$ is a linear operator, we will start with $A[\Phi_{1}]$. Recalling \eqref{EstimatePhi1ADter} we have
	\begin{align*}
		|\partial_{\rho} \Psi_{1}|\le C \frac{\|\Phi_{1}\|_{X_{1}}}{\tau^{\nu}(\ln \tau)^{m}}\begin{cases}
			\frac{1}{(1+\rho)^{1+\sigma}} \ \ \ \ \rho\le \sqrt{\tau}\\
			\frac{\tau^{1+\varepsilon/2}}{\rho^{3+\sigma+\varepsilon}} \ \ \ \rho \ge \sqrt{\tau}
		\end{cases}
	\end{align*}
    and
	\begin{align*}
		|g_{1}|\le C \frac{\|\Phi_{1}\|_{X_{1}}}{\tau^{\nu}(\ln \tau)^{m}}\begin{cases}
			\frac{1}{(1+\rho)^{\sigma}} \ \ \ \ \rho \le \sqrt{\tau}\\
			\frac{\tau^{1+\varepsilon/2}}{\rho^{2+\sigma+\varepsilon}} \ \ \ \ \rho \ge \sqrt{\tau}.
		\end{cases}
	\end{align*}
	If $\rho \ge \sqrt{\tau}$ we have
	\begin{align*}
		|\psi_{1}|\le C \frac{\|\Phi_{1}\|_{X_{1}}}{\tau^{\nu}(\ln \tau)^{m}}\frac{\tau^{1+\varepsilon/2}}{\rho^{4+\sigma+\varepsilon}}
	\end{align*}
	and if $\rho\le \sqrt{\tau}$ we have
	\begin{align*}
		|\psi_{1}|\le C \int_{\rho}^{\sqrt{\tau}} \frac{1}{r}\int_{r}^{\infty} U|g_{1}|sdsdr + C \frac{\|\Phi\|_{X_{1}}}{\tau^{\nu}(\ln \tau)^{m}} \frac{1}{\tau^{1+\sigma/2}} .
	\end{align*}
	Since $\int_{r}^{\infty} U|g_{1}| sds \le C \frac{\|\Phi_{1}\|_{X_{1}}}{\tau^{\nu}(\ln \tau)^{m}}\frac{1}{(1+r)^{2+\sigma}}$ we finally get
	\begin{align*}
		|\psi_{1}|\le C \frac{\|\Phi_{1}\|_{X_{1}}}{\tau^{\nu}(\ln \tau)^{m}}\begin{cases}
			\frac{1}{(1+\rho)^{2+\sigma}} \ \ \ \rho \le \sqrt{\tau}\\
			\frac{\tau^{1+\varepsilon/2}}{\rho^{4+\sigma+\varepsilon}}  \ \ \ \rho\ge\sqrt{\tau}.
		\end{cases}
	\end{align*}
	We just proved
	\begin{align}\label{APhi1}
		|A[\Phi_{1}]|\le C \frac{\|\Phi_{1}\|_{X_{1}}}{\tau^{\nu}(\ln \tau)^{m}}\begin{cases}
			\frac{1}{(1+\rho)^{4+\sigma}} \ \ \ \rho\le \sqrt{\tau}\\
			\frac{1}{\rho^{4+\sigma}}\frac{\tau^{1+\varepsilon/2}}{\rho^{2+\varepsilon}} \ \ \ \rho\ge \sqrt{\tau}.
		\end{cases}
	\end{align}
	Now we want to estimate $A[\Phi_{2}]$. First we recall \eqref{Phi2EstimateAfter} and we have
	\begin{align*}
		|\partial_{\rho}\Psi_{2}|\le C \frac{\|\Phi_{2}\|_{X_{2}}}{\tau^{\nu-1}(\ln \tau)^{m+q+1}} \frac{1}{1+\rho^{3}}, \ \ 
		|g_{2}|\le C \frac{\|\Phi_{2}\|_{X_{2}}}{\tau^{\nu-1}(\ln\tau)^{m+q+1}}\frac{1}{1+\rho^{2}},
	\end{align*}
\begin{align*}
		|\psi_{2}|\le C \frac{\|\Phi_{2}\|_{X_{2}}}{\tau^{\nu-1}(\ln \tau)^{m+q+1}}\frac{1}{(1+\rho)^{4}}.
	\end{align*}
	Consequently
	\begin{align}\label{APhi2}
		|A[\Phi_{2}]|\le C \frac{\|\Phi_{2}\|_{X_{2}}}{\tau^{\nu-1}(\ln \tau)^{m+q+1}}\frac{1}{1+\rho^{6}}.
	\end{align}
	By \eqref{APhi1}, since if $\rho \ge\sqrt{\tau} \iff \frac{\tau}{\rho^{2}}\le 1$, we have that
	\begin{align*}
		|\lambda\dot{\lambda}A[\Phi_{1}]|\le&C\frac{\ln \tau_{0}}{\tau_{0}}\frac{\|\Phi_{1}\|_{X_{1}}}{\tau^{\nu}(\ln \tau)^{m}} \frac{1}{(1+|y|)^{4+\sigma}}\begin{cases}
			1 \ \ \ \ \rho\le \sqrt{\tau}\\
			\frac{\tau^{\varepsilon/2}}{\rho^{\varepsilon}} \ \ \ \ \rho\ge\sqrt{\tau}.
		\end{cases}
	\end{align*}
	Then $\|\lambda\dot{\lambda}A[\Phi_{1}]\|_{0;\nu,m,4+\sigma,\varepsilon}\le C \frac{\ln \tau_{0}}{\tau_{0}}\|\Phi_{1}\|_{X_{1}}$. By \eqref{APhi2}, we have
	\begin{align*}
		|\lambda\dot{\lambda}A[\Phi_{2}]|\le C \frac{\|\Phi_{2}\|_{X_{2}}}{\tau^{\nu}(\ln \tau)^{m}}\frac{1}{(\ln \tau)^{q}} \frac{1}{(1+\rho)^{6}}.
	\end{align*}
	Since if $\rho \ge \sqrt{\tau}$ and $\sigma+\varepsilon<2$ we have $\frac{1}{\rho^{2-\sigma-\varepsilon}}\le \tau^{\varepsilon/2}$, then
	\begin{align*}
		|\lambda\dot{\lambda}A[\Phi_{2}]|\le &C\frac{1}{\ln^{q}\tau_{0}}\frac{\|\Phi_{2}\|_{X_{2}}}{\tau^{\nu}(\ln \tau)^{m}} \frac{1}{(1+|y|)^{4+\sigma}}\begin{cases}
			1 \ \ \ \ \rho\le \sqrt{\tau}\\
			\frac{\tau^{\varepsilon/2}}{\rho^{\varepsilon}} \ \ \ \ \rho\ge\sqrt{\tau}.
		\end{cases}
	\end{align*}
	We just proved that
	\begin{align*}
		\sum_{i=1,2} \|T_{i}[\lambda\dot{\lambda}A[\Phi_{1}+\Phi_{2}]]\|_{X_{i}} \le C \frac{1}{\ln^{q}\tau_{0}}(\|\Phi_{1}\|_{X_{1}}+\|\Phi_{2}\|_{X_{2}})
	\end{align*}
	then we have $\Phi_{1}$, $\Phi_{2}$, $\Phi_{2}^{\perp}$, $a$ and $c_{1}$ such that
	\begin{align*}
		\begin{cases}
			\partial_{ \tau} \Phi_{1}=&\hspace{-1cm}L[\Phi_{1}]+\tilde{B}_{1}[\Phi_{1}]++\lambda\dot{\lambda}L^{-1}[\nabla\cdot(\Phi Uy)-Z_{0}\nabla \Psi]+H-F_{1}[\Phi_{1}]-F_{2}[\Phi_{1}]\\
			\Phi_{1}(\cdot,\tau_{0})=0
		\end{cases}
	\end{align*}
	and
	\begin{align*}
		\begin{cases}
			\partial_{ \tau} \Phi_{2}=L[\Phi_{2}]+\tilde{B}_{1}[\Phi_{2}]+F_{1}[\Phi_{1}]\\
			\Phi_{2}(\cdot, \tau_{0})=c_{1}\tilde{Z}_{0}\\
			\Phi_{2}=\Phi_{2}^{\perp}+ \frac{a}{2}Z_{0}
		\end{cases}
	\end{align*}
	and consequently $\Phi=\Phi_{1}+\Phi_{2}$ is the desired solution.
\end{proof}
\begin{remark}
		We observe that $\Phi_{2}$ in \eqref{Phi2EstimateAfter} would have more decay when $|y|\ge\sqrt{\tau}$. Considering that additional decay we would slightly improve the estimate of $\mathcal{E}$ when $|y|\ge\sqrt{\tau}$ but there will always be a region where the decay $\approx\frac{1}{|y|^{6}}$ prevents us from improving the decay of $\phi$ when $|y|\ge \sqrt{\tau}$ (see also the proof of Proposition \ref{PropMode0MassSecMom0}).
\end{remark}
\begin{remark}\label{remarkNumberDer}
	We observe that since $\tilde{\Phi}_{1}$ solves \eqref{tildephi}, after formally differentiating and standard parabolic estimates, we can control fifth-order derivatives. As a consequence we can control third-order derivatives of $\Phi_{1}=L[\tilde{\Phi}_{1}]$. By the definition of $F_{1}[\Phi_{1}]$, see \eqref{DefF1}, we observe that we can also control third-order derivatives for $\Phi_{2}$.
\end{remark}

\begin{proof}[Proof of Proposition \ref{PropMode0MassSecMom0}]
	The starting point is the solution $\Phi=\Phi_{1}+\Phi_{2}$ of \eqref{sysPhi} we constructed in Lemma \ref{PHIcostruction}. We observe again that $\int_{\mathbb{R}^{2}}\Phi=0$. If we call $\phi:=L[\Phi]$ we see it solves
	\begin{align*}
		\begin{cases}
			\partial_{ \tau}\phi=L[\phi]+\lambda\dot{\lambda}(2\phi+y\cdot \nabla \phi)+h-\lambda\dot{\lambda}(\int \Lambda(\Phi \chi))L[W_{0}](y)+\lambda\dot{\lambda}L\circ\Lambda(\Phi(\chi-1)) +L[\tilde{F}]\\
			\phi(\cdot, \tau_{0})=c_{1}\hat{Z}_{0}.
		\end{cases}
	\end{align*}
    We recall that
\begin{align}
	L[\phi]&=\Delta \phi -\nabla \cdot(\phi \nabla \Gamma_{0})+U'(\rho)\frac{1}{\rho}\int_{0}^{\rho}\phi(s)sds+U\phi=\label{ExpansionL1}\\
	&=\Delta \phi -\nabla \cdot(\phi \nabla \Gamma_{0})-U'(\rho)\frac{1}{\rho}\int_{\rho}^{\infty}\phi(s)sds+U\phi\label{ExpansionL2}.
\end{align} 
    Since $\phi=L[\Phi_{1}+\Phi_{2}]=L[\Phi_{1}+\Phi_{2}^{\perp}]$, $\int_{\R^2}(\Phi_{1}+\Phi_{2}^{\perp})=0$ and they are sufficiently fast decaying in space we have $\int_{\mathbb{R}^{2}} \phi=0$, $\int_{\R^2}\phi|y|^{2}=0$ and, by \eqref{ExpansionL1}, \eqref{ExpansionL2}, we have
	\begin{align*}
		|\phi|\le C \frac{\|h\|}{\tau^{\nu-1/2}(\ln \tau)^{m+\frac{1+q}{2}}} \begin{cases}
			\frac{1}{(1+|y|)^{4}} \ \ \ \ \ |y|\le \sqrt{\tau}\\
			\frac{\tau}{|y|^{6}} \ \ \ \ \ |y|\ge \sqrt{\tau}.
		\end{cases}
	\end{align*}
    We clearly have then
    \begin{align}\label{Eoper}
    	\mathcal{E}[\phi]=L[\tilde{F}]-\lambda\dot{\lambda}(\int_{\R^2}\Lambda(\Phi \chi))L[W_{0}](y)+\lambda\dot{\lambda}L\circ \Lambda(\Phi(\chi-1)).
    \end{align}
     Notice that $-W_{0}(y)(\int_{\mathbb{R}^{2}} \Lambda(\Phi \chi))+\Lambda(\Phi(\chi-1))$ and $\tilde{F}$ have both zero mass and they are sufficiently fast decaying in space. Then we have
     \begin{align*}
     	\int_{\mathbb{R}^{2}}\mathcal{E}[\phi]dy=0, \ \ \ \int_{\R^2}\mathcal{E}[\phi]|y|^{2}dy=0.
     \end{align*}   
    Now from \eqref{EstimateFtilde} and \eqref{ExpansionL1}, \eqref{ExpansionL2} we have
    \begin{align*}
        |L[\tilde{F}]|\le C \frac{\|H\|_{0;\nu,m,4+\sigma,\varepsilon}}{\tau^{\nu}(\ln \tau)^{m}}\begin{cases}
        	\frac{\ln \tau}{\tau}\frac{1}{M^{\sigma}} \frac{1}{1+\rho^{6}}\ \ \ |y|\le M(\tau)\\
        	\frac{\delta}{1+\rho^{6+\sigma}} \ \ \ \ M(\tau)\le |y|\le 2M(\tau)\\
        	0 \ \ \ |y|\ge 2M(\tau).
        \end{cases}
    \end{align*}
	Recalling \eqref{EstimatePhi1ADter} and \eqref{Phi2EstimateAfter} for some universal constant $\bar{c}$ we obtain
    \begin{align*}
    	|-W_{0}(y)\lambda\dot{\lambda}(\int_{\R^2}\Lambda(\Phi \chi))+\lambda\dot{\lambda}\Lambda(\Phi(\chi-1))|\le C \frac{\|H\|_{0;\nu,m,4+\sigma,\varepsilon}}{\tau^{\nu}\ln^{m+q}\tau}\begin{cases}
    		\frac{1}{M^{2}(\tau)} \ \ \ \ |y|\le \bar{c}\\
    		0 \ \ \ \ \bar{c}\le |y|\le M(\tau)\\
         \frac{1}{1+\rho^{4}}\ \ \ \ \ M(\tau)\le |y|\le \sqrt{\tau}\\
    	   \frac{1}{1+\rho^{4}} \ \ \ |y|\ge \sqrt{\tau}
    	\end{cases}
    \end{align*}
 and then, using again \eqref{ExpansionL1}, \eqref{ExpansionL2}, we have
 \begin{align*}
 	L[-W_{0}(y)\lambda\dot{\lambda}\int_{\R^2}\Lambda(\Phi\chi)+\lambda\dot{\lambda}\Lambda(\Phi(\chi-1))]\le C \frac{\|H\|_{0;\nu,m,4+\sigma,\varepsilon}}{\tau^{\nu}(\ln \tau)^{m+q}}\begin{cases}
 		 \frac{1}{M^{2}(\tau)}\frac{1}{1+\rho^{6}} \ \ \ |y|\le M(\tau)\\
 		\frac{1}{1+\rho^{6}} \ \ \ \ \ M(\tau)\le|y|\le \sqrt{\tau}\\
 		\frac{1}{1+\rho^{6}} \ \ \ |y|\ge \sqrt{\tau}.
 	\end{cases}
 \end{align*}
\end{proof}

\section{Linear estimate with no radial mode}\label{Section12}
The purpose of this Section is to prove Proposition \ref{PropMode1}. As we already observed in Section \ref{ProjectionIOsystem} we can split the solution into Fourier modes. In this case we assume that $h$ \emph{has not radial mode} and we study 
\begin{align}\label{EquationMode1}
		\begin{cases}
			\partial_{\tau}\phi = L[\phi]+B[\phi \chi]+\mathcal{F}[\phi]+h, \ \ \ \ \ \int_{\R^2}h(y)y_{1}dy=0, \ \ \int_{\R^2}h(y)y_{2}dy=0,  \\
			\phi(\cdot, \tau_{0})=0
		\end{cases}
\end{align}
where, as we did in the previous two Sections, we indicate $\chi$ the cut-off \eqref{FunctionM} instead of keeping the notation $\hat{\chi}$. \newline
Now we want to explain the role of the force $\mathcal{F}[\phi]$. It is clear by looking \eqref{EquationMode1} that if we cut the operator $B$ we are loosing the conservation of the first moments. The conservation of first moments is a key tool since we want to treat $B$ as a small perturbation of the problem
\begin{align}\label{SimplifiedMode1Prob}
	\begin{cases}
		\partial_{ \tau}\phi=L[\phi]+h, \ \ \ (y,\tau)\in \mathbb{R}^{2}\times(\tau_{0},\infty)\\
		\phi(\cdot, \tau_{0})=0
	\end{cases}
\end{align}
where $h$ has not radial mode and satisfies $\int_{\R^2}h(y)y_{1}dy=\int_{\R^2}h(y)y_{2}dy=0$.
Then we take 
\begin{align}\label{ForceF}
	\mathcal{F}[\phi]=-(\int_{\R^2}B[\phi\chi]y_{1}dy)W_{1}(y)-(\int_{\R^2}B[\phi\chi]y_{2}dy)W_{2}(y)
\end{align}
where the functions $W_{1}(y)$ and $W_{2}(y)$ have been introduced in \eqref{W1j}.
To find estimates for problem \eqref{SimplifiedMode1Prob} we will simply recall the Proposition 12.1 that has been proved in \cite{DdPDMW}.
\begin{proposition}\label{Prop121}
	Let $0<\sigma<1$, $0<\epsilon<2$, $0<\nu<\min(1+\frac{\varepsilon}{2},\frac{3}{2}-\frac{\sigma}{2})$, $m\in \mathbb{R}$. Then there is a $C>0$ such that for any $\tau_{0}$ sufficiently large the following holds. Suppose that $h(y,\tau)$ has no radial mode and satisfies $\|h\|_{0;\nu,m,5+\sigma,\varepsilon}<\infty$,
	\begin{align}\label{CondtionMode1T}
		\int_{\R^2}h(y,\tau)y_{j}dy=0 \ \ \text{for all }\tau>\tau_{0}, \ \ j=1,2.
	\end{align}
Then the solution $\phi(y,\tau)$ of \eqref{SimplifiedMode1Prob} satisfies
\begin{align*}
	|\phi(y,\tau)|\le C \frac{\|h\|_{0;\nu,m,5+\sigma,\varepsilon}}{\tau^{\nu}(\ln \tau)^{m}}\begin{cases}
		\frac{1}{(1+|y|)^{3+\sigma}}, \ \ \ |y|\le \sqrt{\tau}\\
		\frac{\tau^{1+\varepsilon/2}}{|y|^{5+\sigma+\varepsilon}}, \ \ |y|\ge \sqrt{\tau}.
	\end{cases}
\end{align*}
\end{proposition}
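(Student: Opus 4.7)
The plan is to adapt the energy/barrier strategy already developed in Sections \ref{ProofProp71}--\ref{Section11} to the non-radial setting, exploiting the fact that the kernel of $L$ acting on mode $1$ is spanned by the translation modes $z_j=\partial_{y_j}\Gamma_0$, $j=1,2$, with corresponding $Z_j=\partial_{y_j}U=-\nabla\cdot(U\nabla z_j)$ lying in the image of $L$ only after removing these kernel directions. The orthogonality condition \eqref{CondtionMode1T} is precisely what is needed to invert $L$ in mode $1$, exactly as $\int h\,dy=0$ played that role in mode $0$ and $\int h|y|^2dy=0$ removed the dilation kernel.

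First I would decompose $h$ and $\phi$ in Fourier modes $\phi=\sum_{k\ge 1}\phi_k(\rho,\tau)e^{ik\theta}$ (no $k=0$ contribution since $h$ has no radial mode). For $k\ge 2$, $L$ has no bounded kernel in that mode, so direct energy estimates on the quadratic form $\int_{\mathbb R^2}\phi_k g_k^\perp$ (with $g_k=\phi_k/U-(-\Delta)^{-1}\phi_k$) together with a coercive Poincar\'e-type inequality analogous to Lemma \ref{Lemma106} yield integral control $\|U^{1/2}g_k\|_{L^2}\lesssim \|h\|_{0;\nu,m,5+\sigma,\varepsilon}\,f(\tau)R(\tau)$, which bootstraps via the representation formulas for $\psi_k$ (variation of parameters using the homogeneous solutions of $-\Delta_k-U$) to a pointwise estimate. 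The restriction $\nu<\frac{3}{2}-\frac{\sigma}{2}$ is dictated here by the barrier in the self-similar region exactly as in Lemma \ref{barriersUg}.

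For the delicate mode $k=1$, I would introduce the decomposition $\phi_1=\phi_1^\perp+\alpha_1(\tau)Z_1+\alpha_2(\tau)Z_2$ with $\alpha_j(\tau)$ chosen so that $\int_{\mathbb R^2}\phi_1^\perp\,z_j U\,dy=0$. Testing the equation against $y_j$ gives, after integration by parts using $\int L[\phi]y_j=0$, an ODE of the form $\dot\alpha_j(\tau)\,\langle Z_j,y_j\rangle_{\text{cut}} = O(\|h\|f(\tau))$, where the right-hand side is small thanks to \eqref{CondtionMode1T} and to the fact that the obstruction against $y_j$ is precisely removed. From here, the same Gr\"onwall argument as in the proof of Lemma \ref{Lemma101} yields $|\alpha_j(\tau)|\le Cf(\tau)R^2(\tau)$. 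A barrier construction of the type built in Lemma \ref{barriersUg}, using the sub-Laplacian $\Delta+\frac{1}{\rho}\partial_\rho-\frac{1}{\rho^2}$ in mode $1$ and base functions of order $\rho^{-3-\sigma}$ inside and a Gaussian-type $H_1\big(\rho/\sqrt\tau,\tau\big)$ outside (with parameters $\gamma=\nu+1,b=5+\sigma$), then upgrades this to the stated pointwise bound.

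The main obstacle will be the mode-$1$ analysis: one must show that $\alpha_j(\tau)\to 0$ and is not spoiled by the initial condition $\phi(\cdot,\tau_0)=0$, which, unlike in the mode-$0$ radial case, does not need the auxiliary parameter $c_1$ because for $k=1$ the induced initial values of $\alpha_j$ vanish. The relevant technical point is that the Poincar\'e constant in mode $1$ improves over that in mode $0$ (no zero eigenfunction issue), so one gains a factor $\frac{1}{R^2}$ in the dissipation, which is exactly what allows the endpoint $\nu$ up to (but not including) $\frac{3}{2}-\frac{\sigma}{2}$. Once the integral and pointwise control of $\phi_1$ is in place, summing the modes and applying standard parabolic regularity for the gradient completes the proof, and the restriction $\nu<1+\frac{\varepsilon}{2}$ appears as the condition for the outer self-similar barrier, just as in Proposition \ref{PropMode0Mass0}.
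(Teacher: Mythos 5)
The paper does not contain its own proof of Proposition \ref{Prop121}: its ``proof'' is a one-line citation to Proposition 12.1 of \cite{DdPDMW}, so there is no internal argument to compare your outline against. That said, your strategy --- Fourier decomposition in $\theta$, coercive energy estimates and barriers for modes $k\ge 2$ (which have no bounded kernel), and a kernel-splitting $\phi_1=\phi_1^\perp+\alpha_1 Z_1+\alpha_2 Z_2$ with $\alpha_j$ slaved to the conserved first moment in mode $k=1$ --- is the natural route and is consistent with the general machinery of Sections \ref{ProofProp71}--\ref{Section11}, so as an outline it is credible. Your remark that, here, the orthogonality \eqref{CondtionMode1T} together with $\phi(\cdot,\tau_0)=0$ forces $\int\phi\,y_j\,dy\equiv0$ exactly, so the auxiliary initial-data parameter $c_1$ from the mode-0 theory is not needed, is correct and is indeed what makes the mode-1 problem cleaner than the mode-0 one.

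Two claims in your writeup are, however, imprecise or unjustified as written. First, you assert that the Poincar\'e constant ``improves'' in mode 1 (no zero-eigenfunction issue), gaining a factor $1/R^2$ in the dissipation, and that this is what produces the upper endpoint $\nu<\tfrac{3}{2}-\tfrac{\sigma}{2}$. But mode 1 \emph{does} have a nontrivial kernel, spanned by $z_1,z_2$, which decay like $1/\rho$; the relevant spectral gap on $B_R$, after projecting out these modes, is not obviously $R$-independent, and the stated gain needs an argument, not an assertion. Note also that the true source of the upper bound on $\nu$ is the self-similar barrier constraint of the form $\gamma<b/2$ (cf.\ \eqref{Hbarrier} and the proof of Lemma \ref{barriersUg}), not the Poincar\'e inequality; you should derive $\tfrac{3}{2}-\tfrac{\sigma}{2}$ from the pair of spatial decay exponents $\rho^{-5-\sigma}$ (right-hand side) and $\rho^{-3-\sigma}$ (solution) as the analogue of \eqref{FIRSTBARRthetaCOND}--\eqref{SecCondTheta}. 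Second, the relation you write, $Z_j=-\nabla\cdot(U\nabla z_j)$, is not correct; the right identity is $Z_j=\partial_{y_j}U=Uz_j=-\Delta z_j$ (from $-\Delta z_j=Uz_j$). Finally, you attribute the constraint $\nu<1+\tfrac{\epsilon}{2}$ to ``just as in Proposition \ref{PropMode0Mass0}'', but that proposition carries $1<\nu<\tfrac{7}{4}$; the condition $\nu<1+\tfrac{\epsilon}{2}$ actually comes up in Proposition \ref{PropMode0MassSecMom0} via Lemma \ref{Phi1Lemma} (outer barrier for the $L_0^{-1}$-regularized problem). None of these is a fatal flaw in the strategy, but the Poincar\'e claim in particular needs to be replaced with an actual barrier argument and moment estimate if the proof is to stand.
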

\begin{proof}
	See the proof of Proposition 12.1 in \cite{DdPDMW}.
\end{proof}
Now we want to include the operator $B[ \phi\chi]+\mathcal{F}[\phi]$ as a perturbation.
\begin{proof}[Proof of Proposition \ref{PropMode1}]
	By Proposition \ref{Prop121} we know that there is a linear operator $T$ so that given $h$ with $\|h\|_{0;\nu,m,5+\sigma,\varepsilon}<\infty$, with no radial mode, and satisfying condition \eqref{CondtionMode1T} associates the solution $\phi$ of \eqref{SimplifiedMode1Prob} that in particular satisfies
	\begin{align*}
		\int_{\R^2}\phi y_{j}dy=0, \ \ j=1,2.
	\end{align*}
	Notice that by standard parabolic estimates we can easily observe that
	\begin{align*}
		\| |y| \nabla \phi \|_{1;\nu,m,3+\sigma,2+\varepsilon}\le \|h\|_{0;\nu,m,5+\sigma,\varepsilon}.
	\end{align*}
    Then the solution $\phi$ of \eqref{EquationMode1} can be written as
	\begin{align*}
		\phi=T[h+B[\phi\chi]+\mathcal{F}[\phi]].
	\end{align*}
    Thanks to \eqref{FunctionM} since $M^{2}\approx c \frac{\delta \tau}{\ln \tau}$ for some universal constant $c$ it is immediate to observe that
    \begin{align*}
    	\| B[\phi \chi]\|_{0;\nu,m,5+\sigma,\varepsilon}\le C \delta (\|\phi\|_{1;\nu,m,3+\sigma,2+\varepsilon}+ \| |y| \nabla \phi\|_{1;\nu,m,3+\sigma,2+\varepsilon}).
    \end{align*}
    Finally, recalling \eqref{ForceF}, since $B[\phi\chi]=\lambda\dot{\lambda}\nabla \cdot (y\phi \chi)$
    \begin{align*}
    	|\int_{\R^2}B[\phi \chi]y_{j}dy|&=|\lambda\dot{\lambda} \int_{\R^2}\nabla \cdot (y \phi\chi)y_{j}|=|\lambda\dot{\lambda}\int_{\R^2}y_{j}\phi \chi dy|=\\
    	&=|\lambda\dot{\lambda}\int_{\R^2}y_{j}\phi(1-\chi)|\le C(\|\phi\|_{1;\nu,m,3+\sigma,2+\varepsilon}+\| |y| \nabla \phi\|_{1;\nu,m,3+\sigma,2+\varepsilon})\frac{1}{\tau^{\nu+1}(\ln \tau)^{m-1}}\frac{1}{M^{\sigma}}.
    \end{align*}
   Then 
   \begin{align*}
   	\|\mathcal{F}[\phi]\|_{0;\nu,m,5+\sigma,\varepsilon}\le C \frac{\ln \tau_{0}}{\tau_{0}} \frac{1}{M^{\sigma}(\tau_{0})}(\|\phi\|_{1;\nu,m,3+\sigma,2+\varepsilon}+ \| |y| \nabla \phi\|_{1;\nu,m,3+\sigma,2+\varepsilon}).
   \end{align*}
By taking $\tau_{0}$ sufficiently large we get the desired conclusion.
\end{proof}

\section{The outer problem}\label{OuterProblemSection}
We consider the linear outer problem:
\begin{align}\label{OuterEqfinSec}
	\begin{cases}
		\partial_{t}\phi^{o}=L^{o}[\phi^{o}]+g(x,t), \ \ \ \text{in }\mathbb{R}^{2}\times(0,T)\\
		\phi^{o}(\cdot,0)=0, \ \ \text{in }\mathbb{R}^{2}.
	\end{cases}
\end{align}
where 
\begin{align*} 
	L^{o}[\phi]:=\Delta_{x}\phi-\nabla_{x}\big[\Gamma_{0}\big(\frac{x-\xi(t)}{\lambda(t)}\big)\big]\cdot \nabla_{x}\phi=\Delta_{x}\phi+4\frac{x-\xi}{\lambda^{2}+|x-\xi|^{2}}\cdot \nabla_{x}\phi.
\end{align*}
For $g:\mathbb{R}^{2}\times(0,T)\to \mathbb{R}$ we consider the norm $\|g\|_{\star\star,0}$ defined as the least $K$ such that for all $(x,t)\in \mathbb{R}^{2}\times(0,T)$
\begin{align*}
	|g(x,t)|\le K \begin{cases}
		\frac{e^{-a\sqrt{2|\ln(|x|^{2}+(T-t))|}}}{(|x|^{2}+(T-t))^{2}|\ln((T-t)+|x|^{2})|^{b} }\ \ \ \ \text{if } 0\le |x|\le \sqrt{T}\\[7pt] 
		\frac{e^{-a\sqrt{2|\ln T|}}}{T^{2}|\ln T|^{b}}e^{-\frac{|x|^{2}}{4(t+T)}} \ \ \ \ \text{if }|x|\ge \sqrt{T}.
	\end{cases}
\end{align*}
where $a>0$ and $b\in \mathbb{R}$ will be fixed in Section \ref{proofThm1}. We also define the norm $\|\phi\|_{\star,o}$ as the least $K$ such that 
\begin{align*}
	|\phi^{o}(x,t)|+(\lambda+|x|)|\nabla_{x}\phi^{o}(x,t)|\le K\begin{cases}
		\frac{e^{-a\sqrt{2|\ln(|x|^{2}+(T-t))|}}}{(|x|^{2}+(T-t))|\ln((T-t)+{|x|^{2}})|^{b} }\ \ \ \ \text{if } 0\le |x|\le \sqrt{T}\\[7pt] 
		\frac{e^{-a\sqrt{2|\ln T|}}}{T|\ln T|^{b}}e^{-\frac{|x|^{2}}{4(t+T)}} \ \ \ \ \text{if }|x|\ge \sqrt{T}.
	\end{cases}
\end{align*}
Keeping in mind these norms we state the Proposition we want to prove in this section.
\begin{proposition}\label{Prop131}
	Assume that $a>0$, $b\in \mathbb{R}$ and $\lambda$, $\xi$, satisfy \eqref{EstiPar}, \eqref{EstiLambda}. Then there is a constant $C$ so that for $T$ sufficiently small and for $\|g\|_{\star\star,o}<\infty$ there exists a solution $\phi^{o}=\mathcal{T}^{o}_{p}[g]$ of \eqref{OuterEqfinSec}, which defines a linear operator of $g$ and satisfies
	\begin{align*}
	    \|\phi^{o}\|_{\star\star,o}\le C \|g\|_{\star\star,o}.
	\end{align*}
\end{proposition}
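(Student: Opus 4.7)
The plan is to obtain $\phi^o$ by Duhamel's formula with the linear semigroup generated by $L^o$, and then derive the claimed pointwise estimate by constructing an explicit radial barrier and applying the comparison principle. Existence and uniqueness of a classical solution with adequate decay is standard once the barrier is in hand, because $L^o$ has bounded smooth coefficients on any strip $[0,T-\eta]$ (the singular behaviour of the drift at $x=\xi$ is absent since $\lambda(t)>0$, and the drift $4(x-\xi)/(\lambda^2+|x-\xi|^2)$ decays like $1/|x|$ at infinity). Thus the heart of the matter is the estimate, and the strategy reduces to building a supersolution $\bar\phi(x,t)$ with $\bar\phi\ge 0$, $\bar\phi(\cdot,0)\ge 0$, $(\partial_t-L^o)\bar\phi \ge |g|$, and with the pointwise size described by the right-hand side of the $\|\cdot\|_{\star,o}$ norm. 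The gradient bound will follow from parabolic interior regularity applied to the rescaled equation after the pointwise bound is known, in the same way it was extracted throughout the inner theory (Lemma \ref{estimatephilambda}, Lemma \ref{barriersUg}).

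The first step is to recognize that in the regime $|x-\xi|\gg\lambda$ the operator $L^o$ is a small smooth perturbation of the six-dimensional radial Laplacian $\Delta_6$ centred at $\xi$, precisely as in \eqref{SixDimLap}. This is the same observation that justified introducing $\varphi_\lambda$ via $\Delta_6$ in Section~2, and it allows me to reuse the radial Green-function technology already exploited in Lemma~\ref{estimatephilambda}. For $|x|\le\sqrt{T}$, I propose the inner barrier
\[
\bar\phi_1(x,t) \;=\; C\|g\|_{\star\star,o}\,\frac{e^{-a\sqrt{2|\ln((T-t)+|x|^2)|}}}{((T-t)+|x|^2)\,|\ln((T-t)+|x|^2)|^b},
\]
which, by a direct computation analogous to the construction of $\hat{\varphi}_1$ in the proof of Lemma \ref{estimatephilambda}, satisfies $(\partial_t-\Delta_6)\bar\phi_1 \gtrsim \|g\|_{\star\star,o}\,(T-t+|x|^2)^{-2}e^{-a\sqrt{2|\ln(T-t+|x|^2)|}}|\ln(T-t+|x|^2)|^{-b}$. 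The difference $L^o-\Delta_6$ is of the form $\bigl(4(x-\xi)/(\lambda^2+|x-\xi|^2)-4x/|x|^2\bigr)\cdot\nabla$, which is $O(\lambda^2/|x|^3 + |\xi|/|x|^2)$ at infinity; paired with the gradient bound $|\nabla\bar\phi_1|\lesssim \bar\phi_1/|x|$ this gives a lower-order perturbation for $\delta$ small, absorbable into the leading term. For $|x|\ge\sqrt{T}$ I take the Gaussian outer piece
\[
\bar\phi_2(x,t) \;=\; C\|g\|_{\star\star,o}\,\frac{e^{-a\sqrt{2|\ln T|}}}{T\,|\ln T|^b}\,e^{-\frac{|x|^2}{4(t+T)}},
\]
patterned on the third piece of the barrier in Lemma~\ref{estimatephilambda}. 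Here the crucial property is that $e^{-|x|^2/(4(t+T))}$ is essentially a solution of the backward heat equation, so $(\partial_t-\Delta)\bar\phi_2$ is non-negative up to the derivative of the slow time prefactor, and the drift term $4(x-\xi)/(\lambda^2+|x-\xi|^2)\cdot \nabla \bar\phi_2$ generates a term of favourable sign (since $x\cdot\nabla \bar\phi_2 \le 0$).

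The final step is to glue $\bar\phi_1$ and $\bar\phi_2$: take $\bar\phi=\bar\phi_1\chi_0(|x|/\sqrt{T})+K\bar\phi_2$ with $K$ large enough to absorb the commutator terms $\nabla\chi_0\cdot\nabla\bar\phi_1$ and $\bar\phi_1\Delta\chi_0$ produced in the transition annulus $\sqrt{T}\le|x|\le 2\sqrt{T}$; these commutators live on $|x|\asymp \sqrt{T}$, where both $\bar\phi_1$ and $\bar\phi_2$ are comparable to $\|g\|_{\star\star,o}\,e^{-a\sqrt{2|\ln T|}}T^{-1}|\ln T|^{-b}$, so they can be dominated by the pointwise production rate of $\bar\phi_2$ in that annulus. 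Combined with the zero initial datum (trivially $\bar\phi(\cdot,0)\ge 0 = \phi^o(\cdot,0)$), the comparison principle then forces $|\phi^o|\le\bar\phi$, which is the required $\|\phi^o\|_{\star,o}\le C\|g\|_{\star\star,o}$ estimate.

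I expect the main obstacle to be the matching at $|x|\asymp\sqrt{T}$: the inner barrier has a logarithmic factor $|\ln(T-t+|x|^2)|^{-b}$ that is not present in the Gaussian outer barrier, and $b$ may be negative. One must verify that the constants can be chosen so the sum is a genuine supersolution across the annulus, and that the drift term, which decays only like $1/|x|$, does not generate a contribution on scale $\sqrt{T}$ that overwhelms the heat-kernel production of $\bar\phi_2$. A secondary subtlety is the transition at $|x|\asymp \lambda$, where $\Delta_6$ ceases to approximate $L^o$; but in this inner-most region $\bar\phi_1$ is already essentially constant (of size $\lambda^{-2}$-free), so no sharp competition occurs, and the regular heat operator suffices.
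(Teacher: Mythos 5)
Your overall shape is right (radial barrier against $L^{o}$, with $\Delta_{6}$ as the approximating operator in the range $\lambda\ll |x|\ll\sqrt{T}$ and a Gaussian tail for $|x|\gtrsim\sqrt T$), but there is a genuine gap at the core scale $|x|\asymp\lambda$, and it is not the cosmetic one you dismiss at the end. The commutator $(L^{o}-\Delta_{6})\bar\phi_{1}$ is not negligible there. Quantitatively: the drift difference is
\[
\Bigl(\tfrac{4(x-\xi)}{\lambda^{2}+|x-\xi|^{2}}-\tfrac{4x}{|x|^{2}}\Bigr)\cdot\tfrac{x}{|x|}
= \frac{4}{|x|}\Bigl(-\frac{\lambda^{2}}{\lambda^{2}+|x-\xi|^{2}}+\frac{(x-\xi)\cdot\xi}{\lambda^{2}+|x-\xi|^{2}}\Bigr),
\]
and since $\partial_{r}\bar\phi_{1}\sim |x|\,e^{-a\sqrt{2|\ln(T-t)|}}/((T-t)^{2}|\ln(T-t)|^{b})$ for $|x|\le\sqrt{T-t}$, the remainder is of order
\[
\frac{\lambda^{2}}{(\lambda^{2}+|x|^{2})^{2}}\cdot\frac{e^{-a\sqrt{2|\ln(T-t)|}}}{(T-t)|\ln(T-t)|^{b}},
\]
which at $|x|\lesssim\lambda$ exceeds the production rate $\bar g\sim e^{-a\sqrt{2|\ln(T-t)|}}/((T-t)^{2}|\ln(T-t)|^{b})$ by a factor $(T-t)/\lambda^{2}\sim e^{\sqrt{2|\ln(T-t)|}}\to\infty$. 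So your $\bar\phi_{1}$ alone simply is not a supersolution near the concentration point, and ``the regular heat operator suffices'' there is false: the drift of $L^{o}$ has magnitude $\sim 1/\lambda$ at that scale and cannot be absorbed by the slowly varying $\bar\phi_{1}$.

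The paper's proof closes exactly this gap by adjoining a second barrier piece built from the \emph{elliptic} problem for $L^{o}$ itself (Lemma \ref{Lemma131}): one sets $\psi_{2}=\frac{e^{-a\sqrt{2|\ln(T-t)|}}}{(T-t)|\ln(T-t)|^{b}}\tilde\psi_{2}$ with $L^{o}[\tilde\psi_{2}] = -h$, $|h|\le\lambda^{-2}(1+|x|/\lambda)^{-4}$, so that $|\tilde\psi_{2}|\lesssim (1+|x|/\lambda)^{-2}$, multiplies by a cut-off $\chi_{0}(|x|/\sqrt{\epsilon(T-t)})$, and verifies that this piece dominates the core remainder of $\psi_{1}$ once $M$ is large and $\epsilon$, $T$ are small. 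You would need to add this third piece (and check the time-derivative of the elliptic profile against its spatial elliptic production, which works because $|\dot\lambda/\lambda|\lesssim 1/((T-t)\sqrt{|\ln(T-t)|})$) before the comparison principle can be invoked. Your matching at $|x|\asymp\sqrt T$ and the Gaussian outer piece are fine, and your observation that one should centre the radial ansatz at the origin rather than at $\xi(t)$ to avoid the $\dot\xi\cdot\nabla$ term is indeed the same caution the paper uses.
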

To prove Proposition \ref{Prop131} we will need the following Lemma whose proof can be found in \cite{DdPDMW}.
\begin{lemma}\label{Lemma131}
	Let $2<\kappa<6$ and $h(r)$ satisfy
	\begin{align*}
		0<h(r)\le \frac{\lambda^{-2}}{(\frac{r}{\lambda}+1)^{\kappa}}= \frac{\lambda^{\kappa-2}}{(r+\lambda)^{\kappa}},
	\end{align*}
 where $\lambda>0$. Then there is a unique bounded radial function $\phi(r)>0$ satisfying 
 \begin{align*}
 	L^{o}[\phi]+h=0 \ \ \text{in }\mathbb{R}^{2}.
 \end{align*}
 Moreover $\phi$ satisfies
 \begin{align*}
 	|\phi(r)|+(\lambda+r)|\partial_{r}\phi(r)|\le \frac{C}{(1+\frac{r}{\lambda})^{\kappa-2}}=C \frac{\lambda^{\kappa-2}}{(r+\lambda)^{\kappa-2}}.
 \end{align*}
\end{lemma}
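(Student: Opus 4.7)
\textbf{Proof plan for Lemma \ref{Lemma131}.} The plan is to reduce the problem to an explicit ODE by radial symmetry, write down an integrating factor, and then read off the estimates from the closed-form integral representation.

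First, by the scaling $\tilde\phi(y)=\phi(\lambda y)$, $\tilde h(y)=\lambda^{2}h(\lambda y)$, the operator $L^{o}$ transforms to the $\lambda=1$ version of itself and the hypothesis becomes $0<\tilde h(y)\le(1+y)^{-\kappa}$; the desired estimate becomes $|\tilde\phi(y)|+(1+y)|\tilde\phi'(y)|\le C(1+y)^{2-\kappa}$. So I would work throughout with $\lambda=1$ and restore $\lambda$ at the end. For a radial function the equation reads
\begin{equation*}
\tilde\phi''+\Bigl(\frac{1}{y}+\frac{4y}{1+y^{2}}\Bigr)\tilde\phi'=-\tilde h(y),
\end{equation*}
and the integrating factor $\mu(y)=y(1+y^{2})^{2}$ turns this into $(\mu\tilde\phi')'=-\mu\tilde h$. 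Imposing regularity $\tilde\phi'(0)=0$ and decay at infinity, I would \emph{define}
\begin{equation*}
\tilde\phi'(y)=-\frac{1}{y(1+y^{2})^{2}}\int_{0}^{y}s(1+s^{2})^{2}\tilde h(s)\,ds,\qquad \tilde\phi(y)=\int_{y}^{\infty}|\tilde\phi'(s)|\,ds.
\end{equation*}
Because $\tilde h>0$, this $\tilde\phi$ is strictly decreasing with limit $0$ at infinity, hence strictly positive; smoothness at $0$ is clear since the inner integral vanishes like $y^{2}$ there.

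Second, I would verify the pointwise estimate by splitting into the regions $y\le 1$ and $y\ge 1$. For $y\le 1$ the bound $\tilde h\lesssim 1$ gives $\int_{0}^{y}s(1+s^{2})^{2}\tilde h(s)ds\lesssim y^{2}$, hence $|\tilde\phi'(y)|\lesssim y$ and $\tilde\phi(y)\le\tilde\phi(0)<\infty$ (the outer integral converges, using the $y\ge1$ bound below). For $y\ge 1$, using $\tilde h(s)\lesssim(1+s)^{-\kappa}$,
\begin{equation*}
\int_{0}^{y}s(1+s^{2})^{2}\tilde h(s)\,ds\lesssim 1+\int_{1}^{y}s^{5-\kappa}\,ds\lesssim y^{\,6-\kappa},
\end{equation*}
where finiteness of the integral in $y$ uses $\kappa<6$. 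Dividing by $y(1+y^{2})^{2}\sim y^{5}$ yields $|\tilde\phi'(y)|\lesssim y^{1-\kappa}$, hence $(1+y)|\tilde\phi'(y)|\lesssim y^{2-\kappa}$, and integrating from $y$ to $\infty$ (using $\kappa>2$) gives $\tilde\phi(y)\lesssim y^{2-\kappa}$. Undoing the scaling restores the claimed bound $|\phi|+(\lambda+r)|\partial_{r}\phi|\lesssim\lambda^{\kappa-2}/(r+\lambda)^{\kappa-2}$.

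Third, for uniqueness I would use the ODE directly. If $\phi_{1},\phi_{2}$ are two bounded positive radial solutions decaying at infinity, then $\psi=\phi_{1}-\phi_{2}$ solves the homogeneous ODE and satisfies $\psi\to 0$ at infinity. The two-dimensional kernel of $L^{o}$ on radial functions is spanned by the constant $1$ and by $\Psi(y)=\int_{y}^{\infty}\frac{ds}{s(1+s^{2})^{2}}$, which blows up logarithmically at $0$; the only member that is bounded and vanishes at infinity is $0$, so $\phi_{1}=\phi_{2}$. (I would remark that the lemma's uniqueness statement should therefore be read modulo the implicit normalization $\phi(\infty)=0$ supplied by the estimate.)

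The only mildly delicate point is the careful case analysis for the decay estimate at the borderline exponents $\kappa=2,6$, which is precisely why the hypothesis $2<\kappa<6$ is assumed; beyond that, the argument is entirely elementary ODE bookkeeping once the integrating factor is identified.
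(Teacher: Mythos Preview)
Your argument is correct and self-contained. The paper itself does not prove this lemma but simply cites Lemma~13.1 of \cite{DdPDMW}, so there is no in-paper proof to compare against; your direct ODE approach via the integrating factor $\mu(y)=y(1+y^{2})^{2}$ is the natural one and is almost certainly what lies behind that reference as well. Your observation that uniqueness among \emph{bounded} positive radial solutions only holds up to additive constants, and hence should be read with the normalization $\phi(\infty)=0$ implicit in the decay estimate, is a valid refinement of the statement as written.
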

\begin{proof}
	See Lemma 13.1 in \cite{DdPDMW}.
\end{proof}
\begin{proof}[Proof of Proposition \ref{Prop131}]
	As a preliminary observation we see that if $\varphi(x,t)=\phi(|x|,t)=\phi(r,t)$ then
	\begin{align*}
		L^{o}[\varphi]=\partial_{r}^{2}\phi+\frac{1}{r}\partial_{r}\phi + 4 \frac{x-\xi}{\lambda^{2}+|x-\xi|^{2}}\cdot \frac{x}{r}\partial_{r}\phi .
	\end{align*}
    We notice that it is useful to avoid barriers as $\phi(|x-\xi|,t)$ since the time derivative would produce $\dot{\xi}\cdot \nabla \phi$ that as we already pointed out is singular away from the singularity. Proceeding similarly to the proof of Lemma \ref{estimatephilambda}, we can find a function $\psi_{1}(|x|,t)$ that satisfies
	\begin{align*}
		\partial_{t}\psi_{1}-\partial_{r}^{2}\psi_{1}-\frac{5}{r}\partial_{r}\psi_{1}\ge \begin{cases}
			\frac{e^{-a\sqrt{2|\ln(|x|^{2}+(T-t))|}}}{(|x|^{2}+(T-t))^{2}|\ln((T-t)+|x|^{2})|^{b} }\ \ \ \ \text{if } 0\le |x|\le \sqrt{T}\\[7pt] 
			\frac{e^{-a\sqrt{2|\ln T|}}}{T^{2}|\ln T|^{b}}e^{-\frac{|x|^{2}}{4(t+T)}} \ \ \ \ \text{if }|x|\ge \sqrt{T}
		\end{cases}
	\end{align*}
such that 
	\begin{align}\label{ps1boundOut}
	|\psi_{1}(x,t)|+(|x|+\sqrt{T-t})|\nabla \psi_{1}(x,t)|\le C \begin{cases}
		\frac{e^{-a\sqrt{2|\ln (|x|^{2})|}}}{|x|^{2}+T-t}\frac{1}{|\ln(T-t+|x|^{2})|^{b}} \ \ \ \ \ 0\le |x|\le \sqrt{T},\\[7pt]
		\frac{e^{-a\sqrt{2|\ln T|}}}{T|\ln T|^{b}}e^{-\frac{|x|^{2}}{4(t+T)}} \ \ \ \ \ |x|\ge\sqrt{T}
	\end{cases}
\end{align}
and that
\begin{align}\label{psi1gradOUT}
	|\nabla \psi_{1}(x,t)|\le C e^{-a\sqrt{2|\ln(T-t)|}}\frac{|x|}{(T-t+|x|^{2})^{2}}, \ \ \ |x|\le \sqrt{T-t}.
\end{align}
In the following we denote
\begin{align*}
	\bar{g}(x,t)=\begin{cases}
		\frac{e^{-a\sqrt{2|\ln(|x|^{2}+(T-t))|}}}{(|x|^{2}+(T-t))^{2}|\ln((T-t)+|x|^{2})|^{b} }\ \ \ \ \text{if } 0\le |x|\le \sqrt{T},\\[7pt] 
		\frac{e^{-a\sqrt{2|\ln T|}}}{T^{2}|\ln T|^{b}}e^{-\frac{|x|^{2}}{4(t+T)}} \ \ \ \ \text{if }|x|\ge \sqrt{T}.
	\end{cases}
\end{align*}
Similarly to proof of Lemma \ref{lemmaesterr} we have that
\begin{align}\label{BARRIERPSI1Est}
	\big[\partial_{t}-\partial_{r}^{2}-(\frac{1}{r}+ 4 \frac{x-\xi}{\lambda^{2}+|x-\xi|^{2}}\cdot \frac{x}{r})\partial_{r}\big]\psi_{1}\ge C \bar{g}(x,t)+O(\frac{\lambda^{2}+|\xi| |x-\xi|}{|x|(|x-\xi|^{2}+\lambda^{2})}|\partial_{r}\psi_{1}|).
\end{align}
But then thanks to \eqref{psi1gradOUT} and recalling \eqref{EstiPar}, if $|x|\le\sqrt{T-t}$ we have
\begin{align}
	\frac{\lambda^{2}+|\xi||x-\xi|}{|x|(|x-\xi|^{2}+\lambda^{2})}|\partial_{r}\psi_{1}|&\le C \frac{\lambda^{2}}{|x-\xi|^{2}+\lambda^{2}}
		\frac{e^{-a\sqrt{2|\ln (T-t)|}}}{(T-t)^{2}}\frac{1}{|\ln(T-t)|^{b}}\le \nonumber\\
		&\le C \frac{\lambda^{2}}{(|x-\xi|^{2}+\lambda^{2})^{2}}\frac{e^{-a\sqrt{2|\ln(T-t)|}}}{(T-t)}\frac{1}{|\ln(T-t)|^{b}}\le \nonumber\\
		&\le C \frac{\lambda^{2}}{(|x|^{2}+\lambda^{2})^{2}}\frac{e^{-a\sqrt{2|\ln(T-t)|}}}{(T-t)|\ln(T-t)|^{b}}.\label{EstimateREmainderPSIBAR}
\end{align}
Let us consider a function $\tilde{\psi}_{2}$ as in Lemma \ref{Lemma131} with $\kappa=4$. Then if we take $\psi_{2}=\frac{e^{-a\sqrt{2|\ln(T-t)|}}}{(T-t)|\ln(T-t)|^{b}}\tilde{\psi}_{2}$
we get
\begin{align}
     \big[\partial_{t}-\partial_{r}^{2}-(\frac{1}{r}+\frac{4r}{\lambda^{2}+r^{2}})\partial_{r}\big]\psi_{2}\ge&\frac{e^{-a\sqrt{2|\ln(T-t)|}}}{(T-t)|\ln(T-t)|^{b}}\frac{\lambda^{2}}{(r^{2}+\lambda^{2})^{2}}\big[1+\nonumber\\
     &+O(\frac{1}{T-t}|\tilde{\psi}_{2}|\frac{(r^{2}+\lambda^{2})^{2}}{\lambda^{2}})+O(\frac{|\dot{\lambda}|}{\lambda}|\tilde{\psi}_{2}'(\frac{r}{\lambda})|  |\frac{r}{\lambda}||\frac{(r^{2}+\lambda^{2})^{2}}{\lambda^{2}})\big]\ge \nonumber\\
     \ge&\frac{e^{-a\sqrt{2|\ln(T-t)|}}}{(T-t)|\ln(T-t)|^{b}}\frac{\lambda^{2}}{(r^{2}+\lambda^{2})^{2}}\big[1+O(\frac{r^{2}+\lambda^{2}}{T-t})\big].\label{psi2ESTIMATEB}
\end{align}
Now we claim that if $0<r<\sqrt{\epsilon(T-t)}$ we have
\begin{align}\label{ClaimOuter}
	\big[\partial_{t}-\partial_{r}^{2}-(\frac{1}{r}+4\frac{x-\xi}{\lambda^{2}+|x-\xi|^{2}}\cdot\frac{x}{r})\partial_{r}\big]\psi_{2}\ge\frac{e^{-a\sqrt{2|\ln(T-t)|}}}{(T-t)|\ln(T-t)|^{b}}\frac{\lambda^{2}}{(r^{2}+\lambda^{2})^{2}}.
\end{align}
Indeed we have
\begin{align*}
	\big(\frac{x-\xi}{\lambda^{2}+|x-\xi|^{2}}-\frac{x}{|x|^{2}+\lambda^{2}}\big)\cdot \frac{x}{r}=&\frac{1}{r}\big(-\frac{\lambda^{2}}{\lambda^{2}+|x-\xi|^{2}}+\frac{\lambda^{2}}{\lambda^{2}+|x|^{2}}+\frac{(x-\xi)\cdot \xi}{\lambda^{2}+|x-\xi|^{2}}\big).
\end{align*}
Thanks to \eqref{EstiPar} the last term is small when $|x|\le \sqrt{\epsilon(T-t)}$.  For the remaining two terms we observe that $
		|x-\xi|^{2}=|x|^{2}+|\xi|^{2}-2x\cdot \xi$
and then
\begin{align*}
	|-\frac{\lambda^{2}}{\lambda^{2}+|x-\xi|^{2}}+\frac{\lambda^{2}}{\lambda^{2}+|x|^{2}}|\le \frac{\lambda^{2}||x-\xi|^{2}-|x|^{2}|}{(\lambda^{2}+|x-\xi|^{2})(\lambda^{2}+|x|^{2})}\le \frac{|\xi|^{2}+2|x||\xi|}{\lambda^{2}+|x-\xi|^{2}}
\end{align*}
that is small because of \eqref{EstiPar}. This proves the claim \eqref{ClaimOuter}.\newline
Let us now consider
\begin{align}
	\tilde{\psi}=\psi_{1}+M\psi_{2}\chi_{\epsilon} \ \ \ \chi_{\epsilon}(r,t)=\chi_{0}(\frac{r}{\sqrt{\epsilon(T-t)}})
\end{align}
where $M>0$. After properly choosing $M>0$ and $\epsilon>0$ small, $\bar{\psi}$ will be our barrier. Indeed, thanks to \eqref{BARRIERPSI1Est} and \eqref{psi2ESTIMATEB} we have
\begin{align}
	\big[\partial_{t}-\partial_{r}^{2}-(\frac{1}{r}+4\frac{x-\xi}{\lambda^{2}+|x-\xi|^{2}}\cdot\frac{x}{r})\partial_{r}\big]\tilde{\psi}\ge& C \bar{g}(x,t)+ O(\frac{\lambda^{2}+|\xi||x-\xi|}{|x|(|x-\xi|^{2}+\lambda^{2})}|\partial_{r}\psi_{1}|)+\nonumber\\
	&+M\chi_{\epsilon}\frac{e^{-a\sqrt{2|\ln(T-t)|}}}{(T-t)|\ln(T-t)|^{b}}\frac{\lambda^{2}}{(r^{2}+\lambda^{2})^{2}}+R_{\epsilon}(r,t)\label{INEQUALITYGIVESPSIBAR}
\end{align}
where $R_{\epsilon}(r,t)=O\big(M(|\psi_{2}|\partial_{r}^{2}\chi_{\epsilon}+...)\big)$ is a remainder supported only when $\sqrt{\epsilon(T-t)}\le r \le 2\sqrt{\epsilon(T-t)}$. \newline
If we look at \eqref{INEQUALITYGIVESPSIBAR} and because of \eqref{EstimateREmainderPSIBAR} we see that when $r\le \sqrt{\epsilon(T-t)}$ it is enough to take $M$ sufficiently large and $\epsilon$ sufficiently small. The remaining regions we need to investigate are $\sqrt{\epsilon(T-t)}\le r \le 2\sqrt{\epsilon(T-t)}$ and $r\ge 2\sqrt{\epsilon(T-t)}$. \newline If $\sqrt{\epsilon(T-t)}\le r \le 2 \sqrt{\epsilon(T-t)}$ we see that
\begin{align*}
	O(\frac{\lambda^{2}+|\xi| |x-\xi|}{|x|(|x-\xi|^{2}+\lambda^{2})}|\partial_{r}\psi_{1}|)+|R_{\epsilon}(r,t)|=O(M \frac{\lambda^{2}}{\epsilon(T-t)}\frac{e^{-a\sqrt{2|\ln(T-t)|}}}{(T-t)|\ln(T-t)|^{b}})\ll |\bar{g}(x,t)|
\end{align*}
if $T$ is sufficiently small. \newline
In the remaining region, namely $r\ge 2\sqrt{\epsilon(T-t)}$, we observe: 
\begin{align*}
	O(\frac{\lambda^{2}+|\xi||x-\xi|}{|x|(|x-\xi|^{2}+\lambda^{2})}|\partial_{r}\psi_{1}|)=O(\frac{\lambda^{2}}{T-t}\frac{1}{|x|}|\partial_{r}\psi_{1}|)\ll \bar{g}(r,t).
\end{align*}
Thus, if $M$ is sufficiently large, $\epsilon$, $T$ are sufficiently small, \eqref{INEQUALITYGIVESPSIBAR} gives that for any $(x,t)\in \mathbb{R}^{2}\times(0,T)$
\begin{align*}
		\big[\partial_{t}-\partial_{r}^{2}-(\frac{1}{r}+4\frac{x-\xi}{\lambda^{2}+|x-\xi|^{2}}\cdot\frac{x}{r})\partial_{r}\big]\tilde{\psi}\ge& \frac{C}{2} \bar{g}(x,t)
\end{align*}
and then $\bar{\psi}$ is the desired barrier for \eqref{OuterEqfinSec}.
\end{proof}

\subsection*{Acknowledgements}
The present work is part of the PhD thesis of the first author who is funded by URSA (University Research Studentship Award - Science).
J.~D\'avila has been supported  by  a Royal Society  Wolfson Fellowship, UK.
M.~del Pino has been supported by a Royal Society Research Professorship, UK.
M. Musso has been supported by EPSRC research Grant EP/T008458/1.

\bibliographystyle{amsplain}

\end{document}